\definecolor{oneblue}{rgb}{0,0.0,0.75}
\newcommand{\sech}{\mathop{\operator@font sech}}
\newcommand{\sign}{\mathop{\operator@font sign}}
\newtheorem{lemma}{Lemma}[section]
\newtheorem{theorem}{Theorem}[section]
\newtheorem{proposition}{Proposition}[section]
\newtheorem{remark}{Remark}[section]
\numberwithin{equation}{section}
\begin{document}

\title[Notes on numerical analysis...]{Notes on numerical analysis and solitary wave solutions of Boussinesq/Boussinesq systems for internal waves}

\author[V. A. Dougalis]{Vassilios A. Dougalis}
\address{Mathematics Department, University of Athens, 15784
Zographou, Greece \and Institute of Applied \& Computational
Mathematics, FO.R.T.H., 71110 Heraklion, Greece}
\email{doug@math.uoa.gr}

\author[A. Duran]{Angel Duran}
\address{ Applied Mathematics Department,  University of
Valladolid, 47011 Valladolid, Spain}
\email{angel@mac.uva.es}

\author[L. Saridaki]{Leetha Saridaki}
\address{Mathematics Department, University of Athens, 15784
Zographou, Greece \and Institute of Applied \& Computational
Mathematics, FO.R.T.H., 71110 Heraklion, Greece}
\email{leetha.saridaki@gmail.com}

%\author[D. Mitsotakis]{Dimitrios Mitsotakis}
%\address{School of Mathematics and Statistics, Victoria University of Wellington, PO Box 600
%Wellington 6140
%New Zealand}
%\email{dmitsot@gmail.com}
%\urladdr{http://dmitsot.googlepages.com/}

%\tableofcontents
%\author[A. Duran]{Angel Duran}
%\address{ Applied Mathematics Department, University of
%Valladolid, Valladolid, Spain}
%\email{angel@mac.uva.es}
%
%\author[D. Mitsotakis]{Dimitrios Mitsotakis}
%\address{Victoria University of Wellington, School of Mathematics, Statistics and Operations Research, PO Box 600, Wellington 6140, New Zealand}
%\email{dimitrios.mitsotakis@vuw.ac.nz}
%\urladdr{http://dmitsot.googlepages.com/}

\subjclass[2010]{76B15 (primary), 76B25, 65M70 (secondary)}
\keywords{Internal waves, Boussinesq/Boussinesq systems, solitary waves, spectral methods}

\begin{abstract}
In this paper a three-parameter family of Boussinesq systems is studied. The systems have been proposed as models of the propagation of long internal waves along the interface of a two-layer system of fluids with rigid-lid condition for the upper layer and under a Boussinesq regime for the flow in both layers. The contents of the paper are as follows. We first present some theoretical properties of well-posedness, conservation laws and Hamiltonian structure of the systems, using the results for analogous models for surface wave propagation. Then the corresponding periodic initial-value problem is discretized in space by the spectral Fourier Galerkin method and for each well posed system, error estimates for the semidiscrete approximation are proved. The rest of the paper is concerned with the study of existence and the numerical simulation of some issues of the dynamics of solitary-wave solutions. Standard theories are used to derive several results of existence of classical and generalized solitary waves, depending on the parameters of the models. A numerical procedure based on a Fourier collocation approximation for the ode system of the solitary wave profiles with periodic boundary conditions, and on the iterative solution of the resulting fixed-point systems with the Petviashvili scheme combined with vector extrapolation techniques, is used to generate numerically approximations of solitary waves. These are taken as initial conditions in a computational study of the dynamics of the solitary waves, both classical and generalized. To this end, the spectral semidiscretizations of the periodic initial-value problem for the systems are numerically integrated by a fourth-order Runge-Kutta-composition method based on the implicit midpoint rule. The fully discrete scheme is then used to approximate the evolution of small and large perturbations of computed solitary wave profiles, and to study computationally the collisions of solitary waves as well as the resolution of initial data into trains of solitary waves.
\end{abstract}

\maketitle

%\newpage
\tableofcontents

\section{Introduction}
The following three-parameter family of Boussinesq/Boussinesq (B/B) systems for internal waves was derived by Bona, Lannes and Saut, \cite{BonaLS2008}:
\begin{eqnarray}
(1-b\Delta)\zeta_{t}+\frac{1}{\gamma+\delta}\nabla\cdot{\bf v}_{\beta}+\left(\frac{\delta^{2}-\gamma}{(\delta+\gamma)^{2}}\right)\nabla\cdot \left(\zeta{\bf v}_{\beta}\right)+a\nabla\cdot\Delta {\bf v}_{\beta}=0,\nonumber&&\\
(1-d\Delta)({\bf v}_{\beta})_{t}+(1-\gamma)\nabla\zeta+\left(\frac{\delta^{2}-\gamma}{2(\delta+\gamma)^{2}}\right)\nabla |{\bf v}_{\beta}|^{2}+(1-\gamma)c\Delta\nabla\zeta=0.&&\label{BB1}
\end{eqnarray}
The system (\ref{BB1}) is a model (in nondimensional, unscaled form) for the propagation of internal waves along the interface of an inviscid, homogeneous two-layer system of fluids, the upper of which is labelled $1$ and the lower $2$. The layers have 
depths $d_{1}, d_{2}$ and densities $\rho_{1}, \rho_{2}$ with $\rho_{2}>\rho_{1}$. The upper layer is restricted by the rigid-lid assumption, at depth $z=0$, while the rigid, horizontal bottom lies at depth $z=-(d_{1}+d_{2})$. In (\ref{BB1}) 
 $\zeta=\zeta(x,y,t)$ represents the deviation of the interface from the rest position at $(x,y)$ at time $t$, while ${\bf v}_{\beta}=(I-\beta\Delta)^{-1}{\bf v}$, where $\beta\geq 0$ is a modelling parameter, $\Delta$ denotes the Laplace operator and ${\bf v}$ is a \lq velocity\rq\ variable defined in \cite{BonaLS2008} in terms of the horizontal components of the velocities of the two layers of fluids ${\bf v}^{(1)}$ and ${\bf v}^{(2)}$ as the difference ${\bf v}^{(2)}-\gamma {\bf v}^{(1)}$ evaluated at the interface. The constants
\begin{eqnarray*}
\gamma=\frac{\rho_{1}}{\rho_{2}}<1,\quad \delta=\frac{d_{1}}{d_{2}},
\end{eqnarray*}
denote the density and depth ratios, respectively. The parameters $a,b,c,d$ depend on the physical parameters $\delta,\gamma$ and the modelling parameters $\alpha_{1}\geq 0, \beta\geq 0$ and $\alpha_{2}\leq 1$, \cite{BonaLS2008}, and are given by
\begin{eqnarray}
a&=&\frac{(1-\alpha_{1})(1+\gamma\delta)-3\delta\beta (\delta+\gamma)}{3\delta (\gamma+\delta)^{2}},\quad b=\alpha_{1}\frac{1+\gamma\delta}{3\delta(\gamma+\delta)},\nonumber\\
c&=&\beta\alpha_{2},\quad d=\beta(1-\alpha_{2}).\label{BB1b}
\end{eqnarray}
These formulas lead to the relation
\begin{eqnarray}
(\delta+\gamma)a+b+c+d=S(\gamma,\delta),\quad S(\gamma,\delta):=\frac{1+\gamma\delta}{3\delta(\gamma+\delta)}.\label{BB1c}
\end{eqnarray}
The case $\gamma=0,\delta=1$ corresponds to the Boussinesq systems for surface water waves analyzed by Bona, Chen, and Saut in \cite{BonaChS2002,BonaChS2004}. In that case $\beta$ should be taken equal to $\frac{1}{2}(1-\theta^{2})$ in the notation of \cite{BonaChS2002,BonaChS2004}, where $0\leq \theta\leq 1$ defines a parametrization of the depth variable $z=-1+\theta$,
$\zeta$ is the displacement of the surface elevation of the wave over the rest position $z=0$, and the horizontal velocity at the free surface would be given by ${\bf v}$. The variable ${\bf v}_{\beta}$ represents now the horizontal velocity at depth $z=-1+\theta$.

In \cite{BonaLS2008} (see also \cite{S}), several asymptotic models for internal waves in different physical regimes are derived, and the consistency of the corresponding full Euler equations with them is established in a rigorous manner. The physical regimes are defined in terms of the scaling parameters
\begin{eqnarray}
\epsilon=\frac{a}{d_{1}},\quad \mu=\frac{d_{1}^{2}}{\lambda^{2}},\label{BB21b}
\end{eqnarray}
and $\delta$, where $a$ and $\lambda$ denote a typical amplitude and wavelength of the interface wave, respectively. The parameters (\ref{BB21b}) are defined with respect to the upper layer; similar ones, $\epsilon_{2}$ and $\mu_{2}$, can be defined with respect to the lower layer. Then the system (\ref{BB1}) is valid in the so-called Boussinesq/Boussinesq (B/B) regime; this means that the flow is in the Boussinesq regime in both fluid domains, i.~e. the physical parameters satisfy the conditions $\delta\sim 1$ and
\begin{eqnarray}
\mu\sim\mu_{2}\sim\epsilon\sim\epsilon_{2}<<1.\label{BB21bb}
\end{eqnarray}
For a review of several other issues concerning the modelling of internal waves in the B/B and the other asymptotic regimes defined in \cite{BonaLS2008} we refer the reader to the notes of Saut, \cite{S}. We would also like to mention that Nguyen and Dias derived in \cite{ND} a B/B system and extended it to the case of higher-power nonlinear terms. In \cite{Du} Duch\^{e}ne considered free-surface and rigid-lid B/B systems and studied their one-way KdV approximations.

The present study is focused on the one-dimensional version of (\ref{BB1}), which is written in unscaled, dimensionless variables, for $x\in\mathbb{R}, t\geq 0$, as
\begin{eqnarray}
(1-b\partial_{xx})\zeta_{t}+\frac{1}{\gamma+\delta}\partial_{x} v_{\beta}+\left(\frac{\delta^{2}-\gamma}{(\delta+\gamma)^{2}}\right)\partial_{x} \left(\zeta v_{\beta}\right)+a\partial_{xxx} v_{\beta}=0,\nonumber&&\\
(1-d\partial_{xx})( v_{\beta})_{t}+(1-\gamma)\partial_{x}\zeta+\left(\frac{\delta^{2}-\gamma}{2(\delta+\gamma)^{2}}\right)\partial_{x} v_{\beta}^{2}+(1-\gamma)c\partial_{xxx}\zeta=0.&&\label{BB2}
\end{eqnarray}
with ${v}_{\beta}=(1-\beta\partial_{xx})^{-1}{u}$.

In section \ref{sec2} of the paper at hand we present an alternative to that of \cite{BonaLS2008} derivation of the family of systems (\ref{BB2}). We study the linear and nonlinear well-posedness of the B/B systems for various values of the coefficients $a, b, c, d$, based on the analogous theory valid for the Boussinesq systems for surface waves presented in \cite{BonaChS2002,BonaChS2004}. We identify seven classes of $B/B$ systems that are linearly well posed with coefficients relevant to the internal wave problem and whose initial-value problems are nonlinearly, in general locally in time, well-posed in appropriate pairs of Sobolev spaces. In the rest of the paper we consider in detail the numerical approximation of these classes of systems and study the existence and numerical construction of their solitary-wave solutions. We also present a numerical study of various issues of stability and interactions of the solitary waves.

Specifically, in section \ref{sec3} we discretize in space the periodic initial-value problem (ivp) for these systems using the spectral Fourier-Galerkin method and prove $L^{2}$ error estimates for the ensuing semidiscrete approximations. These estimates remain of course valid for the analogous surface-wave Boussinesq systems (take $\gamma=0, \delta =1$ in (\ref{BB2})).

In recent years there have appeared several papers with rigorous error estimates for numerical methds for surface-wave Boussinesq systems. For example, in \cite{ADM1,ADM2,AD2,DougalisMS2007,DMS2,DMS3} one may find error analyses of Galerkin-finite element semidiscretizations for various initial-boundary-value problems (ibvp's) for several Boussinesq systems in one and two space dimensions. The papers \cite{ADM1} and \cite{AD2} also contain error estimates of temporal discretizations of the semidiscrete problems effected with high-order accurate, explicit Runge-Kutta (RK) time-stepping schemes. In \cite{X} Xavier {\it et al.} analyze spectral methods of collocation type, coupled with the explicit, \lq classical\rq, fourth-order accurate RK scheme for time-stepping for the surface-wave Boussinesq systems corresponding to the classes of the cases (i) and (v), see section \ref{sec2} in the sequel.

In section \ref{sec4} we consider the existence and numerical approximation of solitary-wave solutions of the systems (\ref{BB2}). We apply several techniques for proving existence of such waves, namely Normal Form Theory, \cite{I,HaragusI}, valid for solitary-wave speeds close to the limiting value $c_{\gamma,\delta}=\sqrt{(1-\gamma)/(\delta+\gamma)}$, and Toland's theory, \cite{Toland1986}, Concentration-Compactness theory, \cite{Lions}, and Positive Operator theory, \cite{BenjaminBB1990}, that predict existence of solitary waves also at larger speeds (relative to $c_{\gamma,\delta}$). We construct in several cases of interest classical and generalized solitary-wave profiles by solving numerically the second-order nonlinear ordinary differential equation (ode) systems satisfied by the solitary waves. The ode systems are discretized by a spectral method and the resulting nonlinear systems of algebraic equations are numerically solved by the iterative Petviashvili scheme, \cite{Petv1976,pelinovskys}, accelerated by the Minimal Polynomial extrapolation (MPE) algorithm, \cite{sidi,sidifs,smithfs,AlvarezD2015}.

In section \ref{sec5} we present a computational study of several issues associated with the stability and the dynamics of classical and generalized solitary waves of B/B systems with parameters belonging in the \lq generic\rq\ class (ii) of section \ref{sec2}. We note that a computational study of solitary waves has been carried out in \cite{ND} by a spectral-RK scheme, among others, for a B/B system resembling those in the class (vi); classical solitary waves are constructed and their overtaking and head-on collisions are simulated. In \cite{Du} the author used a Crank-Nicolson finite difference-relaxation scheme in order to compare the evolution of solutions of B/B systems with that of the solutions of their associated KdV one-way approximations. In section \ref{sec5} of the paper at hand, the ode ivp's resulting from the spectral semidiscretizations of the ivp's for some of the B/B systems (\ref{BB2}) are discretized in time by a three-stage, fourth-order accurate diagonally implicit RK method of composition type, \cite{Yoshida1990,FrutosS1992}. This scheme is effective for nonlinear dispersive wave problems and was analyzed recently in the case of the KdV equation and other related models in \cite{DD}. With this fully discrete method in hand we study computationally the temporal evolutions emanating from small and larger perturbations of initial classical and generalized solitary waves. We also investigate overtaking and head-on collisions of solitary waves and the resolution of initial profiles into sequences of solitary waves. We close the paper with a section of concluding remarks.

In the paper we denote the inner product, resp. norm, on $L^{2}=L^{2}(0,1)$ by $(\cdot,\cdot)$, $||\cdot ||$, resp. For real $\mu\geq 0$ we denote the $L^{2}-$based periodic Sobolev spaces on $[0,1]$ by $H^{\mu}$; for $g\in H^{\mu}$ its $H^{\mu}$ norm will be given by 
$$||g||_{\mu}=\left(\sum_{k\in\mathbb{Z}}(1+k^{2})^{\mu}|\widehat{g}(k)|^{2}\right)^{1/2},
$$ where $\widehat{g}(k)$ is the $k$th Fourier coefficient of $g$. We let $|\cdot |_{\infty}$ resp. $||\cdot ||_{j,\infty}$ be the norm on $L^{\infty}$, resp. $W^{j,\infty}$, on $(0,1)$, where for $1\leq p\leq\infty$ $W^{\mu,p}=W^{\mu,p}(0,1)$ is the Sobolev space of periodic functions on $(0,1)$ of order $\mu$, whose generalized derivatives are in $L^{p}$.

\section{Derivation and well-posedness}
\label{sec2}
In this section we present a derivation of the systems (\ref{BB2}) which is more classical than that of \cite{BonaLS2008} . We also review results of linear and nonlinear well-posedness of the systems (\ref{BB2}) based on the analogous theory of \cite{BonaChS2002,BonaChS2004} valid for surface-wave Boussinesq systems. Finally, we note some invariant functionals of the solutions of these systems.
\subsection{Alternative derivation}
\label{sec21}
The derivation of (\ref{BB1}) (and (\ref{BB2})) made in \cite{BonaLS2008} is based on the introduction of two nonlocal operators linking the velocity potentials associated with the two layers at the interface, and the reformulation of the Euler system for internal waves in terms of them, \cite{Z}. Then the combination of the assumptions of the Boussinesq regime in both fluids and suitable asymptotic expansions of these nonlocal operators lead to (\ref{BB1}) and its 1D version (\ref{BB2}).

An alternative derivation for (\ref{BB2}) with $a=b=c=0, d=\beta=S(\gamma,\delta)$ was made in \cite{Duran2019} using asymptotic expansions of the velocity potentials associated with the upper and lower fluid layers at the interface (cf. \cite{BonaChS2002} for the case of surface waves). Here, we summarize this procedure and derive the general $(a,b,c,d)$-system (\ref{BB2}) from it. If we take  into account the assumptions on the B/B regime, expressed in terms of the parameters (\ref{BB21b}) due to the conditions (\ref{BB21bb}), the alternative strategy leads, cf. \cite{Duran2019}, to the scaled system
\begin{eqnarray}
\zeta_{t}+\frac{1}{\delta+\gamma}\partial_{x}(1+\beta\epsilon\partial_{xx})u+\epsilon \kappa_{\gamma,\delta}\partial_{x}(\zeta u)&=&O(\epsilon^{2}),\nonumber\\
u_{t}+(1-\gamma)\zeta_{x}+\frac{\epsilon}{2}\kappa_{\gamma,\delta}\partial_{x}(u^{2})&=&O(\epsilon^{2}),\label{l213b}
\end{eqnarray}
 in which we took $\epsilon=\mu$,
where 
\begin{eqnarray}
\kappa_{\gamma,\delta}=\frac{\delta^{2}-\gamma}{(\delta+\gamma)^{2}},\label{kap}
\end{eqnarray} 
and ${v}_{\beta}=(1-\beta\epsilon\partial_{xx})^{-1}{u}$.. From (\ref{l213b}), the system (\ref{BB2}) with $a=b=c=0, d=\beta=S(\gamma,\delta)$ is obtained formally if we note that if ${v}_{\beta}=(I-\beta\epsilon \partial_{xx})^{-1}{u}$ then  $v_{\beta}=(1+\beta\epsilon\partial_{xx})u+O(\epsilon^{2})$ and drop the $O(\epsilon^{2})$ terms. The resulting system is of the form
\begin{eqnarray}
\zeta_{t}+\frac{1}{\gamma+\delta}\partial_{x}{v}_{\beta}
+\epsilon \kappa_{\gamma,\delta}\partial_{x} \left(\zeta{ v}_{\beta}\right)=O(\epsilon^{2}),&&\nonumber\\
(1-\beta\epsilon \partial_{xx})({v}_{\beta})_{t}+(1-\gamma)\partial_{x}\zeta+\frac{\epsilon}{2}\kappa_{\gamma,\delta}\partial_{x} {v}_{\beta}^{2}=O(\epsilon^{2}).&&\label{bbs1}
\end{eqnarray}
From (\ref{bbs1}) we may obtain the $(a,b,c,d)$-system (\ref{BB2}) by using the BBM trick, \cite{BBM1972, BonaLS2008}, i.~e. the observation that the first equation in  (\ref{bbs1}) implies that
\begin{eqnarray*}
\zeta_{t}=-\frac{1}{\gamma+\delta}u_{x}+O(\epsilon).
\end{eqnarray*}
From the second equation of (\ref{bbs1}) we also have
\begin{eqnarray*}
\partial_{t}v_{\beta}=-(1-\gamma)\zeta_{x}+O(\epsilon).
\end{eqnarray*}
Thus, introducing the modelling parameters $\alpha_{1}\geq 0, \alpha_{2}\leq 1$, substituting the expression
\begin{eqnarray*}
u_{x}=(1-\alpha_{1})u_{x}-\alpha_{1}(\delta+\gamma)\zeta_{t}+O(\epsilon),
\end{eqnarray*}
in the third-order derivative term in the first equation of (\ref{l213b}) and using
\begin{eqnarray*}
\partial_{t}v_{\beta}=(1-\alpha_{2})\partial_{t}v_{\beta}-\alpha_{2}(1-\gamma)\zeta_{x}+O(\epsilon),
\end{eqnarray*}
in the third-order derivative term of the second equation in (\ref{bbs1}), we obtain, using the definition of the parameters $a, b, c, d$ given in (\ref{BB1b}), that
\begin{eqnarray}
(1-b\epsilon\partial_{xx})\zeta_{t}+\frac{1}{\gamma+\delta}\partial_{x}{v}_{\beta}
+\epsilon \kappa_{\gamma,\delta}\partial_{x} \left(\zeta{ v}_{\beta}\right)+a\epsilon\partial_{xxx}v_{\beta}=O(\epsilon^{2}),&&\nonumber\\
(1-d\epsilon \partial_{xx})\partial_{t}{v}_{\beta}+(1-\gamma)\partial_{x}\zeta+\frac{\epsilon}{2}\kappa_{\gamma,\delta}\partial_{x} {v}_{\beta}^{2}+(1-\gamma)\epsilon c\partial_{xxx}\zeta=O(\epsilon^{2}).&&\label{bbs2}
\end{eqnarray}
Dropping the $O(\epsilon^{2})$ terms, we see that the system (\ref{bbs2}) is the scaled version of (\ref{BB2}).

Another form of the system (\ref{BB2}) is related to the property of the Galilean 
symmetry present in the Euler system for internal waves. The techniques introduced in \cite{DDM2} yields the system

\begin{eqnarray}
(1-b\partial_{xx})\zeta_{t}+\frac{1}{\gamma+\delta}\partial_{x} v_{\beta}+\left(\frac{\delta^{2}-\gamma}{(\delta+\gamma)^{2}}\right)\partial_{x} \left(\zeta v_{\beta}\right)+a\partial_{xxx} v_{\beta}&&\nonumber\\
-b\kappa_{\gamma,\delta}v_{\beta}\partial_{xxx}\zeta=0,\nonumber&&\\
(1-d\partial_{xx})( v_{\beta})_{t}+(1-\gamma)\partial_{x}\zeta+\left(\frac{\delta^{2}-\gamma}{2(\delta+\gamma)^{2}}\right)\partial_{x} v_{\beta}^{2}+(1-\gamma)c\partial_{xxx}\zeta&&\nonumber\\
-d\kappa_{\gamma,\delta}v_{\beta}\partial_{xxx}v_{\beta}=0,&&\label{BB31}
\end{eqnarray}
which can be shown to be invariant under the Galilean transformation
\begin{eqnarray*}
x\mapsto x-\kappa_{\gamma,\delta}Ct,\;t\mapsto t,\; \zeta\mapsto \zeta,\; v_{\beta}\mapsto v_{\beta}+C,
\end{eqnarray*}
for $C\in\mathbb{R}$. Compared to (\ref{BB2}), the new terms in (\ref{BB31}), given by $-b\kappa_{\gamma,\delta}v_{\beta}\partial_{xxx}\zeta$ and $-d\kappa_{\gamma,\delta}v_{\beta}\partial_{xxx}v_{\beta}$, are in scaled variables of $O(\epsilon^{2})$. In this sense, the Euler system for internal waves will lose the consistency with (\ref{BB31}) in the sense specified in Theorem 3 of \cite{BonaLS2008} for the system (\ref{BB2}). Note also that, since the new terms are nonlinear, (\ref{BB31}) and (\ref{BB2}) share the same linear well-posedness theory.

\subsection{Well-posedness theory} 
\label{sec22}
The associated to (\ref{BB2}) linearized system, written in terms of $\zeta$ and $u=(I-\beta\partial_{xx}){v}_{\beta}$ is
\begin{eqnarray}
(1-b\partial_{xx})\zeta_{t}+\frac{1}{\gamma+\delta}\partial_{x} (I-\beta\partial_{xx})^{-1}{u}+a(1-\beta\partial_{xx})^{-1}\partial_{xxx} u&=&0,\nonumber\\
(1-d\partial_{xx})(1-\beta\partial_{xx})^{-1}{u}_{t}+(1-\gamma)\partial_{x}\zeta+(1-\gamma)c\partial_{xxx}\zeta&=&0.\label{BB3}
\end{eqnarray}
The Fourier transform leads to the system
\begin{eqnarray}
\frac{d}{dt}\begin{pmatrix}\widehat{\zeta}(k,t)\\\widehat{u}(k,t)\end{pmatrix}+(ik)A(k)\begin{pmatrix}\widehat{\zeta}(k,t)\\\widehat{u}(k,t)\end{pmatrix}=0,\label{BB4}
\end{eqnarray}
where $k\in\mathbb{R}, t\geq 0$, a circumflex denotes the Fourier transform, and
\begin{eqnarray*}
A(k)=\begin{pmatrix}0&\omega_{1}(k)\\\omega_{2}(k)&0\end{pmatrix},\label{BB4b}
\end{eqnarray*}
where
\begin{eqnarray*}
\omega_{1}(k)=\frac{\left(\frac{1}{\delta+\gamma}-ak^{2}\right)}{(1+bk^{2})(1+\beta k^{2})},\quad \omega_{2}(k)=\frac{(1-\gamma)(1-ck^{2})(1+\beta k^{2})}{1+dk^{2}}.\nonumber
\end{eqnarray*}
The study of (\ref{BB3}) (or (\ref{BB4})) can be done in a similar way to that of \cite{BonaChS2002}. If
\begin{eqnarray*}
\omega_{1}(k)\omega_{2}(k)=\frac{\left(\frac{1}{\delta+\gamma}-ak^{2}\right)(1-\gamma)(1-ck^{2})}{(1+bk^{2})(1+dk^{2})}\geq 0,
\end{eqnarray*}
then the ivp for the linearized system (\ref{BB4}) is well posed if the matrix 
\begin{eqnarray*}
e^{-ikA(k)t}=\begin{pmatrix} \cos(k\sigma(k)t)&-i\frac{\omega_{1}(k)}{\sigma(k)}\sin(k\sigma(k)t)\\-i\frac{\omega_{2}(k)}{\sigma(k)}\sin(k\sigma(k)t)& \cos(k\sigma(k)t)\end{pmatrix},
\end{eqnarray*}
where $\sigma=\sqrt{\omega_{1}\omega_{2}}$, has elements which are bounded for bounded intervals of $k$. This holds when $\omega_{1}/\omega_{2}$ has neither poles nor zeros on the real axis. Since
\begin{eqnarray*}
\frac{\omega_{1}(k)}{\omega_{2}(k)}=\frac{\left(\frac{1}{\delta+\gamma}-ak^{2}\right)(1+dk^{2})}{(1+\beta k^{2})^{2}(1-\gamma)(1-ck^{2})(1+bk^{2})},\label{BB5}
\end{eqnarray*}
and $\beta\geq 0, 0<\gamma<1$ and $\delta>0$, this is equivalent to ask that the rational function
\begin{eqnarray*}
\frac{\left(\frac{1}{\delta+\gamma}-ak^{2}\right)(1+dk^{2})}{(1-ck^{2})(1+bk^{2})},
\end{eqnarray*}
have no poles nor zeros for $k\in\mathbb{R}$. This leads to the
three \lq admissible\rq\ cases,
\begin{itemize}
\item[(C1)] $a,c\leq 0, b,d\geq 0$.
\item[(C2)] $b,d\geq 0, c=a(\delta+\gamma)>0$.
\item[(C3)] $b=d<0, c=a(\delta+\gamma)>0$.
\end{itemize}
We note that
\begin{eqnarray*}
\alpha_{1}=\frac{3\delta(\delta+\gamma)}{1+\gamma\delta}b,\quad \beta=c+d,\quad \alpha_{2}=\frac{c}{c+d},
\end{eqnarray*}
and observe that (C3) does not satisfy the hypotheses $\alpha_{2}\leq 1, \alpha_{1}\geq 0$, while (C2) requires $0<\alpha_{1}<1, 0<\alpha_{2}\leq 1, \beta>0$. In the present paper only the case (C1) will be considered.
%
%
%. As for (C2), condition $c=a(\delta+\gamma)$ implies $\alpha_{1}\geq 1$; but then , if $\beta\geq 0$, we have $$a(\delta+\gamma)=\left(\frac{1-\alpha_{1}}{\alpha_{1}}\right)b-\beta\leq 0$$ and the second part of (C2) cannot be satisfied. In summary, under the hypotheses on $\alpha_{1},\alpha_{2}$ and $\beta$ given above, only (C1) is possible. (A different question is if it would be worth studying the cases (C2), (C3) by relaxing the hypotheses in (\ref{BB1b}), (\ref{BB1c}).)

If we recall that the order of $\sigma(k)$ is the integer $l$ such that
$$\sigma(k)\approx |k|^{l}, \quad |k|\rightarrow\infty,$$ then Theorem 3.2 of \cite{BonaChS2002} can be applied to prove that if $m_{1}=\max\{0,-l\}, m_{2}=\max\{0,l\}$ then the ivp for the linear system (\ref{BB3}) is well posed  for $(\zeta, u)$ in $H^{s+m_{1}}\times H^{s+m_{2}}$ for any $s\geq 0$. 
For example, 
%Contrary to the systems for surface waves (Remark 3.4 in \cite{BonaChS2002}), here having order $-2$ is possible due to the term $(1+\beta k^{2})^{-2}$. Indeed, the discussion for the case $\beta=0$ reduces to that of the Boussinesq systems for surface waves. 
when $\beta>0$ and $s=0$, in the case (C1) we have:
\begin{itemize}
\item $a=c=0, b,d>0$. In this case (\ref{BB3}) is well posed in $H^{2}\times L^{2}$.
\item $b,d>0, a=0, c<0$; then (\ref{BB3}) is well posed in $H^{1}\times L^{2}$. 
\item $b=d=0, a, c<0$; then (\ref{BB3}) is well posed in $L^{2}\times H^{2}$.
\end{itemize}
%Note that when $\beta>0$ and under (C1), the parameter $l$ can go from $l=-4$ to $l=0$.
%\begin{remark}
%Similar studies to those of \cite{BonaChS2004} for surface waves may give results on well-posedness in the nonlinear case.
%\end{remark}
As already mentioned,  the case $\beta=0$ leads to conditions for the linear well-posedness for Boussinesq systems for surface waves, \cite{BonaChS2002}. 
\begin{remark}
\label{remark22}
It is to be noted that not all cases described by the set $a, c\leq 0, b, d\geq 0$ are relevant for the internal wave problem, due to the restrictions on the physical parameters $\gamma,\delta$ and the modelling parameters , $\alpha_{1}, \alpha_{2}, \beta$ that determine $a, b, c$, and $d$, cf. (\ref{BB1b})-(\ref{BB1c}). Specifically, the case $a=0, c<0, b>0, d=0$ should be excluded since $d=0$ implies either $\beta=0$ or $\alpha_{2}=1$ and in either case $c<0$ cannot hold. Arguing similarly we may see that all cases with $b=d=0, a,c\leq 0$ are not valid for internal waves. In addition, note that several other cases hold, under easily checked conditions between the parameters. (For example, if two of the four parameters are zero, then (\ref{BB1c}) implies an affine relation between the other two.)
\end{remark}
As far as local in time well-posedness of the full nonlinear system is concerned, the analysis made in \cite{BonaChS2004} for the case of surface waves can also be used here. (This was confirmed in \cite{A}.) Let us consider the systems corresponding to those cases among the set of parameters $b,d\geq 0, a,c\leq 0$ that are relevant for internal waves. In each case of the following list, we mention the corresponding theorem of \cite{BonaChS2004} that applies. All the results concern existence, uniqueness, and regularity locally in $t$ of the corresponding solution in the  appropriate pairs of Sobolev spaces shown.
\begin{itemize}
\item Case (i): $b,d>0, a=c=0$ (systems of \lq BBM-BBM\rq\ type; Theorem 2.1, $H^{s}\times H^{s}, s\geq 0$).
\item Case (ii): $b,d>0, a,c<0$ (\lq generic\rq\ case; Theorem 2.5, $H^{s}\times H^{s}, s\geq 0$).
\item Case (iii): $b=0, d>0, a,c<0$ (Theorem 3.5, $H^{s}\times H^{s+1}, s\geq 1$). 
\item Case (iv): $b=0, d>0, a=c=0$ (\lq classical\rq\ Boussinesq system; Theorem 3.3, $H^{s}\times H^{s+1}, s\geq 1$, conditional global existence; see also \cite{Duran2019}), or
$b>0, d=0, a=c=0$ (analogous theory).
\item Case (v): $b,d>0, a=0, c<0$ (Bona-Smith system; Theorem 2.6, $H^{s+1}\times H^{s}, s\geq 0$, conditional global existence), or $b,d>0, a<0, c=0$ (analogous theory).
\item Case (vi): $b=0, d>0, a<0, c=0$ (Theorem 3.1, $H^{s}\times H^{s+2}, s\geq 1$).
\item Case (vii): $b>0, d=0, a<0, c=0$ (Theorem 3.9, $H^{s}\times H^{s}, s\geq 2$) or $b=0, d>0, a=0, c<0$ (analogous theory).
\end{itemize}
Note that slightly sharper regularity results were achieved in \cite{A} for some of these cases.
\subsection{Conserved quantities}
\label{sec23}
It is not hard to show that the linear functionals
\begin{eqnarray*}
M_{1}(\zeta)=\int_{-\infty}^{\infty} \zeta dx,\quad
M_{2}(v_{\beta})=\int_{-\infty}^{\infty}v_{\beta} dx=\int_{-\infty}^{\infty}(1-\beta\partial_{xx})^{-1}{u} dx.
\end{eqnarray*}
are invariant quantities during the evolution of solutions of (\ref{BB2}).
When $b=d$ (cf. the surface wave case, \cite{BonaChS2004}) we have the conserved functionals 
\begin{eqnarray}
I(\zeta,u)&=&\int_{-\infty}^{\infty}(\zeta v_{\beta}+b\partial_{x}\zeta \partial_{x}v_{\beta})dx,\label{mom}\\
H(\zeta,u)&=&\int_{-\infty}^{\infty}\left(\frac{(1-\gamma)}{2}\zeta^{2}+\frac{1}{2(\delta+\gamma)}v_{\beta}^{2}-a(\partial_{x}v_{\beta})^{2}-(1-\gamma)c(\partial_{x}\zeta)^{2}\right.\nonumber\\
&&\left. +\frac{\delta^{2}-\gamma}{2(\delta+\gamma)^{2}}\zeta v_{\beta}^{2}\right)dx,\label{energy}
\end{eqnarray}
(where ${v}_{\beta}=(1-\beta\partial_{xx})^{-1}{u}$) with the Hamiltonian structure for (\ref{BB2}) given by 
\begin{eqnarray}
&&\frac{d}{dt}\begin{pmatrix}\zeta\\u\end{pmatrix}=J\frac{\delta H}{\delta(\zeta,u)},\label{BB5b}\\
&&J=-\partial_{x}(1-\beta\partial_{xx})^{-1}\begin{pmatrix}(1-b\partial_{xx})&0\\0&(1-d\partial_{xx})\end{pmatrix}\begin{pmatrix} 0&-1\\1&0\end{pmatrix},\nonumber
\end{eqnarray}
where $\delta H / \delta (\zeta,u)$ stands for the variational derivative with respect to the variables $(\zeta,u)$. All these conservation laws as well as the Hamiltonian structure hold in suitable function spaces.
%%%%%%%%%%%%%%%%%%%%%%%%%%%%%%%
%%%%%%%%%%%%%%%%%%%%%%%%%%%%%%%
\section{Error estimates for a spectral semidiscretization of the periodic initial-value problem}
\label{sec3}
We consider the periodic initial-value problem (ivp) for the one-dimensional system (\ref{BB2}) on the spatial interval $[0,1]$. In order to simplify notation we denote $v_{\beta}=u, \lambda=\kappa_{\gamma,\delta}=\frac{\delta^{2}-\gamma}{(\delta+\gamma)^{2}}, \kappa_{1}=\frac{1}{\delta+\gamma}, \kappa_{2}=1-\gamma$. (Thus $\kappa_{1}$ and $\kappa_{2}$ are positive constants.) We also let $c'$ denote  the constant $(1-\gamma)c$ multiplying the term $\partial_{xxx}\zeta$ in the second pde of (\ref{BB2}); this does not change the sign of the original $c$. Thus, given $u_{0}(x), \zeta_{0}(x)$, $1$-periodic real functions, we seek for $0\leq t\leq T, \zeta(x,t), u(x,t)$, $1$-periodic in $x$, satisfying, for $0\leq x\leq 1, 0\leq t\leq T$
\begin{eqnarray}
\zeta_{t}+\kappa_{1}u_{x}+\lambda \left(\zeta u\right)_{x}+au_{xxx} -b\zeta_{xxt}&=&0,\nonumber\\
u_{t}+\kappa_{2}\zeta_{x}+\lambda uu_{x}+c'\zeta_{xxx} -du_{xxt}&=&0,
\label{dds31}
\end{eqnarray}
with
\begin{eqnarray}
\zeta(x,0)=\zeta_{0}(x),\; u(x,0)=u_{0}(x),\;0\leq x\leq 1.\label{dds32}
\end{eqnarray}
In the sequel we assume that the ivp (\ref{dds31})-(\ref{dds32}) has a unique solution which is smooth enough for the purposes of error estimation.

We will discretize the ivp (\ref{dds31})-(\ref{dds32}) in space by a spectral Fourier Galerkin method. To this end we let $N\geq 1$ be an integer and define the finite dimensional space $S_{N}$ as
\begin{eqnarray*}
S_{N}={\rm span} \{e^{ikx},\; k\in\mathbb{Z}, -N\leq k\leq N\}.
\end{eqnarray*}
Let $P=P_{N}$ denote the $L^{2}$-projection operator onto $S_{N}$ given explicitly for $v\in L^{2}$ by
\begin{eqnarray*}
Pv=\sum_{|k|\leq N}\widehat{v}(k)e^{ikx},
\end{eqnarray*}
where $\widehat{v}(k)$ is the $k$th Fourier coefficient of $v$. It is obvious that $P$ commutes with $\partial_{x}$. Moreover, given integers $0\leq j\leq \mu$, there exists a constant $C$ independent of $N$, such that for any $v\in H^{\mu}, \mu\geq 1$,
\begin{eqnarray}
||v-Pv||_{j}&\leq &CN^{j-\mu}||v||_{\mu},\label{dds33}\\
|v-Pv|_{\infty}&\leq &CN^{1/2-\mu}||v||_{\mu}.\label{dds34}
\end{eqnarray}
In addition, the following inverse inequalities hold in $S_{N}$: Given $0\leq j\leq \mu$, there exists a constant $C$ independent of $N$, such that for any $\psi\in S_{N}$
\begin{eqnarray}
||\psi||_{\mu}\leq CN^{\mu-j}||\psi||_{j},\; 
||\psi||_{\mu,\infty}\leq CN^{1/2+\mu-j}||\psi||_{j}.\label{dds35}
\end{eqnarray}
In what follows, as is customary, we will denote constants independent of $N$ by $C$.

The spectral Galerkin semidiscretization of the ivp (\ref{dds31})-(\ref{dds32}) is defined as follows. Let $T>0$. We seek real-valued $\zeta_{N}, u_{N}:[0,T]\rightarrow S_{N}$ satisfying for $0\leq t\leq T$ and $\forall \varphi,\chi\in S_{N}$
\begin{eqnarray}
(\zeta_{Nt},\varphi)+\kappa_{1}(u_{Nx},\varphi)+\lambda ((\zeta_{N}u_{N})_{x},\varphi)+a(u_{Nxxx},\varphi)-b(\zeta_{Nxxt},\varphi)=0,&&\label{dds36}\\
(u_{Nt},\chi)+\kappa_{2}(u_{Nx},\chi)+\lambda (u_{N}u_{Nx},\chi)+c'(\zeta_{Nxxx},\chi)-d(u_{Nxxt},\chi)=0,&&\label{dds37}
\end{eqnarray}
and for $t=0$
\begin{eqnarray}
\zeta_{N}(0)=P\zeta_{0},\; u_{N}(0)=Pu_{0}.\label{dds38}
\end{eqnarray}
The ode ivp (\ref{dds36})-(\ref{dds38}) has a unique solution locally in time and has the Fourier implementation
\begin{eqnarray}
(1+bk^{2})\widehat{\zeta}_{N,t}+ik \kappa_{1}\widehat{u}_{N}+ik\lambda\widehat{\zeta_{N}u_{N}}-ik^{3}a\widehat{u}_{N}&=&0,\nonumber\\
(1+dk^{2})\widehat{u}_{N,t}+ik \kappa_{2}\widehat{\zeta}_{N}+\frac{ik}{2}\lambda\widehat{u_{N}^{2}}-ik^{3}c'\widehat{\zeta}_{N}&=&0,\label{38b}
\end{eqnarray}
where $\widehat{\zeta}_{N}=\widehat{\zeta}_{N}(k,t), \widehat{u}_{N}=\widehat{u}_{N}(k,t), -N\leq k\leq N, t\geq 0$ are the Fourier coefficients of $\zeta_{N}, u_{N}$ with initial values $\widehat{\zeta}_{N}(k,0)=\widehat{\zeta}_{0}(k), \widehat{u}_{N}(k,0)=\widehat{u}_{0}(k)$.

In order to estimate the error of the semidiscretization let $\theta=\zeta_{N}-P\zeta, \rho=P\zeta-\zeta$, so that $\zeta_{N}-\zeta=\theta+\rho$, and $\xi=u_{N}-Pu, \sigma=Pu-u$, so that $u_{N}-u=\xi+\sigma$. Then, substracting the first pde in (\ref{dds31}) from (\ref{dds36}) we obtain, while the solution of (\ref{dds36})-(\ref{dds38}) exists and for all $\varphi\in S_{N}$
\begin{eqnarray*}
(\theta_{t},\varphi)+\kappa_{1}(\xi_{x},\varphi)+a(\xi_{xxx},\varphi)-b(\theta_{xxt},\varphi)=-\lambda ((\zeta_{N}u_{N})_{x},\varphi)+\lambda((\zeta u)_{x},\varphi).
\end{eqnarray*}
Therefore for $\varphi\in S_{N}$
\begin{eqnarray}
(\theta_{t},\varphi)+a(\xi_{xxx},\varphi)-b(\theta_{xxt},\varphi)=-\kappa_{1}(\xi_{x},\varphi)-(A_{x},\varphi),\label{dds39}
\end{eqnarray}
where
\begin{eqnarray*}
A=\lambda (\zeta_{N}u_{N}-\zeta u)=\lambda \left((\zeta+\theta+\rho)(u+\xi+\sigma)-\zeta u)\right),
\end{eqnarray*}
i.~e.
\begin{eqnarray}
A=\lambda\left(u\rho+\zeta \sigma+u\theta+\zeta\xi+\sigma\theta+\rho\xi+\rho\sigma+\theta\xi\right).\label{dds310}
\end{eqnarray}
Substracting the second pde in (\ref{dds31}) from (\ref{dds37}) we get for $\chi\in S_{N}$
\begin{eqnarray*}
(\xi_{t},\chi)+\kappa_{2}(\theta_{x},\chi)+c'(\theta_{xxx},\chi)-d(\xi_{xxt},\chi)=-\lambda (u_{N}u_{Nx},\chi)+\lambda(u u_{x},\chi).
\end{eqnarray*}
Therefore, for $\chi\in S_{N}$
\begin{eqnarray}
(\xi_{t},\chi)+c'(\theta_{xxx},\chi)-d(\xi_{xxt},\chi)=-\kappa_{2}(\theta_{x},\chi)-(B_{x},\chi),\label{dds311}
\end{eqnarray}
where
\begin{eqnarray}
B=\lambda\left(u\sigma+u\xi+\sigma\xi+\frac{1}{2}(\sigma^{2}+\xi^{2})\right).\label{dds312}
\end{eqnarray}
Using the error equations (\ref{dds39})-(\ref{dds312}) we proceed now to derive error estimates for the semidiscrete schemes (\ref{dds36})-(\ref{dds38}).

For the purpose of the error analysis, we consider the same seven cases of nonlinearly well posed systems identified in section \ref{sec22}. For simplicity, in the  cases (iv), (v) and (vii) the following systems will be analyzed; the others are similar:
\begin{itemize}
\item Case (iv): \lq Classical Boussinesq\rq\ case: $b=0, d>0, a=c=0$. 
\item Case (v): \lq Bona-Smith\rq\ systems: 
$b, d>0, a=0, c<0$. 
\item Case (vii): $b>0, d=0, a<0, c=0$.
\end{itemize}
As mentioned in the Introduction, the systems of cases (i) (BBM-BBM) and (v) (\lq Bona-Smith\rq) have been discretized by a collocation spectral method in space an analyzed by Xavier et al., \cite{X}, in the case of surface waves. The error estimates obtained in \cite{X} are similar to those that we obtain below for the spectral Galerkin method in these cases but we include the proofs as our techniques are somewhat different.

In all propositions below we assume for simplicity that $\zeta, u\in C^{1}(0,T,H^{\mu}), \mu\geq 1$, and specify in each case the least integer $\mu$ needed for the validity of the error estimates. In all cases it is clear that $\zeta_{N}, u_{N}$ satisfy (\ref{dds36})-(\ref{dds38}) at least locally in $t$; part of the proof is checking that they exist uniquely and satisfy (\ref{dds36})-(\ref{dds37}) up to $t=T$.
\begin{proposition}
\label{propo31}
Let $a,b,c,d$ as in case (i). If $\mu\geq 1$ then
\begin{eqnarray}
\max_{0\leq t\leq T}\left(||\zeta_{N}-\zeta||+||u_{N}-u||\right)\leq C N^{-\mu}.\label{dds313}
\end{eqnarray}
\end{proposition}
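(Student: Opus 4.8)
The plan is to derive an $L^2$-type energy estimate for the error quantities $\theta = \zeta_N - P\zeta$ and $\xi = u_N - Pu$ using the error equations (\ref{dds39})--(\ref{dds312}), and then combine it with the projection estimate (\ref{dds33}) via the triangle inequality. In case (i) we have $a = c' = 0$ and $b, d > 0$, so the error equations simplify to
\begin{eqnarray*}
(\theta_t,\varphi) - b(\theta_{xxt},\varphi) &=& -\kappa_1(\xi_x,\varphi) - (A_x,\varphi),\\
(\xi_t,\chi) - d(\xi_{xxt},\chi) &=& -\kappa_2(\theta_x,\chi) - (B_x,\chi),
\end{eqnarray*}
with $A$, $B$ as in (\ref{dds310}), (\ref{dds312}). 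The natural move is to take $\varphi = \theta$ in the first and $\chi = \xi$ in the second. The left-hand sides then produce $\tfrac12 \tfrac{d}{dt}(\|\theta\|^2 + b\|\theta_x\|^2)$ and $\tfrac12 \tfrac{d}{dt}(\|\xi\|^2 + d\|\xi_x\|^2)$, respectively; this is where $b, d > 0$ is essential, since it makes the quantity $E(t) := \|\theta\|^2 + b\|\theta_x\|^2 + \|\xi\|^2 + d\|\xi_x\|^2$ a genuine (equivalent to $H^1 \times H^1$) energy. Adding the two identities, the linear cross terms $-\kappa_1(\xi_x,\theta)$ and $-\kappa_2(\theta_x,\xi)$ do not cancel but are each bounded by $C(\|\theta\|^2 + \|\xi\|^2 + \|\theta_x\|^2 + \|\xi_x\|^2) \le CE(t)$ after integration by parts (legitimate in $S_N$), so they are harmless.

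The crux is estimating the nonlinear terms $(A_x,\theta)$ and $(B_x,\xi)$. First I would integrate by parts to move the $x$-derivative off $A$ and $B$, writing $(A_x,\theta) = -(A,\theta_x)$ and similarly for $B$. Then I would expand $A$ and $B$ according to (\ref{dds310}) and (\ref{dds312}) and bound each resulting term. The terms linear in $\rho$ or $\sigma$ (the projection errors) contribute, via Cauchy--Schwarz and $|\zeta|_\infty, |u|_\infty \le C$, quantities like $\|\rho\|\,\|\theta_x\| \le CN^{-\mu}\sqrt{E(t)}$, using (\ref{dds33}); terms like $\|u\theta\|\,\|\theta_x\|$ are bounded by $C E(t)$. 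The genuinely nonlinear (quadratic in the errors) terms $\sigma\theta$, $\rho\xi$, $\theta\xi$, $\sigma\xi$, $\xi^2$, etc., require an $L^\infty$ bound on one factor. Here I would use the a priori boundedness of $\zeta_N, u_N$ in $L^\infty$ that one obtains by a continuation/bootstrap argument: assume on a maximal subinterval that $\|\theta\|_1 + \|\xi\|_1$ stays $O(N^{-\mu}) = o(1)$, so that by (\ref{dds33})--(\ref{dds34}) and a Sobolev embedding $\|\zeta_N\|_{1,\infty}, \|u_N\|_{1,\infty}$ (hence $|\theta|_\infty, |\xi|_\infty$, $|\sigma|_\infty$) remain bounded; this controls all quadratic error terms by $CE(t)$. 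Altogether one gets
$$
\frac{d}{dt}E(t) \le C\,E(t) + C N^{-\mu}\sqrt{E(t)} \le C\,E(t) + C N^{-2\mu},
$$
and since $E(0) = 0$ (because $\zeta_N(0) = P\zeta_0$, $u_N(0) = Pu_0$), Gronwall's inequality yields $E(t) \le C N^{-2\mu}$ on $[0,T]$, i.e. $\|\theta\|_1 + \|\xi\|_1 \le CN^{-\mu}$.

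The main obstacle is making the bootstrap rigorous: one must show the maximal subinterval on which the smallness assumption holds is in fact all of $[0,T]$. The standard device is to set $t^* = \sup\{\tau \le T : \|\theta(t)\|_1 + \|\xi(t)\|_1 \le N^{-\mu+1/2} \text{ for } t \le \tau\}$ (a threshold that still forces $\zeta_N, u_N$ to be $L^\infty$-close to $\zeta, u$, hence uniformly bounded, via (\ref{dds34}) and the inverse inequality (\ref{dds35}) if needed), run the Gronwall estimate above on $[0,t^*]$ to conclude $\|\theta\|_1 + \|\xi\|_1 \le CN^{-\mu}$ there, and observe that for $N$ large this is strictly less than the threshold $N^{-\mu+1/2}$, so by continuity $t^* = T$. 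Finally the triangle inequality with (\ref{dds33}) gives $\max_{0\le t\le T}(\|\zeta_N - \zeta\| + \|u_N - u\|) \le \max(\|\theta\| + \|\rho\| + \|\xi\| + \|\sigma\|) \le CN^{-\mu}$, which is (\ref{dds313}); note $\mu \ge 1$ is exactly what is needed for the projection and embedding estimates to be available.
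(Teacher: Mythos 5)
Your proposal is correct and follows essentially the same route as the paper's proof: testing with $\varphi=\theta$, $\chi=\xi$ to get the $H^1\times H^1$ energy identity, bounding the terms of $A$ and $B$ with the projection estimates (\ref{dds33})--(\ref{dds34}), closing the nonlinear terms via a continuation argument on a maximal interval, and finishing with Gronwall and the triangle inequality. The only cosmetic difference is your bootstrap threshold ($\|\theta\|_1+\|\xi\|_1\leq N^{-\mu+1/2}$ instead of the paper's $|\theta|_\infty\leq 1$), which changes nothing substantive.
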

\begin{proof}
While the semidiscrete solution $\zeta_{N}, u_{N}$ exists, putting $\varphi=\theta$ in (\ref{dds39}), $\chi=\xi$ in (\ref{dds311}), using integration by parts and adding the resulting equations give
\begin{eqnarray}
\frac{1}{2}\frac{d}{dt}\left(||\theta||^{2}+||\xi||^{2}+b||\theta_{x}||^{2}+d||\xi_{x}||^{2}\right)&=&\kappa_{1}(\xi,\theta_{x})+\kappa_{2}(\theta,\xi_{x})\nonumber\\
&&+(A,\theta_{x})+(B,\xi_{x}).\label{dds314}
\end{eqnarray}
We estimate as follows the various terms in the right-hand side of (\ref{dds314}). First
\begin{eqnarray}
|(\xi,\theta_{x})|\leq ||\xi||||\theta_{x}||\leq \frac{1}{2}\left(||\xi||^{2}+||\theta_{x}||^{2}\right).\label{dds315a}
\end{eqnarray}
For the various terms of $(A,\theta_{x})$ we first see, using (\ref{dds33}), 
\begin{eqnarray}
|(u\rho,\theta_{x})|\leq |u|_{\infty}||\rho||||\theta_{x}||\leq C\left(N^{-2\mu}+||\theta_{x}||^{2}\right).\label{dds315b}
\end{eqnarray}
Similarly,
\begin{eqnarray}
|(\zeta\sigma,\theta_{x})|\leq |\zeta|_{\infty}||\sigma||||\theta_{x}||\leq C\left(N^{-2\mu}+||\theta_{x}||^{2}\right).\label{dds315c}
\end{eqnarray}
Now
\begin{eqnarray}
|(u\theta,\theta_{x})|&\leq&|u|_{\infty}||\theta|| ||\theta_{x}||\leq C||\theta||_{1}^{2},\label{dds15d}\\
|(\zeta\xi,\theta_{x})|&\leq &|\zeta|_{\infty}||\xi||||\theta_{x}||\leq C(||\xi||^{2}+||\theta_{x}||^{2}).\label{dds15e}
\end{eqnarray}
Using (\ref{dds34}) we see that $|\sigma|_{\infty}\leq C$ and therefore
\begin{eqnarray}
|(\sigma\theta,\theta_{x})|\leq |\sigma|_{\infty}||\theta||||\theta_{x}||\leq C||\theta||_{1}^{2}.\label{dds315f}
\end{eqnarray}
Similarly,
\begin{eqnarray}
|(\rho\xi,\theta_{x})|&\leq &|\rho|_{\infty}||\xi||||\theta_{x}||\leq C(||\xi||^{2}+||\theta_{x}||^{2}).\label{dds315g}\\
|(\rho\sigma,\theta_{x})|&\leq &|\rho|_{\infty}||\sigma||||\theta_{x}||\leq C(N^{-2\mu}+||\theta_{x}||^{2}).\label{dds315h}
\end{eqnarray}
Since $\theta(0)=0$, using continuity, let $t_{N}, 0<t_{N}\leq T$, be the maximal time for which the solution of (\ref{dds36})-(\ref{dds38}) exists and satisfies
\begin{eqnarray}
|\theta|_{\infty}\leq 1,\; 0\leq t\leq t_{N}.\label{dds316}
\end{eqnarray}
Then for $0\leq t\leq t_{N}$
\begin{eqnarray}
|(\theta\xi,\theta_{x})|\leq  |\theta|_{\infty}||\xi||||\theta_{x}||\leq ||\xi||||\theta_{x}||\leq  \frac{1}{2}(||\xi||^{2}+||\theta_{x}||^{2}).\label{dds315i}
\end{eqnarray}
From (\ref{dds315b})-(\ref{dds315i}) we have therefore for $0\leq t\leq t_{N}$ that
\begin{eqnarray}
|(A,\theta_{x})|\leq C\left(N^{-2\mu}+||\xi||^{2}+||\theta||_{1}^{2}\right).\label{dds317}
\end{eqnarray}
For the rest of the terms on the right-hand side of (\ref{dds314}) we first note that
\begin{eqnarray}
|(\theta,\xi_{x})|\leq ||\theta||||\xi_{x}||\leq \frac{1}{2}\left(||\theta||^{2}+||\xi_{x}||^{2}\right).\label{dds318a}
\end{eqnarray}
For the $(B,\xi_{x})$ terms, in view of (\ref{dds312}) we have the following estimates. Note that by (\ref{dds33})
\begin{eqnarray}
|(u\sigma,\theta_{x})|\leq |u|_{\infty}||\sigma||||\theta_{x}||\leq C\left(N^{-2\mu}+||\theta_{x}||^{2}\right).\label{dds318b}
\end{eqnarray}
In addition,
\begin{eqnarray}
|(u\xi,\xi_{x})|\leq |u|_{\infty}||\xi||||\xi_{x}||\leq C||\xi||_{1}^{2}.\label{dds18c}
\end{eqnarray}
By (\ref{dds34})
\begin{eqnarray}
|(\sigma\xi,\xi_{x})|\leq |\sigma|_{\infty}||\xi||||\xi_{x}||\leq C||\xi||_{1}^{2}.\label{dds318d}
\end{eqnarray}
By (\ref{dds33}), (\ref{dds34})
\begin{eqnarray}
|\frac{1}{2}(\sigma^{2},\xi_{x})|\leq \frac{1}{2}|\sigma|_{\infty}||\sigma||||\xi_{x}||\leq C(N^{-2\mu}+||\xi_{x}||^{2}).\label{dds318e}
\end{eqnarray}
And finally, by periodicity,
\begin{eqnarray}
\frac{1}{2}(\xi^{2},\xi_{x})=0.\label{dds318f}
\end{eqnarray}
We conclude from (\ref{dds318b})-(\ref{dds318f}) that, as long as the semidiscrete solution exists,
\begin{eqnarray}
|(B,\xi_{x})|\leq C\left(N^{-2\mu}+||\theta||^{2}+||\xi||_{1}^{2}\right).\label{dds319}
\end{eqnarray}
Hence, since $b,d>0$ we get from (\ref{dds314}), (\ref{dds315a}), (\ref{dds317}), (\ref{dds318a}), (\ref{dds319}) that
\begin{eqnarray*}
\frac{d}{dt}\left(||\theta||_{1}^{2}+||\xi||_{1}^{2}\right)\leq C\left(N^{-2\mu}+||\theta||_{1}^{2}+||\xi||_{1}^{2}\right),\; 0\leq t\leq t_{N}.\label{dds320}
\end{eqnarray*}
Hence, by Gronwall's lemma and (\ref{dds38}) we conclude that for $0\leq t\leq t_{N}$ and for some constant $C=C(T)$ there holds
\begin{eqnarray}
||\theta||_{1}+||\xi||_{1}\leq CN^{-\mu}.\label{dds321}
\end{eqnarray}
Therefore, since $|\theta|_{\infty}\leq C||\theta||_{1}$ by Sobolev's theorem, we conclude by (\ref{dds321}) and our hypothesis on $\mu$ that (for $N$ sufficiently large) $t_{N}$ was not maximal in (\ref{dds316}). Arguing in the customary way we see that $t_{N}$ may be taken equal to $T$, and (\ref{dds321}) holds for $0\leq t\leq T$. By (\ref{dds33}) we conclude that (\ref{dds313}) holds, so that $\zeta_{N},u_{N}$ satisfy optimal-order error estimates in $L^{2}$, where by \lq optimal-order\rq\ in the context of spectral methods we mean that the semidiscrete approximations satisfy estimates like (\ref{dds33}) if $\zeta,u\in H^{\mu}$.
\end{proof}
\begin{proposition}
\label{propo32}
Let $a,b,c,d$ as in case (ii). If $\mu\geq 1$ then
\begin{eqnarray*}
\max_{0\leq t\leq T}\left(||\zeta_{N}-\zeta||+||u_{N}-u||\right)\leq C N^{-\mu}.\label{dds322}
\end{eqnarray*}
\end{proposition}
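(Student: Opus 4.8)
The plan is to follow the proof of Proposition~\ref{propo31}, the only new feature being that, since now $a,c<0$, testing (\ref{dds39})--(\ref{dds311}) with $\theta$ and $\xi$ leaves the third-order terms $a(\xi_{xxx},\theta)+c'(\theta_{xxx},\xi)$, which are not sign-definite and cannot be absorbed into $||\theta||^{2}+||\xi||^{2}+b||\theta_{x}||^{2}+d||\xi_{x}||^{2}$. To remove them I would first recast the error equations in evolution form. Since in (\ref{dds39}) every term except $(A_{x},\varphi)$ already belongs to $S_{N}$ when paired with $\varphi\in S_{N}$, and $(A_{x},\varphi)=(PA_{x},\varphi)$, one obtains the ODE system in $S_{N}$
\begin{eqnarray*}
\theta_{t}&=&-\partial_{x}L_{1}\xi-(1-b\partial_{xx})^{-1}PA_{x},\\
\xi_{t}&=&-\partial_{x}L_{2}\theta-(1-d\partial_{xx})^{-1}PB_{x},
\end{eqnarray*}
with $L_{1}=(1-b\partial_{xx})^{-1}(\kappa_{1}+a\partial_{xx})$ and $L_{2}=(1-d\partial_{xx})^{-1}(\kappa_{2}+c'\partial_{xx})$. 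Because $a,c'<0$ and $b,d>0$, the Fourier symbols $(\kappa_{1}+|a|k^{2})/(1+bk^{2})$ and $(\kappa_{2}+|c'|k^{2})/(1+dk^{2})$ are bounded above and below by positive constants independent of $k$ (and of $N$); hence $L_{1},L_{2}$ are bounded, self-adjoint and positive on $L^{2}$, they commute with $\partial_{x}$ and with one another, and have bounded inverses.

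The energy I would use is $E_{N}(t):=(L_{2}\theta(t),\theta(t))+(L_{1}\xi(t),\xi(t))$, which by the symbol bounds is equivalent to $||\theta||^{2}+||\xi||^{2}$, uniformly in $N$. Differentiating and using self-adjointness,
\begin{eqnarray*}
\frac{1}{2}\frac{d}{dt}E_{N}&=&-(L_{2}\partial_{x}L_{1}\xi,\theta)-(L_{1}\partial_{x}L_{2}\theta,\xi)\\
&&-(L_{2}(1-b\partial_{xx})^{-1}PA_{x},\theta)-(L_{1}(1-d\partial_{xx})^{-1}PB_{x},\xi).
\end{eqnarray*}
The key point is that the two linear terms cancel: since $L_{1},L_{2}$ are self-adjoint and commute and $\partial_{x}$ is skew-adjoint on periodic functions,
\begin{eqnarray*}
(L_{1}\partial_{x}L_{2}\theta,\xi)=(\partial_{x}L_{2}\theta,L_{1}\xi)=-(L_{2}\theta,\partial_{x}L_{1}\xi)=-(L_{2}\partial_{x}L_{1}\xi,\theta).
\end{eqnarray*}
(This is precisely the symmetriser underlying the well-posedness result, Theorem 2.5 of \cite{BonaChS2004}, for the generic class.) Thus only the nonlinear contributions remain on the right-hand side.

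To estimate those, note that the composed operators $L_{2}(1-b\partial_{xx})^{-1}\partial_{x}$ and $L_{1}(1-d\partial_{xx})^{-1}\partial_{x}$ have symbols of order $-1$ and hence are bounded on $L^{2}$ uniformly in $N$; since $PA_{x}=\partial_{x}(PA)$ and $||PA||\leq||A||$ (and likewise for $B$), this gives $\frac{d}{dt}E_{N}\leq C(||A||\,||\theta||+||B||\,||\xi||)$. I would then bound $||A||$ and $||B||$ term by term from (\ref{dds310}) and (\ref{dds312}) exactly as in the proof of Proposition~\ref{propo31}, using (\ref{dds33})--(\ref{dds34}): the terms carrying a factor $\rho$ or $\sigma$ but no factor $\theta$ or $\xi$ contribute $O(N^{-\mu})$, the terms $u\theta$, $\zeta\xi$, $u\xi$, $\sigma\theta$, $\rho\xi$, $\sigma\xi$ contribute $O(||\theta||+||\xi||)$, and the purely quadratic terms $\theta\xi$ and $\xi^{2}$ are controlled under a bootstrap hypothesis $|\theta|_{\infty}\leq1$, $|\xi|_{\infty}\leq1$ valid up to a maximal time $t_{N}\in(0,T]$. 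Altogether $||A||+||B||\leq C(N^{-\mu}+||\theta||+||\xi||)$, hence $\frac{d}{dt}E_{N}\leq C(N^{-2\mu}+E_{N})$ on $[0,t_{N}]$. Since $\theta(0)=\xi(0)=0$ by (\ref{dds38}), we have $E_{N}(0)=0$, so Gronwall's lemma gives $E_{N}(t)\leq C(T)N^{-2\mu}$, and therefore $||\theta||+||\xi||\leq CN^{-\mu}$ on $[0,t_{N}]$. The inverse inequality (\ref{dds35}) then yields $|\theta|_{\infty}+|\xi|_{\infty}\leq CN^{1/2-\mu}\leq1$ for $N$ large (this is where $\mu\geq1$ enters), so $t_{N}$ is not maximal and may be taken equal to $T$; the same continuation argument shows the semidiscrete solution exists and satisfies (\ref{dds36})--(\ref{dds37}) on all of $[0,T]$. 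Adding $||\rho||+||\sigma||\leq CN^{-\mu}$ from (\ref{dds33}) yields the claimed estimate.

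I expect the only genuine obstacle to be identifying the correct weighted energy $E_{N}$; once the dispersive terms are arranged to cancel, the nonlinear and approximation estimates are those of the BBM--BBM case of Proposition~\ref{propo31}. Note also that $b,d>0$ is essential, since it is what makes $(1-b\partial_{xx})^{-1}$ and $(1-d\partial_{xx})^{-1}$ genuinely smoothing and the remainder operators $L^{2}$-bounded; when $b$ or $d$ vanishes (cases (iii), (iv), (vi), (vii)) one is forced into other pairs of Sobolev spaces, as the list in Section~\ref{sec22} reflects.
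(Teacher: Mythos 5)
Your argument is correct, but it follows a genuinely different route from the paper's. The paper obtains the cancellation of the indefinite third-order terms at the level of the Galerkin energy identities: it tests (\ref{dds39}) with $\theta$ and (\ref{dds311}) with $\xi$, multiplies the resulting identities by $-c'$ and $-a$ respectively and adds, so that $a(\xi_{xx},\theta_{x})$ and $c'(\theta_{x},\xi_{xx})$ cancel algebraically; this leaves the weighted $H^{1}$-type energy $|c'|\,(\|\theta\|^{2}+b\|\theta_{x}\|^{2})+|a|\,(\|\xi\|^{2}+d\|\xi_{x}\|^{2})$, after which the right-hand side terms $(A,\theta_{x})$, $(B,\xi_{x})$ are estimated exactly as in Proposition \ref{propo31} and the bootstrap is closed with the Sobolev embedding $|\theta|_{\infty}\leq C\|\theta\|_{1}$. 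You instead invert $(1-b\partial_{x}^{2})$ and $(1-d\partial_{x}^{2})$, introduce the Fourier-multiplier symmetrizers $L_{1},L_{2}$ (whose symbols are indeed bounded above and below by positive constants precisely because $a,c'<0$, $b,d>0$), and work with the $L^{2}$-level energy $(L_{2}\theta,\theta)+(L_{1}\xi,\xi)$, cancelling the linear terms by self-adjointness, commutativity and skew-adjointness of $\partial_{x}$, and then estimating $\|A\|,\|B\|$ in $L^{2}$; this is much closer in spirit to the paper's treatment of the Bona--Smith case (Proposition \ref{propo35}, via the operators $\mathcal{T}_{\kappa}$) than to its treatment of case (ii). Both routes are sound and both need only $\mu\geq 1$: the paper's choice is shorter once Proposition \ref{propo31} is in hand and, as a by-product, controls $\|\theta\|_{1}+\|\xi\|_{1}\leq CN^{-\mu}$, so the continuation argument costs nothing; your $L^{2}$ energy controls only $\|\theta\|+\|\xi\|$, which is why you must invoke the inverse inequality (\ref{dds35}) with its factor $N^{1/2}$ to close the bootstrap (harmless here since $\mu\geq 1$), but it makes the symmetrizer structure underlying Theorem 2.5 of \cite{BonaChS2004} explicit and transfers verbatim to the $L^{2}$-framework used elsewhere in the section.
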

\begin{proof}
While the semidiscrete solution $\zeta_{N}, u_{N}$ exists, putting $\varphi=\theta$ in (\ref{dds39}), $\chi=\xi$ in (\ref{dds311}) we obtain, using integration by parts, that 
\begin{eqnarray}
\frac{1}{2}\frac{d}{dt}\left(||\theta||^{2}+b||\theta_{x}||^{2}\right)-a(\xi_{xx},\theta_{x})&=&\kappa_{1}(\xi,\theta_{x})+(A,\theta_{x}).\label{dds323a}\\
\frac{1}{2}\frac{d}{dt}\left(||\xi||^{2}+d||\xi_{x}||^{2}\right)+c'(\theta_{x},\xi_{xx})&=&\kappa_{2}(\theta,\xi_{x})+(B,\xi_{x}).\label{dds323b}
\end{eqnarray}
Multiplying (\ref{dds323a}) by $-c'$ and (\ref{dds323b}) by $-a$ and adding the resulting equations we get
\begin{eqnarray*}
&&\frac{1}{2}\frac{d}{dt}\left(|c'|||\theta||^{2}+|a|||\xi||^{2}+b|c'|||\theta_{x}||^{2}+d|a|||\xi_{x}||^{2}\right)=-c'\kappa_{1}(\xi,\theta_{x})\nonumber\\
&&-c'(A,\theta_{x})-a\kappa_{2}(\theta,\xi_{x})-a(B,\xi_{x}).\label{dds324}
\end{eqnarray*}
The rest of the proof proceeds exactly along the lines of that of Proposition \ref{propo31}.
\end{proof}
\begin{proposition}
\label{propo33}
Let $a,b,c,d$ as in case (iii). If $\mu> 3/2$ then
\begin{eqnarray}
\max_{0\leq t\leq T}\left(||\zeta_{N}-\zeta||+||u_{N}-u||_{1}\right)\leq C N^{1-\mu}.\label{dds325}
\end{eqnarray}
\end{proposition}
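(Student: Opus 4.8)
The plan is to follow the energy argument of Proposition \ref{propo31}, bearing in mind that now $b=0$, so the energy contains no $\|\theta_{x}\|$ term and $\theta$ is controlled only in $L^{2}$; this is precisely why the estimate on $\zeta_{N}-\zeta$ degrades to $O(N^{1-\mu})$. Taking $\varphi=\theta$ in (\ref{dds39}) and $\chi=\xi$ in (\ref{dds311}), integrating by parts and using $b=0$, one gets
\begin{eqnarray*}
\frac{1}{2}\frac{d}{dt}\|\theta\|^{2}-a(\xi_{xx},\theta_{x})&=&\kappa_{1}(\xi,\theta_{x})+(A,\theta_{x}),\\
\frac{1}{2}\frac{d}{dt}\left(\|\xi\|^{2}+d\|\xi_{x}\|^{2}\right)+c'(\theta_{x},\xi_{xx})&=&\kappa_{2}(\theta,\xi_{x})+(B,\xi_{x}).
\end{eqnarray*}
Since $a<0$ and $c'=(1-\gamma)c<0$, multiplying the first equation by $|c'|$, the second by $|a|$ and adding makes the coupling terms cancel (as $(\xi_{xx},\theta_{x})=(\theta_{x},\xi_{xx})$), yielding
\begin{eqnarray*}
&&\frac{1}{2}\frac{d}{dt}\left(|c'|\,\|\theta\|^{2}+|a|\,\|\xi\|^{2}+|a|d\,\|\xi_{x}\|^{2}\right)\\
&&\qquad=|c'|\kappa_{1}(\xi,\theta_{x})+|c'|(A,\theta_{x})+|a|\kappa_{2}(\theta,\xi_{x})+|a|(B,\xi_{x}).
\end{eqnarray*}
Because $|c'|,|a|,|a|d>0$ (here $d>0$), the left-hand side controls $\frac{d}{dt}\left(\|\theta\|^{2}+\|\xi\|_{1}^{2}\right)$ up to constants, so it suffices to bound the right-hand side by $C\left(N^{2-2\mu}+\|\theta\|^{2}+\|\xi\|_{1}^{2}\right)$.

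The essential difference from Proposition \ref{propo31} is that $\|\theta_{x}\|$ is no longer available to absorb terms, so each factor $\theta_{x}$ on the right must be transferred onto $\theta$ by integration by parts. For the linear term, $|(\xi,\theta_{x})|=|(\xi_{x},\theta)|\leq\frac{1}{2}\left(\|\xi_{x}\|^{2}+\|\theta\|^{2}\right)$. For $(A,\theta_{x})$, splitting $A$ as in (\ref{dds310}): a product $f\theta$ with $f\in\{u,\sigma\}$ is handled via $(f\theta,\theta_{x})=-\frac{1}{2}(f_{x},\theta^{2})$, bounded by $\frac{1}{2}|f_{x}|_{\infty}\|\theta\|^{2}$; a product $fg$ with no $\theta$ factor ($f\in\{u,\zeta,\sigma,\rho\}$, $g\in\{\rho,\sigma,\xi\}$) via $(fg,\theta_{x})=-((fg)_{x},\theta)$ with $\|(fg)_{x}\|\leq|f|_{\infty}\|g_{x}\|+|f_{x}|_{\infty}\|g\|$, whose worst contributions involve $\|\rho_{x}\|,\|\sigma_{x}\|\leq CN^{1-\mu}$ (by (\ref{dds33})) and hence produce the $N^{1-\mu}$ rate; and the cubic term via $(\theta\xi,\theta_{x})=-\frac{1}{2}(\theta^{2},\xi_{x})$, bounded by $\frac{1}{2}|\theta|_{\infty}\|\theta\|\|\xi_{x}\|$. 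Here all the $L^{\infty}$ norms $|u_{x}|_{\infty},|\zeta_{x}|_{\infty}$ (by Sobolev embedding) and $|\sigma_{x}|_{\infty},|\rho_{x}|_{\infty}\leq CN^{3/2-\mu}$ (by (\ref{dds34}) applied to $u_{x},\zeta_{x}\in H^{\mu-1}$) are bounded uniformly in $N$ exactly because $\mu>3/2$; granting in addition $|\theta|_{\infty}\leq1$, these estimates give $|(A,\theta_{x})|\leq C\left(N^{2-2\mu}+\|\theta\|^{2}+\|\xi\|_{1}^{2}\right)$. The terms $(\theta,\xi_{x})$ and $(B,\xi_{x})$ are estimated just as in Proposition \ref{propo31} (in particular $(\xi^{2},\xi_{x})=0$), using the $\|\xi_{x}\|$ supplied by the energy. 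To justify $|\theta|_{\infty}\leq1$ one introduces, as in (\ref{dds316}), the maximal $t_{N}\leq T$ with $|\theta|_{\infty}\leq1$ on $[0,t_{N}]$; there the estimates combine to $\frac{d}{dt}\left(\|\theta\|^{2}+\|\xi\|_{1}^{2}\right)\leq C\left(N^{2-2\mu}+\|\theta\|^{2}+\|\xi\|_{1}^{2}\right)$, and since $\theta(0)=\xi(0)=0$ by (\ref{dds38}), Gronwall's lemma gives $\|\theta\|+\|\xi\|_{1}\leq CN^{1-\mu}$ on $[0,t_{N}]$.

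It remains to show $t_{N}=T$, and this is where $\mu>3/2$ is used a second time: since there is no $H^{1}$ bound on $\theta$, in place of Sobolev's inequality we invoke the inverse inequality (\ref{dds35}), $|\theta|_{\infty}\leq CN^{1/2}\|\theta\|\leq CN^{3/2-\mu}\to0$, so that for $N$ large $|\theta|_{\infty}<1$ strictly and $t_{N}$ was not maximal; in the customary way $t_{N}$ can then be taken equal to $T$. Finally, by the triangle inequality and (\ref{dds33}), $\|\zeta_{N}-\zeta\|\leq\|\theta\|+\|\rho\|\leq CN^{1-\mu}$ and $\|u_{N}-u\|_{1}\leq\|\xi\|_{1}+\|\sigma\|_{1}\leq CN^{1-\mu}$, which is (\ref{dds325}). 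I expect the main obstacle to be organizational rather than conceptual: one must verify carefully that after transferring every $\theta_{x}$ by parts no term survives that cannot be absorbed into $\|\theta\|^{2}$, $\|\xi\|^{2}$ or $\|\xi_{x}\|^{2}$, and that all the powers of $N$ thereby generated (from $\|\rho_{x}\|$, $\|\sigma_{x}\|$, $|\sigma_{x}|_{\infty}$, $|\rho_{x}|_{\infty}$ and the inverse inequality) remain consistent with the claimed rate $N^{1-\mu}$ under the hypothesis $\mu>3/2$.
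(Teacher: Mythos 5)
Your proposal is correct and follows essentially the same route as the paper's proof: the same choice of test functions, the same weighted combination (multiplying by $|c'|$ and $|a|$ to cancel the coupling term $(\xi_{xx},\theta_{x})$), the same transfer of derivatives onto $\theta$ in the $A$-terms, the same continuation argument with $|\theta|_{\infty}\leq 1$ closed via the inverse inequality $|\theta|_{\infty}\leq CN^{1/2}\|\theta\|$ and $\mu>3/2$, and Gronwall. The only cosmetic difference is that you bound the bilinear terms with a product rule in $L^{\infty}$ where the paper invokes the $H^{1}$-algebra property; both yield the same $N^{1-\mu}$ rate.
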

\begin{proof}
While the semidiscrete solution $\zeta_{N}, u_{N}$ exists, putting $\varphi=\theta$ in (\ref{dds39}), $\chi=\xi$ in (\ref{dds311}) we obtain, using integration by parts, that 
\begin{eqnarray}
\frac{1}{2}\frac{d}{dt}||\theta||^{2}-a(\xi_{xx},\theta_{x})&=&-\kappa_{1}(\xi_{x},\theta)-(A_{x},\theta),\label{dds326a}\\
\frac{1}{2}\frac{d}{dt}\left(||\xi||^{2}+d||\xi_{x}||^{2}\right)+c'(\theta_{x},\xi_{xx})&=&\kappa_{2}(\theta,\xi_{x})+(B,\xi_{x}).\label{dds326b}
\end{eqnarray}
Multiplying (\ref{dds326a}) by $-c'$ and (\ref{dds326b}) by $-a$ and adding the resulting equations gives
\begin{eqnarray}
\frac{1}{2}\frac{d}{dt}\left(|c'|||\theta||^{2}+|a|||\xi||^{2}+d|a|||\xi_{x}||^{2}\right)&=&c'\kappa_{1}(\xi_{x},\theta)+c'(A_{x},\theta)\nonumber\\
&&-a\kappa_{2}(\theta,\xi_{x})-a(B,\xi_{x}).\label{dds327}
\end{eqnarray}
We estimate the terms of the right-hand side of the above. Obviously,
\begin{eqnarray}
|(\xi_{x},\theta)|\leq ||\xi_{x}||||\theta||\leq \frac{1}{2}\left(||\theta||^{2}+||\xi_{x}||^{2}\right).\label{dds328a}
\end{eqnarray}
For the terms of $(A_{x},\theta)$, using (\ref{dds310})
and (\ref{dds33}) and the fact that $H^{1}$ is an algebra, we have
\begin{eqnarray*}
|((u\rho)_{x},\theta)|\leq ||u\rho||_{1}||\theta||\leq C||u||_{1}||\rho||_{1}||\theta||\leq  C\left(N^{2(1-\mu)}+||\theta||^{2}\right).\label{dds328b}
\end{eqnarray*}
Similarly,
\begin{eqnarray}
|((\zeta\sigma)_{x},\theta)|\leq C\left(N^{2(1-\mu)}+||\theta||^{2}\right).\label{dds328c}
\end{eqnarray}
Using integration by parts we have
\begin{eqnarray}
|((u\theta)_{x},\theta)|=|\frac{1}{2}|(u_{x}\theta,\theta)|\leq C|u_{x}|_{\infty}||\theta||^{2}\leq C||\theta||^{2}.\label{dds328d}
\end{eqnarray}
Also
\begin{eqnarray}
|((\zeta\xi)_{x},\theta)|\leq C||\zeta||_{1}||\xi||_{1}||\theta||\leq C\left(||\xi||_{1}^{2}+||\theta||^{2}\right).\label{dds328e}
\end{eqnarray}
Using integration by parts and (\ref{dds34}) and our hypothesis on $\mu$
\begin{eqnarray}
|(\sigma \theta)_{x},\theta)|=\frac{1}{2}|(\sigma_{x}\theta,\theta)|\leq C|\sigma_{x}|_{\infty}||\theta||^{2}\leq  C||\theta||^{2}.\label{dds328f}
\end{eqnarray}
By (\ref{dds33})
\begin{eqnarray}
|(\rho\xi)_{x},\theta)|\leq C||\rho||_{1}||\xi||_{1}||\theta||\leq C\left(||\xi||_{1}^{2}+||\theta||^{2}\right).\label{dds328g}
\end{eqnarray}
By (\ref{dds33}), (\ref{dds34}), and our hypothesis on $\mu$
\begin{eqnarray}
|(\rho\sigma)_{x},\theta)|&\leq &|\rho|_{\infty}||\sigma_{x}||||\theta||+|\sigma|_{\infty}||\rho_{x}||||\theta||\nonumber\\
&\leq & CN^{\frac{3}{2}-2\mu}||\theta||\leq CN^{-\mu}||\theta||\nonumber\\
&\leq & C\left(N^{-2\mu}+||\theta||^{2}\right).\label{dds328h}
\end{eqnarray}
Now, since $\theta(0)=0$, using continuity, let $t_{N}$, $0<t_{N}\leq T$, be the maximal value of $t$ for which the solution of (\ref{dds36})-(\ref{dds38}) exists and satisfies
\begin{eqnarray}
|\theta|_{\infty}\leq 1,\; 0\leq t\leq t_{N}.\label{dds329}
\end{eqnarray}
By (\ref{dds329}) we have for $0\leq t\leq t_{N}$, using integration by parts
\begin{eqnarray}
|(\theta\xi)_{x},\theta)|&=&\frac{1}{2}|(\xi_{x}\theta,\theta)|\leq C|\theta|_{\infty}||\theta||||\xi_{x}||\nonumber\\
&\leq & C\left(||\theta||^{2}+||\xi_{x}||^{2}\right).\label{dds328i}
\end{eqnarray}
We conclude from (\ref{dds328c})-(\ref{dds328i}) that
\begin{eqnarray}
|(A_{x},\theta)|\leq C\left(N^{2(1-\mu)}+||\theta||^{2}+||\xi||_{1}^{2}\right),\; 0\leq t\leq t_{N}.\label{dds330}
\end{eqnarray}
We estimate now $(\theta,\xi_{x})$ and the terms of $(B,\xi_{x})$ exactly as in (\ref{dds318a})-(\ref{dds318f}), and conclude that as long as the semidiscrete approximation exists it holds that
\begin{eqnarray}
|(\theta,\xi_{x})|+|(B,\xi_{x})|\leq C\left(N^{-2\mu}+||\theta||^{2}+||\xi||_{1}^{2}\right).\label{dds331}
\end{eqnarray}
Therefore, by (\ref{dds327}), (\ref{dds328a}), (\ref{dds330}), (\ref{dds331}), since $c', a, d\neq 0$ we see that for $0\leq t\leq t_{N}$
\begin{eqnarray*}
\frac{1}{2}\frac{d}{dt}\left(||\theta||^{2}+||\xi||_{1}^{2}\right)\leq C\left(N^{2(1-\mu)}+||\theta||^{2}+||\xi||_{1}^{2}\right).\label{dds332}
\end{eqnarray*}
By Gronwall's lemma and (\ref{dds38}) we see that for $0\leq t\leq t_{N}$ and a constant $C(T)$ there holds
\begin{eqnarray}
||\theta||+||\xi||_{1}\leq C(T)N^{1-\mu}.\label{dds333}
\end{eqnarray}
Therefore, since $|\theta|_{\infty}\leq CN^{1/2}||\theta||$ by (\ref{dds35}), and using (\ref{dds333}) and the assumption that $\mu>3/2$, we see that $t_{N}$ was not maximal in (\ref{dds329}) if $N$ was sufficiently large. We conclude that (\ref{dds333}) holds up to $t=T$ which implies that
\begin{eqnarray*}
\max_{0\leq t\leq T}\left(||\zeta_{N}-\zeta||+||u_{N}-u||_{1}\right)\leq C N^{1-\mu},
\end{eqnarray*}
i.~e. that the conclusion of the proposition holds. Note that this implies that $u_{N}$ is optimally close to $u$ in $H^{1}$ but $\zeta_{N}$ suboptimally so to $\zeta$ in $L^{2}$.
\end{proof}
\begin{proposition}
\label{propo34}
Let $a,b,c,d$ as in case (iv), and with no loss of generality suppose that $b=0, d>0, a=c=0$. If $\mu> 3/2$ then
\begin{eqnarray*}
\max_{0\leq t\leq T}\left(||\zeta_{N}-\zeta||+||u_{N}-u||_{1}\right)\leq C N^{1-\mu}.\label{dds334}
\end{eqnarray*}
\end{proposition}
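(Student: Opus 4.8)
The argument is a direct adaptation of the proof of Proposition \ref{propo33}, and in fact simpler, since here $a=c=c'=0$ so that the only surviving dispersive term is $-du_{xxt}$ in the second equation of (\ref{dds31}). Putting $\varphi=\theta$ in (\ref{dds39}) and $\chi=\xi$ in (\ref{dds311}), using $a=b=c'=0$, and integrating by parts in the $B_{x}$ term, I would first obtain
\begin{eqnarray*}
\frac{1}{2}\frac{d}{dt}||\theta||^{2}&=&-\kappa_{1}(\xi_{x},\theta)-(A_{x},\theta),\\
\frac{1}{2}\frac{d}{dt}\left(||\xi||^{2}+d||\xi_{x}||^{2}\right)&=&\kappa_{2}(\theta,\xi_{x})+(B,\xi_{x}),
\end{eqnarray*}
with $A$ and $B$ as in (\ref{dds310}), (\ref{dds312}). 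Adding these, the linear cross terms are bounded by $|\kappa_{1}(\xi_{x},\theta)|+|\kappa_{2}(\theta,\xi_{x})|\leq C(||\theta||^{2}+||\xi_{x}||^{2})$; exactly as in case (iii), one must \emph{not} integrate by parts in the $\kappa_{1}$ term, since $b=0$ leaves no control of $||\theta_{x}||$, whereas $||\xi_{x}||$ is controlled because $d>0$.

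For $(A_{x},\theta)$ I would reuse verbatim the estimates (\ref{dds328c})--(\ref{dds328i}) from the proof of Proposition \ref{propo33}: the projection terms by (\ref{dds33}), (\ref{dds34}) and the Banach-algebra property of $H^{1}$; the terms $((u\theta)_{x},\theta)$ and $((\sigma\theta)_{x},\theta)$ by integration by parts, which is where $\mu>3/2$ enters, through the bound $|\sigma_{x}|_{\infty}\leq CN^{3/2-\mu}$ and the boundedness of $|u_{x}|_{\infty}$; and $((\theta\xi)_{x},\theta)=\tfrac{1}{2}(\xi_{x}\theta,\theta)$ on the maximal interval $[0,t_{N}]$ on which the bootstrap hypothesis $|\theta|_{\infty}\leq1$ holds. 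This yields $|(A_{x},\theta)|\leq C(N^{2(1-\mu)}+||\theta||^{2}+||\xi||_{1}^{2})$ for $0\leq t\leq t_{N}$, as in (\ref{dds330}). The term $(B,\xi_{x})$ is estimated precisely as in (\ref{dds318a})--(\ref{dds318f}) (with $(\xi^{2},\xi_{x})=0$ by periodicity), giving $|(B,\xi_{x})|\leq C(N^{-2\mu}+||\theta||^{2}+||\xi||_{1}^{2})$, cf. (\ref{dds331}).

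Combining the above and using the equivalence of $||\xi||^{2}+d||\xi_{x}||^{2}$ with $||\xi||_{1}^{2}$ when $d>0$, I would arrive at
\begin{eqnarray*}
\frac{d}{dt}\left(||\theta||^{2}+||\xi||_{1}^{2}\right)\leq C\left(N^{2(1-\mu)}+||\theta||^{2}+||\xi||_{1}^{2}\right),\quad 0\leq t\leq t_{N}.
\end{eqnarray*}
Since $\theta(0)=\xi(0)=0$ by (\ref{dds38}), Gronwall's lemma gives $||\theta||+||\xi||_{1}\leq C(T)N^{1-\mu}$ on $[0,t_{N}]$. The main — and essentially only — difficulty is then closing the bootstrap: because $b=0$ one cannot bound $|\theta|_{\infty}$ by Sobolev embedding into $||\theta||_{1}$, so instead one invokes the inverse inequality (\ref{dds35}), $|\theta|_{\infty}\leq CN^{1/2}||\theta||\leq CN^{3/2-\mu}$, which tends to $0$ as $N\to\infty$ precisely because $\mu>3/2$; hence for $N$ large the constraint $|\theta|_{\infty}\leq1$ is inactive, $t_{N}$ is not maximal, and the usual continuation argument lets one take $t_{N}=T$. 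Finally (\ref{dds33}) converts the bound on $||\theta||+||\xi||_{1}$ into the stated estimate $\max_{0\leq t\leq T}(||\zeta_{N}-\zeta||+||u_{N}-u||_{1})\leq CN^{1-\mu}$.
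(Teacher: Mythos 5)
Your proposal is correct and follows essentially the same route as the paper: the paper's proof of this case likewise tests with $\varphi=\theta$, $\chi=\xi$, adds the resulting identities to get $\tfrac12\tfrac{d}{dt}\left(||\theta||^{2}+||\xi||^{2}+d||\xi_{x}||^{2}\right)=-\kappa_{1}(\xi_{x},\theta)-(A_{x},\theta)+\kappa_{2}(\theta,\xi_{x})+(B,\xi_{x})$, and then estimates $(A_{x},\theta)$ and $(B,\xi_{x})$ exactly as in Proposition \ref{propo33}, closing the bootstrap with the inverse inequality and $\mu>3/2$ just as you do. Your write-up merely spells out the details the paper leaves implicit by reference to case (iii).
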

\begin{proof}
While the semidiscrete solution $\zeta_{N}, u_{N}$ exists, putting $\varphi=\theta$ in (\ref{dds39}), $\chi=\xi$ in (\ref{dds311}), and using integration by parts, then adding the resulting equations yields
\begin{eqnarray*}
\frac{1}{2}\frac{d}{dt}\left(||\theta||^{2}+||\xi||^{2}+d||\xi_{x}||^{2}\right)=-\kappa_{1}(\xi_{x},\theta)-(A_{x},\theta)+\kappa_{2}(\theta,\xi_{x})+(B,\xi_{x}).
\end{eqnarray*}
We estimate now the terms of $(A_{x},\theta)$ and $(B,\xi_{x})$ exactly as in Proposition \ref{propo33}. The conclusion follows.
\end{proof}
\begin{proposition}
\label{propo35}
Let $a,b,c,d$ as in case (v), and with no loss of generality suppose that $b>0, d>0, a=0, c<0$. If $\mu\geq 1$ then
\begin{eqnarray*}
\max_{0\leq t\leq T}\left(||\zeta_{N}-\zeta||+||u_{N}-u||\right)\leq C N^{-\mu}.\label{dds335}
\end{eqnarray*}
\end{proposition}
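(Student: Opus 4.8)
The feature of this case that is absent from Propositions \ref{propo31}, \ref{propo32} is the term $c'(\theta_{xxx},\xi)$ in the error equation \eqref{dds311} for $\xi$. Since $a=0$, the symmetrisation of Proposition \ref{propo32} (multiplying the $\theta$-identity by $-c'$ and the $\xi$-identity by $-a$ and adding) is useless, and the $H^{1}$-control that $b,d>0$ furnish does not absorb $c'(\theta_{xxx},\xi)=-c'(\theta_{xx},\xi_{x})$. The plan is to use the \emph{first} error equation to eliminate $\xi_{x}$ from this term; because $c'<0$, the substitution manufactures a new nonnegative $||\theta_{xx}||^{2}$ contribution to the energy, after which one argues as in Proposition \ref{propo31}.

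Concretely, I would first put $\varphi=\theta$ in \eqref{dds39} (with $a=0$) and $\chi=\xi$ in \eqref{dds311}, integrate by parts, and obtain the two identities $\frac12\frac{d}{dt}(||\theta||^{2}+b||\theta_{x}||^{2})=\kappa_{1}(\xi,\theta_{x})+(A,\theta_{x})$ and $\frac12\frac{d}{dt}(||\xi||^{2}+d||\xi_{x}||^{2})+c'(\theta_{xxx},\xi)=-\kappa_{2}(\theta_{x},\xi)+(B,\xi_{x})$. Since all the functions lie in $S_{N}$, \eqref{dds39} with $a=0$ holds in the strong form $(1-b\partial_{xx})\theta_{t}+\kappa_{1}\xi_{x}+(PA)_{x}=0$ in $S_{N}$, so $\kappa_{1}\xi_{x}=-(1-b\partial_{xx})\theta_{t}-(PA)_{x}$; using $(\theta_{xx},\theta_{t})=-\frac12\frac{d}{dt}||\theta_{x}||^{2}$ and $(\theta_{xx},\theta_{xxt})=\frac12\frac{d}{dt}||\theta_{xx}||^{2}$ this gives $c'(\theta_{xxx},\xi)=-c'(\theta_{xx},\xi_{x})=\frac{c'}{\kappa_{1}}\bigl(-\frac12\frac{d}{dt}||\theta_{x}||^{2}-\frac{b}{2}\frac{d}{dt}||\theta_{xx}||^{2}+(\theta_{xx},(PA)_{x})\bigr)$. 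Adding the two identities and moving the exact $t$-derivatives to the left-hand side, I would then work with
\[
E(t):=\tfrac12\Bigl(||\theta||^{2}+\bigl(b-\tfrac{c'}{\kappa_{1}}\bigr)||\theta_{x}||^{2}-\tfrac{bc'}{\kappa_{1}}||\theta_{xx}||^{2}+||\xi||^{2}+d||\xi_{x}||^{2}\Bigr),
\]
which, since $b,d>0$ and $c'<0$, is equivalent to $||\theta||_{2}^{2}+||\xi||_{1}^{2}$ and, by \eqref{dds38}, vanishes at $t=0$; it satisfies $\frac{d}{dt}E=(\kappa_{1}-\kappa_{2})(\xi,\theta_{x})+(A,\theta_{x})+(B,\xi_{x})-\frac{c'}{\kappa_{1}}(\theta_{xx},(PA)_{x})$.

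It then remains to bound the right-hand side. The term $(\kappa_{1}-\kappa_{2})(\xi,\theta_{x})$ is $\le CE$, and $(A,\theta_{x})$, $(B,\xi_{x})$ I would estimate exactly as in the proof of Proposition \ref{propo31} — via the expansions \eqref{dds310}, \eqref{dds312}, the bounds \eqref{dds33}--\eqref{dds34}, the imbedding $H^{1}\hookrightarrow L^{\infty}$, the algebra property of $H^{1}$, the identity $\frac12(\xi^{2},\xi_{x})=0$, and a continuity hypothesis $|\theta|_{\infty}\le 1$ on a maximal subinterval $[0,t_{N}]\subseteq[0,T]$ — obtaining $|(A,\theta_{x})|+|(B,\xi_{x})|\le C(N^{-2\mu}+E)$. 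The genuinely new, and main, difficulty is the last term $-\frac{c'}{\kappa_{1}}(\theta_{xx},(PA)_{x})=\frac{|c'|}{\kappa_{1}}(\theta_{xx},A_{x})$ (the last equality because $\theta_{xx}\in S_{N}$). Here I would expand $A$ as in \eqref{dds310}, distribute the derivative, and split the eight products into those carrying a factor $\theta$ or $\xi$ — for which the $H^{1}$-algebra property and \eqref{dds33} give $||\cdot||_{1}\le C(||\theta||_{1}+||\xi||_{1}+||\theta||_{1}||\xi||_{1})\le CE^{1/2}$, so that $||\theta_{xx}||\,||(\cdot)_{x}||$ is absorbed into $\varepsilon||\theta_{xx}||^{2}+CE$ — and the three purely interpolatory products $u\rho$, $\zeta\sigma$, $\rho\sigma$, for which one keeps $\rho,\sigma$ undifferentiated, transferring the derivative back via $(\theta_{xx},(X)_{x})=-(\theta_{xxx},X)$ and using $||u\rho||,||\zeta\sigma||,||\rho\sigma||\le CN^{-\mu}$ together with the inverse inequality \eqref{dds35}. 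Arranging the derivative distribution so that neither an undamped $||\theta_{xxx}||$ nor an $H^{1}$-norm of $\rho$ or $\sigma$ survives is the delicate point of the whole argument; with $\varepsilon$ chosen small enough to absorb the $||\theta_{xx}||^{2}$-contributions into $E$, I expect to reach $\frac{d}{dt}E\le C(N^{-2\mu}+E)$ on $[0,t_{N}]$.

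Finally, Gronwall's lemma together with $E(0)=0$ gives $E(t)\le CN^{-2\mu}$ on $[0,t_{N}]$; since $|\theta|_{\infty}\le C||\theta||_{1}\le CE^{1/2}\le CN^{-\mu}$, for $N$ sufficiently large the hypothesis $|\theta|_{\infty}\le 1$ cannot be saturated, so $t_{N}=T$ and $E\le CN^{-2\mu}$ on $[0,T]$. Combined with \eqref{dds33} this yields $\max_{0\le t\le T}(||\zeta_{N}-\zeta||+||u_{N}-u||)\le CN^{-\mu}$, as asserted; note that $E$ in fact controls $\zeta_{N}-\zeta$ in $H^{1}$ and $u_{N}-u$ in $H^{1}$ as well.
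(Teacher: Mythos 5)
Your reduction to the differential inequality $\frac{d}{dt}E\le C(N^{-2\mu}+E)$ is precisely the step that does not go through, and you have flagged it yourself without resolving it. After the substitution $\kappa_{1}\xi_{x}=-(1-b\partial_{xx})\theta_{t}-PA_{x}$, the term $\frac{|c'|}{\kappa_{1}}(\theta_{xx},A_{x})$ contains the contributions $(\theta_{xx},(u\rho)_{x})$ and $(\theta_{xx},(\zeta\sigma)_{x})$, and neither of your two devices gives them the size $C(N^{-2\mu}+E)$ needed for the optimal rate. Estimating directly, $|(\theta_{xx},(u\rho)_{x})|\le \|\theta_{xx}\|\,\|u\rho\|_{1}\le C\|\theta_{xx}\|\,\|\rho\|_{1}\le CN^{1-\mu}\|\theta_{xx}\|$, because \eqref{dds33} only gives $\|\rho\|_{1},\|\sigma\|_{1}\le CN^{1-\mu}$; transferring the derivative instead, $|(\theta_{xxx},u\rho)|\le \|\theta_{xxx}\|\,|u|_{\infty}\,\|\rho\|\le CN\|\theta_{xx}\|\cdot N^{-\mu}$ by the inverse inequality \eqref{dds35} --- the same size. (Orthogonality of $\rho,\sigma$ to $S_{N}$ does not rescue this either, since exploiting it requires control of $\|\theta_{xxx}\|$ or $\|\theta_{xxxx}\|$, which your energy does not provide.) Either way you only reach $\frac{d}{dt}E\le C(N^{2(1-\mu)}+E)$, hence $E^{1/2}\le CN^{1-\mu}$: a suboptimal bound that does not prove the asserted estimate $CN^{-\mu}$, is vacuous at $\mu=1$, and then also breaks the continuation argument, since $|\theta|_{\infty}\le CE^{1/2}$ is no longer small. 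A similar loss occurs in $(\theta_{xx},(\rho\sigma)_{x})$ when $\mu$ is close to $1$. (Separately, your claim $\|\theta\|_{1}\|\xi\|_{1}\le CE^{1/2}$ needs a boundedness hypothesis --- e.g.\ take the bootstrap assumption on $\|\theta\|_{1}+\|\xi\|_{1}$ rather than $|\theta|_{\infty}$ --- but that is a minor repair.)

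The paper avoids the higher-order energy altogether. It writes \eqref{dds39} (with $a=0$) and \eqref{dds311} as $\theta_{t}=-\kappa_{1}\mathcal{T}_{b}\xi_{x}-\mathcal{T}_{b}PA_{x}$ and $\xi_{t}=-c'\mathcal{T}_{d}\theta_{xxx}-\kappa_{2}\mathcal{T}_{d}\theta_{x}-\mathcal{T}_{d}PB_{x}$, where $\mathcal{T}_{\kappa}=(I-\kappa\partial_{x}^{2})^{-1}$; the smoothing property \eqref{361} then gives $\|\theta_{t}\|_{1}\le C(\|\xi\|+\|A\|)$ and $\|\xi_{t}\|\le C(\|\theta\|_{1}+\|B\|)$. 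The troublesome third derivative is absorbed because $\|\mathcal{T}_{d}\theta_{xxx}\|\le C\|\theta\|_{1}$, and the nonlinearities enter only through $\|A\|$, $\|B\|$ in $L^{2}$, where $\|u\rho\|\le|u|_{\infty}\|\rho\|\le CN^{-\mu}$ --- no derivative of $\rho$ or $\sigma$ is ever paid for. Integrating in time from the zero initial errors and using Gronwall's lemma in integral form yields $\|\theta\|_{1}+\|\xi\|\le CN^{-\mu}$ under the bootstrap $|\xi|_{\infty}\le1$, which closes for all $\mu\ge1$. To salvage an energy argument you would have to arrange that derivatives never land on $\rho$ or $\sigma$ against a factor you cannot afford; in practice this means inverting the elliptic operators, as the paper does.
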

\begin{proof}
We write (\ref{dds39}) for $a=0$ as
\begin{eqnarray*}
\theta_{t}-b\theta_{xxt}=-\kappa_{1}\xi_{x}-PA_{x},
\end{eqnarray*}
i.~e. as
\begin{eqnarray}
(1-b\partial_{x}^{2})\theta_{t}=-\kappa_{1}\xi_{x}-PA_{x}.\label{360}
\end{eqnarray}
For a constant $\kappa>0$ let $\mathcal{T}_{\kappa}$ denote the operator $\mathcal{T}_{\kappa}=(I-\kappa\partial_{x}^{2})^{-1}$ which is well defined in $H^{s}$ for any $s\in\mathbb{R}$. Using its Fourier representation we see, for any $f\in H^{j-2}$, that $\mathcal{T}_{\kappa}f\in H^{j}$ and that
\begin{eqnarray}
||\mathcal{T}_{\kappa}f||_{j}\leq C_{k} ||f||_{j-2},\; j\in \mathbb{R},\label{361}
\end{eqnarray}
where $C_{k}$ is a constant depending on $\kappa$. (In the sequel we will only use the property that for $f\in L^{2}$, $||f||_{-j}\leq ||f||, j\geq 0$, for the negative norms.)

Using this notation we write (\ref{360}) as
\begin{eqnarray*}
\theta_{t}=-\kappa_{1}\mathcal{T}_{b}\xi_{x}-\mathcal{T}_{b}PA_{x}.
\end{eqnarray*}
Therefore, since $\partial_{x}$ commutes with $\mathcal{T}_{b}$ and $P$, (\ref{361}) gives
\begin{eqnarray}
||\theta_{t}||_{1}\leq |\kappa_{1}|||\mathcal{T}_{b}\xi_{x}||_{1}+||\mathcal{T}_{b}PA_{x}||_{1}\leq C\left(||\xi||+||A||\right).\label{362}
\end{eqnarray}
From the definition of $A$, cf. (\ref{dds310}), we see that
\begin{eqnarray}
||A||&\leq & C\left(|u|_{\infty}||\rho||+|\zeta|_{\infty}||\sigma||+|u|_{\infty}||\theta||+|\zeta|_{\infty}||\xi||\right.\nonumber\\
&&\left.+|\sigma|_{\infty}||\theta||+|\rho|_{\infty}||\xi||+|\rho|_{\infty}||\sigma||+||\theta|| |\xi|_{\infty}\right).\label{363}
\end{eqnarray}
Since $\xi(0)=0$, using continuity, let $t_{N}\in (0,T]$ denote the maximal time for which the solution of the semidiscrete ivp exists and satisfies
\begin{eqnarray}
|\xi|_{\infty}\leq 1,\; 0\leq t\leq t_{N}.\label{364}
\end{eqnarray}
Therefore, from (\ref{363}), using the fact that $\mu\geq 1$, Sobolev's inequality, (\ref{dds33}), and (\ref{dds34}), we obtain, in view of (\ref{364}), that
\begin{eqnarray}
||A||\leq C\left(N^{-\mu}+||\theta||+||\xi||\right), \; 0\leq t\leq t_{N}.\label{365}
\end{eqnarray}
Hence, from (\ref{362}) and (\ref{365}) we get
\begin{eqnarray*}
||\theta_{t}||_{1}\leq C\left(N^{-\mu}+||\theta||+||\xi||\right), \; 0\leq t\leq t_{N}.\label{366}
\end{eqnarray*}
We write now (\ref{dds311}) as
\begin{eqnarray*}
(1-d\partial_{x}^{2})\xi_{t}=-c'\theta_{xxx}-\kappa_{2}\theta_{x}-PB_{x},
\end{eqnarray*}
i.~e. as
\begin{eqnarray*}
\xi_{t}=-c'\mathcal{T}_{d}\theta_{xxx}-\kappa_{2}\mathcal{T}_{d}\theta_{x}-\mathcal{T}_{d}PB_{x},
\end{eqnarray*}
from which, using (\ref{361}), we get
\begin{eqnarray}
||\xi_{t}||&\leq & |c'|||\mathcal{T}_{d}\theta_{xxx}||+|\kappa_{2}|||\mathcal{T}_{d}\theta_{x}||+||\mathcal{T}_{d}PB_{x}||\nonumber\\
&\leq & C\left(||\theta||_{1}+||\theta||+||B||\right)\leq C\left(||\theta||_{1}+||B||\right).\label{367}
\end{eqnarray}
From (\ref{dds312}) we see that
\begin{eqnarray*}
||B||\leq C\left(|u|_{\infty}||\sigma||+|u|_{\infty}||\xi||+|\sigma|_{\infty}||\xi||+|\sigma|_{\infty}||\sigma||+|\xi|_{\infty}||\xi||\right).
\end{eqnarray*}
Therefore, since $\mu\geq 1$, from (\ref{dds33}) and (\ref{dds34}), we have, in view of (\ref{364}), that
\begin{eqnarray}
||B||\leq C\left(N^{-\mu}+||\xi||\right),\; 0\leq t\leq t_{N}.\label{368}
\end{eqnarray}
Therefore, by (\ref{367}) and (\ref{368}) it follows that
\begin{eqnarray}
||\theta_{t}||_{1}+||\xi_{t}||\leq C\left(N^{-\mu}+||\theta||_{1}+||\xi||\right), \; 0\leq t\leq t_{N}.\label{369}
\end{eqnarray}
where $C$ does not depend on $t_{N}$. From (\ref{dds38}) we infer that $\theta=\int_{0}^{t}\theta_{\tau}d\tau$, $\xi=\int_{0}^{t}\xi_{\tau}d\tau$. Hence, by (\ref{369}) we have, for $0\leq t\leq t_{N}$
\begin{eqnarray*}
||\theta||_{1}+||\xi||&\leq & \int_{0}^{t}\left(||\theta_{\tau}||_{1}+||\xi_{\tau}||\right)d\tau \nonumber\\
&\leq & C\int_{0}^{t}\left(N^{-\mu}+||\theta||_{1}+||\xi||\right)d\tau\nonumber\\
&\leq & C\int_{0}^{t}\left(||\theta||_{1}+||\xi||\right)d\tau+CTN^{-\mu}.
\end{eqnarray*}
Using Gronwall's lemma in integral form gives that for some $C=C(T)$
\begin{eqnarray}
||\theta||_{1}+||\xi||\leq CN^{-\mu},\; 0\leq t\leq t_{N}.\label{dds350}
\end{eqnarray}
From this we observe, since $\mu\geq 1$, that for $0\leq t\leq t_{N}$ $|\xi|_{\infty}\leq 1$ if $N$ is sufficiently large, and therefore that $t_{N}$ was not maximal in (\ref{364}). We may then take $t_{N}=T$ and conclude from (\ref{dds350}) that
\begin{eqnarray*}
||\theta||_{1}+||\xi||\leq CN^{-\mu},\; 0\leq t\leq T.
\end{eqnarray*}
Therefore, the conclusion of the proposition holds. It implies that $\zeta_{N}$ and $u_{N}$ satisfy optimal-order $L^{2}$-error estimates.
\end{proof}
\begin{proposition}
\label{propo36}
Let $a,b,c,d$ as in case (vi) and  $\mu>3/2$. Then
\begin{eqnarray*}
\max_{0\leq t\leq T}\left(||\zeta_{N}-\zeta||+||u_{N}-u||_{1}\right)\leq C N^{1-\mu}.\label{dds351}
\end{eqnarray*}
\end{proposition}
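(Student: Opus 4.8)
The plan is to run the weighted energy argument of the preceding propositions, but to supply the extra device that case (vi) requires: here $c'=0$, so the third-order term $au_{xxx}$ in the first equation — which moreover has no regularizing companion, since $b=0$ — cannot be telescoped against a matching $c'\theta_{xxx}$ term in the second equation the way it was in case (iii) (Proposition \ref{propo33}, where $c'<0$). Instead I will eliminate the offending term by substituting $\theta_x$ from the second error equation.

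First I set $b=c'=0$ in (\ref{dds39}), (\ref{dds311}), take $\varphi=\theta$, $\chi=\xi$, integrate by parts, and add $\kappa_2$ times the first relation to $\kappa_1$ times the second. The first-order cross terms $-\kappa_1\kappa_2(\xi_x,\theta)-\kappa_1\kappa_2(\theta_x,\xi)$ cancel by periodicity, giving, as long as the semidiscrete solution exists,
\begin{equation*}
\frac{1}{2}\frac{d}{dt}\left(\kappa_2||\theta||^{2}+\kappa_1||\xi||^{2}+\kappa_1 d||\xi_x||^{2}\right)-\kappa_2 a(\xi_{xx},\theta_x)=-\kappa_2(A_x,\theta)-\kappa_1(B_x,\xi).
\end{equation*}
Since $\xi_t$, $\xi_{xxt}$ and $\theta_x$ all belong to $S_N$, the second error equation yields the pointwise identity $\kappa_2\theta_x=-\xi_t+d\xi_{xxt}-PB_x$ in $S_N$. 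Substituting it into $-\kappa_2 a(\xi_{xx},\theta_x)=-a(\xi_{xx},\kappa_2\theta_x)$, and using $(\xi_{xx},\xi_t)=-\tfrac12\tfrac{d}{dt}||\xi_x||^{2}$, $(\xi_{xx},\xi_{xxt})=\tfrac12\tfrac{d}{dt}||\xi_{xx}||^{2}$ and $(\xi_{xx},PB_x)=(\xi_{xx},B_x)$ (as $P$ is self-adjoint and $\xi_{xx}\in S_N$), the identity above becomes
\begin{equation*}
\frac{1}{2}\frac{d}{dt}\left(\kappa_2||\theta||^{2}+\kappa_1||\xi||^{2}+(\kappa_1 d-a)||\xi_x||^{2}-ad||\xi_{xx}||^{2}\right)=-\kappa_2(A_x,\theta)-\kappa_1(B_x,\xi)-a(\xi_{xx},B_x).
\end{equation*}
Because $a<0$, $d>0$ and $\kappa_1,\kappa_2>0$, the quantity $E(t)$ under the time derivative is a norm equivalent to $||\theta||^{2}+||\xi||_{2}^{2}$.

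It then remains to bound the right-hand side. The terms $(A_x,\theta)$ and $(B_x,\xi)=-(B,\xi_x)$ are estimated exactly as in Proposition \ref{propo33} (cases (iii) and (vi) share the same $A$ and $B$), giving bounds of the form $|(A_x,\theta)|+|(B,\xi_x)|\le C(N^{2(1-\mu)}+||\theta||^{2}+||\xi||_{1}^{2})$ under a bootstrap hypothesis $|\theta|_\infty\le1$ on a maximal interval $[0,t_N]$ — which is where $\mu>3/2$ enters. For the new term I expand $B_x$ by (\ref{dds312}) and bound each piece by the Cauchy--Schwarz and Young inequalities, using $|u|_\infty,|u_x|_\infty\le C$, the approximation bounds $||\sigma||_{1}\le CN^{1-\mu}$ and $|\sigma|_\infty\le CN^{1/2-\mu}$ from (\ref{dds33})--(\ref{dds34}), and — for the genuinely cubic contribution $(\xi_{xx},(\xi^{2})_x)$ — a second bootstrap $|\xi|_\infty\le1$ on $[0,t_N]$; the result is $|(\xi_{xx},B_x)|\le C(N^{2(1-\mu)}+||\xi||_{2}^{2})$ there. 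Hence $\tfrac{d}{dt}E\le C(N^{2(1-\mu)}+E)$ on $[0,t_N]$, and since $E(0)=0$ by (\ref{dds38}), Gronwall's lemma gives $||\theta||+||\xi||_{2}\le C(T)N^{1-\mu}$ on $[0,t_N]$. Since $\mu>3/2$, the inverse inequality (\ref{dds35}) gives $|\theta|_\infty\le CN^{1/2}||\theta||\le CN^{3/2-\mu}\to0$, and Sobolev's inequality gives $|\xi|_\infty\le C||\xi||_{1}\le CN^{1-\mu}\to0$, so for $N$ large neither bootstrap constraint is saturated; arguing in the customary way one obtains $t_N=T$, and adding $||\rho||\le CN^{-\mu}$, $||\sigma||_{1}\le CN^{1-\mu}$ yields the stated estimate. (As in Proposition \ref{propo33}, this says $u_N$ is optimally close to $u$ in $H^{1}$ — indeed even $||\xi||_{2}$ is controlled — while $\zeta_N$ is only suboptimally close to $\zeta$ in $L^{2}$, and one does not get $H^{2}$ convergence of $u_N$ because $||\sigma||_{2}$ need not be small for $\mu<2$.)

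The step I expect to require the most care is the elimination of $\xi_{xxx}$ via the second error equation: feeding $\kappa_2\theta_x$ back in — a discrete counterpart of the BBM trick — is simultaneously what makes $E$ coercive in $||\xi||_{2}$ and what generates the extra term $a(\xi_{xx},B_x)$, whose cubic-in-$\xi$ part must be absorbed through the $|\xi|_\infty$ bootstrap (or the inverse inequality); keeping track of exactly which negative powers of $N$ survive in estimating this term, so that every contribution stays $O(N^{2(1-\mu)})$ when $\mu>3/2$, is the heaviest part of the bookkeeping.
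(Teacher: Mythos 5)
Your proof is correct and is essentially the paper's argument: your pointwise-in-$S_N$ substitution of $\kappa_2\theta_x=-\xi_t+d\xi_{xxt}-PB_x$ into the term $-\kappa_2 a(\xi_{xx},\theta_x)$ is algebraically the same as the paper's choice of test function $\chi=\xi+a\xi_{xx}$ in (\ref{dds311}), and it produces the same coercive energy in $\|\theta\|^2+\|\xi\|_2^2$, the same right-hand-side terms $(A_x,\theta)$, $(B,\xi_x)$, $(B_x,\xi_{xx})$, the same two bootstraps on $|\theta|_\infty$ and $|\xi|_\infty$, and the same Gronwall/inverse-inequality closure. The only cosmetic difference is your weighting by $\kappa_1$, which cancels the first-order cross terms that the paper simply keeps and bounds trivially.
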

\begin{proof}
Motivated by an {\em a priori} estimate for this system in Theorem 3.1 of \cite{BonaChS2004}, and putting $\varphi=\theta$ in (\ref{dds39}) and using integration by parts we get, while the semidiscrete approximation exists
\begin{eqnarray}
\frac{1}{2}\frac{d}{dt}||\theta||^{2}=-\kappa_{1}(\xi_{x},\theta)-(A_{x},\theta)+a(\xi_{xx},\theta_{x}).\label{dds352}
\end{eqnarray}
Now putting $\chi=\xi+a\xi_{xx}$ in (\ref{dds311}) and using integration by parts, we get, while the semidiscrete approximation exists,
\begin{eqnarray}
\frac{1}{2}\frac{d}{dt}\left(||\xi||^{2}+(|a|+d)||\xi_{x}||^{2}+|a|d||\xi_{xx}||^{2}\right)&=&-\kappa_{2}(\theta_{x},\xi)-a\kappa_{2}(\theta_{x},\xi_{xx})\nonumber\\
&&-(B_{x},\xi+a\xi_{xx}).\label{dds353}
\end{eqnarray}
%Noting from (\ref{dds312}) that the first term of $-B_{x}$ is equal to $-\kappa_{2}\theta_{x}$, whose inner product with $a\xi_{xx}$ is $-a\kappa_{2}(\theta_{x},\xi_{xx})$,

In order to eliminate the term $(\xi_{xx},\theta_{x})$ from (\ref{dds352}) and (\ref{dds353}) we multiply (\ref{dds352}) by $\kappa_{2}>0$ and add the resulting equation to (\ref{dds353}). In this way we get
\begin{eqnarray}
\frac{1}{2}\frac{d}{dt}\left(\kappa_{2}||\theta||^{2}+||\xi||^{2}+(|a|+d)||\xi_{x}||^{2}+|a|d||\xi_{xx}||^{2}\right)
&=&-\kappa_{1}\kappa_{2}(\xi_{x},\theta)-\kappa_{2}(A_{x},\theta)\nonumber\\
&&-(B_{x},\xi+a\xi_{xx})\nonumber\\
&&-\kappa_{2}(\theta_{x},\xi).\label{dds354}
\end{eqnarray}
We now estimate the right-hand side of (\ref{dds354}). The terms $(\xi_{x},\theta)$,  $(A_{x},\theta)$ are estimated as in the proof of Proposition \ref{propo33}. Assuming that $t_{N}$ is the maximal time in $(0,T]$ for which the semidiscrete approximation exists and satisfies, in view of (\ref{dds38}),
\begin{eqnarray}
|\theta|_{\infty}\leq 1,\;\; 0\leq t\leq t_{N},\label{dds355}
\end{eqnarray}
we see, as in (\ref{dds328a}) and (\ref{dds330}), that for $ 0\leq t\leq t_{N}$
\begin{eqnarray}
|(\xi_{x},\theta)|+|(A_{x},\theta)|\leq C\left(N^{2(1-\mu)}+||\theta||^{2}+||\xi||_{1}^{2}\right).\label{dds356}
\end{eqnarray}
Examining the rest of the terms in the right-hand side of (\ref{dds354}) we first note that
\begin{eqnarray}
|(\theta_{x},\xi)|=|(\theta,\xi_{x})|\leq \frac{1}{2}(||\theta||^{2}+||\xi_{x}||^{2}).\label{dds357}
\end{eqnarray}
In the last term of the right-hand side of (\ref{dds354}) the inner product with $\xi$ is easily estimated by integrating by parts and arguing as in (\ref{dds318b})-(\ref{dds318f}). This gives
\begin{eqnarray}
|(B_{x},\xi)|=|(B,\xi_{x})|
\leq C(N^{-2\mu}+||\xi||_{1}^{2}).\label{dds358}
\end{eqnarray}
We now estimate the terms in the inner product $(B_{x},\xi_{xx})$. We have, since $H^{1}$ is an algebra, using (\ref{dds33}) and our hypothesis on $\mu$, that

\begin{eqnarray}
|((u\sigma)_{x},\xi_{xx})|&\leq &C||u||_{1}||\sigma||_{1}||\xi_{xx}||\leq C(N^{2(1-\mu)}+||\xi_{xx}||^{2}).\label{dds359a}\\
|((u\xi)_{x},\xi_{xx})|&\leq &C||u||_{1}||\xi||_{1}||\xi_{xx}||\leq C||\xi||_{2}^{2}.\label{dds359b}\\
|((\sigma\xi)_{x},\xi_{xx})|&\leq &C||\sigma||_{1}||\xi||_{1}||\xi_{xx}||\leq C||\xi||_{2}^{2}.\label{dds359c}
\end{eqnarray}
Since $\mu>3/2$ we have by (\ref{dds33}), (\ref{dds34})
\begin{eqnarray}
|((\sigma^{2})_{x},\xi_{xx})|&\leq & C|\sigma|_{\infty}||\sigma||_{1}||\xi_{xx}||\leq CN^{\frac{3}{2}-2\mu}||\xi_{xx}||\nonumber\\
&\leq & CN^{-\mu}||\xi_{xx}||\leq C(N^{-2\mu}+||\xi_{xx}||^{2}).\label{dds359d}
\end{eqnarray}
Finally, assuming that $t_{N}$ in (\ref{dds355}) is small enough so that in addition to (\ref{dds355}) we have 
\begin{eqnarray}
|\xi|_{\infty}\leq 1,\; 0\leq t\leq t_{N},\label{dds360}
\end{eqnarray}
we obtain for $0\leq t\leq t_{N}$
\begin{eqnarray}
|(\xi^{2})_{x},\xi_{xx})|\leq 2|\xi|_{\infty}||\xi_{x}||||\xi_{xx}||\leq C||\xi||_{2}^{2}.\label{dds359e}
\end{eqnarray}
From (\ref{dds354}), (\ref{dds356})-(\ref{dds359d}) and (\ref{dds359e}), since $\kappa_{2}, d>0$ we obtain
\begin{eqnarray*}
\frac{d}{dt}\left(||\theta||^{2}+||\xi||_{2}^{2}\right)\leq C(N^{2(1-\mu)}+||\theta||^{2}+||\xi||_{2}^{2}),\; 0\leq t\leq t_{N},\label{dds361}
\end{eqnarray*}
where the constant $C$ does not depend on $t_{N}$. By Gronwall's lemma then, for $0\leq t\leq t_{N}$
\begin{eqnarray}
||\theta||+||\xi||_{2}\leq C_{T}N^{1-\mu}.\label{dds362}
\end{eqnarray}
Since by (\ref{dds35}) $|\theta|_{\infty}\leq CN^{1/2}||\theta||$, and since $|\xi|_{\infty}\leq C||\xi||_{1}$ by Sobolev's theorem, we see that (\ref{dds362}) implies, in view of our assumption on $\mu$,  that $t_{N}$ in (\ref{dds355}) and (\ref{dds360}) is not maximal if $N$ is sufficiently large, and, as usual, can be taken equal to $T$. We infer that (\ref{dds362}) holds up to $t=T$ and that the conclusion of the proposition follows, giving an optimal-order $H^{1}$ error estimate for $u_{N}$ and a suboptimal-order one for $\zeta_{N}$ in $L^{2}$.
\end{proof}
\begin{proposition}
\label{propo37}
Let $a,b,c,d$ as in case (vii) and with no loss of generality suppose that $a<0, b>0, d=0, c=0$. If $\mu>3/2$, then
\begin{eqnarray}
\max_{0\leq t\leq T}\left(||\zeta_{N}-\zeta||+||u_{N}-u||\right)\leq C N^{1-\mu}.\label{dds363}
\end{eqnarray}
\end{proposition}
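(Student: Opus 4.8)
The plan is to transfer to the error equations (\ref{dds39})--(\ref{dds312}) the weighted energy structure that underlies the a priori estimate of Theorem 3.9 of \cite{BonaChS2004}. For $a<0$, $b>0$, $c=d=0$ the equations (\ref{dds39}) and (\ref{dds311}) become, for $\varphi,\chi\in S_{N}$,
\begin{eqnarray*}
(\theta_{t},\varphi)+a(\xi_{xxx},\varphi)-b(\theta_{xxt},\varphi)&=&-\kappa_{1}(\xi_{x},\varphi)-(A_{x},\varphi),\\
(\xi_{t},\chi)&=&-\kappa_{2}(\theta_{x},\chi)-(B_{x},\chi),
\end{eqnarray*}
so the $u$-equation carries no regularizing term. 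Taking $\varphi=\theta$ in the first equation and integrating by parts yields, besides $\frac{1}{2}\frac{d}{dt}(\|\theta\|^{2}+b\|\theta_{x}\|^{2})$, the cross term $-a(\xi_{xx},\theta_{x})=|a|(\xi_{xx},\theta_{x})$; I would remove it by also testing the second equation with $\chi=|a|\xi_{xx}$, which contributes $\frac{|a|}{2}\frac{d}{dt}\|\xi_{x}\|^{2}$ together with a term $|a|\kappa_{2}(\theta_{x},\xi_{xx})$ that cancels the previous one once the first equation is multiplied by $\kappa_{2}$. Testing the second equation with $\chi=\kappa_{1}\xi$ as well, and using $(\xi_{x},\theta)+(\xi,\theta_{x})=0$ to kill the remaining linear coupling, one is led to the identity
\begin{eqnarray*}
\frac{1}{2}\frac{d}{dt}\Bigl(\kappa_{2}\|\theta\|^{2}+\kappa_{2}b\|\theta_{x}\|^{2}+\kappa_{1}\|\xi\|^{2}+|a|\|\xi_{x}\|^{2}\Bigr)&=&-\kappa_{2}(A_{x},\theta)-\kappa_{1}(B_{x},\xi)+|a|(B_{x},\xi_{xx}),
\end{eqnarray*}
which is the discrete counterpart of the energy identity for $\kappa_{2}\|\zeta\|^{2}+\kappa_{2}b\|\zeta_{x}\|^{2}+\kappa_{1}\|u\|^{2}+|a|\|u_{x}\|^{2}$, the functional that controls $(\zeta,u)$ in $H^{1}\times H^{1}$ for the continuous system in this case.

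I would then estimate the right-hand side. The terms $(A_{x},\theta)$ and $(B_{x},\xi)=-(B,\xi_{x})$ are treated exactly as in the proofs of Propositions \ref{propo33}--\ref{propo36}: one expands $A$ and $B$ through (\ref{dds310}) and (\ref{dds312}), applies the approximation and inverse estimates (\ref{dds33})--(\ref{dds35}) and the algebra property of $H^{1}$, integrates by parts to shift derivatives off the projection errors $\rho,\sigma$, uses the bounds $|\rho_{x}|_{\infty}+|\sigma_{x}|_{\infty}\le C$ (this is where $\mu>3/2$ enters), and invokes a continuity hypothesis $|\theta|_{\infty}\le 1$ valid on a maximal subinterval $[0,t_{N}]$; this produces $|(A_{x},\theta)|+|(B_{x},\xi)|\le C(N^{2(1-\mu)}+\|\theta\|^{2}+\|\xi\|_{1}^{2})$. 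The genuinely new difficulty is the term $|a|(B_{x},\xi_{xx})$: writing $B_{x}=\lambda\bigl((u(\xi+\sigma))_{x}+\tfrac{1}{2}((\xi+\sigma)^{2})_{x}\bigr)$ and integrating the pieces linear in the errors by parts, all contributions can be bounded either by $CN^{2(1-\mu)}$, or by quantities already present in the energy, or by terms involving $\|\xi_{xx}\|$. To close the estimate one therefore needs control of $\|\xi_{xx}\|$, which I would obtain by running the same weighted energy argument on the once-differentiated error equations --- these have the identical structure with $(\theta,\xi,A,B)$ replaced by $(\theta_{x},\xi_{x},A_{x},B_{x})$ --- giving a companion Gronwall inequality for $\kappa_{2}\|\theta_{x}\|^{2}+\kappa_{2}b\|\theta_{xx}\|^{2}+\kappa_{1}\|\xi_{x}\|^{2}+|a|\|\xi_{xx}\|^{2}$. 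Checking that the additional nonlinear and projection-error terms arising at that order are again dominated by $N^{2(1-\mu)}$ plus energy quantities is the most delicate part of the argument, and it is here that the derivative headroom in the hypothesis $\mu>3/2$ is really used.

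Collecting the two estimates gives $\frac{d}{dt}(\|\theta\|_{1}^{2}+\|\xi\|_{1}^{2})\le C(N^{2(1-\mu)}+\|\theta\|_{1}^{2}+\|\xi\|_{1}^{2})$ on $[0,t_{N}]$ with $C$ independent of $t_{N}$, so Gronwall's lemma and $\theta(0)=\xi(0)=0$ (from (\ref{dds38})) yield $\|\theta\|_{1}+\|\xi\|_{1}\le CN^{1-\mu}$ on $[0,t_{N}]$. Since $\mu>3/2$, Sobolev's inequality gives $|\theta|_{\infty}\le C\|\theta\|_{1}\le CN^{1-\mu}<1$ for $N$ large, so $t_{N}$ is not maximal and, as in the previous propositions, may be taken equal to $T$; then $\|\zeta_{N}-\zeta\|+\|u_{N}-u\|\le CN^{1-\mu}$ follows from $\|\theta\|+\|\xi\|\le CN^{1-\mu}$ together with $\|\rho\|+\|\sigma\|\le CN^{-\mu}$ coming from (\ref{dds33}). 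The main obstacle I anticipate is precisely the term $|a|(B_{x},\xi_{xx})$: the coexistence of the dispersive term $a\xi_{xxx}$ in the $\zeta$-equation with the total lack of smoothing in the $u$-equation forces one to carry the $H^{1}$-norm of $u_{N}-u$ rather than merely its $L^{2}$-norm, and hence to control one more derivative of $\xi$ than the target estimate exhibits; once this bookkeeping is arranged, the Gronwall step is routine.
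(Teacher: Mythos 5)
Your weighted energy identity for the standard Galerkin scheme (\ref{dds36})--(\ref{dds38}) is correctly derived, but the argument does not close, and the term you flag, $|a|(B_{x},\xi_{xx})$, is precisely where it breaks down. Your proposed fix --- a companion estimate for the once-differentiated error equations --- reproduces the same obstruction one derivative higher: to generate $\tfrac{d}{dt}\|\xi_{xx}\|^{2}$ from the $u$-error equation one must test with $\xi_{xxxx}$, and the nonlinear term then becomes $|a|(B_{xx},\xi_{xxx})$ (equivalently, after integration by parts, it requires three derivatives of $B$ and hence $\|\sigma\|_{3}\sim N^{3-\mu}$); since the $u$-equation has no smoothing term, the ladder never terminates. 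Moreover, even granting the companion bound you hope for, it cannot be better than $\|\xi_{xx}\|\lesssim N^{2-\mu}$, because the projection errors now enter through $\|\rho\|_{2},\|\sigma\|_{2}\lesssim N^{2-\mu}$; feeding this back into the base estimate, a term such as $((u\sigma)_{x},\xi_{xx})$ is of size $N^{1-\mu}\|\xi_{xx}\|\sim N^{3-2\mu}$, which exceeds the admissible forcing $N^{2(1-\mu)}$ by a factor of $N$, so Gronwall yields at best $\|\theta\|+\|\xi\|\lesssim N^{3/2-\mu}$ rather than the claimed $N^{1-\mu}$ (and the purely quadratic contribution $(\xi\xi_{x},\xi_{xx})=-\tfrac12\int\xi_{x}^{3}$ would in addition force a separate bootstrap on $|\xi_{x}|_{\infty}$).

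The paper escapes this by changing the scheme rather than the energy. Following the observation in Section 3.3 of \cite{BonaChS2004}, the $\zeta$-equation is rewritten as (\ref{dds366}), absorbing the dispersive term $au_{xxx}$ into $-\tfrac{a}{b}u_{x}+(\kappa_{1}+\tfrac{a}{b})\mathcal{T}_{b}u_{x}$ with $\mathcal{T}_{b}=(I-b\partial_{x}^{2})^{-1}$, and it is the corresponding modified semidiscretization (\ref{dds368})--(\ref{dds370}) that Proposition \ref{propo37} concerns --- not the standard scheme your proof analyzes. Because of the smoothing bound (\ref{361}), $\|\mathcal{T}_{b}f_{x}\|\le C\|f\|$, the resulting error equations couple only $\|\theta\|$ and $\|\xi\|$; the weighted combination $\kappa_{2}\|\theta\|^{2}+\tfrac{|a|}{b}\|\xi\|^{2}$ cancels the linear coupling, no second derivative of $\xi$ ever appears, and a single Gronwall step under the bootstrap $|\theta|_{\infty}\le 1$ (closed via the inverse inequality $|\theta|_{\infty}\le CN^{1/2}\|\theta\|$ and $\mu>3/2$) gives (\ref{dds363}). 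The missing idea in your proposal is thus the reformulation of the $\zeta$-equation through $\mathcal{T}_{b}$ and the attendant modification of the scheme; without it, the coexistence of $au_{xxx}$ with the total absence of smoothing in the $u$-equation defeats the higher-order energy strategy you outline.
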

\begin{proof}
These systems are of the form
\begin{eqnarray}
\zeta_{t}+\kappa_{1}u_{x}+\lambda (\zeta u)_{x}+au_{xxx}-b\zeta_{xxt}&=&0,\label{dds364}\\
u_{t}+\kappa_{2}\zeta_{x}+\lambda uu_{x}&=&0.\label{dds365}
\end{eqnarray}
Motivated by an analogous observation in \cite{BonaChS2004}, Section 3.3, we write the first pde (\ref{dds364}) above in the equivalent form
\begin{eqnarray}
\zeta_{t}-\frac{a}{b}u_{x}+(\kappa_{1}+\frac{a}{b})\mathcal{T}_{b}u_{x}+\lambda \mathcal{T}_{b}(\zeta u)_{x}=0,\label{dds366}
\end{eqnarray}
where $\mathcal{T}_{b}=(I-b\partial_{x}^{2})^{-1}$ was introduced in the proof of Proposition \ref{propo35}.
%
%is the solution operator of the two-point bvp
%\begin{eqnarray}
%&&-bv_{xx}+v=f\;{\rm in}\; (0,1),\label{dds367}\\
%&&v, f \;\;1{\rm -periodic},\nonumber
%\end{eqnarray}
%given by $u=\mathcal{T}f$.

Consequently, we will adopt the following semidiscretization of the system: For all $\varphi, \chi\in S_{N}$
\begin{eqnarray}
(\zeta_{Nt},\varphi)-\frac{a}{b}(u_{Nx},\varphi)+(\kappa_{1}+\frac{a}{b})(\mathcal{T}_{b}u_{Nx},\varphi)+\lambda (\mathcal{T}_{b}(\zeta_{N}u_{N})_{x},\varphi)&=&0,\label{dds368}\\
(u_{Nt},\chi)+\kappa_{2}(\zeta_{Nx},\chi)+\lambda (u_{N}u_{Nx},\chi)&=&0,\label{dds369}
\end{eqnarray}
with 
\begin{eqnarray}
\zeta_{N}(0)=P\zeta_{0}, u_{N}(0)=Pu_{0}.\label{dds370}
\end{eqnarray}
It is clear that the solution of this ivp exists at least locally in time.

%whose Fourier implementation is for $t\geq 0$
%\begin{eqnarray*}
%\widehat{\zeta}_{N,t}-ik\frac{a}{b}\widehat{u}_{N}+\frac{ik(\kappa_{1}-\frac{a}{b})}{1+bk^{2}}\widehat{u}_{N}+\frac{ik\lambda}{1+bk^{2}}\widehat{\zeta_{N}u_{N}}&=&0,\\
%\widehat{u}_{N,t}+ik \kappa_{2}\widehat{\zeta}_{N}+\frac{ik}{2}\lambda\widehat{u_{N}^{2}}&=&0,
%\end{eqnarray*}
%where $\widehat{\zeta}_{N}=\widehat{\zeta}_{N}(k,t), \widehat{u}_{N}=\widehat{u}_{N}(k,t), -N\leq k\leq N, t\geq 0$ are the Fourier coefficients of $\zeta_{N}, u_{N}$, respectively with initial values $\widehat{\zeta}_{N}(k,0)=\widehat{\zeta}_{0}(k), \widehat{u}_{N}(k,0)=\widehat{u}_{0}(k)$.

We proceed now to the proof of the error estimate (\ref{dds363}). 
While the semidiscrete solution exists, using our usual notation and subtracting the weak form of (\ref{dds366}) from (\ref{dds368}) we have for all $\varphi\in S_{N}$
\begin{eqnarray}
(\theta_{t},\varphi)-\frac{a}{b}(\xi_{x},\varphi)=-(\kappa_{1}+\frac{a}{b})(\mathcal{T}_{b}(\xi_{x}+\sigma_{x}),\varphi)-\lambda(\mathcal{T}_{b}{A}_{x},\varphi),\label{dds371}
\end{eqnarray}
%where, cf. (\ref{dds310}), 
%\begin{eqnarray}
%\widetilde{A}=\zeta_{N}u_{N}-\zeta u=u\rho+\zeta\sigma+u\theta+\zeta\xi+\sigma\theta+\rho\xi+\rho\sigma+\theta\xi.\label{dds372}
%\end{eqnarray}
From the weak form of (\ref{dds365}) and (\ref{dds369}) we obtain
\begin{eqnarray}
(\xi_{t},\chi)+\kappa_{2}(\theta_{x},\chi)=-\lambda ({B}_{x},\chi),\forall \chi\in S_{N},\label{dds373}
\end{eqnarray}
%where, cf. (\ref{dds312}),
%\begin{eqnarray}
%\widetilde{B}=u\sigma+u\xi+\sigma\xi+\frac{1}{2}\sigma^{2}+\frac{1}{2}\xi^{2}.\label{dds374}
%\end{eqnarray}
Putting $\varphi=\theta$ in (\ref{dds371}), $\chi=\xi$ in (\ref{dds373}) gives
\begin{eqnarray*}
\frac{1}{2}\frac{d}{dt}||\theta||^{2}-\frac{a}{b}(\xi_{x},\theta)&=&-(\kappa_{1}+\frac{a}{b})(\mathcal{T}_{b}(\xi_{x}+\sigma_{x}),\theta)-\lambda(\mathcal{T}_{b}{A}_{x},\theta),\\
\frac{1}{2}\frac{d}{dt}||\xi||^{2}-\kappa_{2}(\theta,\xi_{x})&=&-\lambda ({B}_{x},\xi).
\end{eqnarray*}
Multiplying the first equation above by $\kappa_{2}$ and the second by $-a/b$ and adding we get
\begin{eqnarray}
\frac{1}{2}\frac{d}{dt}\left(\kappa_{2}||\theta||^{2}+\frac{|a|}{b}||\xi||^{2}\right)&=&-\kappa_{2}(\kappa_{1}+\frac{a}{b})(\mathcal{T}_{b}(\xi+\sigma)_{x},\theta)\nonumber\\
&&-\kappa_{2}\lambda(\mathcal{T}_{b}{A}_{x},\theta)+\frac{a}{b}\lambda ({B}_{x},\xi).\label{dds375}
\end{eqnarray}
%Let now $-bg_{xx}+g=f_{x}$ where $f\in H^{1}, g\in H^{2}$. Then $\mathcal{T}f_{x}=g$. Taking inner products with $g$ in this ode and using integration by parts we have
%\begin{eqnarray*}
%||g||^{2}+b||g_{x}||^{2}=-(f,g_{x})\leq ||f||||g_{x}||\leq  \frac{1}{2b}||f||^{2}+\frac{b}{2}||g_{x}||^{2}.
%\end{eqnarray*}
%Therefore $||g||_{1}\leq C||f||$, i.~e.
%\begin{eqnarray}
%||\mathcal{T}f_{x}||_{1}\leq C||f||.\label{dds376}
%\end{eqnarray}
For the first two terms in the right-hand side of (\ref{dds375}) we have, in view of (\ref{361})
\begin{eqnarray}
|(\mathcal{T}_{b}(\xi+\sigma)_{x},\theta)|&\leq &||\mathcal{T}_{b}(\xi+\sigma)_{x}||||\theta||\leq C(||\xi||+||\sigma||)||\theta||,\label{dds377}\\
|(\mathcal{T}_{b}{A}_{x},\theta)|&\leq & ||\mathcal{T}_{b}{A}_{x}||||\theta||\leq C||{A}||||\theta||.\label{dds378}
\end{eqnarray}
We bound the right-hand side of (\ref{dds377}) as usual by
\begin{eqnarray}
(||\xi||+||\sigma||)||\theta||\leq C(N^{-2\mu}+||\xi||^{2}+||\theta||^{2}).\label{dds379}
\end{eqnarray}
In order to bound the right-hand side of (\ref{dds378}), we argue as in the proof of Proposition \ref{propo35}; in particular, consider (\ref{363}). The only difference is in the last term of $A$, which we now bound by $||\xi|| |\theta|_{\infty}$. Consequently, let
%\begin{eqnarray}
%||u\rho||&\leq & ||u||_{\infty}||\rho||\leq CN^{-\mu},\label{dds380a}\\
%||\zeta\sigma||&\leq & ||\zeta||_{\infty}||\sigma||\leq CN^{-\mu},\label{dds380b}\\
%||u\theta||&\leq & ||u||_{\infty}||\theta||\leq C||\theta||,\label{dds380c}\\
%||\zeta\xi||&\leq & ||\zeta||_{\infty}||\xi||\leq C||\xi||,\label{dds380d}\\
%||\sigma\theta||&\leq & ||\sigma||_{\infty}||\theta||\leq C||\theta||,\label{dds380e}\\
%||\rho\xi||&\leq & ||\rho||_{\infty}||\xi||\leq C||\xi||,\label{dds380f}\\
%\end{eqnarray}
%and since $\mu>1/2$
%\begin{eqnarray}
%||\rho\sigma||\leq ||\rho||_{\infty}||\sigma||\leq CN^{\frac{1}{2}-\mu}N^{-\mu}\leq CN^{-\mu}.\label{dds380g}
%\end{eqnarray}
$t_{N}$, $0<t_{N}\leq T$ be the maximal time for which the semidiscrete approximation exists and is such that
\begin{eqnarray}
|\theta|_{\infty}\leq 1,\; 0\leq t\leq t_{N}.\label{dds381}
\end{eqnarray}

%Therefore, for $0\leq t\leq t_{N}$,
%\begin{eqnarray}
%||\theta\xi||\leq ||\theta||_{\infty}||\xi||\leq ||\xi||.\label{dds380h}
%\end{eqnarray}
%from (\ref{dds372}), (\ref{dds380a})-(\ref{dds380h}) we have then, 
We therefore obtain, for $0\leq t\leq t_{N}$, that
\begin{eqnarray}
||{A}||||\theta||\leq C(N^{-2\mu}+||\theta||^{2}+||\xi||^{2}).\label{dds382}
\end{eqnarray}
We now estimate the last term of the right-hand side of (\ref{dds375}). Using the definition (\ref{dds312}) of $B$, we have, since $H^{1}$ is an algebra and in view of (\ref{dds33}), that

%In view of (\ref{dds374}) we have
\begin{eqnarray}
|((u\sigma)_{x},\xi)|\leq  ||u||_{1}||\sigma||_{1}||\xi||\leq C||\sigma||_{1}||\xi||\leq  C(N^{2(1-\mu)}+||\xi||^{2})\label{dds383a}
\end{eqnarray}
By integrating by parts we see that
\begin{eqnarray}
|((u\xi)_{x},\xi)|=\frac{1}{2}|(u_{x}\xi,\xi)||\leq \frac{1}{2}|u_{x}|_{\infty}||\xi||^{2}\leq  C||\xi||^{2},\label{dds383b}
\end{eqnarray}
using our hypothesis on $\mu$. Similarly, by (\ref{dds34})
\begin{eqnarray}
|((\sigma\xi)_{x},\xi)|=\frac{1}{2}|(\sigma_{x}\xi,\xi)||\leq \frac{1}{2}|\sigma_{x}|_{\infty}||\xi||^{2}\leq  C||\xi||^{2}.\label{dds383c}
\end{eqnarray}
By our hypothesis on $\mu$ and (\ref{dds33}), (\ref{dds34})
\begin{eqnarray}
|(\sigma\sigma_{x},\xi)|&\leq &|\sigma|_{\infty}||\sigma_{x}||||\xi||\leq CN^{\frac{1}{2}-\mu}N^{1-\mu}||\xi||\nonumber\\
&\leq & CN^{-\mu}||\xi||\leq C(N^{-2\mu}+||\xi||^{2}).\label{dds383d}
\end{eqnarray}
And finally, by periodicity
\begin{eqnarray}
(\xi\xi_{x},\xi)=0.\label{dds383e}
\end{eqnarray}
By  (\ref{dds383a})-(\ref{dds383e}) we conclude, as long as the solution of (\ref{dds368})-(\ref{dds370}) exists, that
\begin{eqnarray}
|({B}_{x},\xi)|\leq C(N^{2(1-\mu)}+||\xi||^{2}).\label{dds384}
\end{eqnarray}
Therefore, by (\ref{dds375}), since $\kappa_{2}, b>0$, and using (\ref{dds377}), (\ref{dds379}), (\ref{dds382}), (\ref{dds384}) we have for $0\leq t\leq t_{N}$
\begin{eqnarray*}
\frac{d}{dt}\left(||\theta||^{2}+||\xi||^{2}\right)\leq C(N^{2(1-\mu)}+||\theta||^{2}+||\xi||^{2}).
\end{eqnarray*}
Hence, by Gronwall's lemma, in view of (\ref{dds370}) we obtain for $0\leq t\leq t_{N}$
\begin{eqnarray}
||\theta||+||\xi||\leq C_{T}N^{1-\mu}.\label{dds383}
\end{eqnarray}
Since $|\theta|_{\infty}\leq CN^{1/2}||\theta||\leq CN^{3/2-\mu}$ by the above, in view of our hypothesis on $\mu$, and provided we take $N$ sufficiently large, we infer that $t_{N}$ in (\ref{dds381}) was not maximal; as usual, we may take $t_{N}=T$. Thus (\ref{dds383}) holds up to $t=T$, giving (\ref{dds363}). This inequality implies that $\zeta_{N}$ and $u_{N}$ satisfy in this case suboptimal $L^{2}$-error estimates.
\end{proof}
%%%%%%%%%%%%%%%%%%%%%%%%%%%%%%%%%
%%%%%%%%%%%%%%%%%%%%%%%%%%%%%%%%%
\section{Solitary waves}
\label{sec4}
This section is focused on the existence and numerical generation of solitary-wave solutions of the B/B systems (\ref{BB2}). We first review the application of several theories of existence and then we illustrate the corresponding results with some examples of numerical generation of the profiles.

\subsection{Some existence results}
\label{sec41}
The solitary waves are solutions of (\ref{BB2}) of the form $\zeta=\zeta(x-c_{s}t), u=u(x-c_{s}t)$ that satisfy
\begin{eqnarray*}
\partial_{x}\begin{pmatrix}c_{s}(1-b\partial_{xx})&-\frac{1}{\delta+\gamma}-a\partial_{xx}\\-(1-\gamma)(1+c\partial_{xx})&c_{s}(1-d\partial_{xx})\end{pmatrix}\begin{pmatrix}\zeta\\v_{\beta}\end{pmatrix}=\kappa_{\gamma,\delta}\partial_{x}\begin{pmatrix}\zeta v_{\beta}\\ \frac{v_{\beta}^{2}}{2}\end{pmatrix},
\end{eqnarray*}
where $\kappa_{\gamma,\delta}=\frac{\delta^{2}-\gamma}{(\delta+\gamma)^{2}}$, cf. section \ref{sec21}.
Classical solitary waves (CSW), for which $\zeta,u\rightarrow 0$ as $|x-c_{s}t|\rightarrow \infty$, will be solutions of
\begin{eqnarray}
\begin{pmatrix}c_{s}(1-b\partial_{xx})&-\frac{1}{\delta+\gamma}-a\partial_{xx}\\-(1-\gamma)(1+c\partial_{xx})&c_{s}(1-d\partial_{xx})\end{pmatrix}\begin{pmatrix}\zeta\\v_{\beta}\end{pmatrix}=\kappa_{\gamma,\delta}\begin{pmatrix}\zeta v_{\beta}\\ \frac{v_{\beta}^{2}}{2}\end{pmatrix}.\label{BB6}
\end{eqnarray}
In addition, there exist Generalized solitary waves (GSW), which are not CSW and satisfy (\ref{BB6}). 
\subsubsection{Existence via linearization}
(The main references in this section are \cite{Champ,AmickT,IK2,IK,ChampS,IP}.)
In the case of small deviations of $c_{s}$ from the speed of sound the existence of solitary waves can be studied in a similar way to analogous studies in \cite{BonaDM2007,DougalisM2008} by using Bifurcation theory, \cite{I,HaragusI}. 
In order to apply this theory to (\ref{BB6}), we define $c_{\gamma,\delta}$ to be the speed of sound corresponding to (\ref{BB2}), i.~e. as
\begin{eqnarray}
c_{\gamma,\delta}=\sqrt{\frac{1-\gamma}{\delta+\gamma}},\label{BB7b}
\end{eqnarray}
Note that if $(c_{s},\zeta,v_{\beta})$ is a solution of (\ref{BB6}), then $(-c_{s},\zeta,-v_{\beta})$ is also a solution (with the same profile but opposite speed). Thus we can assume $c_{s}>0$ and define the parameter
\begin{eqnarray}
\widetilde{\mu}:=\frac{c_{s}}{c_{\gamma,\delta}}-1,\label{NFT1}
\end{eqnarray}
so that $c_{s}=\widetilde{\mu}c_{\gamma,\delta}+1$. The parameter $\widetilde{\mu}$ given by (\ref{NFT1}) is analogous to the parameter $\mu$ used in \cite{IK} to discuss, via the Normal Form Theory (NFT), the existence of solitary waves for free surface wave propagation on an inviscid fluid layer under gravity and surface tension effects.
In terms of the variables $\zeta, v_{\beta}$, (\ref{BB6}) can be rewritten as a first-order system, depending on $\widetilde{\mu}$, for $U=(U_{1},U_{2},U_{3},U_{4})^{T}:=(\zeta,\zeta^{\prime},v_{\beta},v_{\beta}^{\prime})^{T}$, namely as
\begin{eqnarray}
U^{\prime}&=&V(U,\widetilde{\mu}):=L(\widetilde{\mu})U+R(U,\widetilde{\mu}),\label{NFS}\\
L(\widetilde{\mu})&=&\begin{pmatrix}0&1&0&0\\\frac{dc_{s}^{2}+a(1-\gamma)}{D}&0&-\frac{c_{s}}{D} \left(a+\frac{d}{\delta+\gamma}\right)&0\\
0&0&0&1\\-\frac{(1-\gamma)c_{s}(b+c)}{D}&0&\frac{1}{D}\left(bc_{s}^{2}+\frac{c(1-\gamma)}{\delta+\gamma}\right)&0\end{pmatrix},\nonumber\\
R(U,\widetilde{\mu})&=&\begin{pmatrix}0\\\frac{1}{D}\frac{\delta^{2}-\gamma}{(\delta+\gamma)^{2}}\left(-dc_{s}U_{1}U_{3}+\frac{a}{2}U_{3}^{2}\right)\\
0\\\frac{1}{D}\frac{\delta^{2}-\gamma}{(\delta+\gamma)^{2}}\left(-c(1-\gamma)U_{1}U_{3}-\frac{bc_{s}}{2}U_{3}^{2}\right)\end{pmatrix},\nonumber\\
D&=&bdc_{s}^{2}-(1-\gamma)ac=bdc_{s}^{2}-\frac{ac}{\kappa_{1}}c_{\gamma,\delta}^{2}.\label{NFTD}
\end{eqnarray}
We assume that the condition (C1) (cf. section \ref{sec22}) holds and $D\neq 0$. The system (\ref{NFS}), (\ref{NFTD}) admits $U=0$ as fixed point, that is, $V(0,\widetilde{\mu})=0$. Additionally, the vector field $V$ is reversible, meaning that
$$SV(U,\widetilde{\mu})=-V(SU,\widetilde{\mu}),$$ where $S={\rm diag}(1,-1,1,-1)$. In view of these observations, we see that the study of homoclinic solutions of (\ref{BB6}) via NFT requires first to analyze the linearization of (\ref{NFS}) at the origin $U=0$. The characteristic equation is
\begin{eqnarray}
\lambda^{4}-B\lambda^{2}+A=0,\label{4A1}
\end{eqnarray}
where
\begin{eqnarray}
A=\frac{c_{s}^{2}-c_{\gamma,\delta}^{2}}{D},\;
B=\frac{(b+d)c_{s}^{2}+(c+a/\kappa_{1})c_{\gamma,\delta}^{2}}{D}.\label{4A2}
\end{eqnarray}
(No confusion need arise between these $A,B$ and those of section \ref{sec3}.)The structure of the spectrum of $L(\widetilde{\mu})$ in (\ref{NFS}), (\ref{NFTD}) can be studied following the survey \cite{Champ}. The distribution of the roots of (\ref{4A1}) in the $(B,A)$-plane is given in Figure \ref{fig_A1}, which reproduces the bifurcation diagram, along with the location and the type of the four eigenvalues, shown in Figure 1 of \cite{Champ}. 
Thus, the behaviour of the linear dynamics is determined by the four regions separated by the four bifurcation curves
\begin{eqnarray*}
C_{0}&=&\{(B,A) / A=0, B>0\},\\
C_{1}&=&\{(B,A) / A=0, B<0\},\\
C_{2}&=&\{(B,A) / A>0, B=-\sqrt{A}\},\\
C_{3}&=&\{(B,A) / A>0, B=\sqrt{A}\}.
\end{eqnarray*}
The Center Manifold Theorems and the theory of reversible bifurcations, \cite{I,HaragusI}, can be applied to study the existence of homoclinic orbits in each bifurcation. The reduced Normal Form systems reveal the existence of homoclinic to zero orbits and homoclinic to periodic orbits. The corresponding solutions are CSW's and GSW's, respectively. In addition, periodic and quasi-periodic orbits are identified, \cite{IK}.

\begin{figure}[htbp]
\centering
{\includegraphics[width=1\textwidth]{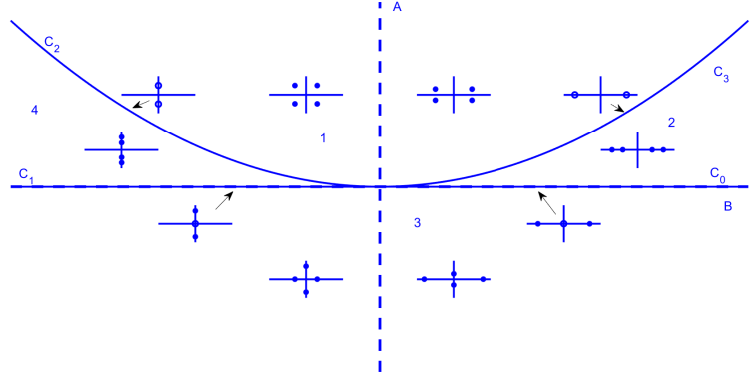}}
\caption{Linearization at the origin of (\ref{NFS}). As in Figure 1 of \cite{Champ}.}
\label{fig_A1}
\end{figure}

More specifically, we may adapt the discussion of \cite{Champ} near the bifurcation curves $C_{0}$ to $C_{3}$ to the case of (\ref{NFS}), (\ref{NFTD}), using (\ref{NFT1}) as bifurcation parameter. Note first that
%
%
%
%
%and displayed in Figure \ref{fig_A1}, along with the distribution of the eigenvalues in each region and curve, and which reproduces that of \cite{Champ}.
%\begin{figure}[htbp]
%\centering
%{\includegraphics[width=1\textwidth]{champ_fig.pdf}}
%\caption{Linearization at the origin of (\ref{NFS}).}
%\label{fig_A1}
%\end{figure}
%Following the theory developed in \cite{IK} and the discussion in \cite{Champ} we can identify CSW's (as homoclinic to zero orbits)near the curve $C_{0}$ in region $2$ and GSW's (as solutions homoclinic to periodic orbits) near the curve $C_{1}$ in region $3$. Note first that, 
by (\ref{4A2}), $A=0$ iff $c_{s}=c_{\gamma,\delta}$; then $B=S(\gamma,\delta)/(bd-ac/\kappa_{1})$, with $S(\gamma,\delta):=\frac{1+\gamma\delta}{3\delta (\delta+\gamma)}$. Therefore, under the hypothesis (C1), the curve $C_{0}$ is characterized by the conditions 
\begin{eqnarray}
a,c\leq 0, b,d\geq 0, bd-\frac{ac}{\kappa_{1}}>0,\; \widetilde{\mu}=0,\label{4A3}
\end{eqnarray}
while the conditions for the curve $C_{1}$ are
\begin{eqnarray}
a,c\leq 0, b,d\geq 0, bd-\frac{ac}{\kappa_{1}}<0.\; \widetilde{\mu}=0,\label{4A4}
\end{eqnarray}
We now study the information furnished by the Normal Form Theory (NFT) close to each curve $C_{j}, 0\leq j\leq 3$.
In the case of $C_{0}$, the linearization matrix $L(0)$ has two simple eigenvalues equal to
\begin{eqnarray}
\pm\sqrt{\frac{S(\gamma,\delta)}{bd-ac/\kappa_{1}}},\label{NFT3}
\end{eqnarray}
and the zero eigenvalue with geometric multiplicity one and algebraic multiplicity two. As in \cite{IK,Champ}, the main role in describing the dynamics close to $C_{0}$ by NFT is played by this two-dimensional center manifold, on which (\ref{NFS}) is reduced to a nonlinear oscillator system which depends on $\widetilde{\mu}$. When $\widetilde{\mu}$, $A$ and $B$ are positive, and near $C_{0}$ the linear dynamics is given by the spectrum of $L(\widetilde{\mu})$ which consists of four real eigenvalues (region 2 in Figure \ref{fig_A1}). In this case, the normal form system has a unique solution, homoclinic to zero at infinity, symmetric and unique up to spatial translations, (\cite{IK}, Proposition 3.1), that corresponds to a CSW solution of (\ref{BB6}).

\begin{remark}
If $\widetilde{\mu}$ is negative, and with similar arguments to those of \cite{IK}, NFT establishes the existence of a family of periodic solutions of the reduced system (close to $C_{0}$) for each $\widetilde{\mu}$, unique up to spatial translations. For the case at hand, and due to (\ref{4A2}), if $\widetilde{\mu}<0$ and $bd-ac/\kappa_{1}>0$, then $D>0$. Therefore, $A<0$ and we fall into region 3 of Figure \ref{fig_A1}. The numerical generation of periodic traveling wave solutions of (\ref{BB6}) under these conditions will be discussed in section \ref{sec42}.
\end{remark} 
In the case of $C_{1}$, the spectrum of $L(0)$ consists of zero (with algebraic multiplicity two) and the two simple imaginary eigenvalues given by (\ref{NFT3}) (recall that $bd-ac/\kappa_{1}<0$). The arguments used in \cite{IK}, Proposition 3.2, apply
here and NFT reduces (\ref{BB6}), on the center manifold, for $\widetilde{\mu}\neq 0$ small enough, to a normal form system which admits homoclinic solutions to periodic orbits, that is GSW solutions. Information about the structure of the periodic orbits can also be obtained, cf. \cite{Lombardi1,Lombardi2,Lombardi3}.
\begin{remark}
\label{remark42}
In addition to GSW's, the normal form derived in \cite{IK}, section 3.2, also reveals the existence of other solutions: periodic, quasi-periodic, and homoclinic to zero solutions (that is, CSW's). This normal form is used in \cite{Champ} to generalize the result of existence of a homoclinic solution of square hyperbolic secant form (in lowest order of $\widetilde{\mu}$) or a pair of hyperbolic secant solutions close to $C_{1}$ and corresponding to region 3 of Figure \ref{fig_A1}. As mentioned in \cite{Champ}, persistence of these solutions under small reversible perturbations of the normal form is not expected. 
\end{remark}
The normal form derived in \cite{IP} is used in \cite{Champ} to study the dynamics close to $C_{2}$. Its application to our case reveals the existence of homoclinic to zero solutions (CSW's) with nonmonotone decay for $\widetilde{\mu}<0$ and corresponding to region 1 of Figure \ref{fig_A1} (they have a truncated form like $E_{1}{\rm sech}(E_{2}x)e^{i\omega \theta}$, for constants $E_{1}, E_{2}$ and $\omega$ related to the coefficients of the normal form, and $\theta\in\mathbb{R}$), and homoclinic solutions to periodic orbits (GSW's) for $\widetilde{\mu}>0$ corresponding to region 4 of Figure \ref{fig_A1}.
\begin{remark}
These CSW's with nonmonotone decay require then speeds $c_{s}$ smaller than the speed of sound $c_{\gamma,\delta}$ (since $\widetilde{\mu}<0$) and are different from the CSW's obtained close to $C_{0}$, which are strictly positive or negative. The generation and stability of these waves will be discussed numerically in sections \ref{sec42} and \ref{sec55}, respectively.
\end{remark}
Finally, near $C_{3}$, and according to \cite{Champ}, crossing from the region 2 to region 1 in Figure \ref{fig_A1} forms a bifurcation causing the generation of infinity multiplicity of homoclinic orbits. This bifurcation was analyzed for specific problems, \cite{ChampT,Belyakov,Deveney}. In our case, the numerical computations suggest a similar situation to that of described in \cite{ChampT} (see also section 5.1 of \cite{Champ} and \cite{AmickT,ChampS}). In particular, CSW's of nonmonotone decay are numerically generated in section \ref{sec42}.

% Furthermore, (\cite{IK}, Proposition 3.2; see also \cite{Champ}), homoclinic to zero solutions (that is, CSW's) exist for $\mu<0$. In the case of (\ref{NFT1}), (\ref{NFT2}), since $bd-ac/\kappa_{1}$, if $\mu<0$ then $A>0$ so this combinaion corresponds to region 1. As mentioned in \cite{Champ}, persistence of these solutions under small reversible perturbations is not expected. For the case of (\ref{BB6}), the generation and persistence of these waves will be discussed numerically in sections \ref{sec42} and \ref{sec55}, respectively.

In summary, NFT establishes the existence of CSW solutions of (\ref{BB6}) for small and positive $c_{s}^{2}-c_{\gamma,\delta}^{2}$ when (\ref{4A3}) holds (close to $C_{0}$), and GSW solutions for small $c_{s}^{2}-c_{\gamma,\delta}^{2}$ under the conditions given in (\ref{4A4}) and close to $C_{1}$. Note that, in the latter case, in view of (\ref{BB1b}), it is not possible to have $a,c<0, b>0, d=0$. Hence, from (\ref{4A3}) and (\ref{4A4}), we may distinguish the cases shown in Table \ref{tavle0}.

%Thus defining $c_{\gamma,\delta}$ to be the speed of sound corresponding to (\ref{BB2}), i.~e. as
%\begin{eqnarray}
%c_{\gamma,\delta}=\sqrt{\frac{1-\gamma}{\delta+\gamma}},\label{BB7b}
%\end{eqnarray}
%then a similar discussion to that of \cite{DougalisM2008}, under hypothesis (C1), leads to existence of CSW or GSW for small and positive $c_{s}-c_{\gamma,\delta}$, and when $D\neq 0$, in the cases shown in Table \ref{tavle0}.
\begin{table}[ht]
\begin{center}
\begin{tabular}{|c|c|c|}
    \hline
Case&Admissible system&Type of solitary wave\\\hline\hline
(A1)&$a,c<0,b=0,d>0$&GSW\\\hline
(A2)&$a,c<0,b,d>0, \frac{bd}{\delta+\gamma}-ac<0$&GSW\\\hline
(A3)&$a,c<0,b,d>0, \frac{bd}{\delta+\gamma}-ac>0$&CSW\\\hline
(A4)&$a=0,c<0,b,d>0$&CSW\\\hline
(A5)&$a<0,c=0,b,d>0$&CSW\\\hline
(A6)&$a=c=0,b,d>0$&CSW\\
    \hline
\end{tabular}
\end{center}
\caption{Corresponding admissible system in (\ref{BB2}) and type of solitary waves and  for small and positive $c_{s}-c_{\gamma,\delta}$, when $D\neq 0$, according to the Normal Form Theory.}
\label{tavle0}
\end{table}

In addition, when $c_{s}<c_{\gamma,\delta}$, close to $C_{2}$, CSW's of nonmonotone decay are shown to exist. They have also been observed numerically close to $C_{3}$, cf. section \ref{sec42}.
\begin{remark}
In \cite{Champ} a review is made of existence results of homoclinic orbits using the linearization in each region of Figure \ref{fig_A1}. In our case, this is beyond the scope of the paper. However, in section \ref{sec42} we will indicate, for each computed solitary wave, the corresponding region, refering to \cite{Champ} and references therein for more information and possible similarities with other models.
\end{remark}

%\begin{itemize}
%
%\item[(A1)] $a,c<0,b=0,d>0$ (GSW).
%\item[(A2)] $a,c<0,b,d>0, \frac{bd}{\delta+\gamma}-ac<0$ (GSW).
%\item[(A3)] $a,c<0,b,d>0, \frac{bd}{\delta+\gamma}-ac>0$ (CSW).
%\item[(A4)] $a=0,c<0,b,d>0$ (CSW).
%\item[(A5)] $a<0,c=0,b,d>0$ (CSW).
%\item[(A6)] $a=c=0,b,d>0$ (CSW).
%%\item[(A7)] When $D=0$ numerical evidence of CSW will be shown below.
%\end{itemize}
Table \ref{tavle1} shows the relation between the classification (i)-(vii) of parameter cases for the study of well-posedness in section \ref{sec22} and  (A1)-(A6) 
\begin{table}[ht]
\begin{center}
\begin{tabular}{|c||c|}
    \hline
$(i)\leftrightarrow (A6)$&$\left.\begin{matrix}(iv)\\(vi)\\(vii)\end{matrix}\right\}\leftrightarrow D=0$\\
$(ii)\leftrightarrow\left\{\begin{matrix}(A2)\\(A3)\end{matrix}\right.$&\\
$(iii)\leftrightarrow (A1)$&$(v)\leftrightarrow\left\{\begin{matrix}(A4)\\(A5)\end{matrix}\right.$\\
    \hline
\end{tabular}
\end{center}
\caption{Relation between the cases (i)-(vii) of section \ref{sec22}, and (A1)-(A6) and $D=0$ for existence of solitary waves according to the Normal Form Theory.}
\label{tavle1}
\end{table}

For a particular case of $D=0$ (\lq classical Boussinesq\rq\ $b=0, d>0, a=c=0$) existence and numerical generation of CSW's are studied in \cite{Duran2019}, cf. also \cite{AD1} for the analogous case of surface waves. In other cases where $D=0$, numerical evidence of existence of CSW's will be presented below.

\subsubsection{Toland's Theory}
For larger deviations $c_{s}-c_{\gamma,\delta}$ or other conditions on the speed $c_{s}$, several general theories may be used to analyze the existence of classical solitary waves: Toland's Theory, \cite{Toland1986}, Concentration-Compactness Theory, \cite{Lions}, and Positive Operator Theory, \cite{BenjaminBB1990}. In the first case, and in order to apply the results of \cite{Toland1986} to study the existence of CSW's, note that system (\ref{BB6}) may be written in the form
\begin{eqnarray}
S_{1}{\bf v}^{\prime\prime}+S_{2}{\bf v}+\nabla g(v_{\beta},\zeta)=0,\label{BB8}
\end{eqnarray}
where ${\bf v}=(v_{\beta},\zeta)^{T}$ and
\begin{eqnarray}
&&S_{1}=-\begin{pmatrix}\frac{a}{c_{s}}&b\\d&\frac{(1-\gamma)c}{c_{s}}\end{pmatrix},\quad
S_{2}=-\begin{pmatrix}\frac{-1}{c_{s}(\delta+\gamma)}&1\\1&-\frac{(1-\gamma)}{c_{s}}\end{pmatrix},\label{BB9}\\
&&g(v_{\beta},\zeta)=-\frac{(\delta^{2}-\gamma)}{c_{s}(\delta+\gamma)^{2}}\frac{\zeta v_{\beta}^{2}}{2}=-\frac{\kappa_{\gamma,\delta}}{2c_{s}}\zeta v_{\beta}^{2},\nonumber
\end{eqnarray}
where we recall that $\kappa_{\gamma,\delta}$ is given by (\ref{kap}) and  $v_{\beta}=(1-\beta\partial_{xx})^{-1}u$. 
Note that the theory of \cite{Toland1986} can be applied here in the symmetric case $b=d$, and may be used to give existence results for classical solitary waves for specific systems in terms of $c_{s},\gamma$  and $\delta$ or, alternatively, in terms of $c_{s}, c_{\gamma.\delta}$, cf. (\ref{BB7b}). We recall the general form of the main result for existence of solitary waves derived in \cite{Toland1986}, in the form given in \cite{MChen1998}.
\begin{theorem}
\label{toland}
Let $S_{1},S_{2}$ be symmetric, $g\in C^{2}(\mathbb{R}^{2})$ such that $g,\nabla g, \nabla^{2}g$ are zero at $(0,0)$. Let $Q$ and $f$ be given for ${\bf u}:=(u_{1},u_{2})^{T}\in\mathbb{R}^{2}$ by 
\begin{eqnarray}
Q({\bf u})={\bf u}^{T}S_{1}{\bf u},\; 
f({\bf u})={\bf u}^{T}S_{2}{\bf u}+2g({\bf u}),\label{BB8b}
\end{eqnarray}
%$$f({\bf u})={\bf u}^{T}S_{2}{\bf u}+2g({\bf u})$$ 
and assume that:
\begin{itemize}
\item[(I)] $det(S_{1})<0$ and there are two linearly independent vectors ${\bf v}_{1}, {\bf v}_{2}$ such that $Q({\bf v}_{1})=Q({\bf v}_{2})=0$.
\item[(II)] There is a closed plane curve $\mathcal{F}$ with $(0,0)\in\mathcal{F}$ such that
\begin{itemize}
\item[(i)] $f=0$ on $\mathcal{F}$ and $\mathcal{F}\backslash \{(0,0)\}\subset\{{\bf u} | Q({\bf u})<0\}$.
\item[(ii)] $f(u_{1},u_{2})>0$ in the (nonempty) interior of $\mathcal{F}$.
\item[(iii)] $\mathcal{F}\backslash \{(0,0)\}$ is strictly convex.
\item[(iv)] $\nabla f(u_{1},u_{2})=0$ on $\mathcal{F}$ $\Leftrightarrow (u_{1},u_{2})=(0,0)$.
\end{itemize}
\end{itemize}
Then there is an orbit $\widetilde{\gamma}$ of (\ref{BB8}) in the $(v_{\beta}(0),\zeta)$ plane, which is homoclinic to the origin and
\begin{itemize}
\item[(a)] $(v_{\beta}(0),\zeta(0))\in\Gamma$ where $\Gamma$ is the segment of $\mathcal{F}$ including the origin between $P_{1}, P_{2}$ with $P_{j}$ satisfying
\begin{eqnarray*}
\label{BB9b}
\langle \nabla f(P_{j}),{\bf v}_{j}\rangle=0,\quad j=1,2.
\end{eqnarray*}
\item[(b)] $v_{\beta},\zeta$ are even functions on $\mathbb{R}$.
\item[(c)] $(v_{\beta}(\xi),\zeta(\xi))$ is in the interior of $\mathcal{F}$ for all $\xi\neq 0$.
\item[(d)] $\widetilde{\gamma}$ is monotone.
\end{itemize}
\end{theorem}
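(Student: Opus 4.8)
The plan is to follow the variational/shooting scheme of \cite{Toland1986} (in the formulation of \cite{MChen1998}). First I would note that (\ref{BB8}) is the Euler--Lagrange system of the Lagrangian $\tfrac12({\bf v}')^{T}S_{1}{\bf v}'-V({\bf v})$, $V({\bf v})=\tfrac12{\bf v}^{T}S_{2}{\bf v}+g({\bf v})=\tfrac12 f({\bf v})$, so that $E=\tfrac12({\bf v}')^{T}S_{1}{\bf v}'+V({\bf v})$ is conserved; since $g,\nabla g,\nabla^{2}g$ vanish at $0$, any orbit homoclinic to the origin lies on $\{E=0\}$, where $Q({\bf v}')=-f({\bf v})$ with $Q,f$ as in (\ref{BB8b}). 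Using hypothesis (I) I would pass to \emph{characteristic coordinates}: $Q$ is indefinite with isotropic directions ${\bf v}_{1},{\bf v}_{2}$, so writing ${\bf v}=w_{1}{\bf v}_{1}+w_{2}{\bf v}_{2}$ makes $S_{1}$ antidiagonal, say $S_{1}=\bigl(\begin{smallmatrix}0&\tau\\\tau&0\end{smallmatrix}\bigr)$ with $\tau\neq0$, so that $Q=2\tau w_{1}w_{2}$ and the system reads $\tau w_{2}''=-\partial_{w_{1}}V$, $\tau w_{1}''=-\partial_{w_{2}}V$, with $E=\tau w_{1}'w_{2}'+V$. In these coordinates the interior of $\mathcal{F}$ is $\mathcal{D}=\{f>0\}=\{V>0\}$, which by (II) is bounded, strictly convex, with the origin on $\mathcal{F}=\partial\mathcal{D}$.

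Next I would construct the wave as an \emph{even} solution, looking for data $({\bf v}(0),{\bf v}'(0))=(P,0)$ (automatically even by reversibility); such a solution can be homoclinic to $0$ only if $E=0$, which here forces $P\in\mathcal{F}$, and for every $P\in\mathcal{F}$ these data give $E\equiv0$. So I would shoot in $P$ along the arc $\Gamma\subset\mathcal{F}$ between the endpoints $P_{1},P_{2}$ of part (a). A Taylor expansion at $\xi=0$ using $S_{1}^{-1}=\tfrac1\tau\bigl(\begin{smallmatrix}0&1\\1&0\end{smallmatrix}\bigr)$ gives $\tfrac{d^{2}}{d\xi^{2}}V({\bf v})\big|_{0}=-\tfrac2\tau\,\partial_{w_{1}}V(P)\,\partial_{w_{2}}V(P)$, so ${\bf v}_{P}$ enters $\mathcal{D}$ at once precisely on the portion of $\mathcal{F}$ where this quantity is positive --- nonvanishing off the origin by (II)(iv), with $P_{1},P_{2}$ (the points where the tangent to $\mathcal{F}$ becomes isotropic) bounding the admissible shooting arc. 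Once inside $\mathcal{D}$, the identity $\tau w_{1}'w_{2}'=-V<0$ keeps $w_{1}',w_{2}'$ of opposite, nonvanishing sign, so both configuration coordinates are strictly monotone there; this will yield conclusions (c) and (d).

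Then I would pin down $P^{*}$ by a connectedness argument. Such a monotone orbit ${\bf v}_{P}$ in $\overline{\mathcal{D}}$ either converges to an $\{E=0\}$ rest point of (\ref{BB8}) in $\overline{\mathcal{D}}$ --- which must be the origin, since an interior critical point of $f$ has $f>0$ hence positive energy, and on $\mathcal{F}$ only the origin is critical by (II)(iv) --- or it exits through $\mathcal{F}$. Let $\Gamma_{1},\Gamma_{2}\subset\Gamma$ be the open sets of $P$ for which the exit occurs through one or the other of the two sub-arcs of $\mathcal{F}$ lying on either side of the origin; a two-sided estimate --- $P$ near an endpoint of $\Gamma$, where the initial acceleration is nearly isotropic, versus $P$ near $0$, where the hyperbolic linearization of (\ref{BB8}) at the origin (one-dimensional stable and unstable manifolds, by the compatibility of $S_{1},S_{2}$ encoded in (I) and (II)(i)) governs the flow --- would show $\Gamma_{1},\Gamma_{2}$ nonempty and lying on opposite sides of $0$ in $\Gamma$. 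Being open and disjoint they cannot cover the connected arc $\Gamma$, so any $P^{*}\in\Gamma\setminus(\Gamma_{1}\cup\Gamma_{2})$ yields an orbit that stays in $\overline{\mathcal{D}}$ and hence converges to the origin; being even it is homoclinic, and conclusions (a)--(d) are read off from the construction.

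The hard part will be the two-sided shooting estimate that places $\Gamma_{1}$ and $\Gamma_{2}$ on opposite sides of the origin: this is where strict convexity (II)(iii) and the nondegeneracy (II)(iv) enter in an essential way, together with control of the hyperbolic approach to the origin near the corner of $\mathcal{F}$. I would also point out that the symmetry hypothesis $b=d$, needed to put the solitary-wave system of (\ref{BB2}) into the form (\ref{BB8}) with symmetric $S_{1},S_{2}$, is precisely what makes this Lagrangian/energy structure --- and hence the whole scheme --- available.
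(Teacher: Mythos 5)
The paper does not prove Theorem \ref{toland}: it is quoted, as stated, from Toland \cite{Toland1986} in the formulation of \cite{MChen1998}, and is then only \emph{applied} to the system (\ref{BB8})--(\ref{BB9}) in the symmetric case $b=d$. So there is no internal proof to compare your argument against; what you have written is an attempted reconstruction of Toland's own existence proof for the abstract theorem.

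As such a reconstruction, your outline assembles the right structural ingredients: the Lagrangian structure of (\ref{BB8}) with conserved energy $E=\tfrac12 Q({\bf v}')+\tfrac12 f({\bf v})$, the observation that a homoclinic orbit lives on $\{E=0\}$ where $Q({\bf v}')=-f({\bf v})$, the characteristic coordinates attached to the isotropic directions ${\bf v}_1,{\bf v}_2$ of the indefinite form $Q$ from (I), the identification of $P_1,P_2$ as the points of $\mathcal{F}$ with isotropic tangent, evenness via shooting with ${\bf v}'(0)=0$ from $\mathcal{F}$, and monotonicity inside $\{f>0\}$ from $\tau w_1'w_2'=-V<0$. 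But the proposal is a plan rather than a proof: the decisive step --- the two-sided shooting estimate showing that the exit sets $\Gamma_1,\Gamma_2$ are nonempty and sit on opposite sides of the origin in $\Gamma$, together with the verification that a trapped orbit genuinely converges to the origin (rather than accumulating on $\mathcal{F}$, where $E=0$ only forces ${\bf v}'$ to be isotropic) and that the selected datum lies on the segment $\Gamma$ of conclusion (a) --- is exactly where hypotheses (I) and (II)(iii)--(iv) must do their work, and you explicitly defer it. Until that estimate is supplied, the argument does not establish the statement; for the purposes of this paper the correct move is simply to cite \cite{Toland1986,MChen1998}, as the authors do.
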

The application of this theory can be made in a similar way to the case of Boussinesq systems for surface waves, \cite{Chen2000}. In our context, from (\ref{BB8}), (\ref{BB9}) and assuming $a,c\leq 0, b=d>0$, we have
\begin{eqnarray}
f(v_{\beta},\zeta)&=&{\bf v}^{T}S_{2}{\bf v}+2g({\bf v})\nonumber\\
&=&\frac{1}{c_{s}}\left(-\frac{v_{\beta}^{2}}{\delta+\gamma)}\left(1+\frac{(\delta^{2}-\gamma}{(\delta+\gamma)}\zeta\right)-(1-\gamma)\zeta^{2}+2c_{s}\zeta v_{\beta}\right).\label{efe}
\end{eqnarray}
The quadratic form $Q$ defined in (\ref{BB8b})
%This may be used to give existence results on solitary waves for specific systems in terms of $c_{s},\gamma$ and $\delta$ or, alternatively, in terms of $c_{s}, c_{\gamma.\delta}$. Specifically, the application of this theory can be made in a similar way to the case of Boussinesq systems for surface waves, \cite{ref}. In our context, assuming $a,c\leq 0, b=d>0$ the quadratic form
%\begin{eqnarray}
%Q(u_{1},u_{2})=(u_{1},u_{2})S_{1}\begin{pmatrix}u_{1}\\u_{2}\end{pmatrix},\label{BB8b}
%\end{eqnarray}
is indefinite, and its diagonal form (used to identify two linearly independent directions in which $Q$ vanishes) is of the following types:
\begin{itemize}
\item[(1)] If $a<0$ then
$$Q(U_{1},U_{2})=-\frac{a}{c_{s}}U_{1}^{2}+\frac{D}{ac_{s}}U_{2}^{2},$$ where here $D=b^{2}c_{s}^{2}-(1-\gamma)ac\geq 0$ and
\begin{eqnarray*}
U_{1}=u_{1}+\frac{bc_{s}}{a}u_{2},\quad U_{2}=u_{2}.
\end{eqnarray*}
In this case the two linearly independent directions where $Q=0$ are given by
\begin{eqnarray*}
U_{2}=\pm\frac{a}{\sqrt{D}}U_{1}&\Rightarrow &u_{2}=\frac{a}{\sqrt{D}-bc_{s}}u_{1},\\
&&u_{2}=-\frac{a}{\sqrt{D}+bc_{s}}u_{1}.
\end{eqnarray*}
\item[(2)] When $a=0, c<0$ then
$$Q(U_{1},U_{2})=-(1-\gamma)\frac{c}{c_{s}}U_{1}^{2}+\frac{b c_{s}}{c(1-\gamma)}U_{2}^{2},$$ with 
\begin{eqnarray*}
U_{1}=u_{1}+\frac{bc_{s}}{c(1-\gamma)}u_{2},\quad U_{2}=u_{2}.
\end{eqnarray*}
Now $Q=0$ when
\begin{eqnarray*}
U_{2}=\pm\frac{c(1-\gamma)}{c_{s}\sqrt{b}}U_{1}&\Rightarrow &u_{2}=-\frac{c(1-\gamma)}{c_{s}\sqrt{b}(1+\sqrt{b})}u_{1}\\
&&u_{2}=\frac{c(1-\gamma)}{c_{s}\sqrt{b}(1-\sqrt{b})}u_{1}.
\end{eqnarray*}
\item[(3)] When $a=c=0$, then
$Q(u_{1},u_{2})=-2bu_{1}u_{2}$, and the two independent directions are clearly $u_{1}=0$ and $u_{2}=0$.
\end{itemize}
As we are investigating existence of classical solitary waves, we will apply Theorem \ref{toland} for the cases (A3)-(A6) in section \ref{sec41}. Note that the condition ${\rm det}(S_{1})<0$ in (I) of Theorem \ref{toland} implies that $D>0$, where $D$ is given by (\ref{NFTD}).
%(The case $D=0$ may be treated as in \cite{PegoW1997} for the case of surface waves.) 
By way of illustration, we consider the simplest case (3). 
%We define
%\begin{eqnarray}
%\label{BB8d}
%\kappa_{1}=\frac{1}{\delta+\gamma},\quad \kappa_{2}=(1-\gamma),\quad K_{\gamma,\delta}=\frac{\delta^{2}-\gamma}{(\delta+\gamma)^{2}}.
%\end{eqnarray}
The condition $Q<0$ (see (II)(i) of Theorem \ref{toland}) implies that $u_{1}, u_{2}$ are taken in the first or the third $(v_{\beta},\zeta)$ quadrant. The resulting solitary waves are, respectively, of elevation or of depression type, and this is determined by the sign of $\kappa_{\gamma,\delta}$. Since the linearly independent directions are given in this case by ${\bf v}_{1}=(1,0)^{T}, {\bf v}_{2}=(0,1)^{T}$, the points $P_{1}, P_{2}$ must satisfy
\begin{eqnarray*}
f(P_{1})=f(P_{2})=0,\quad \frac{\partial f}{\partial u_{1}}(P_{1})=\frac{\partial f}{\partial u_{2}}(P_{2})=0,
\end{eqnarray*}
which means, following \cite{Chen2000}
\begin{eqnarray}
-\kappa_{1}u_{1}^{2}-\kappa_{\gamma,\delta}u_{1}^{2}u_{2}-\kappa_{2}u_{2}^{2}+2c_{s}u_{1}u_{2}&=&0,\label{BB9c}\\
-\kappa_{1}u_{1}-\kappa_{\gamma,\delta}u_{1}u_{2}+c_{s}u_{2}&=&0,\label{BB9d}\\
-\kappa_{\gamma,\delta}u_{1}^{2}-2\kappa_{2}u_{2}+2c_{s}u_{1}&=&0,\label{BB9e}
\end{eqnarray}
where $\kappa_{1}=\frac{1}{\delta+\gamma}$ and $\kappa_{2}=1-\gamma$ as in section \ref{sec3}. The components of $P_{1}$ must satisfy (\ref{BB9c}), (\ref{BB9d}) and those of $P_{2}$  (\ref{BB9c}), (\ref{BB9e}). 
\begin{figure}[htbp]
\centering
\subfigure[]
{\includegraphics[width=6.27cm]{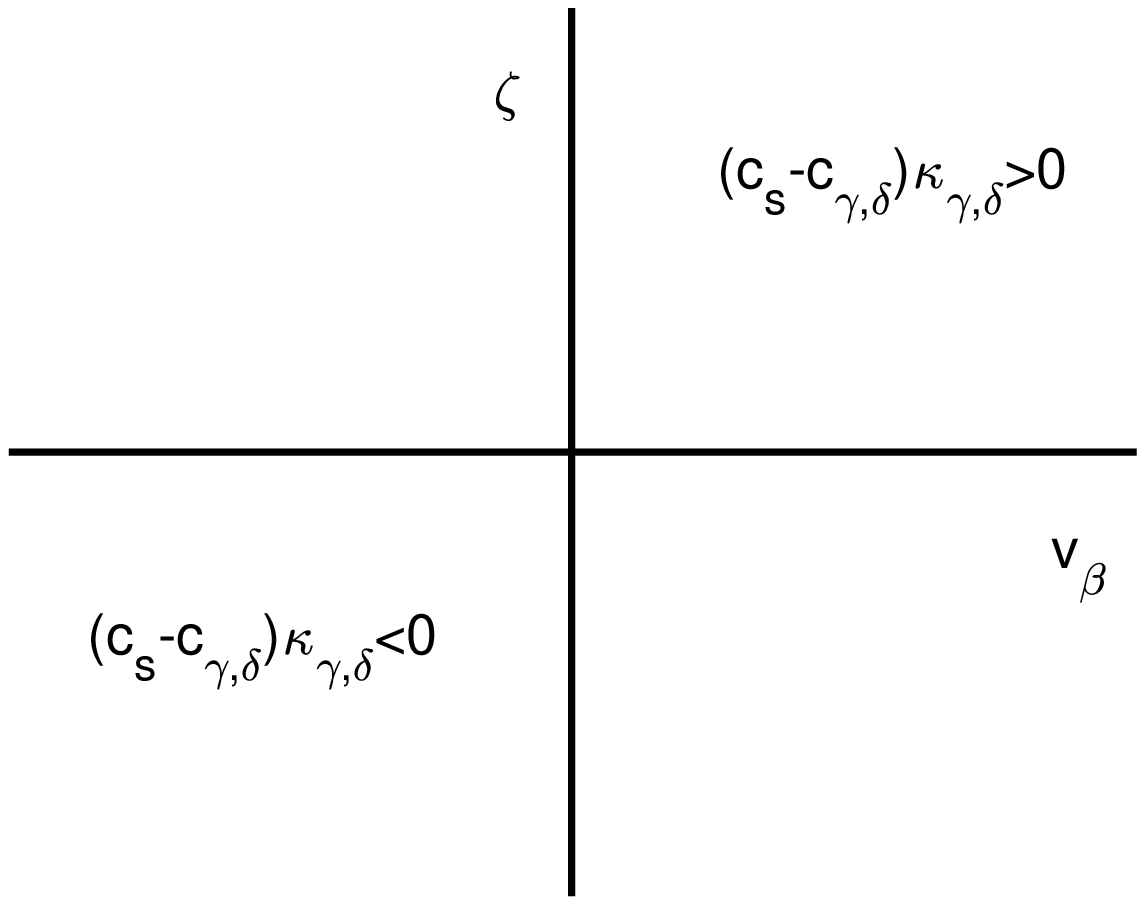}}
\subfigure[]
{\includegraphics[width=6.27cm]{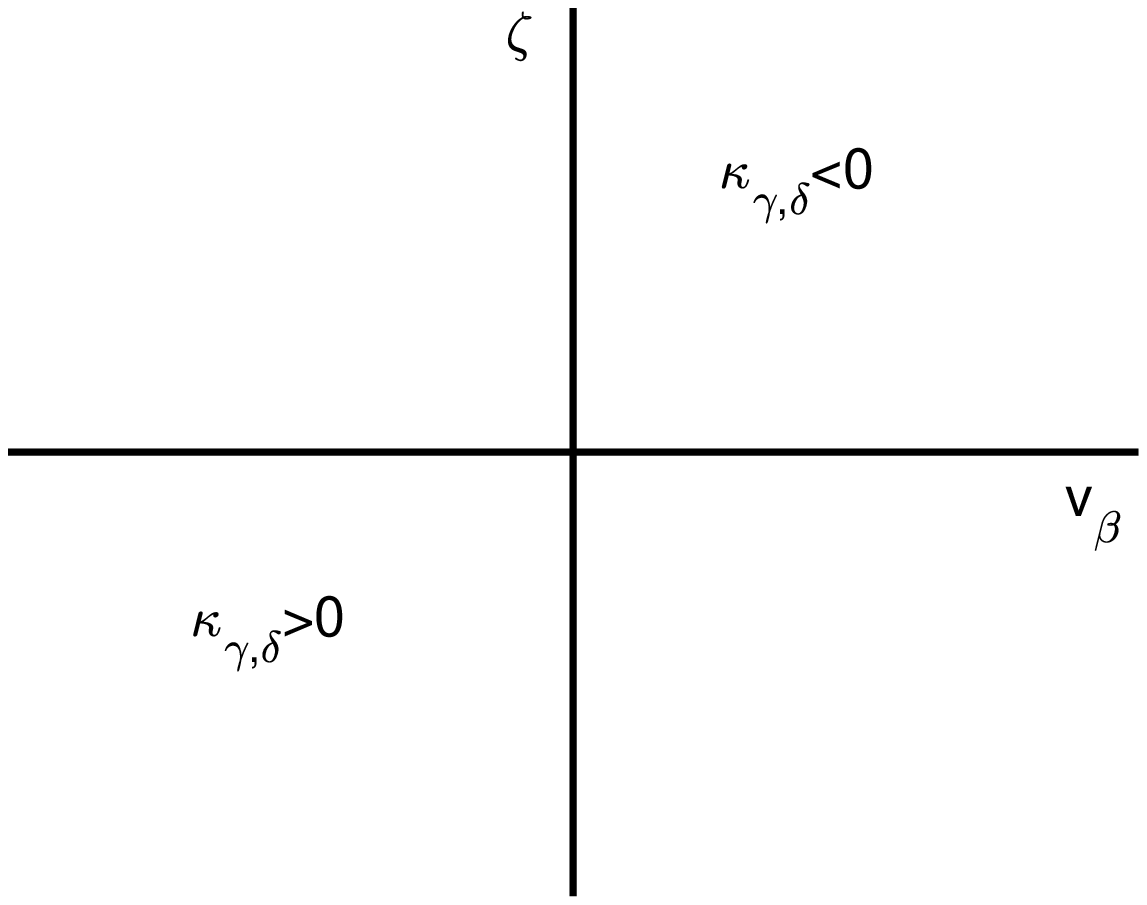}}
\caption{Conditions on $c_{s}-c_{\gamma,\delta}$ and $\kappa_{\gamma,\delta}$ to have $P_{1}$ and $P_{2}$ in the first or third $(v_{\beta},\zeta)$ quadrant. (a) $c_{s}>0$; (b) $c_{s}<0$.}
\label{fig_XXX}
\end{figure}
Solving (\ref{BB9c}), (\ref{BB9d}) leads to $u_{1}=u_{2}=0$ and
\begin{eqnarray}
u_{1}=\frac{c_{s}^{2}-c_{\gamma,\delta}^{2}}{c_{s}\kappa_{\gamma,\delta}},\; u_{2}=\frac{c_{s}}{\kappa_{2}}u_{1},\label{c1}
\end{eqnarray} 
while solving (\ref{BB9c}), (\ref{BB9e}) yields
\begin{eqnarray}
u_{1}^{\pm}=2\frac{c_{s}\mp c_{\gamma,\delta}}{\kappa_{\gamma,\delta}},\; u_{2}^{\pm}=\pm\sqrt{\frac{\kappa_{1}}{\kappa_{2}}}u_{1}^{\pm}.\label{c2}
\end{eqnarray} 
We now discuss the components of $P_{1}$ and $P_{2}$ by using the requirement that they must be in the first or third $(v_{\beta},\zeta)$ quadrant. Assume first that $c_{s}>0$. Then $u_{1}$ and $u_{2}$ in (\ref{c1}) have always the same sign, while in the case of (\ref{c2}) this holds only for $u_{1}^{+}$ and $u_{2}^{+}$. Therefore, $P_{1}$ has the components given by (\ref{c1}), and from (\ref{c2}) $P_{2}=(u_{1}^{+},u_{2}^{+})$. It is not hard to check that $P_{1}$ and $P_{2}$ are in the first $(v_{\beta},\zeta)$ quadrant when $(c_{s}-c_{\gamma,\delta})\kappa_{\gamma,\delta}>0$ and in the third quadrant when $(c_{s}-c_{\gamma,\delta})\kappa_{\gamma,\delta}<0$, see Figure \ref{fig_XXX}(a).
Then, 
according to the conclusions of Theorem \ref{toland}, for initial data on $\Gamma=P_{1}P_{2}$, the corresponding classical solitary waves are of elevation when  $(c_{s}-c_{\gamma,\delta})\kappa_{\gamma,\delta}>0$  and of depression when  $(c_{s}-c_{\gamma,\delta})\kappa_{\gamma,\delta}<0$. The orbit $\widetilde{\gamma}$ and the segment $\Gamma$ for several values of the speed $c_{s}$ are shown in Figures \ref{fig_locus1} and \ref{fig_locus2} respectively.
%Assume first that $\kappa_{\gamma,\delta}>0$ (that is $\delta^{2}-\gamma>0$). Then solving  (\ref{BB9c}), (\ref{BB9d}) for the components of $P_{1}$ leads to
%\begin{eqnarray*}
%u_{1}^{\pm}&=&\frac{1}{2}\left(-\frac{c_{s}}{2\kappa_{\gamma,\delta}}\pm\sqrt{R}\right),\quad R=\left(\frac{c_{s}}{2\kappa_{\gamma,\delta}}\right)^{2}-2\frac{c_{\delta,\gamma}^{2}-c_{s}^{2}}{\kappa_{\gamma,\delta}^{2}},\\
%u_{2}^{\pm}&=&\frac{\kappa_{1}u_{1}^{\pm}}{c_{s}-\kappa_{\gamma,\delta}u_{1}^{\pm}}.
%\end{eqnarray*}
%We assume $c_{s}^{2}-c_{\delta,\gamma}^{2}>0$. Thus, $R>(c_{s}/2\kappa_{\gamma,\delta})^{2}$ and $u_{1}^{+}>0, u_{1}^{-}<0$. On the other hand, it can be seen that $u_{1}^{+}<c_{s}/2\kappa_{\gamma,\delta}$ and therefore $u_{2}^{+}>0$. Hence $P_{1}=(u_{1}^{+},u_{2}^{+})^{T}$.
%
%In the case of (\ref{BB9c}), (\ref{BB9e}) for $P_{2}$ we have
%\begin{eqnarray*}
%u_{1}^{\pm}=\frac{2}{\kappa_{\gamma,\delta}}(c_{s}\pm c_{\delta,\gamma}),\quad u_{2}^{\pm}=\frac{2c_{s}-\kappa_{\gamma,\delta}u_{1}^{\pm}}{2\kappa_{2}}u_{1}^{\pm},
%\end{eqnarray*}
%and now, if $c_{s}-c_{\delta,\gamma}>0$, then $u_{1}^{-},u_{2}^{-}>0$ and $P_{2}=(u_{1}^{-},u_{2}^{-})^{T}$. In Figure \ref{fig_locus1} we show the orbit $\widetilde{\gamma}$ and the segment $\Gamma$ of (a) of the conclusion of Theorem \ref{toland} for several values of the speed $c_{s}$. For initial data on $\Gamma=P_{1}P_{2}$ the corresponding classical solitary waves are of elevation.
\begin{figure}[htbp]
\centering
{\includegraphics[width=0.9\textwidth]{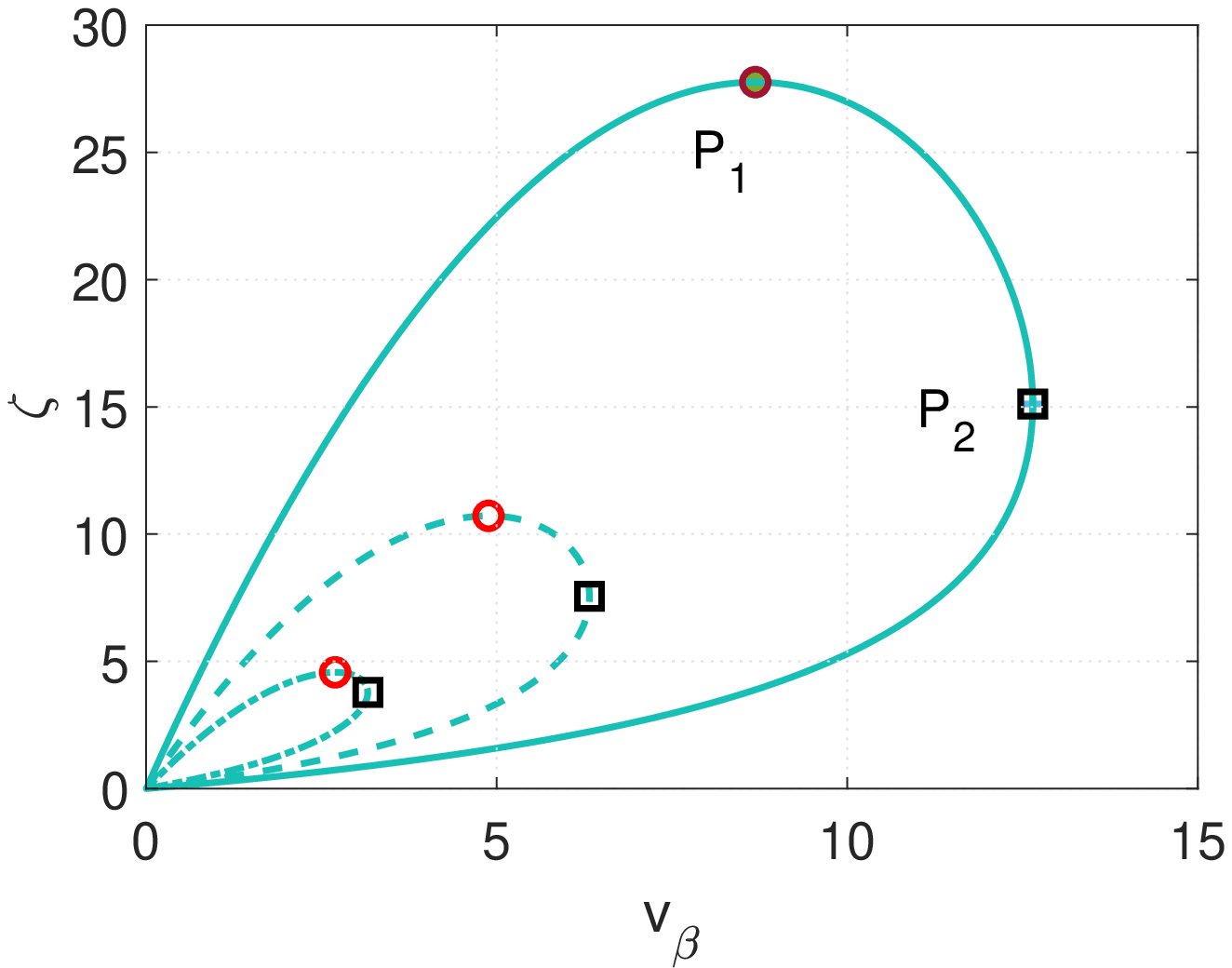}}
\caption{Generation of CSW's of elevation (case $(c_{s}-c_{\delta,\gamma})\kappa_{\gamma,\delta}>0$). Graphs of orbits for several positive values of $c_{s}-c_{\delta,\gamma}$. Dashed-dotted line: $c_{s}-c_{\delta,\gamma}=0.25$; dashed line: $c_{s}-c_{\delta,\gamma}=0.5$; solid line: $c_{s}-c_{\delta,\gamma}=1$.}
\label{fig_locus1}
\end{figure}

%If we now assume $\kappa_{\gamma,\delta}<0$ (that is $\delta^{2}-\gamma<0$), then the orbit $\widetilde{\gamma}$ is in the third quadrant ($u_{1},u_{2}<0$). Again if $c_{s}-c_{\delta,\gamma}>0$, then $P_{1}=(u_{1}^{-},u_{2}^{-})^{T}, P_{2}=(u_{1}^{+},u_{2}^{+})^{T}$. For the orbit of the corresponding solitary waves of depression, see Figure \ref{fig_locus2}.
\begin{figure}[htbp]
\centering
{\includegraphics[width=0.8\textwidth]{locus2a.eps}}
\caption{Generation of CSW's of depression  (case $(c_{s}-c_{\delta,\gamma})\kappa_{\gamma,\delta}<0$). Graphs of orbits for several positive values of $c_{s}-c_{\delta,\gamma}$. Dashed-dotted line: $c_{s}-c_{\delta,\gamma}=0.25$; dashed line: $c_{s}-c_{\delta,\gamma}=0.5$; solid line: $c_{s}-c_{\delta,\gamma}=1$.}
\label{fig_locus2}
\end{figure}
%\begin{remark}
%Since $u=(1-\beta\partial_{xx})v_{\beta}, \beta>0$, it remains to study which properties of $v_{\beta}$ would be inherited by $u$.
%\end{remark}
If we assume that $c_{s}<0$, then $u_{1}$ and $u_{2}$ from (\ref{c1}) have always opposite sign. Then necessarily $P_{1}=(0,0)$. The same argument as before leads again to obtain $P_{2}=(u_{1}^{+},u_{2}^{+})$, with $u_{1}^{+},u_{2}^{+}$ given by (\ref{c2}). Furthermore, in this case $c_{s}-c_{\gamma,\delta}$ is always negative and the condition for $P_{2}$ to be in the first or third $(v_{\beta},\zeta)$ quadrant depends only on the sign of $\kappa_{\gamma,\delta}$. For initial data on $\Gamma$, the corresponding classical solitary wave is of elevation if $\kappa_{\gamma,\delta}<0$ and of depression if $\kappa_{\gamma,\delta}>0$, see Figure \ref{fig_XXX}(b).

\begin{remark}
According to the previous arguments based on Toland's theory, the speed $c_{s}$ must satisfy, in view of the equation $f=0$, the speed-amplitude relation
\begin{eqnarray}
c_{s}&=&\frac{1}{2\mu}\left(\frac{1}{\delta+\gamma}\left(1+\frac{(\delta^{2}-\gamma}{(\delta+\gamma)}\zeta(0)\right)+(1-\gamma)\mu^{2}\right)\nonumber\\
&=&\frac{1}{2\mu}\left(\kappa_{1}+\kappa_{\gamma,\delta}\zeta(0)+\kappa_{2}\mu^{2}\right),\label{48b}
\end{eqnarray}
where $\mu=\zeta(0)/v_{\beta}(0)$. 
\end{remark}
\subsubsection{Concentration-Compactness Theory}
In the Hamiltonian case ($b=d>0$) and when $a, c< 0$, another result of existence of CSW's can be obtained from the application of the Concentration-Compactness (C-C) Theory. The method was developed by Lions in \cite{Lions} and has been used for proving the existence of solitary-wave solutions of a great number of nonlinear dispersive equations, see \cite{Weinstein,ABS,BouardS,Kichenassamy,AlbertAngulo}, among other. (A more exhaustive list of references can be found in \cite{Angulo}. For the case of Boussinesq systems for surface wave propagation, see \cite{Bao2015}.)
Here, its application is based on identifying solutions of (\ref{BB6}) with solutions of minimization problems of the form
\begin{eqnarray}
I_{r}=\inf\{E(\zeta,v): (\zeta,v)\in H^{1}\times H^{1}/ F(\zeta,v)=r\},\label{49a}
\end{eqnarray}
for $r>0$ where, in the case of the B/B systems at hand,
\begin{eqnarray}
E(\zeta,v)&=&\int_{-\infty}^{\infty}\left(\frac{\kappa_{2}}{2}\zeta J_{c}\zeta+\frac{\kappa_{1}}{2}v J_{a}v-c_{s}\zeta J_{b}v\right)dx,\label{49b}\\
F(\zeta,v)&=&-\frac{\kappa_{\gamma,\delta}}{2}\int_{-\infty}^{\infty}\zeta v^{2}dx,\label{49c}
\end{eqnarray}
and 
\begin{eqnarray}
J_{a}=1+\frac{a}{\kappa_{1}}\partial_{xx},\; J_{b}=1-b\partial_{xx},\; J_{c}=1+c\partial_{xx}.\label{49d}
\end{eqnarray}
Existence of CSW's by means of the C-C Theory has been recently proved in \cite{AnguloS2019} for another class of systems modelling internal wave propagation, specifically belonging to the Boussinesq/Full Dispersion (BFD) regime. The proof of \cite{AnguloS2019} can be adapted to the B/B system (\ref{BB6}) and we will emphasize here the main steps. The result is based on the following properties of the functional (\ref{49b}) (cf. Proposition 2.1 of \cite{AnguloS2019}).
\begin{proposition}
\label{propos41}
Assume that $b=d>0$ and $a, c< 0$.
The following properties concerning (\ref{49a})-(\ref{49d}) hold:
\begin{itemize}
\item[(i)] $E:H^{1}\times H^{1}\rightarrow \mathbb{R}$ is well defined and continuous: if $(\zeta,v)\in H^{1}\times H^{1}$ then
\begin{eqnarray}
E(\zeta,v)\leq C\left(||J_{c}^{1/2}\zeta||^{2}+||J_{a}^{1/2}v||^{2}+||J_{b}^{1/2}\zeta||^{2}+||J_{b}^{1/2}v||^{2}\right),\label{412a}
\end{eqnarray}
for some constant $C$.
\item[(ii)] If
\begin{eqnarray}
|c_{s}|\leq M:=\min\{\kappa_{2}\frac{|c|}{b},\frac{|a|}{2b}\},\label{49e}
\end{eqnarray}
then $E(\zeta,v)\geq 0$ and $E$ is coercive: 
\begin{eqnarray}
E(\zeta,v)\geq C\left(||\zeta||_{1}^{2}+||v||_{1}^{2}\right),\; (\zeta,v)\in H^{1}\times H^{1},\label{412b}
\end{eqnarray}
for some constant $C$ depending on $a, b, c, |c_{s}|, \kappa_{1}, \kappa_{2}$.
\item[(iii)] $I_{r}>0$, all minimizing sequences for $I_{r}$ are bounded in $H^{1}\times H^{1}$, and for $s\in (0,r)$, $I_{r}$ satisfies the sub-additivity property
\begin{eqnarray}
I_{r}<I_{s}+I_{r-s}.\label{413b}
\end{eqnarray}
\end{itemize}
\end{proposition}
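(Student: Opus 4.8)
The plan is to follow the concentration--compactness scheme of \cite{AnguloS2019} (Proposition~2.1 there), adapting it to the functionals \eqref{49b}--\eqref{49d}. The starting point for (i) and (ii) is an integration by parts: since $a,c<0$ and $b>0$, the Fourier multipliers $J_a,J_b,J_c$ of \eqref{49d} are positive and self-adjoint on $L^{2}$, with $\|J_c^{1/2}\zeta\|^{2}=\|\zeta\|^{2}+|c|\,\|\zeta_x\|^{2}$, $\|J_a^{1/2}v\|^{2}=\|v\|^{2}+\tfrac{|a|}{\kappa_1}\|v_x\|^{2}$ and $\|J_b^{1/2}\zeta\|^{2}=\|\zeta\|^{2}+b\|\zeta_x\|^{2}$ (and similarly for $v$), each of these norms being equivalent to the corresponding $H^{1}$ norm. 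Using $(\zeta,J_c\zeta)=\|J_c^{1/2}\zeta\|^2$, $(v,J_av)=\|J_a^{1/2}v\|^2$ and $(\zeta,J_bv)=(J_b^{1/2}\zeta,J_b^{1/2}v)$ one obtains
\[
E(\zeta,v)=\frac{\kappa_2}{2}\|J_c^{1/2}\zeta\|^{2}+\frac{\kappa_1}{2}\|J_a^{1/2}v\|^{2}-c_s\bigl(J_b^{1/2}\zeta,J_b^{1/2}v\bigr).
\]
For (i), $E$ is then manifestly a continuous quadratic form on $H^{1}\times H^{1}$, and the Cauchy--Schwarz estimate $|c_s|\,|(J_b^{1/2}\zeta,J_b^{1/2}v)|\le\frac{|c_s|}{2}\bigl(\|J_b^{1/2}\zeta\|^{2}+\|J_b^{1/2}v\|^{2}\bigr)$ gives \eqref{412a} at once.

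For (ii), I would expand the cross term, writing $E(\zeta,v)=\frac{\kappa_2}{2}\|\zeta\|^{2}+\frac{\kappa_2|c|}{2}\|\zeta_x\|^{2}+\frac{\kappa_1}{2}\|v\|^{2}+\frac{|a|}{2}\|v_x\|^{2}-c_s(\zeta,v)-c_sb(\zeta_x,v_x)$ and estimate the two cross terms by weighted Young inequalities; equivalently, pass to Fourier variables, so that $E(\zeta,v)=\int_{-\infty}^{\infty}\langle\mathcal{A}(k)\widehat{w}(k),\widehat{w}(k)\rangle\,dk$ with $w=(\zeta,v)^{T}$ and $\mathcal{A}(k)$ the $2\times2$ symmetric matrix with diagonal entries $\frac{\kappa_2}{2}(1+|c|k^{2})$ and $\frac{\kappa_1}{2}+\frac{|a|}{2}k^{2}$ and off-diagonal entry $-\frac{c_s}{2}(1+bk^{2})$. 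One then verifies that the hypothesis $|c_s|\le M=\min\{\kappa_2|c|/b,\,|a|/(2b)\}$ makes $\det\mathcal{A}(k)\ge0$ for every $k$ (in particular $c_s^{2}\le c_{\gamma,\delta}^{2}$ at $k=0$), hence $E\ge0$, and that the $k^{4}$-coefficient of $\det\mathcal{A}(k)$, namely $\tfrac14(\kappa_2|c||a|-c_s^{2}b^{2})$, is \emph{strictly} positive under $|c_s|\le M$; together these yield $\mathcal{A}(k)\ge C(1+k^{2})I$ uniformly in $k$, which is \eqref{412b}. The constant $M$ is precisely the threshold that makes the weighted Young inequalities close, and checking that it does is the one genuinely problem-specific step.

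For (iii), since $\kappa_{\gamma,\delta}\neq0$ and $F$ is linear in $\zeta$ for fixed $v$, the constraint set $\{F=r\}$ is nonempty for every $r>0$. Using $H^{1}(\mathbb{R})\hookrightarrow L^{\infty}$ (or $H^1\hookrightarrow L^4$), $|F(\zeta,v)|\le C\|\zeta\|\,\|v\|_{L^{4}}^{2}\le C\|\zeta\|_{1}\|v\|_{1}^{2}\le C(\|\zeta\|_{1}^{2}+\|v\|_{1}^{2})^{3/2}$, so on $\{F=r\}$ the coercivity \eqref{412b} gives $r\le C\,E(\zeta,v)^{3/2}$ and hence $E(\zeta,v)\ge(r/C)^{2/3}$; therefore $I_r\ge(r/C)^{2/3}>0$. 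If $(\zeta_n,v_n)$ is minimizing for $I_r$, then $E(\zeta_n,v_n)$ is bounded and \eqref{412b} makes $(\zeta_n,v_n)$ bounded in $H^{1}\times H^{1}$. Finally, strict subadditivity follows from the scaling $(\zeta,v)\mapsto(\lambda\zeta,\lambda v)$: since $E$ is homogeneous of degree $2$ and $F$ of degree $3$, this map carries $\{F=r'\}$ onto $\{F=\lambda^{3}r'\}$ and multiplies $E$ by $\lambda^{2}$, whence $I_r=(r/r')^{2/3}I_{r'}$ for all $r,r'>0$, i.e. $I_r=I_1\,r^{2/3}$ with $I_1>0$; since $t\mapsto t^{2/3}$ is strictly subadditive on $(0,\infty)$, $I_r=I_1(s+(r-s))^{2/3}<I_1\bigl(s^{2/3}+(r-s)^{2/3}\bigr)=I_s+I_{r-s}$ for $0<s<r$, which is \eqref{413b}.

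The main obstacle is part (ii): establishing coercivity with the sharp constant $M$, where the interplay of $a,b,c,\kappa_1,\kappa_2$ enters the symbol $\mathcal{A}(k)$; parts (i) and (iii) are the standard concentration--compactness bookkeeping.
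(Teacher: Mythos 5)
Parts (i) and (iii) of your proposal are correct and essentially reproduce the paper's own argument: (i) is the same Cauchy--Schwarz estimate after expressing the quadratic terms through $J_a^{1/2},J_b^{1/2},J_c^{1/2}$, and (iii) is the same combination of coercivity with the homogeneity identity $I_{\tau r}=\tau^{2/3}I_r$ (your explicit $I_r=I_1r^{2/3}$ together with strict subadditivity of $t\mapsto t^{2/3}$ is exactly the mechanism the paper invokes). Your Fourier--symbol formulation of (ii) is in itself a reasonable alternative to the paper's device of absorbing the cross term by Cauchy--Schwarz with weight $|c_s|/2$ and then checking two scalar symbols.

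The genuine gap is that the one step you defer --- ``one then verifies that $|c_s|\le M$ makes $\det\mathcal{A}(k)\ge0$ for every $k$'' --- cannot be carried out from the hypotheses as you use them. Writing $\det\mathcal{A}(k)=\tfrac14\bigl[(\kappa_1\kappa_2-c_s^2)+(\kappa_2|a|+\kappa_1\kappa_2|c|-2bc_s^2)k^2+(\kappa_2|a||c|-b^2c_s^2)k^4\bigr]$, only the $k^4$ coefficient is controlled by (\ref{49e}) alone (since $c_s^2\le\kappa_2|a||c|/(2b^2)$). The $k^0$ coefficient requires $c_s^2\le\kappa_1\kappa_2$, and the $k^2$ coefficient requires $|c|\le b$ (or a discriminant argument); neither follows from $a,c<0$, $b=d>0$ and $|c_s|\le M$. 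For instance, $\kappa_1=0.1$, $\kappa_2=0.5$, $b=d=1$, $a=-10$, $c=-1$, $c_s=M=0.5$ gives $\det\mathcal{A}(0)=\tfrac14(0.05-0.25)<0$, so $E$ takes negative values on low-frequency data and (\ref{412b}) fails; these values of course violate the constraint (\ref{BB1c}), which is precisely the input your argument never uses. The paper closes this step by invoking the modelling parametrization (\ref{BB1b}): with $b=d$ it yields $|c|/b=-\alpha_2/(1-\alpha_2)\le1$ and $\kappa_1\ge|a|/(2b)$, hence $M\le\min(\kappa_1,\kappa_2)$ and in fact $c_s^2\le\kappa_2|a||c|/(2b^2)<\kappa_1\kappa_2$ (cf.\ Remark \ref{remark42b}); equivalently, from (\ref{BB1c}) with $b=d$ and $a,c<0$ one has $2b=S(\gamma,\delta)+|a|/\kappa_1+|c|$, whence $\kappa_1>|a|/(2b)$ and, by the AM--GM inequality, $\kappa_1b^2>|a||c|$. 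Only after this parameter input is your claimed bound $\mathcal{A}(k)\ge C(1+k^2)I$ --- note you also need strict positivity of the $k^0$ coefficient, not merely $c_s^2\le c_{\gamma,\delta}^2$, for a uniform constant --- and with it (\ref{412b}), actually justified.
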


\begin{proof}
The definition of the operators $J_{a}, J_{b}$ and $J_{c}$ in (\ref{49d}) implies tat $E$ is well defined in $H^{1}\times H^{1}$ while, from the Cauchy-Schwarz inequality, it is not hard to prove that $E$ satisfies (\ref{412a}) and therefore it is continuous.

Let $\langle\cdot,\cdot\rangle$ denote the $L^{2}(\mathbb{R})$-inner product. Since $J_{b}$ is a self-adjoint, positive operator, then from Cauchy-Schwarz inequality we have, for $(\zeta,v)\in H^{1}\times H^{1}$
\begin{eqnarray}
\left|c_{s}\int_{-\infty}^{\infty}\zeta J_{b}vdx\right|\leq |c_{s}|\langle J_{b}\zeta,\zeta\rangle^{1/2}\langle J_{b}v,v\rangle^{1/2}\leq \frac{|c_{s}|}{2}\left(\langle J_{b}\zeta,\zeta\rangle+\langle J_{b}v,v\rangle\right).\label{proof1b}
\end{eqnarray}
Note that, using Plancherel Theorem, we get
\begin{eqnarray}
\langle (\kappa_{2}J_{c}-|c_{s}|J_{b})\zeta,\zeta\rangle=\int_{-\infty}^{\infty}p(\xi)|\widehat{\zeta}(\xi)|^{2}d\xi,\label{proof1c}
\end{eqnarray}
where
\begin{eqnarray*}
p(\xi)=-(\kappa_{2}c+b|c_{s}|)\xi^{2}+\kappa_{2}-|c_{s}|.
\end{eqnarray*}
Since $a,c<0, b=d>0$, and using  (\ref{BB1b}), we have
\begin{eqnarray}
\frac{|c|}{b}=\frac{-\alpha_{2}}{1-\alpha_{2}}\leq 1,\label{proof1d}
\end{eqnarray}
Then, under condition (\ref{49e}), $p(\xi)\geq 0$ for all $\xi\in\mathbb{R}$, which by (\ref{proof1c}) implies
\begin{eqnarray}
\frac{|c_{s}|}{2}\langle J_{b}\zeta,\zeta\rangle\leq \frac{\kappa_{2}}{2}\langle J_{c}\zeta,\zeta\rangle.\label{proof2}
\end{eqnarray}
Similarly
\begin{eqnarray*}
\langle (\kappa_{1}J_{a}-|c_{s}|J_{b})v,v\rangle=\int_{-\infty}^{\infty}q(\xi)|\widehat{v}(\xi)|^{2}d\xi,\label{proof2a}
\end{eqnarray*}
with
\begin{eqnarray*}
q(\xi)=-(a+b|c_{s}|)\xi^{2}+\kappa_{1}-|c_{s}|.
\end{eqnarray*}
Since $b=d$, from (\ref{BB1b})
$$b=\alpha_{1}\frac{1+\gamma\delta}{3\delta(\gamma+\delta)}=\beta(1-\alpha_{2}).$$ Then
\begin{eqnarray*}
a&=&\frac{(1-\alpha_{1})(1+\gamma\delta)-3\delta\beta (\delta+\gamma)}{3\delta (\gamma+\delta)^{2}}=(1-\alpha_{1})\kappa_{1}\frac{b}{\alpha_{1}}-\beta \kappa_{1}\\
&=&\kappa_{1}b\left(\frac{1-\alpha_{1}}{\alpha_{1}}-\frac{1}{1-\alpha_{2}}\right).
\end{eqnarray*}
Therefore
\begin{eqnarray}
-\frac{a}{b}=k_{1}\left(\frac{\alpha_{1}-1}{\alpha_{1}}+\frac{1}{1-\alpha_{2}}\right),\label{proof2b}
\end{eqnarray}
where, again from (\ref{BB1b}), and since $b>0, c<0$, we have $\alpha_{1}>0, \alpha_{2}<0$. Then 
\begin{eqnarray*}
\frac{\alpha_{1}-1}{\alpha_{1}}+\frac{1}{1-\alpha_{2}}\leq 2,
\end{eqnarray*}
which, by the hypothesis $a<0$ and (\ref{proof2b}), leads to
\begin{eqnarray}
\kappa_{1}\geq \frac{|a|}{2b}.\label{proof2c}
\end{eqnarray}
Therefore, (\ref{proof2c}) and (\ref{49e}) imply that $q(\xi)\geq 0$ for all $\xi\in\mathbb{R}$, and then, from (\ref{proof1c}), that
\begin{eqnarray}
\frac{|c_{s}|}{2}\langle J_{b}v,v\rangle\leq \frac{\kappa_{1}}{2}\langle J_{a}v,v\rangle.\label{proof3}
\end{eqnarray}
Thus, using (\ref{proof1b}), (\ref{proof2}) and (\ref{proof3}), we see that it holds that $E(\zeta,v)\geq 0$ for $(\zeta,v)\in H^{1}\times H^{1}$.

We now prove coercitivity of $E$. From (\ref{proof1b}) we have
\begin{eqnarray}
E(\zeta,v)\geq\underbrace{\frac{1}{2}\langle (\kappa_{2}J_{c}-|c_{s}|J_{b})\zeta,\zeta\rangle}_{I}+\underbrace{\frac{1}{2}\langle (\kappa_{1}J_{a}-|c_{s}|J_{b})v,v\rangle}_{II}.\label{proof4a}
\end{eqnarray}
Note that for $(\zeta,v)\in H^{1}\times H^{1}$, using (\ref{49e}) and the fact that $|c|/b\leq 1$, we have
\begin{eqnarray}
2I&=&\int_{-\infty}^{\infty} \left(\kappa_{2}\zeta^{2}+\kappa_{2}|c|\zeta_{x}^{2}-|c_{s}|\zeta^{2}-b|c_{s}|\zeta_{x}^{2}\right)dx\nonumber\\
&=&\int_{-\infty}^{\infty}\left((\kappa_{2}-|c_{s}|)\zeta^{2}+(\kappa_{2}\frac{|c|}{b}-|c_{s}|)b\zeta_{x}^{2}\right)dx\nonumber\\
&\geq & (\kappa_{2}\frac{|c|}{b}-|c_{s}|)||J_{b}^{1/2}\zeta||^{2}\geq M ||J_{b}^{1/2}\zeta||^{2}.\label{proof4b}
\end{eqnarray}
Similarly, using (\ref{proof2c}) and (\ref{49e}), we have
\begin{eqnarray}
2II&=&\int_{-\infty}^{\infty} \left(\kappa_{1}v^{2}+|a|v_{x}^{2}-|c_{s}|v^{2}-b|c_{s}|v_{x}^{2}\right)dx\nonumber\\
&=&\int_{-\infty}^{\infty}\left((\kappa_{1}-|c_{s}|)v^{2}+(\frac{|a|}{b}-|c_{s}|)bv_{x}^{2}\right)dx\nonumber\\
&\geq & (\frac{|c|}{2b}-|c_{s}|)||J_{b}^{1/2}v||^{2}\geq M ||J_{b}^{1/2}v||^{2}.\label{proof4c}
\end{eqnarray}
Now, since $||J_{b}^{1/2}\cdot||$ is equivalent to $||\cdot||_{1}$, then (\ref{proof4a}), (\ref{proof4b}) and (\ref{proof4c}) imply the existence of a constant $C>0$ such that (\ref{412b}) holds.

Note now that
\begin{eqnarray*}
0<r=F(\zeta,v)\leq \frac{\kappa_{\gamma,\delta}}{2}|v|_{\infty}||v||||\zeta||\leq C||(\zeta,v)||_{1}^{3},
\end{eqnarray*}
for some constant $C>0$ and where $||(\zeta,v)||_{1}:=\left(||\zeta||_{1}^{2}+||v||_{1}^{2}\right)^{1/2}$. Then, using (\ref{412b}), we can find a constant $C$ such that
\begin{eqnarray*}
r\leq C E(\zeta,v)^{3/2},\; (\zeta,v)\in H^{1}\times H^{1},
\end{eqnarray*}
and therefore
\begin{eqnarray*}
E(\zeta,v)\geq \left(\frac{r}{C}\right)^{2/3}, (\zeta,v)\in H^{1}\times H^{1},
\end{eqnarray*}
which implies $I_{r}\geq (r/C)^{2/3}>0$. On the other hand, due to (\ref{412b}), it is clear that all minimizing sequences for $I_{r}$ are bounded for $ (\zeta,v)\in H^{1}\times H^{1}$. Finally, the sub-additivity property (\ref{413b}) is obtained from the fact that $I_{r}>0$ and the property
$$I_{\tau r}=\tau^{2/3}I_{r},$$ for all $\tau>0$, which is a consequence of the homogeneity of $E$ and $F$ (of orders two and three respectively), cf. \cite{AnguloS2019}.
\end{proof}

At this point we recall the C-C Principle, the main tool of the theory, see Lemma 1.1 of \cite{Lions}.

\begin{lemma}
\label{lemma_lions}
Let $\{\rho_{k}\}_{k\geq 1}$ be a sequence of non-negative functions in $L^{1}(\mathbb{R})$ such that $||\rho_{k}||_{L^{1}}$ converges to some $\sigma>0$. Then there is a subsequence (also denoted by $\{\rho_{k}\}_{k\geq 1}$) satisfying one of the following three conditions:
\begin{itemize}
\item[(a)] Compactness: there exist $y_{k}\in\mathbb{R}$ such that for any $\epsilon>0$ there is $R(\epsilon)>0$ such that for all $k$
\begin{eqnarray*}
\int_{|x-y_{k}|\leq R(\epsilon)}\rho_{k}dx\geq \int_{-\infty}^{\infty}\rho_{k}dx-\epsilon=L-\epsilon.
\end{eqnarray*}
\item[(b)] Vanishing: For every $R>0$
\begin{eqnarray*}
\lim_{k\rightarrow\infty}\sup_{y\in\mathbb{R}}\int_{|x-y|\leq R}\rho_{k}dx=0.
\end{eqnarray*}
\item[(c)] Dichotomy: there exists $l\in (0,L)$ such that for all $\epsilon>0$ there are $R,R_{k}\rightarrow\infty, y_{k}\in\mathbb{R}$ and $k_{0}$ satisfying
\begin{eqnarray*}
\left|\int_{|x-y_{k}|\leq R}\rho_{k}dx-l\right|<\epsilon,\quad 
\left|\int_{R<|x-y_{k}|\leq R_{k}}\rho_{k}dx\right|<\epsilon,
\end{eqnarray*}
for $k>k_{0}$.
\end{itemize}
\end{lemma}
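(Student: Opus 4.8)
Proof proposal. The plan is to run the classical argument of Lions built on the \emph{L\'evy concentration function}. First I would attach to each $\rho_{k}$ the function
\[
Q_{k}(R)=\sup_{y\in\mathbb{R}}\int_{|x-y|\leq R}\rho_{k}\,dx,\qquad R\geq 0,
\]
and record its elementary properties: $Q_{k}$ is nonnegative, nondecreasing, bounded above by $\|\rho_{k}\|_{L^{1}}$, and $Q_{k}(R)\uparrow\|\rho_{k}\|_{L^{1}}$ as $R\to\infty$ (since $\rho_{k}\in L^{1}$). Because $\sup_{k}\|\rho_{k}\|_{L^{1}}<\infty$ and the $Q_{k}$ are monotone, Helly's selection theorem furnishes a subsequence, not relabelled, along which $Q_{k}(R)\to Q(R)$ for every $R\geq 0$, with $Q$ nondecreasing and $0\leq Q\leq\sigma$. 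I would then set $\mathcal{L}:=\lim_{R\to\infty}Q(R)\in[0,\sigma]$ and split into the three cases $\mathcal{L}=0$, $\mathcal{L}=\sigma$, $0<\mathcal{L}<\sigma$; these will correspond exactly to alternatives (b), (a), (c).

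The two "pure" cases are short. If $\mathcal{L}=0$ then $Q\equiv 0$, hence $Q_{k}(R)\to 0$ for each fixed $R$, which is vanishing. If $\mathcal{L}=\sigma$, then for each $\epsilon>0$ I would pick $R_{\epsilon}$ with $Q(R_{\epsilon})>\sigma-\epsilon/3$, deduce $Q_{k}(R_{\epsilon})>\|\rho_{k}\|_{L^{1}}-\epsilon$ for large $k$ (using $\|\rho_{k}\|_{L^{1}}\to\sigma$), and take near-maximizers $y_{k}=y_{k}(\epsilon)$ with $\int_{|x-y_{k}|\leq R_{\epsilon}}\rho_{k}\,dx\geq\|\rho_{k}\|_{L^{1}}-\epsilon$; running $\epsilon=1/j$ and diagonalizing, reconciling the centres by noting that two balls each carrying all but a small fraction of a fixed total mass must intersect, yields a single sequence $y_{k}$ realizing compactness.

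The heart of the argument, and the step I expect to be the main obstacle, is the dichotomy $0<\mathcal{L}<\sigma$, where several nested extractions must be organized carefully. Fixing $\epsilon\in(0,\min\{\mathcal{L},\sigma-\mathcal{L}\})$, I would choose $R$ with $Q(R)>\mathcal{L}-\epsilon$, take near-maximizers $y_{k}$ with $m_{k}:=\int_{|x-y_{k}|\leq R}\rho_{k}\,dx>\mathcal{L}-\epsilon$, and observe that for any fixed $R'\geq R$ one has $\int_{|x-y_{k}|\leq R'}\rho_{k}\,dx\leq Q_{k}(R')\to Q(R')\leq\mathcal{L}$, so the annular mass $\int_{R<|x-y_{k}|\leq R'}\rho_{k}\,dx$ is below $2\epsilon$ for $k$ large. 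The delicate point is letting $R'\to\infty$ and $k\to\infty$ in the right order: a diagonal choice produces $R_{k}\to\infty$ with $\int_{R<|x-y_{k}|\leq R_{k}}\rho_{k}\,dx\to 0$, and a further extraction gives $m_{k}\to l_{\epsilon}\in[\mathcal{L}-\epsilon,\mathcal{L}]$. A last pass with $\epsilon=\epsilon_{j}\downarrow 0$ and one more diagonalization sends the inner masses to $l:=\mathcal{L}\in(0,\sigma)$, keeps the annular masses converging to $0$, and forces the outer masses to $\sigma-l$; this is alternative (c). Throughout, the only real difficulty is bookkeeping, keeping track of which subsequence is active and re-extracting at each stage, since the estimates themselves are elementary.
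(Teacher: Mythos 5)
The paper does not prove this lemma at all: it is quoted verbatim as Lemma 1.1 of Lions' concentration-compactness paper \cite{Lions}, and the authors simply cite that reference. Your sketch reconstructs the standard proof from that source — attach the L\'evy concentration functions $Q_{k}(R)=\sup_{y}\int_{|x-y|\leq R}\rho_{k}\,dx$, extract via Helly a pointwise limit $Q$, set $\mathcal{L}=\lim_{R\to\infty}Q(R)$, and read off vanishing, compactness, or dichotomy according to $\mathcal{L}=0$, $\mathcal{L}=\sigma$, or $0<\mathcal{L}<\sigma$ — and this is correct; the only remark worth making is that once the Helly subsequence is fixed, the compactness and dichotomy cases do not actually require further extractions (for each fixed $\epsilon$ the near-maximizing centres $y_{k}$ already give the stated inequalities for $k$ large, with $l=\mathcal{L}$), so your extra diagonalizations over $\epsilon_{j}\downarrow 0$ are harmless but unnecessary.
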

The application of Lemma \ref{lemma_lions} to the existence of solitary waves can be summarized as follows, see e.~g. \cite{KenigPV1991,KenigPV1993,BonaCh2002} for different examples. One considers $\{\zeta_{n},v_{n}\}_{n\geq 1}$, a minimizing sequence for (\ref{49a}), and defines from this a new sequence $\{\rho_{n}\}_{n\geq 1}$ of non-negative functions in $L^{1}$ satisfying the conditions of the lemma; in our case (as in \cite{AnguloS2019}) this is
\begin{eqnarray}
\rho_{n}(x)=|\zeta_{n}(x)|^{2}+|\zeta^{\prime}_{n}(x)|^{2}+|v_{n}(x)|^{2}+|v^{\prime}_{n}(x)|^{2},\label{49f}
\end{eqnarray}
from which, thanks to (iii) of Proposition \ref{propos41}, one can find a subsequence $\{\rho_{n}\}_{n\geq 1}$ such that $||\rho_{n}||_{L^{1}}$ is convergent.
The next step is dismissing the possibility of vanishing and dichotomy for $\rho_{n}$; therefore Lemma \ref{lemma_lions} implies that the compactness property holds.

The final step is: From $\rho_{n}$ find a subsequence of the translated sequence $\{\zeta_{n}(x-y_{n}),v_{n}(x-y_{n})\}_{n}$ which converges weakly to some $H^{1}\times H^{1}$ function $(\zeta_{0},v_{0})$ and prove that $(\zeta_{0},v_{0})$ solves the variational problem (\ref{49a}). A solution $(\zeta,v)$ of (\ref{BB6}) is obtained from $(\zeta_{0},v_{0})$ after some scaling involving the Lagrange multiplier $r$. The application of all these steps to our system leads to the following result, which should be compared with that of the case of surface waves, \cite{Bao2015}. 
\begin{theorem}
\label{th_CC}
Let $\delta>0, \gamma<1$ and assume that $a,c< 0, b=d>0$. If (\ref{49e}) holds, then the system (\ref{BB2}) admits a classical solitary wave solution $\zeta=\zeta(x-c_{s}t), v=v(x-c_{s}t)$ with $(\zeta,v)\in H^{\infty}\times H^{\infty}$. Furthermore, $\zeta$ and $v$ decay exponentially as $x\rightarrow \pm\infty$ with
\begin{eqnarray}
\lim_{x\rightarrow \pm\infty}e^{\sigma_{a}|x|}v(x)&=&C_{1},\label{49g}\\
\lim_{x\rightarrow \pm\infty}e^{\sigma_{0}|x|}\zeta(x)&=&C_{2},\label{49h}
\end{eqnarray}
with $C_{1}, C_{2}$ constants, $\sigma_{a}=\sqrt{\kappa_{1}/|a|}$ and $\sigma_{0}\in (0,\sigma_{a}], \sigma_{0}<\sqrt{|c|}$.
\end{theorem}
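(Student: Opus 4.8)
The plan is to run the concentration--compactness programme sketched before the statement, following \cite{AnguloS2019} with the functionals $E$, $F$ of (\ref{49b})--(\ref{49c}). First I would fix $r>0$, take a minimizing sequence $\{(\zeta_n,v_n)\}_{n\geq 1}$ for the problem (\ref{49a}), so that $F(\zeta_n,v_n)=r$ and $E(\zeta_n,v_n)\to I_r$, and define $\rho_n$ by (\ref{49f}). By part (iii) of Proposition \ref{propos41} the sequence is bounded in $H^{1}\times H^{1}$, so $\|\rho_n\|_{L^{1}}=\|\zeta_n\|_{1}^{2}+\|v_n\|_{1}^{2}$ is bounded; on the other hand the constraint $F(\zeta_n,v_n)=r>0$ together with the bound $r=F(\zeta_n,v_n)\leq C\|(\zeta_n,v_n)\|_{1}^{3}$ (proved in Proposition \ref{propos41}) keeps $\|\rho_n\|_{L^{1}}$ bounded below by a positive constant. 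Passing to a subsequence I may assume $\|\rho_n\|_{L^{1}}\to\sigma$ with $\sigma>0$, so Lemma \ref{lemma_lions} applies and exactly one of compactness, vanishing, dichotomy occurs.

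Next I would dismiss vanishing and dichotomy. If vanishing held, then by the vanishing lemma of \cite{Lions} a bounded sequence in $H^{1}(\mathbb{R})$ that vanishes tends to $0$ in every $L^{p}(\mathbb{R})$, $2<p<\infty$; in particular $\|\zeta_n\|_{L^{3}},\|v_n\|_{L^{3}}\to0$, so $|F(\zeta_n,v_n)|\leq\frac{|\kappa_{\gamma,\delta}|}{2}\|\zeta_n\|_{L^{3}}\|v_n\|_{L^{3}}^{2}\to0$, contradicting $F(\zeta_n,v_n)=r>0$. To rule out dichotomy one supposes $\rho_n$ splits with parameter $l\in(0,\sigma)$, introduces smooth cut-offs at scale $R$ centred at the $y_n$, and writes $\zeta_n=\zeta_n^{(1)}+\zeta_n^{(2)}$, $v_n=v_n^{(1)}+v_n^{(2)}$, the two pieces supported (up to a part of small mass) on sets whose distance tends to $\infty$. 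The key point is that both $E$ and $F$ are \emph{local} functionals --- after one integration by parts $E(\zeta,v)=\int_{-\infty}^{\infty}(\frac{\kappa_{2}}{2}(\zeta^{2}-c\zeta_x^{2})+\frac{\kappa_{1}}{2}(v^{2}-\frac{a}{\kappa_{1}}v_x^{2})-c_s(\zeta v+b\zeta_x v_x))\,dx$ and $F$ is a pointwise cubic --- so the only errors produced by the splitting come from derivatives of the cut-offs and are $O(1/R)$ uniformly in $n$. Letting $n,R\to\infty$ suitably, one gets $F(\zeta_n^{(j)},v_n^{(j)})\to r_j$ with $r_1+r_2=r$, $r_j\in(0,r)$ (using $I_{\tau\rho}=\tau^{2/3}I_{\rho}$ to normalize pieces with $F\leq0$), and, since $E\geq0$ and $E$ is additive under the split up to $o(1)$, $I_r=\lim_n E(\zeta_n,v_n)\geq I_{r_1}+I_{r_2}$, contradicting the strict subadditivity (\ref{413b}). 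This dichotomy step is the main technical obstacle of the concentration--compactness part, although the locality of $E$ and $F$ makes it cleaner than in a nonlocal setting.

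Thus compactness holds: there are $y_n\in\mathbb{R}$ with $\{\rho_n(\cdot+y_n)\}$ tight. The translates $(\widetilde{\zeta}_n,\widetilde{v}_n):=(\zeta_n(\cdot+y_n),v_n(\cdot+y_n))$, bounded in $H^{1}\times H^{1}$, converge along a subsequence weakly in $H^{1}\times H^{1}$ to some $(\zeta_0,v_0)$, and tightness together with the compact embedding $H^{1}\hookrightarrow L^{2}_{\mathrm{loc}}$ upgrades this to strong convergence in $L^{p}(\mathbb{R})$ for $2\leq p<\infty$. Hence $F(\zeta_0,v_0)=\lim_n F(\widetilde{\zeta}_n,\widetilde{v}_n)=r$, so $(\zeta_0,v_0)$ is admissible; by weak lower semicontinuity of the nonnegative (by part (ii) of Proposition \ref{propos41} under (\ref{49e})) continuous quadratic form $E$ we get $E(\zeta_0,v_0)\leq I_r$, so $(\zeta_0,v_0)$ is a minimizer, and coercivity (\ref{412b}) forces $(\widetilde{\zeta}_n,\widetilde{v}_n)\to(\zeta_0,v_0)$ strongly in $H^{1}\times H^{1}$. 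The minimizer obeys the Euler--Lagrange equation $dE(\zeta_0,v_0)=\theta\,dF(\zeta_0,v_0)$; pairing with $(\zeta_0,v_0)$ and using that $E$, $F$ are homogeneous of degrees $2$ and $3$ gives $\theta=2I_r/(3r)\neq0$. Computing $dE$ and $dF$ from (\ref{49b})--(\ref{49d}) and rescaling $(\zeta,v_{\beta}):=\theta(\zeta_0,v_0)$, the Euler--Lagrange system becomes exactly (\ref{BB6}) with $b=d$; since $(\zeta,v_{\beta})\in H^{1}\times H^{1}$ it decays at infinity, so $\zeta=\zeta(x-c_st)$, $v_{\beta}=v_{\beta}(x-c_st)$ is a CSW solution of (\ref{BB2}).

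Finally I would prove smoothness and decay. Eliminating $\zeta$ from (\ref{BB6}) via $J_c^{-1}$ (bounded, gaining two derivatives since $c<0$) one finds $v_{\beta}$ solves a scalar equation $\mathcal{L}v_{\beta}=N$, with $\mathcal{L}=\kappa_{1}J_a-\frac{c_s^{2}}{\kappa_{2}}J_bJ_c^{-1}J_b$ elliptic of order two (its symbol not vanishing on $\mathbb{R}$ under the standing hypotheses) and $N$ a smooth right-hand side built from the products $\zeta v_{\beta}$ and $v_{\beta}^{2}$; since $H^{1}$ is an algebra in one dimension a routine bootstrap yields $v_{\beta}\in H^{\infty}$, and then $\zeta=\frac{1}{\kappa_{2}}J_c^{-1}(c_sJ_bv_{\beta}-\frac{\kappa_{\gamma,\delta}}{2}v_{\beta}^{2})\in H^{\infty}$, whence $(\zeta,v_{\beta})\in H^{\infty}\times H^{\infty}$ and all derivatives vanish at infinity by Sobolev's embedding. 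For exponential decay one rewrites (\ref{BB6}) as the convolution system $v_{\beta}=\frac{1}{\kappa_{1}}J_a^{-1}(c_sJ_b\zeta-\kappa_{\gamma,\delta}\zeta v_{\beta})$, $\zeta=\frac{1}{\kappa_{2}}J_c^{-1}(c_sJ_bv_{\beta}-\frac{\kappa_{\gamma,\delta}}{2}v_{\beta}^{2})$, where the kernels of $J_a^{-1}$ and $J_c^{-1}$ decay like $e^{-\sigma_a|x|}$ with $\sigma_a=\sqrt{\kappa_{1}/|a|}$ and at the rate attached to $J_c^{-1}$; starting from an arbitrarily small exponential rate (available since $(\zeta,v_{\beta})\in H^{\infty}$ solves an elliptic system) one runs a coupled comparison/bootstrap argument that raises the rate of $v_{\beta}$ to $\sigma_a$ and the rate of $\zeta$ to the value $\sigma_0\in(0,\sigma_a]$ stated in the theorem, and then extracts the exact limits (\ref{49g})--(\ref{49h}) from the leading-order asymptotics of the convolutions, the remainders decaying strictly faster. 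I expect this last refinement --- pinning down the precise decay rates and the constants $C_1$, $C_2$ --- to be the most delicate bookkeeping, and it parallels the analysis in \cite{AnguloS2019}.
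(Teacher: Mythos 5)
Your concentration--compactness part is essentially the route the paper intends: the paper only sketches those steps in the text preceding the statement and delegates the details to \cite{AnguloS2019}, and your filling-in is sound --- ruling out vanishing through the bound $|F(\zeta_n,v_n)|\leq \tfrac{|\kappa_{\gamma,\delta}|}{2}\|\zeta_n\|_{L^{3}}\|v_n\|_{L^{3}}^{2}$, ruling out dichotomy through the locality of $E,F$ and the strict subadditivity (\ref{413b}), passing to a translated weak limit, and recovering (\ref{BB6}) from the Euler--Lagrange equation with multiplier $\theta=2I_r/(3r)$ by homogeneity, all resting on Proposition \ref{propos41}. The regularity bootstrap is likewise consistent with the paper, which simply cites \cite{AnguloS2019} for it.

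The genuine gap is in the decay step. You keep the coupled convolution system $v_{\beta}=\tfrac{1}{\kappa_{1}}J_{a}^{-1}\bigl(c_{s}J_{b}\zeta-\kappa_{\gamma,\delta}\zeta v_{\beta}\bigr)$, $\zeta=\tfrac{1}{\kappa_{2}}J_{c}^{-1}\bigl(c_{s}J_{b}v_{\beta}-\tfrac{\kappa_{\gamma,\delta}}{2}v_{\beta}^{2}\bigr)$ and claim a comparison/bootstrap that ``raises the rate of $v_{\beta}$ to $\sigma_{a}$''. But the first equation contains the \emph{linear} term $\tfrac{c_{s}}{\kappa_{1}}J_{a}^{-1}J_{b}\zeta$: if $\zeta$ decays only like $e^{-\sigma_{0}|x|}$ with $\sigma_{0}<\sigma_{a}$ (which the statement allows, $\sigma_{0}$ being limited by the kernel of $J_{c}^{-1}$), this term again decays like $e^{-\sigma_{0}|x|}$ with a generically nonzero leading coefficient, so a kernel-by-kernel comparison stalls at the rate $\min(\sigma_{a},\sigma_{0})$ for $v_{\beta}$ and cannot produce the sharp asymptotics (\ref{49g}). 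The paper avoids exactly this by first \emph{eliminating} $\zeta$: substituting the second equation of (\ref{410a}) into the first gives $c_{\gamma,\delta}^{2}J_{a}v=G(v)$, i.e. $v=\tfrac{1}{c_{\gamma,\delta}^{2}}K_{a}\ast G(v)$ with the explicit kernel $K_{a}(x)=\tfrac{\pi}{2}\sqrt{\kappa_{1}/|a|}\,e^{-\sigma_{a}|x|}$, and then invokes the decay theory of \cite{BonaLi} to obtain (\ref{49g}), deriving (\ref{49h}) afterwards as in \cite{AnguloS2019}. To repair your argument you need this decoupling (or an equivalent analysis of the full matrix symbol), not a rate-raising iteration on the coupled system.
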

\begin{proof}
Regularity of the profiles can be proved in a similar way to that of the corresponding result in \cite{AnguloS2019}. On the other hand, it may be worth to explain the details of the proof of the properties of asymptotic decay. In terms of the operators (\ref{49d}) the system (\ref{BB6}) has the form
\begin{eqnarray}
-c_{s}J_{b}\zeta+\kappa_{1}J_{a}v&=&-\kappa_{\gamma,\delta}\zeta v\nonumber,\\
\kappa_{2}J_{c}\zeta-c_{s}J_{b}v&=&-\frac{\kappa_{\gamma,\delta}}{2}v^{2}.\label{410a}
\end{eqnarray}
The second equation of (\ref{410a}) can be written as
\begin{eqnarray*}
\kappa_{2}\zeta=c_{s}J_{c}^{-1}J_{b}v-\frac{\kappa_{\gamma,\delta}}{2}v^{2}.
\end{eqnarray*}
Using that $c_{\gamma,\delta}^{2}=\kappa_{1}\kappa_{2}$, and substituting into the first equation of (\ref{410a}) we get
\begin{eqnarray*}
c_{\gamma,\delta}^{2}J_{a}v&=&G(v)=c_{s}^{2}J_{c}^{-1}J_{b}^{2}v-c_{s}\frac{\kappa_{\gamma,\delta}}{2}J_{b}J_{c}^{-1}(v^{2})\\
&&-\kappa_{\gamma,\delta}c_{s}vJ_{c}^{-1}J_{b}v+\frac{\kappa_{\gamma,\delta}^{2}}{2}vJ_{c}^{-1}(v^{2}).
\end{eqnarray*}
Therefore
\begin{eqnarray*}
v=\frac{1}{c_{\gamma,\delta}^{2}}J_{a}^{-1}G(v)=\frac{1}{c_{\gamma,\delta}^{2}}K_{a}\ast G(v),
\end{eqnarray*}
where $\widehat{K}_{a}(\xi)=\frac{1}{1+\frac{|a|}{\kappa_{1}}\xi^{2}}$, and therefore
\begin{eqnarray*}
K_{a}(x)=\frac{\pi}{2}\sqrt{\frac{\kappa_{1}}{|a|}}e^{-\sigma_{a}|x|},
\end{eqnarray*}
where $\sigma_{a}=\sqrt{\kappa_{1}/|a|}$ . According to \cite{BonaLi}, this implies (\ref{49g}). Similar arguments to those of \cite{AnguloS2019} can be used to obtain (\ref{49h}).
\end{proof}

\begin{remark}
\label{remark42b}
Note that it is not possible to have $b=d$ in the case (A2) of Table \ref{tavle0}. The reason is the following. Assume that $b=d$, that $a, b$ and $c$ satisfy (A2) and let $S({\gamma,\delta})$ be given in (\ref{BB1c}).  Then the condition $\frac{bd}{\delta+\gamma}-ac<0$ gives
\begin{eqnarray}
b^{2}<ac(\delta+\gamma)=|a||c|(\delta+\gamma).\label{BB_2}
\end{eqnarray}
From (\ref{BB1c}), we have
$$b=\frac{1}{2}(S({\gamma,\delta})-a(\delta+\gamma)-c)>0.$$ Substituting this into (\ref{BB_2}), and since $a, c<0$, after some computations we obtain
\begin{eqnarray*}
S({\gamma,\delta})<-(\sqrt{|c|}-\sqrt{|a|(\delta+\gamma)})^{2},
\end{eqnarray*}
which contradicts the fact that $S({\gamma,\delta})>0$. 
%This clarifies the range of speeds for existence of CSW given by Toland's Theory and C-C Theory. Recall that in the first case, the condition $|c_{s}|>c_{\gamma,\delta}$ is assumed. 
On the other hand, note
that, since $\kappa_{2}<1$ and from (\ref{proof1d})
%\begin{eqnarray*}
%\frac{|c|}{b}=\frac{-\alpha_{2}}{1-\alpha_{2}}<1,
%\end{eqnarray*}
it holds that $M<1$. Therefore the inequalities
\begin{eqnarray*}
c_{\gamma,\delta}=\sqrt{\kappa_{1}\kappa_{2}}<|c_{s}|\leq M,\label{410b}
\end{eqnarray*}
imply in particular that $c_{\gamma,\delta}<1$ and $b^{2}\kappa_{1}-|a||c|<0$, that is, $a, b$ and $c$ are in the case (A2) of Table \ref{tavle0}. Since we know that this is not possible,  Theorem \ref{th_CC} applies when $|c_{s}|<c_{\gamma,\delta}$. This should be compared with Toland's theory, cf. Theorem \ref{toland}, for which this condition was not required.
\end{remark}
%\begin{remark}
%Note that the result on the asymptotic decay in Theorem \ref{th_CC} assumes that $a, c<0$. If $c=0, a<0$, it s not hard to see that, due to regularity, the argument above to prove (\ref{49g}) is still valid. Since in this case $\kappa_{2}\zeta=c_{s}J_{b}v-\frac{}{2}v^{2}$, then  (\ref{49h}) now holds for $\sigma_{0}=\sigma_{a}$.
%
%As far as the other two cases ($a=0, c<0$ and $a=c=0$) are concerned, the asymptotic decay can be obtained from the approach based on the Positive Operator Theory, explained below.
%\end{remark}

\subsubsection{Positive Operator theory}
The Positive Operator theory can also be applied as in \cite{BonaCh2002} where some existence results of classical solitary wave solutuions of Boussinesq systems for surface waves were derived. In our case, if we take the Fourier transform in (\ref{BB6}) we obtain

%when the system (\ref{BB6}) is invertible, it can be written in a fixed point form. Using the Fourier transform, we have
\begin{eqnarray}
\begin{pmatrix}c_{s}(1+bk^{2})&-\kappa_{1}+ak^{2}\\-\kappa_{2}(1-ck^{2})&c_{s}(1+dk^{2})\end{pmatrix}\begin{pmatrix}\widehat{\zeta}(k)\\\widehat{v_{\beta}}(k)\end{pmatrix}=\kappa_{\gamma,\delta}\begin{pmatrix}\widehat{\zeta v_{\beta}}(k)\\\widehat{v_{\beta}^{2}/2}(k)\end{pmatrix}.\label{BB10}
\end{eqnarray}
System (\ref{BB10}) is invertible for all $k\in\mathbb{R}$ and for $b,d>0, a,c\leq 0$ if
\begin{eqnarray}
\Delta(k)=\Delta_{0}+\Delta_{1}k^{2}+\Delta_{2}k^{4}\neq 0,\label{445a}
\end{eqnarray}
where
\begin{eqnarray*}
&&\Delta_{0}=c_{s}^{2}-c_{\gamma,\delta}^{2},\quad \Delta_{1}=(c_{s}^{2}(b+d)-c_{\gamma,\delta}^{2}(-c-\frac{a}{\kappa_{1}}),\\
&& \Delta_{2}=c_{s}^{2}bd-c_{\gamma,\delta}^{2}\frac{ac}{\kappa_{1}}=c_{s}^{2}bd-(1-\gamma)ac.
\end{eqnarray*}
(Note that $A$ is the determinant $D$ given by (\ref{NFTD}).) We assume that $a,c\leq 0, b,d>0, bd-ac/\kappa_{1}>0$ (case (A3) of Table \ref{tavle0}), and $|c_{s}|>c_{\gamma,\delta}$. Then $\Delta_{j}>0, 0\leq j\leq 2$, and we may write (\ref{BB10}) in the form

%We observe that if $A>0$ then $B>0$ as well. So if we additionally assume $C>0$ we may write (\ref{BB10}) in the form
\begin{eqnarray*}
\widehat{\zeta}(k)&=&\frac{\kappa_{\gamma,\delta}}{\Delta(k)}\left(c_{s}(1+dk^{2})\widehat{\zeta v_{\beta}}(k)+(\kappa_{1}-ak^{2})\widehat{v_{\beta}^{2}/2}(k)\right)\\
\widehat{v_{\beta}}(k)&=&\frac{\kappa_{\gamma,\delta}}{\Delta(k)}\left(c_{s}(1+bk^{2})\widehat{v_{\beta}^{2}/2}(k)+\kappa_{2}(1-ck^{2})\widehat{\zeta v_{\beta}}(k)\right).
\end{eqnarray*}
As in \cite{BonaCh2002}, this leads to an integral form of (\ref{BB6})
\begin{eqnarray*}
\zeta&=&k_{12}\ast (\zeta v_{\beta})+\frac{1}{2}k_{22}\ast (v_{\beta}^{2}),\nonumber\\
v_{\beta}&=&m_{12}\ast (\zeta v_{\beta})+\frac{1}{2}m_{22}\ast (v_{\beta}^{2}),\label{BB11}
\end{eqnarray*}
where the integral kernels are
\begin{eqnarray*}
k_{12}(x)&=&\frac{c_{s}\kappa_{\gamma,\delta}}{2\Delta_{2}}\left(\frac{1-dr_{-}^{2}}{r_{-}(r_{+}^{2}-r_{-}^{2})}e^{-r_{-}|x|}-
\frac{1-dr_{+}^{2}}{r_{+}(r_{+}^{2}-r_{-}^{2})}e^{-r_{+}|x|}\right)\\
k_{22}(x)&=&\frac{\kappa_{\gamma,\delta}}{2\Delta_{2}}\left(\frac{\kappa_{1}+ar_{-}^{2}}{r_{-}(r_{+}^{2}-r_{-}^{2})}e^{-r_{-}|x|}-
\frac{\kappa_{1}+ar_{+}^{2}}{r_{+}(r_{+}^{2}-r_{-}^{2})}e^{-r_{+}|x|}\right)\\
m_{12}(x)&=&\frac{\kappa_{\gamma,\delta}}{2\Delta_{2}}\left(\frac{\kappa_{2}(1+cr_{-}^{2})}{r_{-}(r_{+}^{2}-r_{-}^{2})}e^{-r_{-}|x|}-
\frac{\kappa_{2}(1+cr_{+}^{2})}{r_{+}(r_{+}^{2}-r_{-}^{2})}e^{-r_{+}|x|}\right)\\
m_{22}(x)&=&\frac{c_{s}\kappa_{\gamma,\delta}}{2\Delta_{2}}\left(\frac{1-br_{-}^{2}}{r_{-}(r_{+}^{2}-r_{-}^{2})}e^{-r_{-}|x|}-
\frac{1-br_{+}^{2}}{r_{+}(r_{+}^{2}-r_{-}^{2})}e^{-r_{+}|x|}\right),
\end{eqnarray*}
with
\begin{eqnarray*}
r_{\pm}^{2}=\frac{1}{2\Delta_{2}}\left(\Delta_{1}\pm\sqrt{\Delta_{1}^{2}-4\Delta_{0}\Delta_{2}}\right).\label{445b}
\end{eqnarray*}
Then we have
\begin{theorem}
\label{pot}
Assume that $b,d>0,a,c\leq 0$, and $bd-ac/\kappa_{1}>0$. If $|c_{s}|>c_{\gamma,\delta}$, then the system (\ref{BB6}) admits classical solitary wave solutions of elevation if $\kappa_{\gamma,\delta}>0$ and of depression if $\kappa_{\gamma,\delta}<0$.
\end{theorem}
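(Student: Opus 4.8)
The plan is to adapt the positive-operator argument of \cite{BonaCh2002} (based in turn on \cite{BenjaminBB1990}), used there for the surface-wave Boussinesq systems. Two preliminary reductions simplify matters. First, by the symmetry $(c_s,\zeta,v_\beta)\mapsto(-c_s,\zeta,-v_\beta)$ recorded in Section~\ref{sec41} we may assume $c_s>0$. Second, if $\kappa_{\gamma,\delta}<0$, replacing $(\zeta,v_\beta)$ by $(-\zeta,-v_\beta)$ leaves the quadratic terms $\zeta v_\beta$ and $v_\beta^2/2$ in (\ref{BB6}) unchanged while reversing the sign of the linear part, so the system takes the same form with $|\kappa_{\gamma,\delta}|$ in place of $\kappa_{\gamma,\delta}$; a positive solution of the transformed system then corresponds to a wave of depression, and to a wave of elevation when $\kappa_{\gamma,\delta}>0$. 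Thus it suffices to exhibit a solution $(\zeta,v_\beta)$ of (\ref{BB6}) with $\zeta,v_\beta>0$, under the standing hypotheses together with $\kappa_{\gamma,\delta}>0$ and $c_s>0$.

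Next I would use the integral reformulation derived just above: since $\Delta_j>0$ for $j=0,1,2$ (and, moreover, $\Delta_1^2>4\Delta_0\Delta_2$, which follows from $(b+d)^2\ge 4bd$ and $c_s^2>c_{\gamma,\delta}^2$, so that $r_\pm$ are real and distinct), the system (\ref{BB6}) is equivalent to $\zeta=k_{12}\ast(\zeta v_\beta)+\tfrac12 k_{22}\ast(v_\beta^2)$, $v_\beta=m_{12}\ast(\zeta v_\beta)+\tfrac12 m_{22}\ast(v_\beta^2)$. The crucial point here is that the four kernels are non-negative, even, exponentially decaying, and non-increasing on $[0,\infty)$. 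Their Fourier symbols are ratios of a polynomial in $k^2$ with non-negative coefficients (this is where $c_s>0$, $a\le0$, $c\le0$ enter) to $\Delta(k)>0$, so the claim reduces to a sign analysis of the partial-fraction coefficients in the explicit exponential representations of $k_{12},k_{22},m_{12},m_{22}$ in terms of $r_\pm$, carried out exactly as in \cite{BonaCh2002}.

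With this in hand I would set up a fixed-point problem on a cone. Let $\mathcal C\subset H^1(\mathbb R)$ be the cone of non-negative, even functions that are non-increasing on $[0,\infty)$, and define $\mathcal A:\mathcal C\times\mathcal C\to\mathcal C\times\mathcal C$ by $\mathcal A(\zeta,v)=\bigl(k_{12}\ast(\zeta v)+\tfrac12 k_{22}\ast v^2,\; m_{12}\ast(\zeta v)+\tfrac12 m_{22}\ast v^2\bigr)$. Since the convolution of two elements of $\mathcal C$ again lies in $\mathcal C$, $\mathcal A$ maps $\mathcal C\times\mathcal C$ into itself; the exponential decay and smoothing of the kernels, combined with Helly's selection theorem for monotone functions, make $\mathcal A$ completely continuous; and $\mathcal A$ is order-preserving, homogeneous of degree two, and vanishes at the origin. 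One then invokes the positive-operator fixed-point theorem of \cite{BenjaminBB1990}: the superlinearity of $\mathcal A$ near the origin yields fixed-point index $1$ on a small ball, while an a~priori bound on fixed points in $\mathcal C\times\mathcal C$ together with a lower ``amplification'' estimate (obtained by testing the kernels against a fixed positive profile, and here $c_s>c_{\gamma,\delta}$ is essential) yields index $0$ on a large ball; consequently $\mathcal A$ has a nontrivial fixed point $(\zeta,v_\beta)\in(\mathcal C\setminus\{0\})^2$. This $(\zeta,v_\beta)$ solves (\ref{BB6}); its positivity and the exponentially decaying kernels force $\zeta,v_\beta>0$ with $\zeta,v_\beta\to0$ as $x\to\pm\infty$, so it is a classical solitary wave --- of elevation when $\kappa_{\gamma,\delta}>0$, and, undoing the reduction, of depression when $\kappa_{\gamma,\delta}<0$.

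I expect the main obstacles to be, first, the verification that all four convolution kernels are non-negative and non-increasing: this is a concrete but somewhat delicate computation with $\Delta_0,\Delta_1,\Delta_2$ and $r_\pm$, done in the surface-wave case in \cite{BonaCh2002}, which one must transcribe to the present coefficients (\ref{BB1b}); and second, the verification of the hypotheses of the positive-operator theorem of \cite{BenjaminBB1990} --- in particular the a~priori bound on fixed points in the cone and the lower bound that excludes the trivial solution --- which is the substantive analytic content of that theory and is precisely where the hypothesis $|c_s|>c_{\gamma,\delta}$ is used.
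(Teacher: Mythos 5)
Your proposal follows essentially the same route as the paper: the paper's proof is precisely the reduction to the convolution system with kernels $k_{12},k_{22},m_{12},m_{22}$ (set up just before the theorem) followed by the positive‑operator fixed‑point argument of \cite{BenjaminBB1990} as implemented in \cite{BonaCh2002}, applied on the cone of even, positive, non‑increasing functions for $\kappa_{\gamma,\delta}>0$ and on the corresponding negative cone for $\kappa_{\gamma,\delta}<0$ (your sign‑flip reduction of the depression case is equivalent to the latter). One caveat: your parenthetical claim that $\Delta_1^2>4\Delta_0\Delta_2$ ``follows from $(b+d)^2\ge 4bd$ and $c_s^2>c_{\gamma,\delta}^2$'' is not a valid deduction — for $b=d$ one computes $\Delta_1^2-4\Delta_0\Delta_2=4c_s^2c_{\gamma,\delta}^2\,(b-|c|)\bigl(b-|a|/\kappa_1\bigr)+c_{\gamma,\delta}^4\bigl(|c|-|a|/\kappa_1\bigr)^2$, which can be negative for admissible parameters (e.g. $|c|<b<|a|/\kappa_1$ with $b^2>|a||c|/\kappa_1$) once $c_s$ is large, in which case $r_\pm$ are complex and the kernels oscillate; the paper itself silently assumes real distinct $r_\pm$, so this does not distinguish your argument from the paper's, but the justification you give for it is wrong as stated.
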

\begin{proof}
The conclusion follows from the application of the Positive Operator theory as in \cite{BonaCh2002}. When $\kappa_{\gamma,\delta}>0$, the theory is applied on the cone of continuous real-valued functions $(f,g)$ on $\mathbb{R}$ which are even, positive and non-increasing on $(0,\infty)$, while if $\kappa_{\gamma,\delta}<0$, the cone consists of continuous real-valued functions on $\mathbb{R}$ which are even, negative and non-decreasing on $(0,\infty)$.
\end{proof}

%assuming $c_{s}>c_{\gamma,\delta}$ and $A>0$, for $b,d>0, a,c\leq 0$ Positive Operator theory can be applied as in \cite{BonaCh2002} to obtain the existence of smooth classical solitary wave solutions of elevation if $\kappa_{\gamma,\delta}>0$ and of depression if $\kappa_{\gamma,\delta}<0$.
%\begin{remark}
%In a similar way, taking initial superposition of profiles, symmetric multi-pulses (of elevation and of depression) can also be numerically generated.
%\end{remark}
\begin{remark}
Note that when $b=d$, Theorem \ref{pot} reproduces part of the results of Toland's theory in Theorem \ref{toland}.
\end{remark}
\begin{remark}
The previous formulas can also be used to estimate the asymptotic decay at infinity of the classical solitary waves. In general,  both $\zeta, v_{\beta}$ should behave as $e^{-r_{-}|x|}$ as $|x|\rightarrow\infty$. In particular, for the cases $b=d>0, a=0, c\leq 0$ we may specify $r_{-}$ in (\ref{445b}). If $a=0, c<0$ then
\begin{eqnarray*}
\Delta_{2}=b^{2}c_{s}^{2},\; \Delta_{1}=2bc_{s}^{2}+cc_{\gamma,\delta}^{2},\;\Delta_{0}=c_{s}^{2}-c_{\gamma,\delta}^{2},
\end{eqnarray*}
and $\Delta_{1}^{2}-4\Delta_{0}\Delta_{2}=c^{2}c_{\gamma,\delta}^{4}+4b(b+c)c_{s}^{2}c_{\gamma,\delta}^{2}>0$ (since $b+c=\beta>0$). Therefore
\begin{eqnarray*}
\lim_{x\rightarrow \pm\infty}e^{-r_{-}|x|}v(x)=C,\; r_{-}=\left(\frac{1}{2\Delta_{2}}(\Delta_{1}-\sqrt{\Delta_{1}^{2}-4\Delta_{0}\Delta_{2}})\right)^{1/2},
\end{eqnarray*}
and $C$ a constant. The argument used in Theorem \ref{th_CC} can be applied here as well to obtain (\ref{49h}) for some $\sigma_{0}\in (0,r_{-}], \sigma_{0}<\sqrt{|c|}$.

On the other hand, if $a=c=0$, then
\begin{eqnarray*}
r_{-}=\frac{1}{\sqrt{b}}\sqrt{1-\frac{c_{\gamma,\delta}}{|c_{s}|}},
\end{eqnarray*}
and in this case
\begin{eqnarray*}
\lim_{x\rightarrow \pm\infty}e^{-r_{-}|x|}v(x)=C_{1},\;
 \lim_{x\rightarrow \pm\infty}e^{-r_{-}|x|}\zeta(x)=C_{2},
\end{eqnarray*}
for some constants $C_{1}, C_{2}$.
\end{remark}
In summary, in addition to the results obtained by the Normal Form Theory, valid for small and positive $c_{s}-c_{\gamma,\delta}$, and shown in Table \ref{tavle0}, the other standard theories contribute to the existence of classical solitary waves as follows:
\begin{itemize}
\item Toland's Theory ensures the existence of CSW's when $a,c\leq 0, b=d>0$ (Theorem \ref{toland}), a condition which intersects with those of the cases (A3) to (A6) of Table \ref{tavle0}. The speed and amplitude must satisfy the relation (\ref{48b}).
\item The C-C Theory establishes the existence of CSW's when $a,c< 0, b=d>0$, and $bd-ac/\kappa_{1}>0$ (cf. the  case (A3) of Table \ref{tavle0}), and for speeds satisfying (\ref{49e}) and $|c_{s}|<c_{\gamma,\delta}$, cf. Remark \ref{remark42}.
\item The Positive Operator Theory proves the existence of CSW's when $b,d>0, a,c\leq 0$, and $bd-ac/\kappa_{1}>0$, with speeds $c_{s}$ satisfying $|c_{s}|>c_{\gamma,\delta}$. The result can also justify the existence of CSW's predicted by NFT in the cases (A3) to (A6) of Table \ref{tavle0}.
\end{itemize}

We finally note that in \cite{Duran2019} existence of even, classical solitary waves of the \lq classical Boussinesq\rq\ system (case (vii) with $b=0, d>0, a=c=0$, for which $D=0$) is proved for speeds $c_{s}$ with $|c_{s}|>c_{\gamma,\delta}$ and for $\delta^{2}-\gamma\neq 0$. The waves are of elevation when $\delta^{2}-\gamma>0$ and of depression otherwise. The result is based on phase plane analysis of the system (\ref{BB6}), which is conservative in this case; see \cite{PegoW1997} for the case of surface waves.

\subsubsection{Exact solitary wave solutions in 1D}
For particular values of the speed, exact classical solitary waves can be derived by following the arguments used in \cite{MChen1998}. If we look for solutions $(\zeta,v_{\beta})$ with $v_{\beta}=B\zeta$ for some constant $B$, then, substituting into (\ref{BB6}) we have
\begin{eqnarray}
(c_{s}-\kappa_{1}B)\zeta -(bc_{s}+aB)\zeta^{\prime\prime}&=&B\kappa_{\gamma,\delta}\zeta^{2}\nonumber\\
(c_{s}B-\kappa_{2})\zeta-(dc_{s}B+\kappa_{2}c)\zeta^{\prime\prime}&=&\frac{B^{2}}{2}\kappa_{\gamma,\delta}\zeta^{2}.\label{esw1}
\end{eqnarray}
The existence of a solution $\zeta$ of (\ref{esw1}) requires then
\begin{eqnarray}
2c_{s}B-2\kappa_{2}&=&c_{s}B-\kappa_{1}B^{2}\nonumber\\
2dc_{s}B+2\kappa_{2}c&=&bc_{s}B+aB^{2}.\label{esw2}
\end{eqnarray}
If we consider (\ref{esw2}) as a linear system for the variables $B^{2}$ and $c_{s}B$, then we have two possibilities:

\begin{itemize}
\item If $b-2d-a(\delta+\gamma)\neq 0$, then (\ref{esw2}) has a unique solution from which
\begin{eqnarray}
B^{2}=\frac{2\kappa_{2}(b-2d-c)}{\kappa_{1}(b-2d)-a},\quad
c_{s}=\frac{1}{B}\frac{2\kappa_{2}(c\kappa_{1}-a)}{\kappa_{1}(b-2d)-a}.\label{esw3a}
\end{eqnarray}
%
%. Let
%\begin{eqnarray*}
%p=\frac{(1-\gamma)(b-2d-c)}{\kappa_{1}(b-2d)-a},\quad \kappa_{1}=\frac{1}{\delta+\gamma}.
%\end{eqnarray*}
%Then the system has classical solitary wave solutions if and only if $p>0$ and
%\begin{eqnarray*}
%(\kappa_{1}p-\frac{1-\gamma}{2})((\kappa_{1}b-a)p-b(1-\gamma))>0.
%\end{eqnarray*}
%Explicitly
%\begin{eqnarray}
%\zeta(x,t)&=&\zeta_{0}{\sech}^{2}(\lambda (x-c_{s}t-x_{0})),\label{BB12}\\
%u(x,t)&=&B(\zeta(x,t)-\beta\partial_{xx}\zeta(x,t)),\nonumber
%\end{eqnarray}
%where
%\begin{eqnarray}
%&&\zeta_{0}=\frac{3((1-\gamma)-2\kappa_{1}p)}{2\kappa_{\gamma,\delta}p},\quad \kappa_{\gamma,\delta}=\frac{\delta^{2}-\gamma}{(\delta+\gamma)^{2}},\nonumber\\
%&&B=\pm\sqrt{\frac{3(1-\gamma)}{\kappa_{\gamma,\delta}\zeta_{0}+3\kappa_{1}}},\quad c_{s}=\frac{2(1-\gamma)-\kappa_{1}B^{2}}{B},\label{BB13}\\
%&&\lambda=\frac{1}{2}\left(\frac{2\kappa_{\gamma,\delta}\zeta_{0}}{2b\kappa_{\gamma,\delta}(1-\gamma)\zeta_{0}+3(1-\gamma)(a+\kappa_{1}b)}\right)^{1/2}.\nonumber
%\end{eqnarray}
\item If $b-2d-a(\delta+\gamma)=0$, then (\ref{esw2}) has infinitely many solutions iff $c=a(\delta+\gamma)$; they satisfy
\begin{eqnarray}
c_{s}=\frac{2\kappa_{2}-\kappa_{1}B^{2}}{B},\label{esw3b}
\end{eqnarray} 
for $B\neq 0$ arbitrary.
%
%. Here there are solitary wave solutions of the form (\ref{BB12}), (\ref{BB13}) for any $B^{2}>0$ when
%\begin{eqnarray*}
%(B^{2}-(1-\gamma)(\delta+\gamma))(dB^{2}-b(\delta+\gamma)(1-\gamma))>0.
%\end{eqnarray*}
\end{itemize}
As far as the exact form of the solutions is concerned, differentiating one of the equations of (\ref{esw1}) and using (\ref{esw2}) we have
\begin{eqnarray}
\mu_{1}\zeta^{\prime}-\mu_{2}\zeta^{\prime\prime\prime}=\zeta\zeta^{\prime},\label{esw4}
\end{eqnarray}
where
\begin{eqnarray*}
\mu_{1}=\frac{\kappa_{2}-\kappa_{1}B^{2}}{\kappa_{\gamma,\delta}B^{2}},\;
\mu_{2}=\frac{(a-b\kappa_{1})B^{2}+2b\kappa_{2}}{2\kappa_{\gamma,\delta}B^{2}}.
\end{eqnarray*}
Thus, (\ref{esw4}) admits solutions of square hyperbolic secant form if $\mu_{1}\mu_{2}>0$, that is, if
\begin{eqnarray*}
(\kappa_{2}-\kappa_{1}B^{2})((a-b\kappa_{1})B^{2}+2b\kappa_{2})>0.\label{esw5}
\end{eqnarray*}
If this condition is satisfied, then
\begin{eqnarray}
\zeta(x,t)&=&3\mu_{1}{\rm sech}^{2}\left(\frac{1}{2}\sqrt{\frac{\mu_{1}}{\mu_{2}}}(x-c_{s}t-x_{0})\right),\nonumber\\
u(x,t)&=&B(\zeta(x,t)-\beta\zeta_{xx}(x,t)),\label{esw6}
\end{eqnarray}
with $B$ and $c_{s}$ given by (\ref{esw3a}) or (\ref{esw3b}), and $x_{0}\in\mathbb{R}$ is arbitrary.
\subsection{Numerical generation of solitary waves}
\label{sec42}
In this section some classical and generalized solitary wave profiles will be generated numerically. To this end and following a standard procedure, cf. e.~g. \cite{DougalisDM2012}, the system (\ref{BB6}) is discretized on a long enough interval $(-l,l)$ and with periodic boundary conditions by the Fourier collocation method based on $N$ collocation points given by $x_{j}=-l+jh, j=0,\ldots,N-1$ for an even integer $N\geq 1$ and where $h=2l/N$. If the vectors $\zeta_{h}=(\zeta_{h,0},\ldots,\zeta_{h,N-1})^{T}$ and $v_{h}=(v_{h,0},\ldots,v_{h,N-1})^{T}$ denote, respectively, the approximations to the values of $\zeta$ and $v_{\beta}$ at the $x_{j}$, then the discrete system satisfied by $\zeta_{h}$ and $v_{h}$ has the form
\begin{eqnarray}
S_{N}\begin{pmatrix} \zeta_{h}\\v_{h}\end{pmatrix}=\kappa_{\gamma,\delta}\begin{pmatrix} \zeta_{h}.v_{h}\\(v_{h}.^{2})/2\end{pmatrix},\label{421}
\end{eqnarray}
where $S_{N}$ is the $2N$-by-$2N$ matrix
\begin{eqnarray}
S_{N}:=\begin{pmatrix} c_{s}(I_{N}-b D_{N}^{2})&-(\kappa_{1}I_{N}+a D_{N}^{2})\\-\kappa_{2}(I_{N}+c D_{N}^{2})& c_{s}(I_{N}-d D_{N}^{2})\end{pmatrix},\label{421b}
\end{eqnarray}
with $I_{N}$ standing for the $N$-by-$N$ identity matrix and $D_{N}$ denoting the $N$-by-$N$ pseudospectral differentiation matrix. The products of the nonlinear terms on the right hand-side of (\ref{421}) are understood in the Hadamard (componentwise) sense. The system (\ref{421}), (\ref{421b}) is implemented in the Fourier space, that is, for the discrete Fourier components of $\zeta_{h}$ and $v_{h}$, leading to a $2$-by-$2$ system for each component of fixed-point form
\begin{eqnarray}
S(k)\begin{pmatrix} \widehat{\zeta_{h}}(k)\\\widehat{v_{h}}(k)\end{pmatrix}=\kappa_{\gamma,\delta}\begin{pmatrix} \widehat{\zeta_{h}.v_{h}}(k)\\\widehat{(v_{h}.^{2})/2}(k)\end{pmatrix},\label{422}
\end{eqnarray}
where
\begin{eqnarray}
S(k)=\begin{pmatrix} c_{s}(1+b\widetilde{k}^{2}&-(\kappa_{1}-a\widetilde{k}^{2})\\ -\kappa_{2}(1-c\widetilde{k}^{2})&c_{s}(1+d\widetilde{k}^{2}\end{pmatrix},\label{423}
\end{eqnarray}
with $\widetilde{k}=\pi k/l, -N/2\leq k\leq N/2-1$ and $\widehat{\zeta_{h}}(k), \widehat{v_{h}}(k) $ denoting, respectively, the $k$-th discrete Fourier component of $\zeta_{h}$ and $v_{h}$.
\begin{figure}[htbp]
\centering
\subfigure[]
{\includegraphics[width=0.8\textwidth]{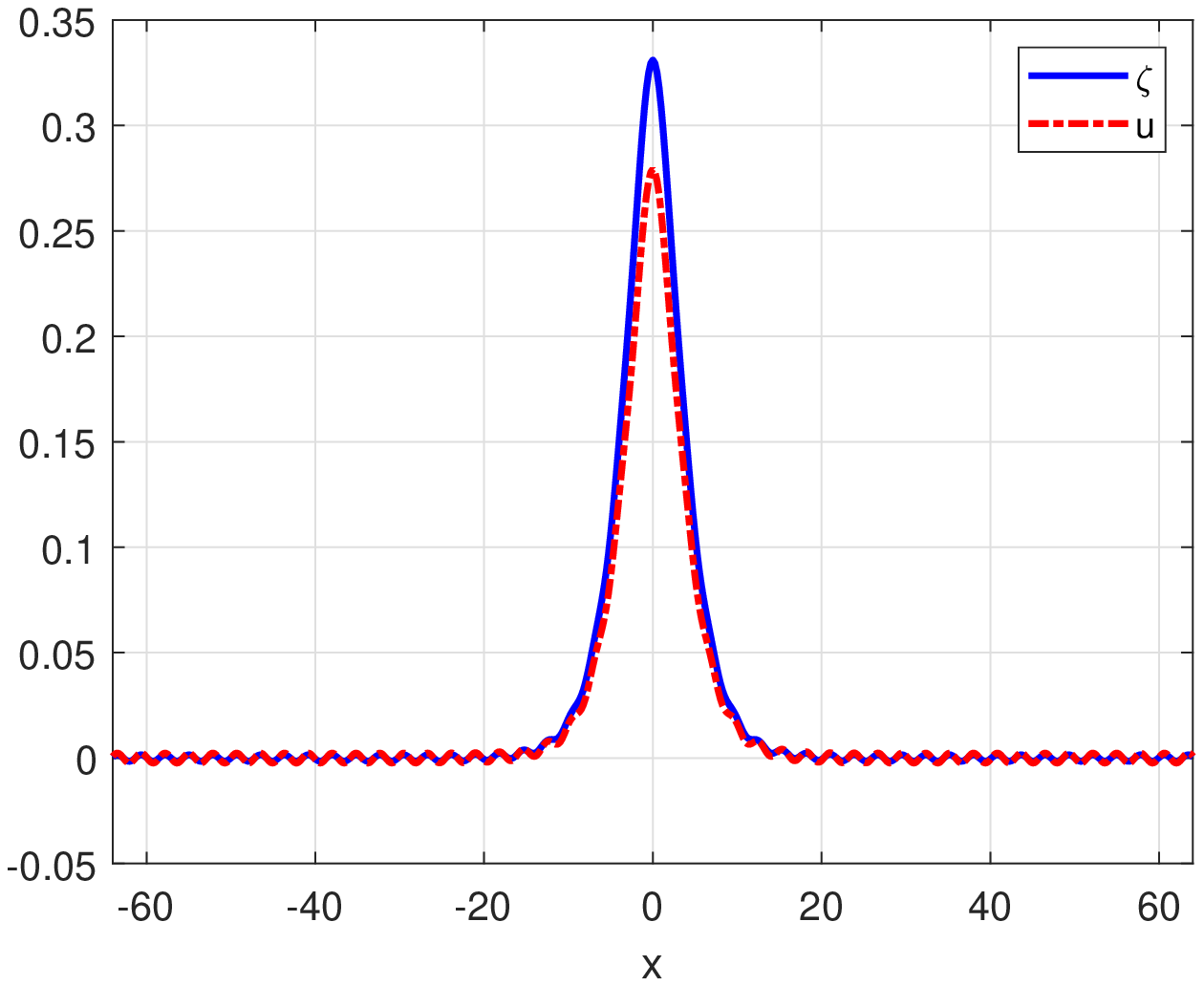}}
\subfigure[]
{\includegraphics[width=6.27cm]{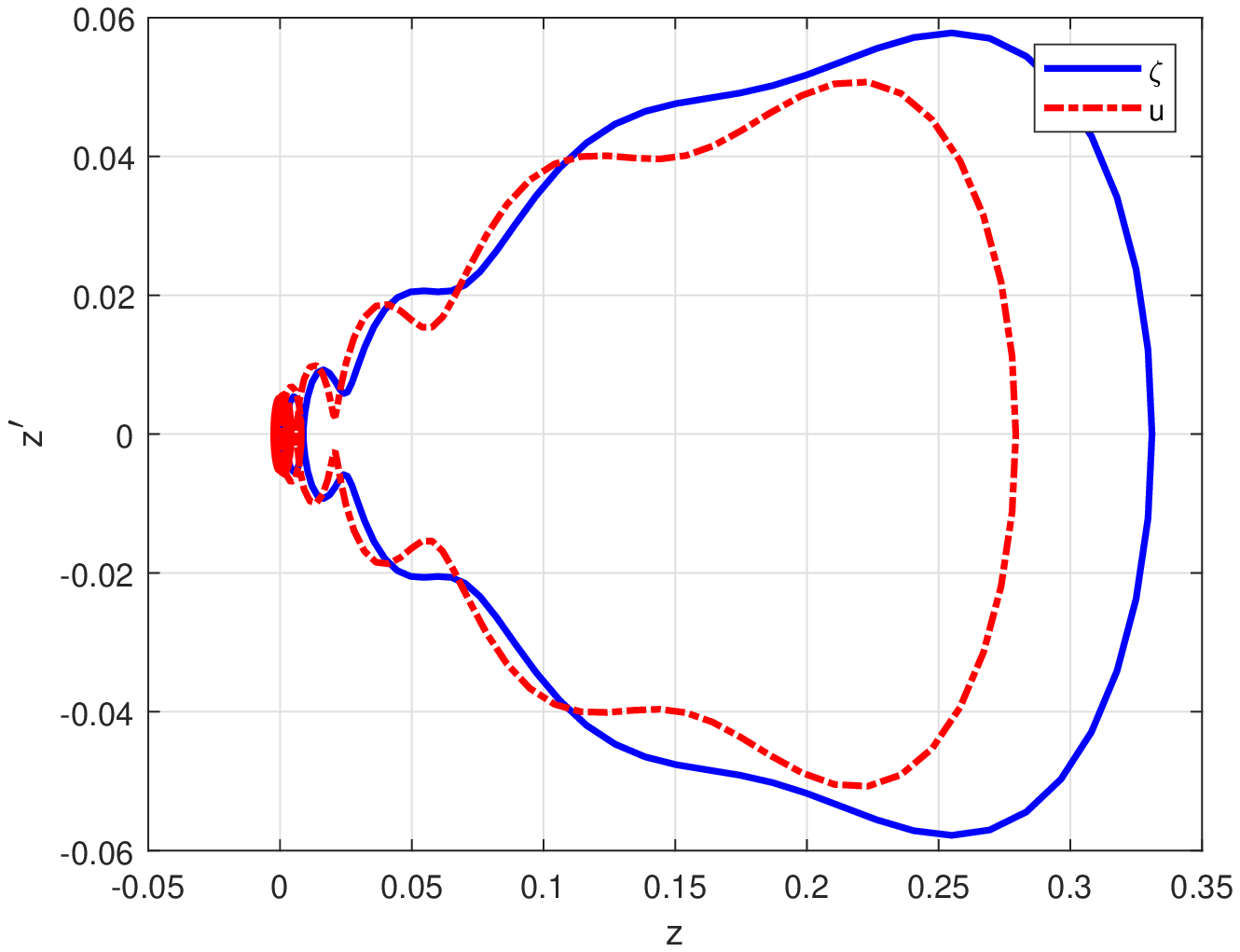}}
\subfigure[]
{\includegraphics[width=6.27cm]{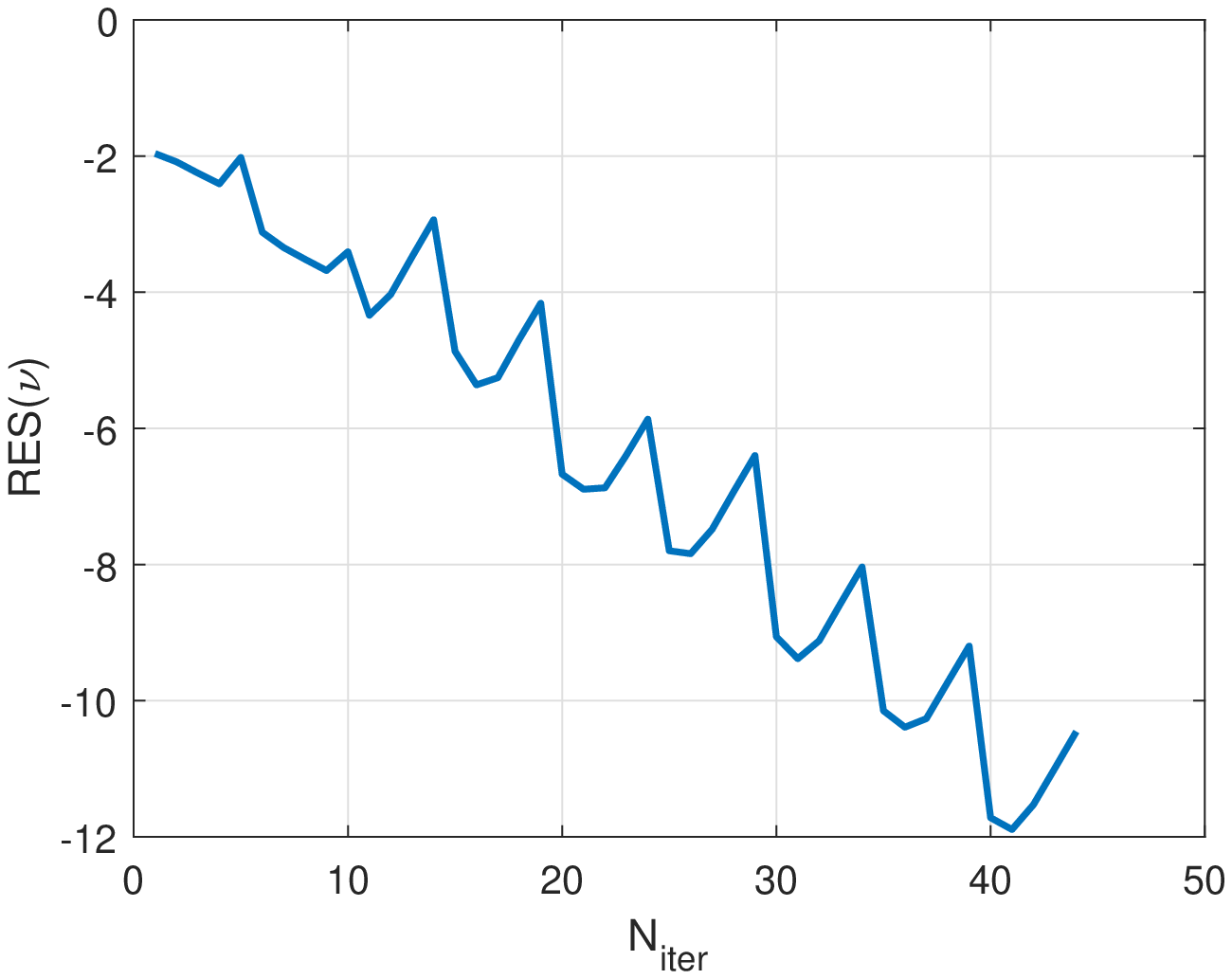}}
\caption{GSW generation. Case (A1) with $\delta=0.9, \gamma=0.5,  c=-1/3, b=0, d=1, a=\kappa_{1}(S(\gamma,\delta)-b-c-d)\approx -0.2022; c_{s}=c_{\gamma,\delta}+0.02\approx 0.6176$. (a) $\zeta$ and $u$ profiles; (b) $\zeta$ and $u$ phase portraits; (c) Residual error vs. number of iterations (semilog scale).}
\label{fig_BB4}
\end{figure}

%\begin{figure}[htbp]
%\centering
%\subfigure[]
%{\includegraphics[width=6.27cm]{case8_a.eps}}
%\subfigure[]
%{\includegraphics[width=6.27cm]{case8_b.eps}}
%\subfigure[]
%{\includegraphics[width=6.27cm]{case8_d.eps}}
%\subfigure[]
%{\includegraphics[width=6.27cm]{case8_e.eps}}
%\subfigure[]
%{\includegraphics[width=6.27cm]{case8_c.eps}}
%\caption{GSW generation. Case (A1) with $\delta=0.9, \gamma=0.5, a=-1/3, c=-1/3, b=0, d=-(\delta+\gamma)a-b-c+\frac{1+\gamma\delta}{3\delta(\gamma+\delta)}; c_{s}=c_{\gamma,\delta}+0.01.$ (a), (b) $\zeta$ and $u$ profiles; (c), (d) $\zeta$ and $u$ phase portraits; (e) Residual error vs. number of iterations.}
%\label{fig_BB1b}
%\end{figure}
Assuming that $S(k)$ given by (\ref{423}) is nonsingular  for all $k,  -N/2\leq k\leq N/2-1$, cf. (\ref{BB10}), then the system (\ref{422}) is solved iteratively with the 
Petviashvili scheme, \cite{Petv1976,pelinovskys},
\begin{eqnarray}
S(k)\begin{pmatrix} \widehat{\zeta_{h}^{[\nu+1]}}(k)\\\widehat{v_{h}^{[\nu+1]}}(k)\end{pmatrix}&=&m_{h}^{2}\kappa_{\gamma,\delta}\begin{pmatrix} \widehat{\zeta_{h}^{[\nu]}.v_{h}^{[\nu]}}(k)\\\widehat{((v_{h}^{[\nu]}).^{2})/2}(k)\end{pmatrix},\nonumber\\
&& \nu=0,1,\ldots,\; -N/2\leq k\leq N/2-1,\label{424}
\end{eqnarray}
where $m_{h}$ is the corresponding stabilizing factor
$$
m_{h}={\left(S_{N}\begin{pmatrix} \zeta_{h}^{[\nu]}\\ v_{h}^{[\nu]}\end{pmatrix}, \begin{pmatrix} \zeta_{h}^{[\nu]}\\ v_{h}^{[\nu]}\end{pmatrix}\right)_{N}}/{\left(\begin{pmatrix} \zeta_{h}^{[\nu]}.v_{h}^{[\nu]}\\((v_{h}^{[\nu]}).^{2}/2)(k)\end{pmatrix}, \begin{pmatrix} \zeta_{h}^{[\nu]}\\ v_{h}^{[\nu]}\end{pmatrix}\right)_{N}},
$$ 
with $(\cdot,\cdot)_{N}$ denoting the Euclidean inner product in $\mathbb{C}^{2N}$. The iterative procedure (\ref{424}) is in some cases accelerated by using vector extrapolation methods, \cite{sidi,sidifs,smithfs}. For the application of these techniques to the Petviashvili's method for traveling wave computations see \cite{AlvarezD2015}. The benefits of their use include a reduction in the number of iterations when the Petviahsvili's method is convergent, and the transformation of divergent cases into convergent. Once the iteration is completed and approximations $\zeta_{h}$ and $v_{h}$ are computed, an approximation of $u=(1-\beta\partial_{xx})v_{\beta}$ can be obtained as $u_{h}=(I_{N}-\beta D_{N}^{2})v_{h}$.

\begin{figure}[htbp]
\centering
\subfigure[]
{\includegraphics[width=0.8\textwidth]{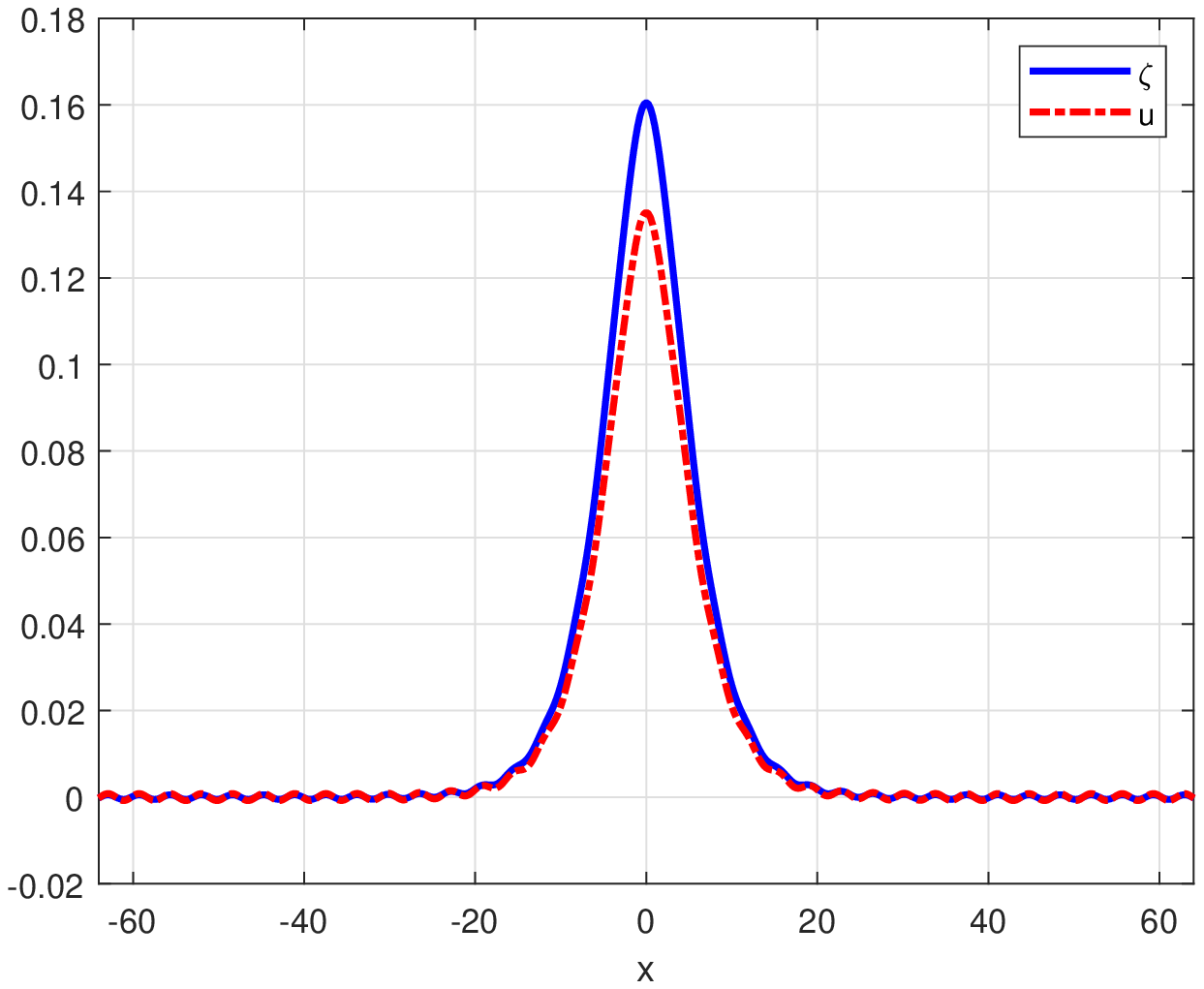}}
\subfigure[]
{\includegraphics[width=6.25cm,height=5.2cm]{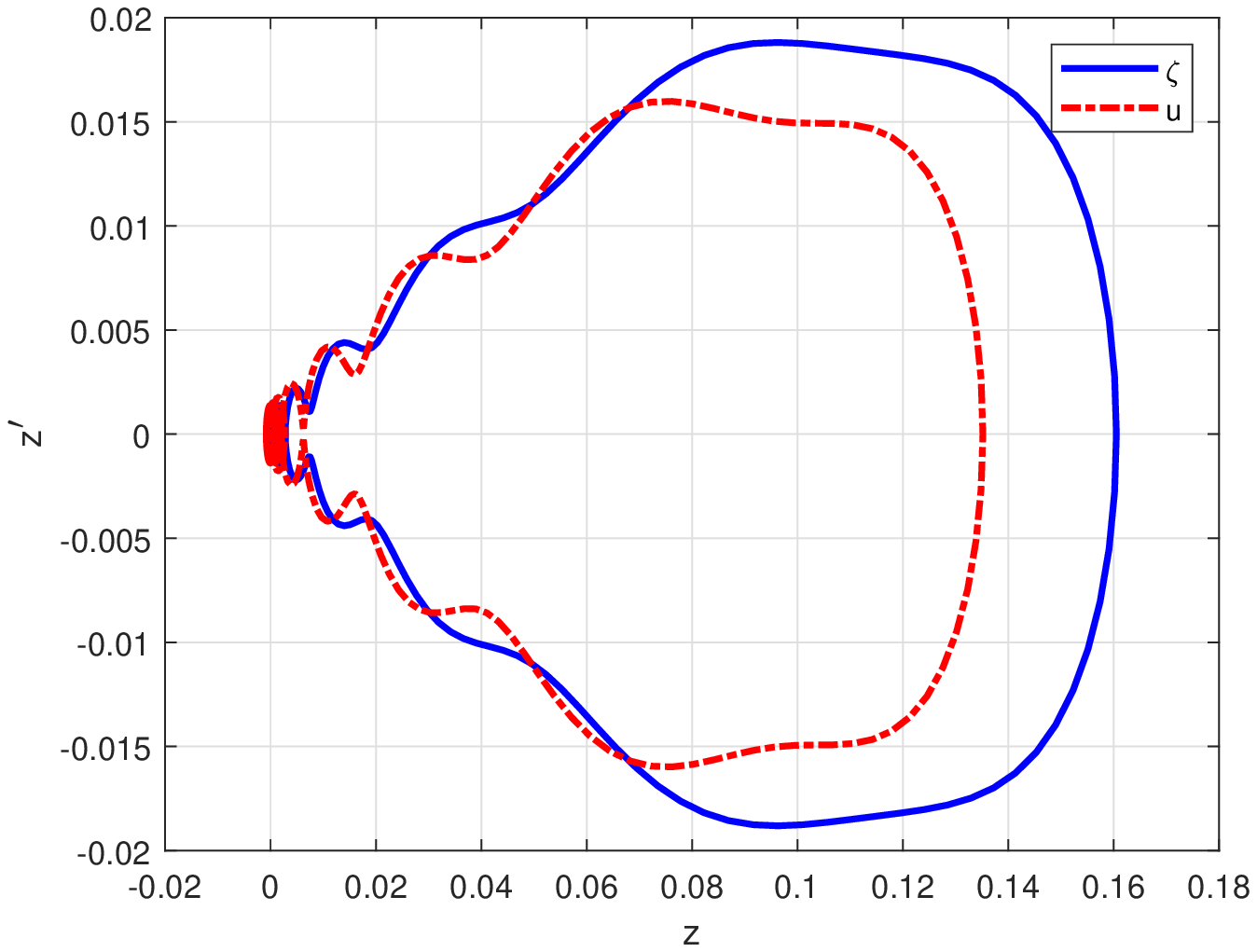}}
\subfigure[]
{\includegraphics[width=6.25cm,height=5.2cm]{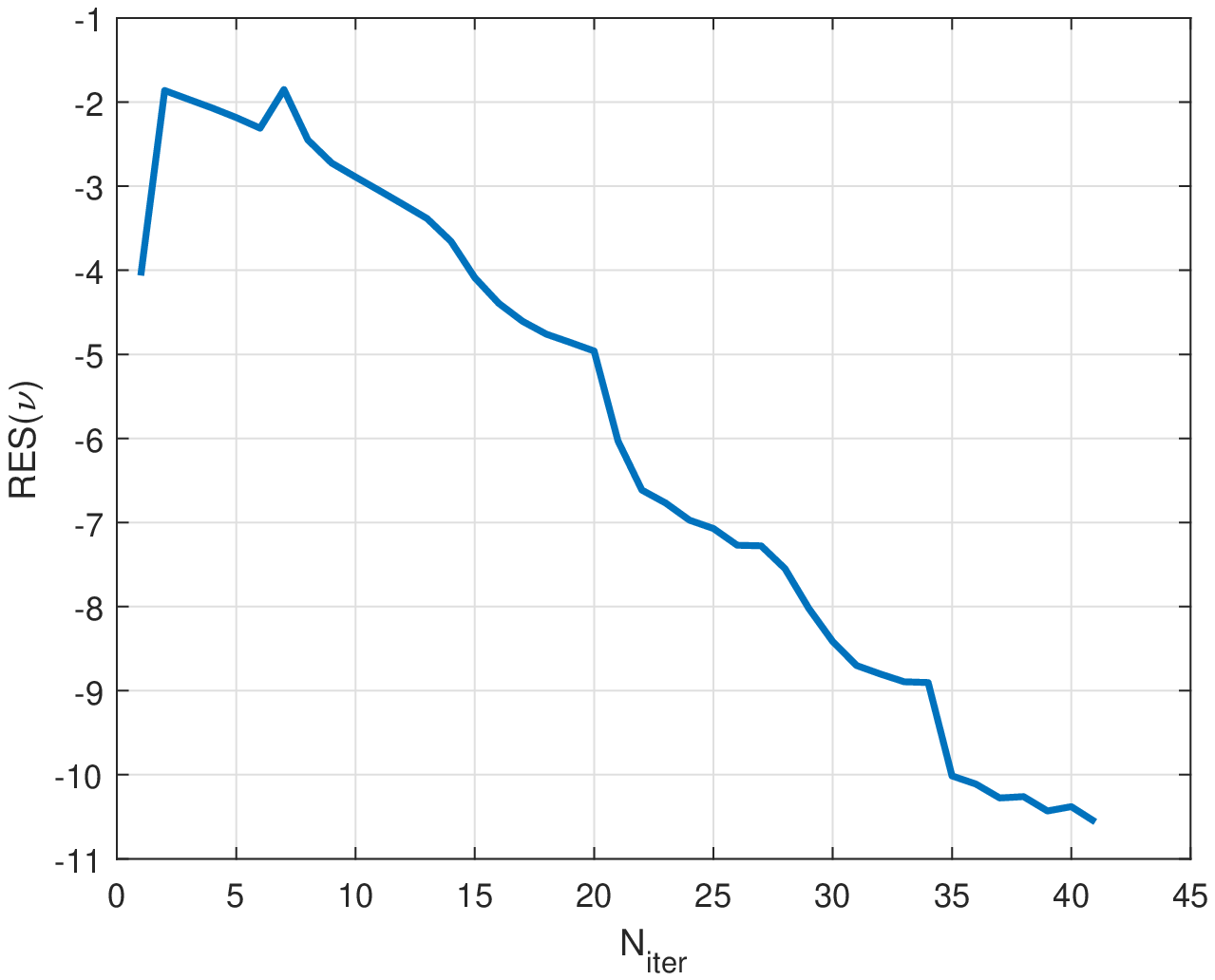}}
\caption{GSW generation. Case (A2) with $\delta=0.9, \gamma=0.5, a=-1/3, c=-2/3, b=1/9, d=S-a/\kappa_{1}-b-c\approx 1.4058; c_{s}=c_{\gamma,\delta}+0.01\approx 0.6076.$ (a) $\zeta$ and $u$ profiles; (b) $\zeta$ and $u$ phase portraits; (c) Residual error vs. number of iterations (semilog scale).}
\label{fig_BB5}
\end{figure}
%\begin{figure}[htbp]
%\centering
%\subfigure[]
%{\includegraphics[width=6.27cm]{case9_a.eps}}
%\subfigure[]
%{\includegraphics[width=6.27cm]{case9_b.eps}}
%\subfigure[]
%{\includegraphics[width=6.27cm]{case9_d.eps}}
%\subfigure[]
%{\includegraphics[width=6.27cm]{case9_e.eps}}
%\subfigure[]
%{\includegraphics[width=6.27cm]{case9_c.eps}}
%\caption{GSW generation. Case (A2) with $\delta=0.9, \gamma=0.5, a=-1/3, c=-1/3, b=1/9, d=d=-(\delta+\gamma)a-b-c+\frac{1+\gamma\delta}{3\delta(\gamma+\delta)}; c_{s}=c_{\gamma,\delta}+1E-02.$ (a), (b) $\zeta$ and $u$ profiles; (c), (d) $\zeta$ and $u$ phase portraits; (e) Residual error vs. number of iterations.}
%\label{fig_BB2b}
%\end{figure}

Some details on the implementation are now given. For the experiments below $h$ varies in a range between $6.25\times 10^{-2}$ and $2.5\times 10^{-1}$. In all the computations the approximate profiles for $\zeta$ and $u$ and corresponding phase portraits are displayed. The accuracy of the profiles is monitored in two ways. First, the behaviour of the residual error at each iteration
\begin{eqnarray}
RES(\nu)=\left|\left|S_{N}\begin{pmatrix} \zeta_{h}^{[\nu]}\\{v_{h}^{[\nu]}}\end{pmatrix}-\begin{pmatrix} {\zeta_{h}^{[\nu]}.v_{h}^{[\nu]}}\\{((v_{h}^{[\nu]}).^{2})/2}\end{pmatrix}\right|\right|,\label{RES}
\end{eqnarray}
where $||\cdot ||$ denotes the Euclidean norm, is checked and displayed. A second test of accuracy consists of integrating numerically the periodic ivp of (\ref{BB2}) by some fully discrete scheme, considering the computed profiles as initial condition and monitoring several error indicators during the evolution of the corresponding numerical solution. For the experiments to follow, we use the Fourier pseudospectral discretization in space (justified by the analysis of convergence made in section \ref{sec3}), and, as time integrator, we use a fourth-order Runge-Kutta (RK)-composition method based on the Implicit Midpoint Rule. The fully discrete method and the evolution experiments will be described 
in section \ref{sec5}.

In the rest of the present section we illustrate several cases of numerically generated solitary waves corresponding to the cases (A1)-(A6) of Table \ref{tavle0}, to the case $D=0$ and some exceptional cases predicted by NFT. In each one of Figures \ref{fig_BB4}- we present (a) the profiles of the $\zeta-$ and $u-$solitary waves, (b) their phase space diagrams, and (c) a graph of the residual error defined by (\ref{RES}) versus the number of iterations required to reach a certain level of residual error down to a minimum value of about $10^{-11}$.

%The cases (A1)-(A6) of Table \ref{tavle0} and the case $D=0$ have been illustrated

%As mentioned before, each case from (A1) to (A6) of Table \ref{tavle0} as well as the case $D=0$ are illustrated in the experiments by plotting the approximate profiles for $\zeta$ and $u$, the corresponding phase portraits and the behaviour (in semilog scale) of the residual error (\ref{RES}) as function of the number of iterations in order to check the convergence of the process. 

\begin{figure}[htbp]
\centering
\subfigure[]
{\includegraphics[width=0.8\textwidth]{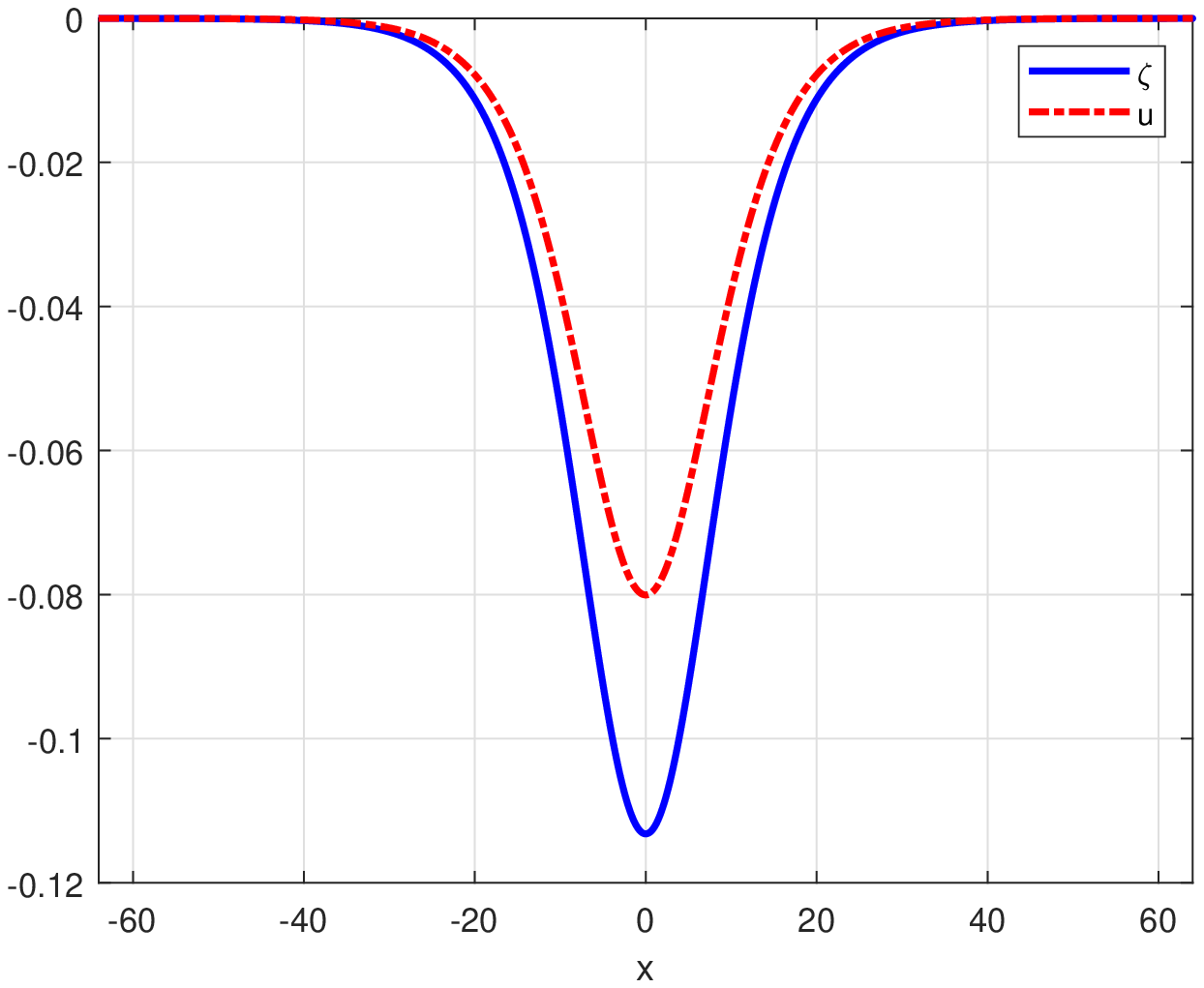}}
\subfigure[]
{\includegraphics[width=6.27cm]{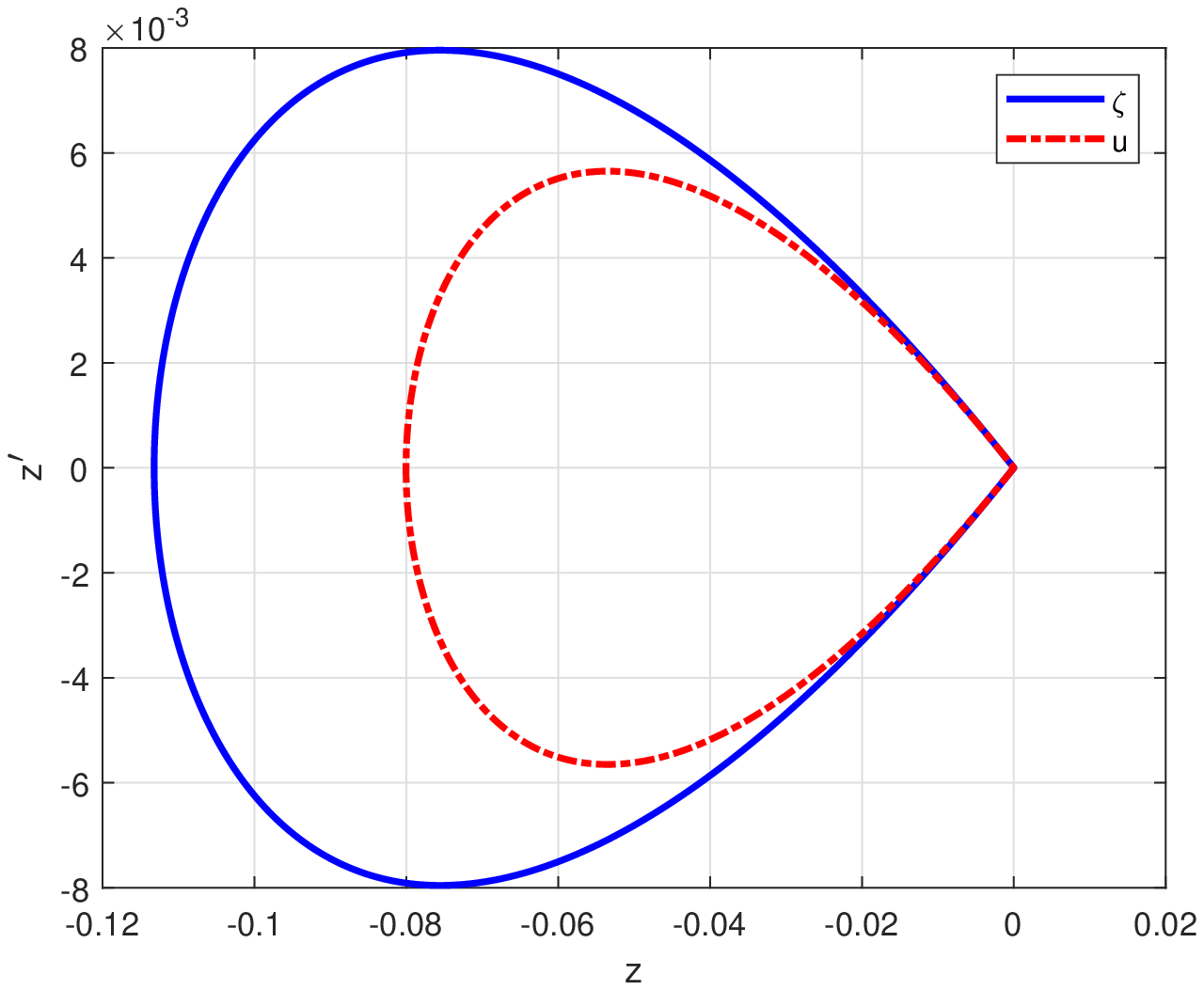}}
\subfigure[]
{\includegraphics[width=6.27cm]{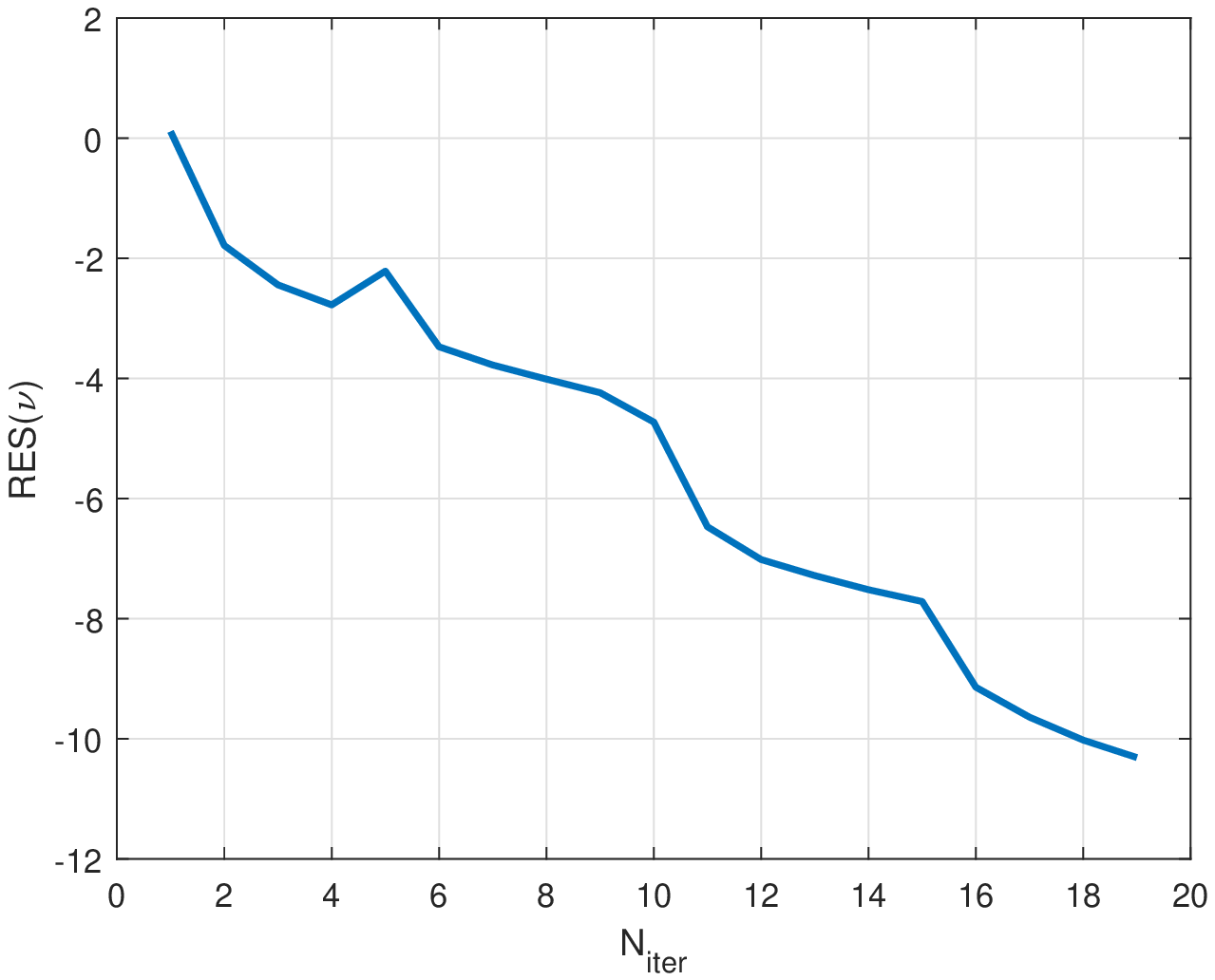}}
\caption{CSW generation. Case (A4) with $\delta=\gamma=0.5, a=0, c=-1/3, b=1, d=1/6; c_{s}=c_{\gamma,\delta}+1E-02.$ (a) $\zeta$ and $u$ profiles; (c) $\zeta$ and $u$ phase portraits; (c) Residual error vs. number of iterations (semilog scale).}
\label{fig_BB1}
\end{figure}
\begin{figure}[htbp]
\centering
\subfigure[]
{\includegraphics[width=0.8\textwidth]{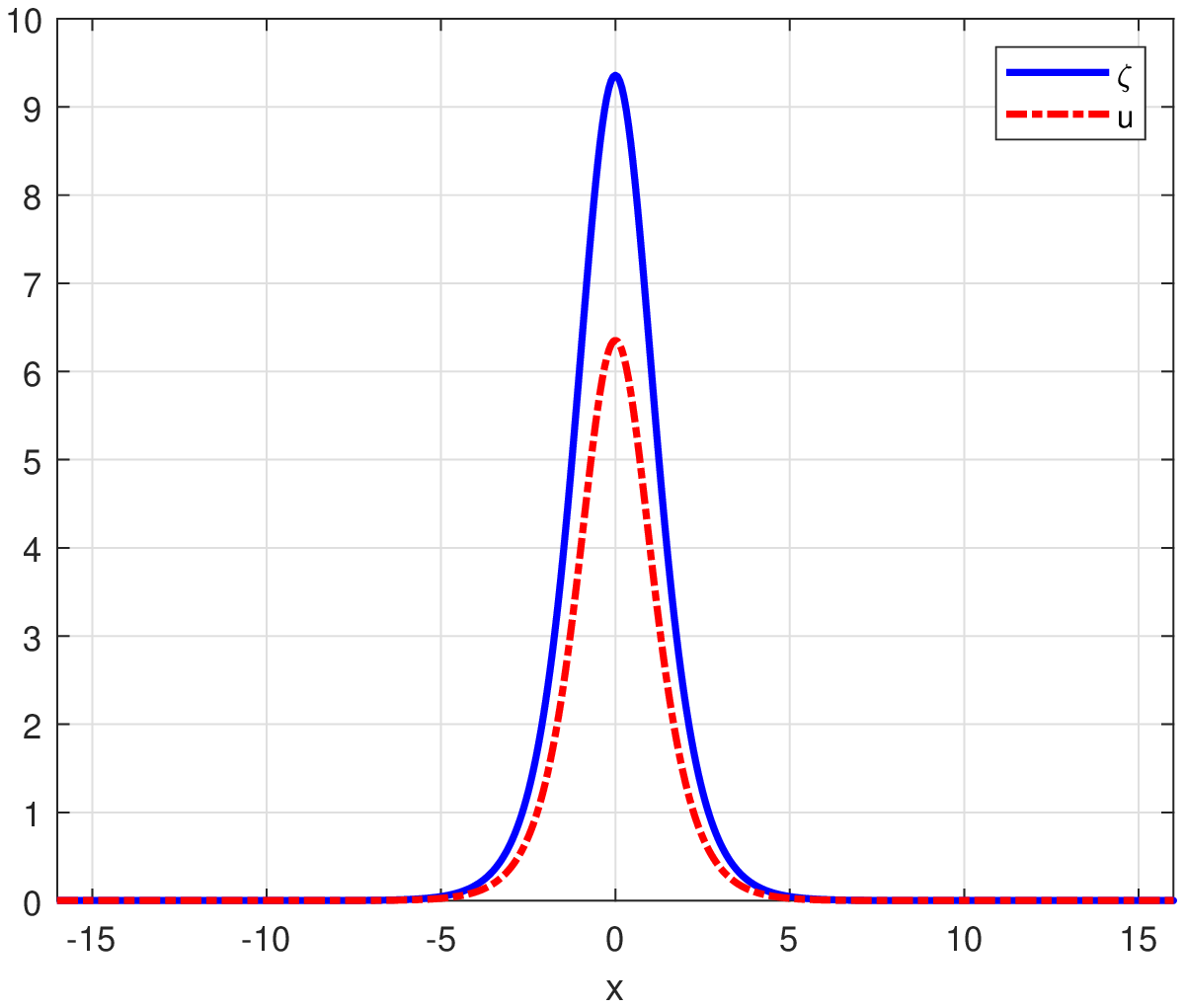}}
\subfigure[]
{\includegraphics[width=6.27cm]{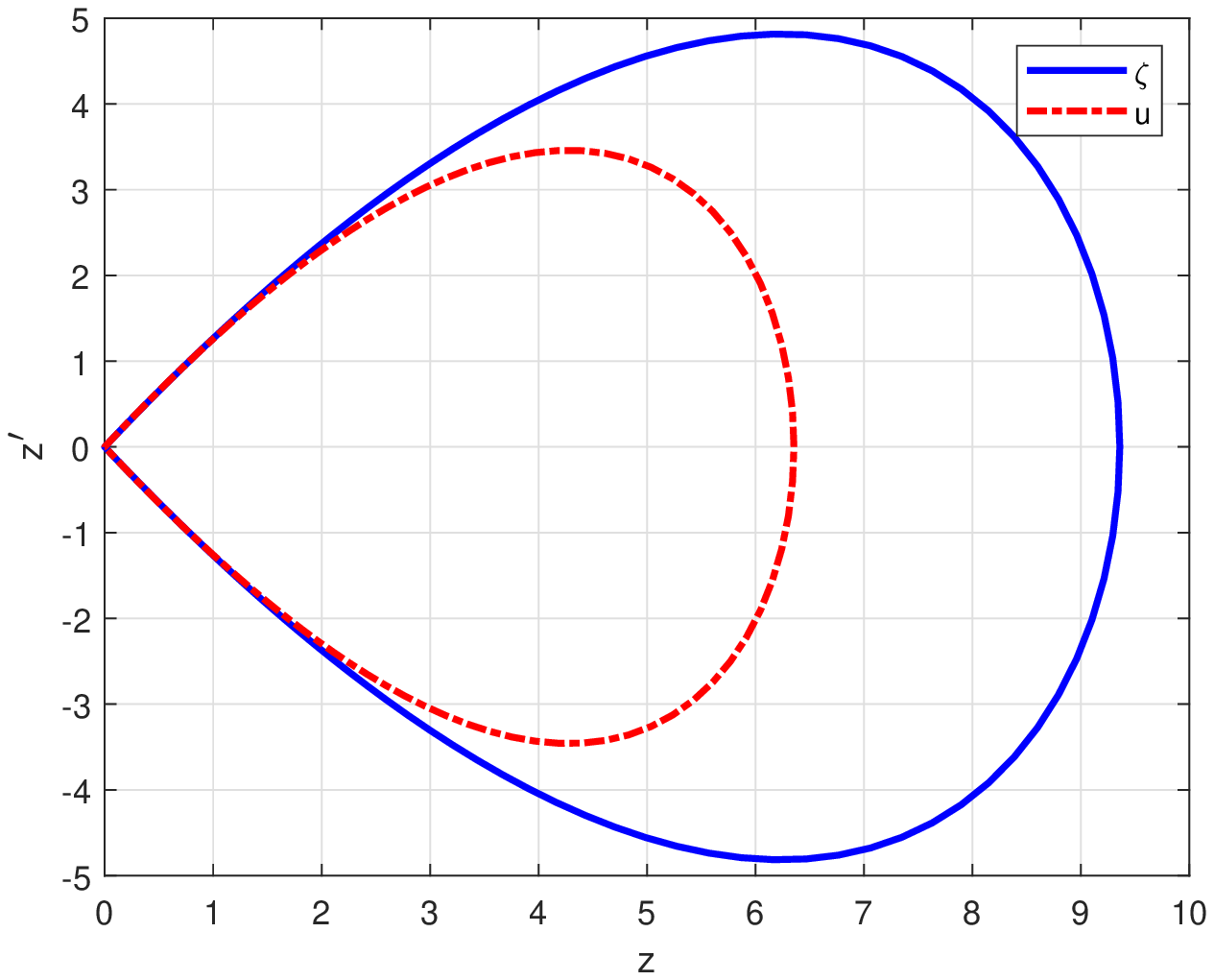}}
\subfigure[]
{\includegraphics[width=6.25cm]{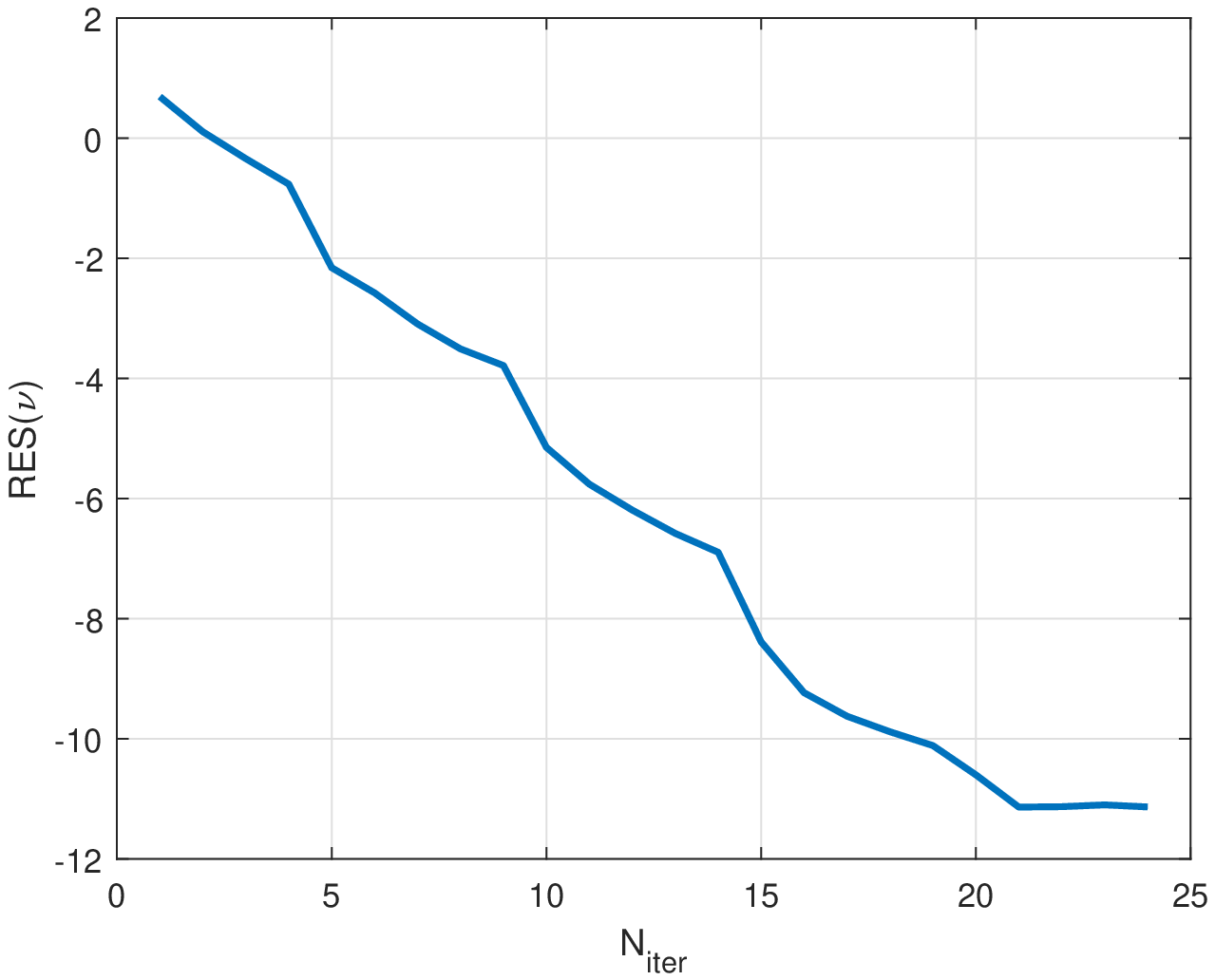}}
\caption{CSW generation. Case (A4) with $\delta=0.9, \gamma=0.5, a=0, c=-1/3, b=1/3, d=-(\delta+\gamma)a-b-c+\frac{1+\gamma\delta}{3\delta(\gamma+\delta)}; c_{s}=c_{\gamma,\delta}+0.5.$ (a) $\zeta$ and $u$ profiles; (c) $\zeta$ and $u$ phase portraits; (c) Residual error vs. number of iterations (semilog scale).}
\label{fig_BB4b}
\end{figure}
We start with two illustrations,  in Figures \ref{fig_BB4} and \ref{fig_BB5},
of GSW's, predicted by NFT (cases (A1) and (A2), respectively, of Table \ref{tavle0}). Figure \ref{fig_BB4} corresponds to the case (A1)  with 
 $$\delta=\gamma=0.5, c=-1/3, b=0, d=1, a=\kappa_{1}(S(\gamma,\delta)-b-c-d)\approx -0.2022.$$ The speed is $c_{s}=c_{\gamma,\delta}+0.02\approx 0.6176$.  Figure \ref{fig_BB5} illustrates the case (A2), with  
$$\delta=0.9, \gamma=0.5, a=-1/3, c=-2/3, b=1/9, d=S-a/\kappa_{1}-b-c\approx 1.4058,$$ and $c_{s}=c_{\gamma,\delta}+0.01\approx 0.6076$.
Both are waves of elevation and the corresponding phase portraits show how the profiles are homoclinic to a periodic orbit at infinity. We checked that these homoclinic orbits correspond to region 3 of Figure \ref{fig_A1}.

We turn now to the numerical generation of CSW's. First we illustrate
the cases (A3)-(A6) of Table \ref{tavle0} with examples that are covered by 
the other three theories considered in section \ref{sec41}, namely Toland's Theory, Concentration-Compactness Theory and Positive Operator Theory. The associated homoclinic orbits correspond to region 2 of the $(B,A)$-plane in Figure \ref{fig_A1}. 

%We complete the results with the computation of approximate CSW's when $D=0$ in (\ref{NFTD}) and the numerical generation of CSW's with nonmonotone decay, predicted by the NFT (cf. section \ref{sec41}) for speeds $c_{s}<c_{\gamma,\delta}$.

\begin{figure}[htbp]
\centering
\subfigure[]
{\includegraphics[width=0.8\textwidth]{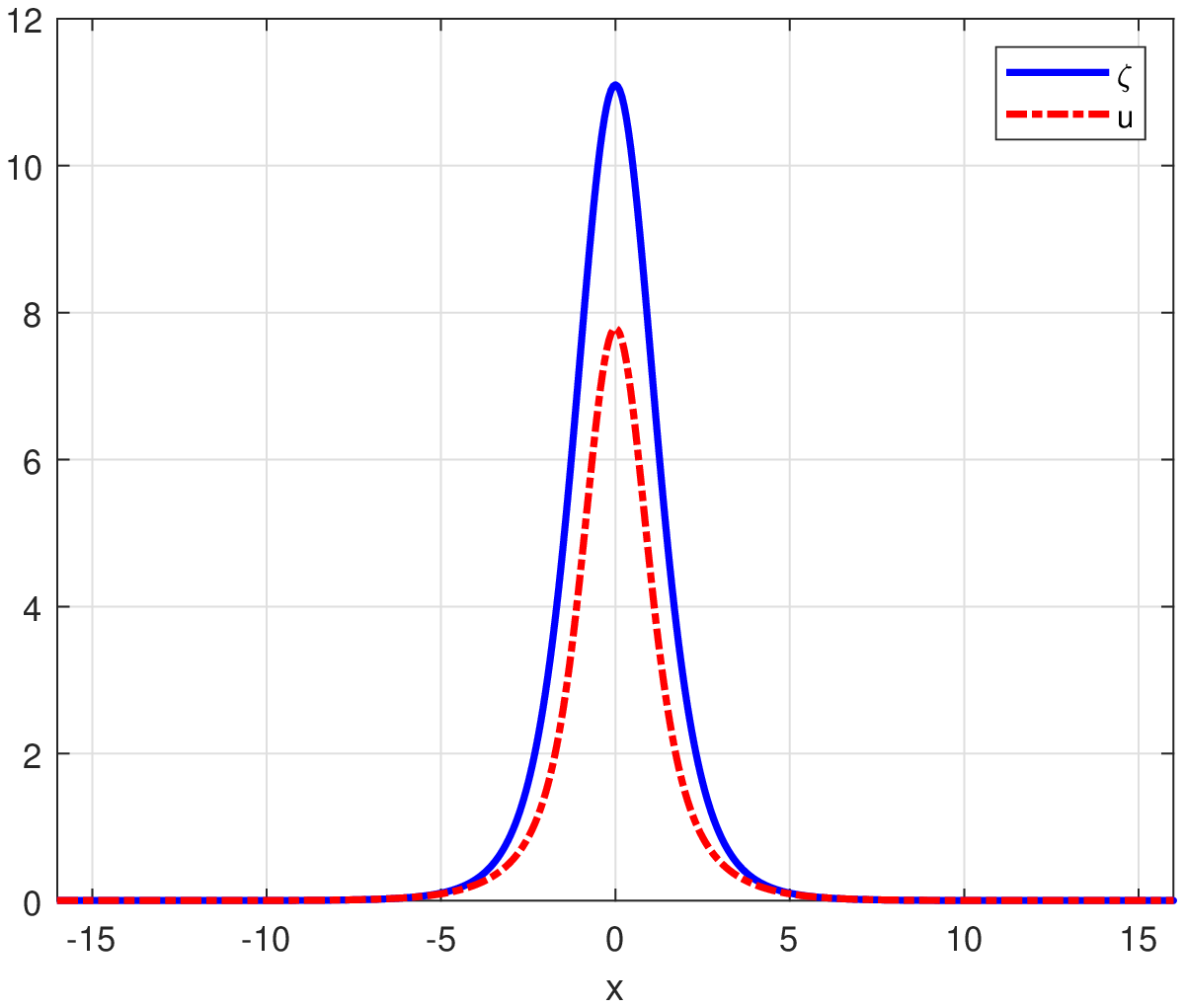}}
\subfigure[]
{\includegraphics[width=6.27cm]{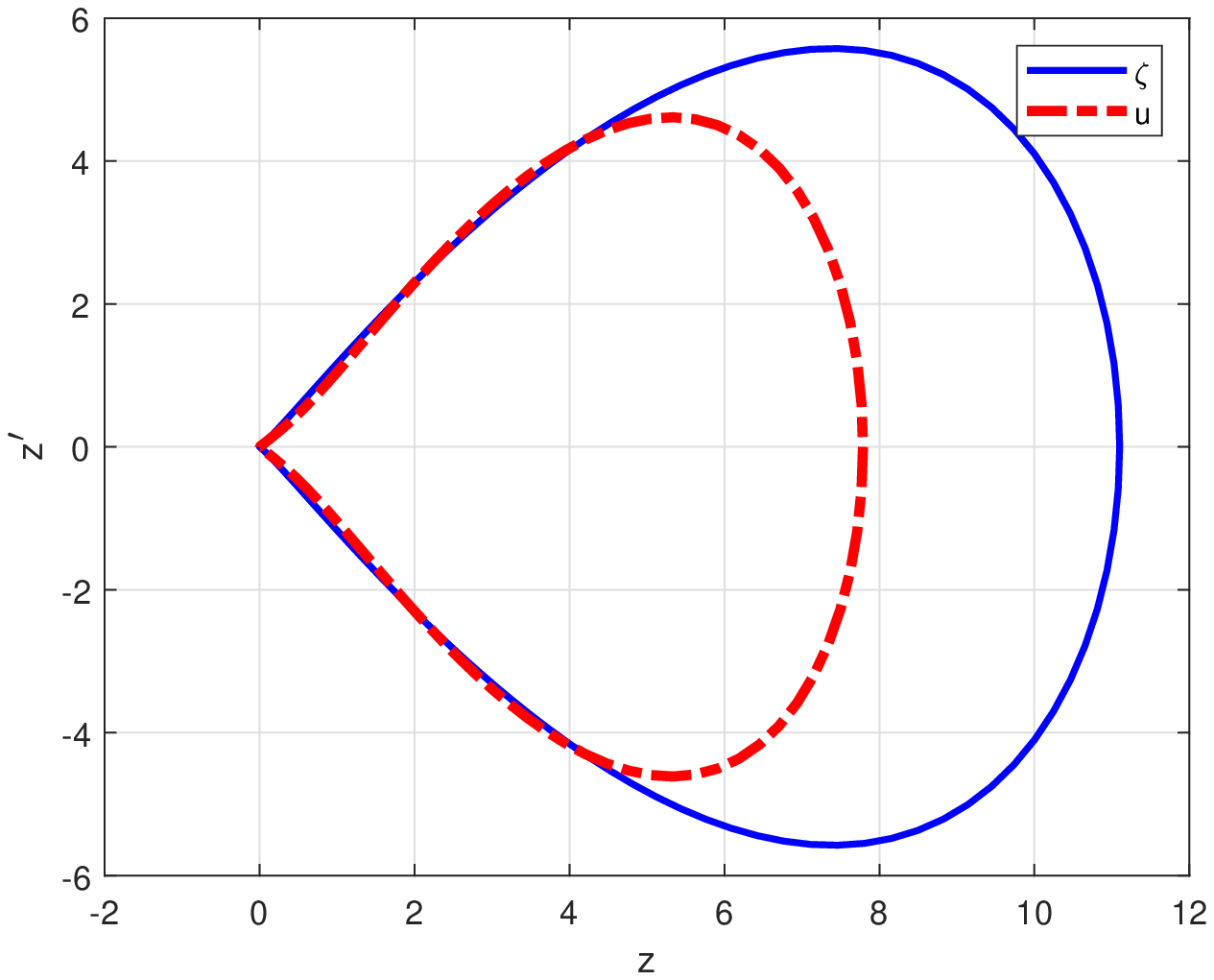}}
\subfigure[]
{\includegraphics[width=6.27cm]{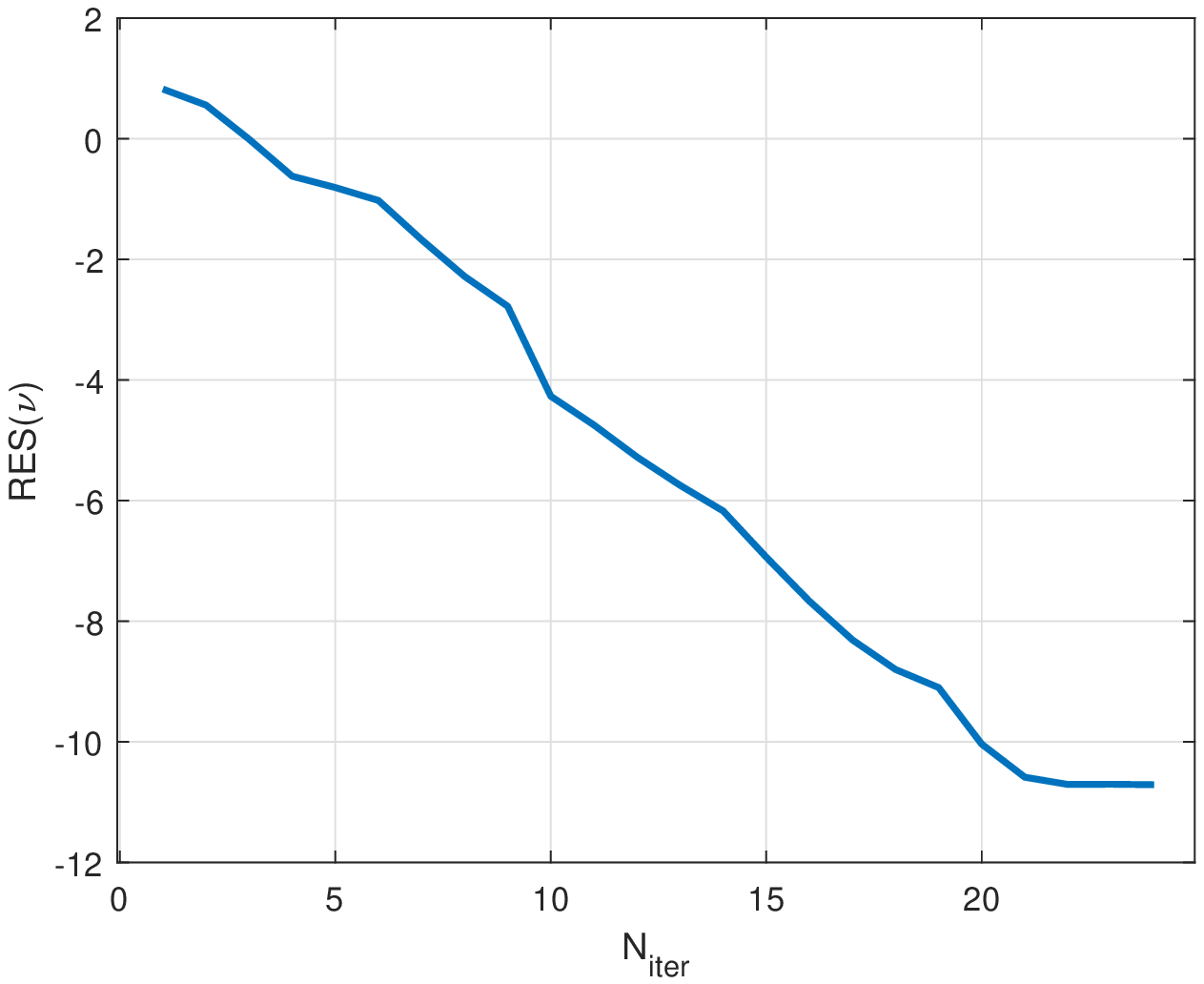}}
%\subfigure[]
%{\includegraphics[width=6.27cm]{case10_e.eps}}
%\subfigure[]
%{\includegraphics[width=6.27cm]{case10_c.eps}}
\caption{CSW generation. Case (A3) with $\delta=0.9, \gamma=0.5, a=-1/3, c=-2/3, b=1/3, d=-(\delta+\gamma)a-b-c+\frac{1+\gamma\delta}{3\delta(\gamma+\delta)}\approx 1.1836; c_{s}=c_{\gamma,\delta}+0.5\approx=1.0976.$ (a) $\zeta$ and $u$ profiles; (c) $\zeta$ and $u$ phase portraits; (c) Residual error vs. number of iterations (semilog scale).}
\label{fig_BB3b}
\end{figure}

\begin{figure}[htbp]
\centering
\subfigure[]
{\includegraphics[width=0.8\textwidth]{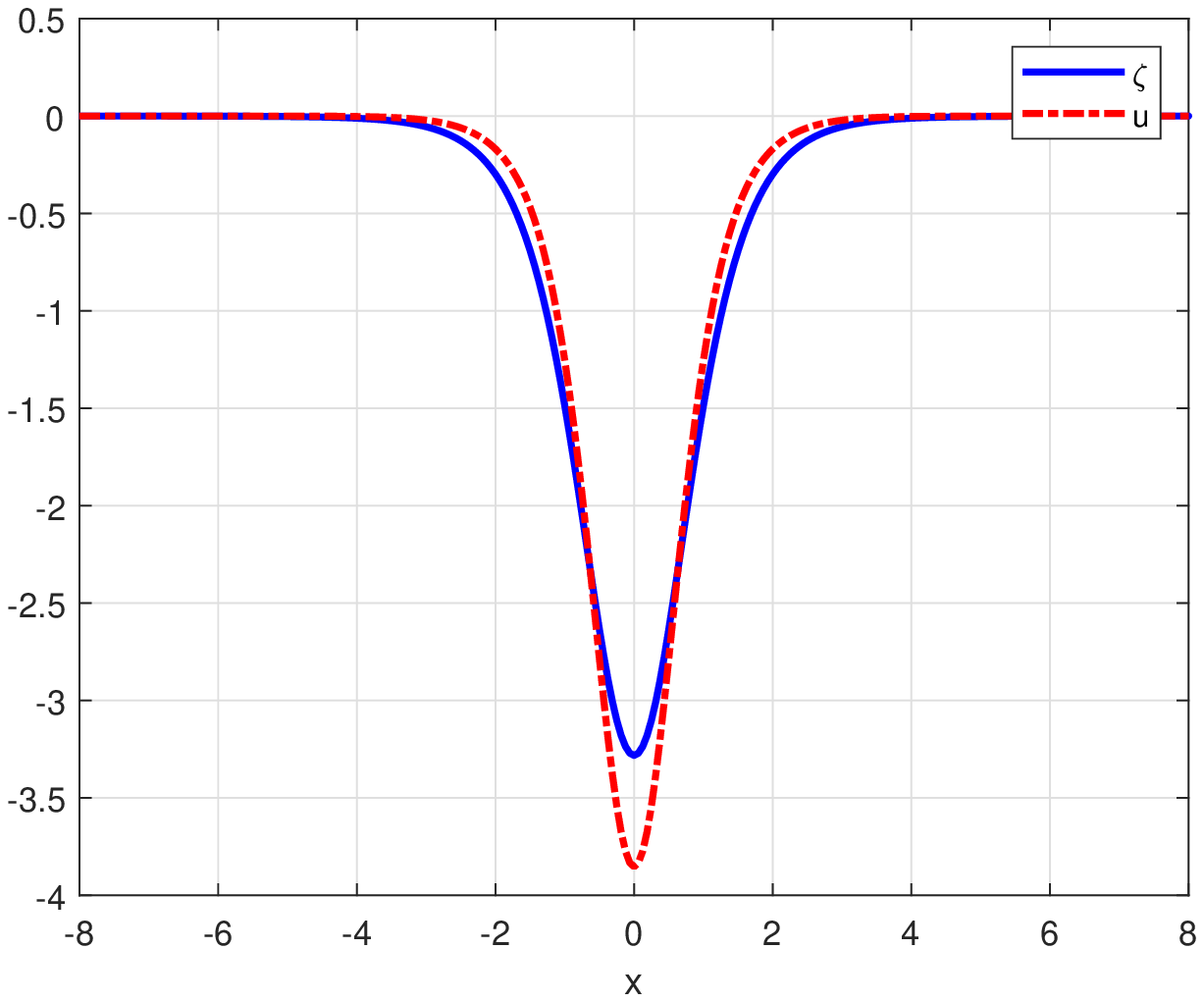}}
\subfigure[]
{\includegraphics[width=6.27cm]{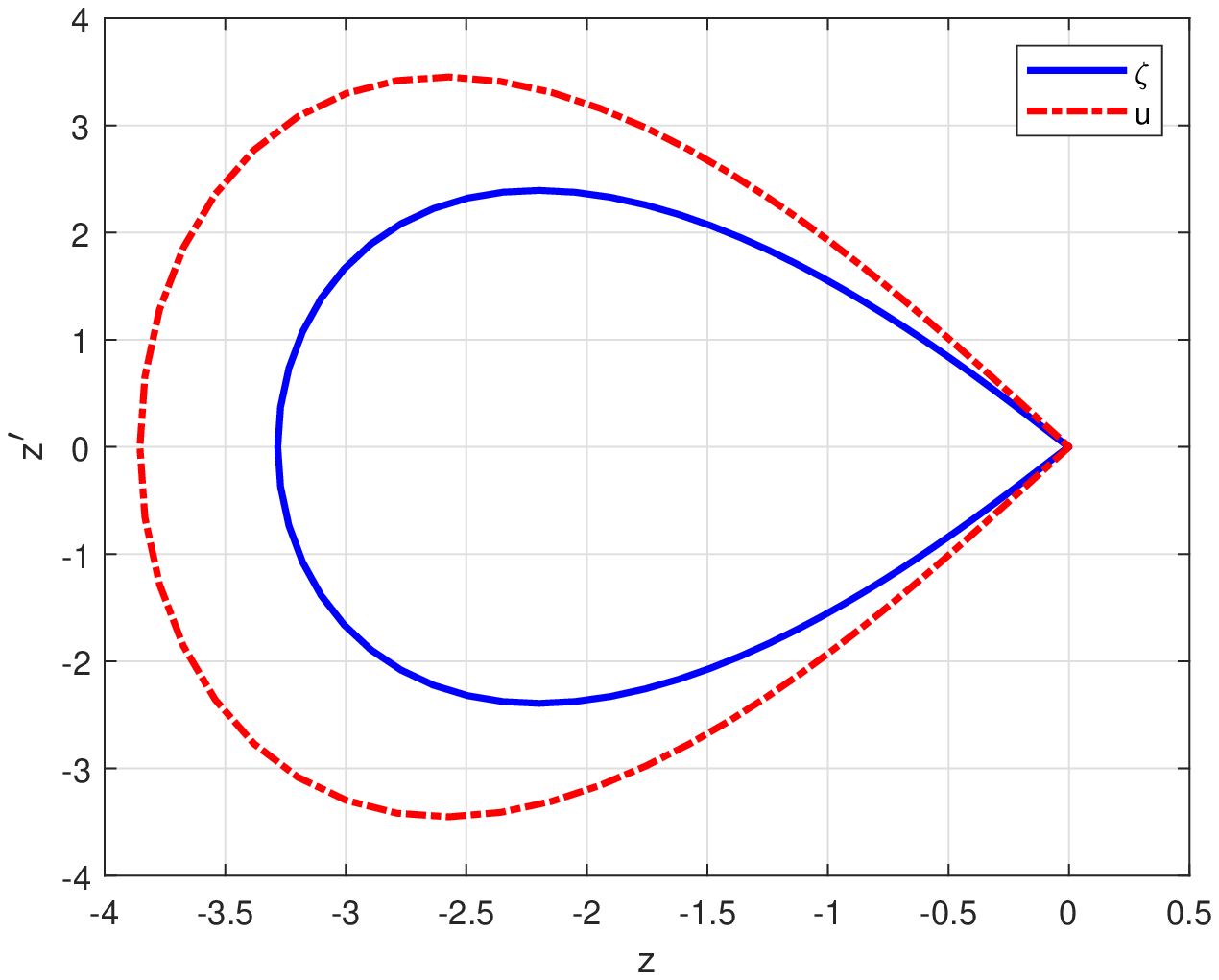}}
\subfigure[]
{\includegraphics[width=6.27cm]{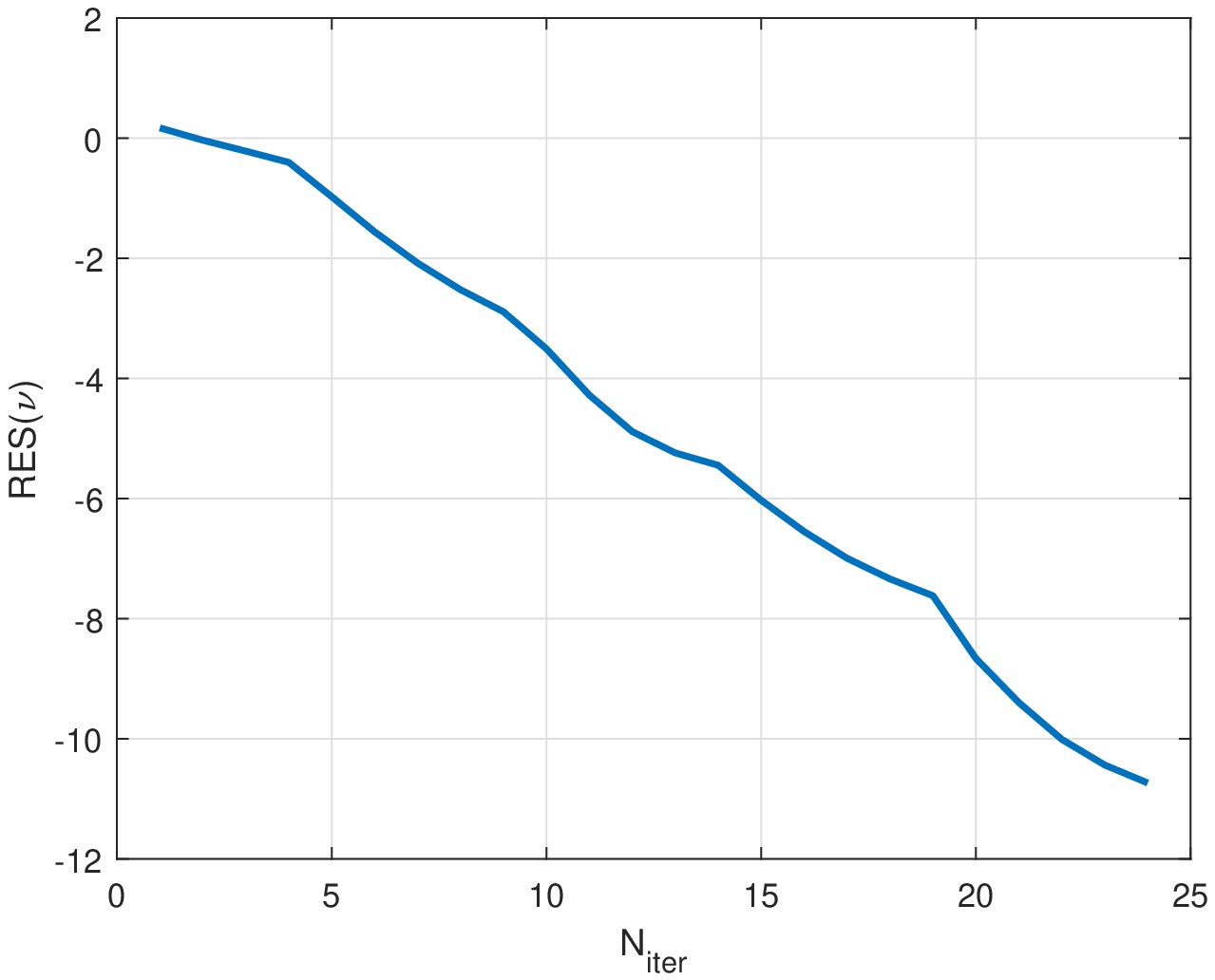}}
\caption{CSW generation. Case (A3) with $\delta=0.9, \gamma=0.5, a=-1/3, c=-2/3, b=d=\frac{1}{2}(-(\delta+\gamma)a-c+\frac{1+\gamma\delta}{3\delta(\gamma+\delta)})\approx 0.7585; c_{s}=c_{\gamma,\delta}-0.25\approx 0.3476.$  (a) $\zeta$ and $u$ profiles; (c) $\zeta$ and $u$ phase portraits; (c) Residual error vs. number of iterations (semilog scale).}
\label{fig_BB3bb}
\end{figure}
The case (A3) is exemplified in Figures \ref{fig_BB3b} and \ref{fig_BB3bb}. In Figure \ref{fig_BB3b}, we took $\delta=0.9, \gamma=0.5$, $a=-1/3, c=-2/3, b=1/3, d=-(\delta+\gamma)a-b-c+\frac{1+\gamma\delta}{3\delta(\gamma+\delta)}\approx 1.1836,$
 and $c_{s}=c_{\gamma,\delta}+0.5\approx=1.0976$. The resulting classical solitary wave is of elevation type. It was also checked that $\Delta_{j}>0, 0\leq j\leq 3$, where $\Delta_{j}$ are given by (\ref{445a}). Hence this example illustrates Theorem \ref{pot}, i.~e. is an application of Positive Operator Theory to (\ref{BB6}). On the other hand, an application of Theorem \ref{th_CC}, deduced from the Concentration-Compactness Theory, is illustrated in Figure \ref{fig_BB3bb}. In this case, with the same values of $\delta$ and $\gamma$, $a$, and $c$, but different $b$ and $d$, the computed CSW is of depression type and has speed  
$c_{s}=c_{\gamma,\delta}-0.25\approx 0.3476$.

\begin{figure}[htbp]
\centering
\subfigure[]
{\includegraphics[width=0.8\textwidth]{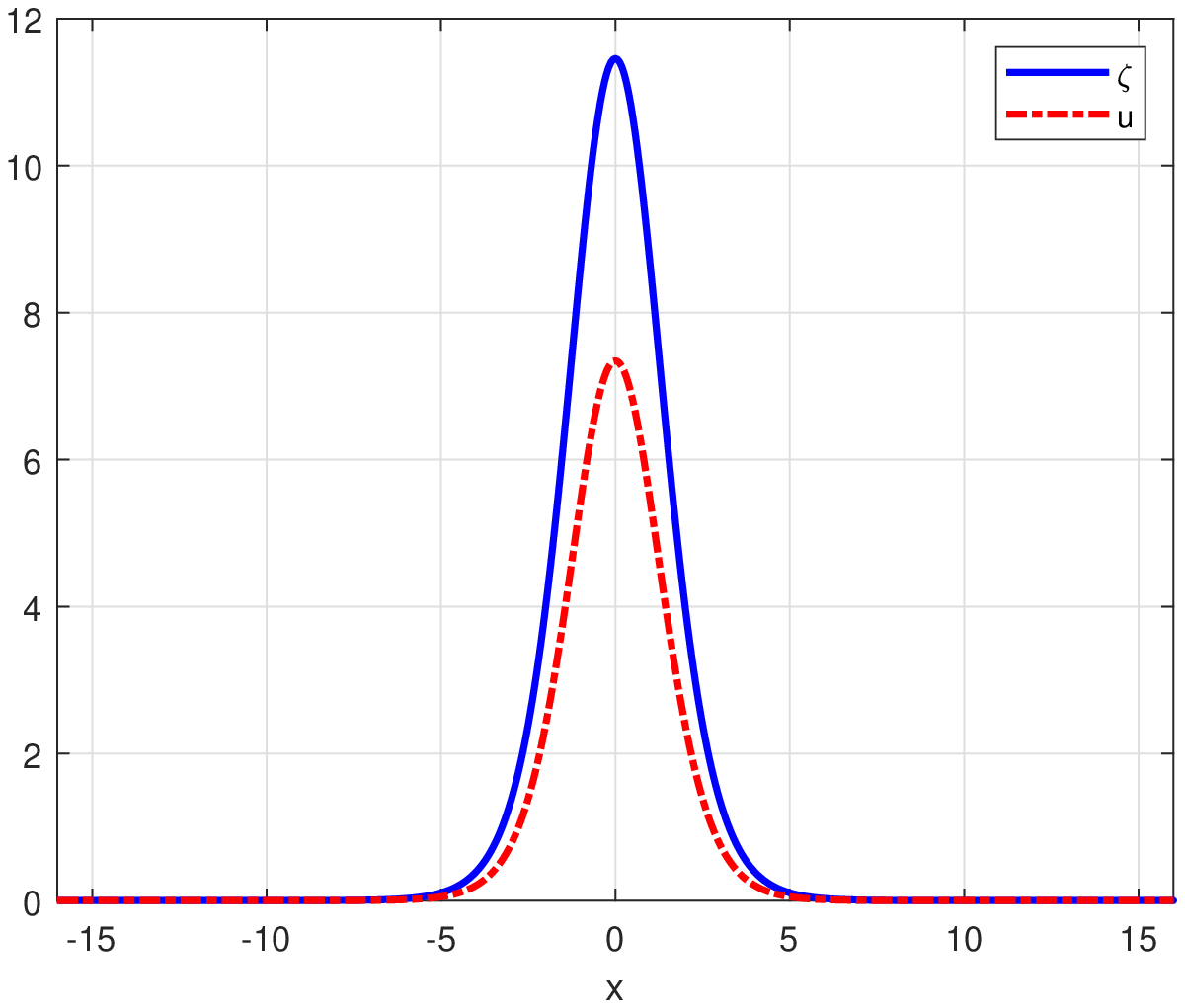}}
\subfigure[]
{\includegraphics[width=6.27cm]{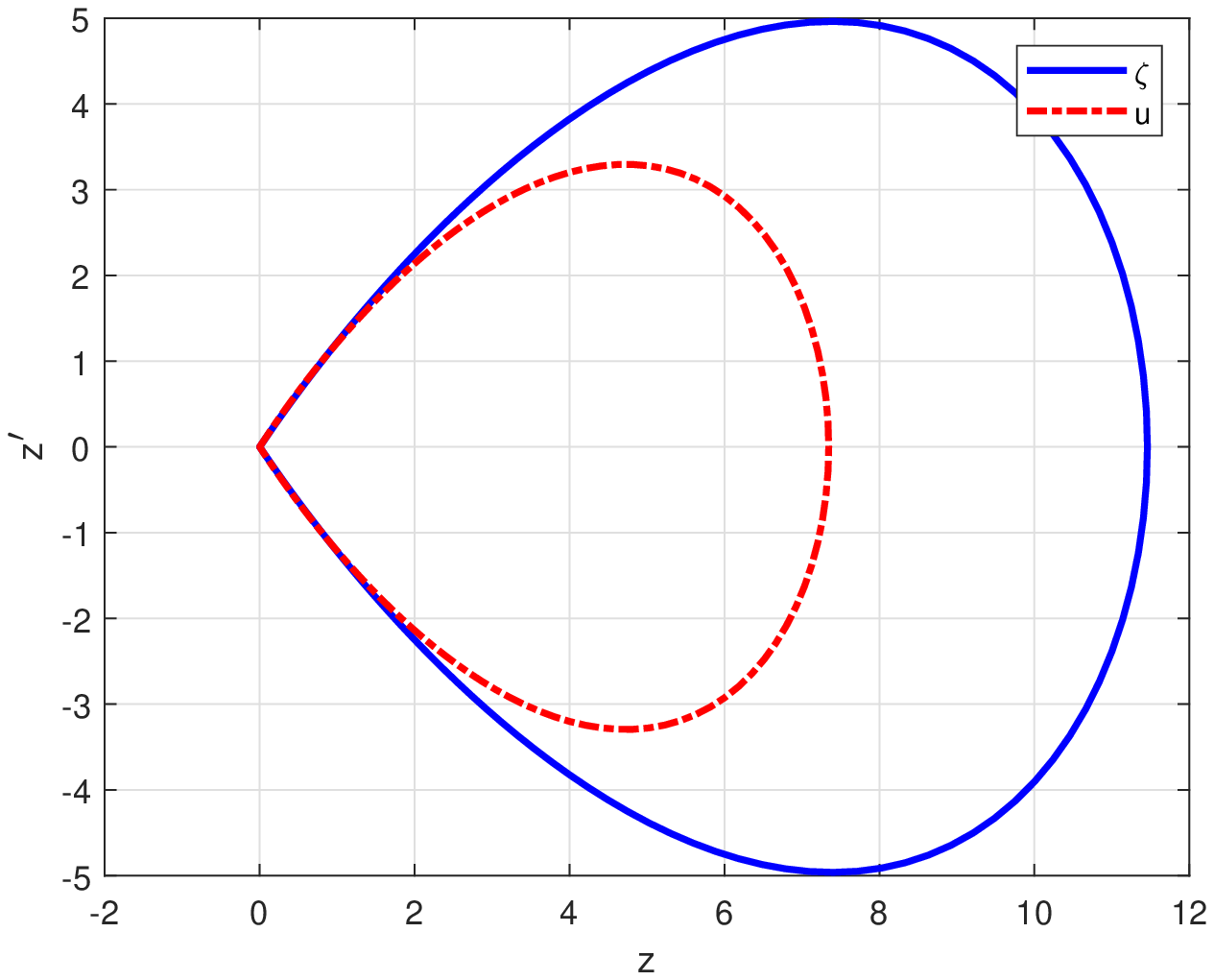}}
\subfigure[]
{\includegraphics[width=6.27cm]{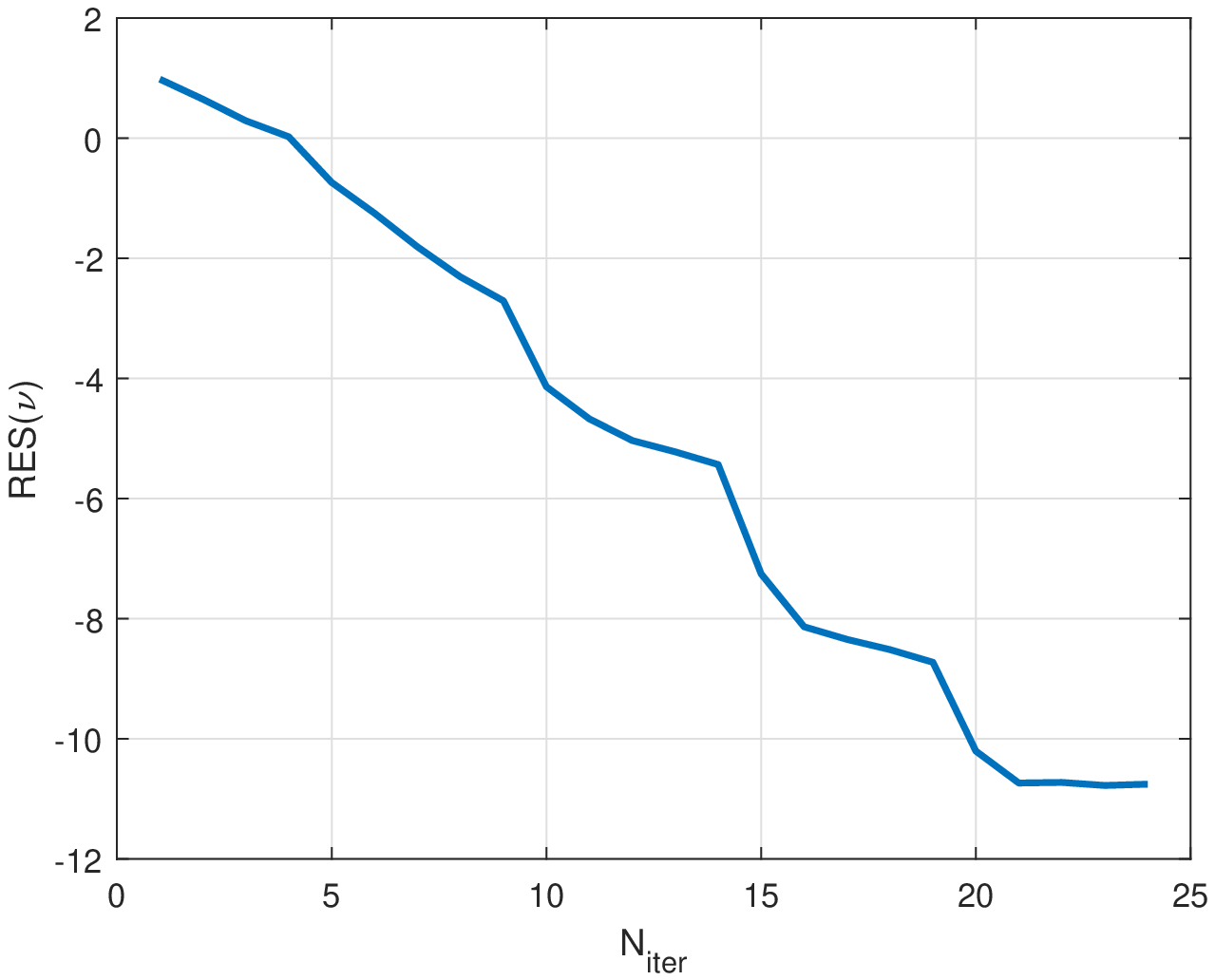}}
\caption{CSW generation. Case (A5) with $\delta=0.9,\gamma=0.5, a=-2/3, c=0, b=1/3, d=-(\delta+\gamma)a-b-c+\frac{1+\gamma\delta}{3\delta(\gamma+\delta)}\approx 0.9836; c_{s}=c_{\gamma,\delta}+0.5\approx 1.0976.$ (a) $\zeta$ and $u$ profiles; (c) $\zeta$ and $u$ phase portraits; (c) Residual error vs. number of iterations (semilog scale).}
\label{fig_BB5b}
\end{figure}

The case (A5) is illustrated in Figure \ref{fig_BB5b}. This example can also be justified by Theorem \ref{pot}.

\begin{figure}[htbp]
\centering
\subfigure[]
{\includegraphics[width=0.8\textwidth]{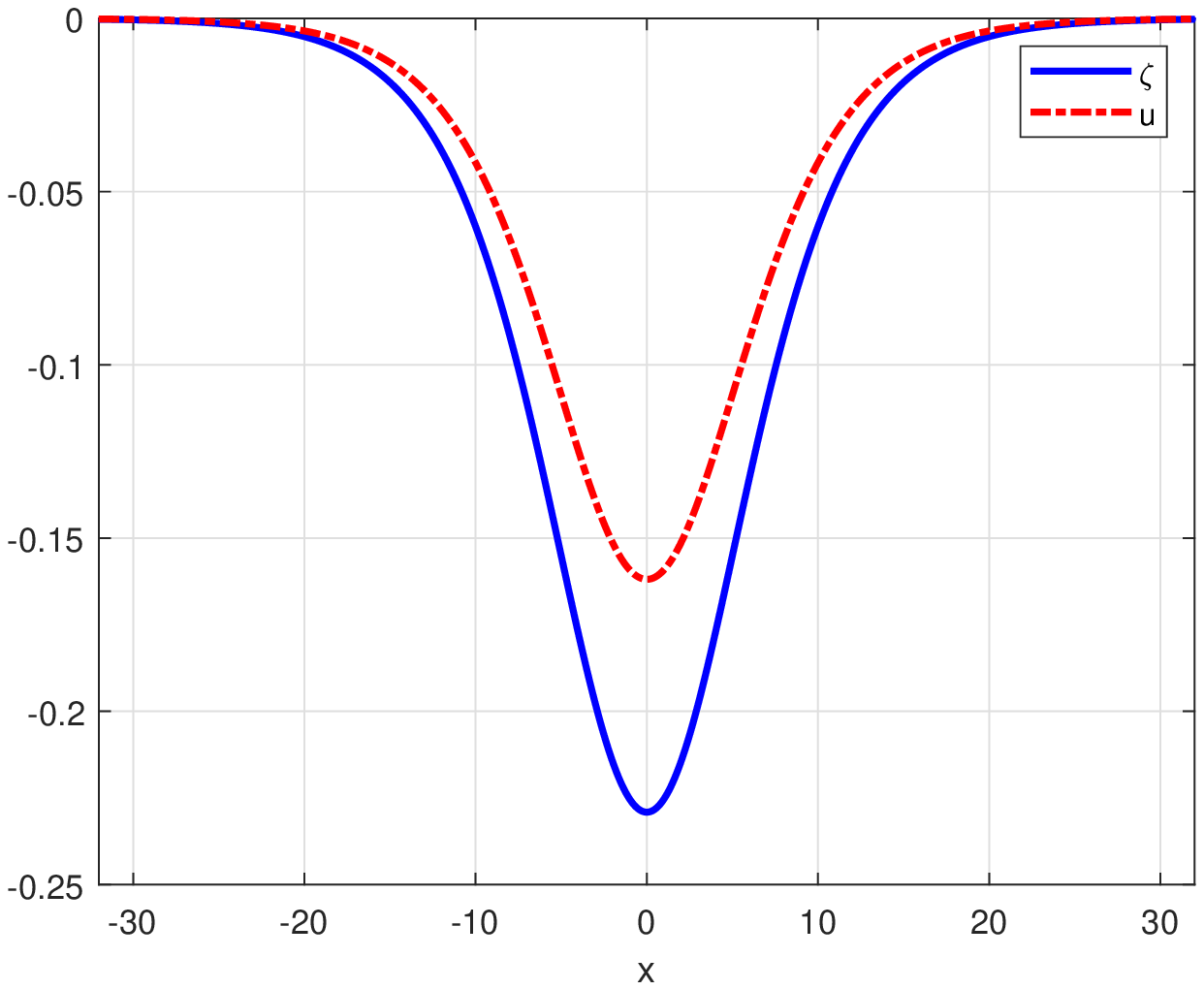}}
\subfigure[]
{\includegraphics[width=6.27cm]{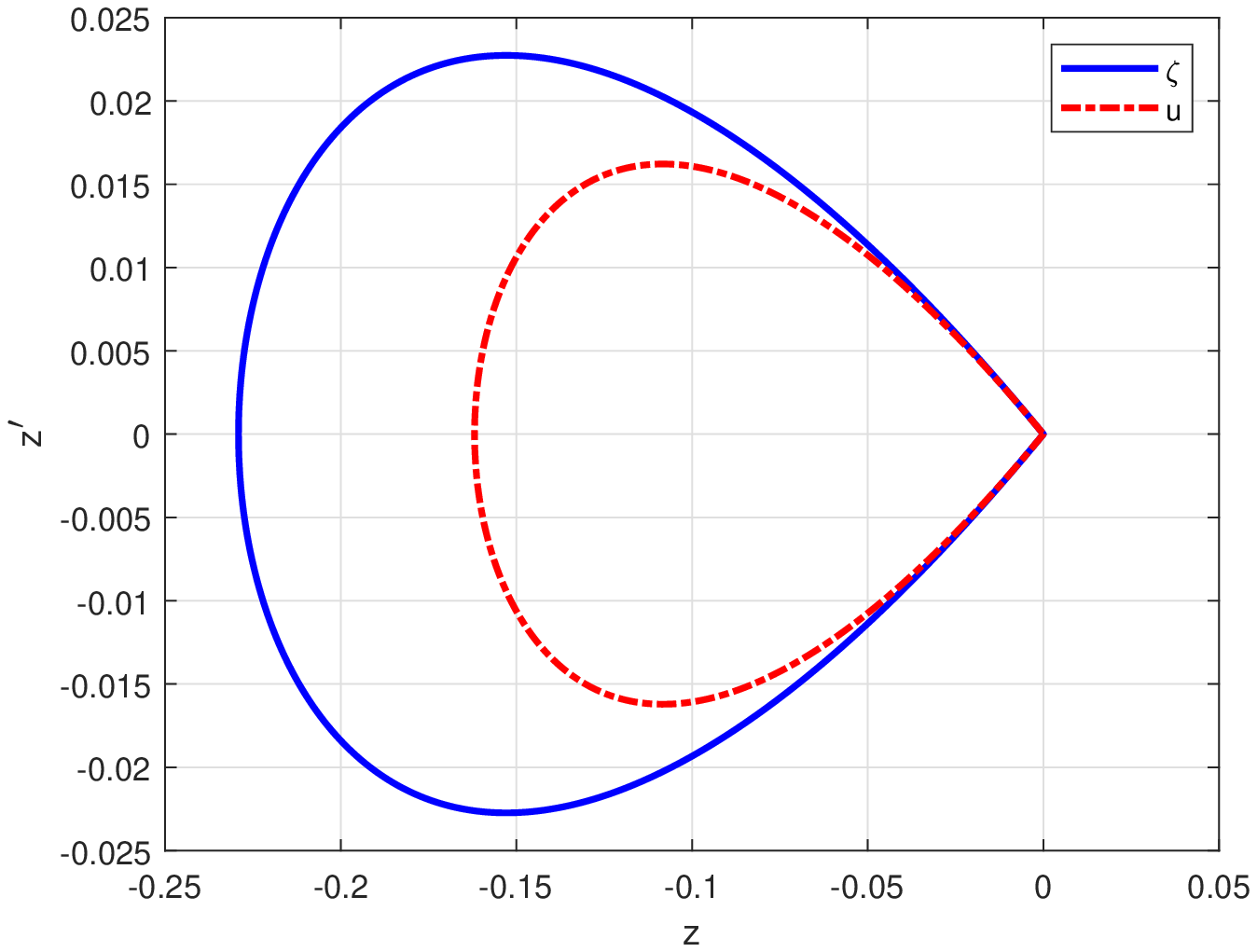}}
\subfigure[]
{\includegraphics[width=6.27cm]{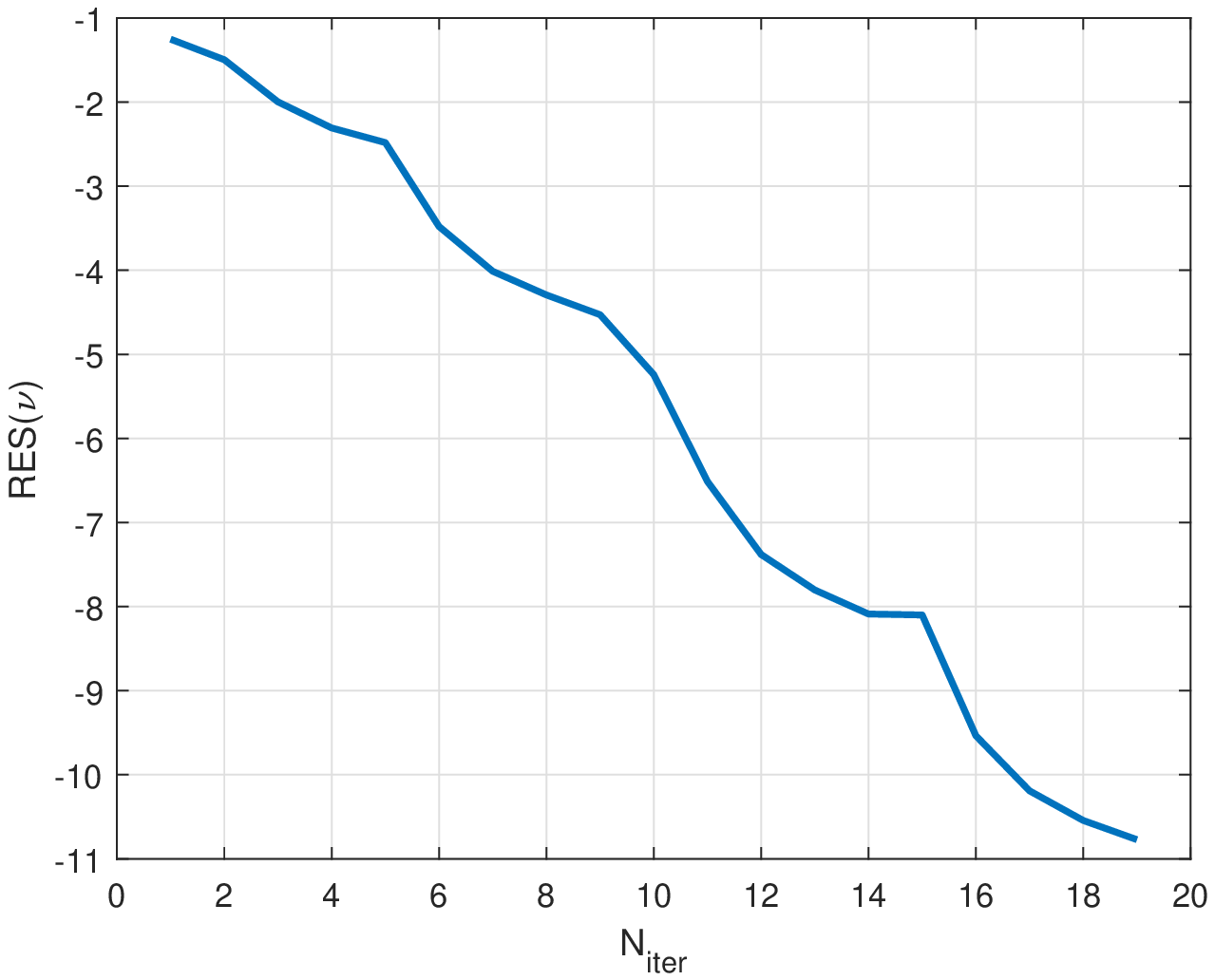}}
\caption{CSW generation. Case (A6) with $\delta=\gamma=0.5, a=0, c=0, b=1/3, d=1/2; c_{s}=c_{\gamma,\delta}+0.01\approx 0.7171.$ (a) $\zeta$ and $u$ profiles; (c) $\zeta$ and $u$ phase portraits; (c) Residual error vs. number of iterations (semilog scale).}
\label{fig_BB2}
\end{figure}

\begin{figure}[htbp]
\centering
\subfigure[]
{\includegraphics[width=0.8\textwidth]{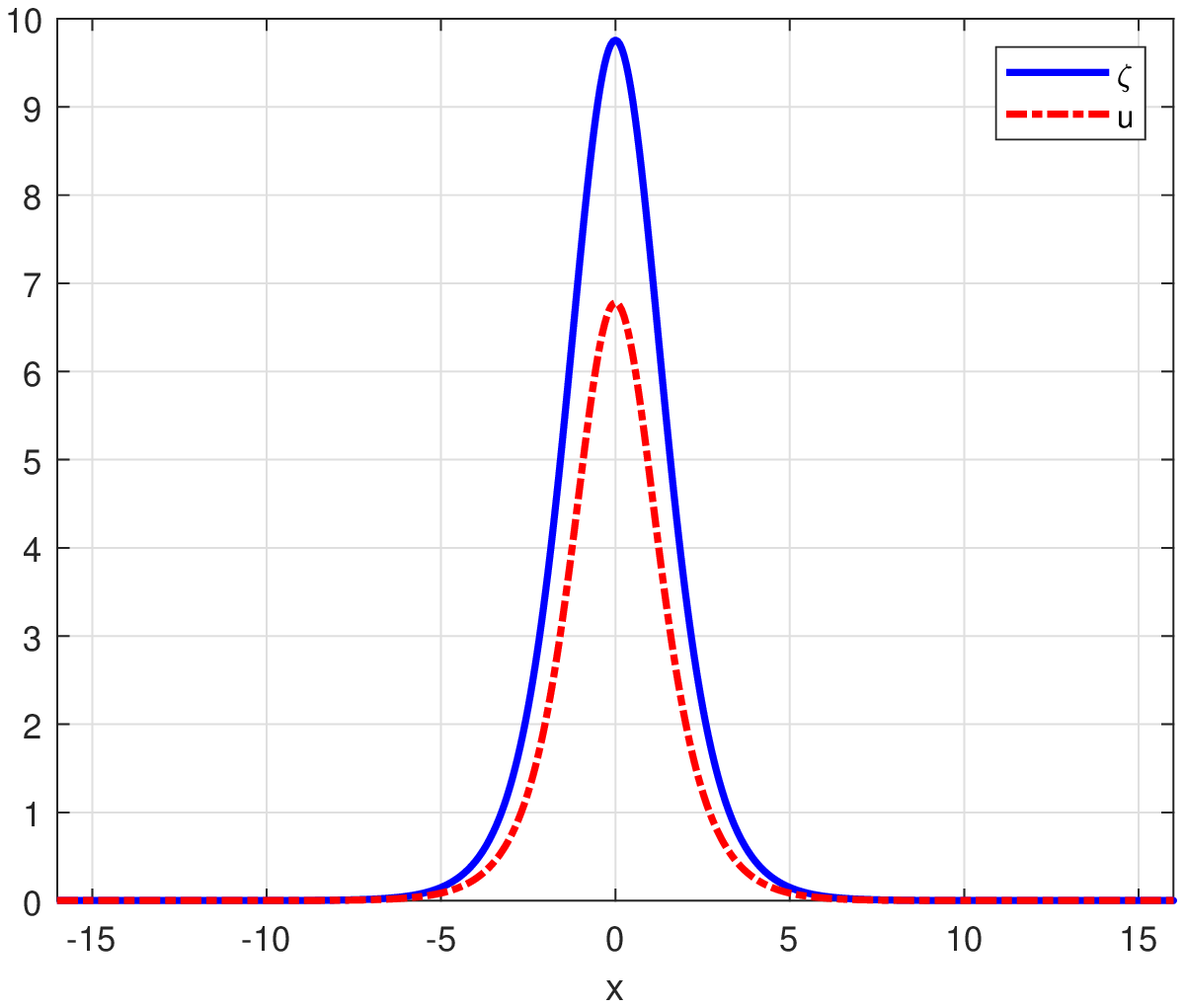}}
\subfigure[]
{\includegraphics[width=6.27cm]{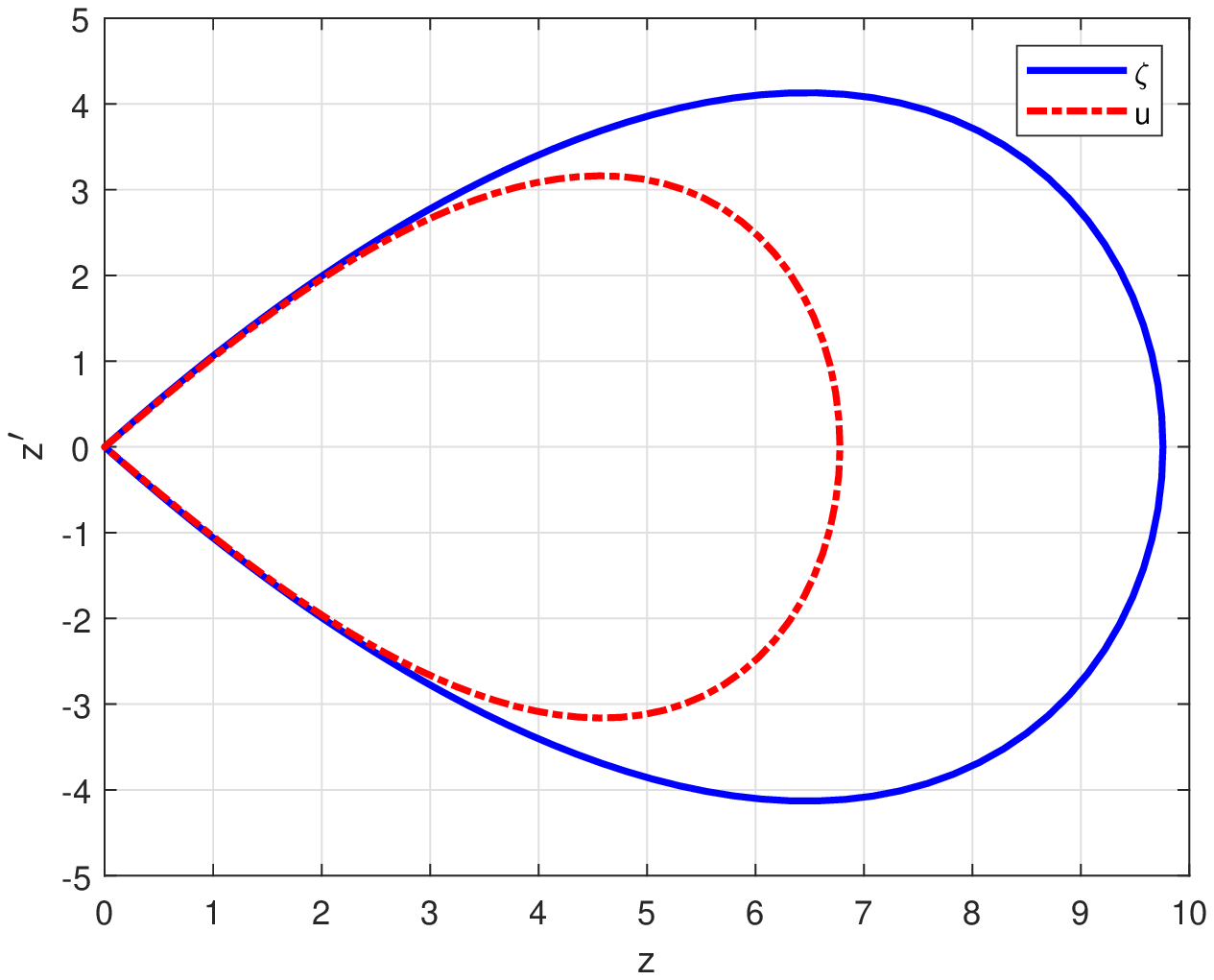}}
\subfigure[]
{\includegraphics[width=6.27cm]{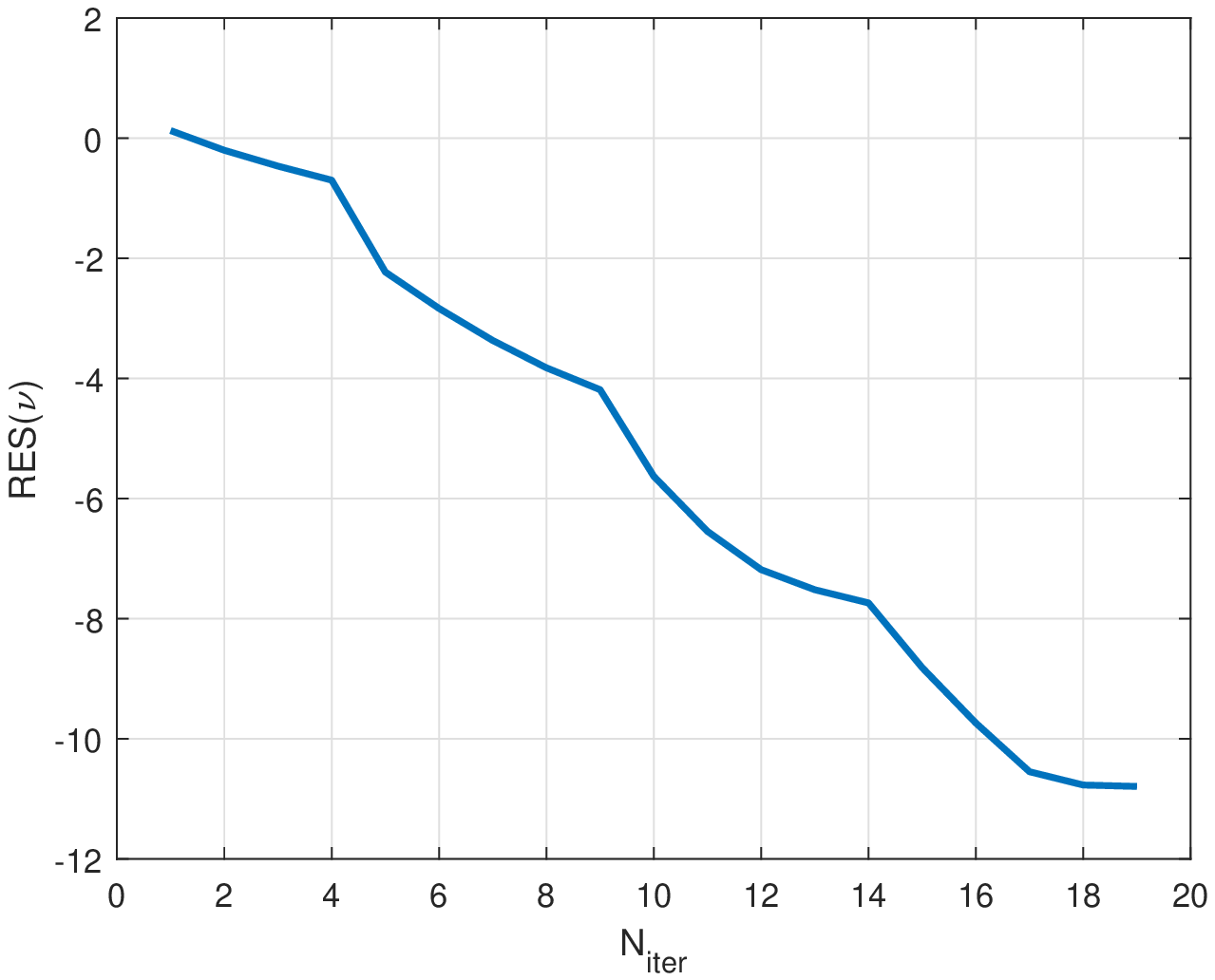}}
\caption{CSW generation. Case (A6) with $\delta=0.9, \gamma=0.5, a=0, c=0, b= d=\frac{1}{2}\frac{1+\gamma\delta}{3\delta(\gamma+\delta)}\approx 0.1918; c_{s}=c_{\gamma,\delta}+0.5\approx 1.0976$. (a) $\zeta$ and $u$ profiles; (c) $\zeta$ and $u$ phase portraits; (c) Residual error vs. number of iterations (semilog scale).}
\label{fig_BB6b}
\end{figure}

Case (A6) is illustrated by two examples, displayed in Figures \ref{fig_BB2} and \ref{fig_BB6b}. In the case of Figure \ref{fig_BB2}, the speed $c_{s}$ is quite close to $c_{\gamma,\delta}$ and the example is explained by NFT and Positive Operator Theory. On the other hand, Toland's theory may justify the example of Figure \ref{fig_BB6b}, corresponding to a Hamiltonian case of (A6). The numerical experiment is perfomed using a variant of the Petviashvili method (\ref{424}) as follows. For a fixed speed $c_{s}$, each iterate $(v_{h}^{[\nu]},\zeta_{h}^{[\nu]})$ of the method is forced to be in the manifold $\{f=0\}$, where $f$ is given by (\ref{efe}). This is accomplished by complementing the Petviashvili iteration with a projection method, implemented in the standard way, \cite{HairerLW2004}. For the particular example of Figure \ref{fig_BB6b}, with speed $c_{s}=c_{\gamma,\delta}+0.5$, the points $P_{1}$ and $P_{2}$ of the segment $\Gamma$, cf. Theorem \ref{toland}, have the components (approximately)
$$P_{1}=(4.8825,10.7182),\; P_{2}=(6.3226,7.5569),$$ and the computed solitary wave profile has amplitude $\zeta_{max}\approx 9.7566$ with $v_{max}\approx 5.9357$ and $u_{max}\approx 6.7788$.

Recall that the theories examined in this paper do not consider the case $D=0$, where $D$ is given by (\ref{NFTD}). However, some numerical experiments in that case suggest existence of classical solitary waves. (Existence of CSW for the particular case of the  \lq classical Boussinesq\rq\ system, with $b=0, d>0, a=c=0$, is a special case and is proved e.~g. in
\cite{Duran2019}.) By way of illustration Figures \ref{fig_BB32} and \ref{fig_BB33} show two CSW's generated for different combinations of the parameters $a, b, c$ and $d$ leading to $D=0$, and different speeds.

\begin{figure}[htbp]
\centering
\subfigure[]
{\includegraphics[width=0.8\textwidth]{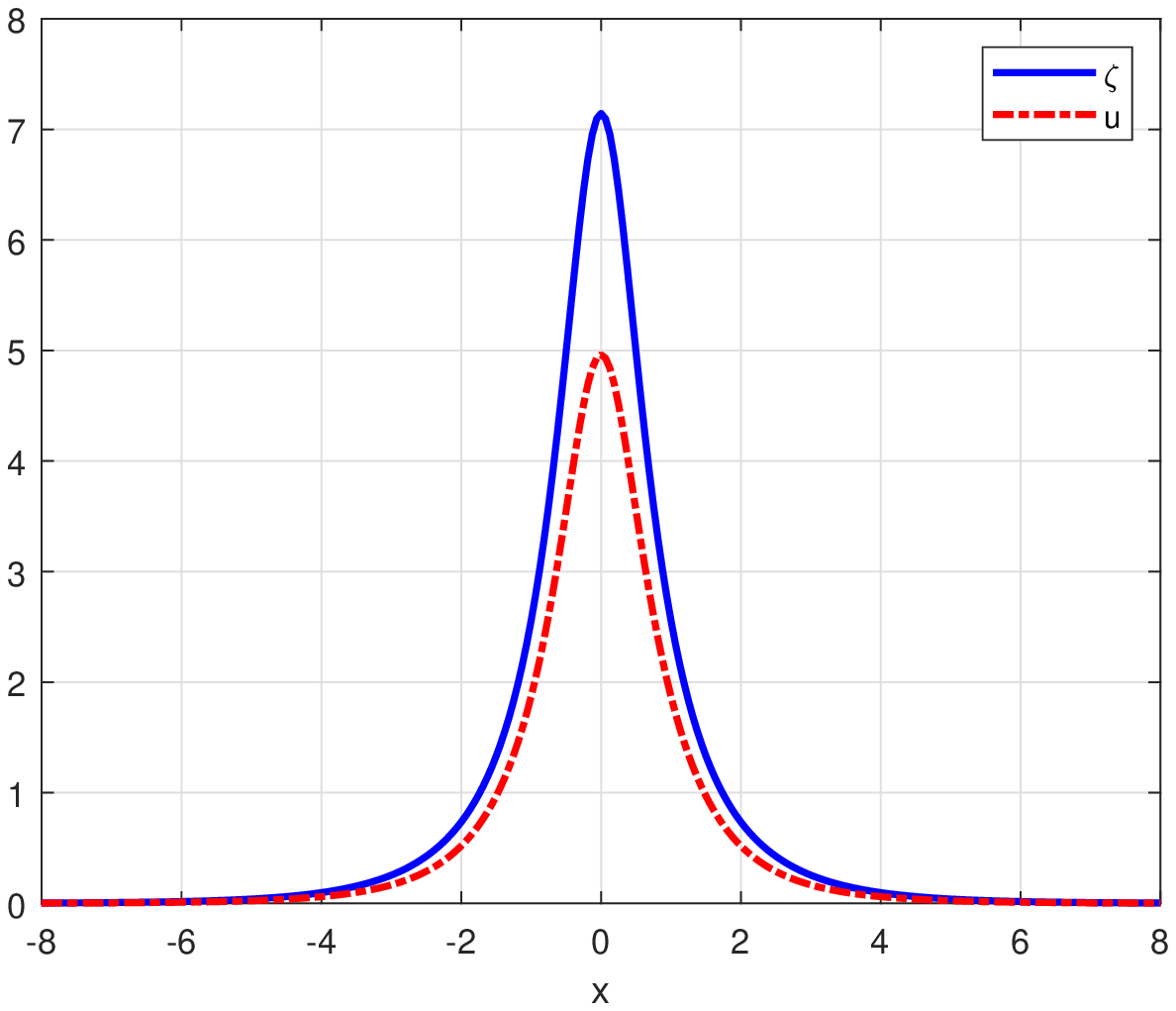}}
\subfigure[]
{\includegraphics[width=6.27cm]{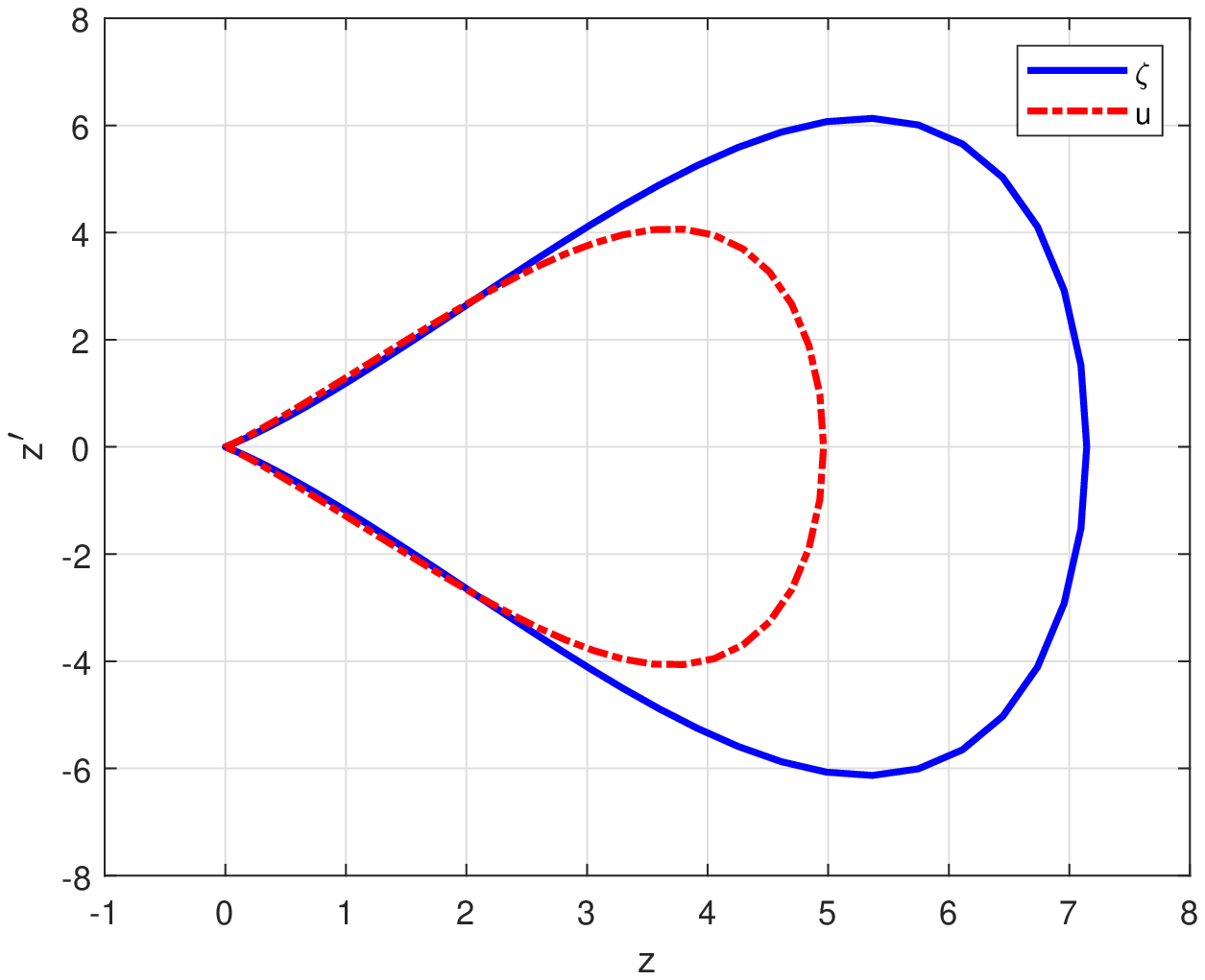}}
\subfigure[]
{\includegraphics[width=6.27cm]{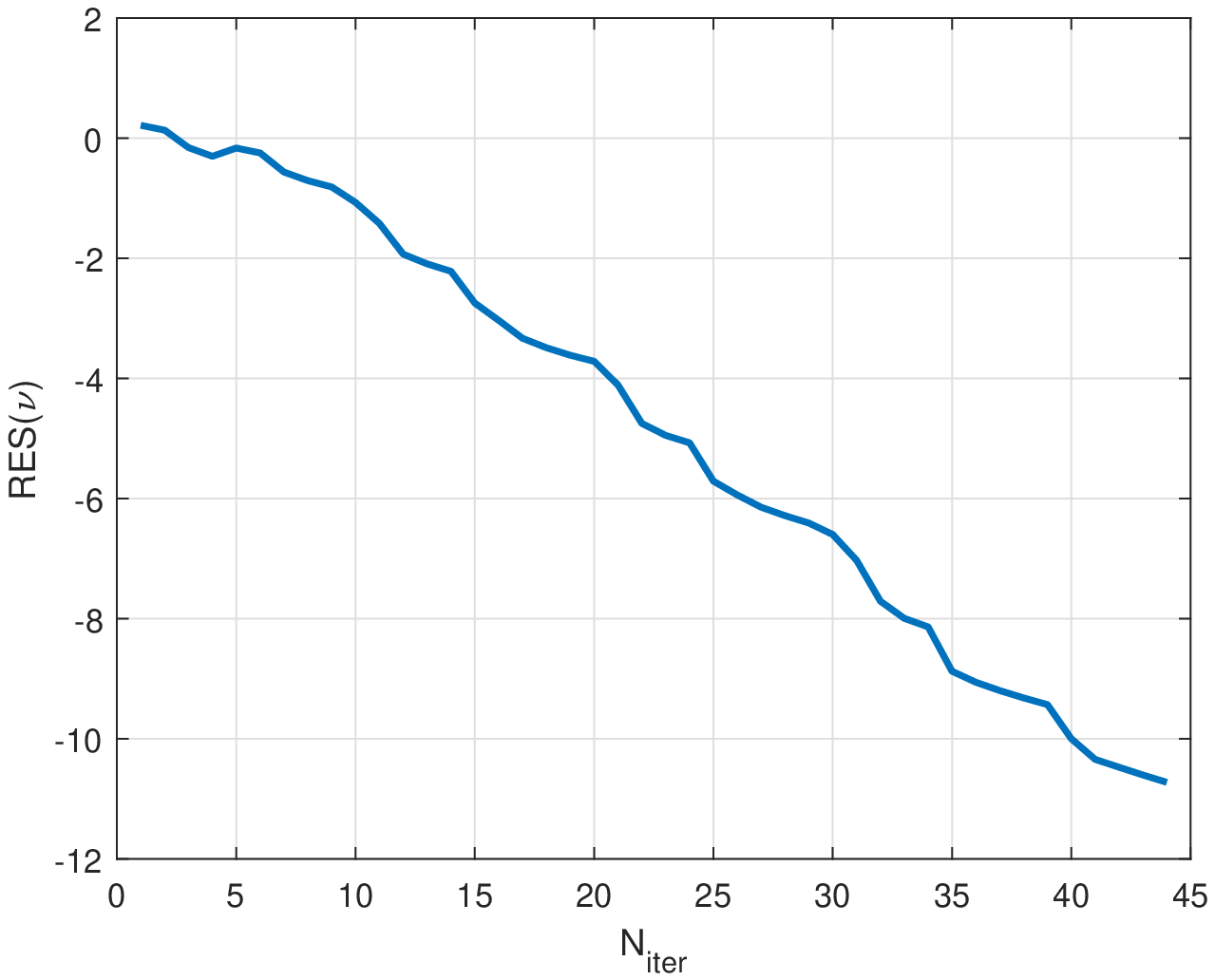}}
\caption{CSW generation. Case $D=0$ with $\delta=0.9, \gamma=0.5, a=-1/3, c=0, b=0, d=-(\delta+\gamma)a-b-c+\frac{1+\gamma\delta}{3\delta(\gamma+\delta)}\approx 0.8503; c_{s}=c_{\gamma,\delta}+0.25\approx 0.8476.$ (a) $\zeta$ and $u$ profiles; (c) $\zeta$ and $u$ phase portraits; (c) Residual error vs. number of iterations (semilog scale).}
\label{fig_BB32}
\end{figure}
\begin{figure}[htbp]
\centering
\subfigure[]
{\includegraphics[width=0.8\textwidth]{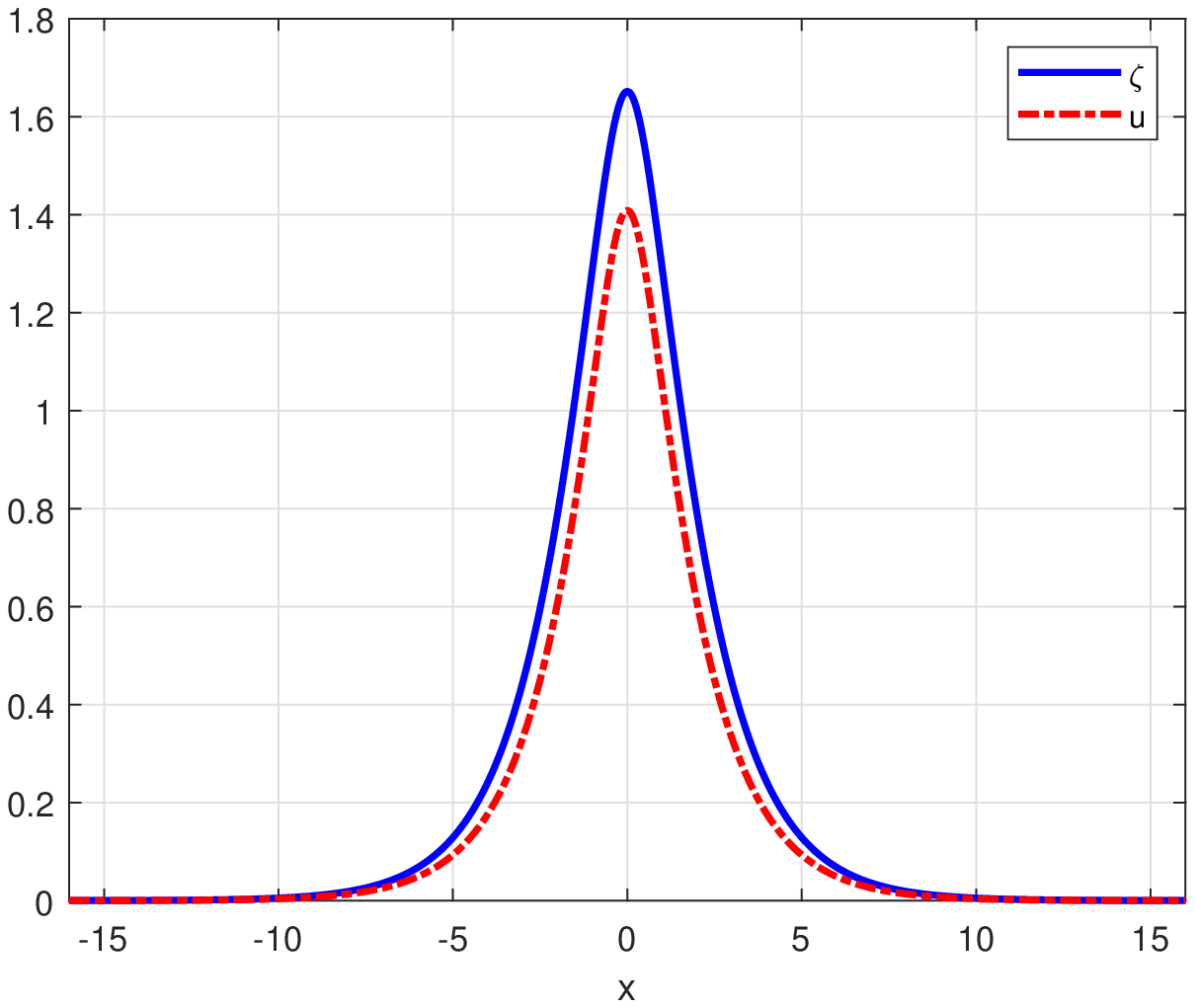}}
\subfigure[]
{\includegraphics[width=6.27cm]{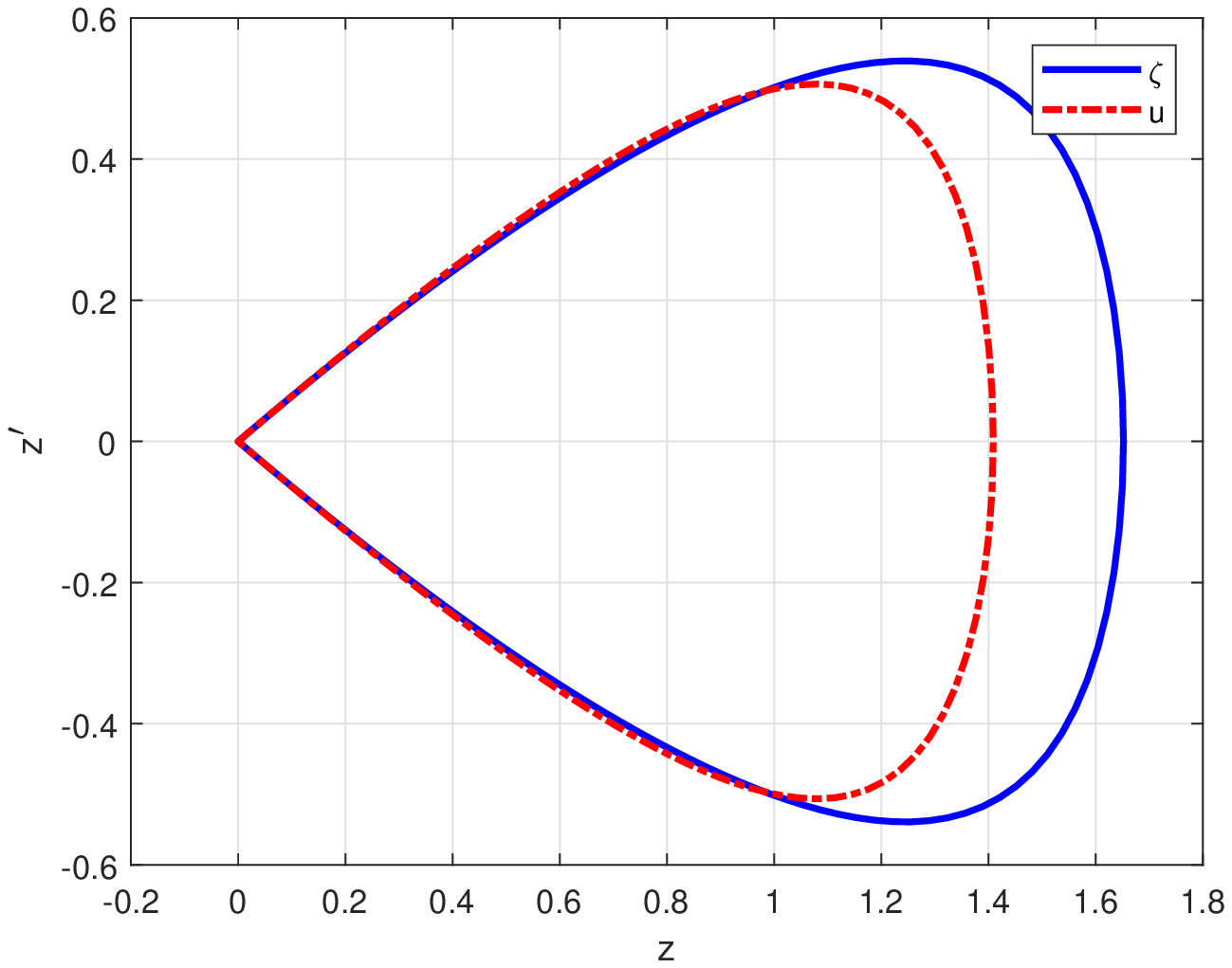}}
\subfigure[]
{\includegraphics[width=6.27cm]{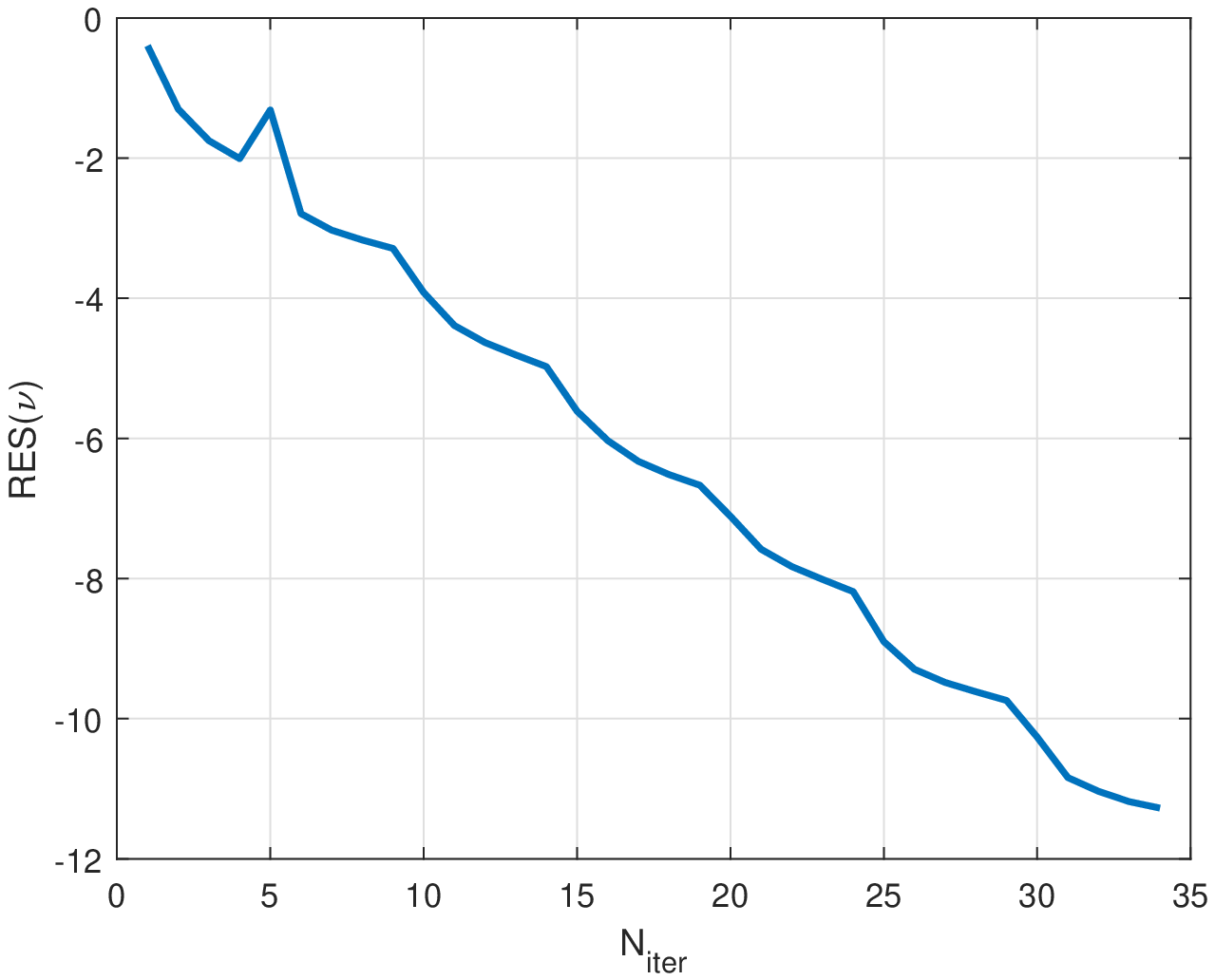}}
\caption{CSW generation. Case $D=0$ with $\delta=0.9, \gamma=0.5, a=-2/3, c=0, d=0, b=-(\delta+\gamma)a-d-c+\frac{1+\gamma\delta}{3\delta(\gamma+\delta)}\approx 1.3169; c_{s}=c_{\gamma,\delta}+0.1\approx 0.6976$ (a) $\zeta$ and $u$ profiles; (c) $\zeta$ and $u$ phase portraits; (c) Residual error vs. number of iterations (semilog scale).}
\label{fig_BB33}
\end{figure}
We complete this numerical study by illustrating the generation of CSW's with non-monotonic decay and of periodic traveling waves, both predicted by NFT. CSW's with non-monotonic decay appear in Figures \ref{fig_A2} and \ref{fig_A3}.

\begin{figure}[htbp]
\centering
\subfigure[]
{\includegraphics[width=0.8\textwidth]{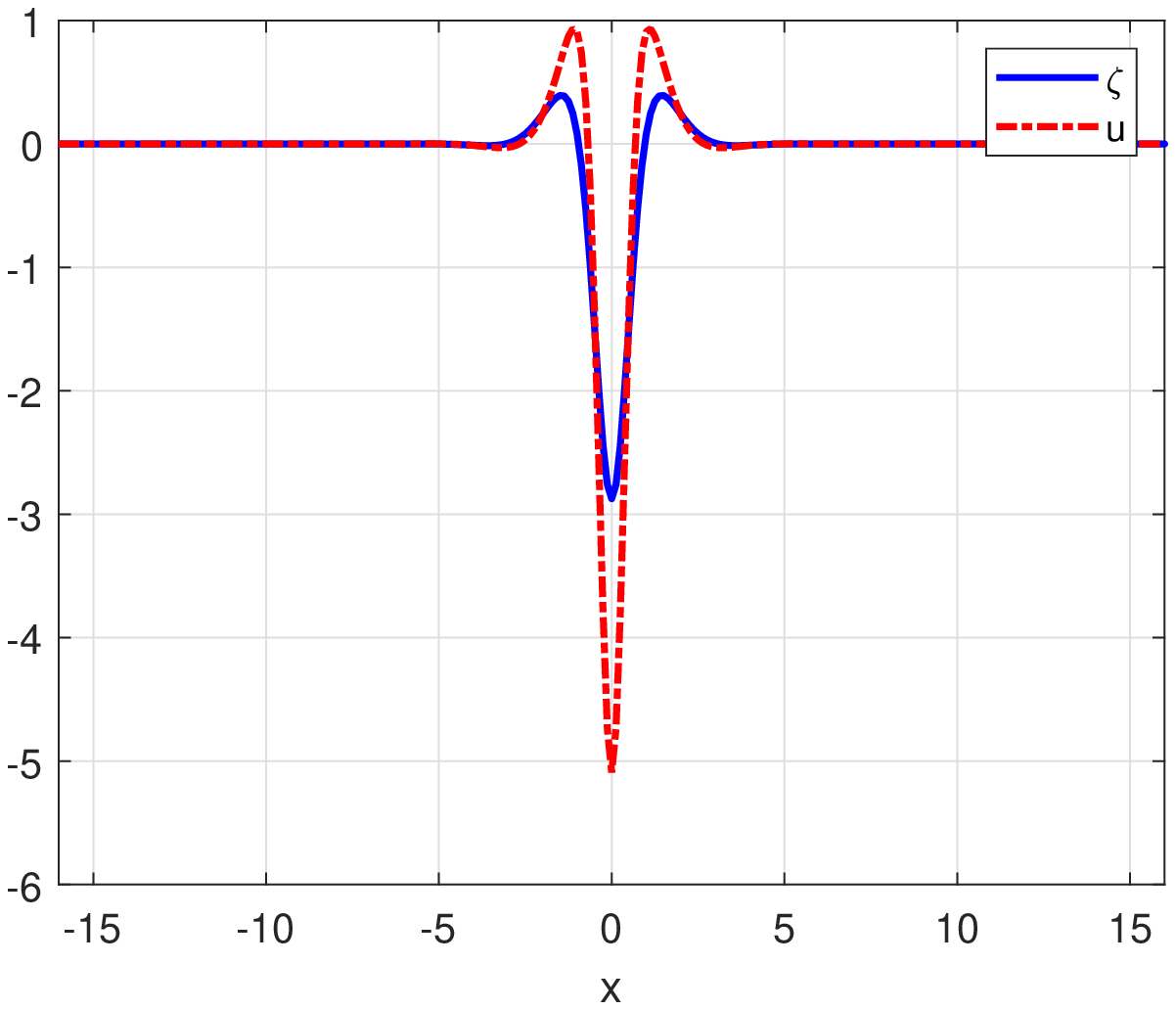}}
\subfigure[]
{\includegraphics[width=6.27cm]{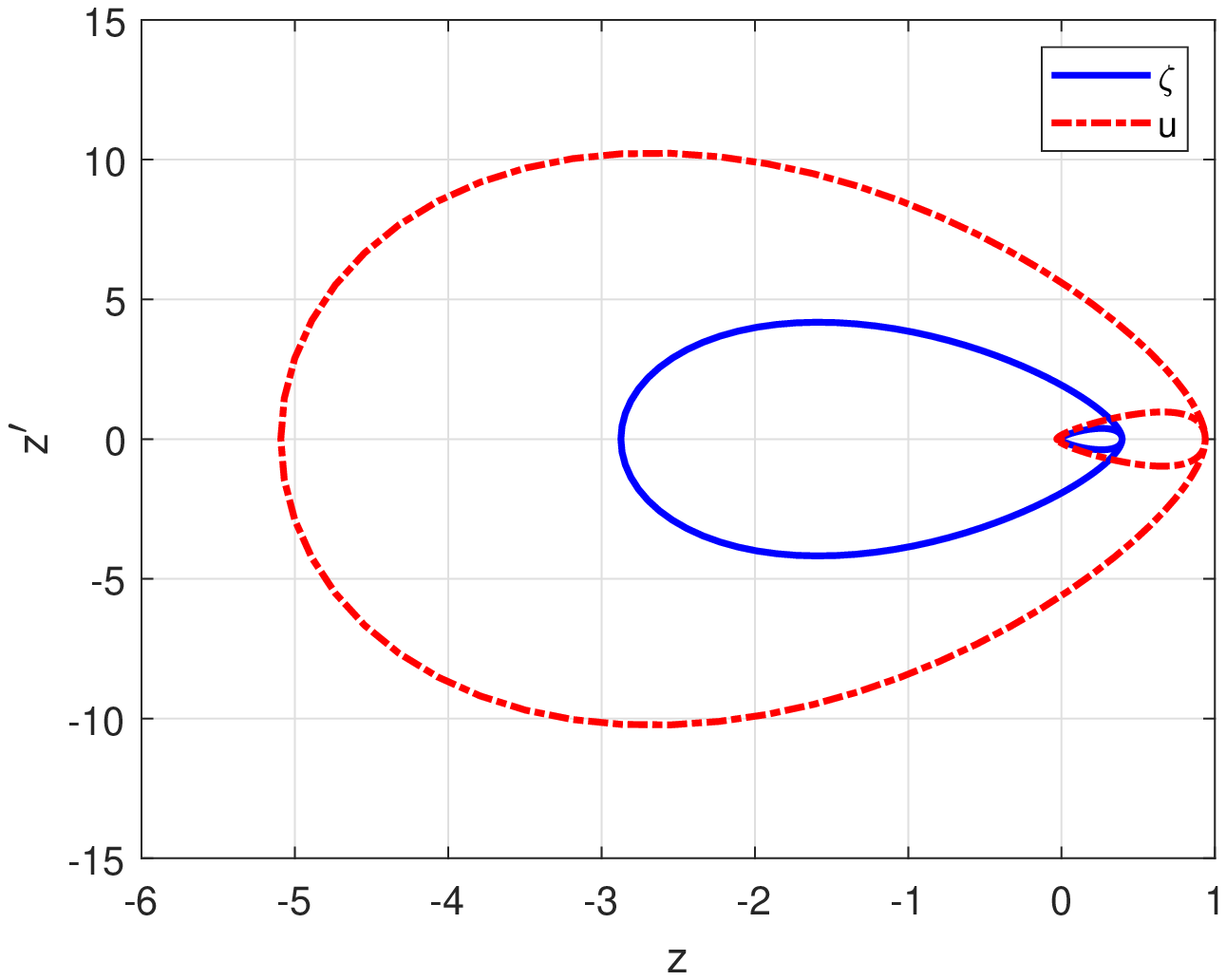}}
\subfigure[]
{\includegraphics[width=6.27cm]{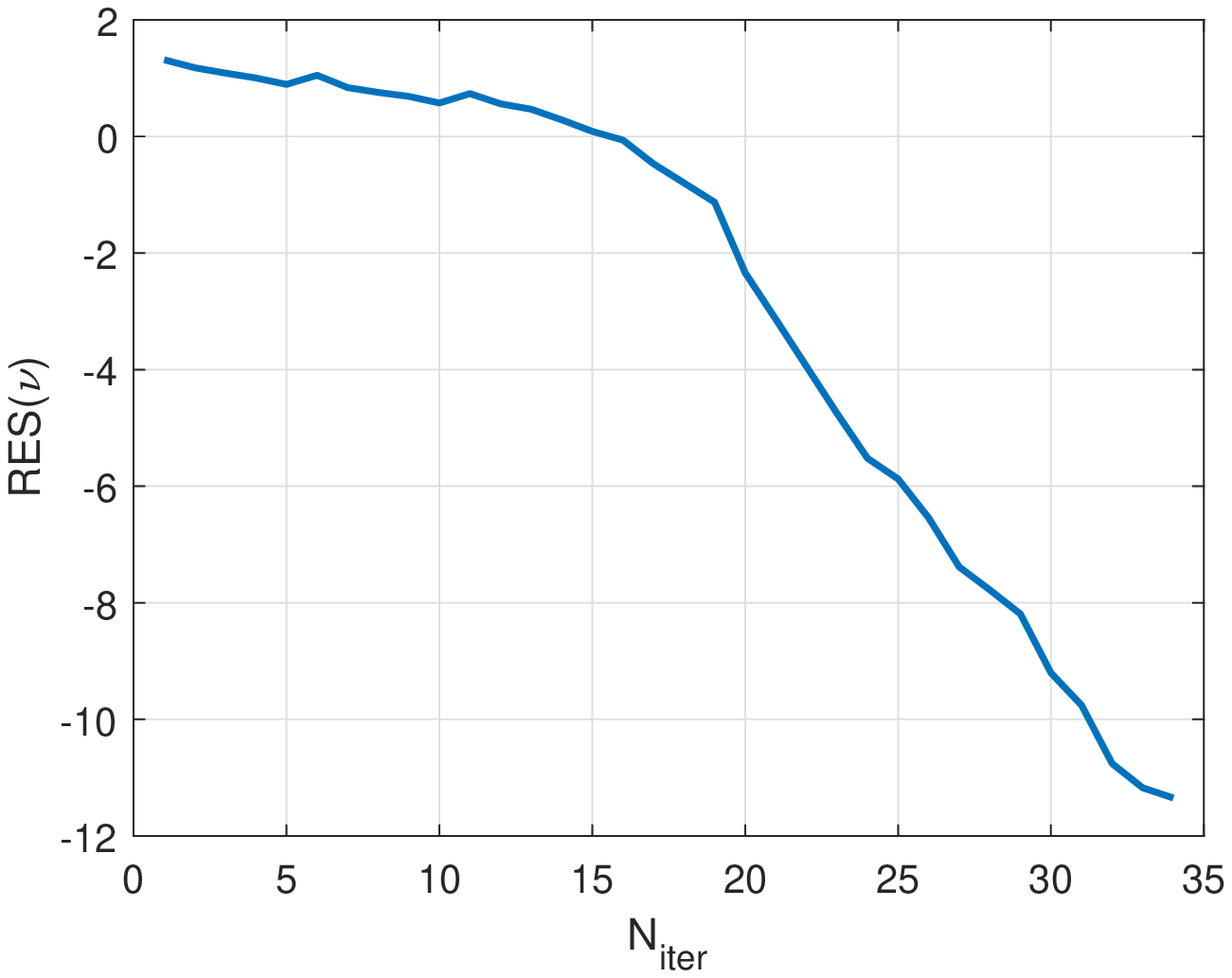}}
\caption{CSW generation with $a=-1/9,c=-1/6,b=0,d=-a/\kappa_{1}-c-b+S(\gamma,\delta)\approx 0.7058, \delta=0.9, \gamma=0.5,$ and speed $c_{s}=c_{\gamma,\delta}-0.2\approx 3.9761\times 10^{-1}$ (a) $\zeta$ and $u$ profiles; (c) $\zeta$ and $u$ phase portraits; (c) Residual error vs. number of iterations (semilog scale).}
\label{fig_A2}
\end{figure}

\begin{figure}[htbp]
\centering
\subfigure[]
{\includegraphics[width=0.8\textwidth]{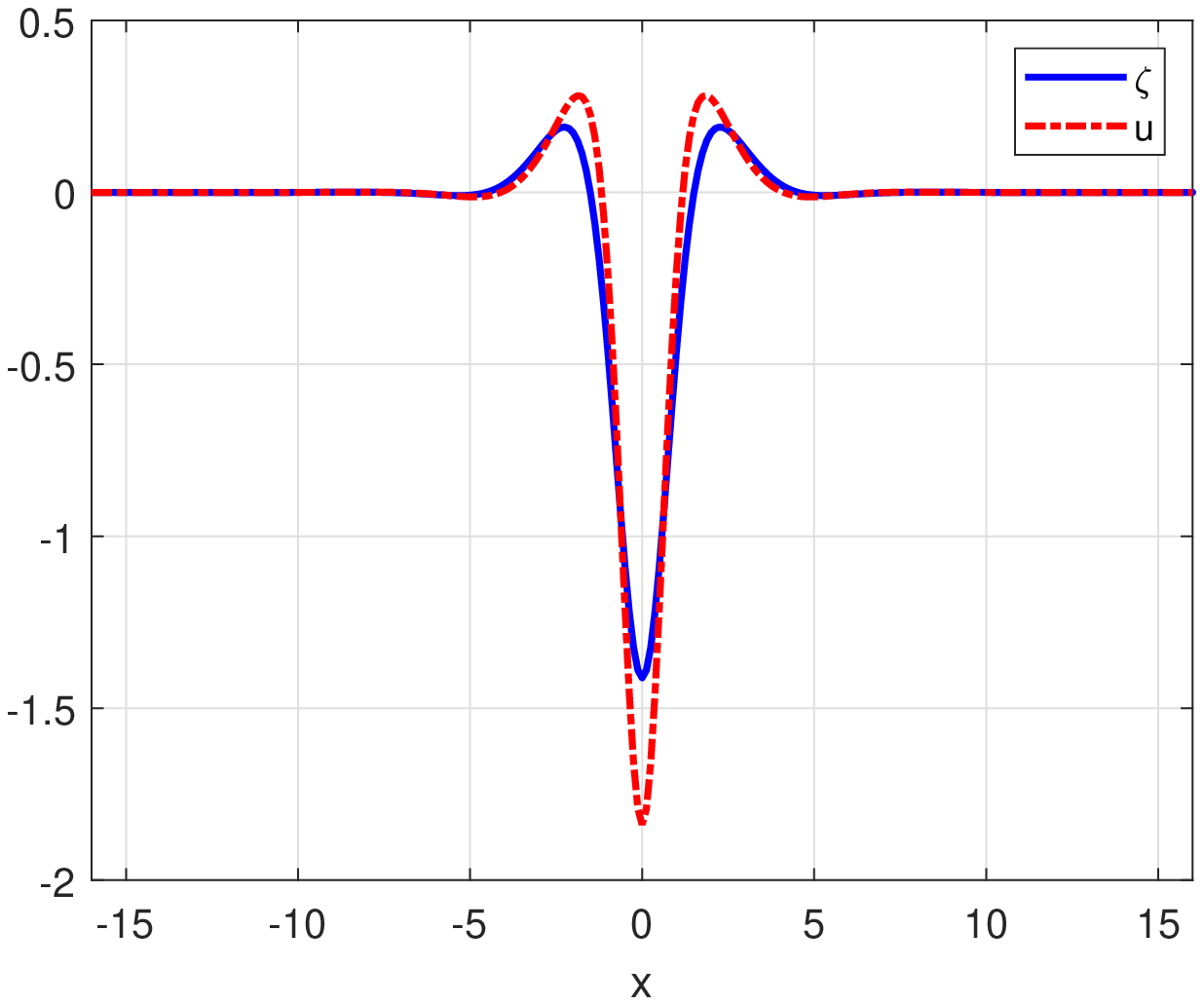}}
\subfigure[]
{\includegraphics[width=6.27cm]{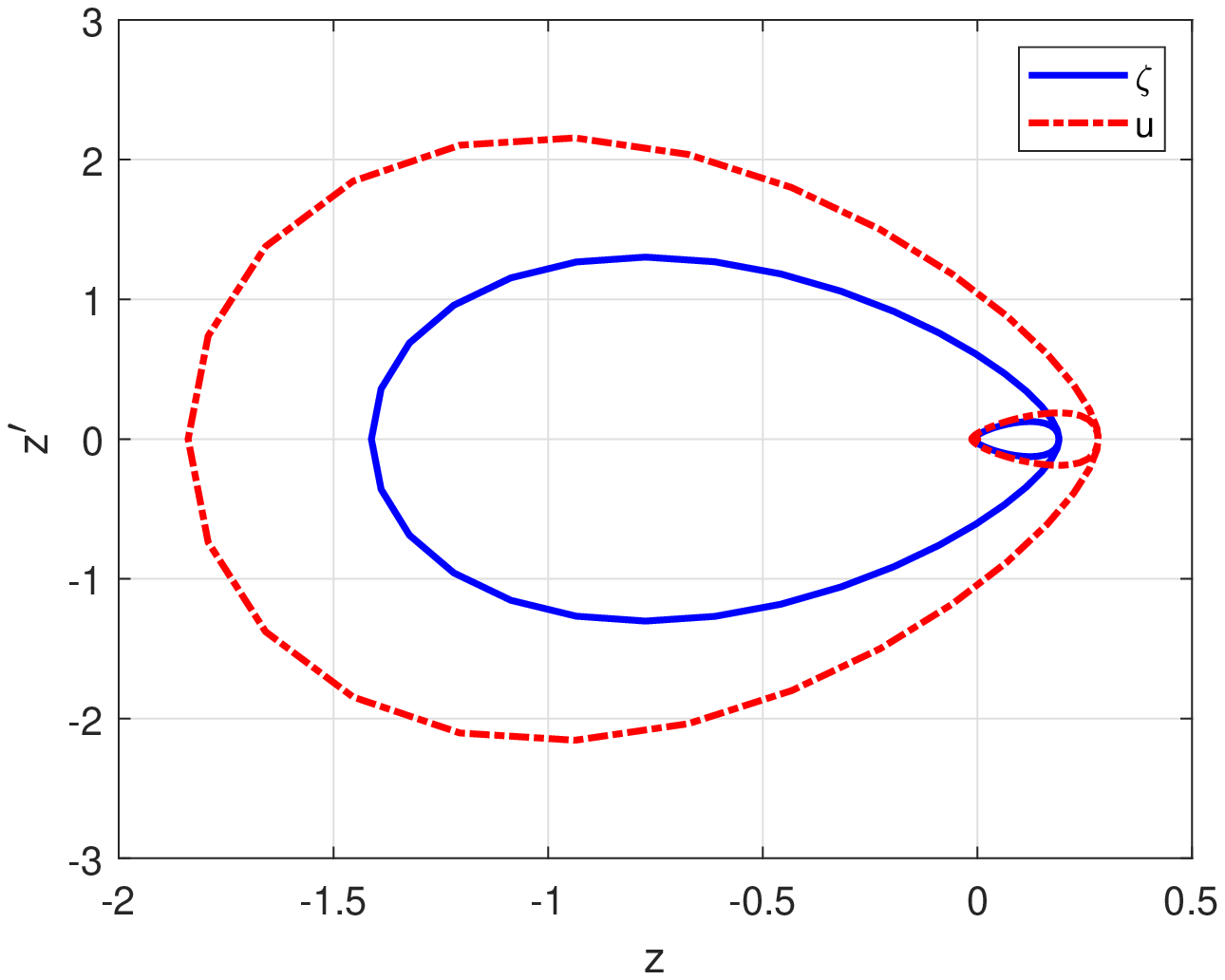}}
\subfigure[]
{\includegraphics[width=6.27cm]{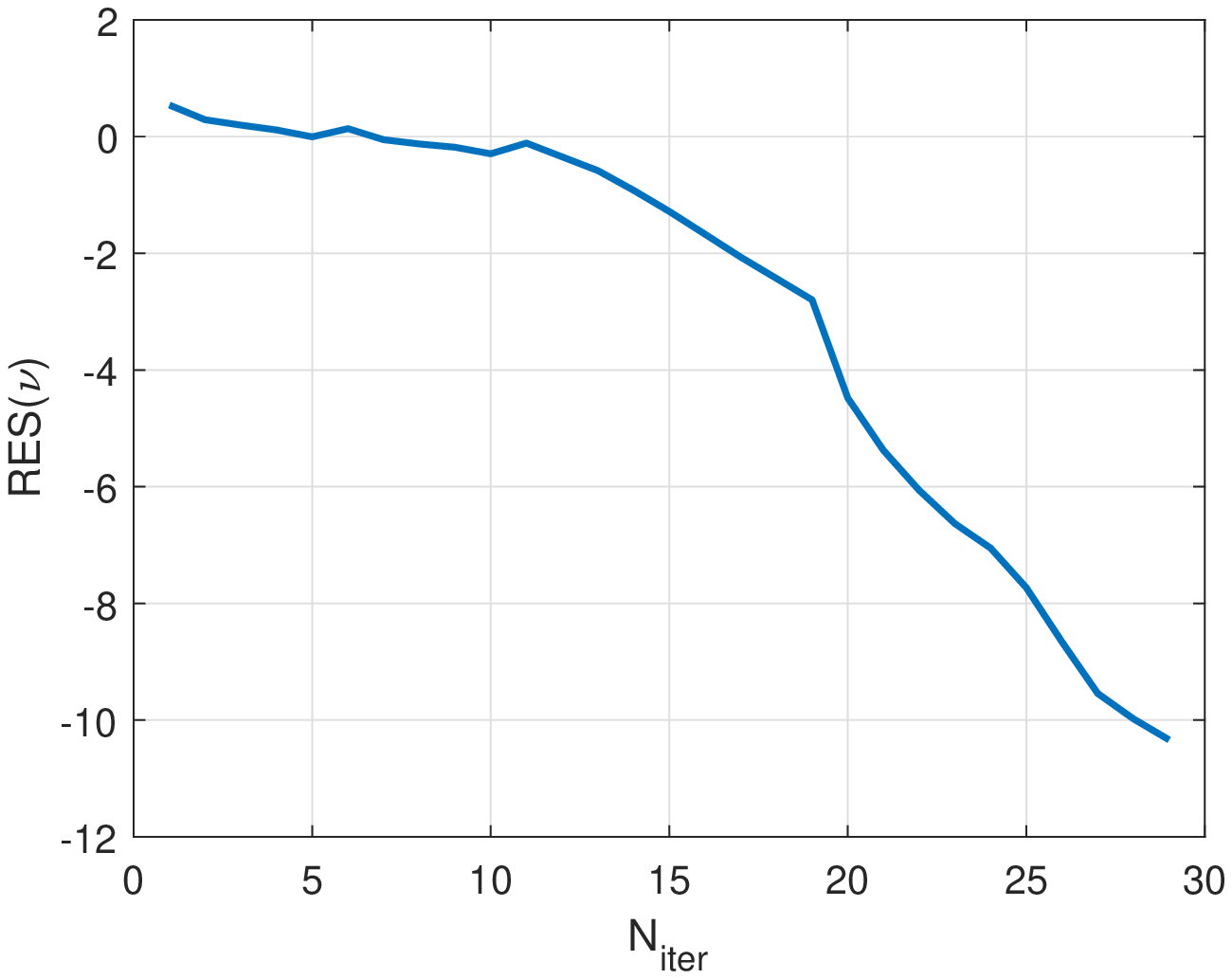}}
\caption{CSW generation with $a=-1/3,c=-1/3,b=1/9,d=-a/\kappa_{1}-c-b+S(\gamma,\delta)\approx 1.0725, \delta=0.9, \gamma=0.5,$ and speed $c_{s}=c_{\gamma,\delta}-0.2\approx 3.9761\times 10^{-1}$ (a) $\zeta$ and $u$ profiles; (c) $\zeta$ and $u$ phase portraits; (c) Residual error vs. number of iterations (semilog scale).}
\label{fig_A3}
\end{figure}

Figure \ref{fig_A2} corresponds to a homoclinic orbit belonging to the first-quadrant part of region 1 in Figure \ref{fig_A1} and corresponding to the values
$$a=-1/9,c=-1/6,b=0,d=-a/\kappa_{1}-c-b+S(\gamma,\delta)\approx 0.7058, \delta=0.9, \gamma=0.5,$$ and speed $c_{s}=c_{\gamma,\delta}-0.2\approx 3.9761\times 10^{-1}$, while the one in Figure \ref{fig_A3} belongs to the second quadrant of region 1 and is generated by the parameters
 $$a=-1/3,c=-1/3,b=1/9,d=-a/\kappa_{1}-c-b+S(\gamma,\delta)\approx 1.0725, \delta=0.9, \gamma=0.5,$$ and speed $c_{s}=c_{\gamma,\delta}-0.1\approx 4.9761\times 10^{-1}.$ (Recall that the generation of these CSW's requires $c_{s}<c_{\gamma,\delta}$, cf. section \ref{sec41}.)

\begin{figure}[htbp]
\centering
\subfigure[]
{\includegraphics[width=0.8\textwidth]{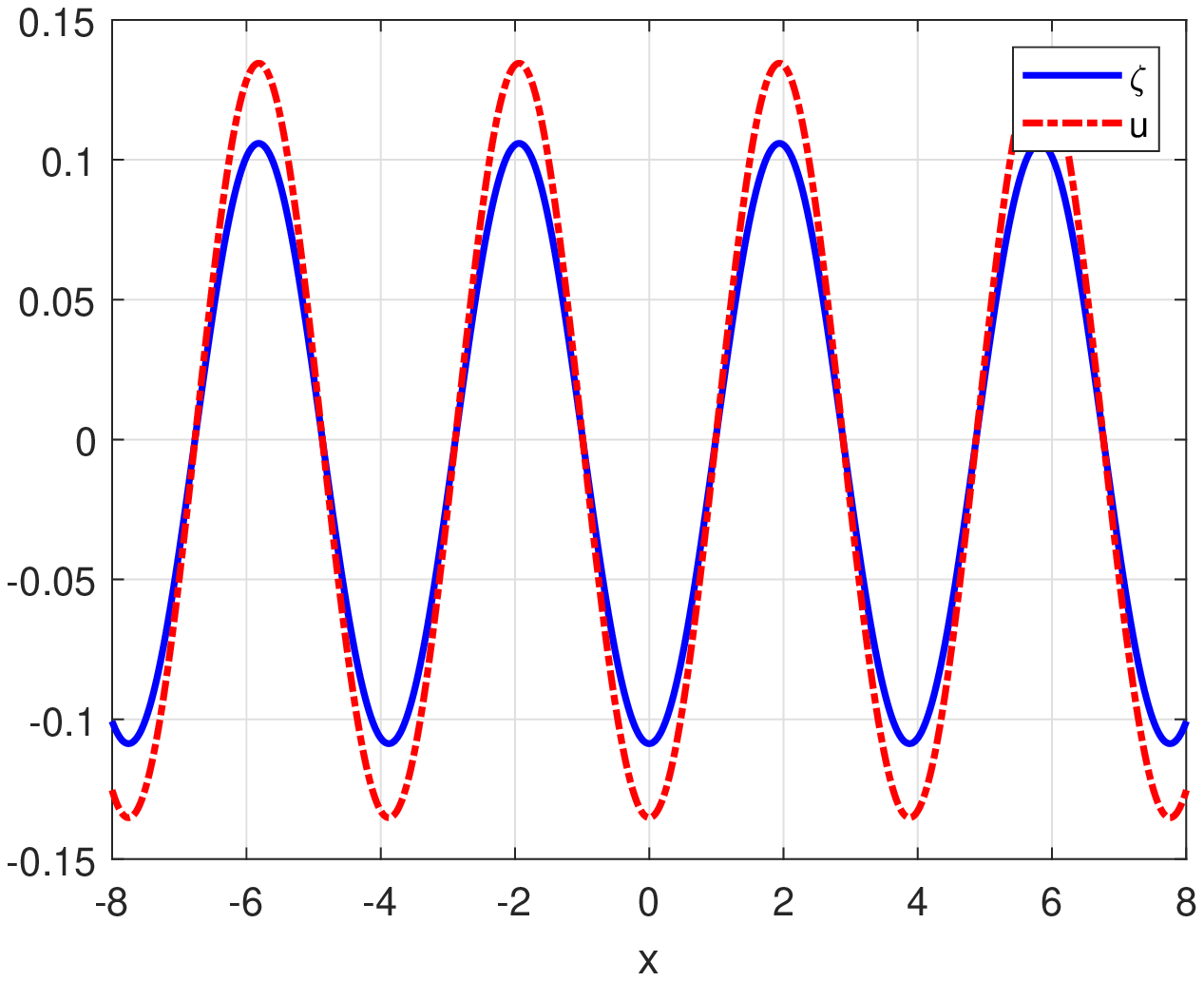}}
\subfigure[]
{\includegraphics[width=6.27cm]{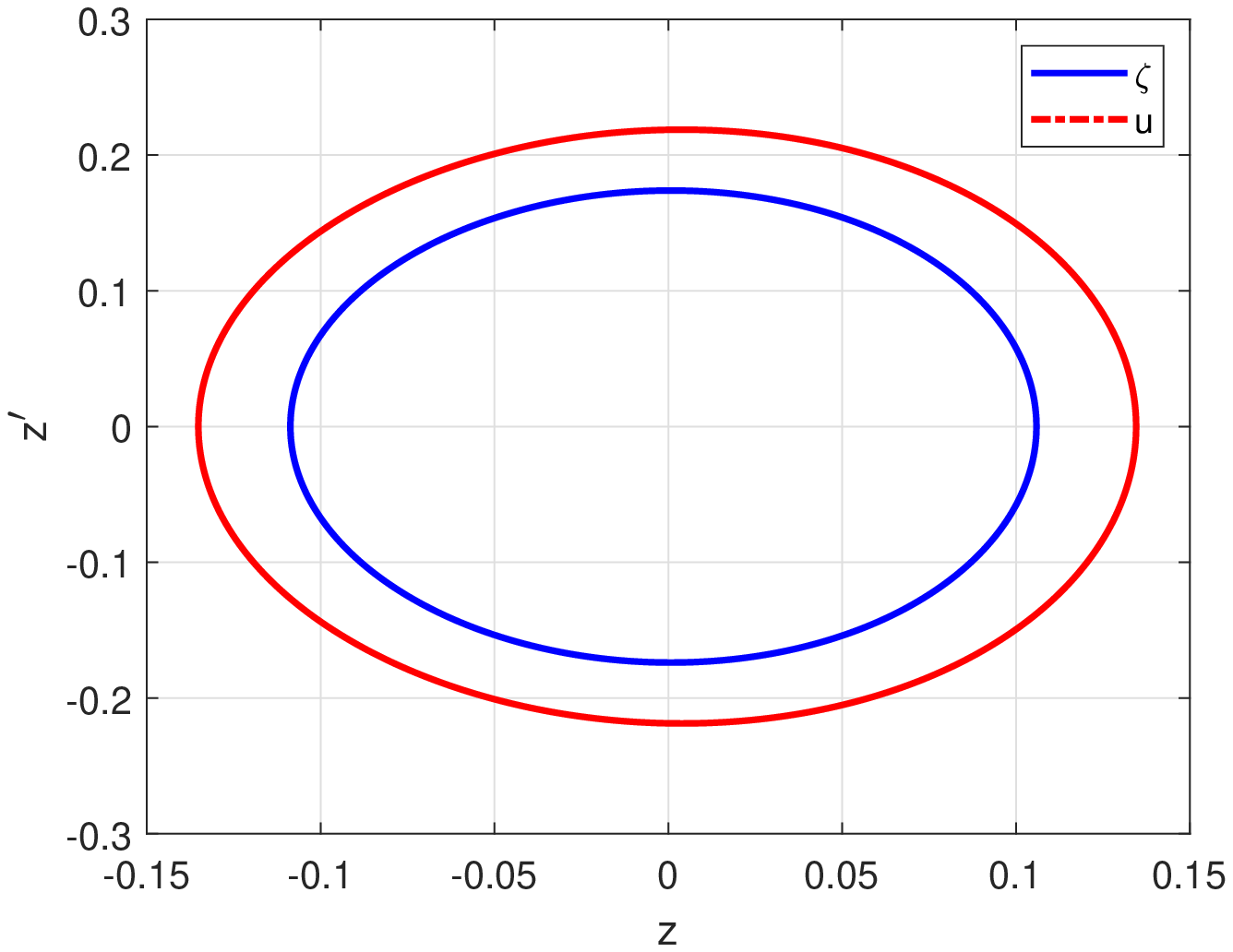}}
\subfigure[]
{\includegraphics[width=6.27cm]{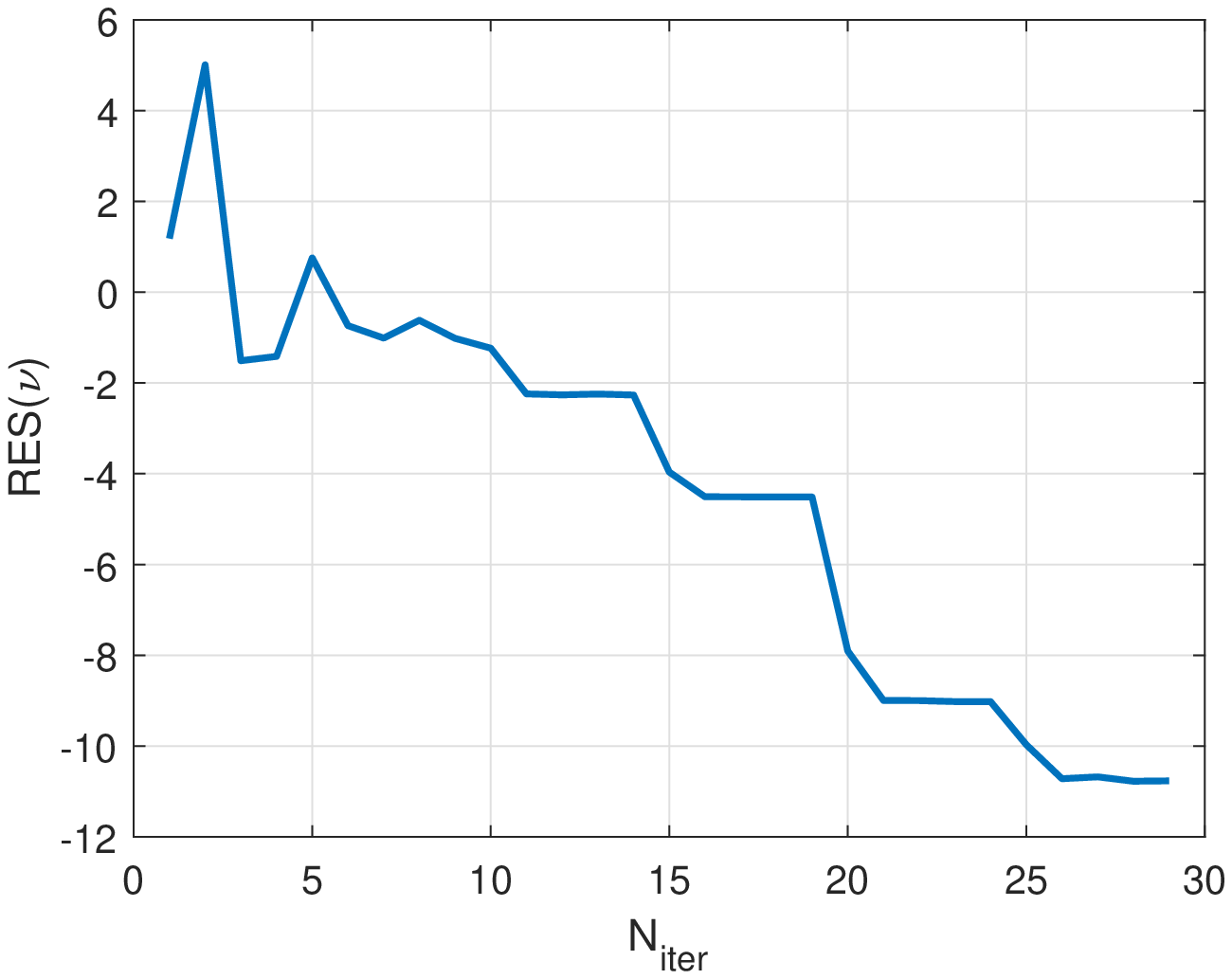}}
\caption{PTW generation with $a=c=0,b=1/6,d=-a/\kappa_{1}-c-b+S(\gamma,\delta)\approx 0.2169, \delta=0.9, \gamma=0.5,$ and speed $c_{s}=c_{\gamma,\delta}-0.2\approx 3.9761\times 10^{-1}$ (a) $\zeta$ and $u$ profiles; (c) $\zeta$ and $u$ phase portraits; (c) Residual error vs. number of iterations (semilog scale).}
\label{fig_A4}
\end{figure}

The numerical generation of periodic traveling wave solutions is illustrated in Figure \ref{fig_A4}, wherein the values
$$a=c=0,b=1/6,d=-a/\kappa_{1}-c-b+S(\gamma,\delta)\approx 0.7058, \delta=0.9, \gamma=0.5,$$ and speed $c_{s}=c_{\gamma,\delta}-0.1\approx 3.9761\times 10^{-1}$, are taken. In the spectral analysis of the linearization the corresponding point in the $(B,A)$-plane is in region 3 of Figure \ref{fig_A1}.

%discussing the generation of other solutions of (\ref{BB6}). As mentioned in section \ref{sec41}, Normal Form Theory predicts CSW solutions in region 1 (Figure \ref{fig_A1}) and periodic solutions in region 3. The first case is illustrated by Figures \ref{fig_A2} and \ref{fig_A3}. The first one corresponds to the first quadrant of region 1 and with the values
%$$a=-1/9,c=-1/6,b=0,d=-a/\kappa_{1}-c-b+S(\gamma,\delta)\approx 0.7058, \delta=0.9, \gamma=0.5,$$ and speed $c_{s}=c_{\gamma,\delta}-0.2\approx 3.9761\times 10^{-1}$, while the second one corresponds to the second quadrant of region 1 and is generated with
% $$a=-1/3,c=-1/3,b=1/9,d=-a/\kappa_{1}-c-b+S(\gamma,\delta)\approx 1.0725, \delta=0.9, \gamma=0.5,$$ and speed $c_{s}=c_{\gamma,\delta}-0.1\approx 4.9761\times 10^{-1}$.

The last experiment in this section concerns the speed-amplitude relation. Figure \ref{speedamp} displays, in linear and log-log scales and for fixed $\gamma, \delta$, the amplitude of the computed profiles for $\zeta, u$ and $v_{\beta}$ as functions of the difference $c_{s}-c_{\gamma,\delta}$. The results correspond to an experiment with $\delta=0.9, \gamma=0.5$ and parameters $a=c=0, b=d=\frac{1}{2}\frac{1+\gamma\delta}{3\delta (\gamma+\delta)}$ (case (A6) of Table \ref{tavle0} with Hamiltonian structure). Similar experiments were made for different parameters leading to other CSW's (including those of nonmonotone decay) as well as GSW's, and the results resemble qualitatively those of this Figure
The three maximum values (for $\zeta, u$ and $v_{\beta}$) are increasing functions of $c_{s}-c_{\gamma,\delta}$, with the amplitude of $\zeta$ increasing faster. Figure \ref{speedamp}(b), in log-log scale, includes a dotted line of slope $1$ for comparison purposes. The representation of the amplitudes as affine functions for small $c_{s}-c_{\gamma,\delta}$ seems to fit the results (as expected). For larger values of $c_{s}-c_{\gamma,\delta}$, the slope of the line for the $\zeta$-amplitude is increasing faster while the approximate linear fitting persists longer for the velocity variables.

\begin{figure}[htbp]
\centering
%\subfigure[]
%{\includegraphics[width=0.8\textwidth]{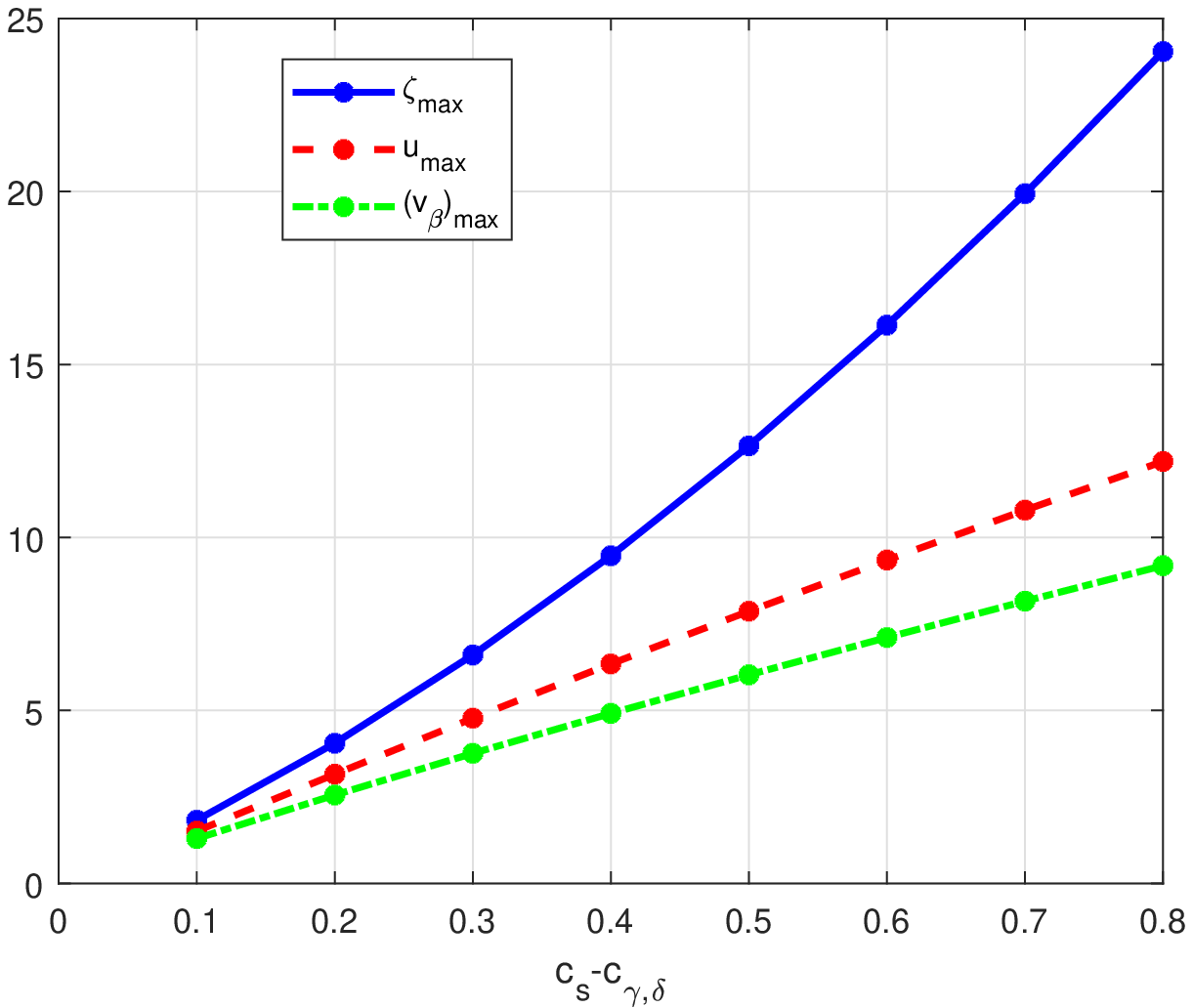}}
\subfigure[]
{\includegraphics[width=6.27cm]{speed-amp1.eps}}
\subfigure[]
{\includegraphics[width=6.27cm]{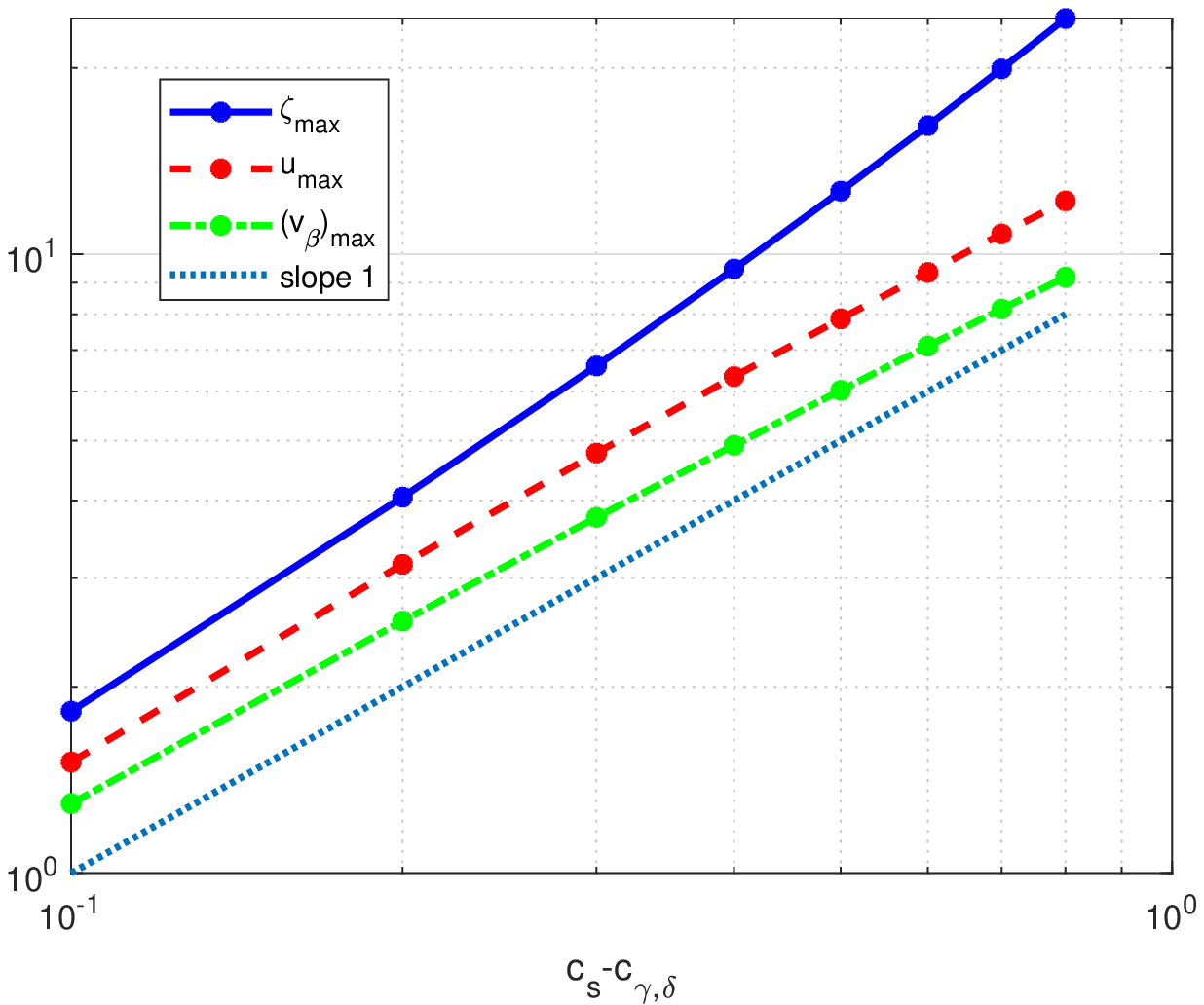}}
\caption{Speed-amplitude relation in (a) linear scale and (b) log-log scale. Case (A6) with $\delta=0.9, \gamma=0.5, a=0, c=0, b= d=\frac{1}{2}\frac{1+\gamma\delta}{3\delta(\gamma+\delta)}$.}
\label{speedamp}
\end{figure}

\begin{remark}
\label{remark44}
The study of solitary waves may be completed by studying numerically generations of bi-modal homoclinic orbits in the form of multi-pulses, typically bifurcating from the CSW's, \cite{ChampS,Champ,AmickT}. An example of these solutions (a positive, two-pulse wave) is shown in section \ref{sec52}.
\end{remark}

\section{Computational study of solitary wave dynamics}
\label{sec5}
In this section we present a computational study of the dynamics of some aspects of solitary waves, both classical and generalized. The {\it generic case} ($a, c<0, b, d>0$) is considered for the experiments. In this case we have the existence of classical solitary waves when $\kappa_{1}bd-ac>0$ (case (A3) of Table \ref{tavle0}) and generalized solitary waves when $\kappa_{1}bd-ac<0$ (case (A2) of Table \ref{tavle0}). 
\subsection{Numerical approximation of the periodic ivp for the B/B system}
\label{sec51}
First we briefly describe the numerical method used for the computational study to follow. We consider the periodic initial-value problem  (\ref{dds31})-(\ref{dds32})  on a long enough interval $(-L,L)$ and discretize it in space with the spectral Galerkin method introduced in Section \ref{sec3}. After the change from the interval $(0,1)$ (for which the convergence analysis was made) to $(-L,L)$ is performed, the corresponding system (\ref{dds36})-(\ref{dds38}) is solved in terms of the Fourier coefficients of the semidiscretization, in analogy to (\ref{38b}). This leads to an ode system of the form
\begin{eqnarray}
\frac{d}{dt}\begin{pmatrix}\widehat{\zeta_{N}}\\ \widehat{u_{N}}\end{pmatrix}=F\begin{pmatrix}\widehat{\zeta_{N}}\\ \widehat{u_{N}}\end{pmatrix},\label{51}
\end{eqnarray}
where for $-N\leq k\leq N, \widehat{k}=k\pi/L$
\begin{eqnarray}
F\begin{pmatrix}\widehat{\zeta_{N}}\\ \widehat{u_{N}}\end{pmatrix}(\widehat{k})=\begin{pmatrix}
\frac{1}{1+b\widehat{k}^{2}}\left(i\widehat{k}((-\kappa_{1}+a\widehat{k}^{2})\widehat{u_{N}}(\widehat{k},t)-\lambda\widehat{\zeta_{N}u_{N}}(\widehat{k},t))\right)\\
\frac{1}{1+d\widehat{k}^{2}}\left(i\widehat{k}(-\kappa_{2}(1-c\widehat{k}^{2})\widehat{\zeta_{N}}(\widehat{k},t)-\frac{\lambda}{2}\widehat{u_{N}^{2}}(\widehat{k},t))\right)
\end{pmatrix},\label{52}
\end{eqnarray}
with  $\widehat{\zeta}_{N}(\widehat{k},0)=\widehat{\zeta}_{0}(\widehat{k}), \widehat{u}_{N}(\widehat{k},0)=\widehat{u}_{0}(\widehat{k})$. The ode system (\ref{51}), (\ref{52})  is then discretized in time with the fourth-order, three-stage RK-composition method based on the implicit midpoint rule (IMR), \cite{Yoshida1990,HairerLW2004}. The scheme belongs to the family of RK methods with Butcher tableau
\begin{equation}\label{44}
\begin{tabular}{c | c}
& $a_{ij}$\\ \hline
 & $b_{i}$
\end{tabular}=
\begin{tabular}{c | ccccc}
& $b_{1}/2$ & & &&\\
&  $b_{1}$ & $b_{2}/2$ &&&\\
&$b_{1}$&$b_{2}$&$\ddots$&&\\
&$\vdots$&$\vdots$ &&$\ddots$&\\
&$b_{1}$&$b_{2}$&$\cdots$&$\cdots$&$b_{s}/2$\\ \hline
 & $b_{1}$&$b_{2}$&$\cdots$&$\cdots$&$b_{s}$
\end{tabular},
\end{equation}
in the particular case of $s=3$ and
\begin{eqnarray}
&&b_{1}=(2+2^{1/3}+2^{-1/3})/3=\frac{1}{2-2^{1/3}}\sim 1.351,\nonumber\\
&& b_{2}=1-2b_{1}\sim -1.702,\quad b_{3}=b_{1},\label{46}
\end{eqnarray}

The method can be written as a composition of three steps of the IMR with stepsizes $b_{1}\Delta t, b_{2}\Delta t$ and $b_{3}\Delta t$. For a general ode system $\dot{y}=f(y)$ this reads
\begin{eqnarray*}
y^{n,1}&=&y^{n}+b_{1}\Delta t f\left(\frac{y^{n}+y^{n,1}}{2}\right),\nonumber\\
y^{n,j}&=&y^{n}+b_{j}\Delta t f\left(\frac{y^{n,j-1}+y^{n,j}}{2}\right),\; j=2,3,\nonumber\\
y^{n+1}&=&y^{n,3}.\label{45}
\end{eqnarray*}

The scheme is fourth-order accurate, symplectic, symmetric and of easy implementation. The full discretization has been analyzed in \cite{DD} in the case of spectral semidiscretizations of the periodic ivp for the KdV equation and its efficiency has been checked in computations with other nonlinear dispersive equations, \cite{FrutosS1992,DDM}. It has been shown to be $L^{2}$-conservative and convergent under suitable CFL conditions in the case of the KdV. Here, in our experiments with the B/B system (\ref{dds31})-(\ref{dds32})  in the generic case $a, c<0, b, d>0$, we also observed that a Courant stability condition of the form $N\Delta t\leq \alpha$ for some $\alpha>0$ was sufficient to ensure stability and convergence of the fully discrete scheme.

For an integer $M\geq 1$ the corresponding fully discrete approximation at times $t_{m}=m\Delta t, m=0,\ldots,M$, where $T=M\Delta t$, is computed in the Fourier space with FFT techniques. The computation of initial solitary wave profiles is carried out with the method outlined in Section \ref{sec42}.
\subsection{Validation of the codes}
\label{sec52}
In this section we present some numerical evidence in order to validate the full discretization introduced in section \ref{sec51} and to complete the study of the accuracy of the numerical procedure to generate approximate solitary wave profiles, developed in section \ref{sec42}. The experiments will additionally serve to give confidence to the numerical simulation of stability and interactions of solitary waves to be carried out in sections \ref{sec53} and \ref{sec54}.

\begin{table}[htbp]
\begin{center}
\begin{tabular}{|c|c|c|c|c|}
\hline
\multicolumn{5}{|c|}{$N=2048$}
\\\hline
$\Delta t$&$\zeta$-error&Rate&$v_{\beta}$-error&Rate
\\\hline
$1/40$&$1.1028E-05$&&$7.3309E-06$&\\\hline
$1/80$&$6.8977E-07$&$3.9989$&$4.5852E-07$&$3.9989$\\\hline
$1/160$&$4.3076E-08$&$4.0012$&$2.8635E-08$&$4.0011$\\
    \hline\hline
\multicolumn{5}{|c|}{$N=4096$}
\\\hline
$\Delta t$&$\zeta$-error&Rate&$v_{\beta}$-error&Rate
\\\hline
$1/40$&$1.1028E-05$&&$7.3309E-06$&\\\hline
$1/80$&$6.8977E-07$&$3.9989$&$4.5852E-07$&$3.9989$\\\hline
$1/160$&$4.2945E-08$&$4.0056$&$2.8548E-08$&$4.0055$\\
    \hline
\end{tabular}
\end{center}
\caption{$L^{2}$-errors and temporal convergence rates at $T=100$ with $L=256$ and $N=2048$ ($h=0.25$) and $N=4096$ ($h=0.125$).}
\label{tavle2}
\end{table}

\begin{figure}[htbp]
\centering
\subfigure[]
{\includegraphics[width=0.8\textwidth]{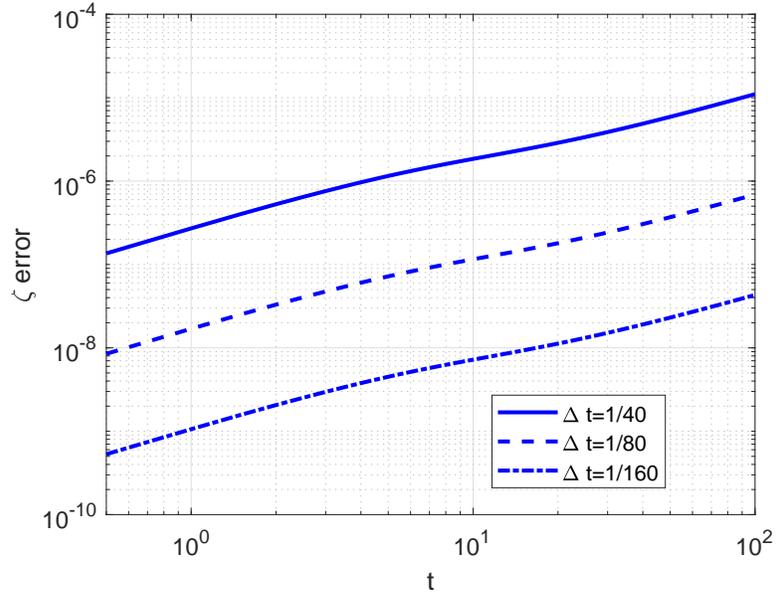}}
\subfigure[]
{\includegraphics[width=6.25cm]{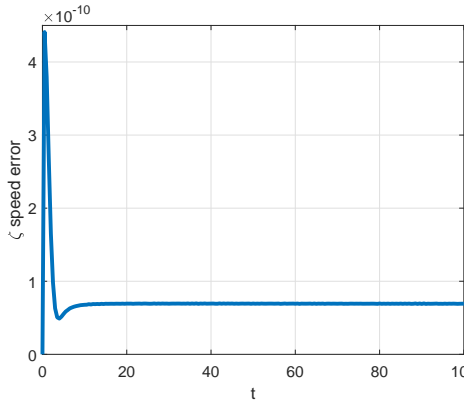}}
\subfigure[]
{\includegraphics[width=6.25cm,height=5.25cm]{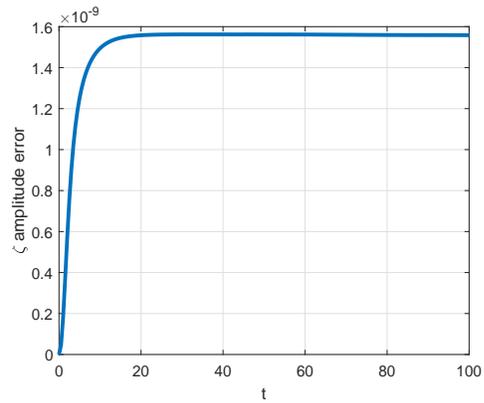}}
\subfigure[]
{\includegraphics[width=6.25cm,height=5.25cm]{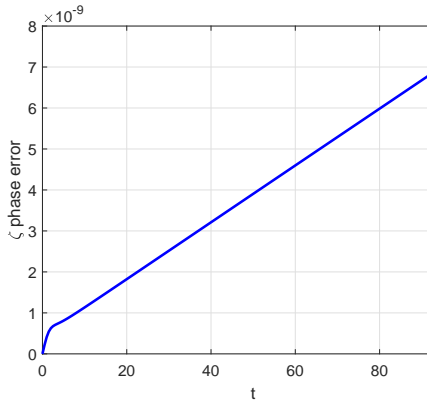}}
\subfigure[]
{\includegraphics[width=6.25cm,height=5.15cm]{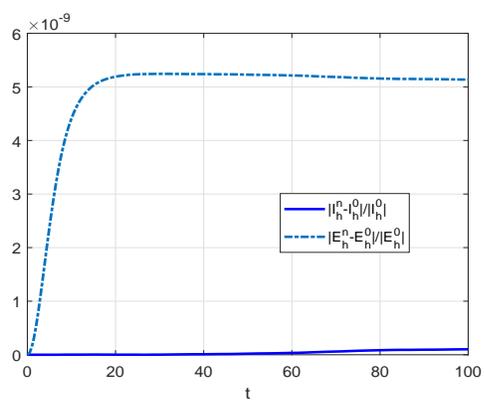}}
\caption{Numerical simulation of (\ref{BB2}) with (\ref{ap1}) up to $T=100$ with $N=4096$. (a) Normalized $L^{2}$-error for $\zeta$ vs. time in log-log scale; (b) $\zeta$ speed error vs. time; (c) $\zeta$ amplitude error vs. time; (d) $\zeta$ phase error vs. time; (e) Errors of (\ref{ap2}) and (\ref{ap3}) vs. time (semilog scale). In (b)-(e) the errors are normalized and $\Delta t=1/160$ is taken.}
\label{fig_BB521}
\end{figure}

In a first experiment we check the temporal order of convergence by simulating with the fully discrete scheme  the propagation of an exact solitary wave solution of the form (\ref{esw6}) corresponding to the parameter values
\begin{eqnarray}
&&\delta=0.9, \gamma=0.5,\nonumber\\
&&a=-1/3, c=-2/3, b=d=\frac{1}{2}(\frac{a}{\kappa_{1}}-c+\frac{1+\gamma\delta}{3\delta(\delta+\gamma)}\approx 0.2918\label{ap1}
\end{eqnarray}
up to a final time $T=100$ on an interval $(-L,L)$ with $L=256$ and periodic boundary conditions. According to (\ref{esw3a}), the speed is $c_{s}\approx 1.0328$ and the amplitude is $\zeta_{\max}\approx 7.9846$. We made two runs with $N=2048$ and $N=4096$ (that is with $h=2L/N=0.250$ and $0.125$ respectively) and several values of $\Delta t$. The normalized $L^{2}$-errors for $\zeta$ and $v_{\beta}$ at the final time and corresponding temporal convergence rates are displayed in Table \ref{tavle2}. The results show the fourth order of convergence of the time discretization and the associated errors in space.
%\begin{figure}[htbp]
%\centering
%\subfigure[]
%{\includegraphics[width=0.8\textwidth]{code_check1b.eps}}
%\subfigure[]
%{\includegraphics[width=6.25cm]{code_check1d.eps}}
%\subfigure[]
%{\includegraphics[width=6.25cm,height=5.25cm]{code_check1c.eps}}
%\subfigure[]
%{\includegraphics[width=6.25cm,height=5.25cm]{code_check1e.eps}}
%\subfigure[]
%{\includegraphics[width=6.25cm,height=5.25cm]{code_check1.eps}}
%\caption{Numerical simulation of (\ref{esw4}) with (\ref{ap1}) up to $T=100$ with $N=4096$. (a) Normalized $L^{2}$-error for $\zeta$ vs. time in log-log scale; (b) $\zeta$ speed error vs. time; (c) $\zeta$ amplitude error vs. time; (d) $\zeta$ phase error vs. time; (e) Errors in (\ref{ap2}) and (\ref{ap3}) vs. time. In (b)-(e) the errors are normalized and $\Delta=1/160$ is taken.}
%\label{fig_BB521}
%\end{figure}
Since for the values (\ref{ap1}) the system (\ref{BB2}) is Hamiltonian and its solutions are smooth, decaying to zero at infinity, and preserving the quantities (\ref{mom}) and (\ref{energy}), we may study the ability of the numerical solution to conserve the corresponding discrete versions of the invariants, given by
\begin{eqnarray}
I_{h}(U,V)&=&h\sum_{j=0}^{N-1}(U_{j}V_{j}+b(D_{N}U)_{j}(D_{N}V)_{j}),\label{ap2}\\
E_{h}(u,V)&=&h\sum_{j=0}^{N-1}\left(\frac{\kappa_{2}}{2}U_{j}^{2}+\frac{\kappa_{1}}{2}V_{j}^{2}-a(D_{N}V)_{j}^{2}\right.\nonumber\\
&&\left.-\kappa_{2}c(D_{N}U)_{j}^{2}+\frac{\kappa_{\gamma,\delta}}{2}U_{j}V_{j}^{2}\right),\label{ap3}
\end{eqnarray}
for $U=(U_{0},\ldots,U_{N-1}), V=(V_{0},\ldots,V_{N-1})$ and $D_{N}$ standing for the pseudospectral differentiation matrix of order $N$, cf. section \ref{sec42}. If $I_{h}^{n}, E_{h}^{n}$ denote, respectively, the values of (\ref{ap2}) and (\ref{ap3}) of the numerical solution at time $t_{n}=n\Delta t$, we obtain the evolution of the normalized errors $|(I_{h}^{n}-I_{h}^{0})/I_{h}^{0}|, |(E_{h}^{n}-E_{h}^{0})/E_{h}^{0}|$ shown in Figure \ref{fig_BB521}(e) for $\Delta t=1/160$. It shows the preservation of the two quantities up to the final time $T=100$ with a normalized error of $1.0198\times 10^{-10}$ and $5.1366\times 10^{-9}$ respectively. Due to the form of the bilinear invariant (\ref{mom}), we have almost exact in time conservation of the discrete $H^{1}$-norm, defined by the pseudospectral differentiation of the numerical solution.

In addition, Figure \ref{fig_BB521}(a) shows, in log-log scale, the evolution of the normalized $\zeta$ errors in $L^{2}$ norm as function of time for $N=4096$ and several values of $\Delta t$. (The experiment with $N=2048$ was also made, with similar results.) The slopes of the lines suggest approximately linear growth in time of the errors, as expected, \cite{FrutosS1997}. A source of this behaviour is due to the evolution of the error with respect to some parameters of the solitary waves. For the case at hand this is illustrated by Figures \ref{fig_BB521}(b)-(d) showing, for $N=4096, \Delta t=1/160$, the relative errors in speed, amplitude, and the error in phase, respectively, of the  numerical approximation of $\zeta$ as functions of time. These errors are computed in a standard way, as in e.~g. \cite{DougalisDLM2007}. The results show the preservation of speed and amplitude (up to an error at $t=100$ of about $6.9340\times 10^{-11}$ and $1.5584\times 10^{-9}$ respectively) and a linear growth of the phase error.
%\subsection{CSW dynamics. Numerical experiments}
%\label{sec53}
%In this section we 
%introduce and comment upon some numerical experiments concerning the dynamics of CSW's for the generic case ($a, c<0, b, d>0$) and $\kappa_{1}bd-ac>0$ (case (A3)). The experiments concern the evolution of small and large perturbations of CSW's, as well as head-on and overtaking collisions. For simplicity, only the results concerning the first component $\zeta^{N}$ will be shown.
\begin{figure}[htbp]
\centering
\subfigure[]
{\includegraphics[width=0.8\textwidth]{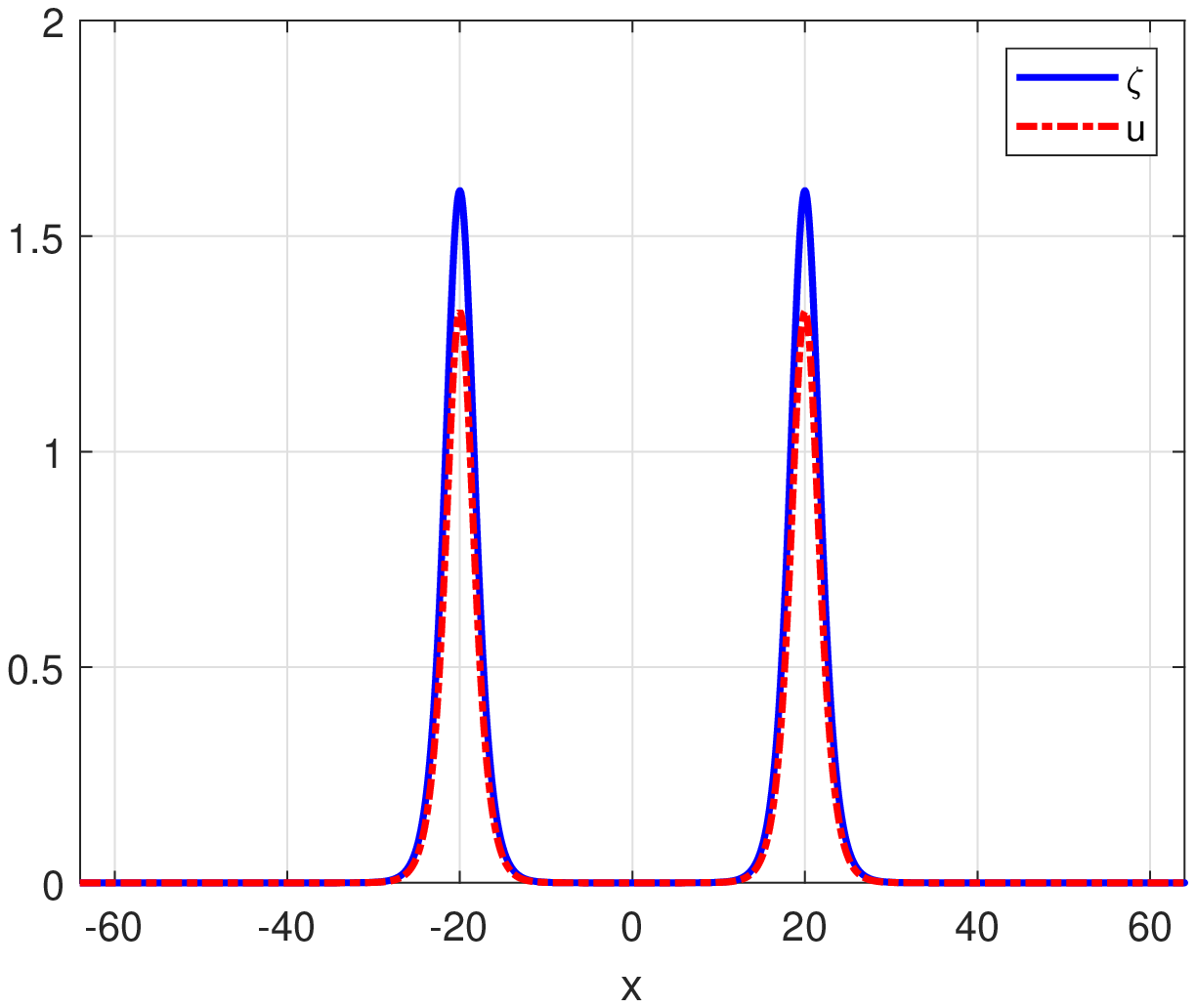}}
\subfigure[]
{\includegraphics[width=6.27cm]{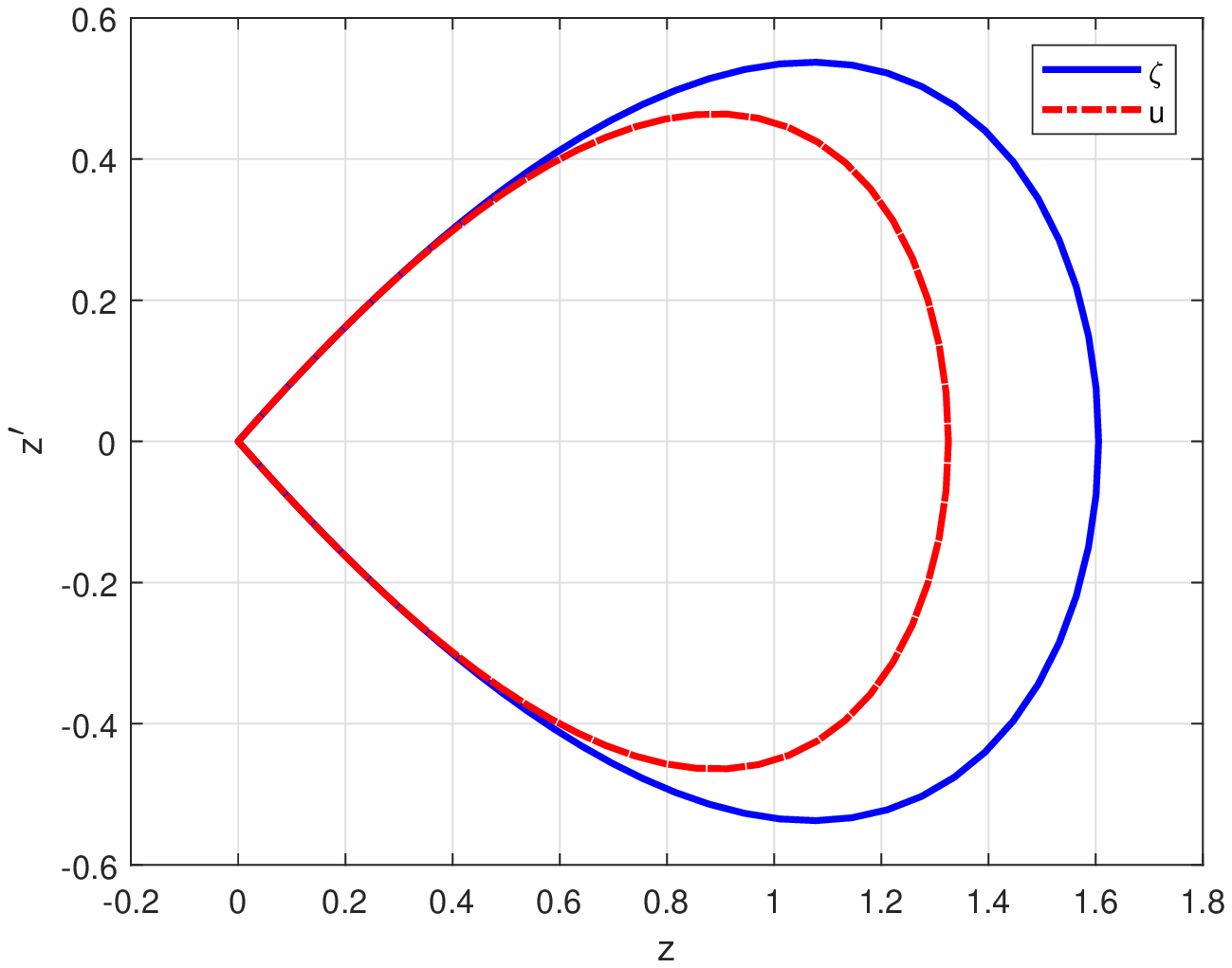}}
\subfigure[]
{\includegraphics[width=6.27cm]{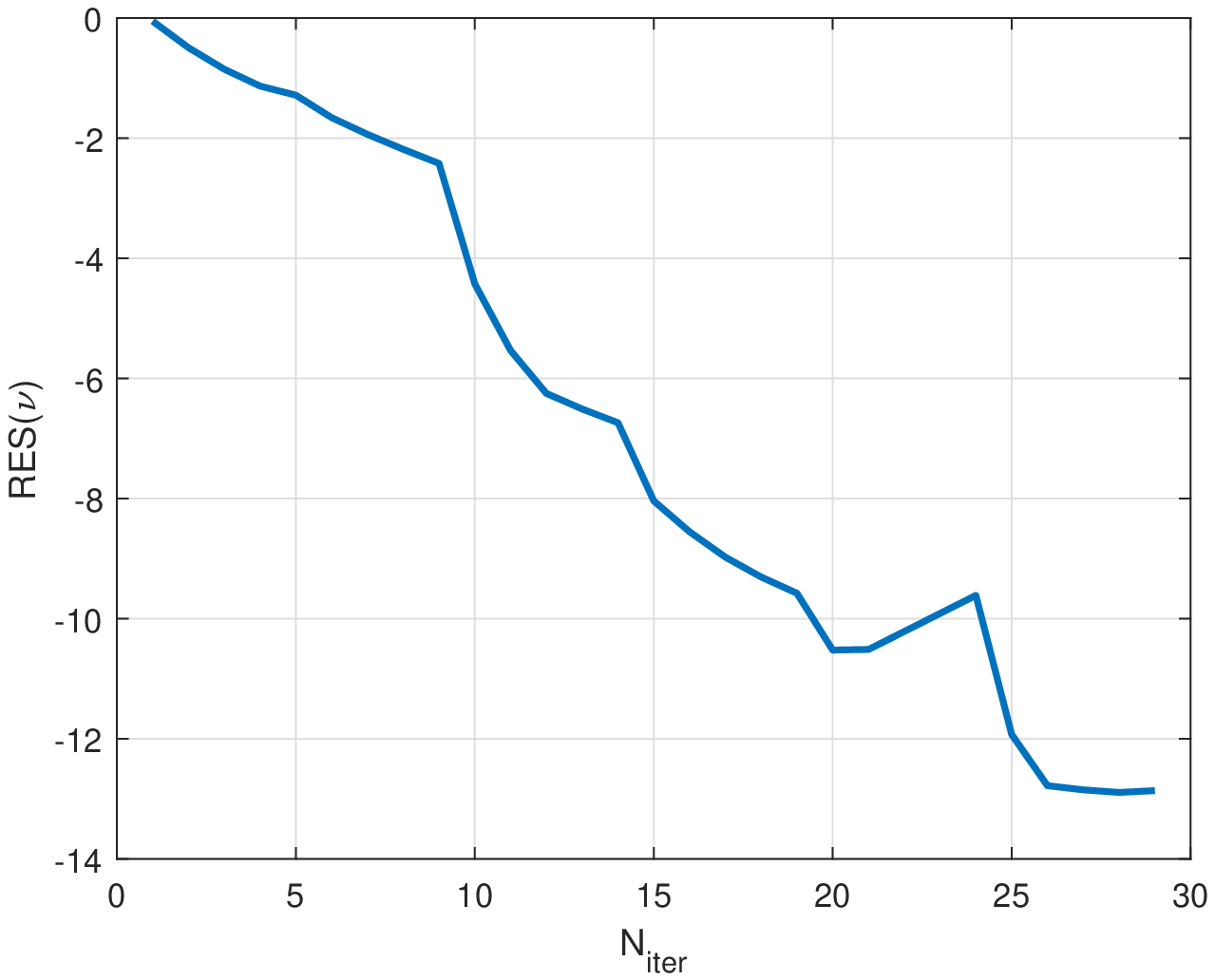}}
\caption{Two-pulse solitary wave generated numerically with parameters given by (\ref{ap4}). (a) $\zeta$ and $u$ profiles; (c) $\zeta$ and $u$ phase portraits; (c) Residual error vs. number of iterations (semilog scale).}
\label{fig_BB522}
\end{figure}

In a second experiment we took the values
\begin{eqnarray}
\delta=0.9, \gamma=0.5,\;
a=c=0, b=d=\frac{1}{2}(\frac{1+\gamma\delta}{3\delta(\delta+\gamma)}\approx 0.1918,\label{ap4}
\end{eqnarray}
and taking as initial profile a superposition of two hyperbolic secant square profiles of amplitudes equal to $1$ and centered at $x=\pm 20$, we ran the iteration (\ref{424}). The method converges to the profile shown in Figure \ref{fig_BB522}(a), which has the form of a symmetric two-pulse solitary wave (cf. Remark \ref{remark44}). The accuracy of the iteration is first suggested by the convergence to practically zero of the residual error shown in Figure \ref{fig_BB522}(c). Both pulses have a computed amplitude of about $1.6055$ and the two-pulse wave travels with a speed of $6.9761\times 10^{-1}$.

\begin{figure}[htbp]
\centering
\subfigure[]
{\includegraphics[width=6.25cm]{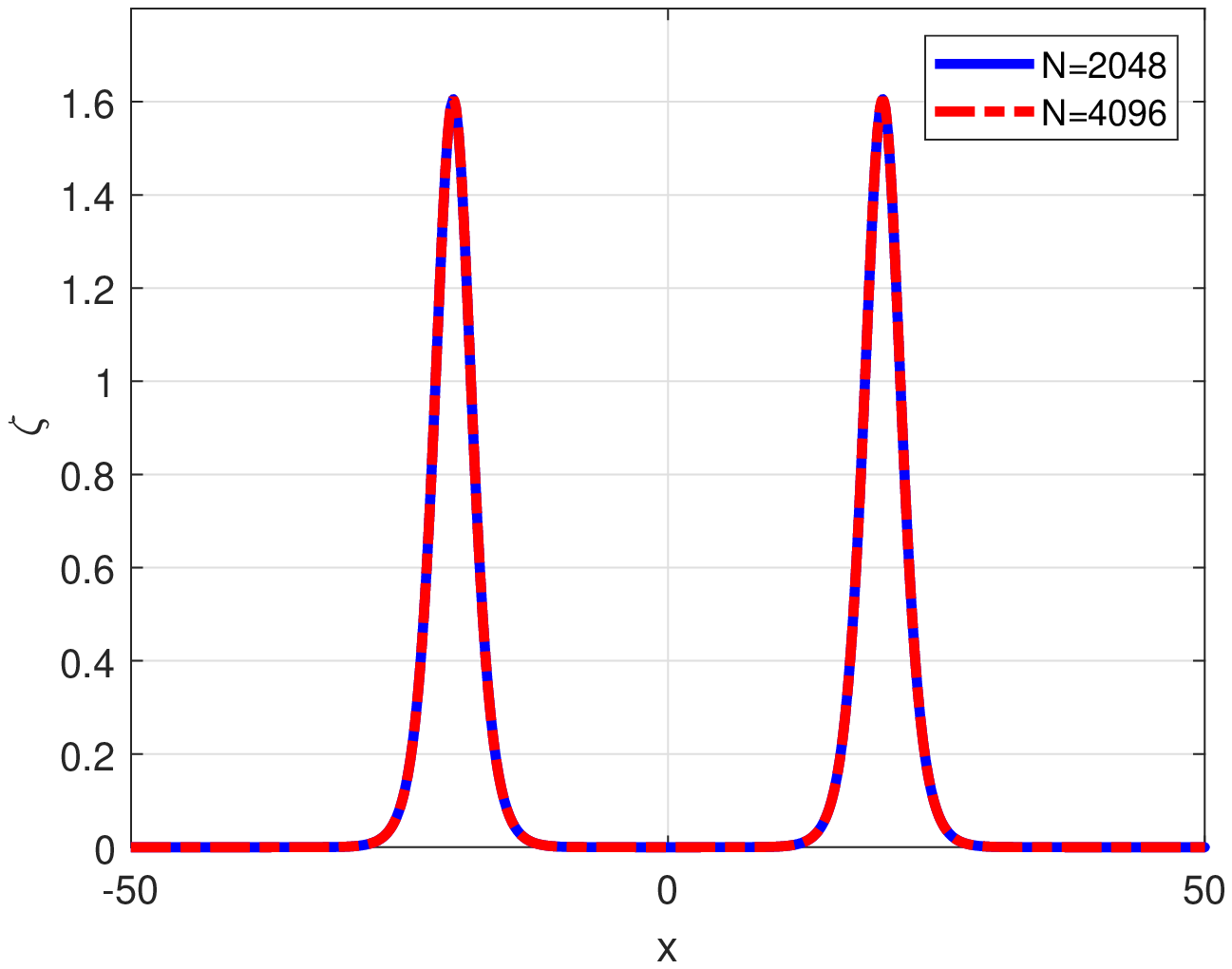}}
\subfigure[]
{\includegraphics[width=6.25cm]{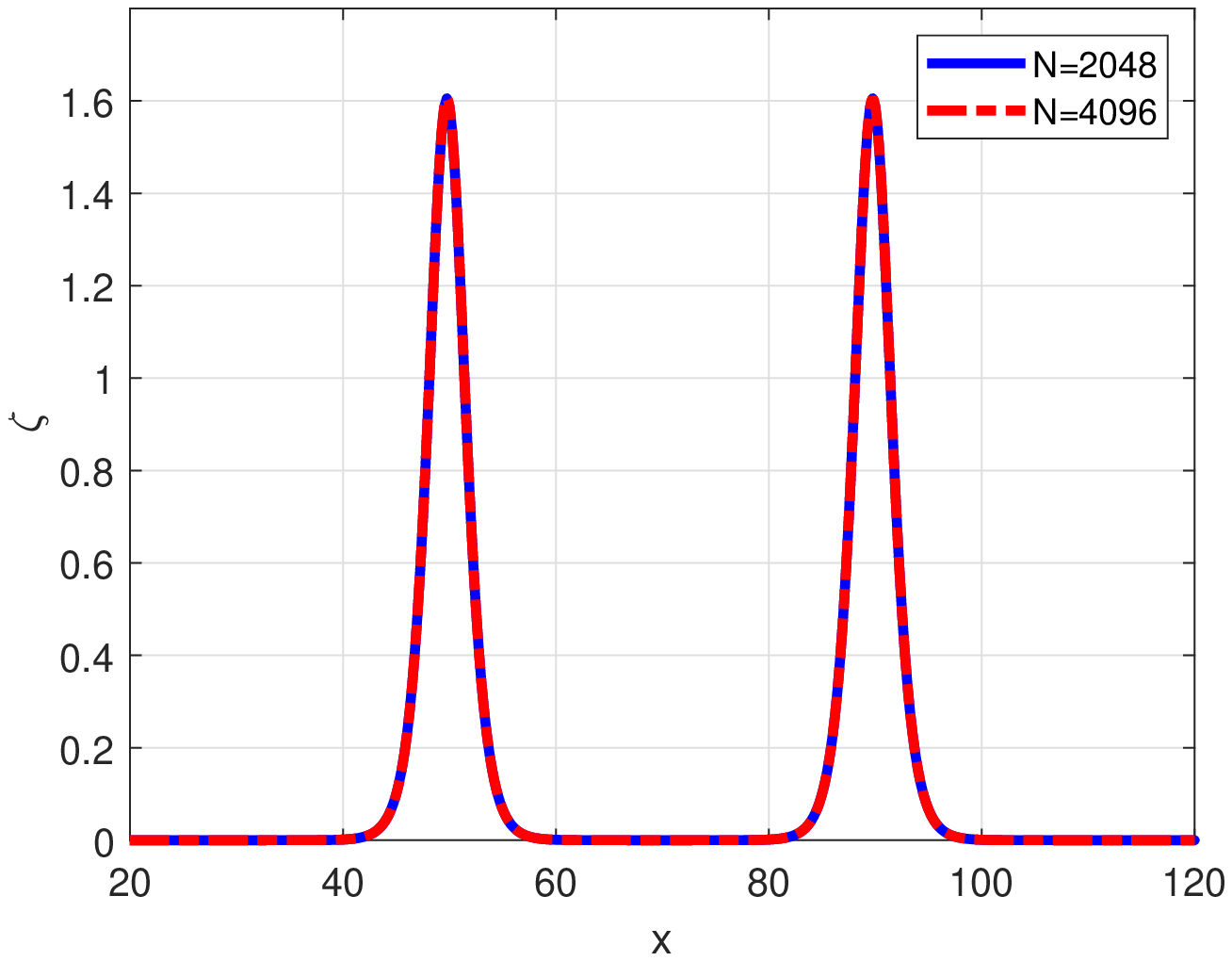}}
\subfigure[]
{\includegraphics[width=6.25cm,height=5.25cm]{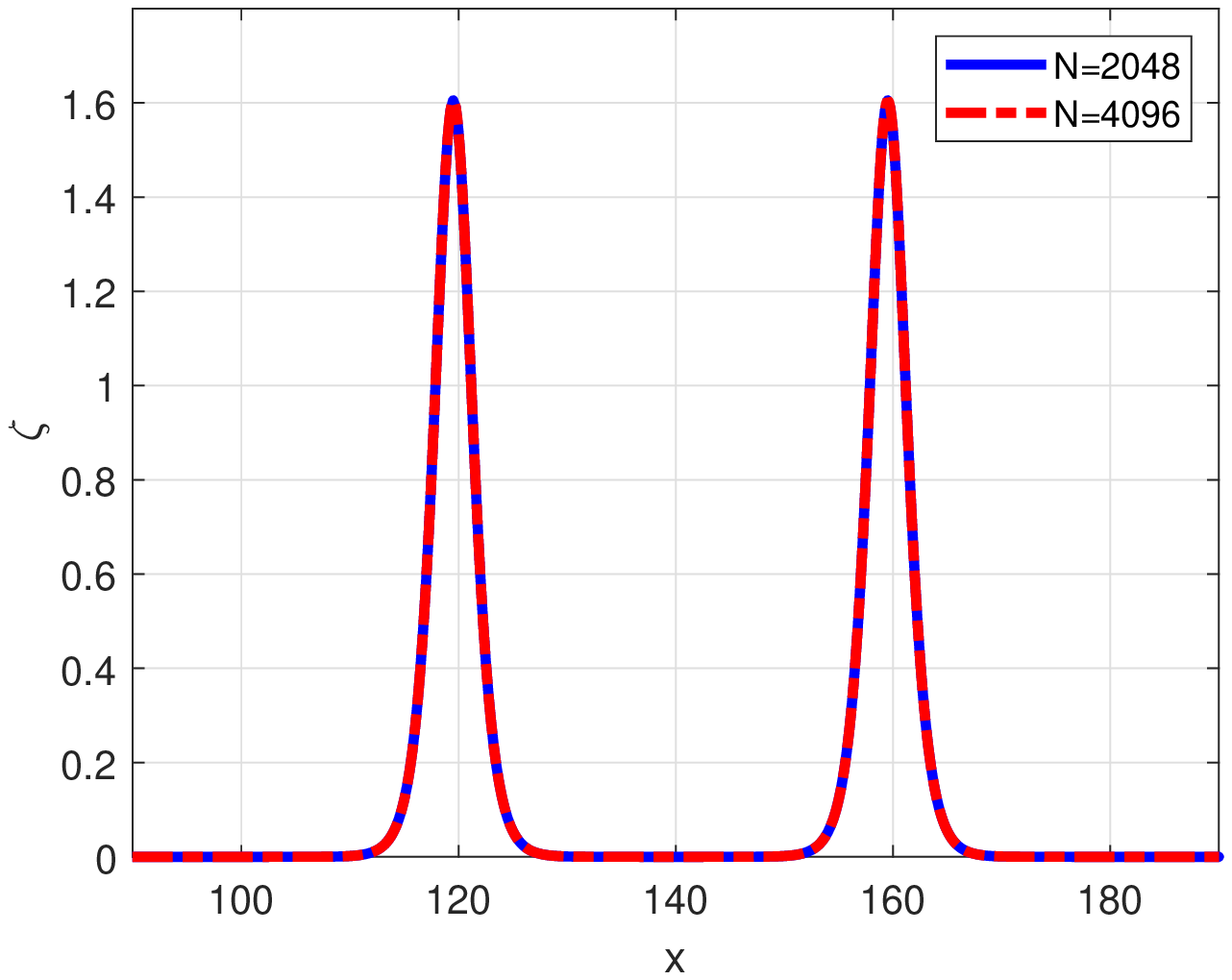}}
\subfigure[]
{\includegraphics[width=6.25cm,height=5.25cm]{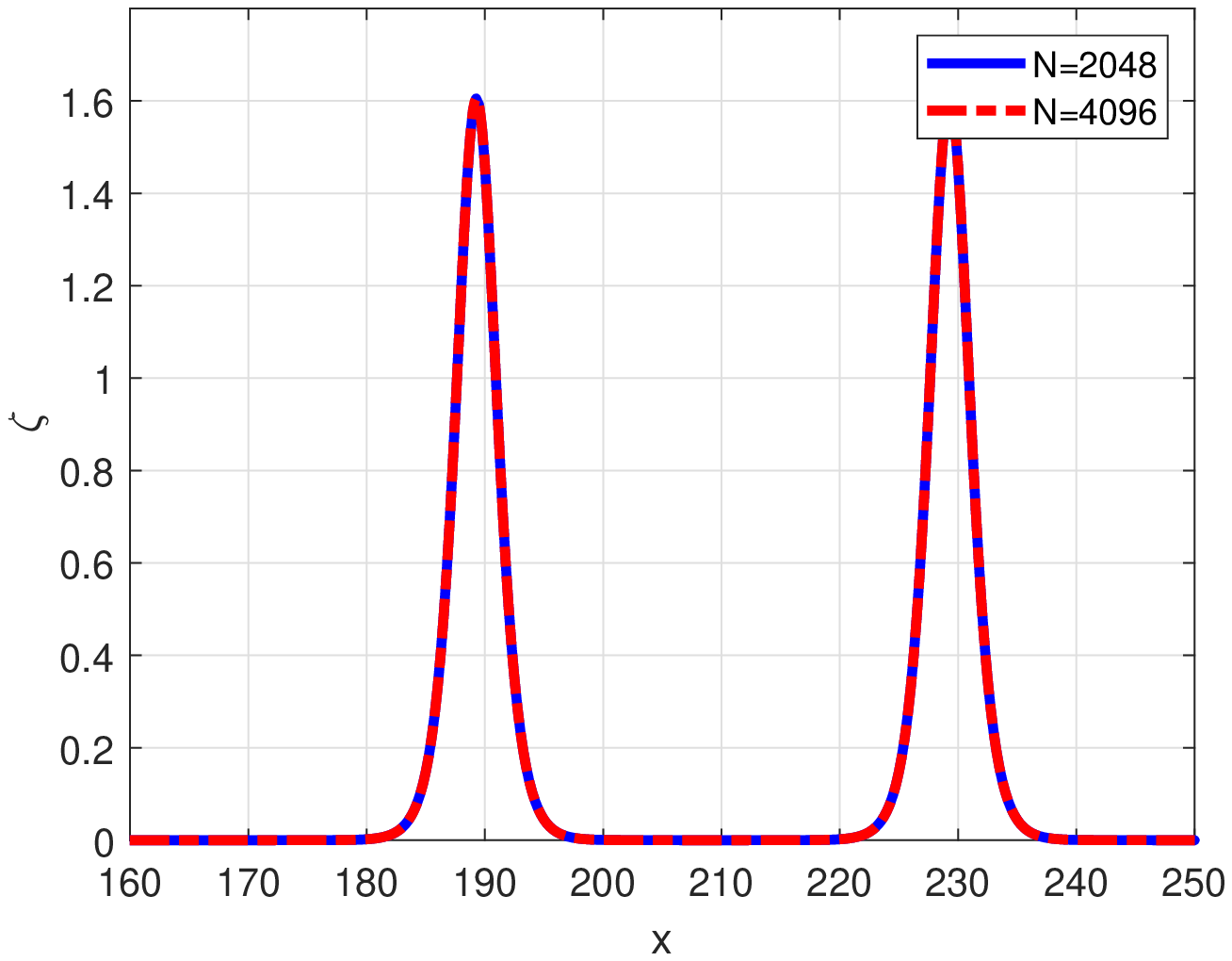}}
%\subfigure[]
%{\includegraphics[width=6.25cm,height=5.25cm]{code_check1.eps}}
\caption{Simulation of (\ref{BB2}) with (\ref{ap4}) up to $T=400$ with $\Delta t=1/160$. Approximation of $\zeta(x,t)$ at (a) $t=0$; (b) $t=100$; (c) $t=200$; (d) $t=300$.}
\label{fig_BB523}
\end{figure}

In addition, we took this computed two-pulse as initial condition of the fully discrete method and monitored the evolution of the numerical solution up to a final time $T=400$ with several values of the discretization parameters $h$ (or $N$) and $\Delta t$. The profiles at several times generated with $\Delta t=1/160$ and $N=2048, 4096$, are shown in Figure \ref{fig_BB523}; they coincide within graph thickness. 

%The good performance of the invariants (\ref{ap2}) and (\ref{ap3}) as well as the evolution of amplitude and speed errors is shown in Figures \ref{fig_BB523}(b)-(d).
\begin{figure}[htbp]
\centering
\subfigure[]
{\includegraphics[width=6.25cm]{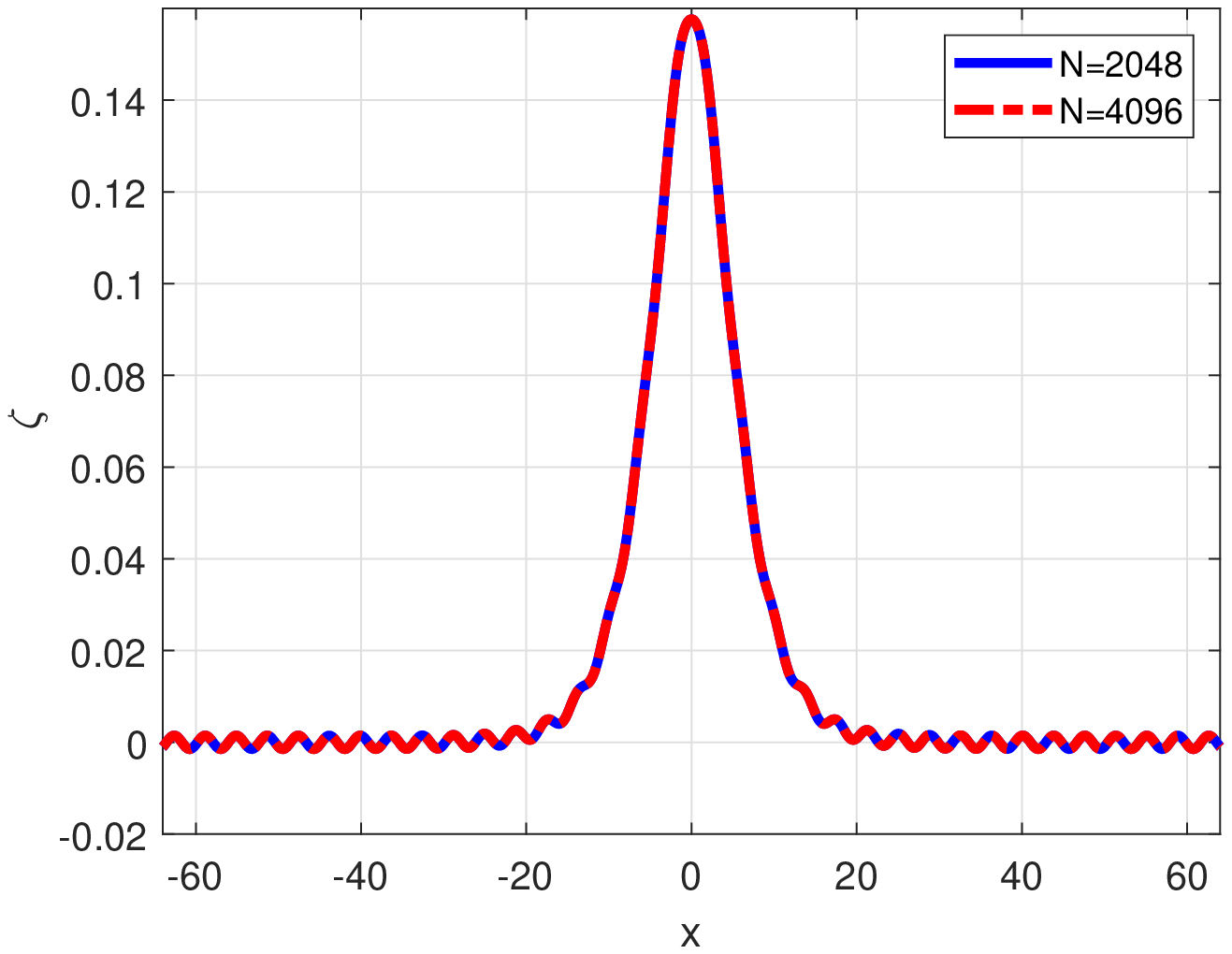}}
\subfigure[]
{\includegraphics[width=6.25cm]{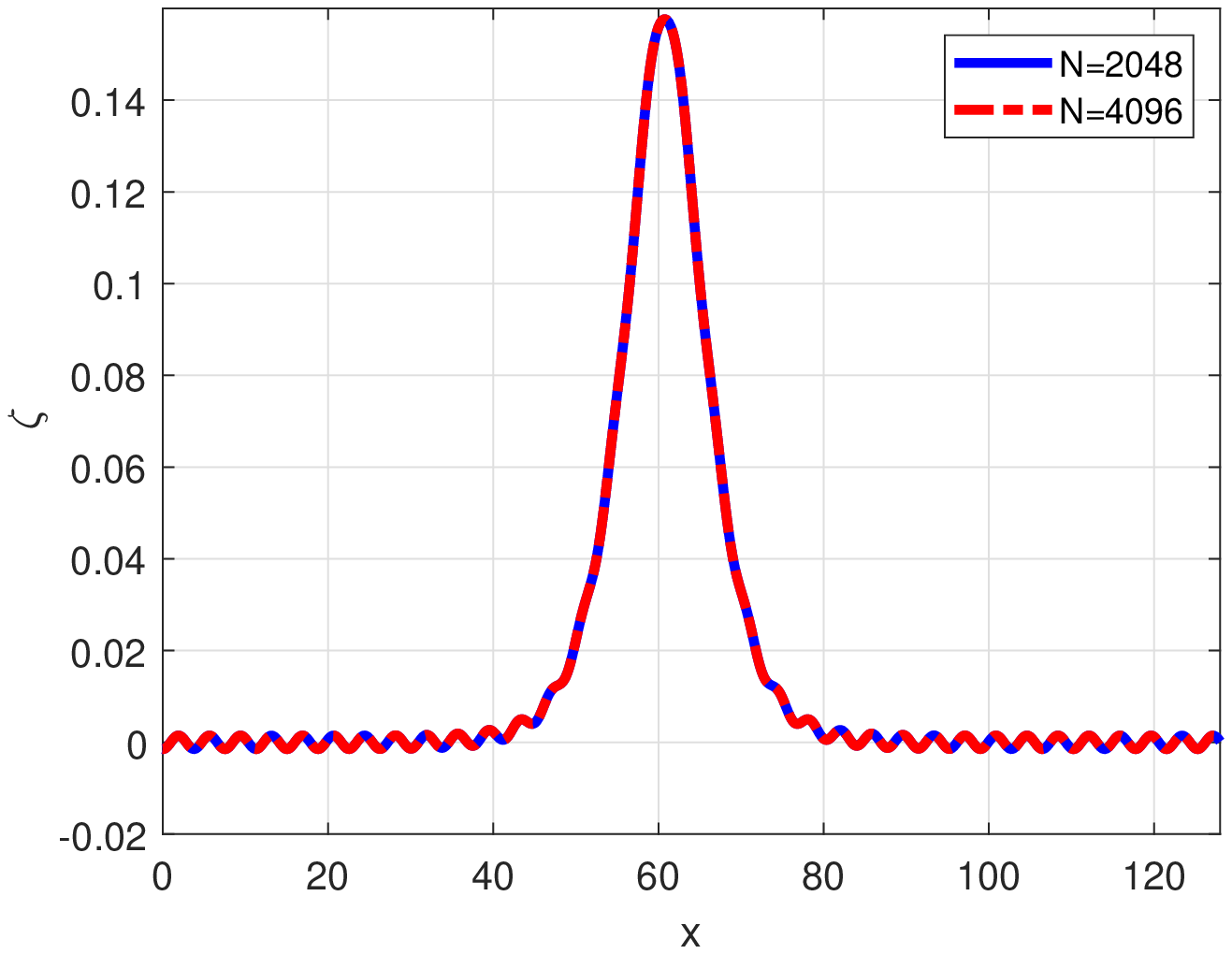}}
\subfigure[]
{\includegraphics[width=6.25cm,height=5.25cm]{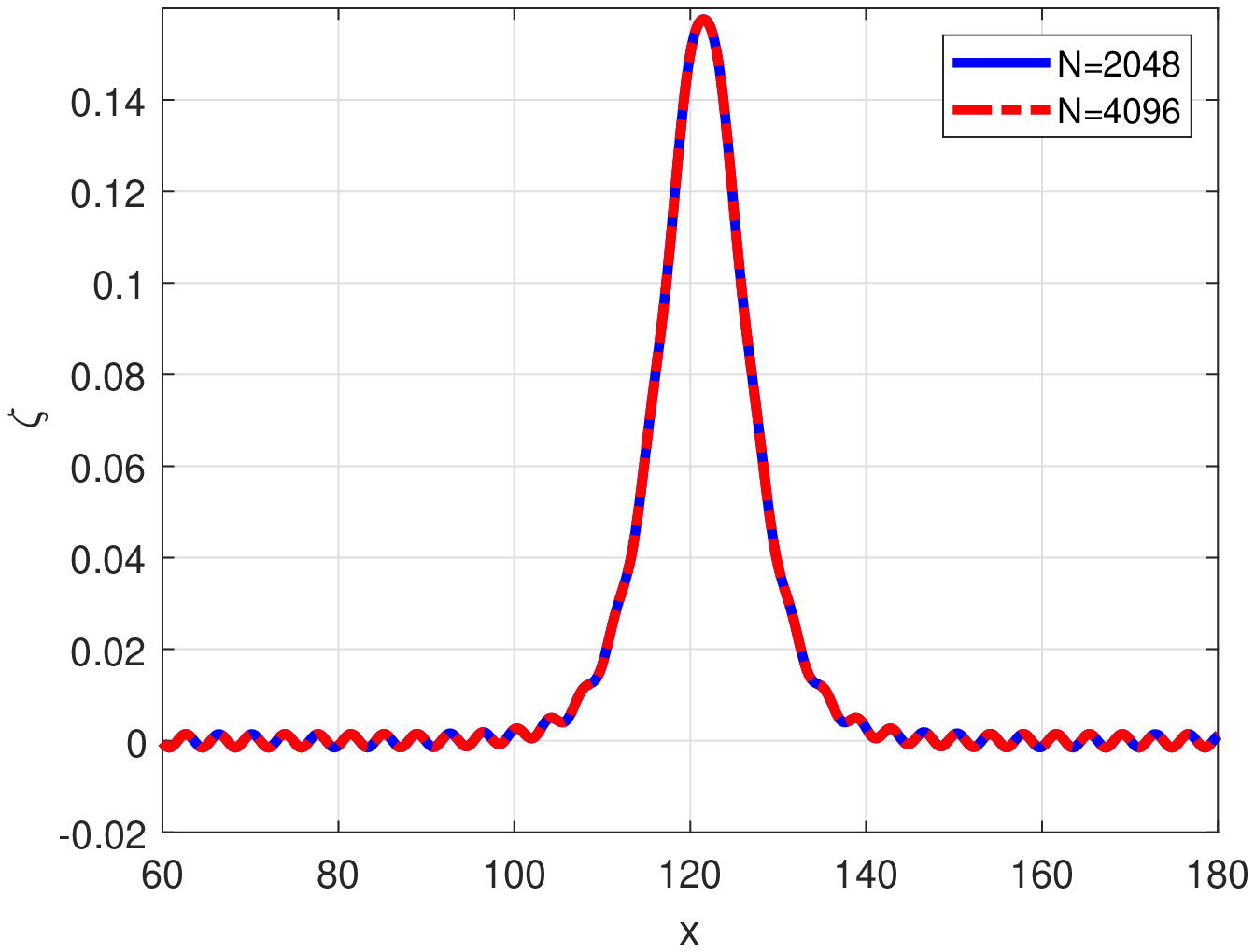}}
\subfigure[]
{\includegraphics[width=6.25cm,height=5.25cm]{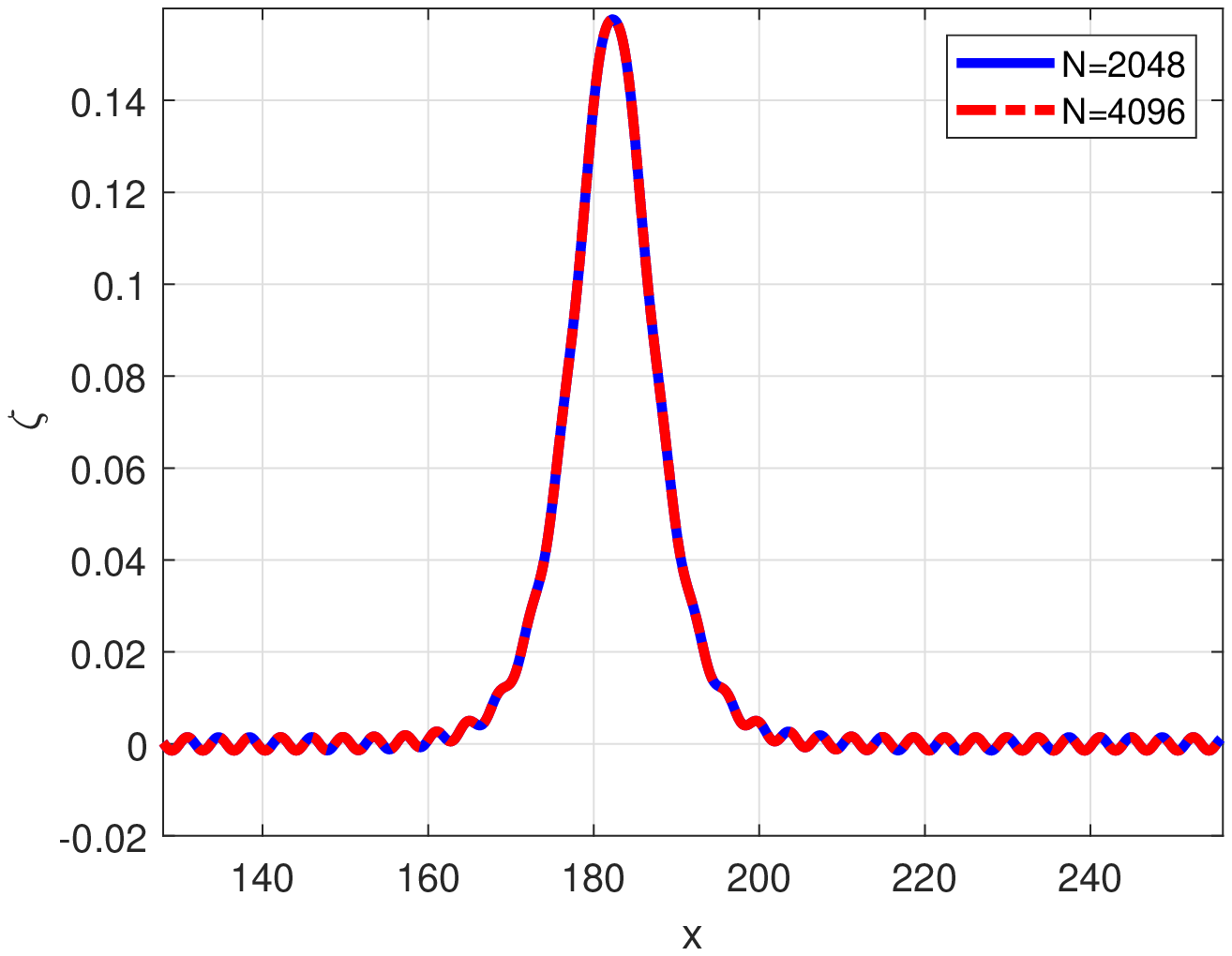}}
%\subfigure[]
%{\includegraphics[width=6.25cm,height=5.25cm]{code_check1.eps}}
\caption{Simulation of (\ref{BB2}) with (\ref{ap5}) up to $T=400$ with $\Delta t=1/160$. Approximation of $\zeta(x,t)$ at (a) $t=0$; (b) $t=100$; (c) $t=200$; (d) $t=300$.}
\label{fig_BB524}
\end{figure}

A third experiment for checking the accuracy of the codes concerns the simulation of a generalized solitary wave. Taking the values $L=256$, and
\begin{eqnarray}
\delta=0.9, \gamma=0.5,\;
a=c=-1/3, b=0, d=-\frac{a}{\kappa_{1}}-c+\frac{1+\gamma\delta}{3\delta(\delta+\gamma)}\approx 1.1836,\label{ap5}
\end{eqnarray}
and the GSW profile generated by the iteration (\ref{424}) as initial condition for the fully discrete scheme, the evolution of the resulting numerical approximation is observed in the following experiments. Figure \ref{fig_BB524} shows the numerical approximation of $\zeta$ at several time instances with $N=2048$ and $\Delta t=1/160$, confirming the preservation of the permanent form of the wave as it evolves. (The amplitude of computed initial GSW profile is $\zeta_{max}\approx 1.5764\times 10^{-1}$ and the profile was generated with speed $c_{s}=c_{\gamma,\delta}+0.01\approx 6.0761\times 10^{-1}$.) The profiles obtained with $N=4096$ were also computed and they coincide with the corresponding ones obtained for $N=2048$ within the graph thickness. Figure \ref{fig_BB525} is a magnification of Figure \ref{fig_BB524} and shows the structure of the ripples in more detail.
\begin{figure}[htbp]
\centering
\subfigure[]
{\includegraphics[width=6.25cm]{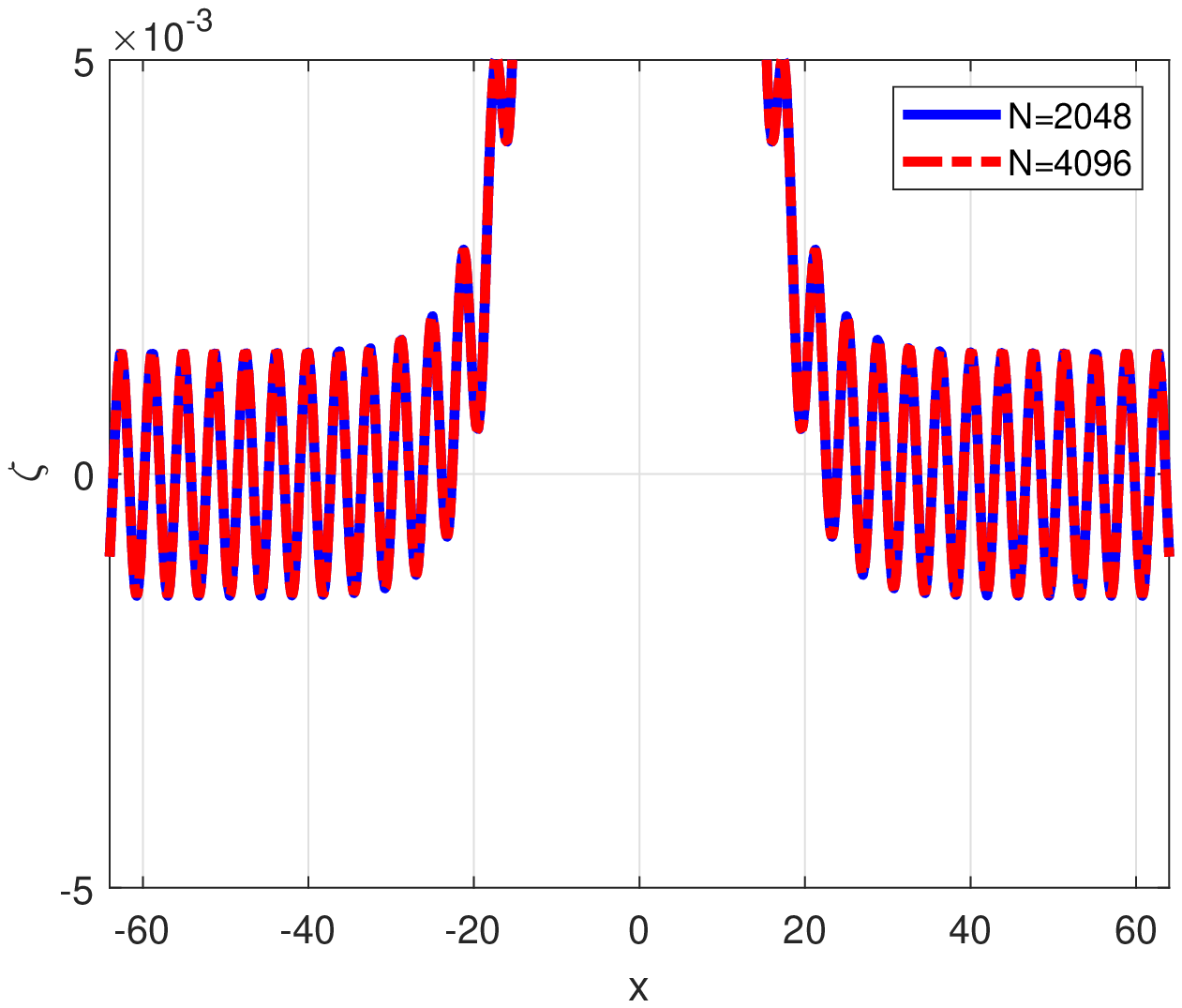}}
\subfigure[]
{\includegraphics[width=6.25cm]{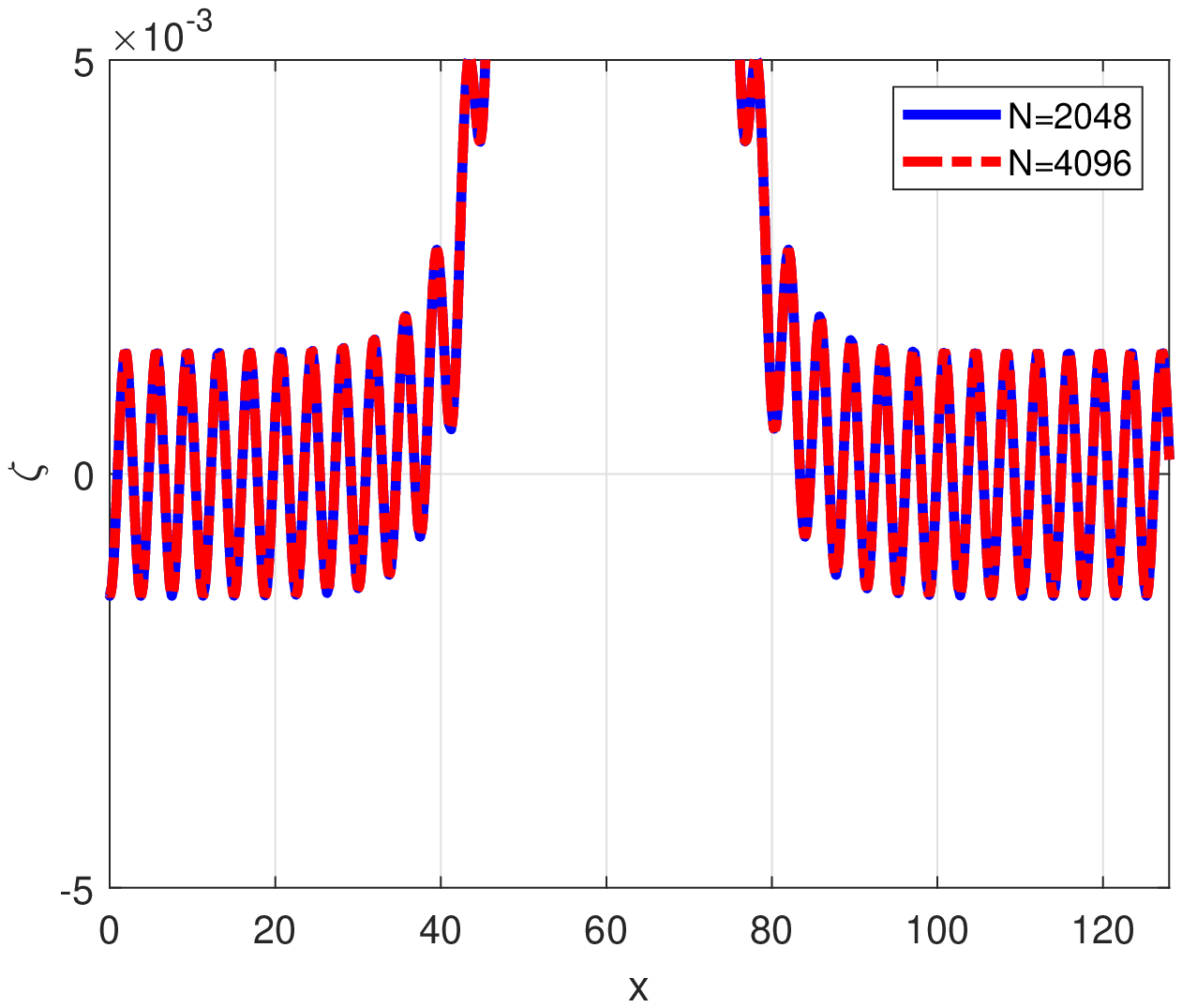}}
\subfigure[]
{\includegraphics[width=6.25cm,height=5.25cm]{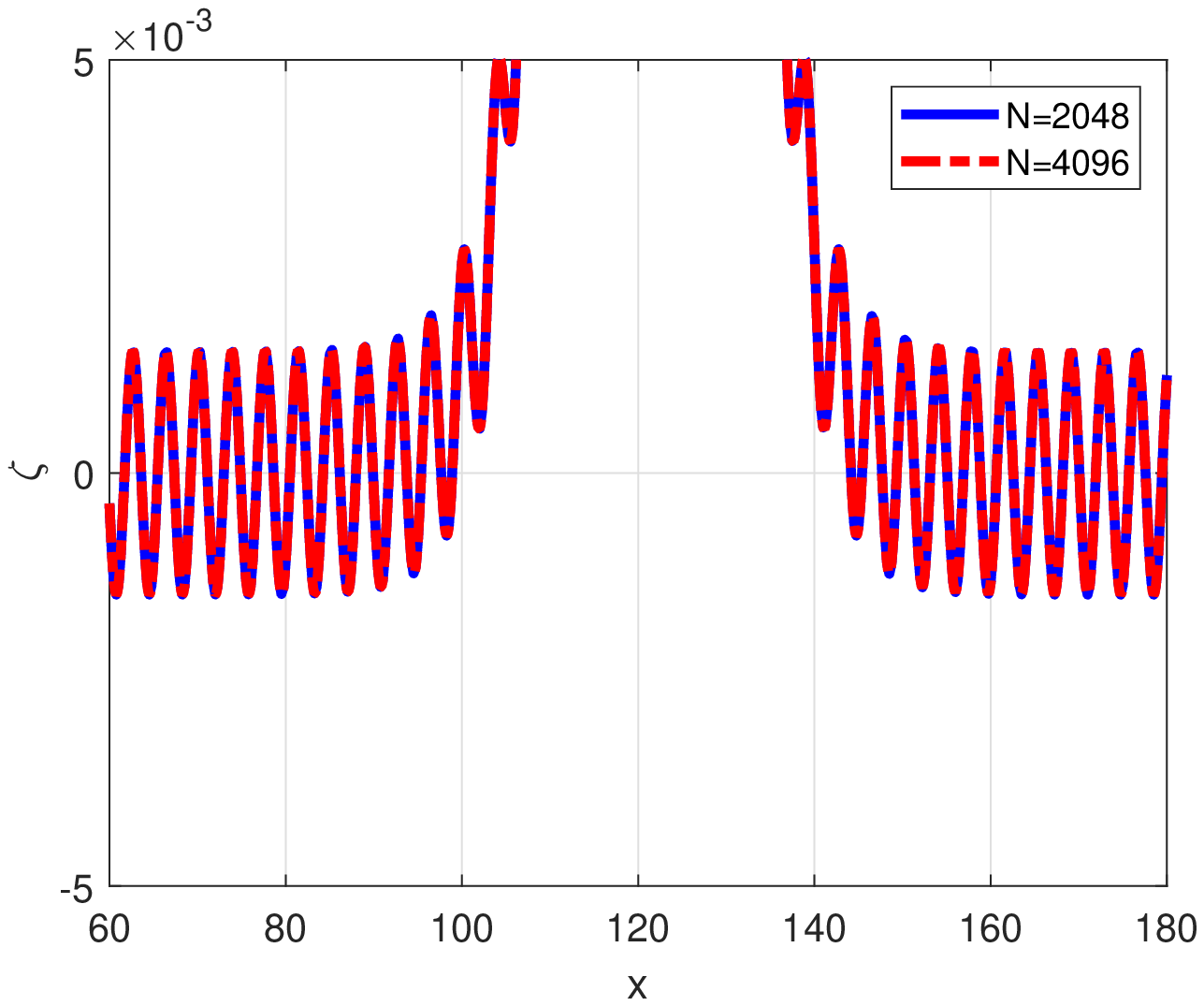}}
\subfigure[]
{\includegraphics[width=6.25cm,height=5.25cm]{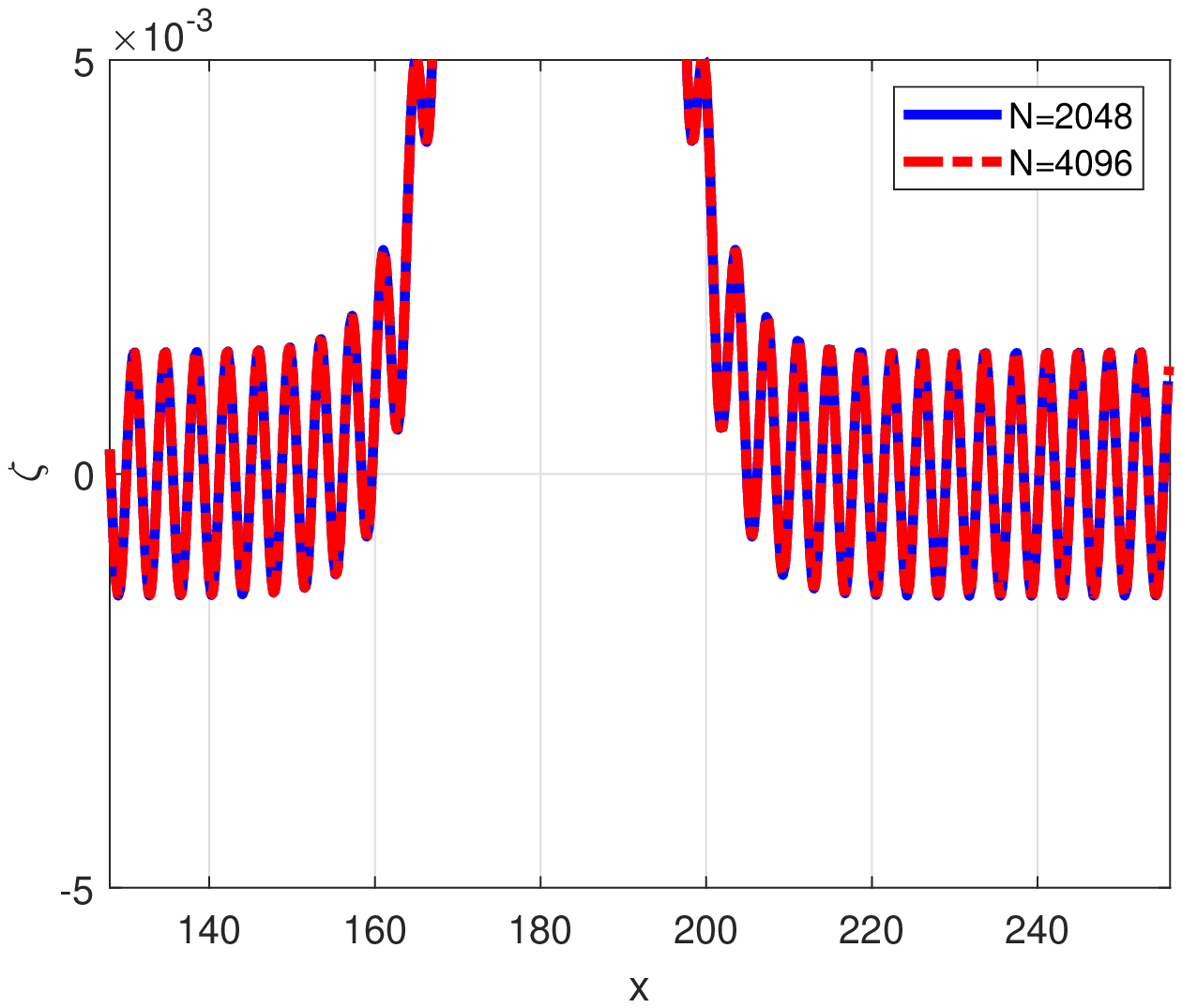}}
%\subfigure[]
%{\includegraphics[width=6.25cm,height=5.25cm]{code_check1.eps}}
\caption{Magnification of Figure \ref{fig_BB524} near the base of the main pulse. 
Approximation of $\zeta(x,t)$ at (a) $t=0$; (b) $t=100$; (c) $t=200$; (d) $t=300$.}
\label{fig_BB525}
\end{figure}

The accuracy of the computations is also confirmed by monitoring the evolution of amplitude and speed errors, shown in Figure \ref{fig_BB526}. Observe that this case is not Hamiltonian and the quantities (\ref{mom}) , (\ref{energy}) are not preserved by the solution. The $L^{2}$ norm of the numerical solution for $N=2048, 4096$, evaluated at several time instances is shown in Table \ref{tavle3}. This is preserved up to twelve significant digits, and is equal to $4.5789161770\times 10^{-1}$, this furnishes more evidence of the accuracy of the computations.

\begin{figure}[htbp]
\centering
\subfigure[]
{\includegraphics[width=6.25cm,height=5.25cm]{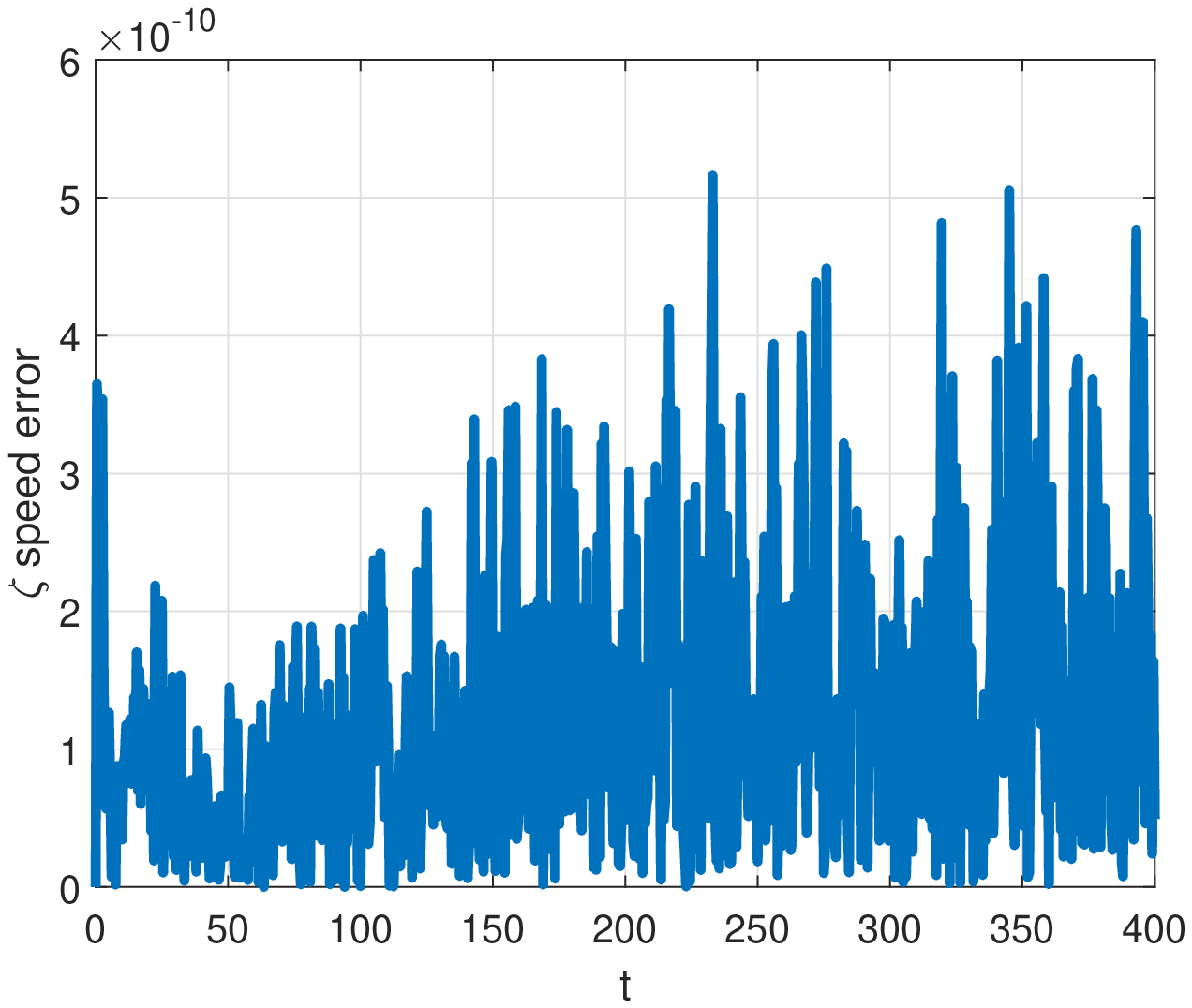}}
\subfigure[]
{\includegraphics[width=6.25cm,height=5.25cm]{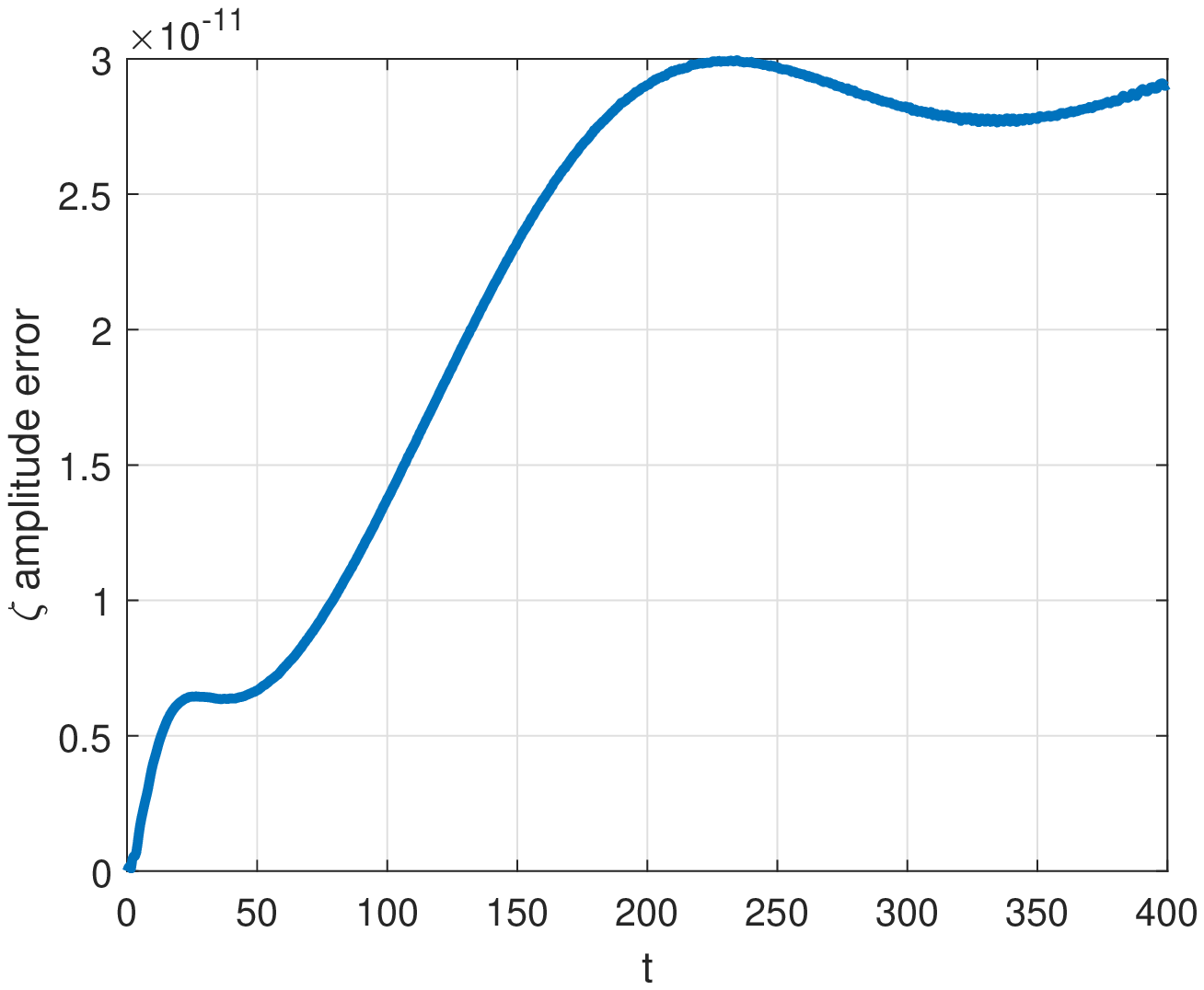}}
\caption{Simulation of (\ref{BB2}) with (\ref{ap5}) up to $T=400$ with $\Delta t=1/160$ and $N=4096$. (a) Normalized $\zeta$ speed error vs. time; (b) Normalized $\zeta$ amplitude error vs. time.}
\label{fig_BB526}
\end{figure}
\begin{table}[htbp]
\begin{center}
\begin{tabular}{|c|c|c|}
\hline
$t$&$N=2048$&$N=4096$
\\\hline
$t=0$&$4.578916177071256e-01$&$4.578916177063826e-01 $\\\hline
$t=100$&$4.578916177071556e-01$&$4.578916177064732e-01$\\\hline
$t=200$&$4.578916177072831e-01$&$4.578916177066413e-01$\\\hline
$t=300$&$4.578916177073125e-01$&$4.578916177067275e-01$\\\hline
$t=400$&$4.578916177075660e-01$&$4.578916177070251e-01$\\\hline
\end{tabular}
\end{center}
\caption{Simulation of (\ref{BB2}) from a GSW (Figure \ref{fig_BB524}) up to $T=400$ with $\Delta=1/160$. $L^{2}$ norm of $\zeta$ component.}
\label{tavle3}
\end{table}

\subsection{CSW dynamics. Numerical experiments}
\label{sec53}
In this section we present some numerical experiments on the dynamics of CSW's in the generic case ($a, c<0, b, d>0$) and $\kappa_{1}bd-ac>0$ (case (A3)). The experiments concern the evolution of small and larger perturbations of CSW's, as well as head-on and overtaking collisions of CSW's. For simplicity, only the results for the $\zeta$-component $\zeta^{N}$ will be shown.
\subsubsection{Small perturbations of a CSW}
In order to illustrate the evolution of a perturbed CSW in this generic case, we take the specific values
\begin{eqnarray}
&&\gamma=0.5, \delta=0.9, a=-1/3, c=-2/3, b=1/3, \nonumber\\
&&d=-\frac{a}{\kappa_{1}}-b-c+\frac{1+\gamma\delta}{3\delta(\gamma+\delta)}\approx 1.1836.\label{52a}
\end{eqnarray}
A typical experiment consists of generating a corresponding approximate CSW profile $(\zeta_{s}^{N}, u_{s}^{N})$ for these values (here with specific data $L=256, N=4096, c_{s}=c_{\gamma,\delta}+0.1\approx 0.6976$, starting from a ${\rm sech}^{2}$-initial profile), taking a perturbation
\begin{eqnarray}
\zeta^{N}(0)=A\zeta_{s}^{N},\; u^{N}(0)=Au_{s}^{N},\label{52b}
\end{eqnarray}
with $A$ constant as initial condition and monitoring the resulting numerical solution up to some final time $T$, which was taken in the experiments to be up to $T=800$. The time step was $\Delta t=6.25\times 10^{-3}$.
\begin{figure}[htbp]
\centering
\subfigure[]
{\includegraphics[width=\columnwidth]{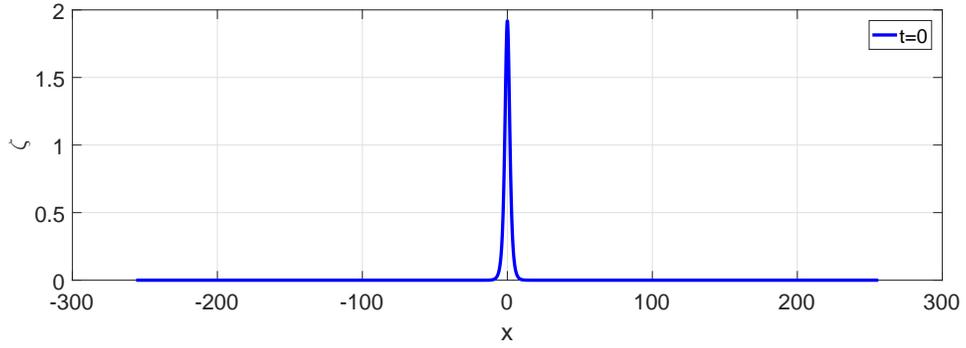}}
\subfigure[]
{\includegraphics[width=\columnwidth]{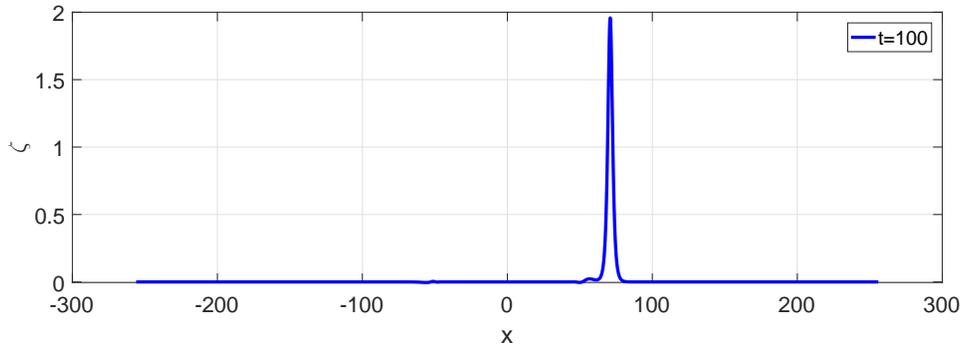}}
\subfigure[]
{\includegraphics[width=\columnwidth]{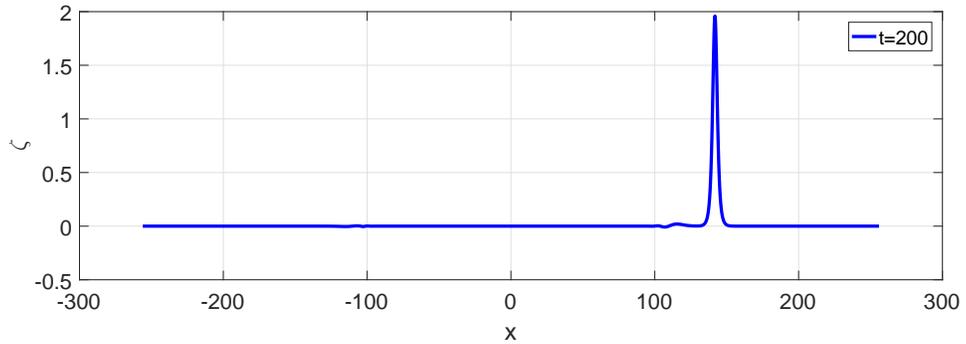}}
\subfigure[]
{\includegraphics[width=\columnwidth]{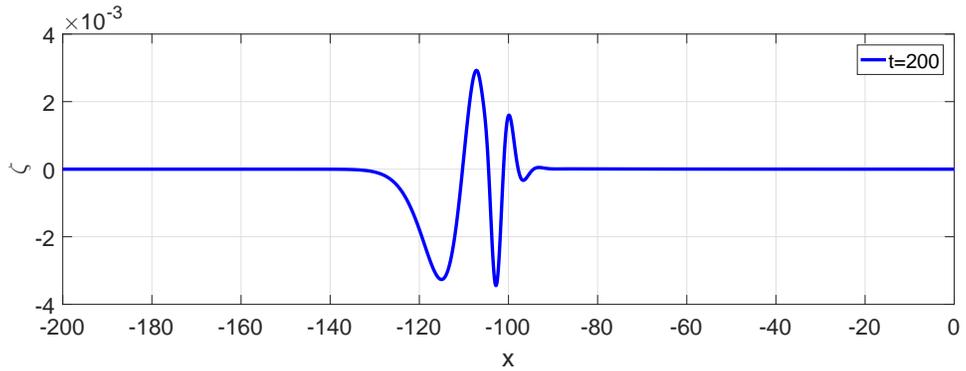}}
\caption{Evolution of a CSW perturbed by a small factor. Case (A3) with (\ref{52a}), (\ref{52b}) $A=1.1, c_{s}=c_{\gamma,\delta}+0.1.$ (a)-(c) $\zeta$ component of the numerical solution; (d) Magnification of (c) between $x=-200$ and $x=0$.}
\label{fdds5_1}
\end{figure}
For $A=1.1$ the results are given in Figures \ref{fdds5_1}-\ref{fdds5_3}. Figure \ref{fdds5_1} shows the evolution of the numerical approximation at several time instances. In this small perturbation case, a new CSW emerges, with some small-amplitude tail of dispersive nature following the main wave and shown in detail in Figure \ref{fdds5_1}(d).

The structure of these tails emerges when we analyze the behaviour of small- amplitude solutions of the linearized system associated to (\ref{BB2}) in a frame $y=x-c_{s}t$ moving with the speed $c_{s}$ of the CSW, given by
\begin{eqnarray}
(1-b\partial_{yy})(\partial_{t}-c_{s}\partial_{y})\zeta+\kappa_{1}\partial_{y} J_{a}v_{\beta}=0,\label{53}&&\\
(1-d\partial_{yy})(\partial_{t}-c_{s}\partial_{y}) v_{\beta}+\kappa_{2}\partial_{y}J_{c}\zeta=0,&&\label{54}
\end{eqnarray}
where $J_{a}, J_{c}$ are given by (\ref{49d}). 
Applying the operator $(1-d\partial_{yy})(\partial_{t}-c_{s}\partial_{y})$ to (\ref{53}) and using (\ref{54}) we obtain the high-order wave equation
\begin{eqnarray}
(1-d\partial_{yy})(1-b\partial_{yy})(\partial_{t}-c_{s}\partial_{y})^{2}\zeta-\kappa_{1}\kappa_{2}\partial_{y}^{2} J_{a}J_{c}\zeta=0.\label{55}
\end{eqnarray}
Plane wave solutions $\zeta(y,t)=e^{i(ky-\omega(k)t)}$ of (\ref{55}) satisfy the linear dispersion relation
\begin{eqnarray*}
\omega(k)=\omega_{\pm}(k)=-kc_{s}\pm c_{\gamma,\delta}k\phi(k^{2}),
\end{eqnarray*}
where $\phi:[0,\infty)\rightarrow\mathbb{R}$ is the function
\begin{eqnarray*}
\phi(x)=\sqrt{\frac{(1-\widetilde{a}x)(1-cx)}{(1+bx)(1+dx)}},\; \widetilde{a}=\frac{a}{\kappa_{1}}.
\end{eqnarray*}
This leads to a local phase speed (relative to the speed of the CSW)
\begin{eqnarray}
v_{\pm}(k)=-c_{s}\pm c_{\gamma,\delta}\phi(k^{2}).\label{514b}
\end{eqnarray}
Some properties of the function $\phi$ can explain the behaviour of (\ref{514b}). These are:
\begin{itemize}
\item[(1)] $\phi(x)\geq 0, x\geq 0, \; \phi(0)=1$.
\item[(2)] From the condition $\kappa_{1}bd-ac>0$, characterizing (A3), we have
$$\lim_{x\rightarrow\infty}\phi(x)=\phi_{*}:=\sqrt{\frac{ac}{\kappa_{1}bd}}<1.$$
\item[(3)] Let 
\begin{eqnarray}
p_{1}&=&\widetilde{a}c(b+d)+bd(\widetilde{a}+c),\nonumber\\
 p_{2}&=&2(\widetilde{a}c-bd)<0 \; ({\rm cf. \; (A3)}),\nonumber\\
p_{3}&=&\frac{1+\gamma\delta}{3\delta(\gamma+\delta)}>0.\label{514c}
\end{eqnarray}
Then
\begin{eqnarray}
\phi^{\prime}(x)=\frac{P(x)}{2\phi(x)(1+bx)^{2}(1+dx)^{2}},\; P(x)=p_{1}x^{2}+p_{2}x-p_{3}.\label{514d}
\end{eqnarray}
Therefore, the monotonicity of $\phi$ depends on the polynomial $P(x)$, in particular on the sign of $p_{1}$. Let $\Delta=p_{2}^{2}+4p_{1}p_{3}$ be the discriminant of the equation $P(x)=0$. Then we have the cases:
\begin{itemize}
\item[(i)] $p_{1}<0$. In this case $\Delta\leq 0$. Then $P(x)$ has the same sign for all $x\geq 0$. Since $P(0)=-p_{3}<0$, then $P(x)<0, x\geq 0$. Therefore, $\phi(x)$ is decreasing for $x\geq 0$. On the other hand, when $\Delta=0$ then $P(x)$ has a double root at $x=x_{*}=-p_{2}/2>0$, where $P(x)$ attains a minimum. Thus:
\begin{enumerate}
\item If $0\leq x\leq x_{*}$, then $\phi(x)$ is decreasing with $1>\phi(x)\geq \phi(x_{*})$.
\item If $x_{*}\leq x$, then $\phi(x)$ is increasing with $\phi(x_{*})\leq \phi(x)\leq \phi_{*}<1$.
\end{enumerate}
\item[(ii)] $p_{1}=0$. In this case $P(x)=p_{2}x-p_{3}$ has only the root $x=p_{3}/p_{2}<0$ and therefore $\phi(x)$ is decreasing for $x>0$ with $0\leq \phi(x)\leq \phi(0)=1$.
\item[(iii)] $p_{1}>0$. This implies $\Delta>0$ and the existence of two simple roots of $P(x)$, one positive $x_{*}=\frac{1}{2}(-p_{2}+\sqrt{\Delta})$ and one negative. The behaviour is then similar to the case $\Delta=0$ in (i), that is
\begin{enumerate}
\item If $0\leq x\leq x_{*}$, then $\phi(x)$ is decreasing with $1>\phi(x)\geq \phi(x_{*})$.
\item If $x_{*}\leq x$, then $\phi(x)$ is increasing with $\phi(x_{*})\leq \phi(x)\leq \phi_{*}<1$.
\end{enumerate}
\end{itemize}
The form of $\phi$ for the example at hand is given in Figure \ref{fdds5_2}.
\end{itemize}
\begin{figure}[htbp]
\centering
\subfigure[]
{\includegraphics[width=\columnwidth]{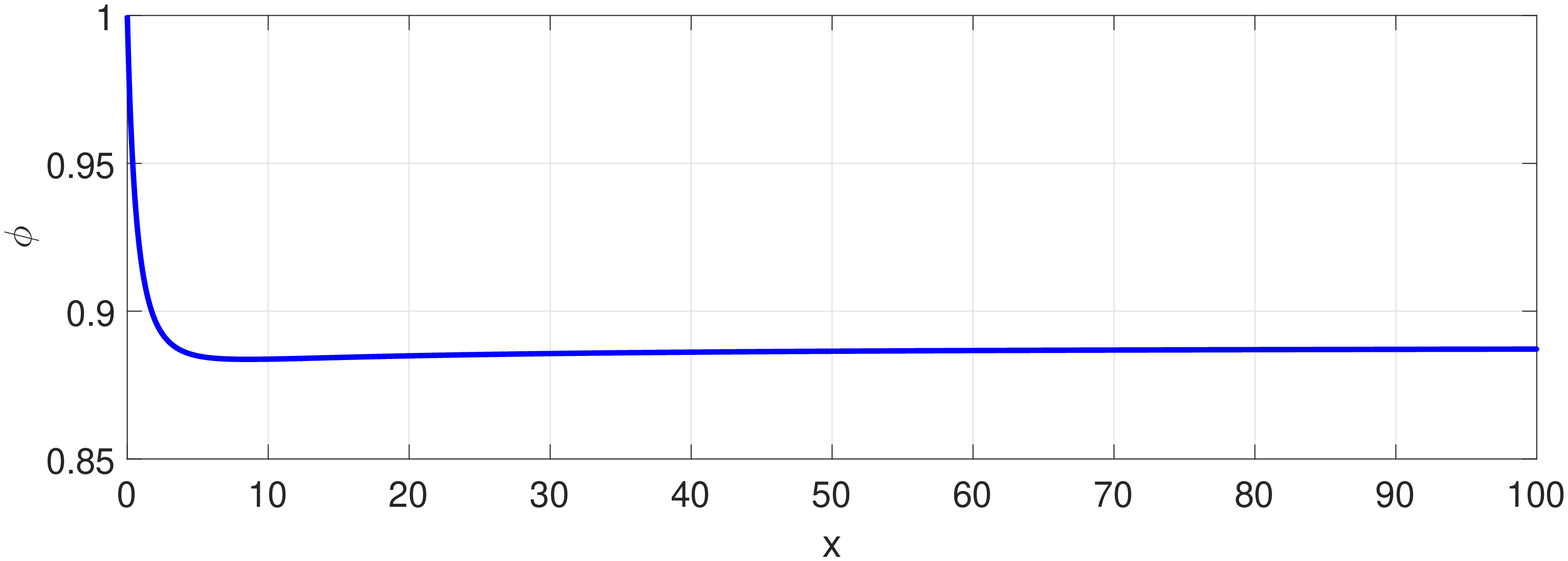}}
\subfigure[]
{\includegraphics[width=\columnwidth]{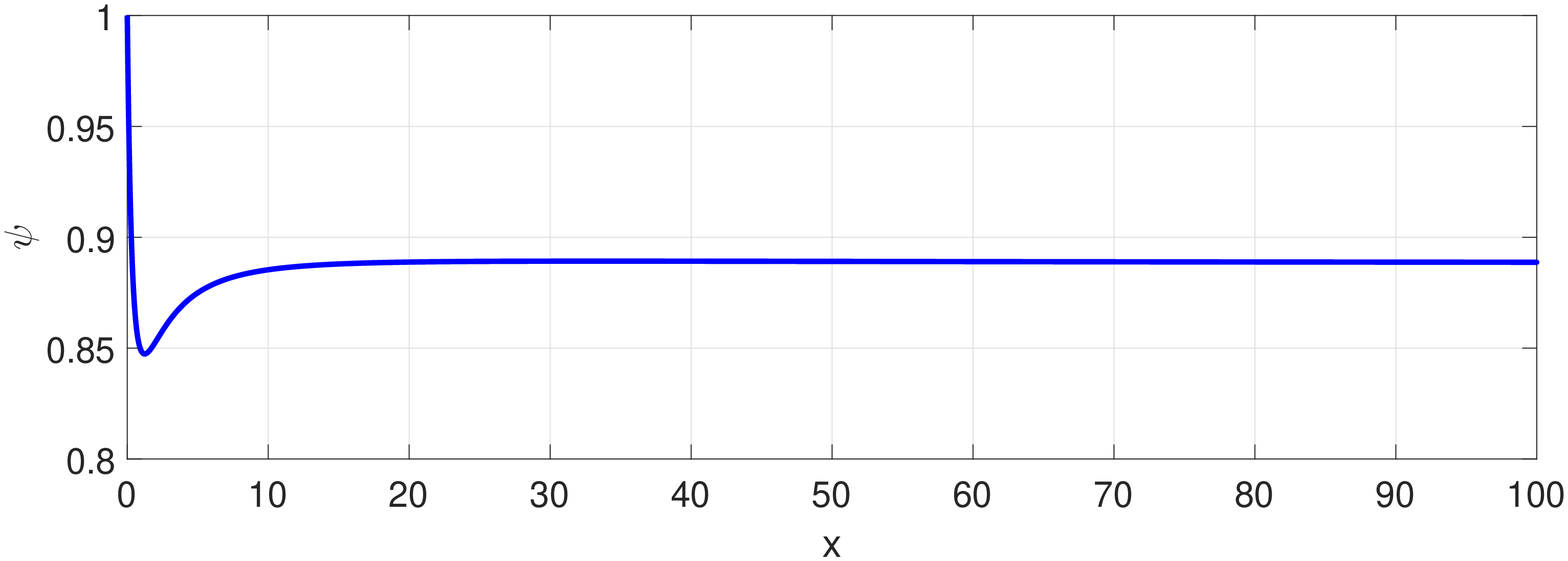}}
\caption{Form of the functions (a) $\phi(x)$ and (b) $\psi(x)$ for the values (\ref{52a}).}
\label{fdds5_2}
\end{figure}
With all these properties we have
\begin{eqnarray*}
-c_{s}-c_{\gamma,\delta}<v_{-}(k)<-c_{s}<v_{+}(k)<-c_{s}+c_{\gamma,\delta}.
\end{eqnarray*}
If we assume $c_{s}>c_{\gamma,\delta}$, then we conclude that plane wave components of the dispersive tail, traveling to the right or to the left, trail the solitary wave with an absolute phase speed satisfying $v_{+}(k)+c_{s}<c_{\gamma,\delta}$ and $|v_{-}(k)+c_{s}|<c_{\gamma,\delta}$. Furthermore, components with smaller $k$ (long wavelength) are faster than those with larger $k$ (short wavelength).

For the group velocities we have
\begin{eqnarray*}
\omega^{\prime}_{\pm}(k)=-c_{s}\pm c_{\gamma,\delta}\psi(k^{2}),
\end{eqnarray*}
where  $\psi:[0,\infty)\rightarrow\mathbb{R}$ is the function
\begin{eqnarray}
\psi(x)=2x\phi^{\prime}(x)+\phi(x),\label{514e}
\end{eqnarray}
Figure \ref{fdds5_2}(b) shows this function when the coefficients of the system are given by (\ref{52a}). The behaviour in this case seems to be similar to that of $\phi$. Different values of the parameters (always in the generic case and with the condition $\kappa_{1}bd-ac>0$) seem to suggest that this is a typical form, with the existence of a minimum, to the left of which $0\leq \psi(x)\leq 1$ and after which $\psi$ grows up to
$$\lim_{x\rightarrow\infty}\psi(x)=\phi_{*}:=\sqrt{\frac{ac}{\kappa_{1}bd}}<1.$$
Under these conditions, we would have 
\begin{eqnarray*}
-c_{s}-c_{\gamma,\delta}<\omega^{\prime}_{-}(k)<-c_{s}<\omega^{\prime}_{+}(k)<-c_{s}+c_{\gamma,\delta}.
\end{eqnarray*}
If we assume $c_{s}>c_{\gamma,\delta}$ again, this would imply the existence of two dispersive groups, one traveling to the left and one to the right following the solitary wave, with group velocity smaller than $c_{s}$ with $|\omega^{\prime}_{\pm}(k)+c_{s}|<c_{\gamma,\delta}$. In Figure  \ref{fdds5_1}(c) a first tail, close to the main wave is observed, while a second wave packet of smaller amplitude appears in the magnified 
Figure \ref{fdds5_1}(d). At $t=200$, this wavelet is traveling to the left with effective support in $[-130,-90]$ 
(cf.  \cite{DougalisDLM2007} for a similar wavelet in some cases of surface waves).
\begin{figure}[htbp]
\centering
\subfigure[]
{\includegraphics[width=\columnwidth]{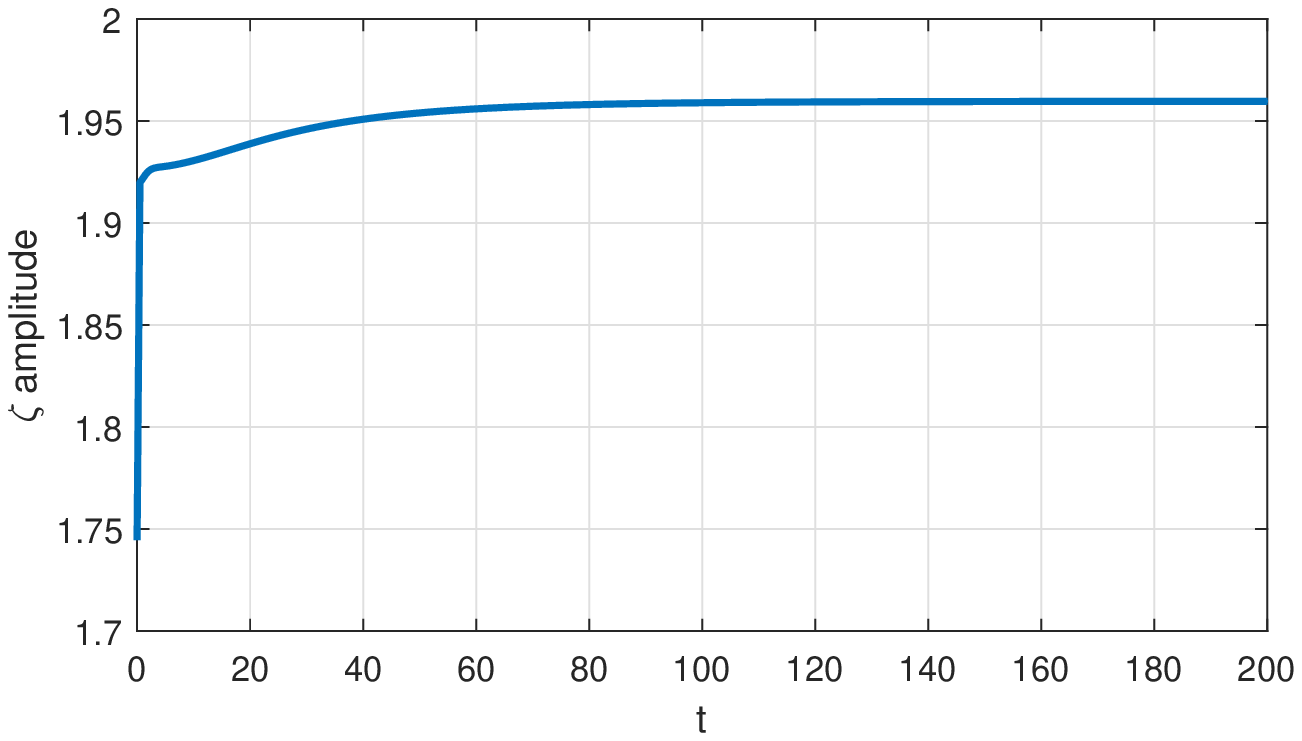}}
\subfigure[]
{\includegraphics[width=\columnwidth]{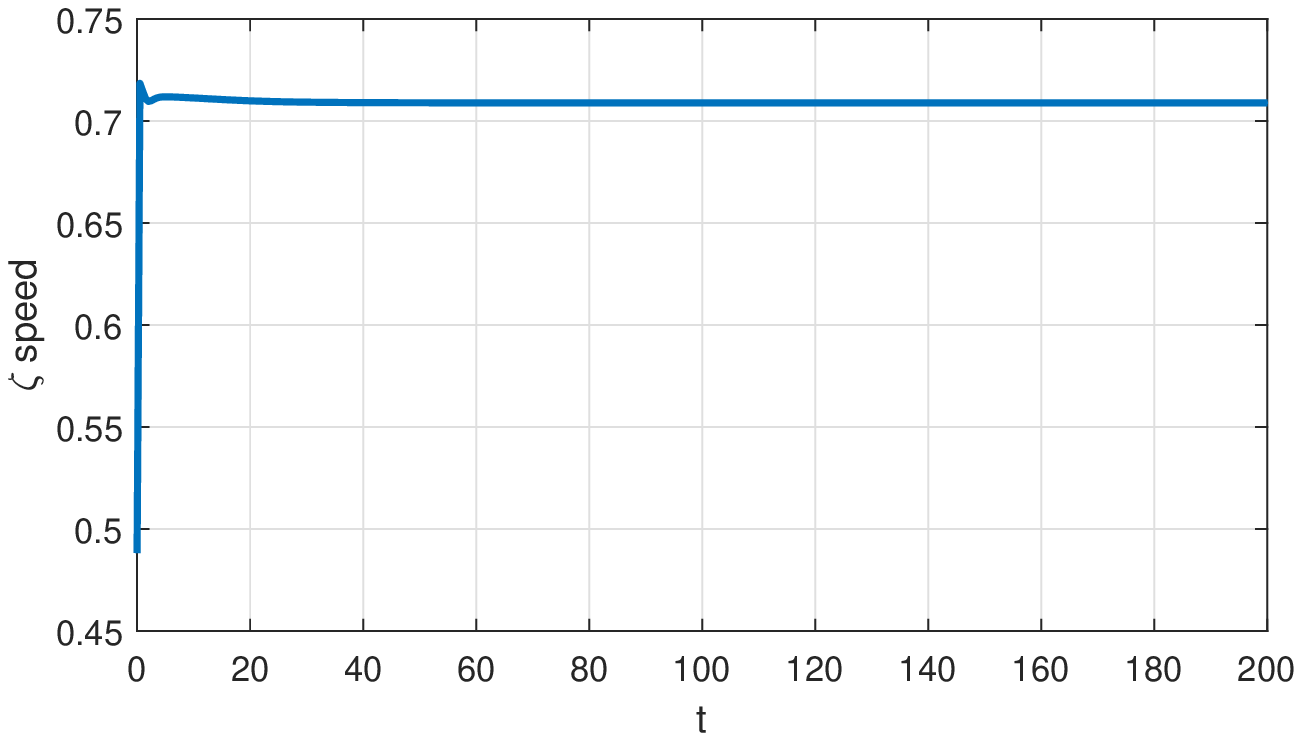}}
\caption{Evolution of a CSW perturbed by a small amount. Case (A3) with (\ref{52a}), (\ref{52b}) $A=1.1, c_{s}=c_{\gamma,\delta}+0.1$. Evolution of amplitude (a) and speed (b) of the $\zeta$ component of the main pulse.}
\label{fdds5_3}
\end{figure}

Finally, Figures \ref{fdds5_3}(a) and \ref{fdds5_3}(b) show, respectively, the evolution of the amplitude and speed of the $\zeta$ component of the numerical approximation of the main pulse for the system with parameters given by (\ref{52a}). In this example, the emerging solitary wave is faster and larger than the perturbed initial profile. 
\begin{remark}
A similar study can also be applied to the rest of the cases (A4)-(A6) of Table \ref{tavle0}. Thus, in the cases (A4) and (A5), we have $p_{1}<0$, while in the case (A6), $p_{1}=0$. Then, analogous results to those of the generic case (A3) of CSW's hold.
\end{remark}

\subsubsection{Larger perturbation of a CSW}
\label{sec522}
\begin{figure}[htbp]
\centering
\subfigure[]
{\includegraphics[width=\columnwidth]{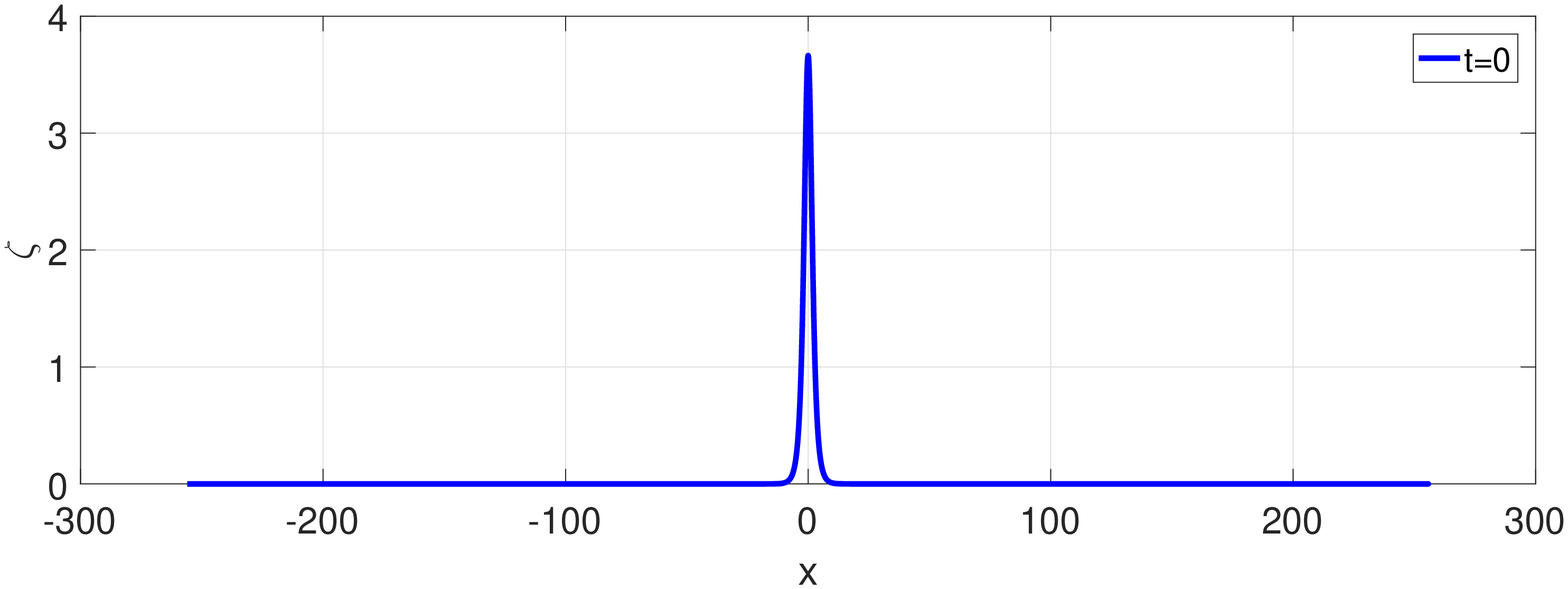}}
\subfigure[]
{\includegraphics[width=\columnwidth]{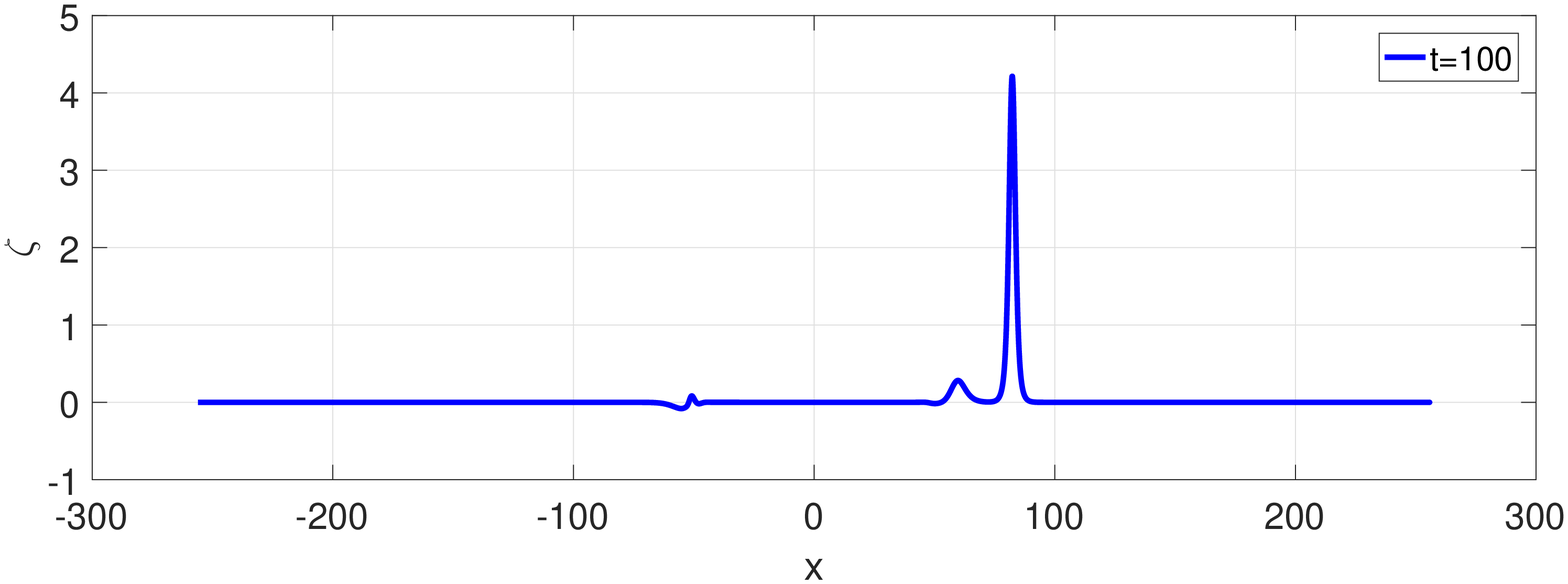}}
\subfigure[]
{\includegraphics[width=\columnwidth]{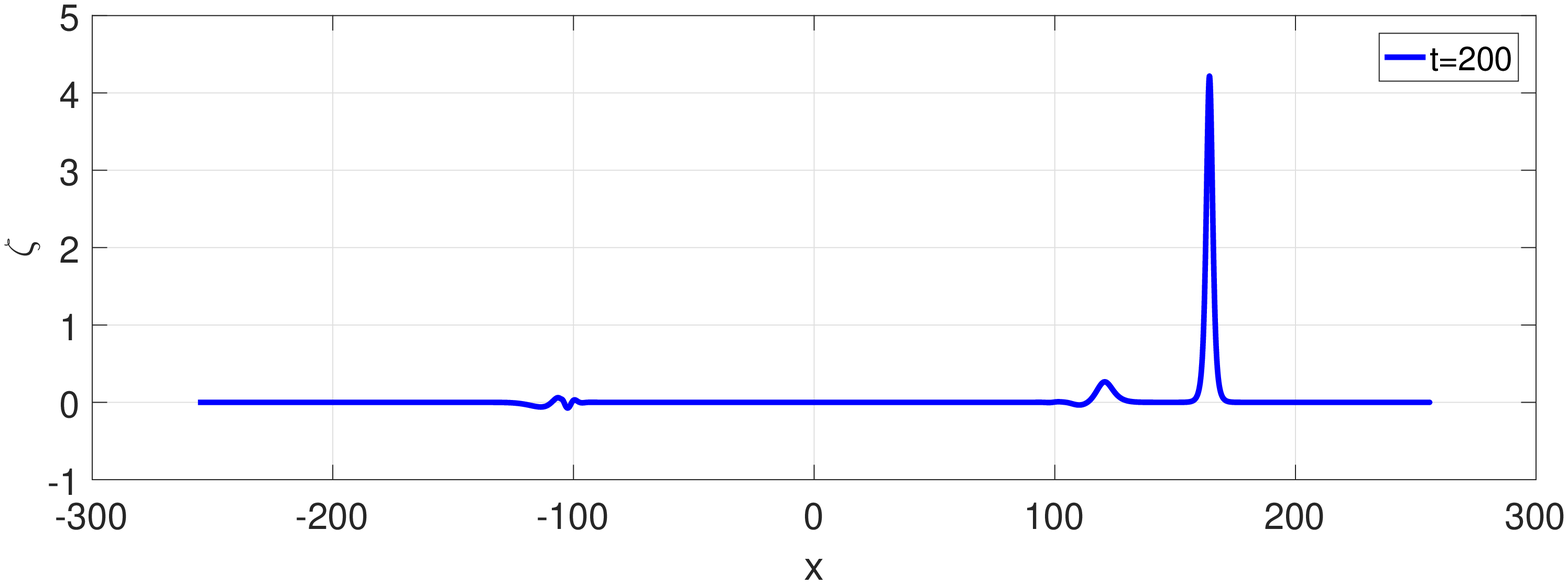}}
\caption{Evolution of a perturbed CSW. Case (A3) with (\ref{52a}), (\ref{52b}) $A=2.1, c_{s}=c_{\gamma,\delta}+0.1.$ (a)-(c) $\zeta$ component of the numerical solution.}
\label{fdds5_4}
\end{figure}
Increasing the value of the perturbation parameter $A$ leads, in our example, to the generation of another solitary wave; no iunstability was observed.
Figures \ref{fdds5_4}-\ref{fdds5_5} illustrate the experiment with $A=2.1$. The evolution of the numerical approximation (see Figure \ref{fdds5_4}) shows, in addition to the generation of a dominant, emerging solitary wave, (a  bit taller than that of the perturbed initial data, cf. Figure \ref{fdds5_5}), the formation of two other structures. 
\begin{figure}[htbp]
\centering
\subfigure[]
{\includegraphics[width=\columnwidth]{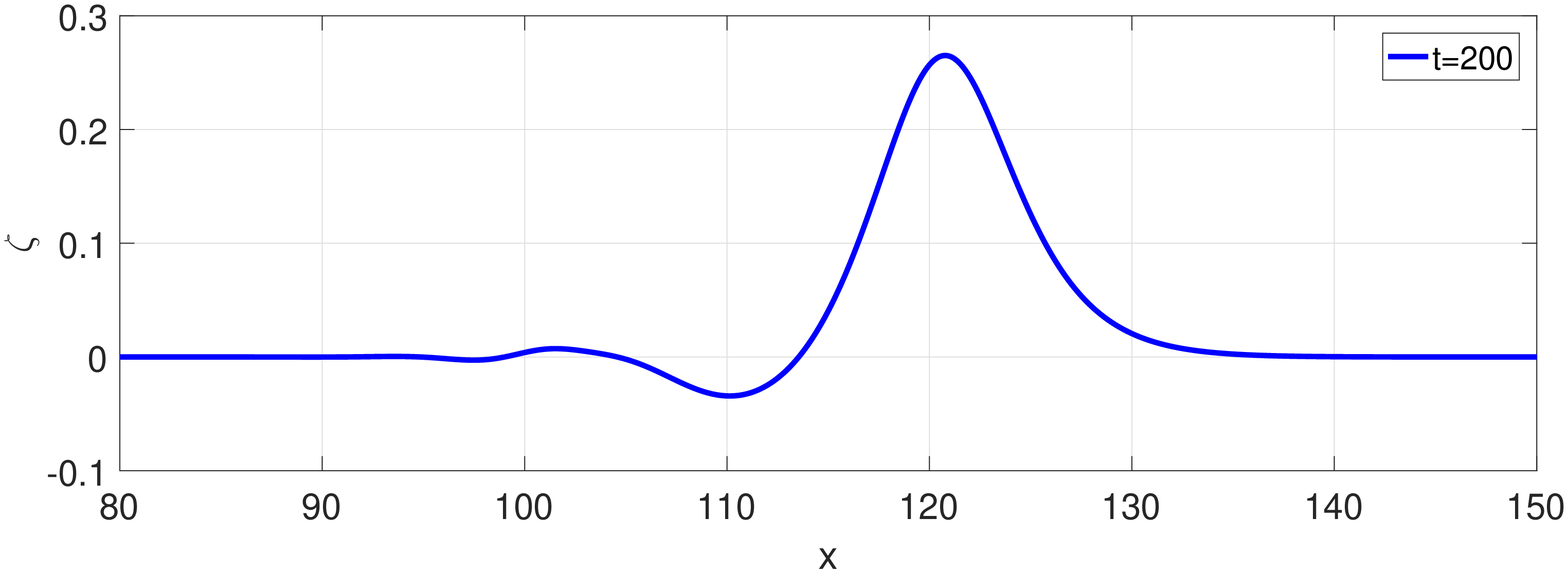}}
\subfigure[]
{\includegraphics[width=\columnwidth]{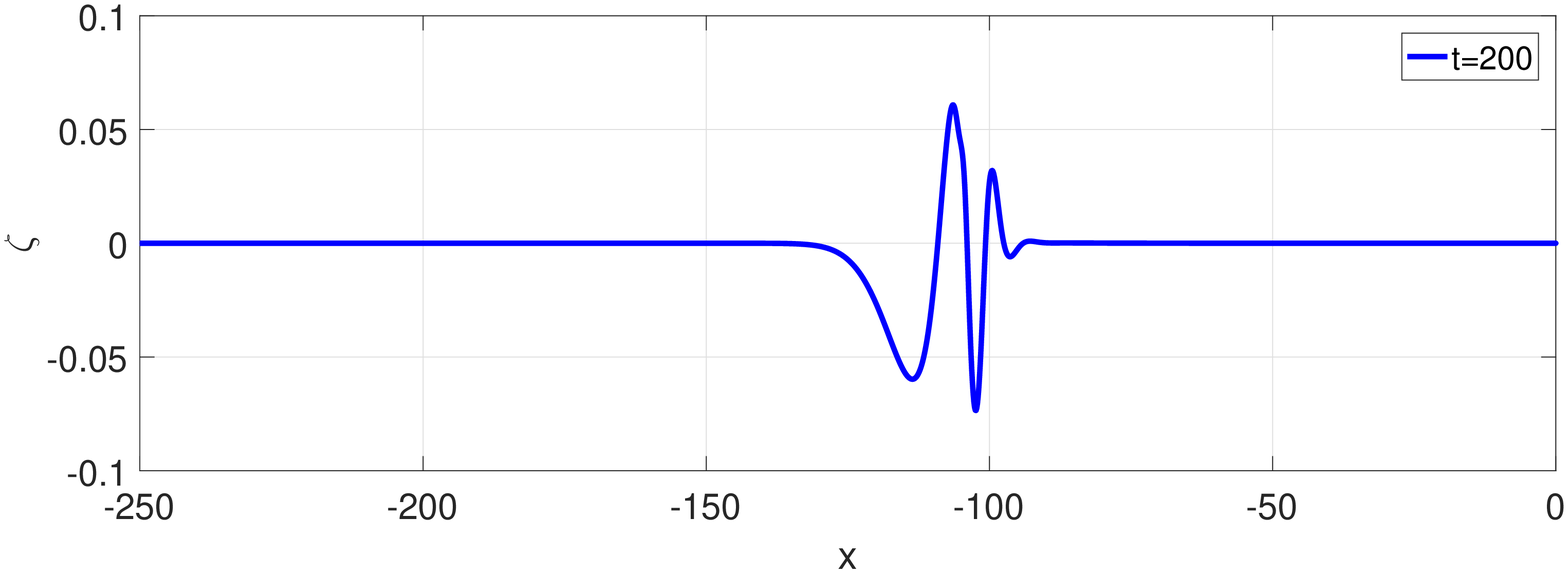}}
\caption{Evolution of a perturbed CSW. Case (A3) with (\ref{52a}), (\ref{52b}) $A=2.1, c_{s}=c_{\gamma,\delta}+0.1.$ (a) First structure of the $\zeta$ component of the numerical solution; (b) Second structure.}
\label{fdds5_6}
\end{figure}

Immediately behind the main wave, a nonlinear wave of solitary-wave type seems to be forming, along with a dispersive tail trailing it, as Figure \ref{fdds5_6}(a) reveals. The nature of the second structure on the left (see the magnification in Figure \ref{fdds5_6}(b)) is not so clear. Apparently, the wave should contain some dispersive component (as in the experiment with small perturbations in the previous section) but its persistence during the evolution may suggest the existence of something that does not disperse strongly. Similar conclusions hold from the experiments (not shown here) with $A=3.1$. This behaviour also suggests, as the structure seems to be moving to the left, the formation of a wavelet, or a superposition of them, such as observed in Boussinesq systems for surfaces waves, see e.~g. \cite{ADM1}. Nevertheless, the possibility of the formation of a CSW with nonmonotone decay is not to be discarded.
\begin{figure}[htbp]
\centering
\subfigure[]
{\includegraphics[width=\columnwidth]{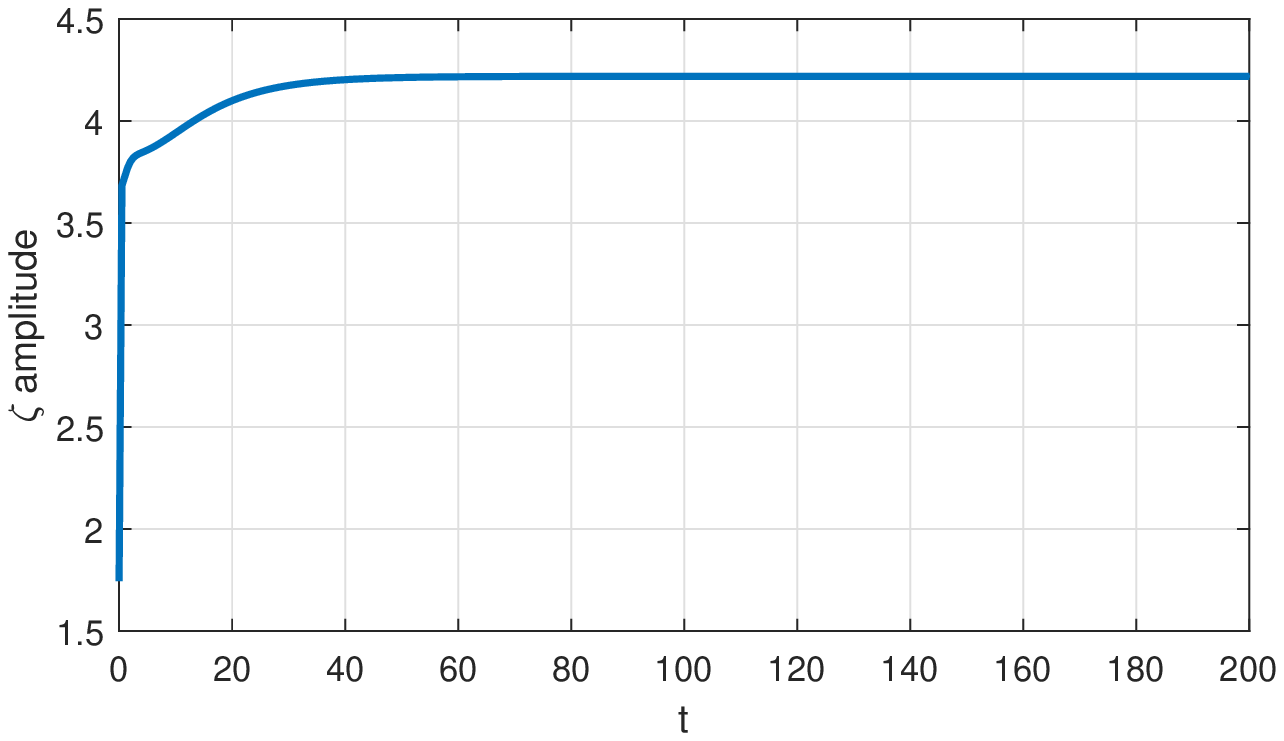}}
\subfigure[]
{\includegraphics[width=\columnwidth]{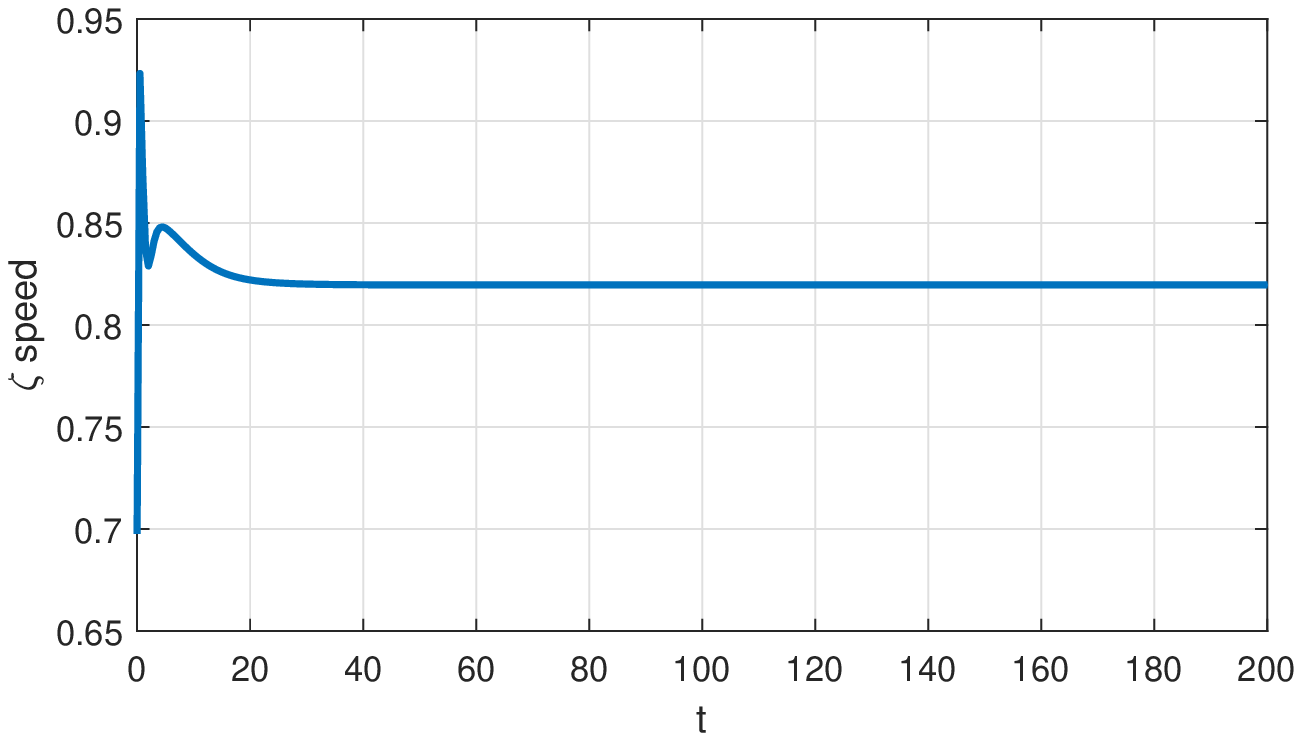}}
\caption{Evolution of a perturbed CSW. Case (A3) with (\ref{52a}), (\ref{52b}) $A=2.1, c_{s}=c_{\gamma,\delta}+0.1.$. Evolution of amplitude (a) and speed (b) of the $\zeta$ component of the numerical solution.}
\label{fdds5_5}
\end{figure}

These observations persist when the value of $A$ is taken to be larger. In order to illustrate this, we show the analogous experiment corresponding to $A=6.1$, in Figures \ref{fdds5_7}-\ref{fdds5_8}. For this larger perturbation, the initial perturbed wave
gives rise to a main solitary-wave pulse
and seems to exhibit a sort of stronger resolution property, with a solitary wave of elevation forming behind the main wave, see Figure \ref{fdds5_8}(a). As for the second structure, the possible formation of a CSW of depression with nonmonotone decay is more clearly observed (Figure \ref{fdds5_8}(b)) than in the previous experiment. 

\begin{figure}[htbp]
\centering
\subfigure[]
{\includegraphics[width=\columnwidth]{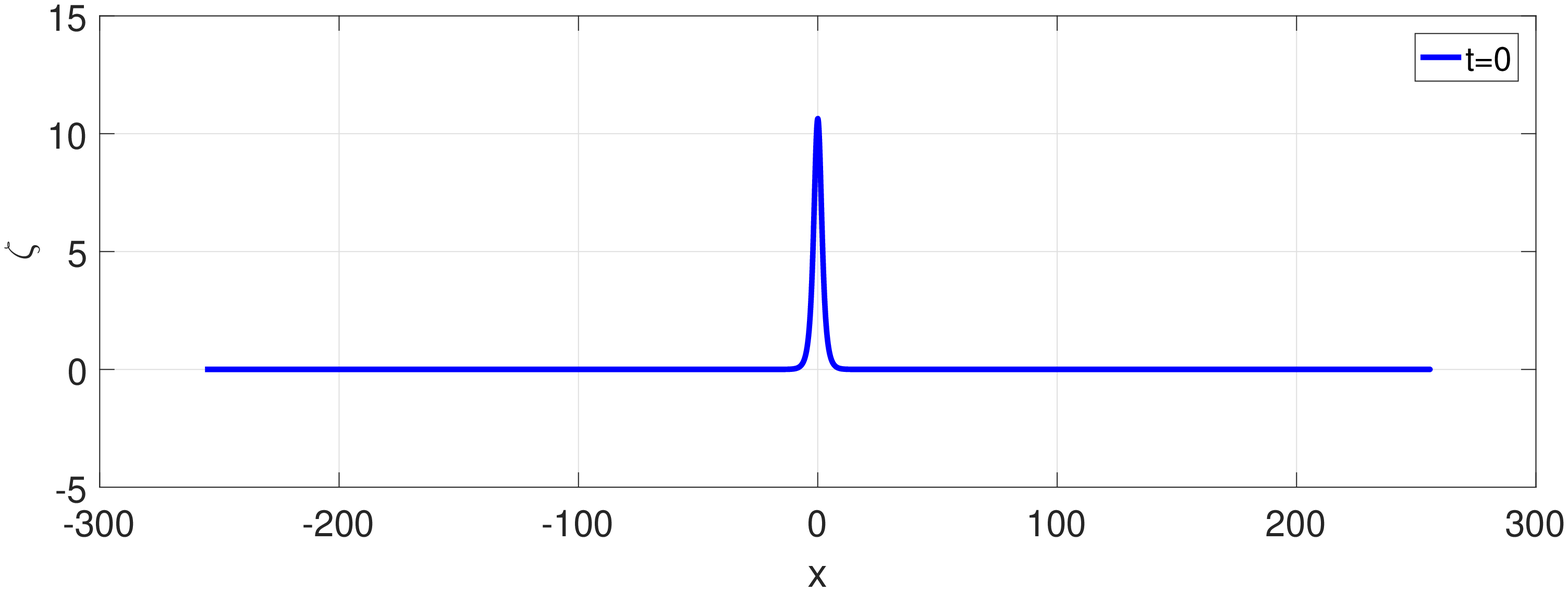}}
\subfigure[]
{\includegraphics[width=\columnwidth]{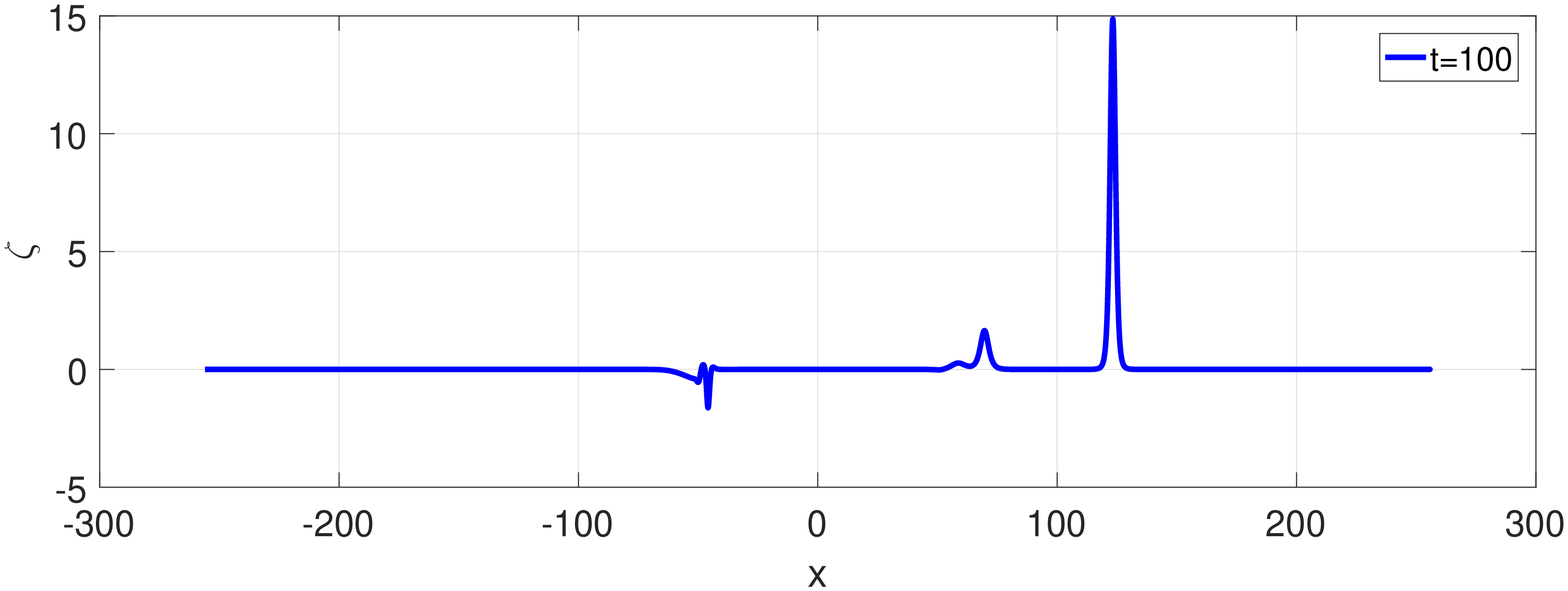}}
\subfigure[]
{\includegraphics[width=\columnwidth]{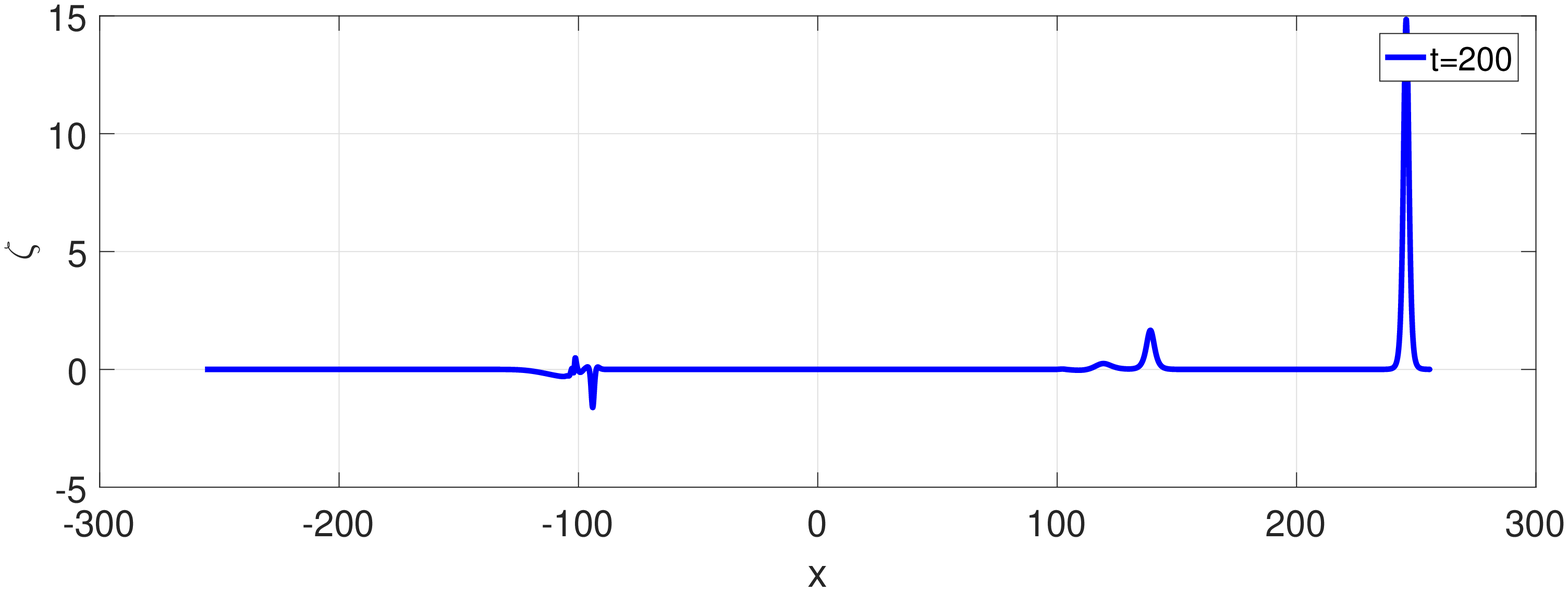}}
\caption{Evolution of a perturbed CSW. Case (A3) with (\ref{52a}), (\ref{52b}) $A=6.1, c_{s}=c_{\gamma,\delta}+0.1.$ (a)-(c) $\zeta$ component of the numerical solution.}
\label{fdds5_7}
\end{figure}

\begin{figure}[htbp]
\centering
\subfigure[]
{\includegraphics[width=\columnwidth]{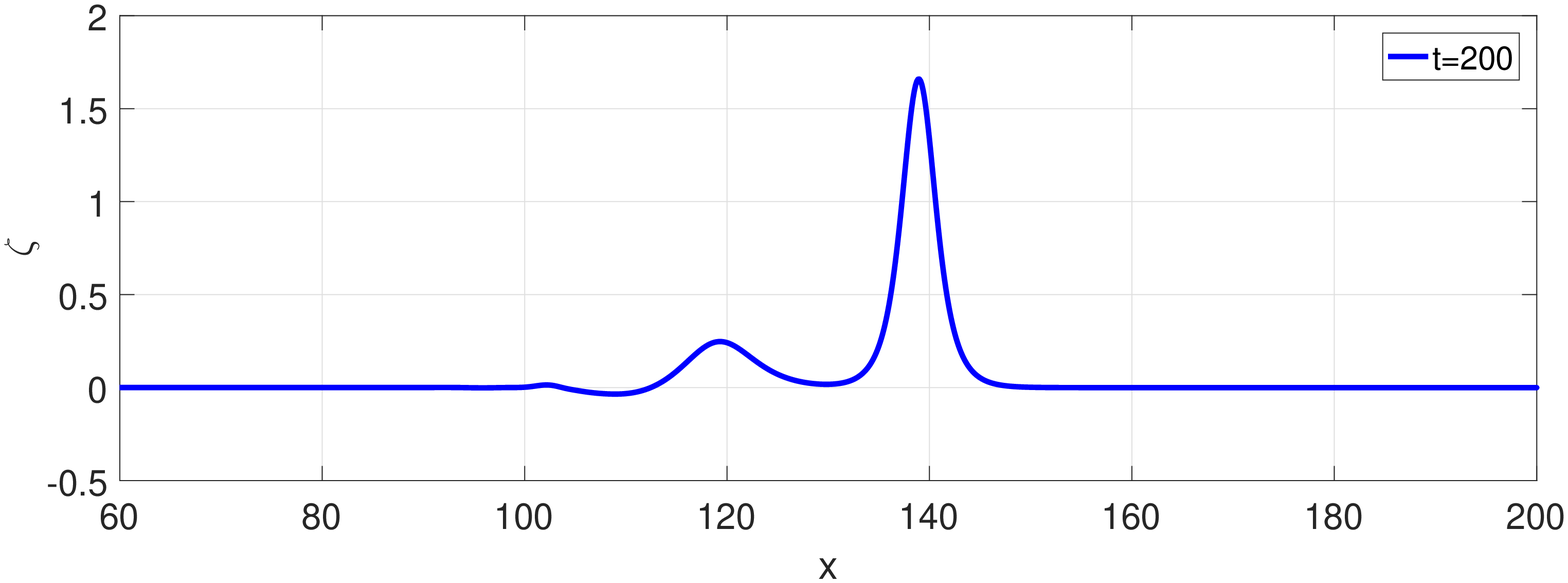}}
\subfigure[]
{\includegraphics[width=\columnwidth]{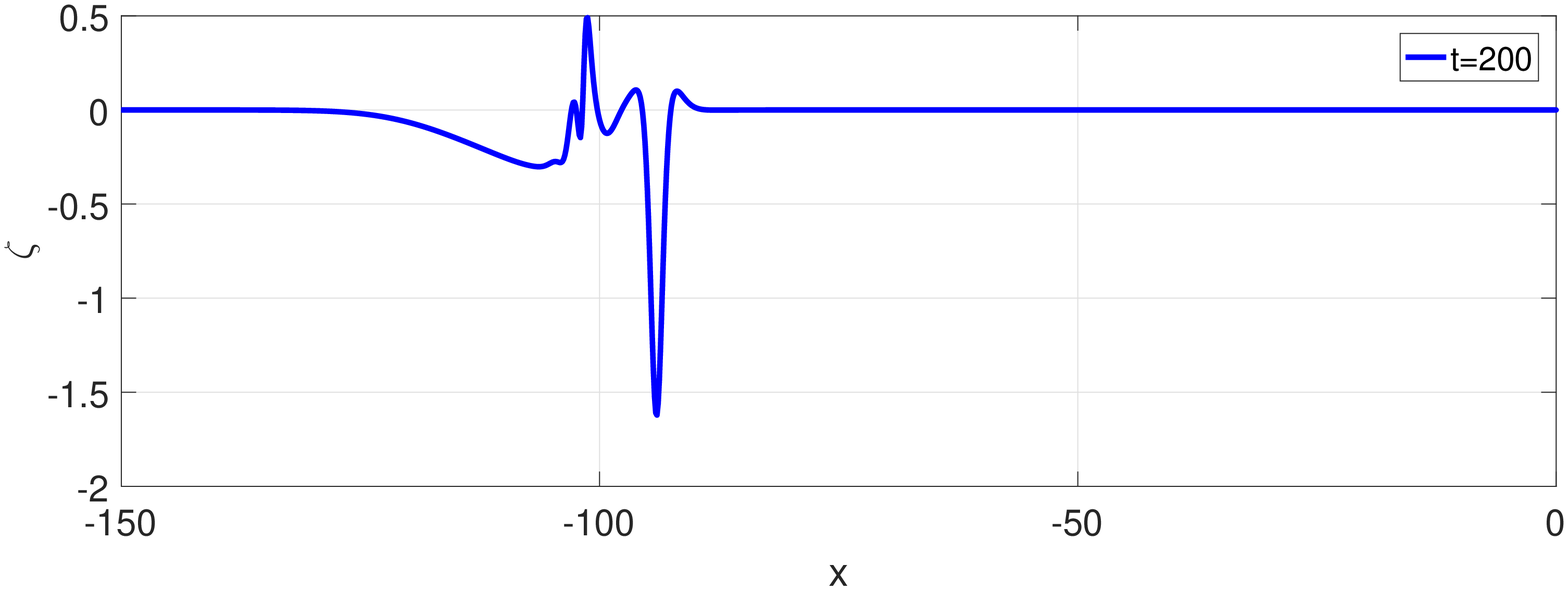}}
\caption{Evolution of a perturbed CSW. Case (A3) with (\ref{52a}), (\ref{52b}) $A=6.1, c_{s}=c_{\gamma,\delta}+0.1.$ (a) First structure of the $\zeta$ component of the numerical solution; (b) Second structure.}
\label{fdds5_8}
\end{figure}

\subsubsection{Overtaking collisions}
Overtaking collisions are illustrated in the following experiment. With the same values of the parameters given by (\ref{52a}), we generate two approximate CSW profiles with speed $c_{s}^{(1)}=c_{\gamma,\delta}+0.5\approx 1.0976$ centered at $x_{0}^{(1)}=0$, and speed $c_{s}^{(2)}=c_{\gamma,\delta}+0.2\approx 0.7976$ centered at $x_{0}^{(2)}=20$, respectively. The superposition of these profiles is taken as initial condition for the numerical method. The ensuing evolution is shown in Figure \ref{fdds5_9}, while Figure \ref{fdds5_9m} shows a magnified version. 

\begin{figure}[htbp]
\centering
\subfigure[]
{\includegraphics[width=\columnwidth]{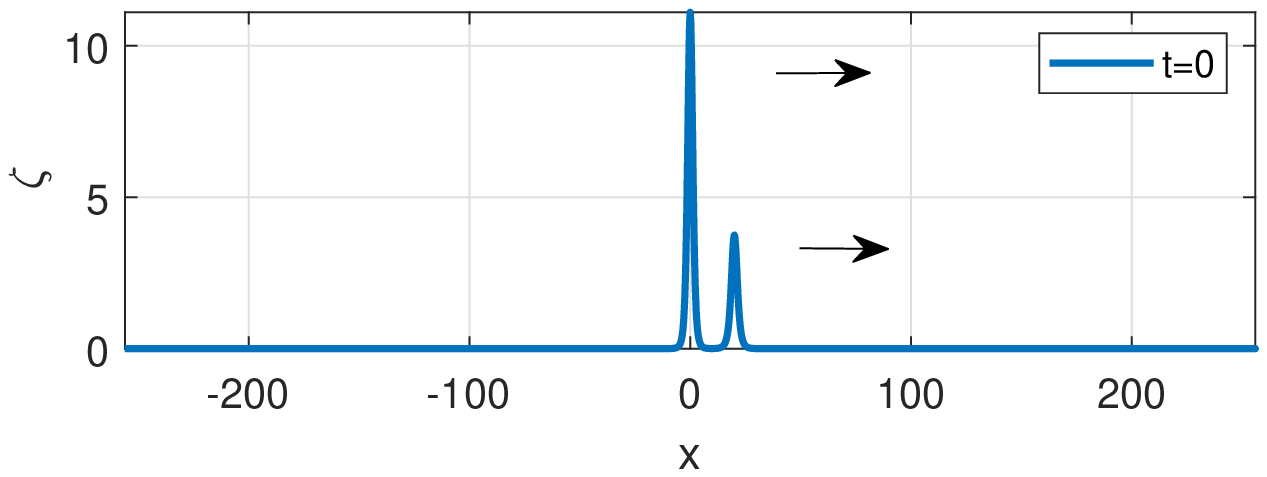}}
%\subfigure[]
%{\includegraphics[width=\columnwidth]{overcol_t200.eps}}
\subfigure[]
{\includegraphics[width=\columnwidth]{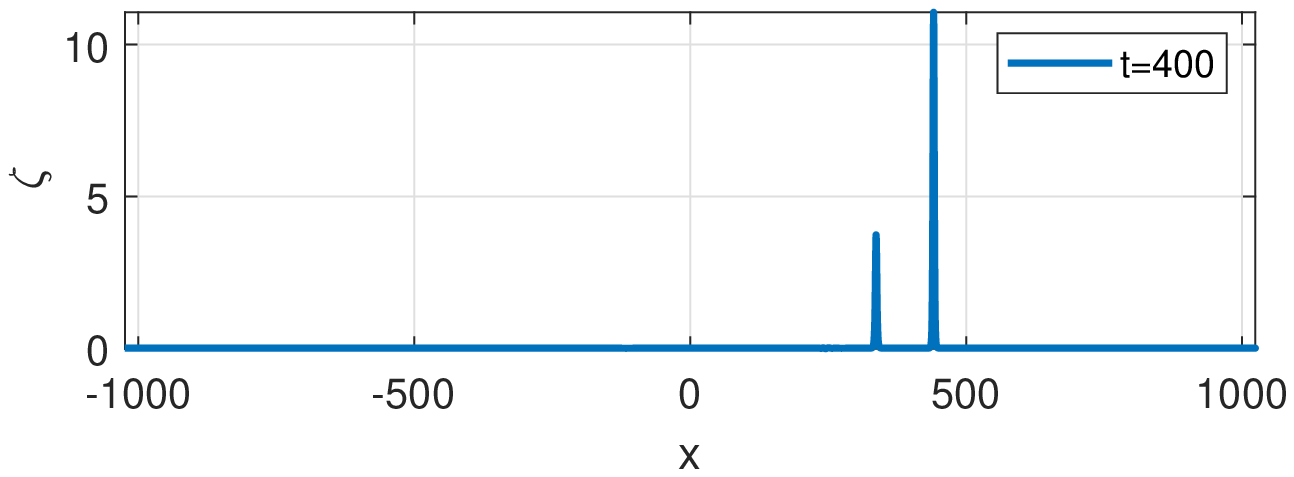}}
%\subfigure[]
%{\includegraphics[width=\columnwidth]{overcol_t600.eps}}
\subfigure[]
{\includegraphics[width=\columnwidth]{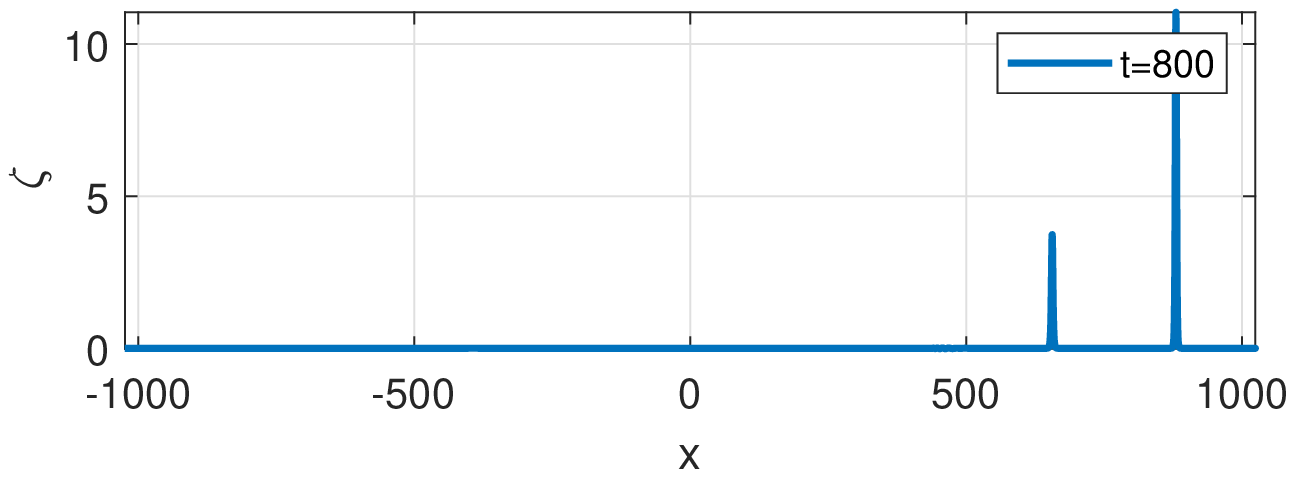}}
%\subfigure[]
%{\includegraphics[width=\columnwidth]{csw4_4.eps}}
\caption{Overtaking collision of CSW's.  (a)-(c) $\zeta$ component of the numerical approximation.}
\label{fdds5_9}
\end{figure}

The experiment shows that after the one-way collision, two solitary waves emerge. The amplitude of the larger one, compared to that of the corresponding wave before the collision, has decreased slightly; the relative difference is of $9.7\times 10^{-6}$. In the case of the second, smaller solitary wave, the comparison shows an increase of the amplitude after the collision, which in relative terms is about $2.0\times 10^{-6}$. The effect in the corresponding speeds is qualitatively similar to the experimental  
speed-amplitude relation developed in section \ref{sec4}, as the taller wave reduces slightly its speed after the collision, while that of the shorter one is increasing. The evolution of the errors of amplitude and speed of the tall solitary wave is displayed in Figure \ref{fdds5_10}.

\begin{figure}[htbp]
\centering
\subfigure[]
{\includegraphics[width=\columnwidth]{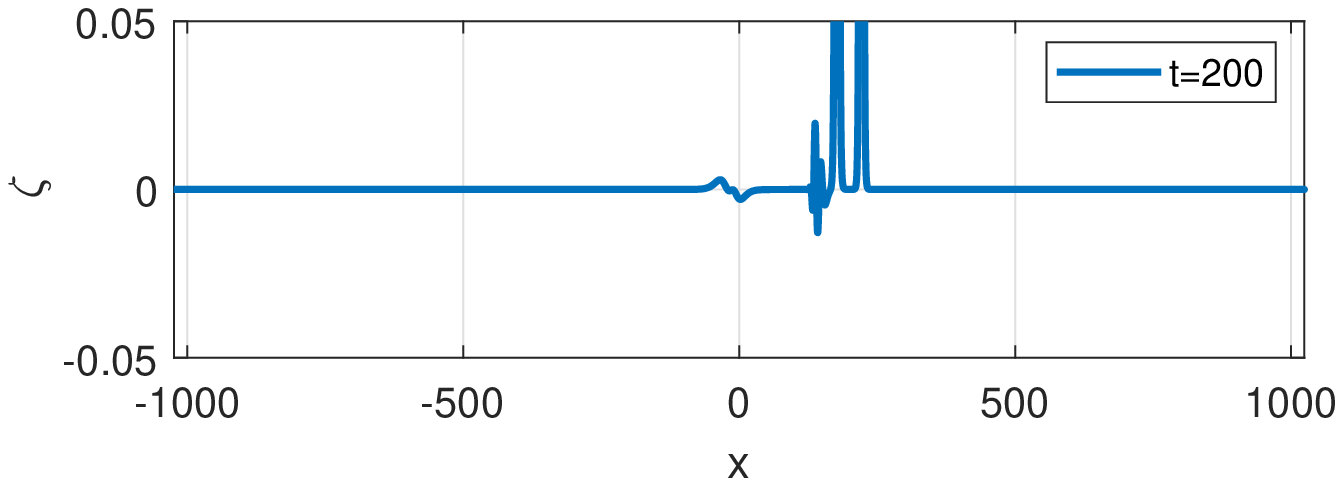}}
\subfigure[]
{\includegraphics[width=\columnwidth]{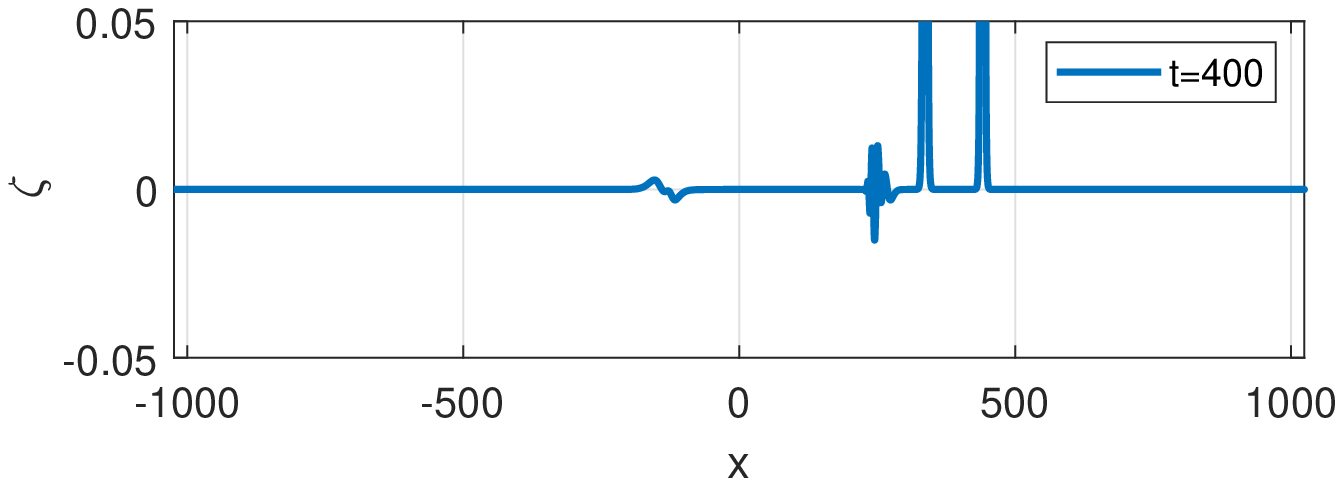}}
\subfigure[]
{\includegraphics[width=\columnwidth]{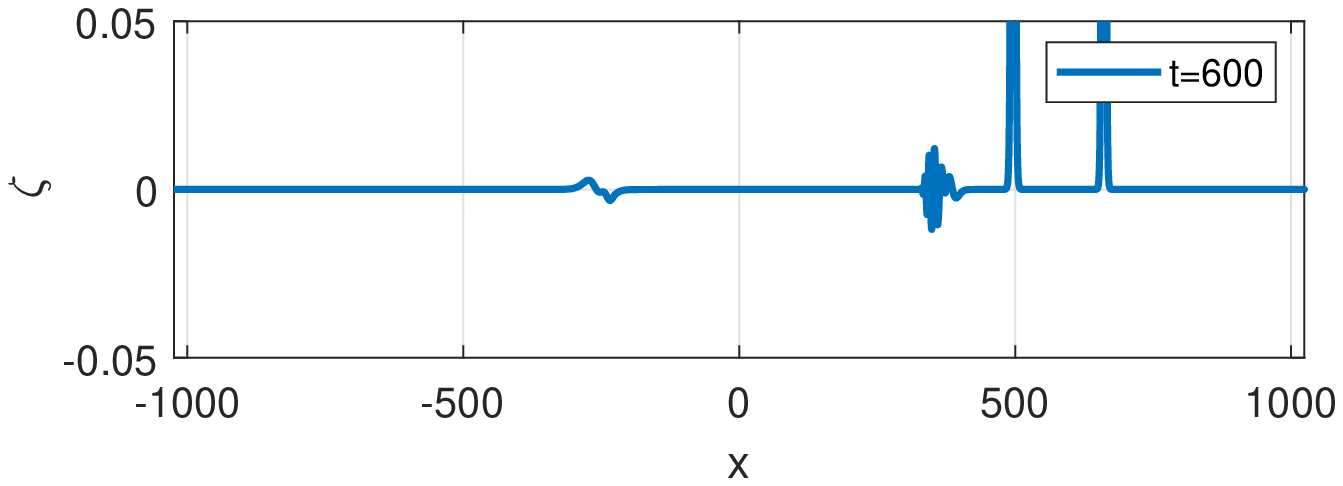}}
\subfigure[]
{\includegraphics[width=\columnwidth]{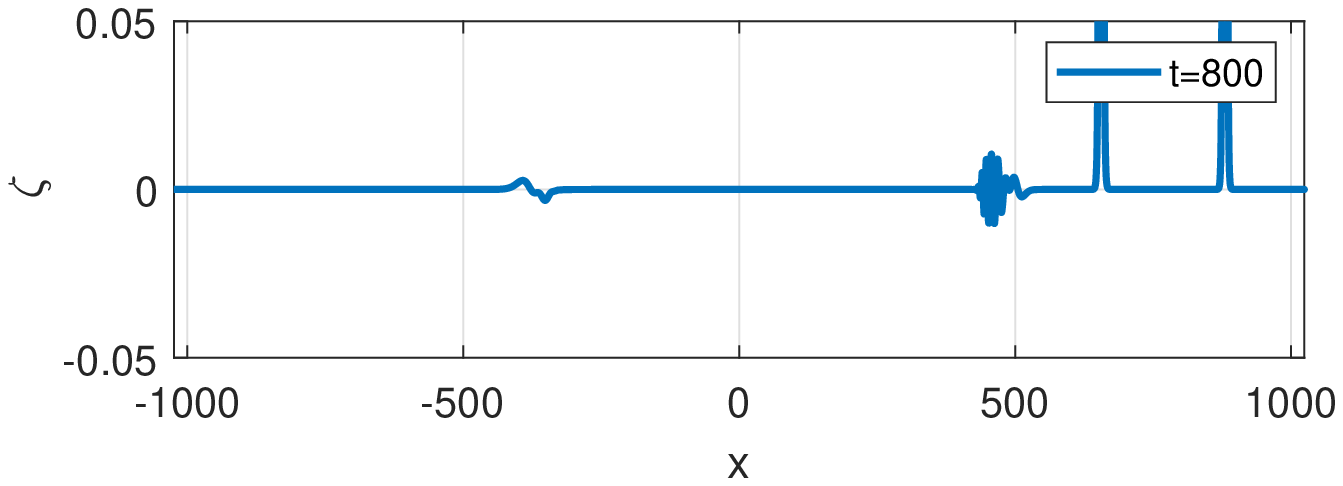}}
\caption{Overtaking collision of CSW's. Magnifications of the numerical solution of Figure \ref{fdds5_9}.}
\label{fdds5_9m}
\end{figure}

\begin{figure}[htbp]
\centering
\subfigure[]
{\includegraphics[width=\columnwidth]{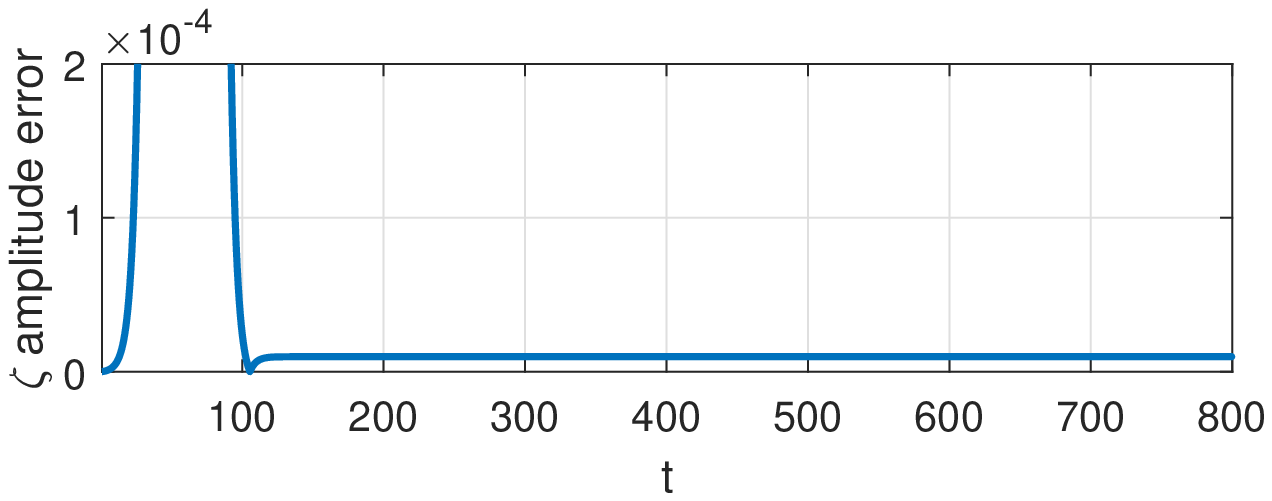}}
\subfigure[]
{\includegraphics[width=\columnwidth]{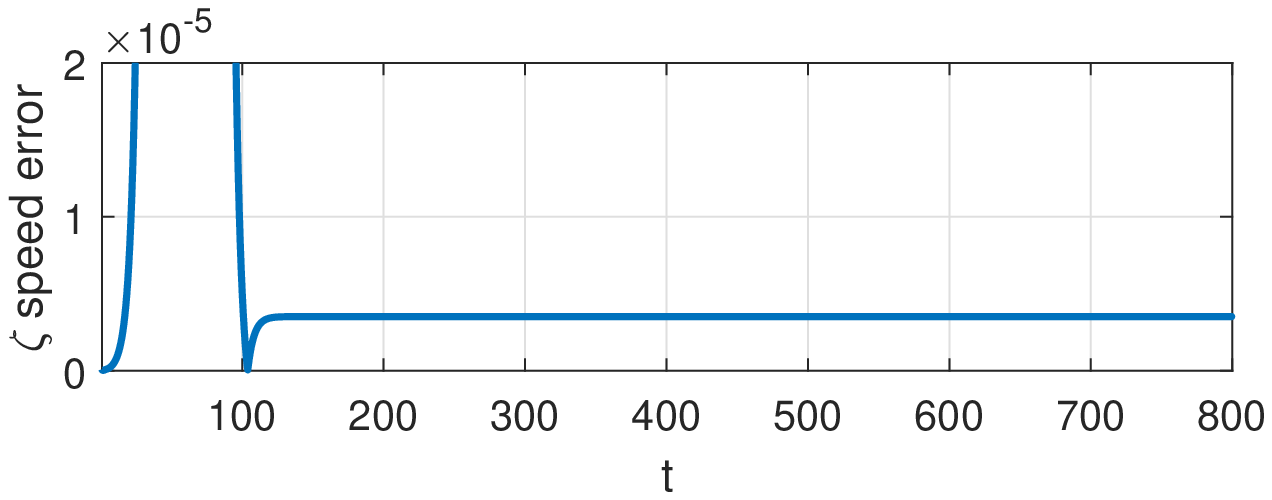}}
\caption{Overtaking collision of CSW's. Evolution of amplitude (a) and speed (b) errors of the tall emerging solitary wave ($\zeta$ component of the numerical solution).}
\label{fdds5_10}
\end{figure}

%\begin{figure}[htbp]
%\centering
%\subfigure[]
%{\includegraphics[width=\columnwidth]{overcol_t200m.eps}}
%\subfigure[]
%{\includegraphics[width=\columnwidth]{overcol_t400m.eps}}
%\subfigure[]
%{\includegraphics[width=\columnwidth]{overcol_t600m.eps}}
%\subfigure[]
%{\includegraphics[width=\columnwidth]{overcol_t800m.eps}}
%\caption{Overtaking collisions of CSW. Case (A3) with (\ref{52a}).  Magnifications of the numerical solution of Figure \ref{fdds5_9}.}
%\label{fdds5_9m}
%\end{figure}

Additional features of this inelastic interaction are shown in Figure \ref{fdds5_9m} and in magnification in Figure \ref{fdds5_9m2}. Behind the shorter emerging wave a small dispersive tail is generated (Figures \ref{fdds5_9m2}(b),(d),(f)), while a second, wavelet-type structure is observed to have formed  and to be traveling to the left (Figures \ref{fdds5_9m2}(a),(c),(e)).

\begin{figure}[htbp]
\centering
\subfigure[]
{\includegraphics[width=6.27cm,height=5.05cm]{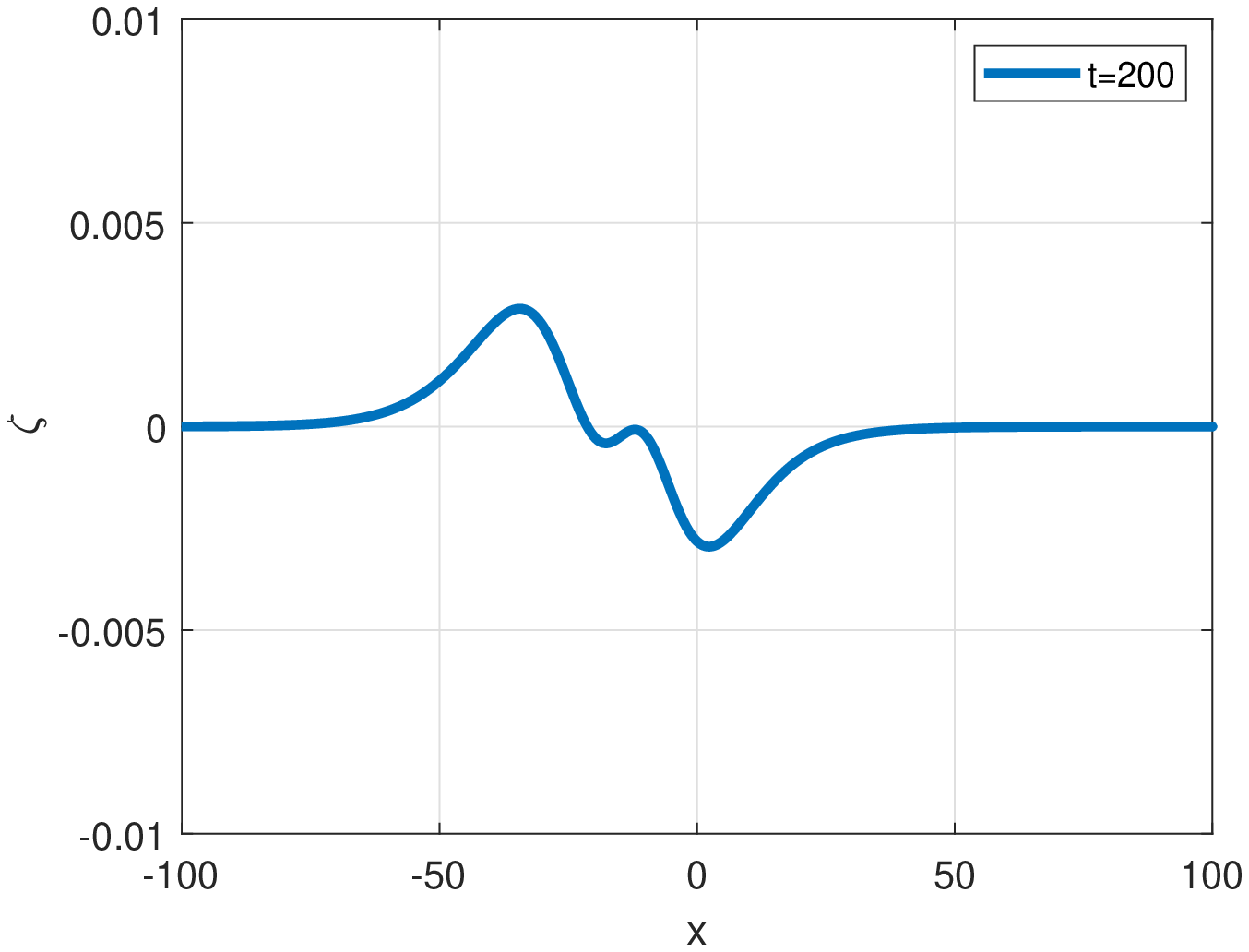}}
\subfigure[]
{\includegraphics[width=6.27cm,height=5.05cm]{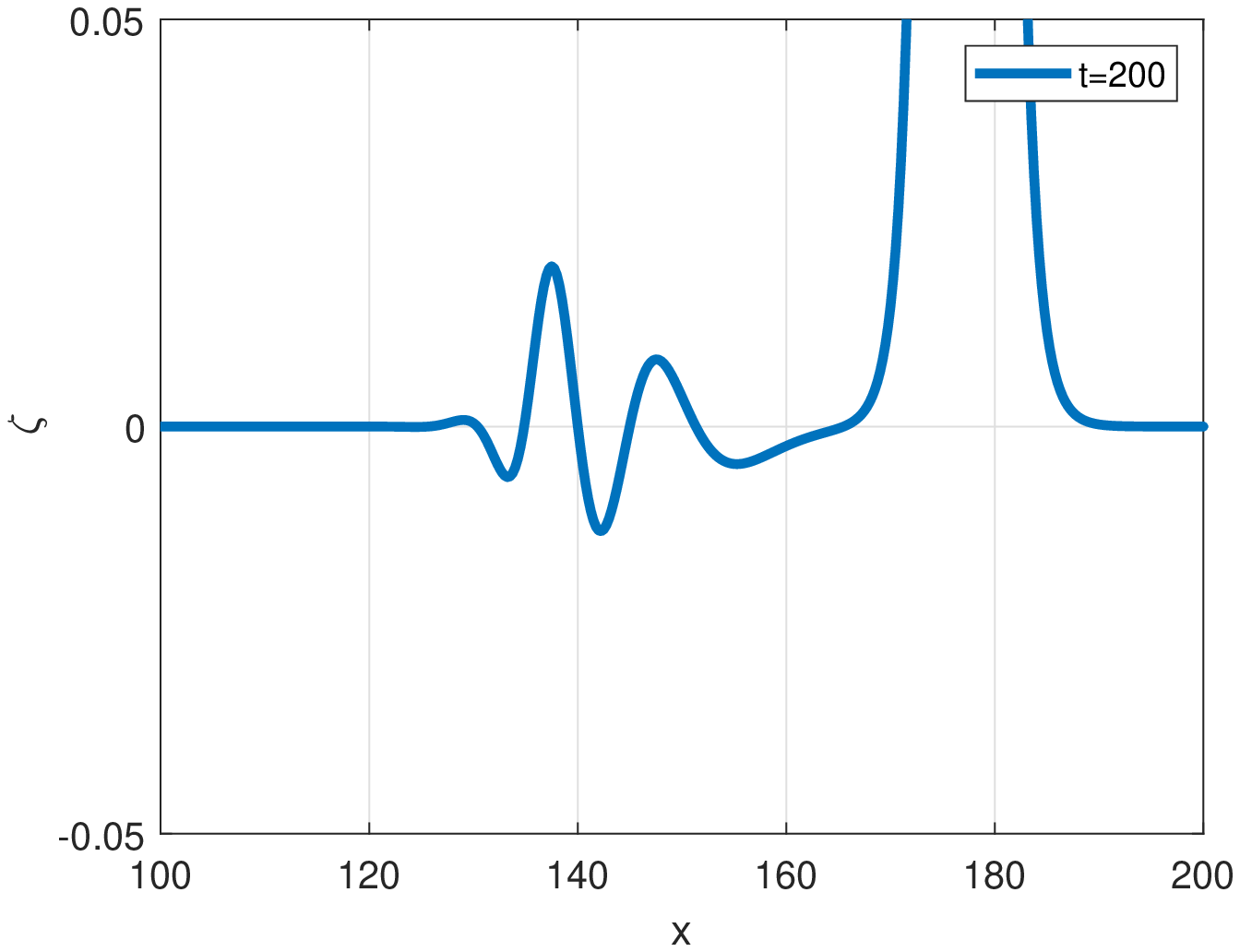}}
\subfigure[]
{\includegraphics[width=6.27cm,height=5.05cm]{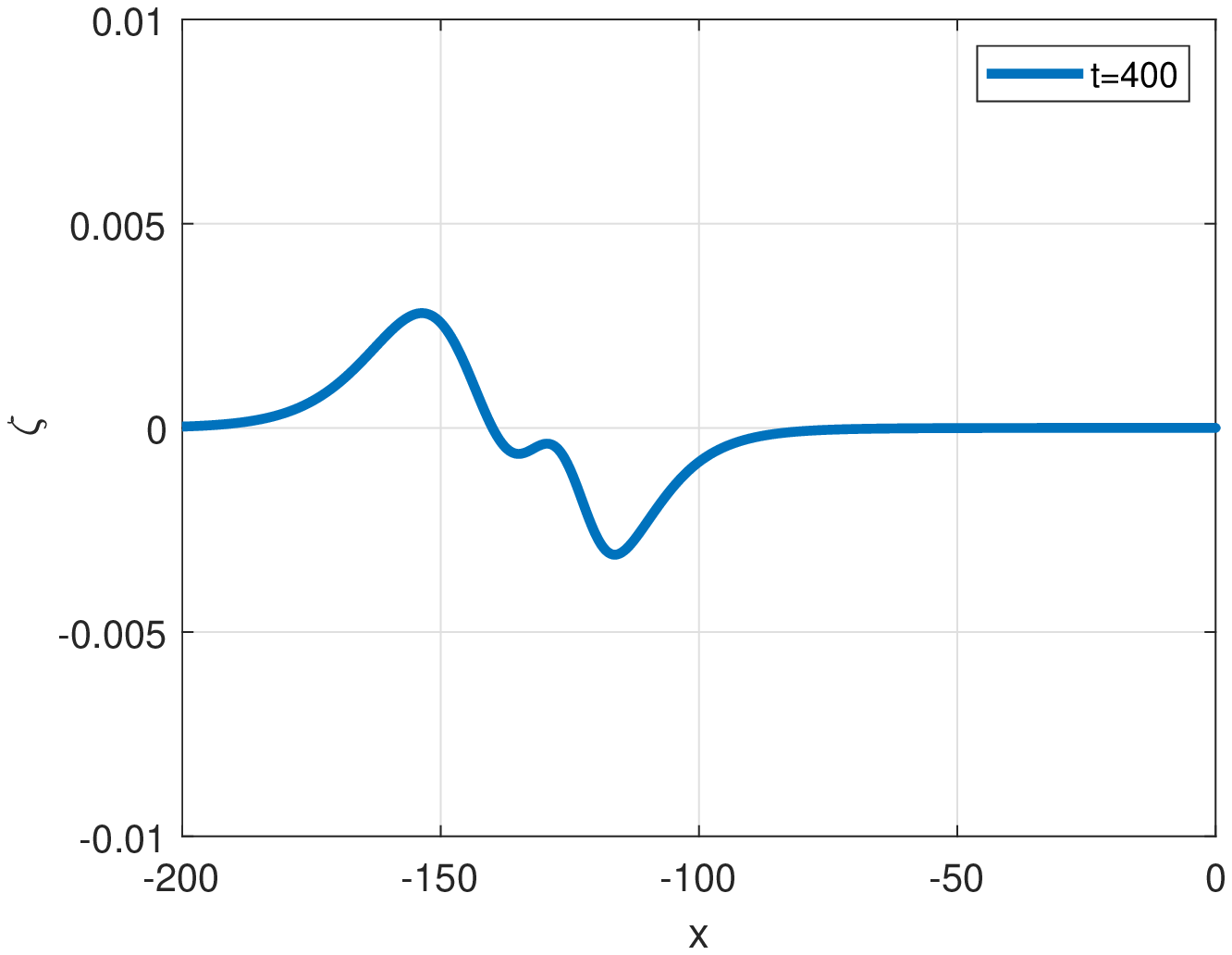}}
\subfigure[]
{\includegraphics[width=6.27cm,height=5.05cm]{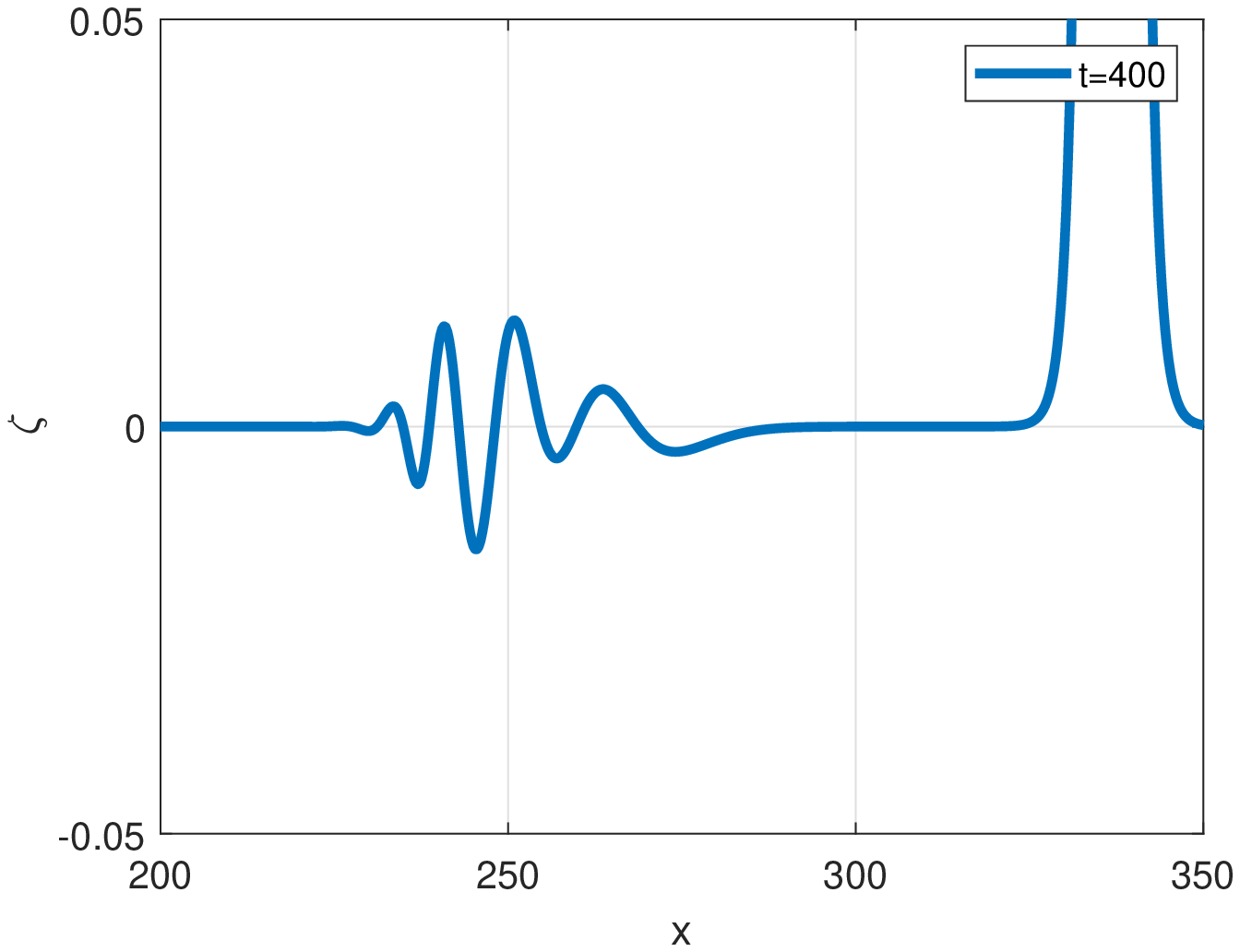}}
\subfigure[]
{\includegraphics[width=6.27cm,height=5.05cm]{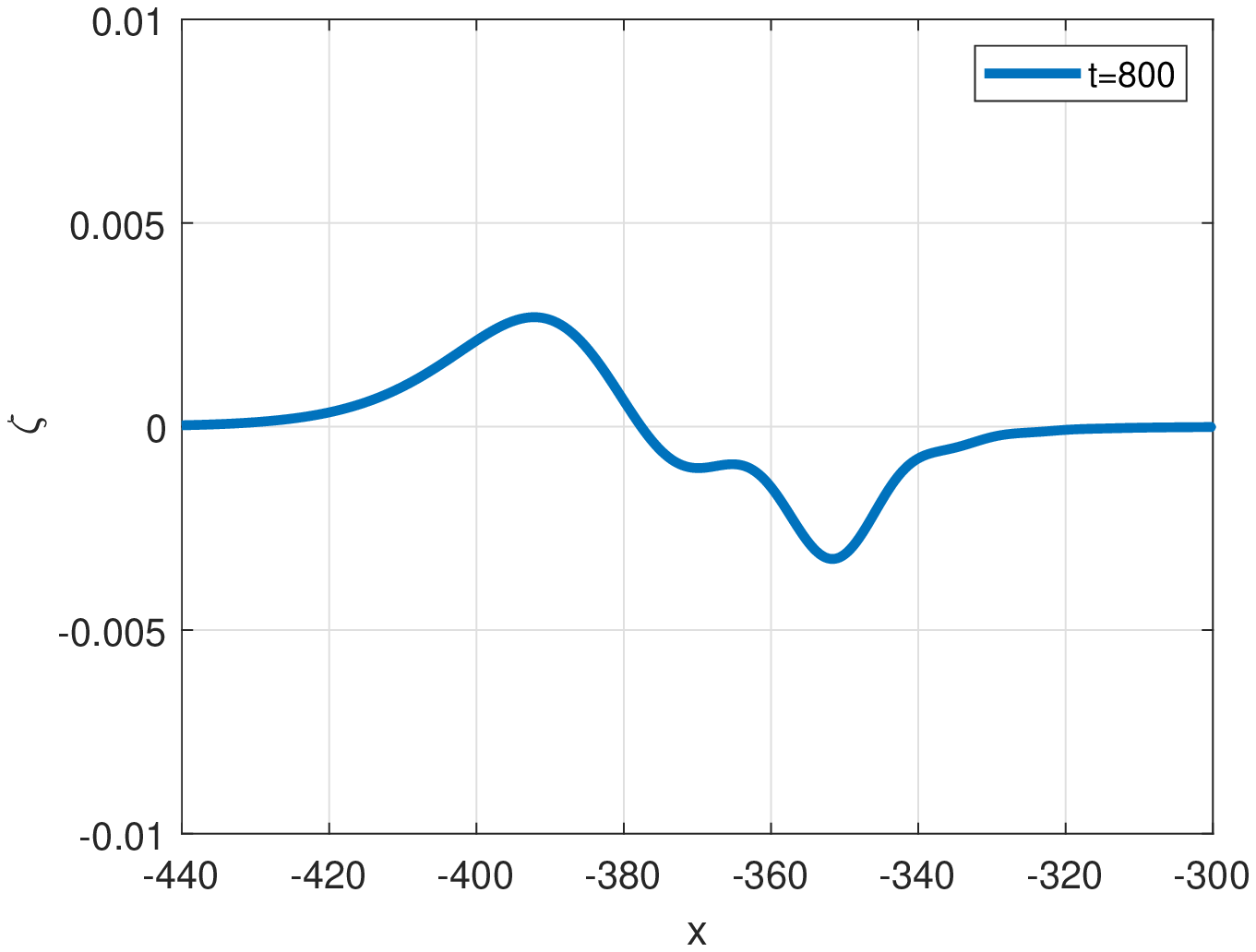}}
\subfigure[]
{\includegraphics[width=6.27cm,height=5.05cm]{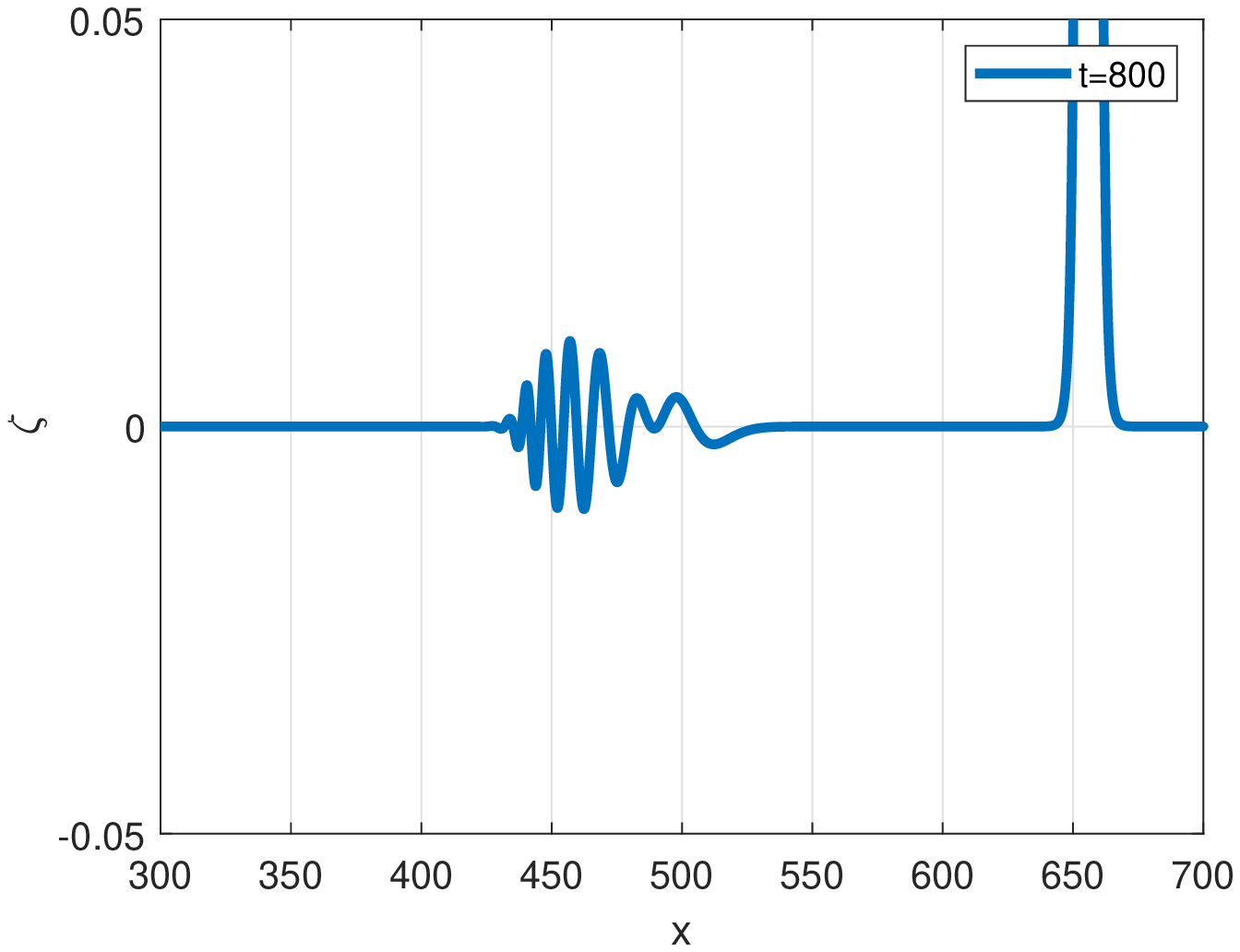}}
\caption{Overtaking collision of CSW's. Magnifications of the numerical solution of Figure \ref{fdds5_9m}.}
\label{fdds5_9m2}
\end{figure}

%It may be worth mentioning that the collision is very slightly inelastic; after the interaction a very small (compared to the amplitude of the emerging waves) dispersive tail behind the slower wave is generated. The evolution of the amplitude of the tallest solitary wave (Figure \ref{fdds5_10}) supports this idea; after the collision, the wave emerges with a relative increment of amplitude of about $9.73\times 10^{-6}$.
%[COMMENT ON THE TWO STRUCTURES]
%\begin{figure}[htbp]
%\centering
%\subfigure[]
%{\includegraphics[width=\columnwidth]{overcol_amp.eps}}
%\subfigure[]
%{\includegraphics[width=\columnwidth]{overcol_speed.eps}}
%\caption{Overtaking collisions of CSW. Case (A3) with (\ref{52a}). Evolution of amplitude (a) and speed (b) errors of the tallest emerging solitary wave ($\zeta$ component of the numerical solution).}
%\label{fdds5_10}
%\end{figure}

\subsubsection{Head-on collisions}
Two experiments on head-on collisions are reported here. With the same values of the parameters as in (\ref{52a}), in the first experiment we follow the evolution of the superposition of two approximate CSW-profiles of equal heights, initially centered at $x=\pm 20$,
with opposite speeds of absolute values equal to $c_{s}=c_{\gamma,\delta}+0.5\approx 1.0976$, the waves undergo a symmetric head-on collision, shown in Figures \ref{fdds5_11} and \ref{fdds5_11m}.

\begin{figure}[htbp]
\centering
\subfigure[]
{\includegraphics[width=\columnwidth]{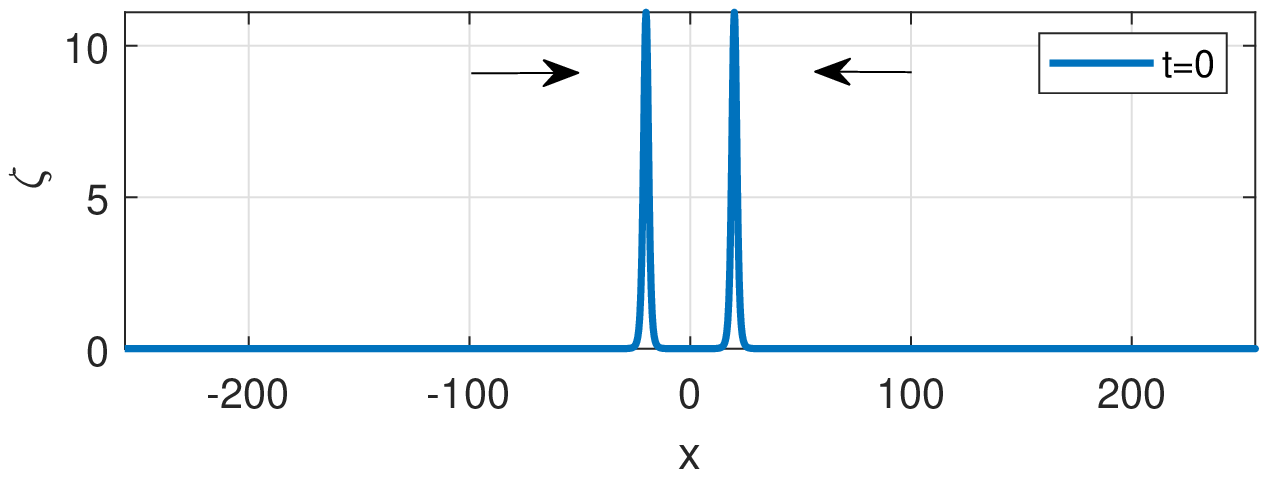}}
%\subfigure[]
%{\includegraphics[width=\columnwidth]{overcol_t200.eps}}
\subfigure[]
{\includegraphics[width=\columnwidth]{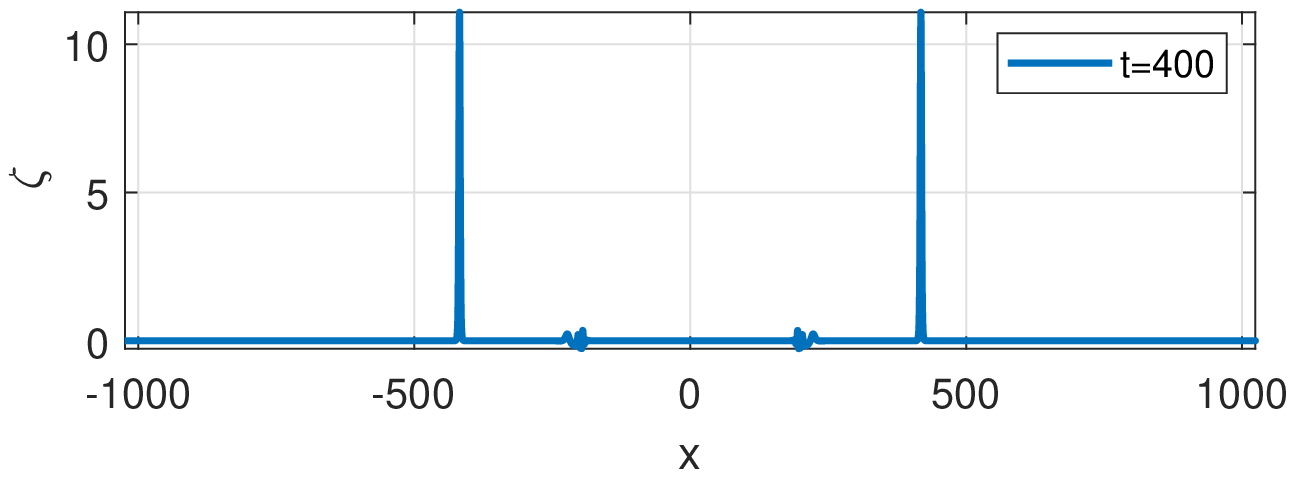}}
%\subfigure[]
%{\includegraphics[width=\columnwidth]{overcol_t600.eps}}
\subfigure[]
{\includegraphics[width=\columnwidth]{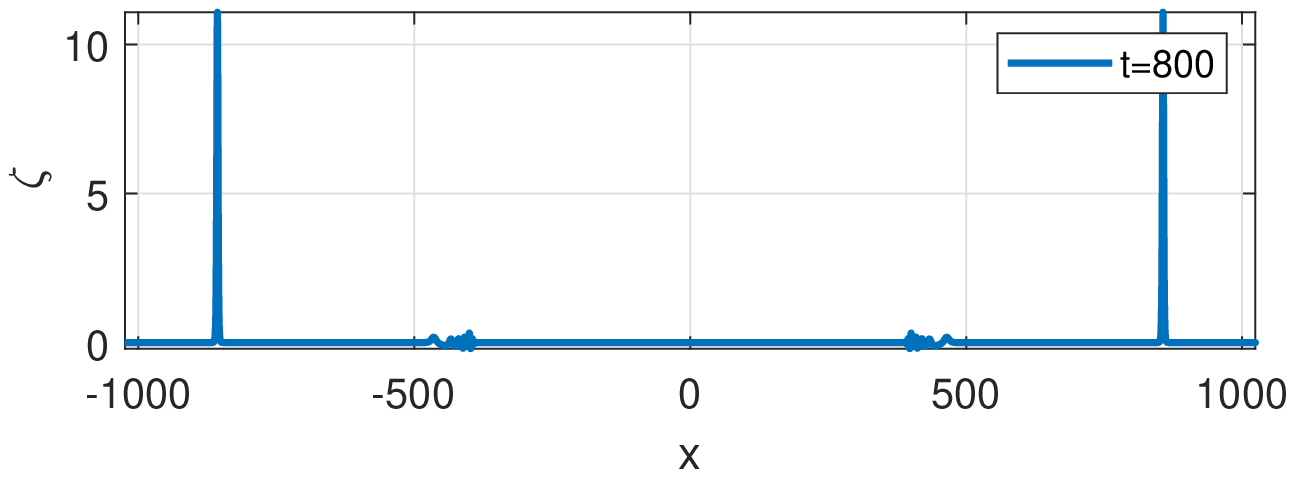}}
%\subfigure[]
%{\includegraphics[width=\columnwidth]{csw4_4.eps}}
\caption{Symmetric head-on collision of CSW's.  (a)-(c) $\zeta$ component of the numerical approximation.}
\label{fdds5_11}
\end{figure}

\begin{figure}[htbp]
\centering
\subfigure[]
{\includegraphics[width=\columnwidth]{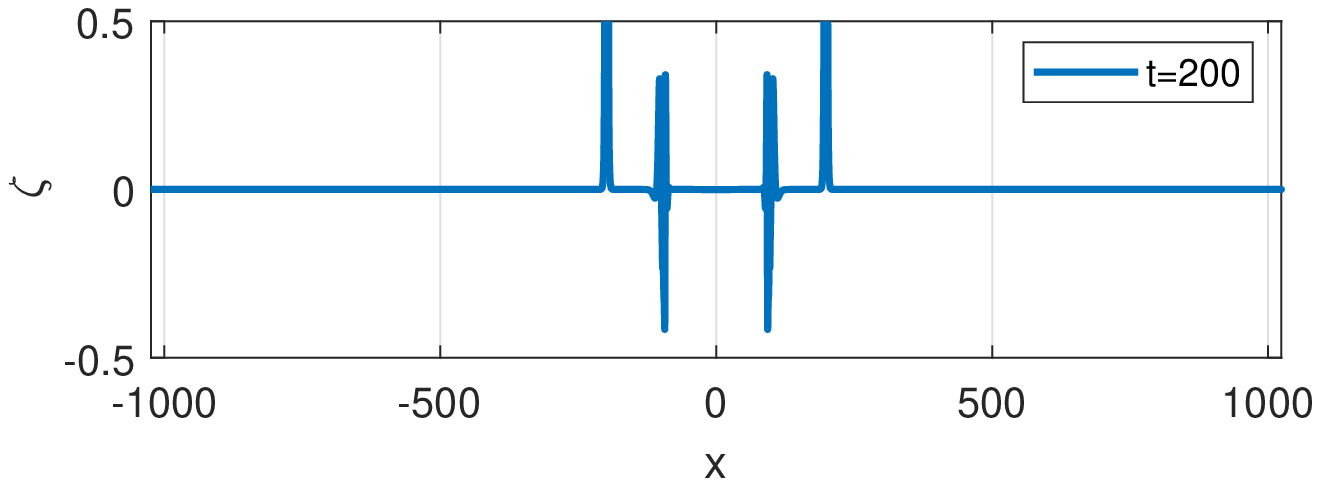}}
\subfigure[]
{\includegraphics[width=\columnwidth]{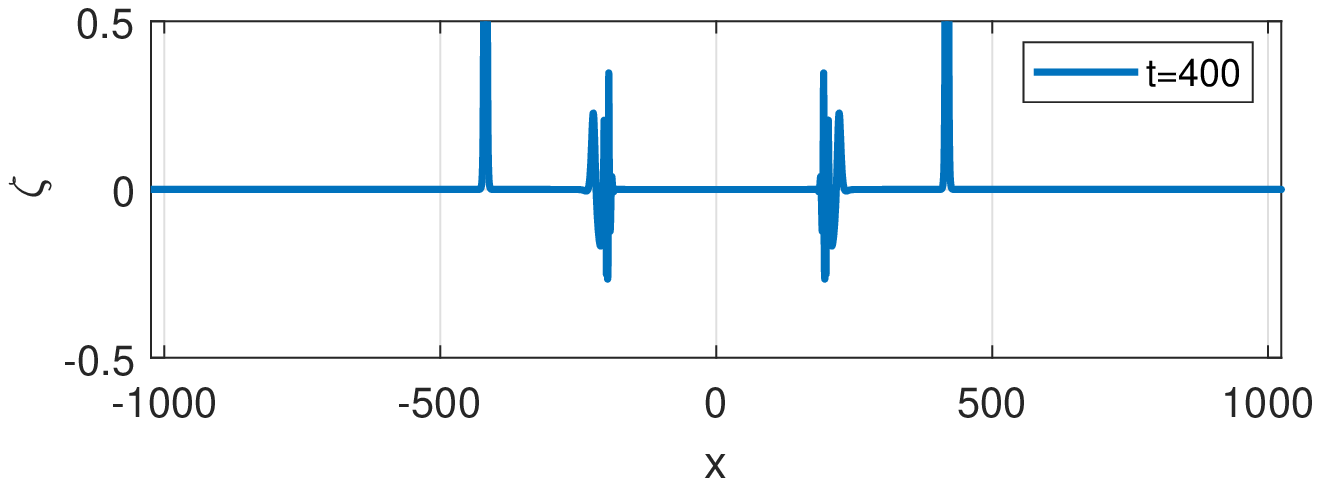}}
\subfigure[]
{\includegraphics[width=\columnwidth]{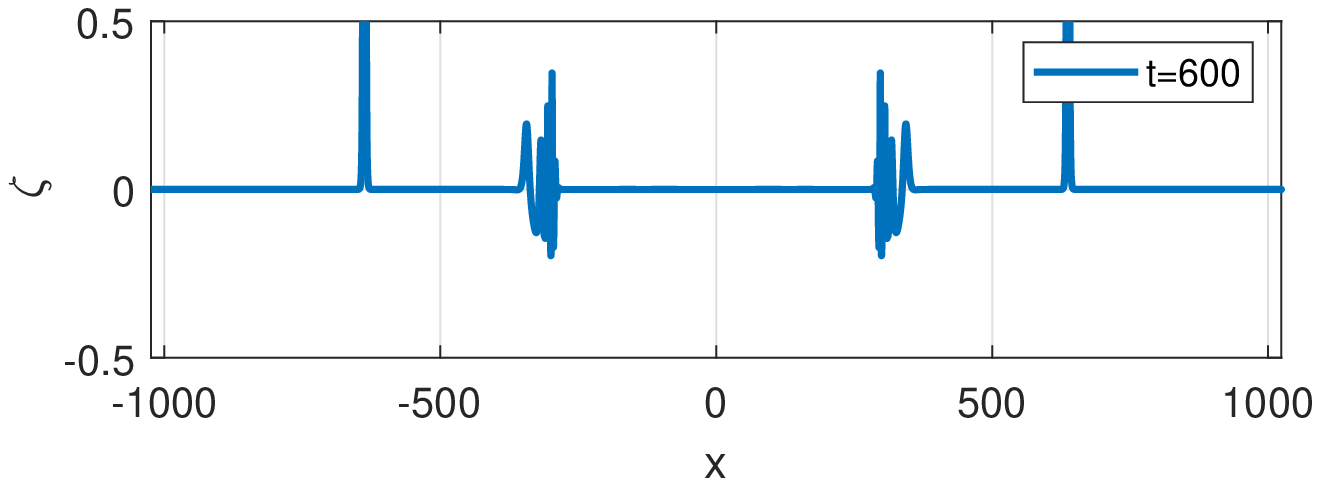}}
\subfigure[]
{\includegraphics[width=\columnwidth]{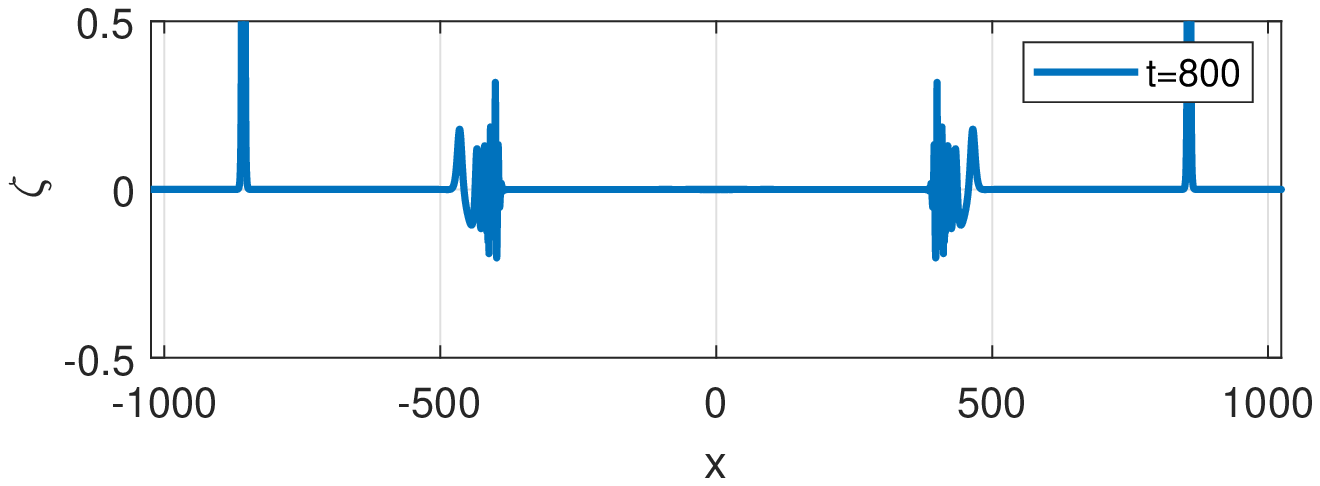}}
\caption{Symmetric head-on collision of CSW's. Magnifications of the numerical solution of Figure \ref{fdds5_11}.}
\label{fdds5_11m}
\end{figure}

%\begin{figure}[htbp]
%\centering
%\subfigure[]
%{\includegraphics[width=\columnwidth]{symho_amp.eps}}
%\subfigure[]
%{\includegraphics[width=\columnwidth]{symho_speed.eps}}
%\caption{Symmetric head-on of CSW. Case (A3) with (\ref{52a}). Evolution of amplitude (a) and speed (b) errors of the right-going emerging solitary wave ($\zeta$ component of the numerical solution); cf. Figure \ref{fdds5_11}.}
%\label{fdds5_11b}
%\end{figure}
\begin{figure}[htbp]
\centering
\subfigure[]
{\includegraphics[width=6.27cm,height=5.05cm]{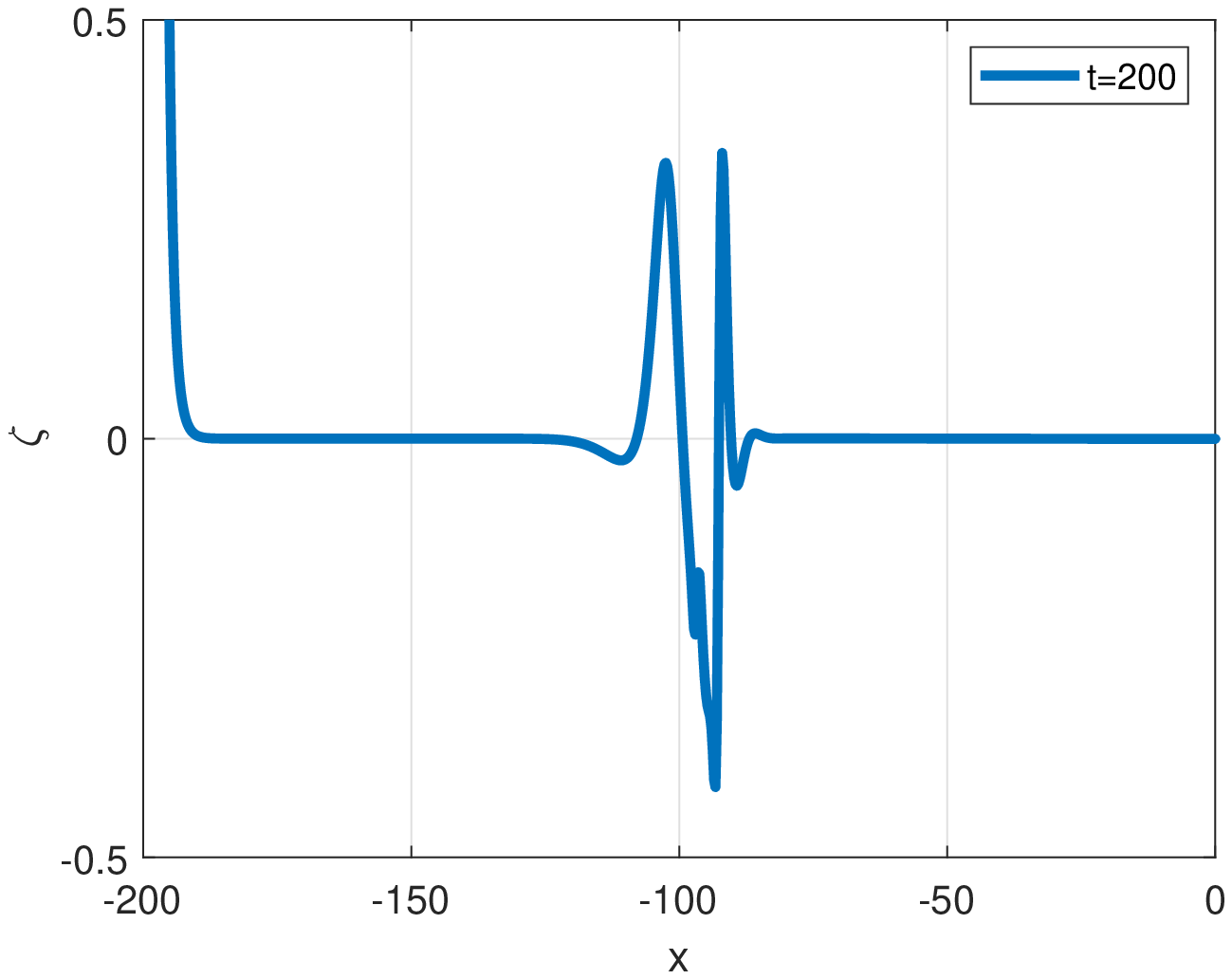}}
\subfigure[]
{\includegraphics[width=6.27cm,height=5.05cm]{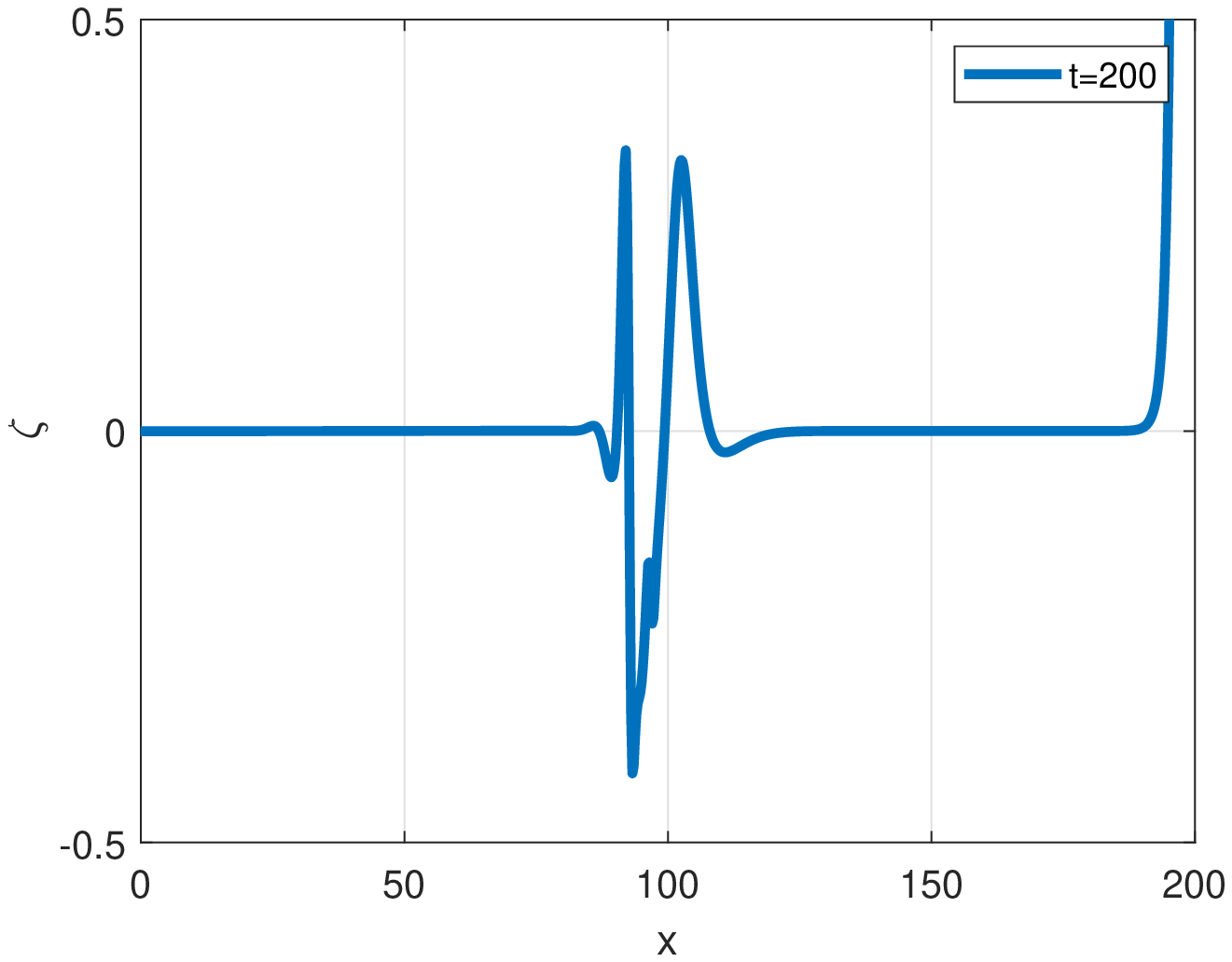}}
\subfigure[]
{\includegraphics[width=6.27cm,height=5.05cm]{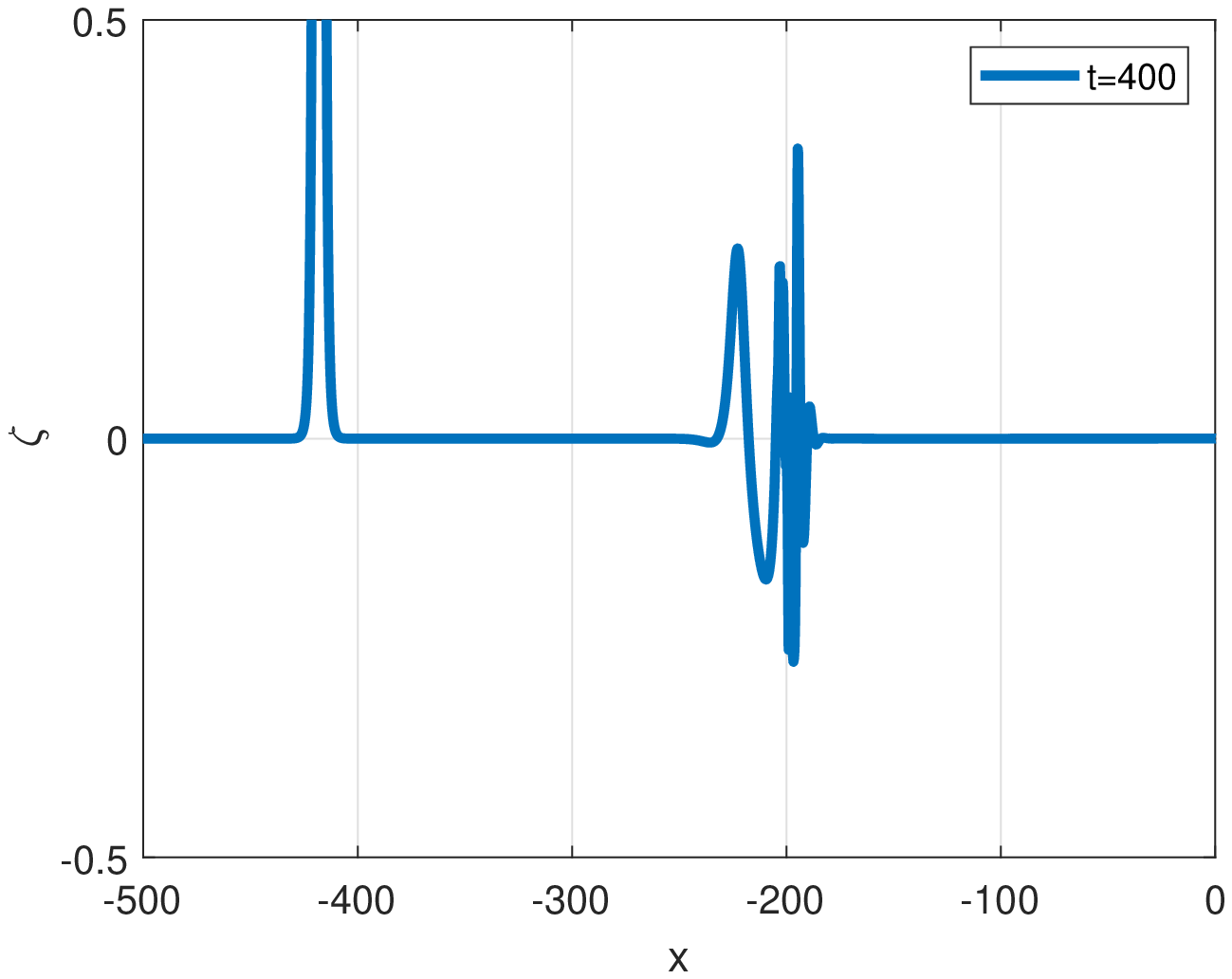}}
\subfigure[]
{\includegraphics[width=6.27cm,height=5.05cm]{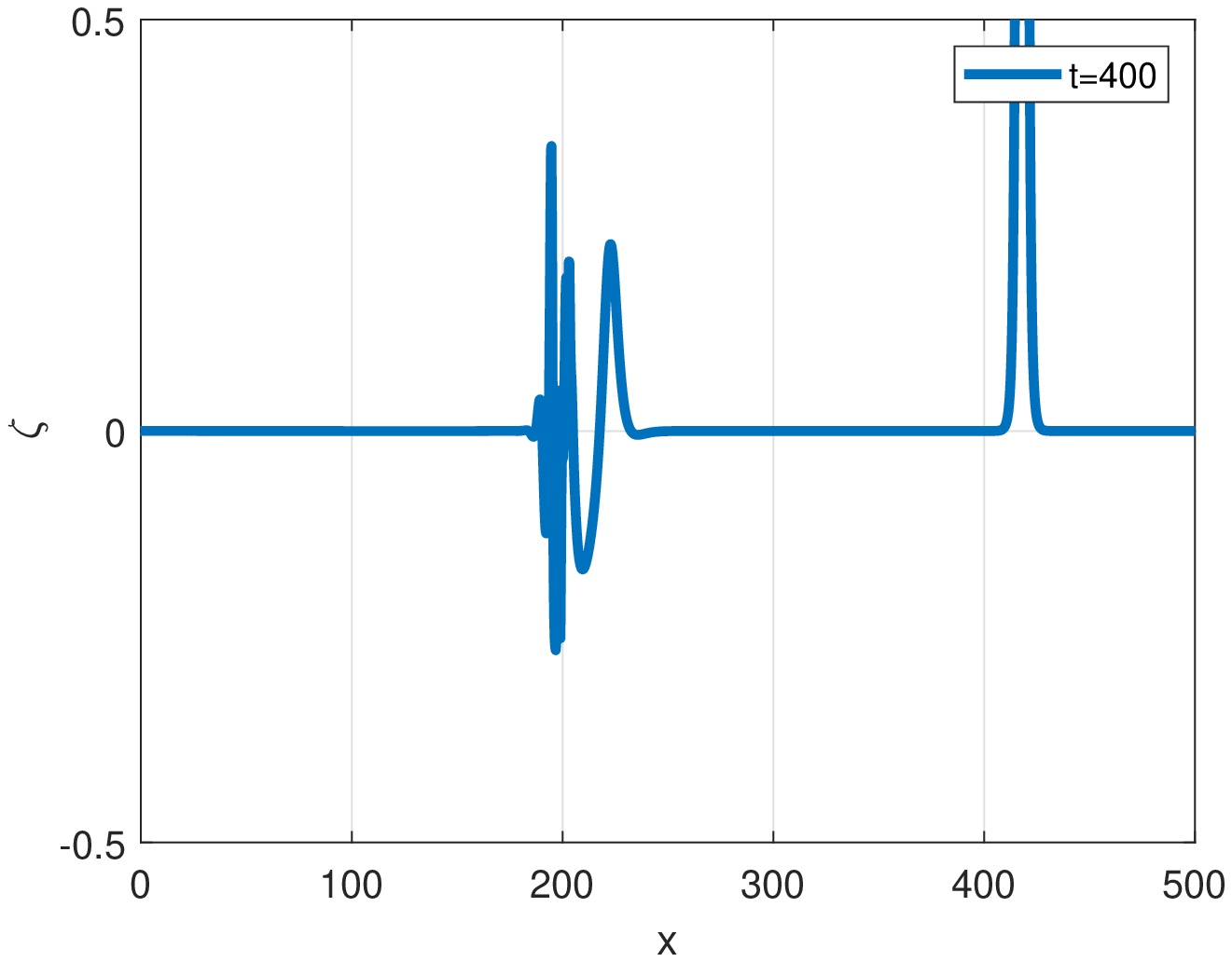}}
\subfigure[]
{\includegraphics[width=6.27cm,height=5.05cm]{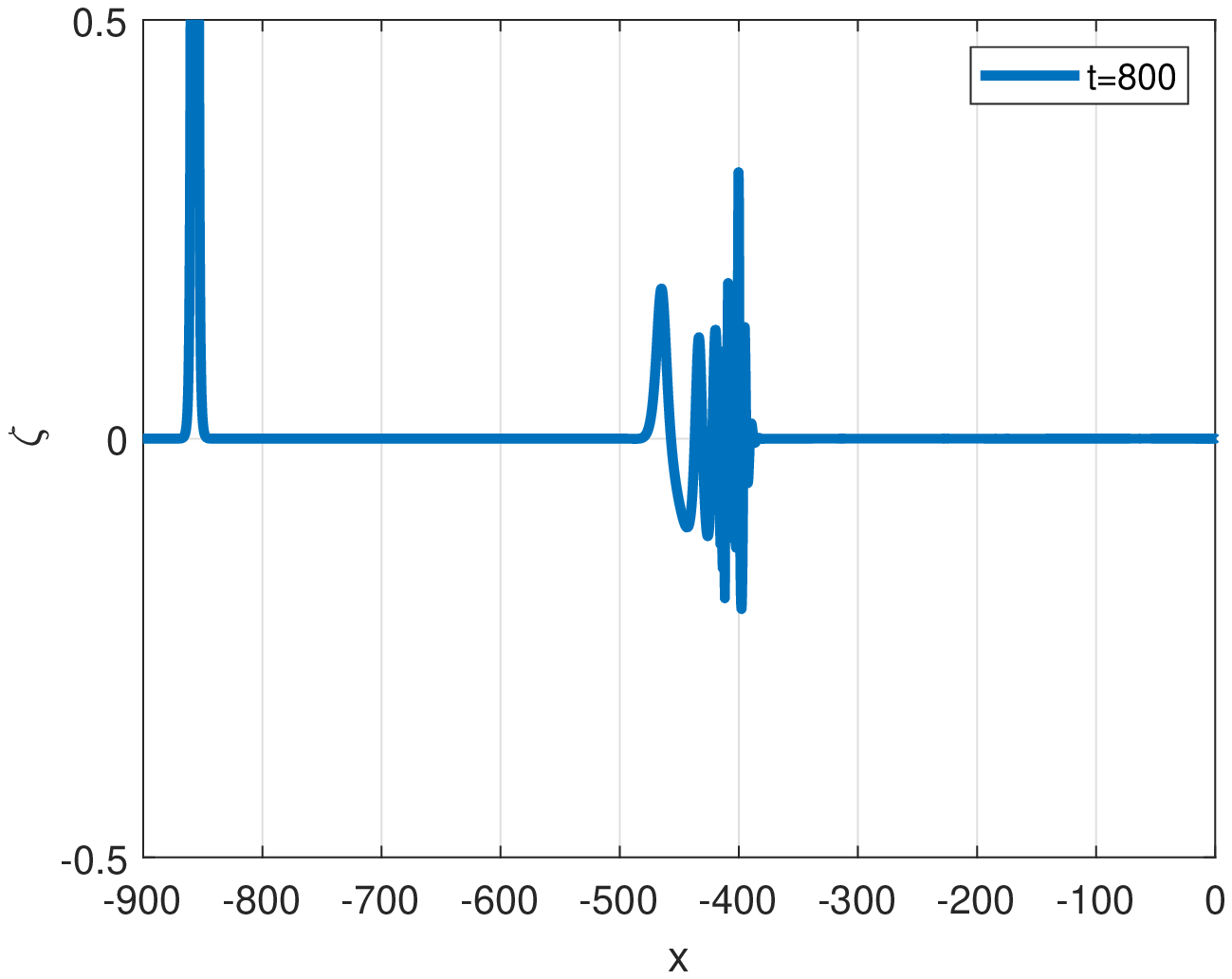}}
\subfigure[]
{\includegraphics[width=6.27cm,height=5.05cm]{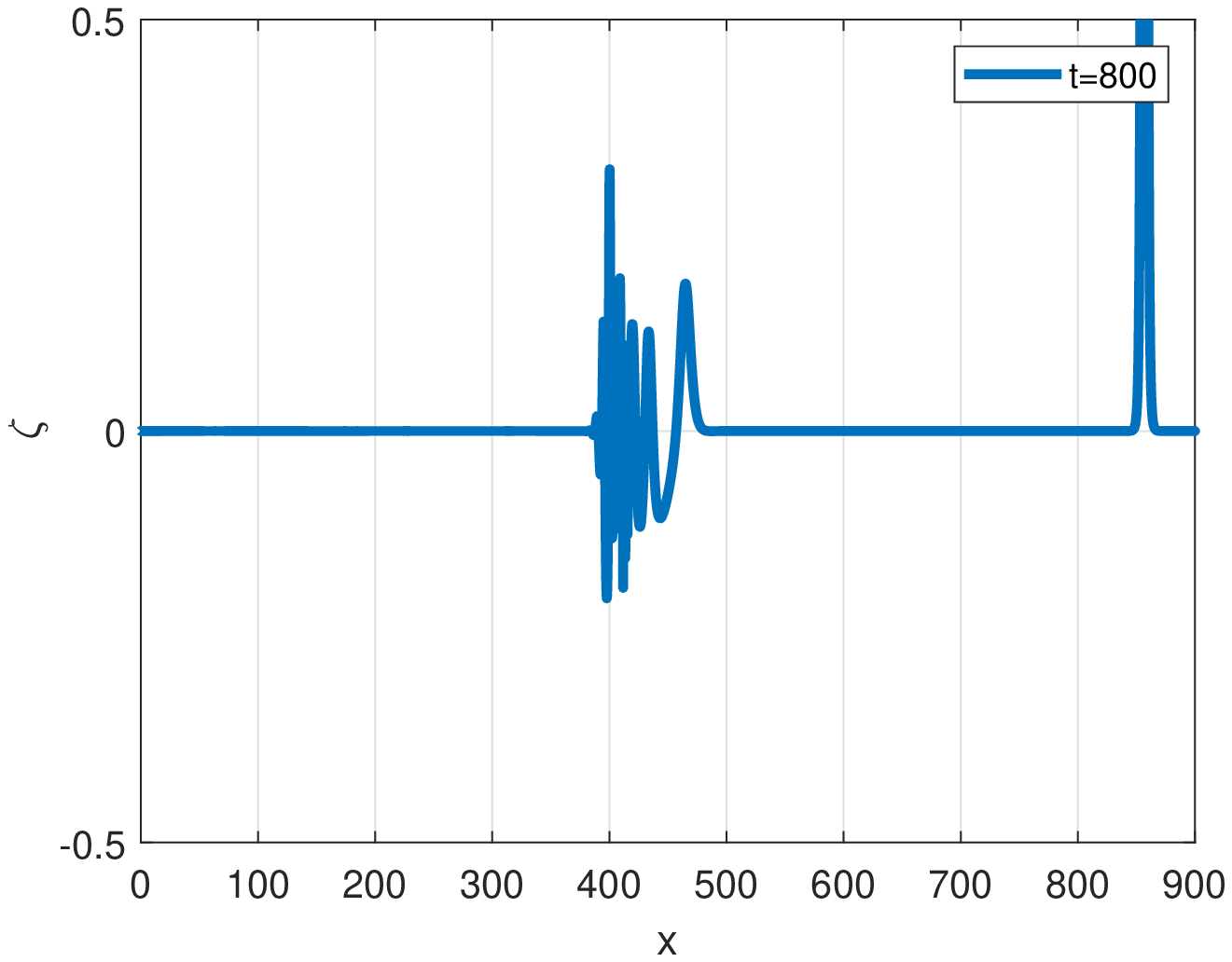}}
\caption{Symmetric head-on collision of CSW's. Magnifications of the numerical solution of Figure \ref{fdds5_11m}.}
\label{fdds5_11m1}
\end{figure}

\begin{figure}[htbp]
\centering
\subfigure[]
{\includegraphics[width=6.27cm,height=5.05cm]{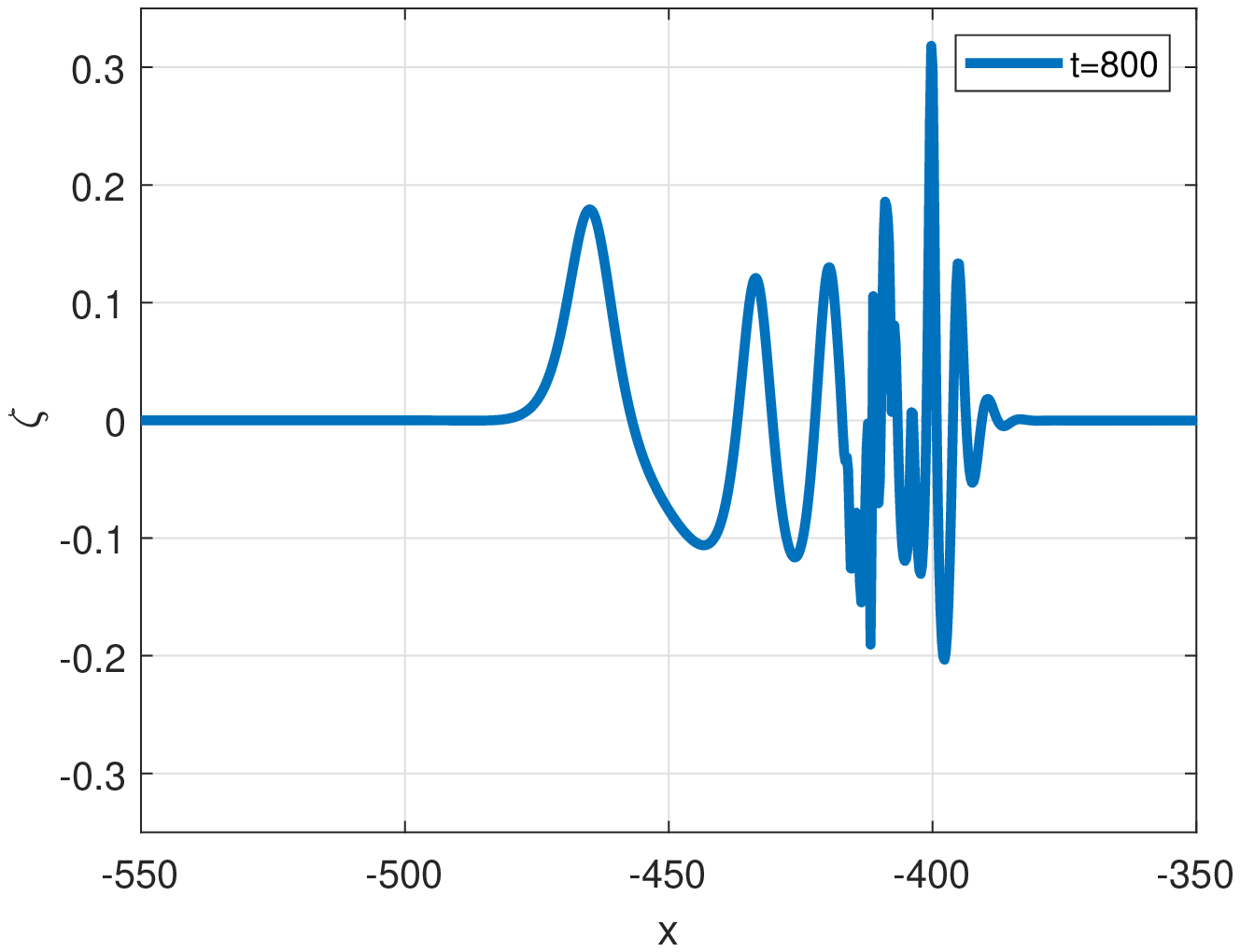}}
\subfigure[]
{\includegraphics[width=6.27cm,height=5.05cm]{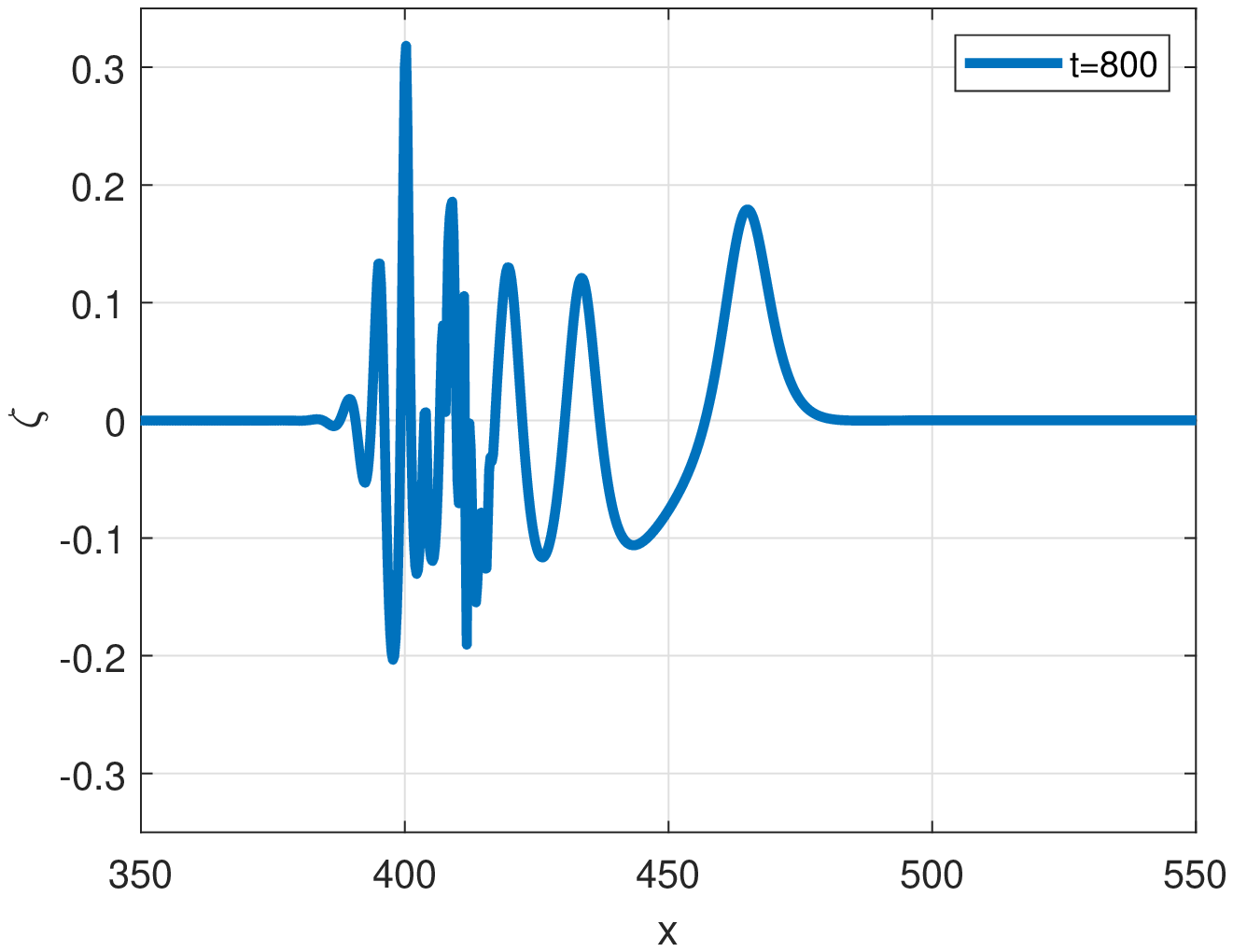}}
\caption{Symmetric head-on collision of CSW's. Magnifications of the numerical solution of Figure \ref{fdds5_11m1}(c).}
\label{fdds5_11m2}
\end{figure}
%\begin{figure}[htbp]
%\centering
%\subfigure[]
%{\includegraphics[width=10cm]{symho_t0.eps}}
%\subfigure[]
%{\includegraphics[width=10cm]{symho_t400.eps}}
%\subfigure[]
%{\includegraphics[width=10cm]{symho_t800.eps}}
%\subfigure[]
%{\includegraphics[width=10cm]{csw5_4.eps}}
%\caption{Symmetric head-on collisions of CSW. Case (A3) with (\ref{52a}).  (a)-(c) $\zeta$ component of the numerical solution; (d) Magnification of (c).}
%\label{fdds5_11}
%\end{figure}

The outcome of the collision is symmetric as well. After the interaction, there emerge two solitary waves and structures traveling behind them, similar to ones already observed in other experiments. In this case, the amplitude of the emerging right-traveling CSW is smaller than that of the initial one with a relative difference of about $1.8\times 10^{-3}$. The emerging waves are also slightly slower than the initial ones, with a relative decrease of $6.4\times 10^{-4}$ in their speeds.

The structures behind the solitary waves, shown in more detail in Figures \ref{fdds5_11m1} and \ref{fdds5_11m2}, are of bigger size than those of the previous experiment of overtaking collision, but
seem to be again of dispersive and nonlinear type. The form of the nonlinear structure, however, is not yet clear from Figure \ref{fdds5_11m2}.
%Trailing each main wave, at least one  wavelet is observed to be formed, fighting for separating from the dispersive tail following it.
%The behaviour is apparently similar to that of the experiments from large perturbations of CSW: we may distinguish the formation of CSW with the rest of the structure which may hide dispersion and possibly other nonlinear components (in the form of CSW of elevation or of depression). In this case, the amplitude of the emerging right-going CSW is smaller than that of the original one; the difference is about $2.1\times 10^{-2}$. 

\begin{figure}[htbp]
\centering
\subfigure[]
{\includegraphics[width=\columnwidth]{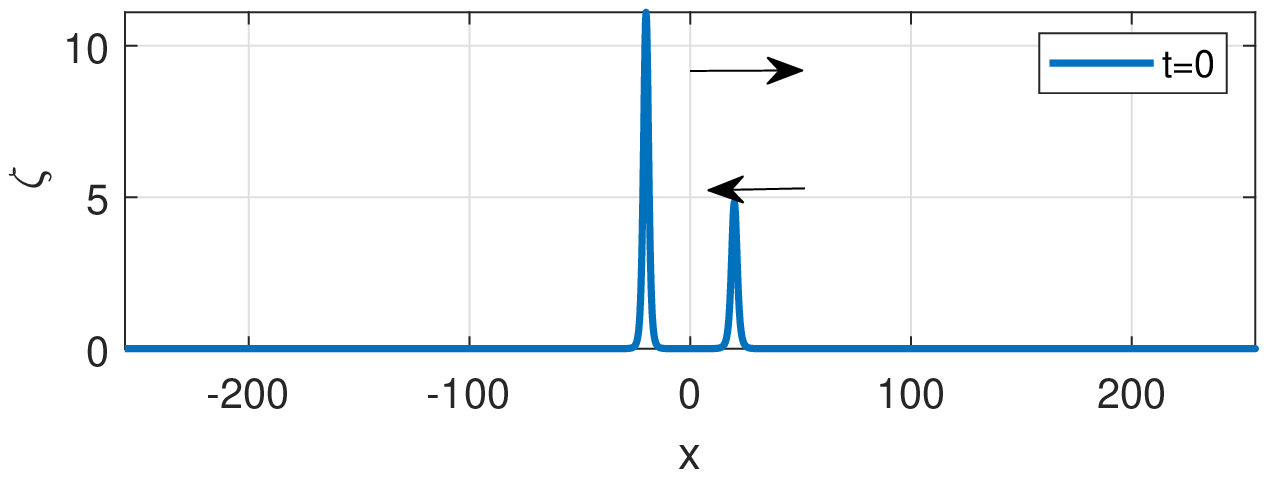}}
%\subfigure[]
%{\includegraphics[width=\columnwidth]{overcol_t200.eps}}
\subfigure[]
{\includegraphics[width=\columnwidth]{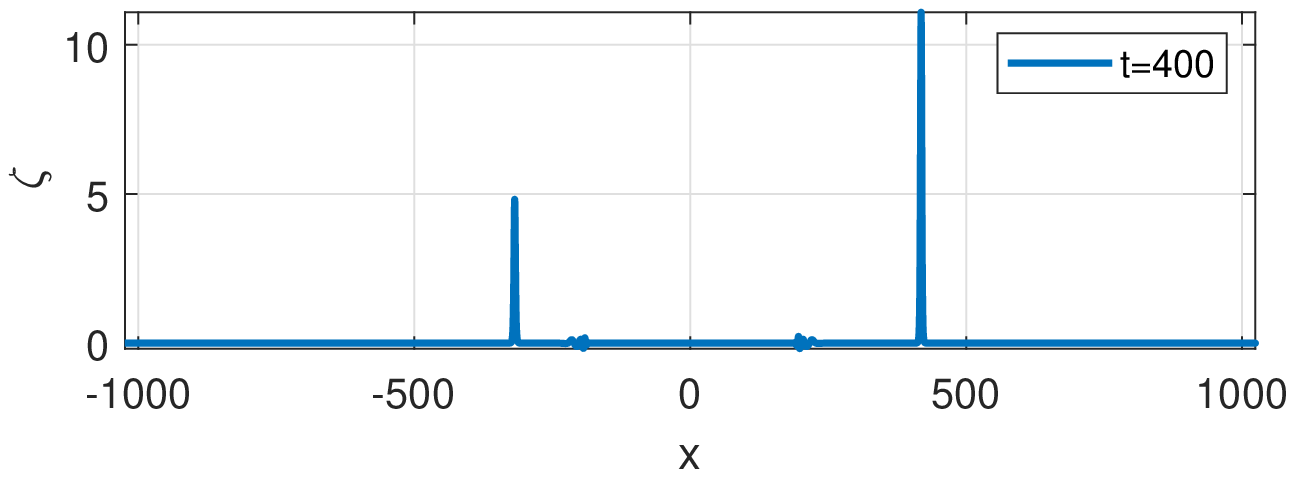}}
%\subfigure[]
%{\includegraphics[width=\columnwidth]{overcol_t600.eps}}
\subfigure[]
{\includegraphics[width=\columnwidth]{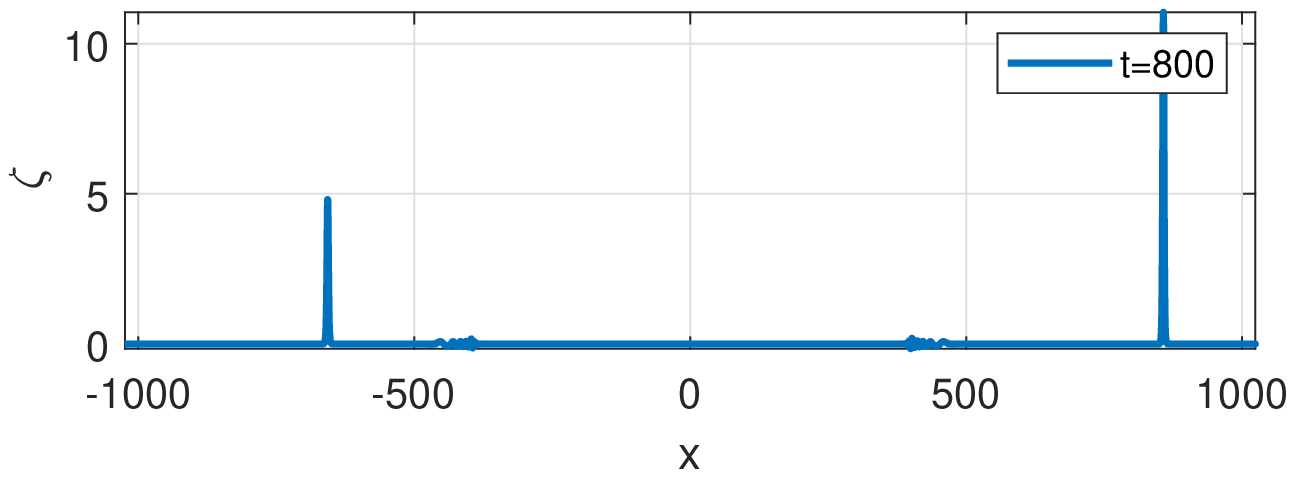}}
%\subfigure[]
%{\includegraphics[width=\columnwidth]{csw4_4.eps}}
\caption{Non-symmetric head-on collision of CSW's. (a)-(c) $\zeta$ component of the numerical approximation.}
\label{fdds5_12}
\end{figure}

%\begin{figure}[htbp]
%\centering
%\subfigure[]
%{\includegraphics[width=\columnwidth]{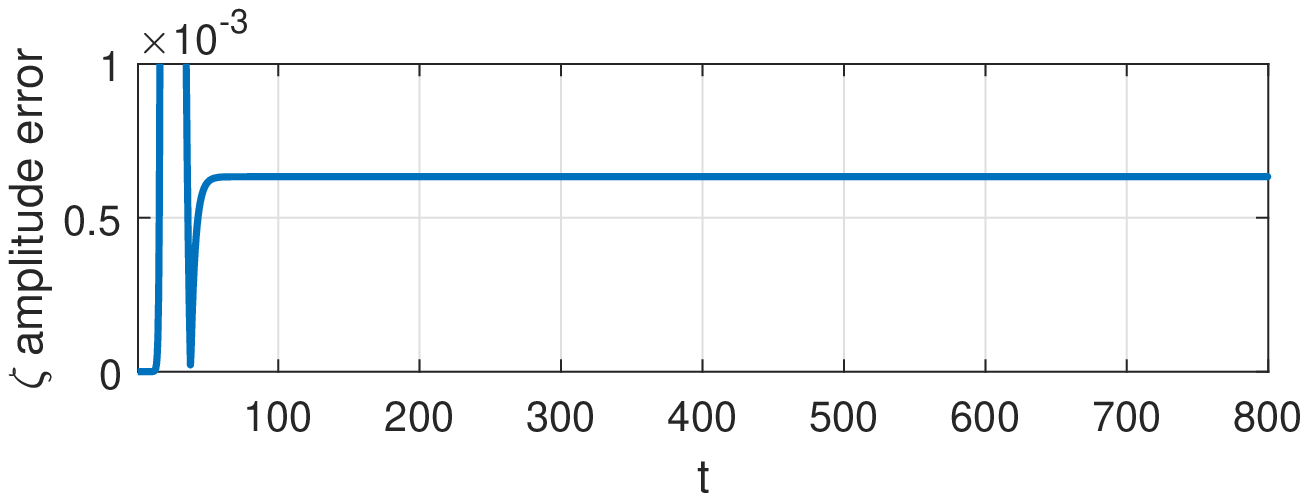}}
%\subfigure[]
%{\includegraphics[width=\columnwidth]{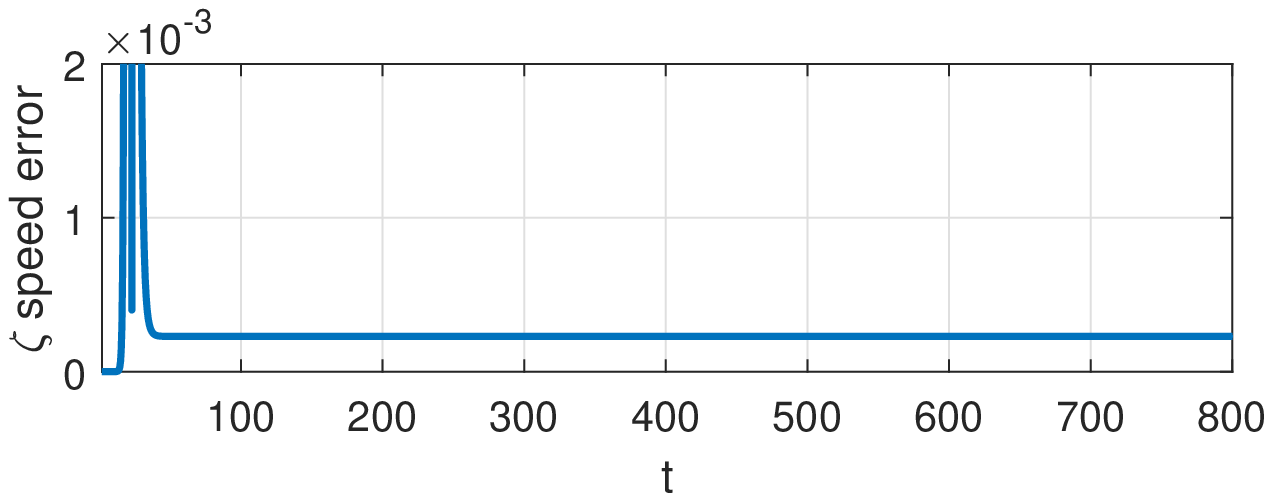}}
%\caption{Non-symmetric head-on collision of CSW's. Evolution of amplitude (a) and speed (b) errors of the right-going emerging solitary wave ($\zeta$ component of the numerical solution); cf. Figure \ref{fdds5_12}.}
%\label{fdds5_12b}
%\end{figure}

\begin{figure}[htbp]
\centering
\subfigure[]
{\includegraphics[width=\columnwidth]{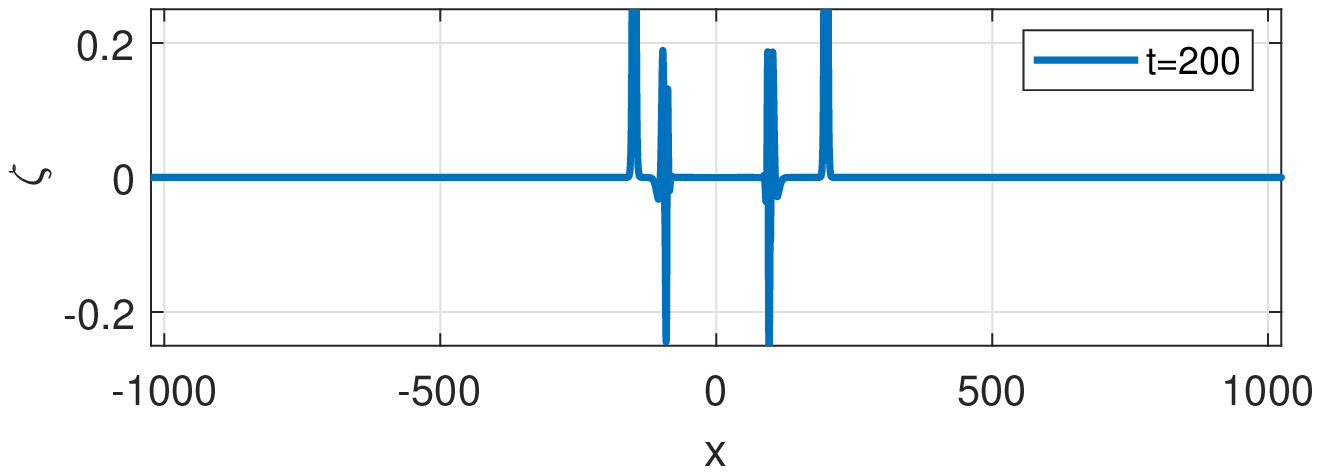}}
\subfigure[]
{\includegraphics[width=\columnwidth]{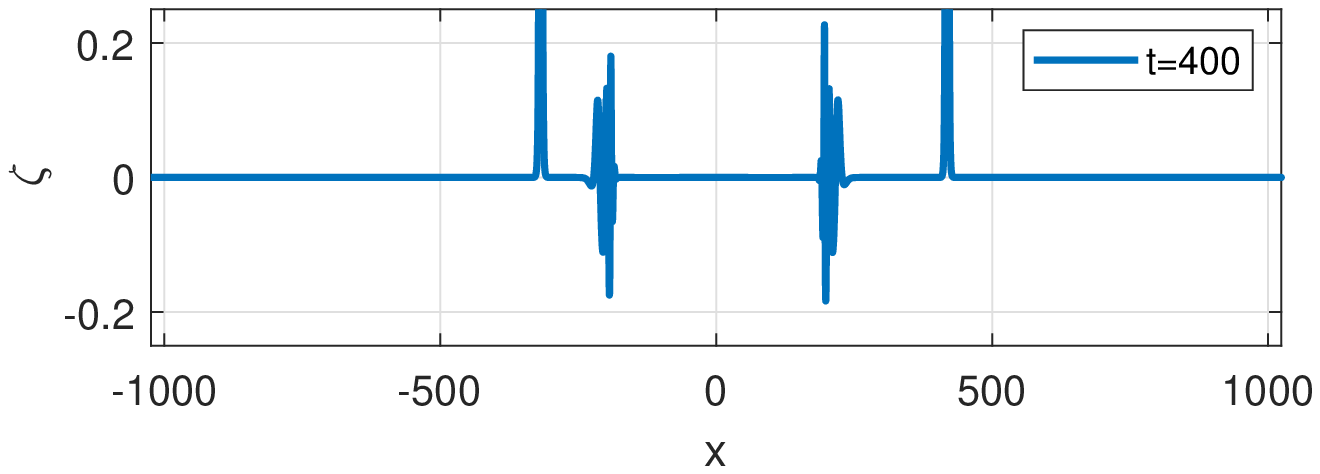}}
\subfigure[]
{\includegraphics[width=\columnwidth]{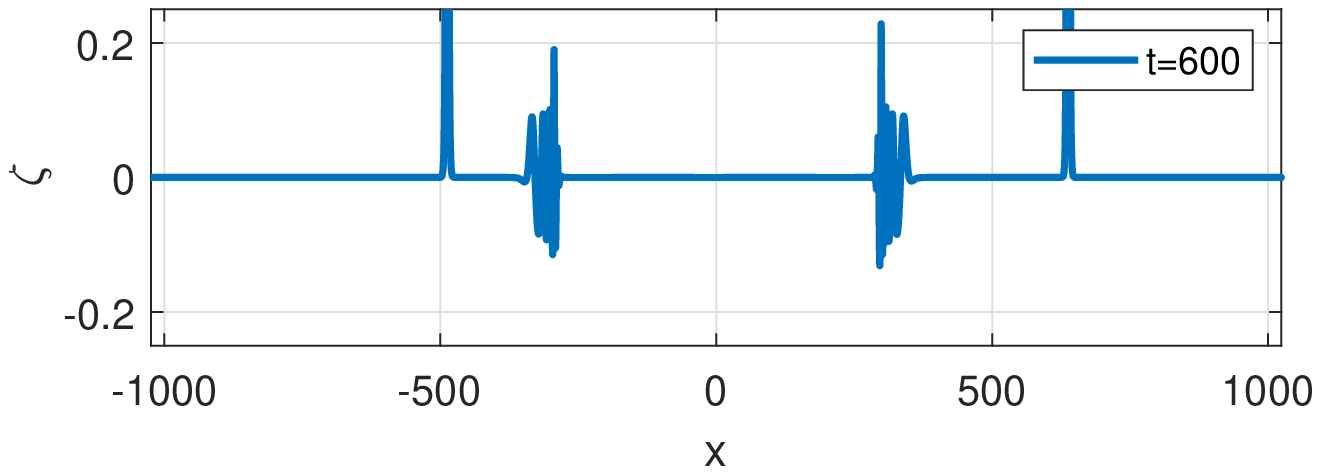}}
\subfigure[]
{\includegraphics[width=\columnwidth]{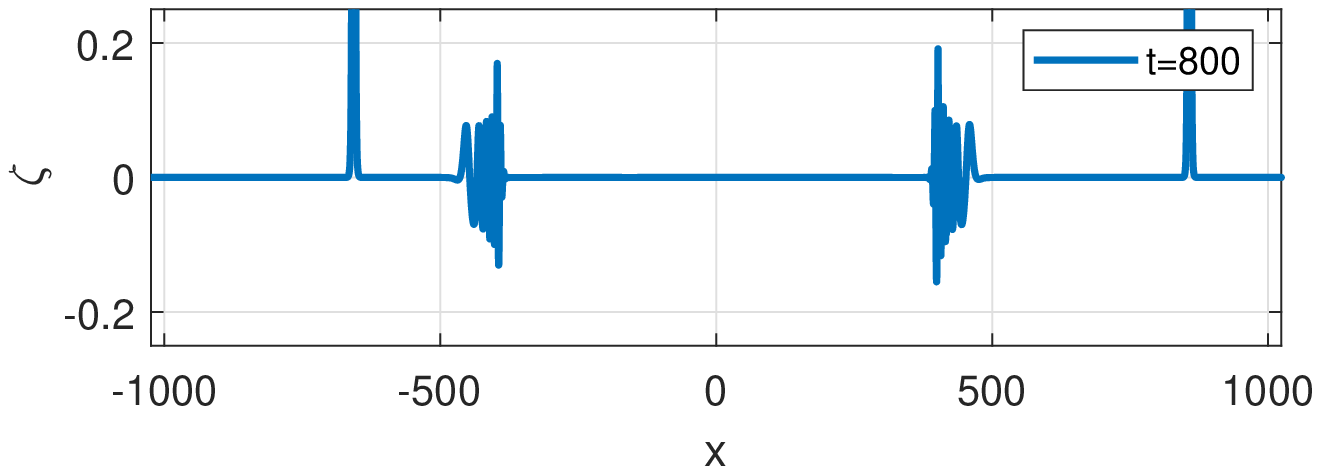}}
\caption{Non-symmetric head-on collision of CSW's. Magnifications of the numerical solution of Figure \ref{fdds5_12}.}
\label{fdds5_12m}
\end{figure}

\begin{figure}[htbp]
\centering
\subfigure[]
{\includegraphics[width=6.27cm,height=5.05cm]{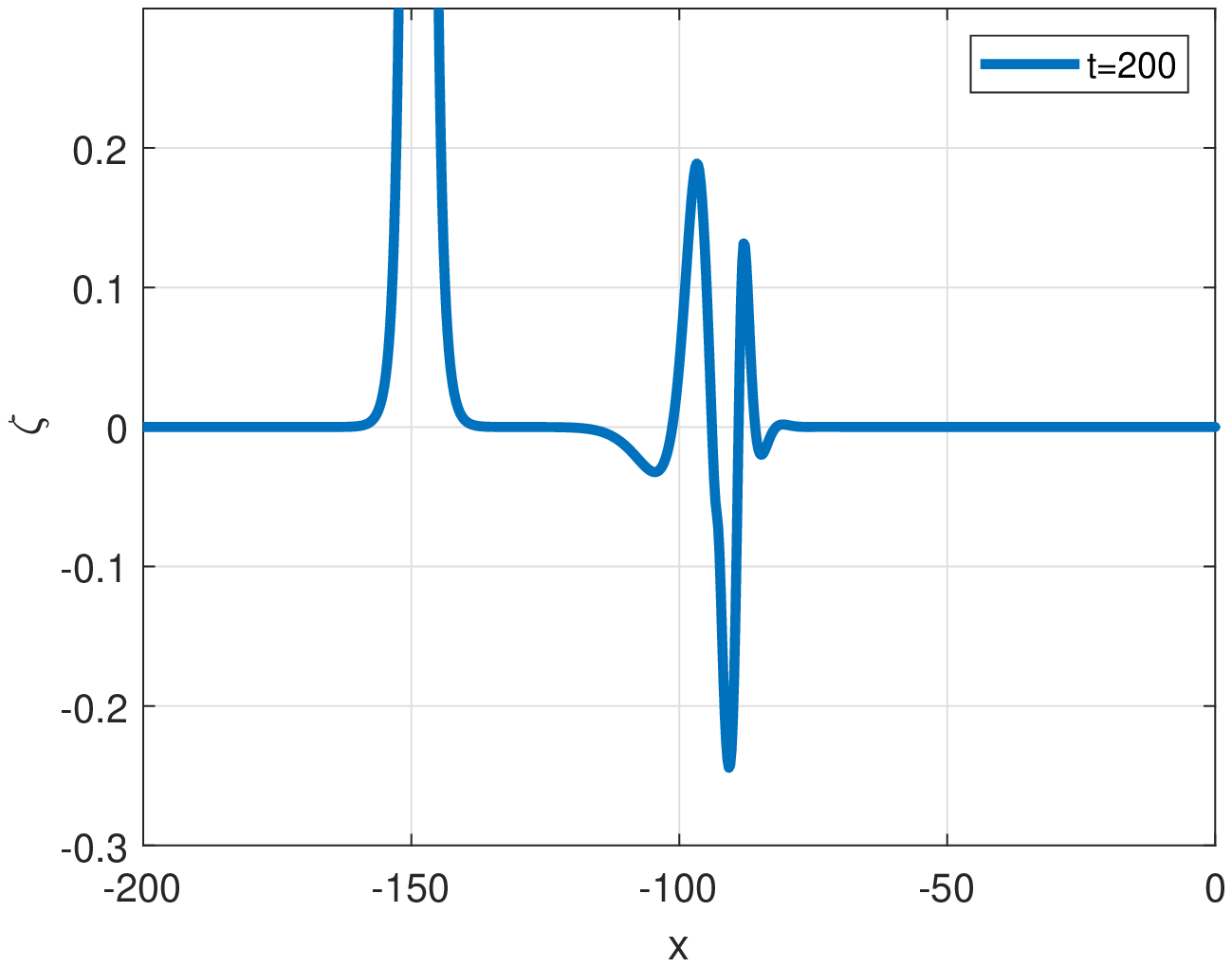}}
\subfigure[]
{\includegraphics[width=6.27cm,height=5.05cm]{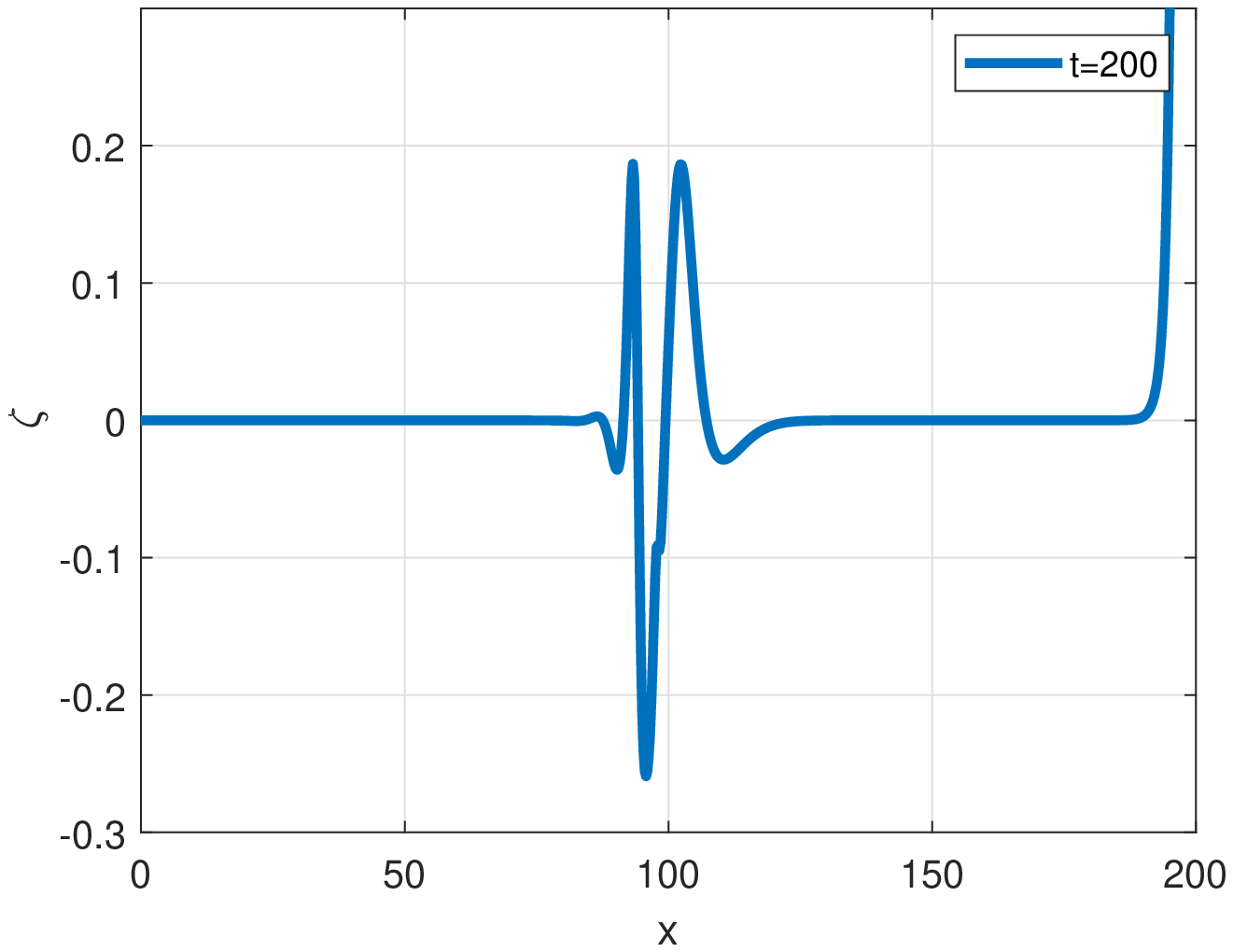}}
\subfigure[]
{\includegraphics[width=6.27cm,height=5.05cm]{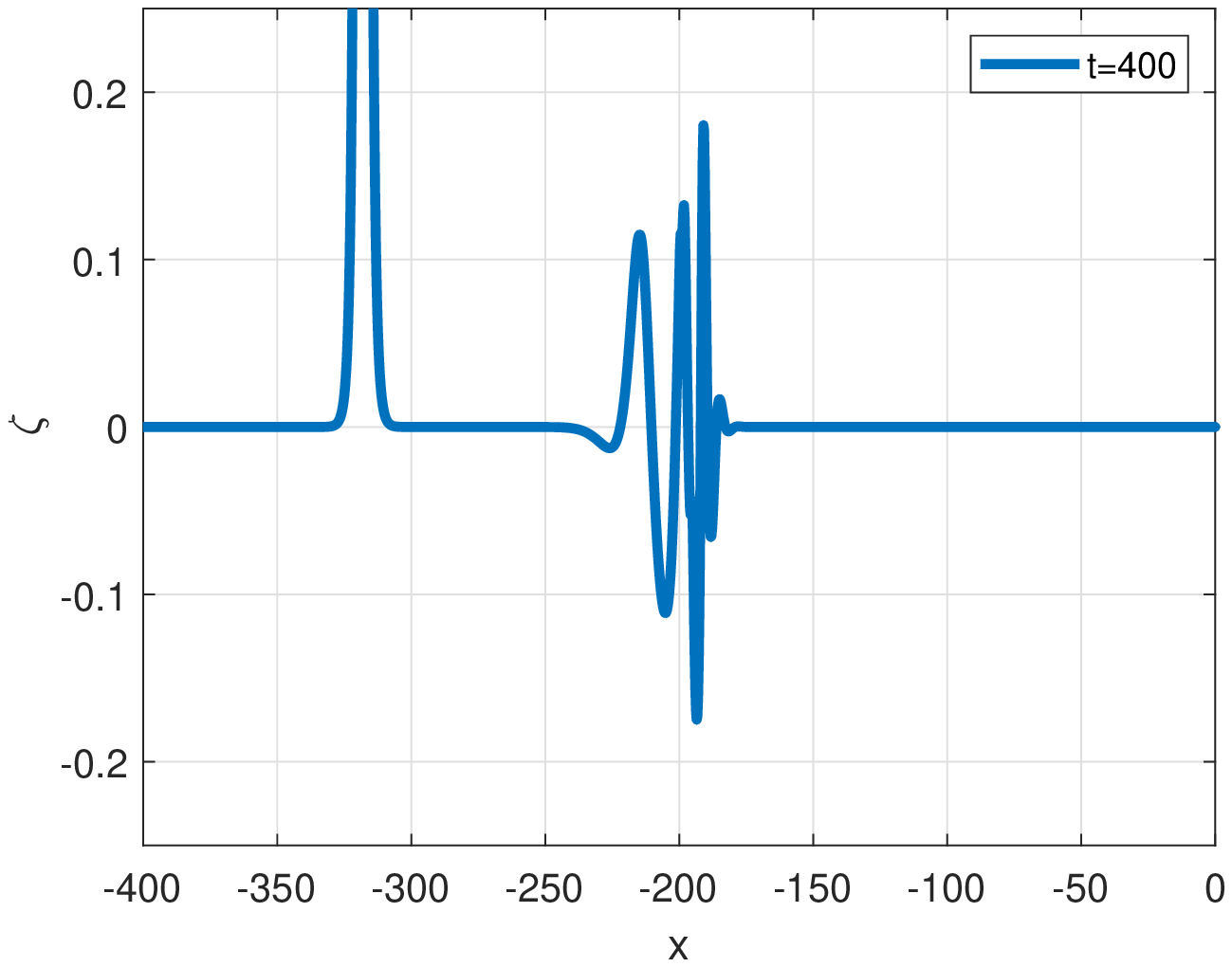}}
\subfigure[]
{\includegraphics[width=6.27cm,height=5.05cm]{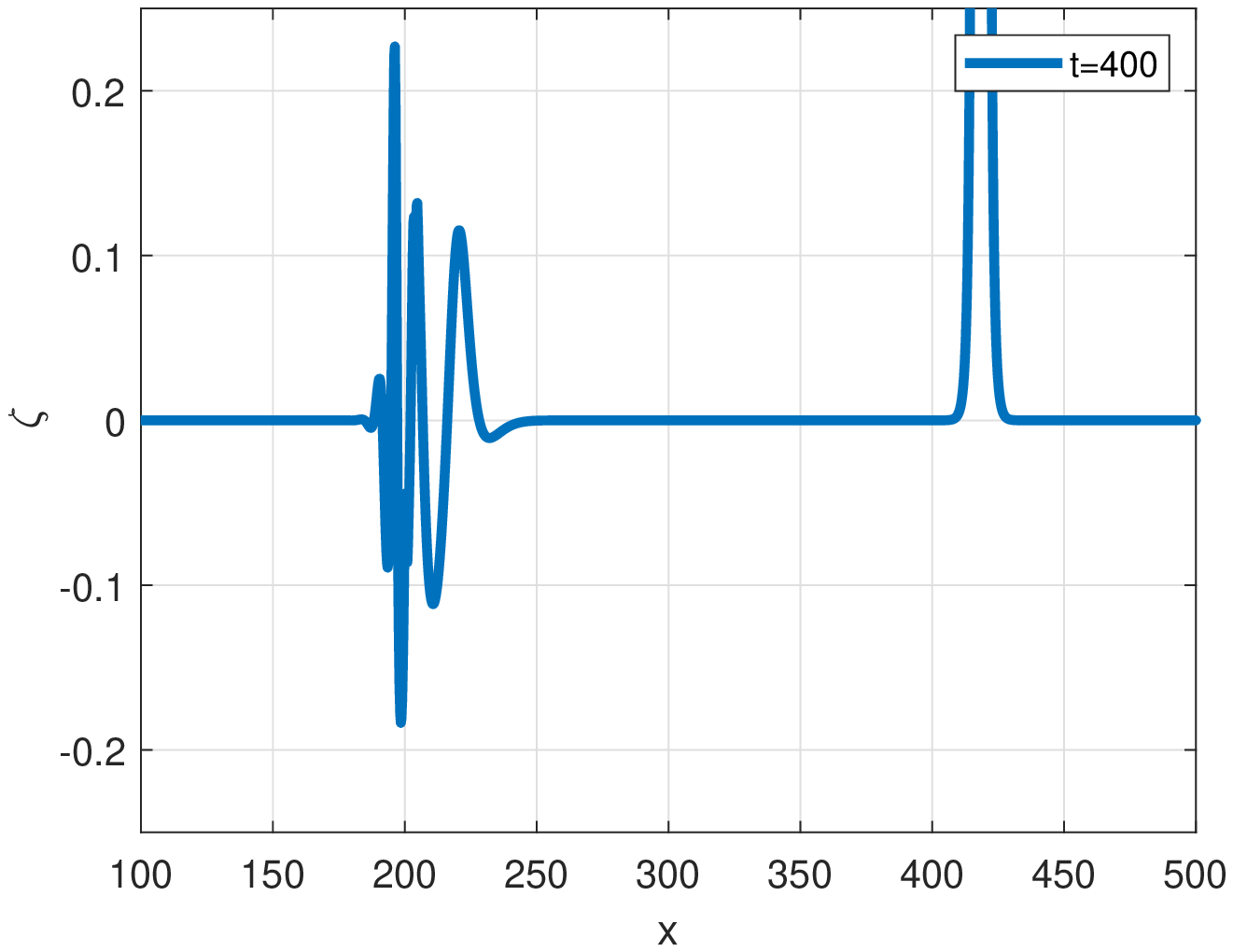}}
\subfigure[]
{\includegraphics[width=6.27cm,height=5.05cm]{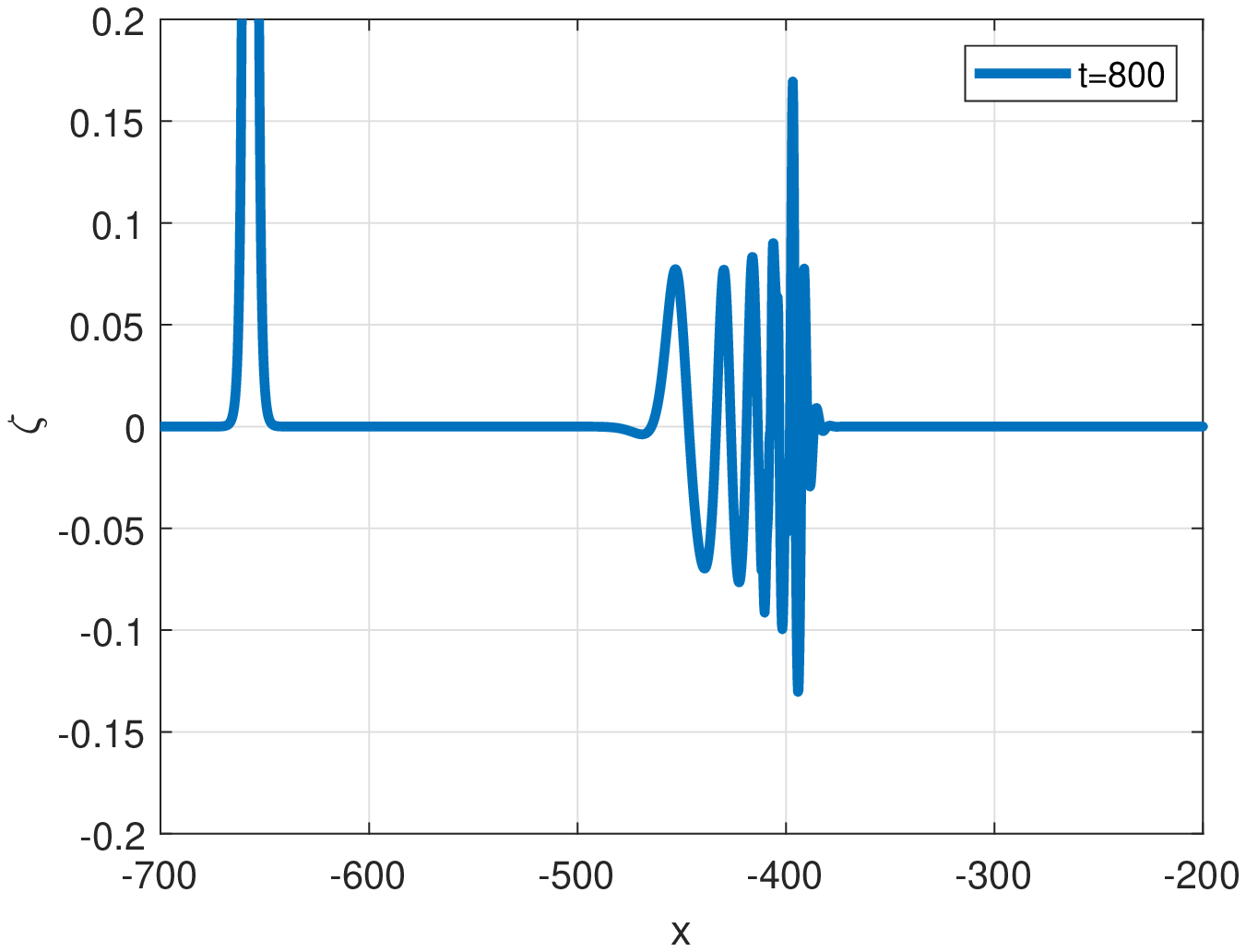}}
\subfigure[]
{\includegraphics[width=6.27cm,height=5.05cm]{symho_t800m2.eps}}
\caption{Non-symmetric head-on collision of CSW's. Magnifications of the numerical solution of Figure \ref{fdds5_12m}.}
\label{fdds5_12m1}
\end{figure}

\begin{figure}[htbp]
\centering
\subfigure[]
{\includegraphics[width=6.27cm,height=5.05cm]{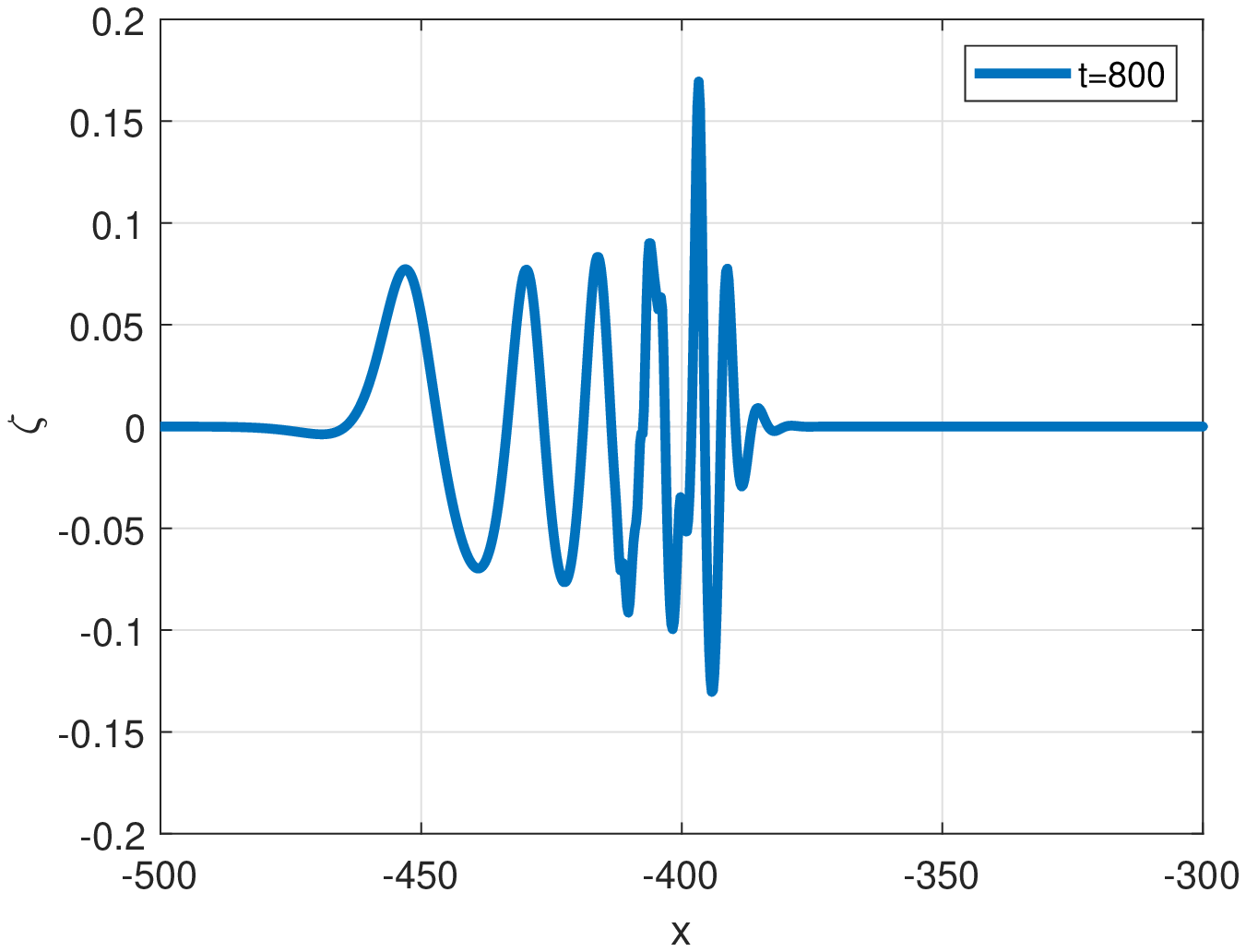}}
\subfigure[]
{\includegraphics[width=6.27cm,height=5.05cm]{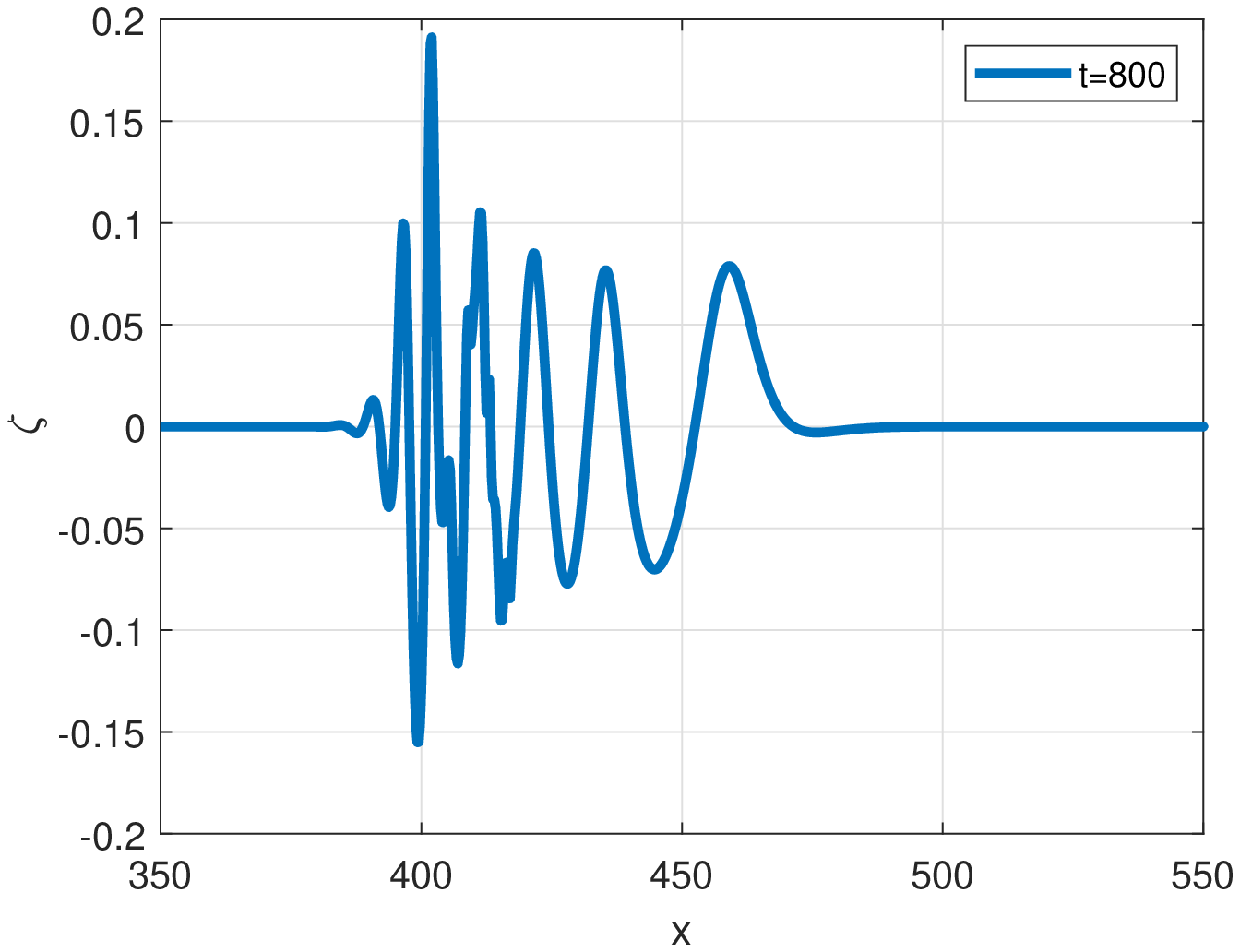}}
\caption{Non-symmetric head-on collision of CSW's. Magnifications of the numerical solution of Figure \ref{fdds5_12m1}(e),(f).}
\label{fdds5_12m2}
\end{figure}

\begin{figure}[htbp]
\centering
\subfigure[]
{\includegraphics[width=\columnwidth]{nsymho_amp.eps}}
\subfigure[]
{\includegraphics[width=\columnwidth]{nsymho_speed.eps}}
\caption{Non-symmetric head-on collision of CSW's. Evolution of amplitude (a), and speed (b) errors of the right-traveling emerging solitary wave ($\zeta$ component of the numerical approximation); cf. Figure \ref{fdds5_12}.}
\label{fdds5_12b}
\end{figure}

\begin{figure}[htbp]
\centering
\subfigure[]
{\includegraphics[width=\columnwidth]{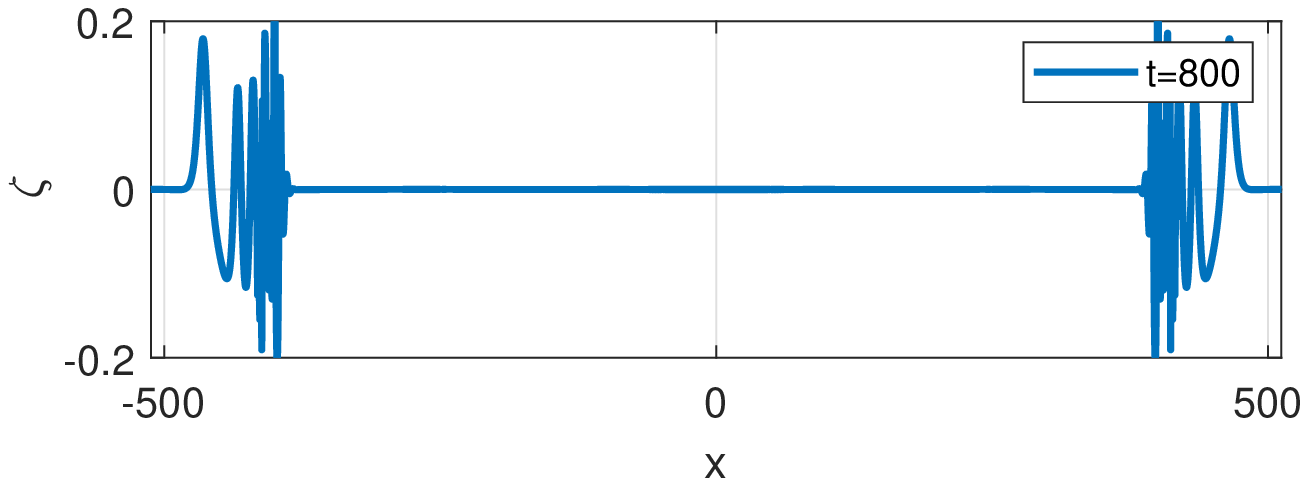}}
\subfigure[]
{\includegraphics[width=\columnwidth]{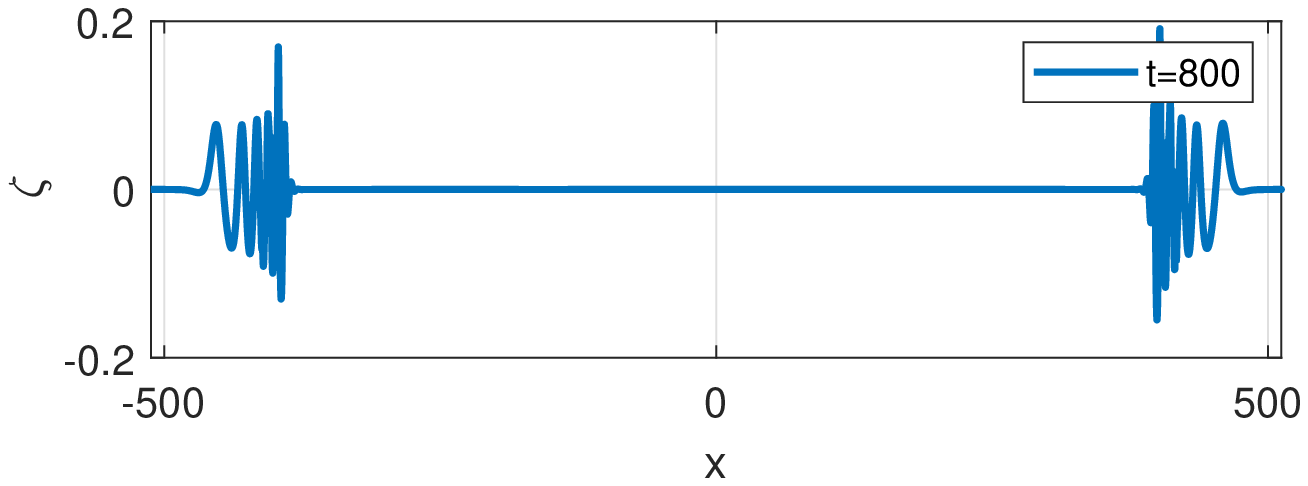}}
\caption{Symmetric vs. non-symmetric head-on collisions of CSW's. (a) Magnification of Figure \ref{fdds5_11m}(d); (b) Magnification of Figure \ref{fdds5_12m}(d).}
\label{fdds5_12mm}
\end{figure}

%\begin{figure}[htbp]
%\centering
%\subfigure[]
%{\includegraphics[width=\columnwidth]{nsymho_amp.eps}}
%\subfigure[]
%{\includegraphics[width=\columnwidth]{nsymho_speed.eps}}
%\caption{Non-symmetric head-on collisions of CSW. Case (A3) with (\ref{52a}). Evolution of amplitude (a) and speed (b) errors of the right-going emerging solitary wave ($\zeta$ component of the numerical solution); cf. Figure \ref{fdds5_12}.}
%\label{fdds5_12b}
%\end{figure}

%\begin{figure}[htbp]
%\centering
%\subfigure[]
%{\includegraphics[width=10cm]{csw6_1.eps}}
%\subfigure[]
%{\includegraphics[width=10cm]{csw6_2.eps}}
%\subfigure[]
%{\includegraphics[width=10cm]{csw6_3.eps}}
%\subfigure[]
%{\includegraphics[width=10cm]{csw6_4.eps}}
%\caption{Non-symmetric head-on collisions of CSW. Case (A3) with (\ref{52a}).  (a)-(c) $\zeta$ component of the numerical solution; (d) Magnification of (c).}
%\label{fdds5_12}
%\end{figure}

The second experiment concerns a non-symmetric head-on collision. Here the initial condition is a superposition of two approximate CSW profiles, one with amplitude $11.101$, speed $c_{s}^{(1)}=c_{\gamma,\delta}+0.5\approx 1.0976$ (traveling to the right) and centered at $x=-20$, and a second one with amplitude $4.8324$, absolute value of speed equal to $c_{s}^{(2)}=c_{\gamma,\delta}+0.25\approx 0.8476$ traveling to the left and centered at $x=20$. The evolution at  several time instances of the corresponding numerical approximation of $\zeta$ is shown in Figures \ref{fdds5_12} and \ref{fdds5_12m}. After the non-symmetric interaction, the taller emerging CSW has smaller amplitude than before the collision (with a relative difference of about $6.3\times 10^{-4}$, see Figure \ref{fdds5_12b}(a)) but the amplitude of the shorter emerging CSW has decreased more (about $5.2\times 10^{-3}$). (Consequently, the emerging CSW's are slower than their corresponding counterparts before the collision, cf. Figure \ref{fdds5_12b}(b)). The tails trailing the emerging waves are not symmetric either and they seem to have a similar structure to those of the symmetric head-on collision shown in Figure \ref{fdds5_11}: A wavelet-type form in front and a strong dispersive component behind. This is shown in Figures \ref{fdds5_12m1} and \ref{fdds5_12m2}. For a comparison with the symmetric case see Figure \ref{fdds5_12mm}.

\subsubsection{Resolution property}
%\begin{figure}[htbp]
%\centering
%\subfigure[]
%{\includegraphics[width=10cm]{csw7_1.eps}}
%\subfigure[]
%{\includegraphics[width=10cm]{csw7_2.eps}}
%\subfigure[]
%{\includegraphics[width=10cm]{csw7_3.eps}}
%\subfigure[]
%{\includegraphics[width=10cm]{csw7_4.eps}}
%\subfigure[]
%{\includegraphics[width=10cm]{csw7_5.eps}}
%\caption{Resolution property from a Gaussian pulse. Case (A3) with (\ref{52a}).  (a)-(d) $\zeta$ component of the numerical solution; (e) Magnification of (d).}
%\label{fdds5_13}
%\end{figure}
In addition to the evidence of generation of more than one CSW observed in the evolution of initial profiles with larger perturbations in section \ref{sec522}, the resolution into solitary waves also appears in the evolution of other types of initial conditions.
This is illustrated in Figures \ref{fdds5_13} and \ref{fdds5_13a}, which represent the temporal behaviour of the $\zeta$-component of the numerical solution emerging from an intial Gaussian pulse $\zeta(x,0)=Ae^{-\tau x^{2}}, u(x,0)=\zeta(x,0)$, with $A=2, \tau=0.01$.
In this example, a train of solitary waves of elevation is formed, followed by a left-traveling dispersive structure, see Figure \ref{fdds5_13c}.
The evolution of the maximum of the $\zeta$-component of the numerical solution is shown in Figure \ref{fdds5_13b}(a). It stabilizes to around $4.1790$, which is the amplitude of the leading solitary wave profile of the train. The speed, computed from the point where the maximum is attained, cf. \cite{DougalisDLM2007}, is shown in Figure \ref{fdds5_13b}(a), and is about $8.1779\times 10^{-1}$.

\begin{figure}[htbp]
\centering
\subfigure[]
{\includegraphics[width=\columnwidth]{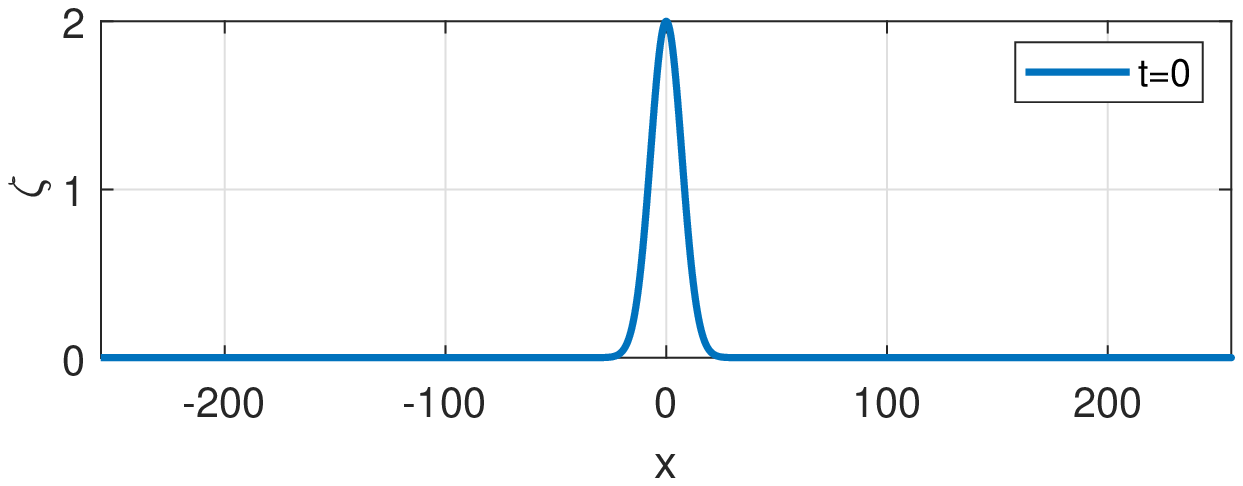}}
%\subfigure[]
%{\includegraphics[width=\columnwidth]{overcol_t200.eps}}
\subfigure[]
{\includegraphics[width=\columnwidth]{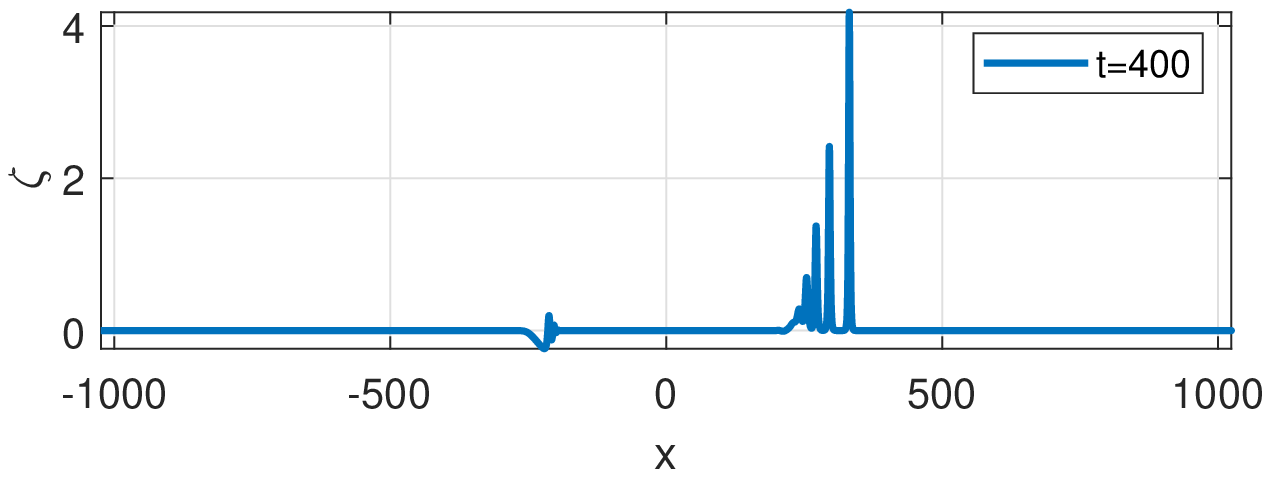}}
%\subfigure[]
%{\includegraphics[width=\columnwidth]{overcol_t600.eps}}
\subfigure[]
{\includegraphics[width=\columnwidth]{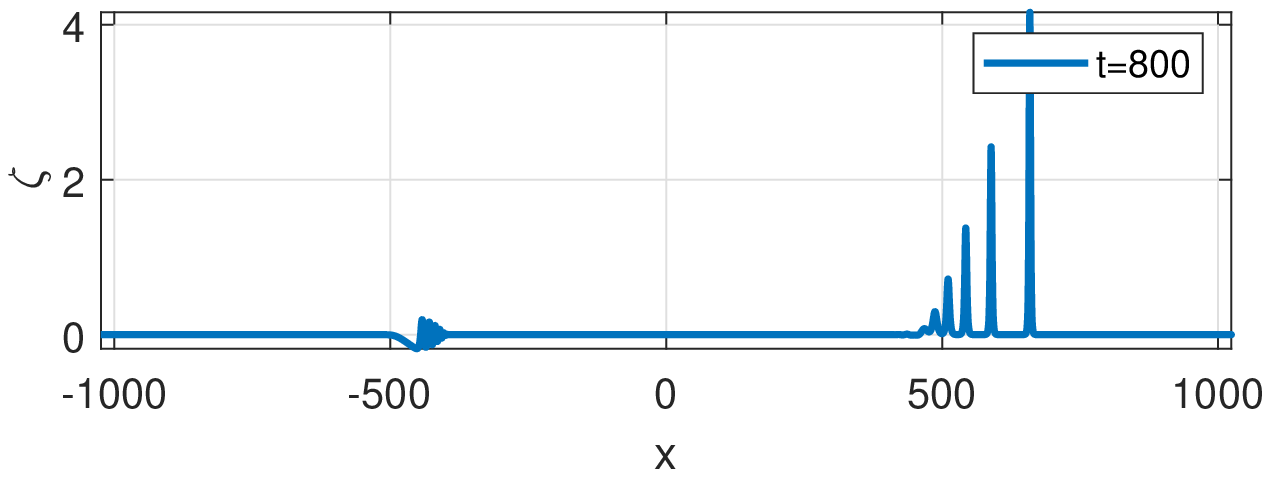}}
%\subfigure[]
%{\includegraphics[width=\columnwidth]{csw4_4.eps}}
\caption{Resolution property. Initial Gaussian pulse $\zeta(x,0)=Ae^{-\tau x^{2}}, u(x,0)=\zeta(x,0)$, with $A=2, \tau=0.01$. Case (A3).  (a)-(c) $\zeta$ component of the numerical solution.}
\label{fdds5_13}
\end{figure}

\begin{figure}[htbp]
\centering
\subfigure[]
{\includegraphics[width=\columnwidth]{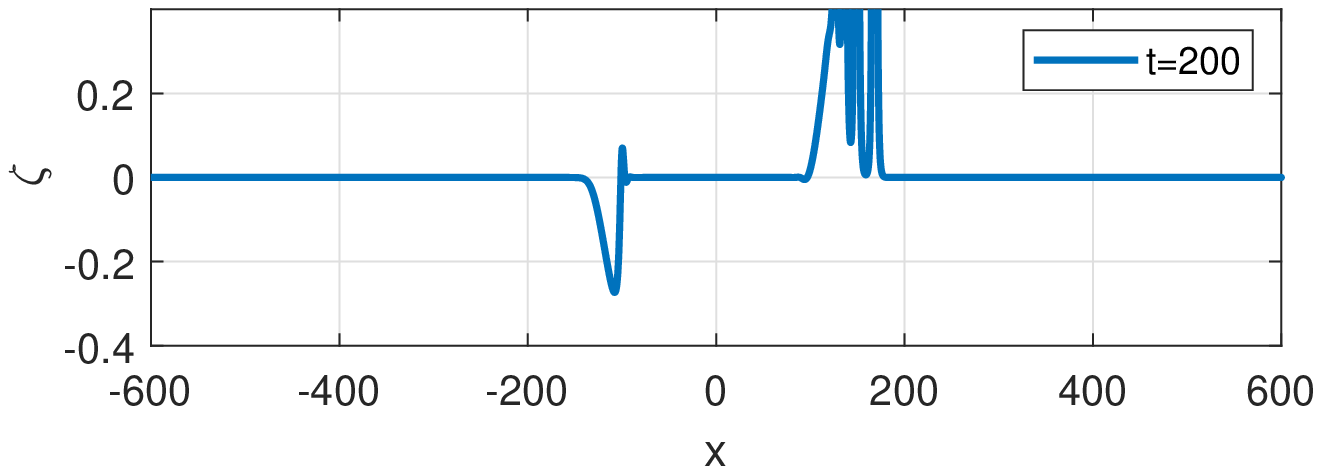}}
\subfigure[]
{\includegraphics[width=\columnwidth]{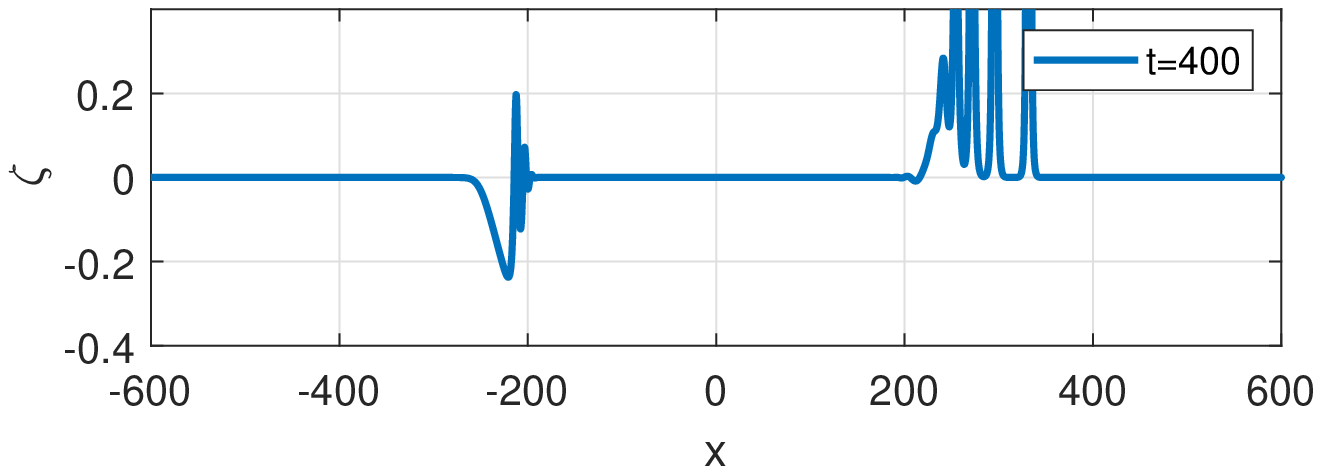}}
\subfigure[]
{\includegraphics[width=\columnwidth]{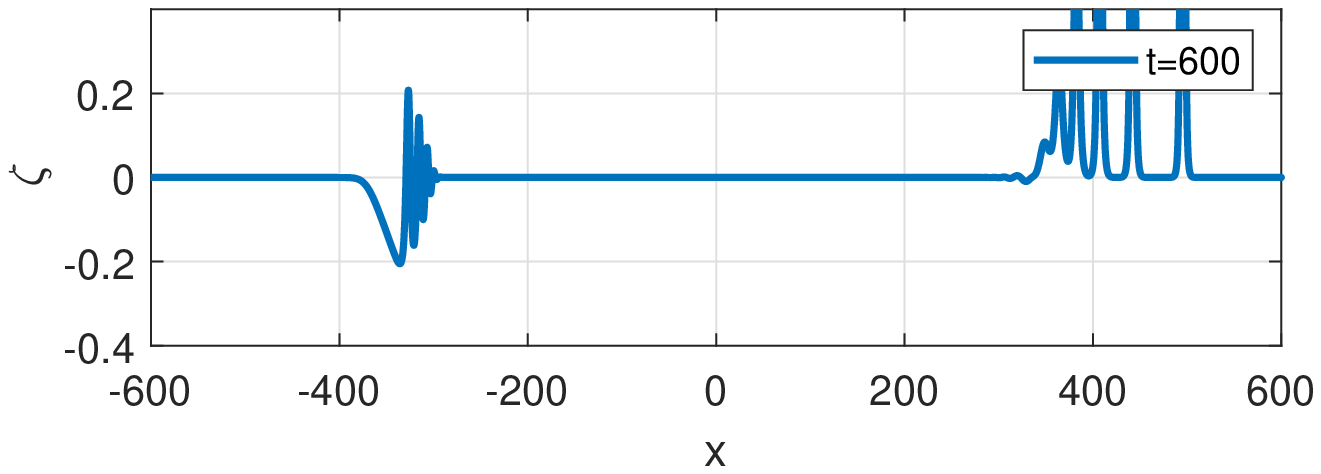}}
\subfigure[]
{\includegraphics[width=\columnwidth]{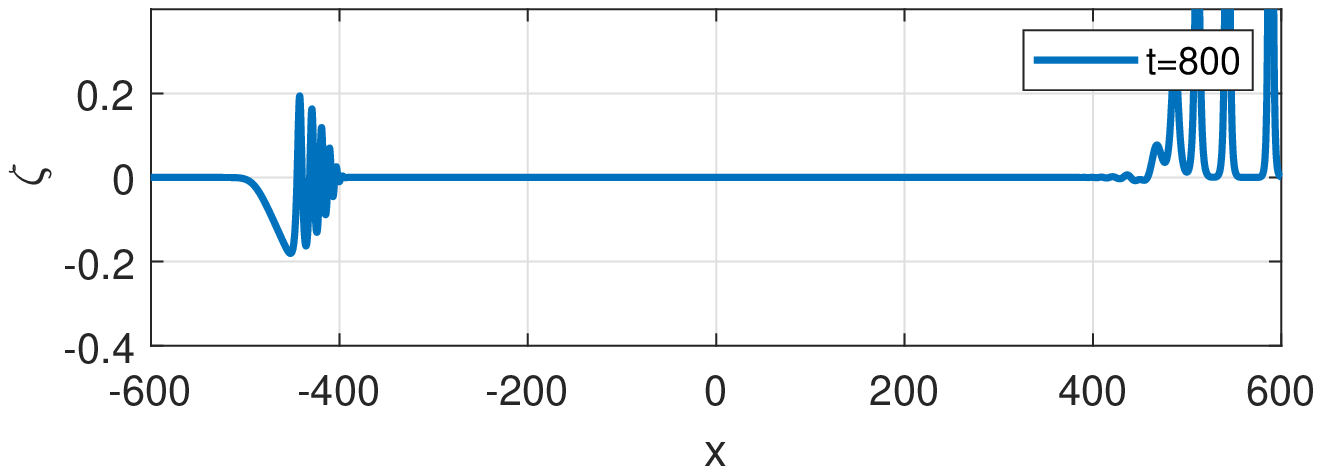}}
\caption{Resolution property. Initial Gaussian pulse $\zeta(x,0)=Ae^{-\tau x^{2}}, u(x,0)=\zeta(x,0)$, with $A=2, \tau=0.01$. Case (A3).  (a)-(c) $\zeta$ component of the numerical solution (magnified).}
\label{fdds5_13a}
\end{figure}

\begin{figure}[htbp]
\centering
{\includegraphics[width=\columnwidth]{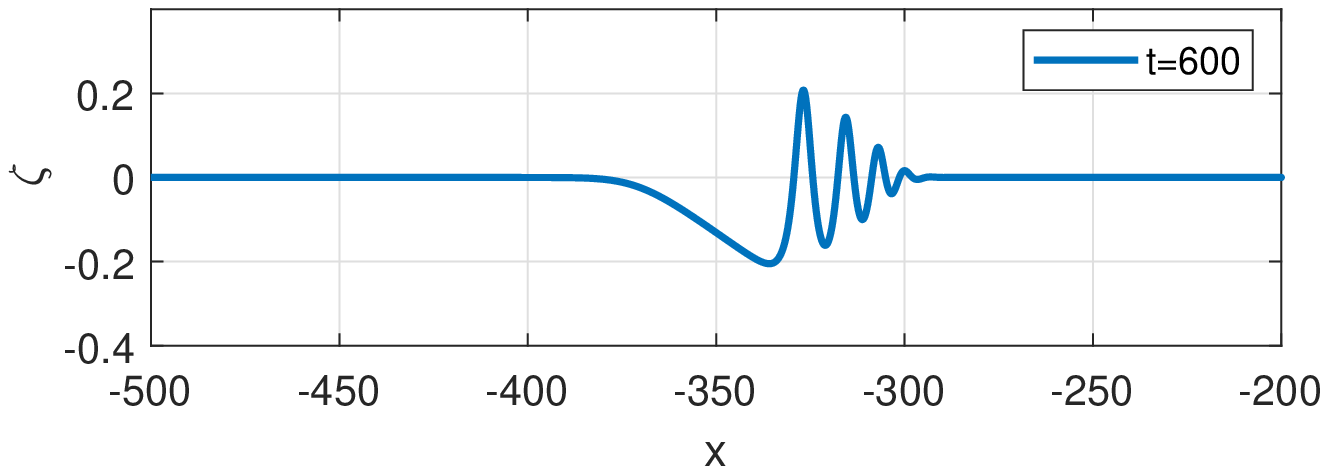}}
\subfigure[]
{\includegraphics[width=\columnwidth]{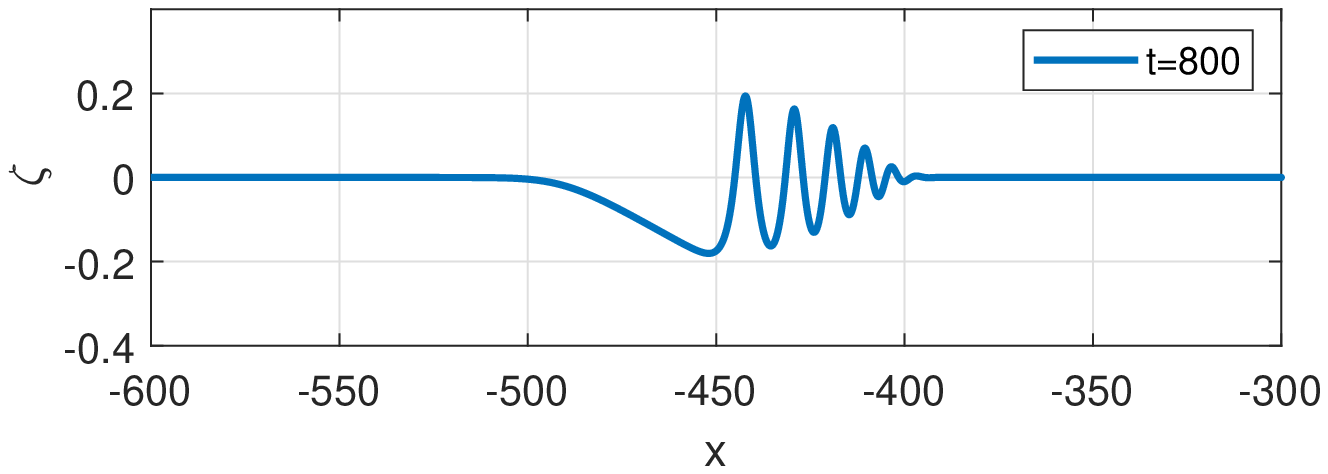}}
\caption{Resolution property. Initial Gaussian pulse $\zeta(x,0)=Ae^{-\tau x^{2}}, u(x,0)=\zeta(x,0)$, with $A=2, \tau=0.01$. Case (A3).  $\zeta$ component of the numerical solution at (a) $t=600$; (b) $t=800$.}
\label{fdds5_13c}
\end{figure}

\begin{figure}[htbp]
\centering
\subfigure[]
{\includegraphics[width=\columnwidth]{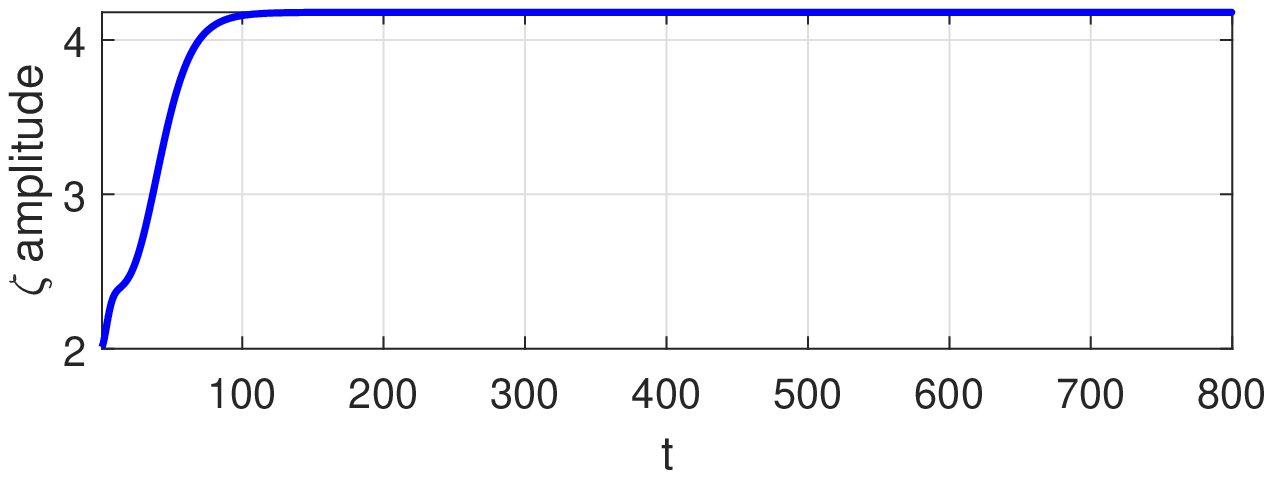}}
\subfigure[]
{\includegraphics[width=\columnwidth]{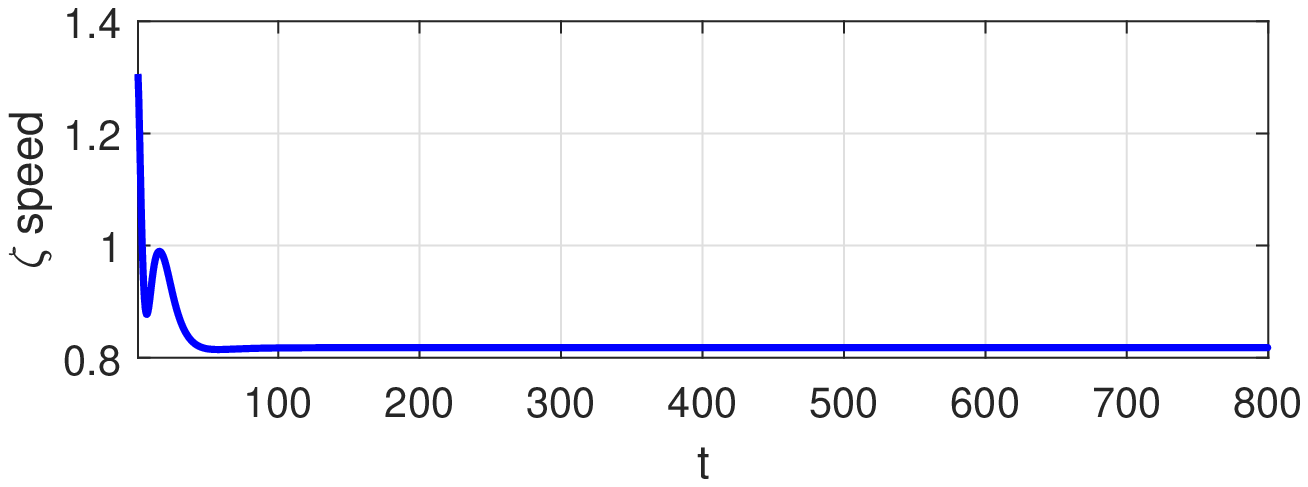}}
\caption{Resolution property. Initial Gaussian pulse $\zeta(x,0)=Ae^{-\tau x^{2}}, u(x,0)=\zeta(x,0)$, with $A=2, \tau=0.01$. Case (A3).  Evolution of amplitude (a) and speed (b) of the taller emerging solitary wave in the $\zeta$-component of the numerical solution; cf. Figure \ref{fdds5_13}.}
\label{fdds5_13b}
\end{figure}
%\begin{figure}[htbp]
%\centering
%\subfigure[]
%{\includegraphics[width=10cm]{csw7_1.eps}}
%\subfigure[]
%{\includegraphics[width=10cm]{csw7_2.eps}}
%\subfigure[]
%{\includegraphics[width=10cm]{csw7_3.eps}}
%\subfigure[]
%{\includegraphics[width=10cm]{csw7_4.eps}}
%\caption{Resolution property from a Gaussian pulse. Case (A3).  (a)-(d) $\zeta$ component of the numerical solution.}
%\label{fdds5_13}
%\end{figure}
%\begin{figure}[htbp]
%\centering
%\subfigure[]
%{\includegraphics[width=10cm]{csw7_5.eps}}
%\caption{Resolution property from a Gaussian pulse. Case (A3).  Magnification of (d) of Figure \ref{fdds5_13}.}
%\label{fdds5_13a}
%\end{figure}

\subsection{GSW dynamics. Numerical experiments}
\label{sec54}
In this section we illustrate some aspects of the dynamics of GSW's in the generic case (A2). As in the previous section, the experiments are concerned with perturbations and collisions of GSW's.
\subsubsection{Perturbations of GSW}
\begin{figure}[htbp]
\centering
\subfigure[]
{\includegraphics[width=\columnwidth]{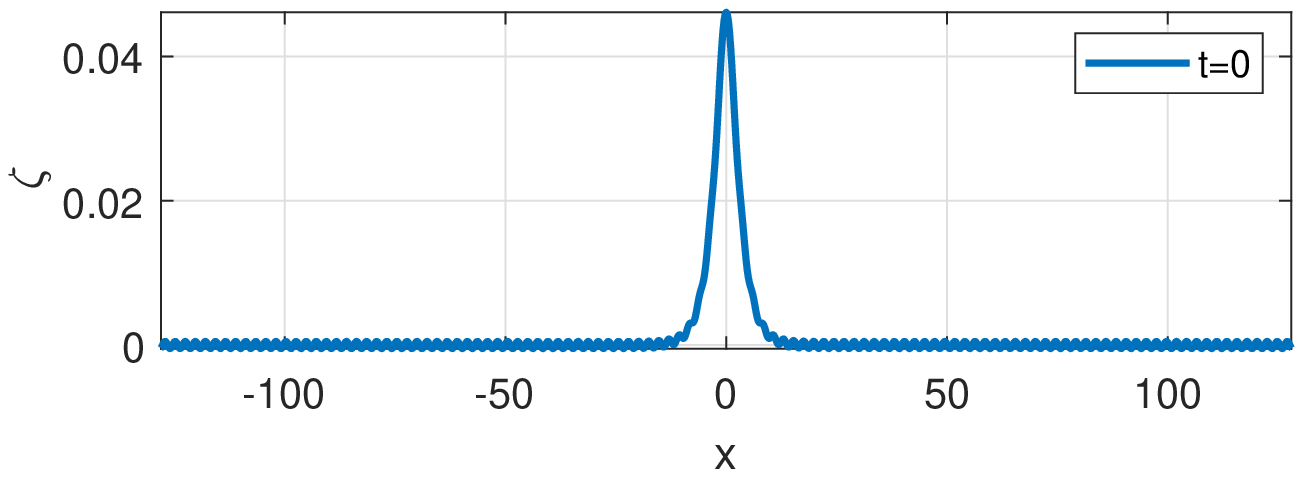}}
\subfigure[]
{\includegraphics[width=\columnwidth]{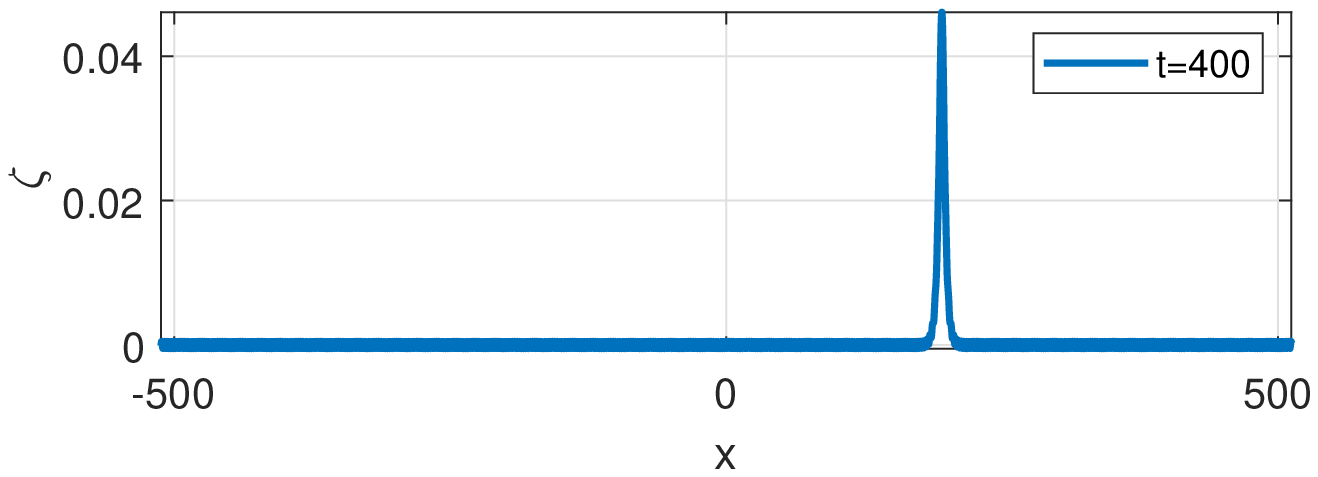}}
\subfigure[]
{\includegraphics[width=\columnwidth]{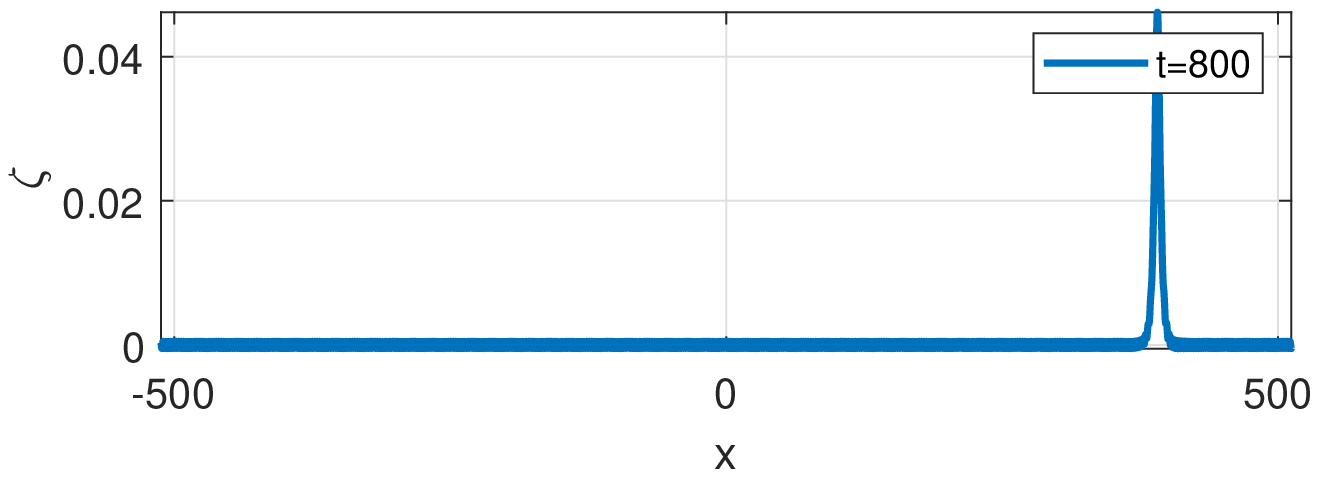}}
%\subfigure[]
%{\includegraphics[width=10cm]{gsw8_4.eps}}
%\subfigure[]
%{\includegraphics[width=10cm]{gsw8_5.eps}}
\caption{Small perturbation of a GSW. Case (A2) with (\ref{53a}), (\ref{53b}) with $A=1.01$.  (a) Perturbed GSW profile; (b), (c) $\zeta$ component of the numerical solution.}
\label{fdds5_14}
\end{figure}

%\subsubsection{Perturbations of GSW}
We consider the parameters
\begin{eqnarray}
&&\gamma=0.5, \delta=\frac{\gamma+\sqrt{\gamma^{2}+8}}{2}\approx 1.6861,\nonumber\\
&& c=-1/6, b=1/9, d=4/3, a={\kappa_{1}}(1/6-b-c-d)\approx -0.5083,\label{53a}
\end{eqnarray}
and generate the corresponding approximate GSW, with amplitude $4.5728\times 10^{-2}$ and speed $4.8824\times 10^{-1}$. (The values of $\gamma$ and $\delta$ in (\ref{53a}) are taken appropriately to ensure that the parameters $a, b, c$ and $d$ satisfy the conditions in (A2) in Table \ref{tavle0}.) The first experiment consists of perturbing the two components $(\zeta^{N}_{s},u_{s}^{N})$ of the GSW with the same quantity $A=1.01$, i.~e. considering 
\begin{eqnarray}
\zeta^{N}(0)=A\zeta_{s}^{N},\; u^{N}(0)=Au_{s}^{N},\label{53b}
\end{eqnarray}
as initial condition of the numerical method and monitoring the evolution of the corresponding numerical solution. This is shown in Figure \ref{fdds5_14}. This small perturbation of the GSW generates a new GSW.

\begin{figure}[htbp]
\centering
\subfigure[]
{\includegraphics[width=\columnwidth]{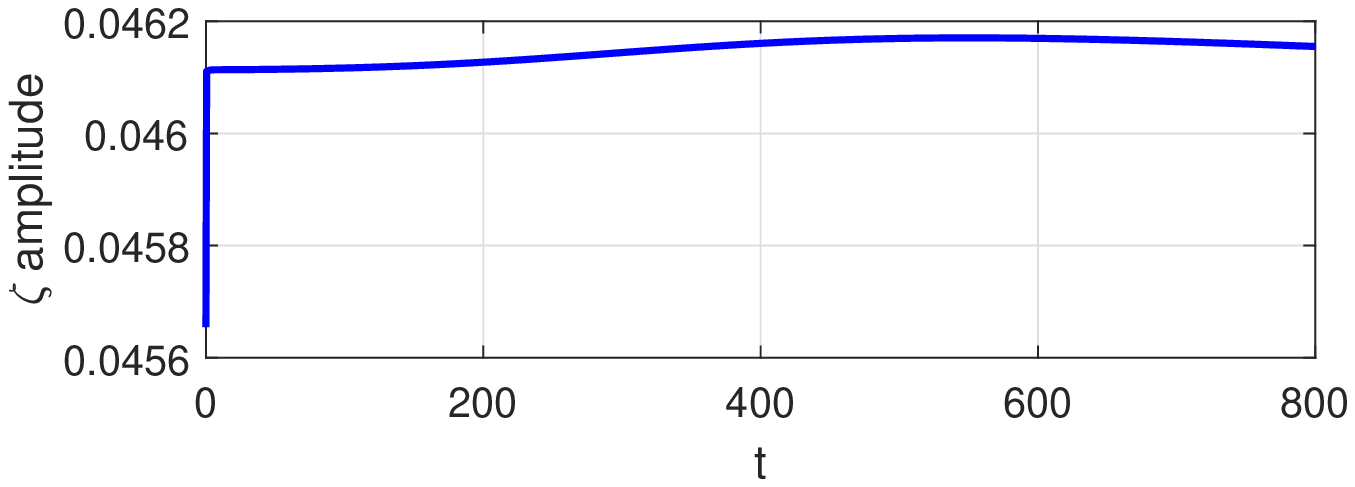}}
\subfigure[]
{\includegraphics[width=\columnwidth]{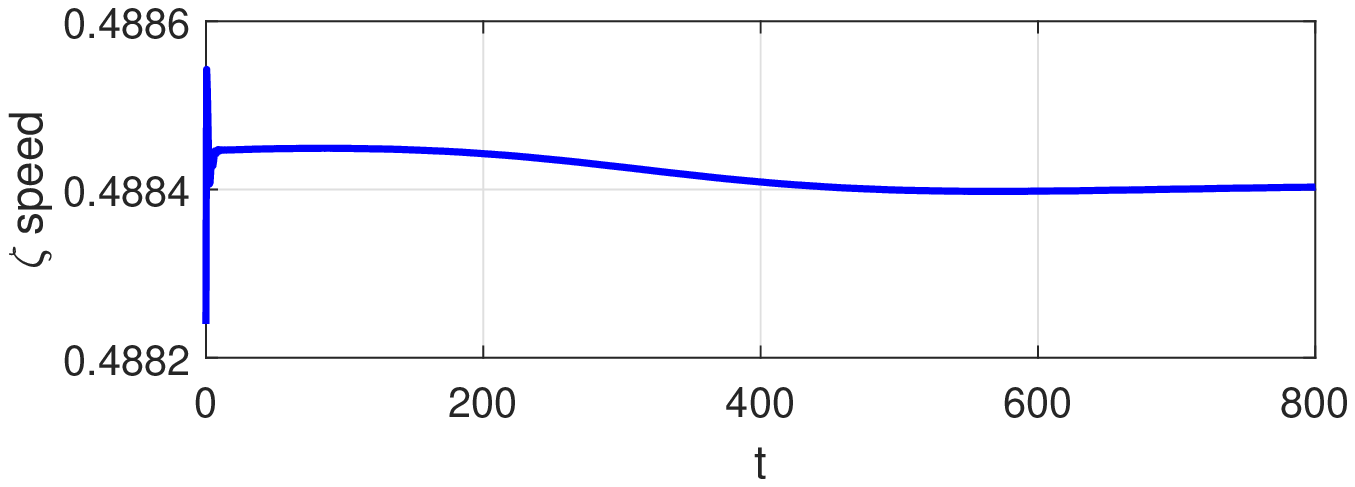}}
\subfigure[]
{\includegraphics[width=\columnwidth]{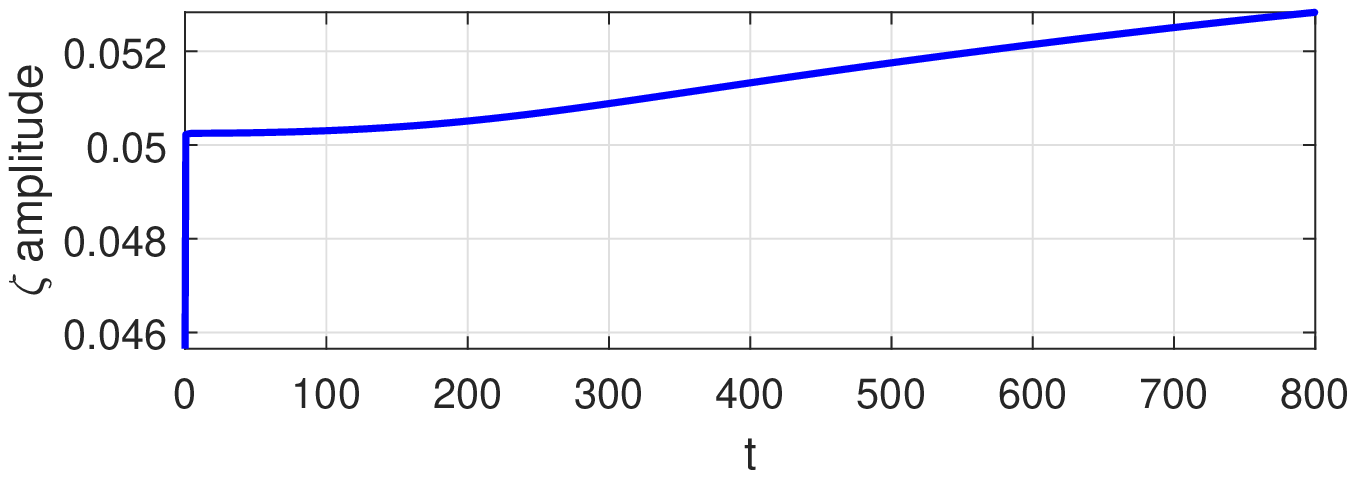}}
\subfigure[]
{\includegraphics[width=\columnwidth]{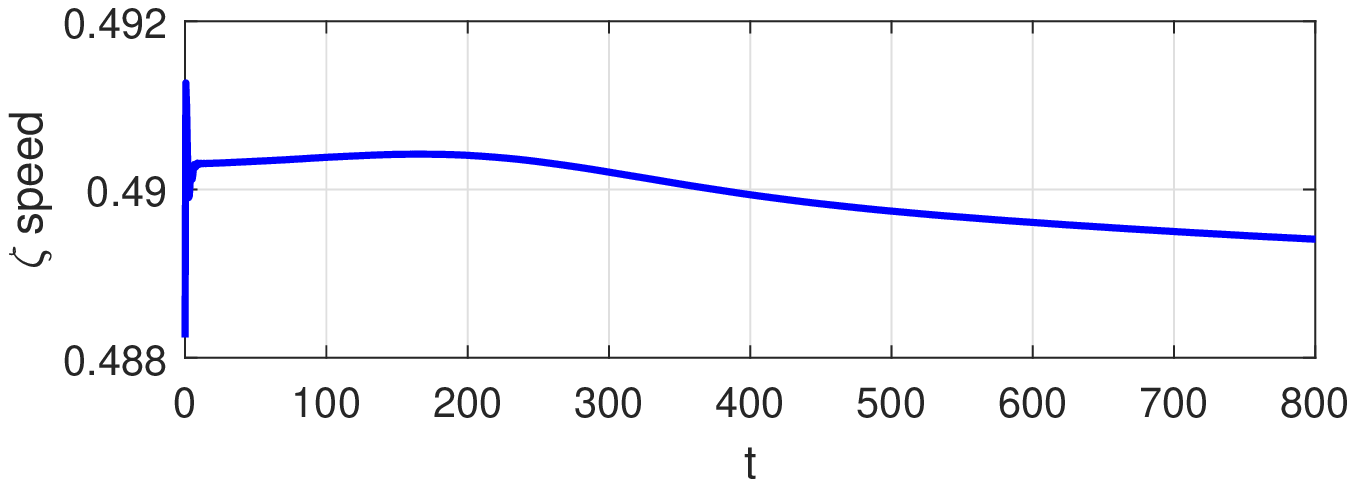}}
\caption{Small perturbation of a GSW. Case (A2) with (\ref{53a}), (\ref{53b}). Evolution of amplitude and speed of the tallest emerging solitary wave ($\zeta$ component of the numerical solution); (a), (b) $A=1.01$; (c), (d) $A=1.1$.}
\label{fdds5_15}
\end{figure}

The evolution of the amplitude and speed of the emerging wave, shown in Figures \ref{fdds5_15}(a),(b), suggests that its parameters stabilize at slightly larger values than those of the initial condition. Specifically, the amplitude of the perturbed initial GSW is of about $4.612\times 10^{-2}$ and that of emerging GSW is between $4.61\times 10^{-2}$ and $4.62\times 10^{-2}$. In the case of the speed, the relative difference is about $3.26\times 10^{-4}$.
The structure of the ripples appears to be the same. Since we expect that the small perturbation will generate some sort of dispersion, for this experiment these are apparently of very small size and are probably hidden in the ripples.

The study of the structure of dispersive tails via the linearized system (\ref{53}), (\ref{54}), made in section \ref{sec53} for the generic case (A3) of classical solitary waves can be adapted to the generic case (A2) for generalized solitary waves. Now we have $p_{2}>0$ in (\ref{514c}) and the cases (i)-(iii) are as follows:
\begin{itemize}
\item[(i)] $p_{1}<0$. Compared to the analogous case in section \ref{sec53}, the only change occurs when $\Delta=0$. Then $P(x)$ has a double root at $x=x_{*}=-p_{2}/2<0$; therefore,  $\phi(x)$ is decreasing for $x\geq 0$.
\item[(ii)] $p_{1}=0$. Here $P(x)$ has only one root $x=x_{*}=p_{3}/p_{2}>0$ and therefore:
\begin{enumerate}
\item If $0\leq x\leq x_{*}$, then $\phi(x)$ is decreasing with $1>\phi(x)\geq \phi(x_{*})$.
\item If $x_{*}\leq x$, then $\phi(x)$ is increasing with $\phi(x_{*})\leq \phi(x)\leq \phi_{*}<1$.
\end{enumerate}
\item[(iii)] $p_{1}>0$.  Then $\Delta>0$ and $P$ has two simple roots, one positive and one negative. The same behaviour as in section \ref{sec53} follows.
\end{itemize}
On the other hand, the case (A1) is different. Now we have
\begin{eqnarray*}
\phi(x)=\sqrt{\frac{(1-\widetilde{a}x)(1-cx)}{1+dx}},\; \widetilde{a}=\frac{a}{\kappa_{1}},\; x\geq 0,
\end{eqnarray*}
and $p_{1}=\widetilde{a}cd>0$ in (\ref{514c}). The case (iii) of section \ref{sec53} applies but note that now $\phi(x)\rightarrow +\infty$ as $x\rightarrow +\infty$ (see Figure \ref{Z1}(a)). Therefore $\phi(x)>1$ for large enough $x>0$. This means that for sufficiently large $|k|$ and from (\ref{514b}) we have
\begin{eqnarray*}
-c_{s}-c_{\gamma,\delta}<v_{-}(k)<-c_{s}<v_{+}(k),
\end{eqnarray*}
and $v_{+}(k)>-c_{s}+c_{\gamma,\delta}$. This implies the existence of plane wave components of small amplitude traveling to the right and in front of the GSW. Similarly, the formation of two dispersive groups, one traveling to the left behind the solitary wave and one to the right in front of it, is suggested by the form of the function $\psi$, given by (\ref{514e}), for the case (A1), and displayed in Figure \ref{Z1}(b). 

\begin{figure}[htbp]
\centering
\subfigure[]
{\includegraphics[width=\columnwidth]{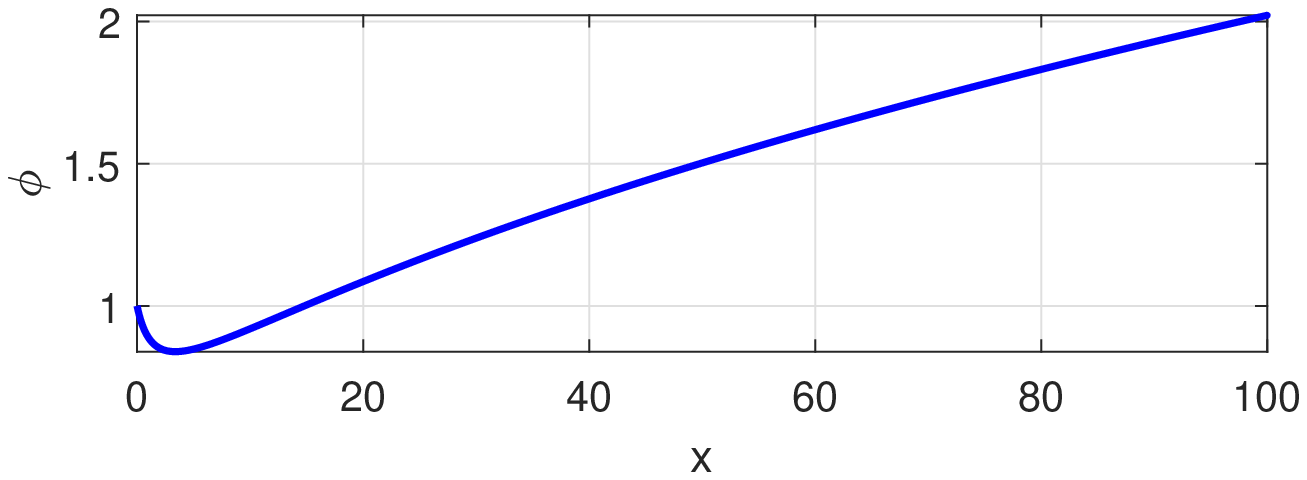}}
\subfigure[]
{\includegraphics[width=\columnwidth]{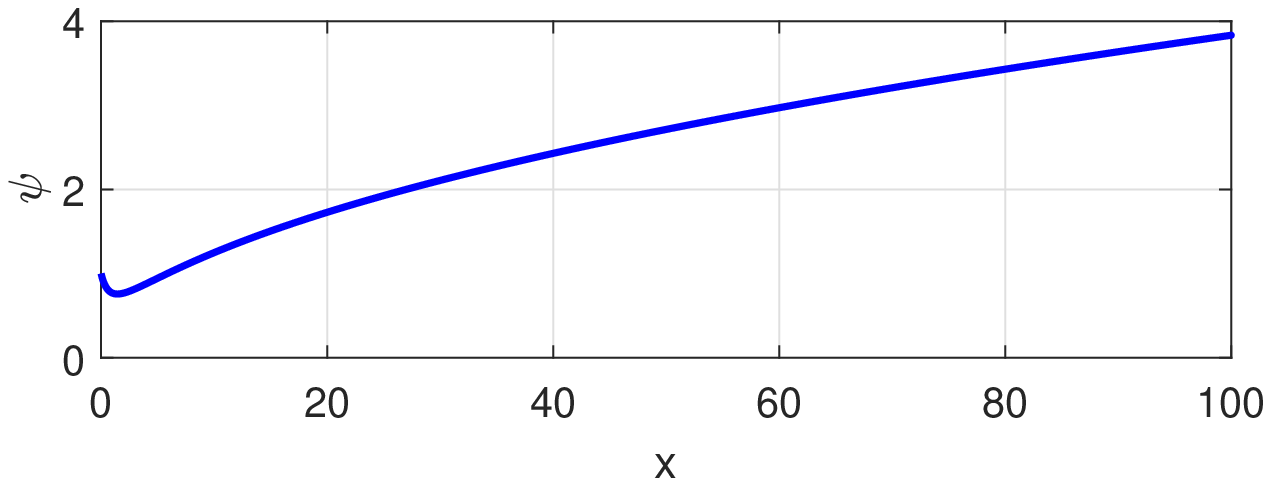}}
\caption{Form of the functions (a) $\phi(x)$ and (b) $\psi(x)$ for the case (A1).}
\label{Z1}
\end{figure}

The range of the size of the perturbations from which the GSW evolves in a stable way seem to be smaller than those for CSW. Figures \ref{fdds5_15}(c),(d) show the evolution of amplitude and speed, respectively, of the numerical solution from a perturbed GSW with (\ref{53a}), (\ref{53b}) and $A=1.1$. Note that by $t=800$ the parameters have not stabilized. 
\begin{figure}[htbp]
\centering
\subfigure[]
{\includegraphics[width=\columnwidth]{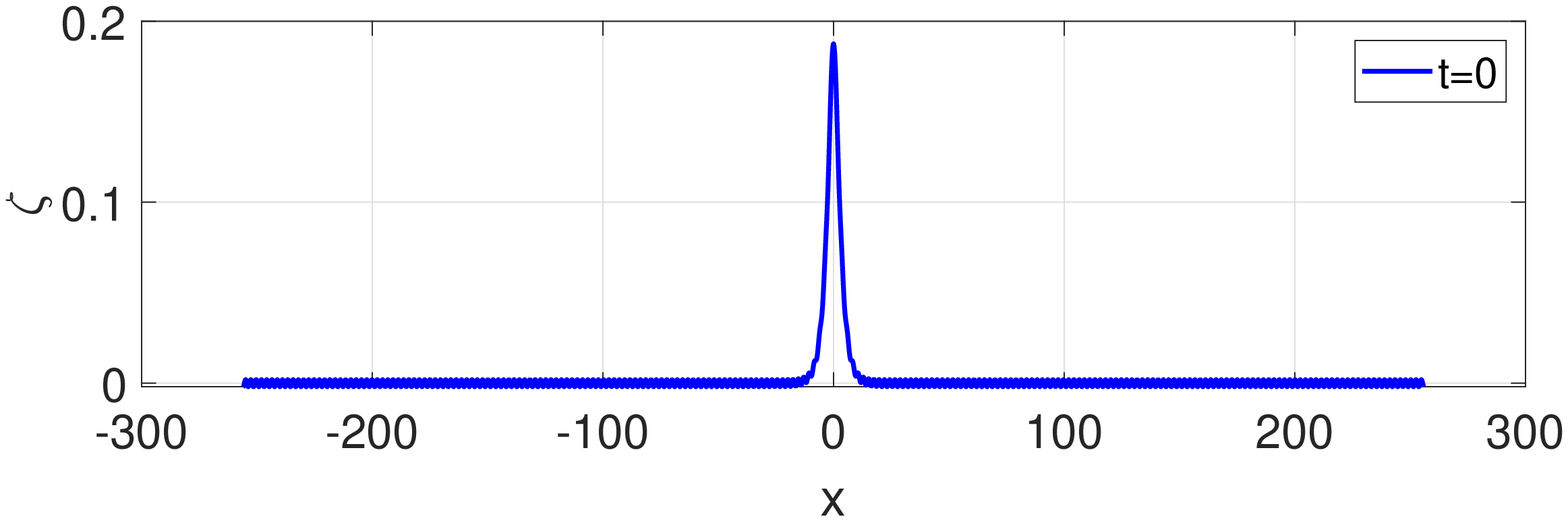}}
\subfigure[]
{\includegraphics[width=\columnwidth]{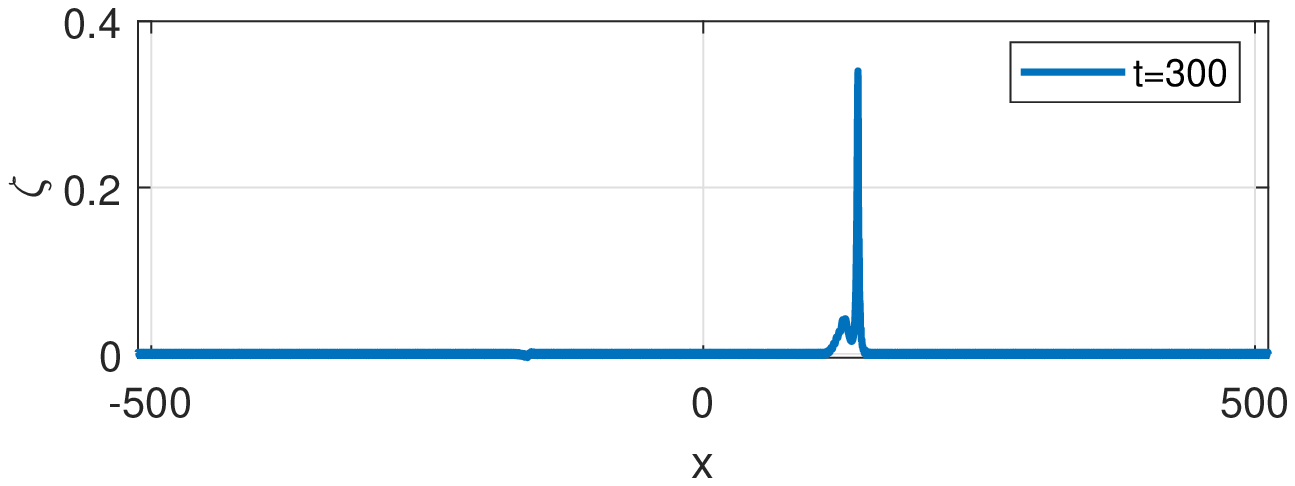}}
\subfigure[]
{\includegraphics[width=\columnwidth]{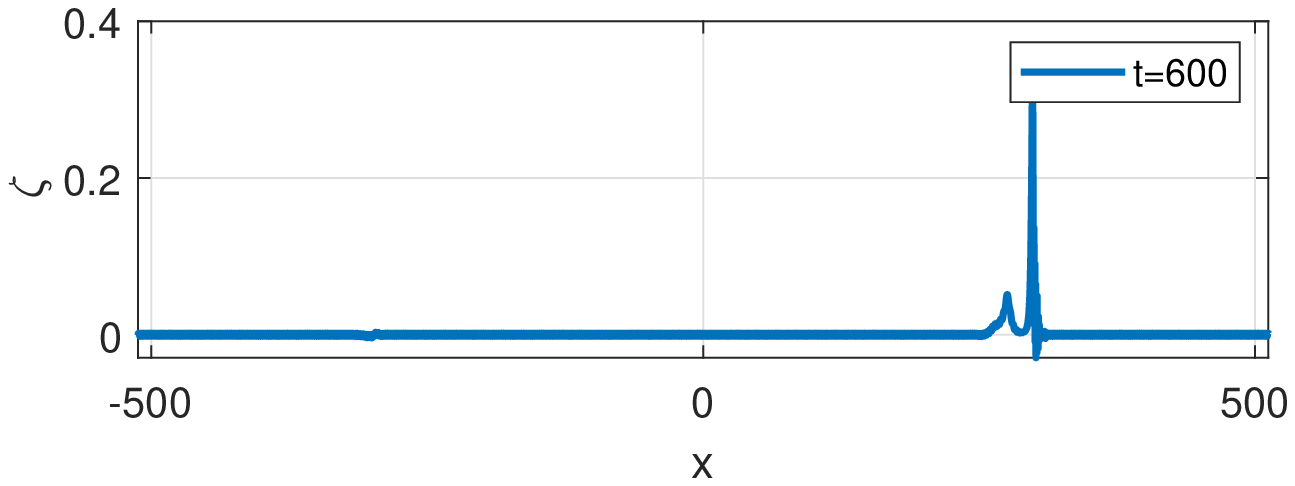}}
\subfigure[]
{\includegraphics[width=\columnwidth]{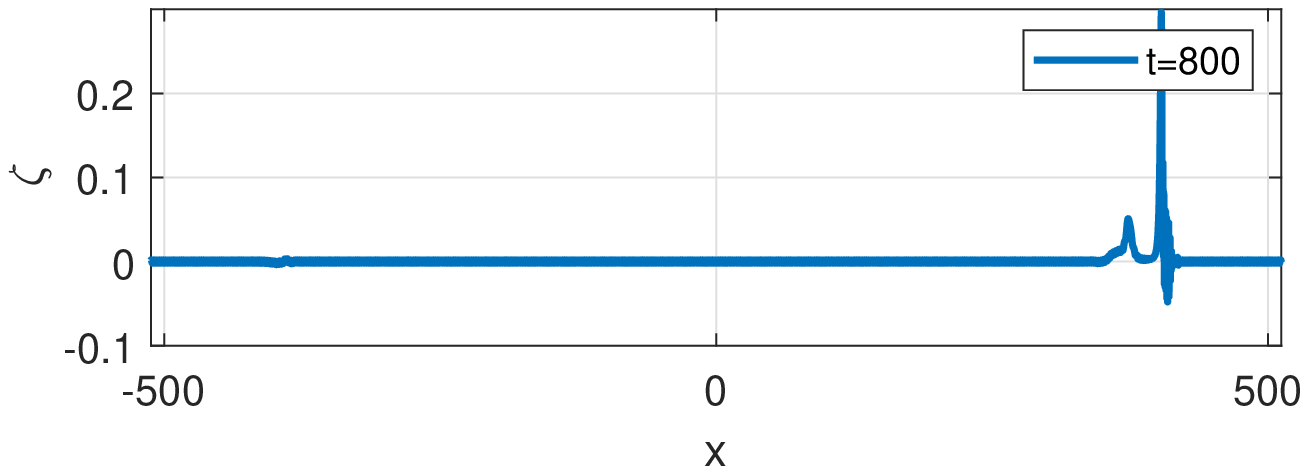}}
%\subfigure[]
%{\includegraphics[width=10cm]{gsw9_1.eps}}
%\subfigure[]
%{\includegraphics[width=10cm]{gsw9_3.eps}}
%\subfigure[]
%{\includegraphics[width=10cm]{gsw9_5.eps}}
%\subfigure[]
%{\includegraphics[width=10cm]{gsw9_5m.eps}}
\caption{Large perturbation of a GSW. Case (A2) with (\ref{53a}), (\ref{53b}) with $A=4.1$. (a) Perturbed GSW profile; (b), (c) $\zeta$ component of the numerical solution.}
\label{fdds5_16}
\end{figure}

As the perturbation parameter $A$ grows, new phenomena in the dynamics appear. Figures \ref{fdds5_16} and \ref{fdds5_16b} show the evolution of the numerical solution generated  from an initial GSW profile of the system with parameters given by (\ref{53a}) multiplied by a perturbation factor $A=4.1$ as in(\ref{53b}) . The experiment suggests (cf. Figure \ref{fdds5_16b}) that the perturbed initial GSW evolves into a new GSW, although by the final time of simulation ($t=800$), Figure \ref{fdds5_17} shows that the amplitude and speed do not seem to have completely stabilized. Behind the main wave, similar structures to those observed in the case of large perturbations of CSW's (cf. Figures \ref{fdds5_4} and \ref{fdds5_6}) seem to be generated, superimposed on the ripples. They are observed in Figures \ref{fdds5_16c}(a),(b), By $t=400$, some perturbation tails have formed in front of the main emerging wave, see Figure \ref{fdds5_16c}(c). This fact and the behaviour of the amplitude and speed of the main pulse observed in Figure \ref{fdds5_17} suggest a possible instability.

\begin{figure}[htbp]
\centering
\subfigure[]
{\includegraphics[width=\columnwidth]{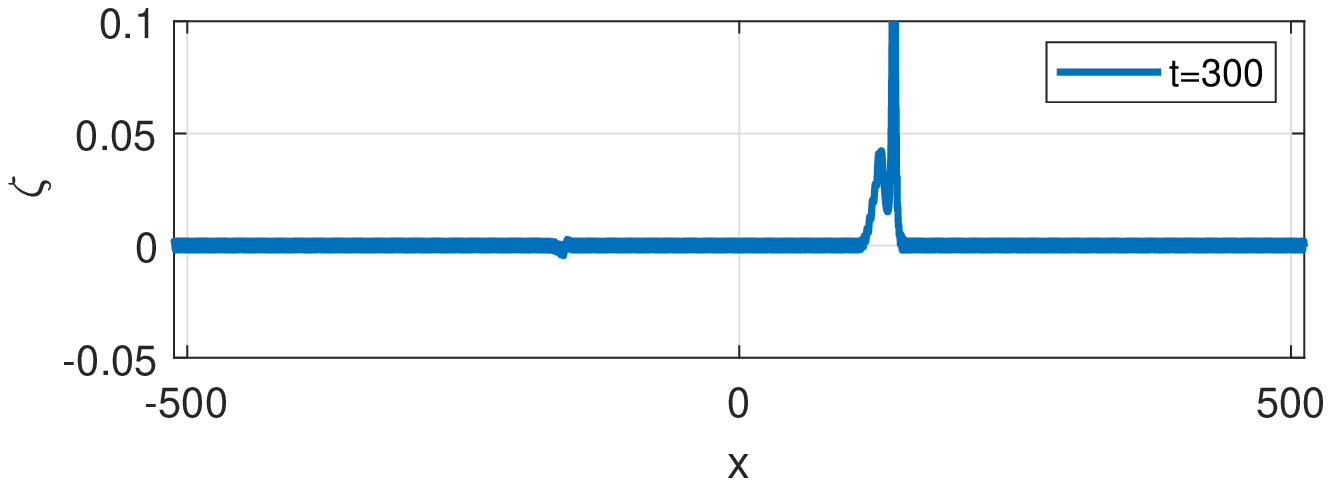}}
\subfigure[]
{\includegraphics[width=\columnwidth]{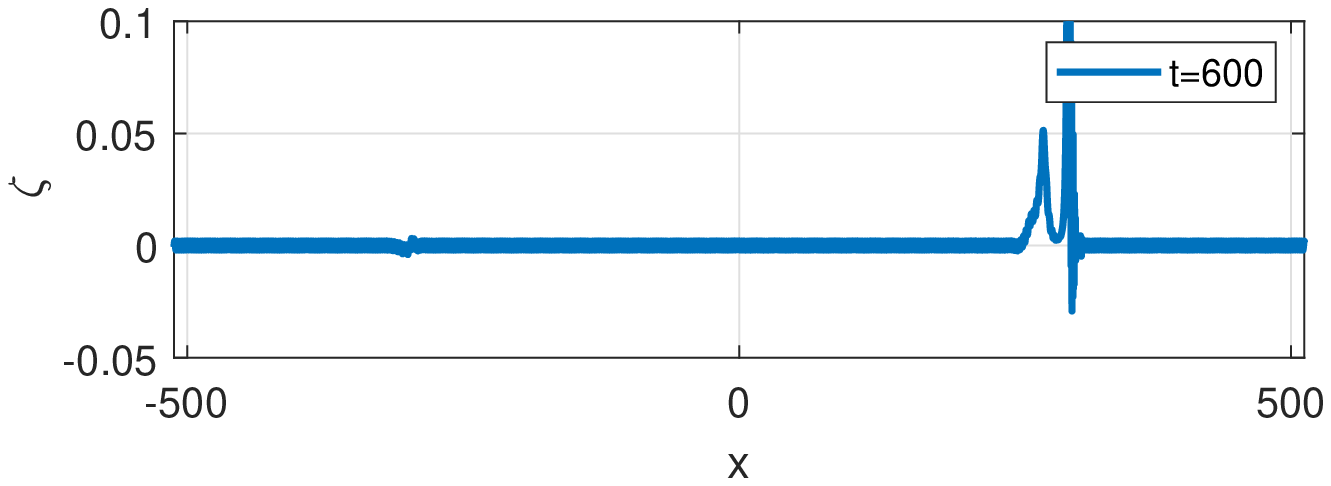}}
\subfigure[]
{\includegraphics[width=\columnwidth]{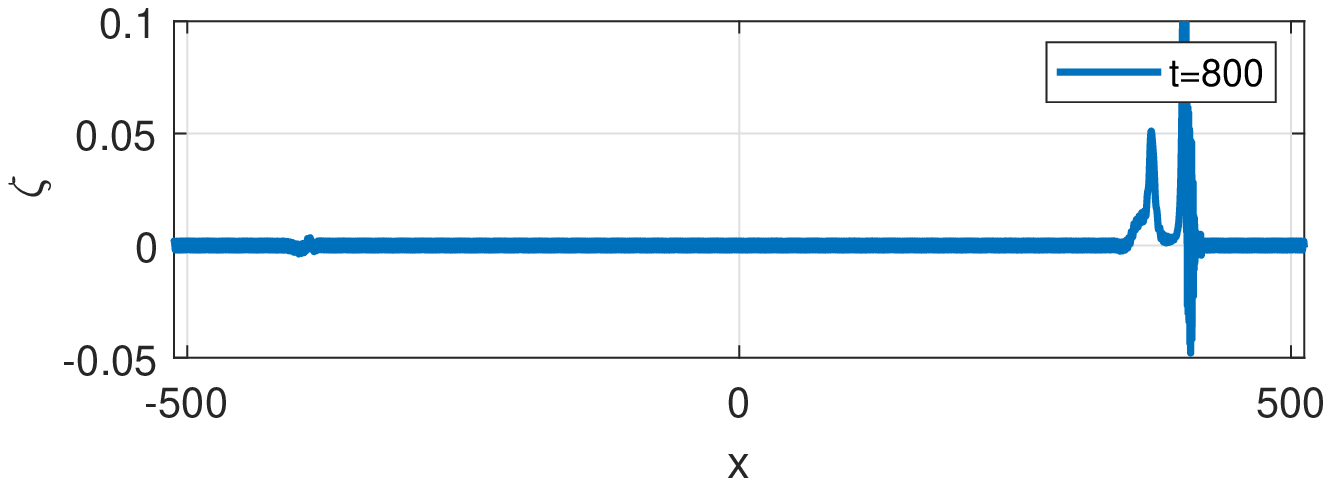}}
%\subfigure[]
%{\includegraphics[width=\columnwidth]{bb_gsw2_t800.eps}}
%\subfigure[]
%{\includegraphics[width=10cm]{gsw9_1.eps}}
%\subfigure[]
%{\includegraphics[width=10cm]{gsw9_3.eps}}
%\subfigure[]
%{\includegraphics[width=10cm]{gsw9_5.eps}}
%\subfigure[]
%{\includegraphics[width=10cm]{gsw9_5m.eps}}
\caption{Large perturbation of a GSW. Case (A2) with (\ref{53a}), (\ref{53b}) with $A=4.1$. Magnification of Figures \ref{fdds5_16}(b)-(d).}
\label{fdds5_16b}
\end{figure}

\begin{figure}[htbp]
\centering
\subfigure[]
{\includegraphics[width=\columnwidth]{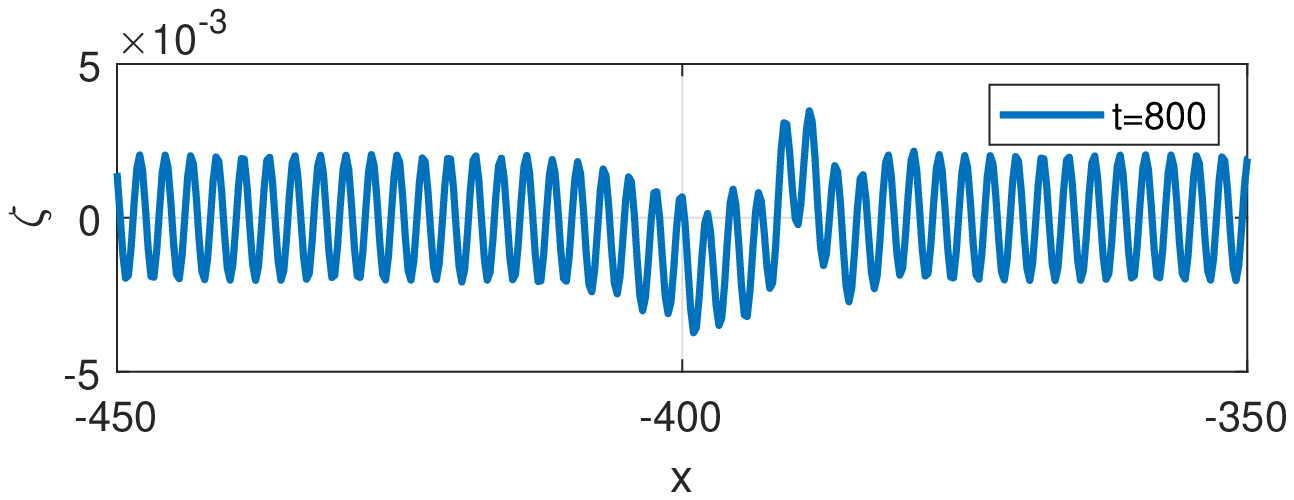}}
\subfigure[]
{\includegraphics[width=\columnwidth]{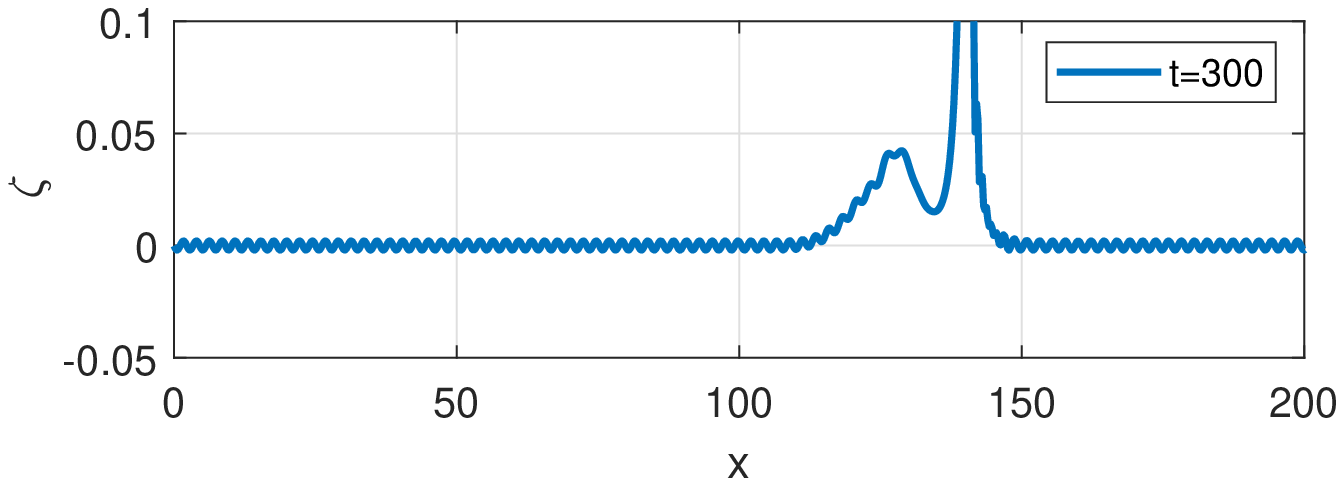}}
\subfigure[]
{\includegraphics[width=\columnwidth]{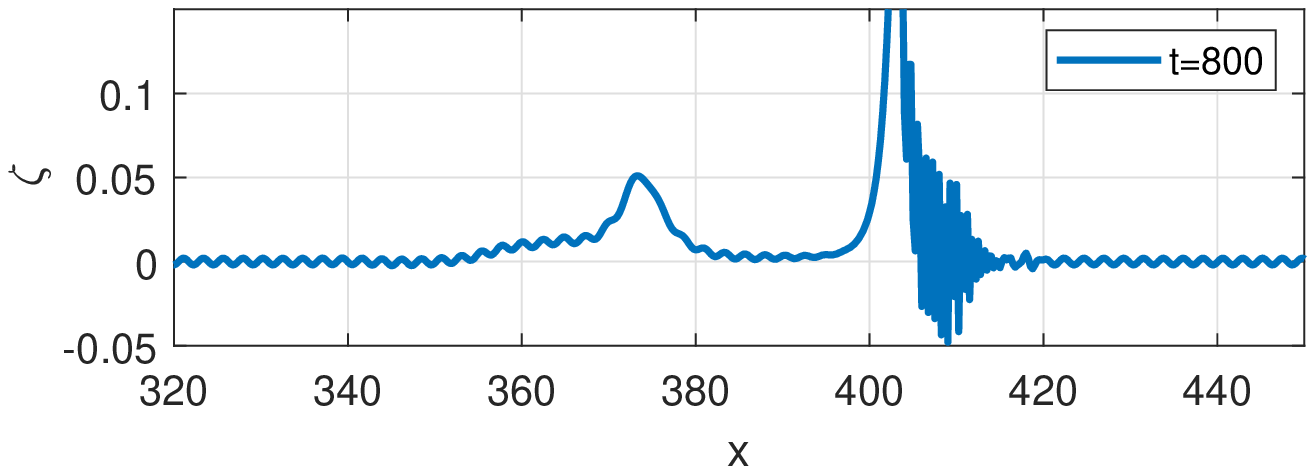}}
\caption{Large perturbation of a GSW. Case (A2) with (\ref{53a}), (\ref{53b}) with $A=4.1$. (a), (c) Magnifications of Figure \ref{fdds5_16}(d); (b) Magnifications of Figure \ref{fdds5_16}(b).}
\label{fdds5_16c}
\end{figure}
%in the formation of the modified GSW, a larger perturbation generates a more relevant dispersive tail traveling along the chain of ripples behind the main profile. Furthermore, as time goes by, an additional nonlinear structure separates very slowly from this main wave, in a process that reminds the corresponding dynamics from large perturbations of CSW. The structure seems to have the form of a GSW, in a sort of resolution property (cf. \cite{BonaDM2008}). 
%Figure \ref{fdds5_17} shows the evolution of the amplitude and speed of the main emerging GSW.

\begin{figure}[htbp]
\centering
%\subfigure[]
%{\includegraphics[width=0.6\textwidth]{gsw9_6.eps}}
%\subfigure[]
%{\includegraphics[width=0.6\textwidth]{gsw9_7.eps}}
\subfigure[]
{\includegraphics[width=\columnwidth]{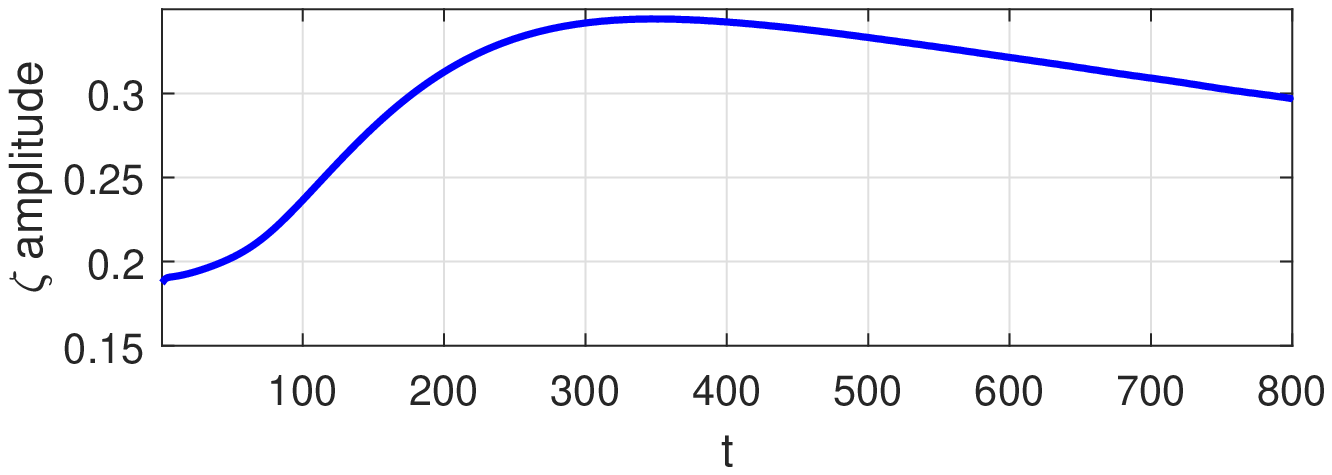}}
\subfigure[]
{\includegraphics[width=\columnwidth]{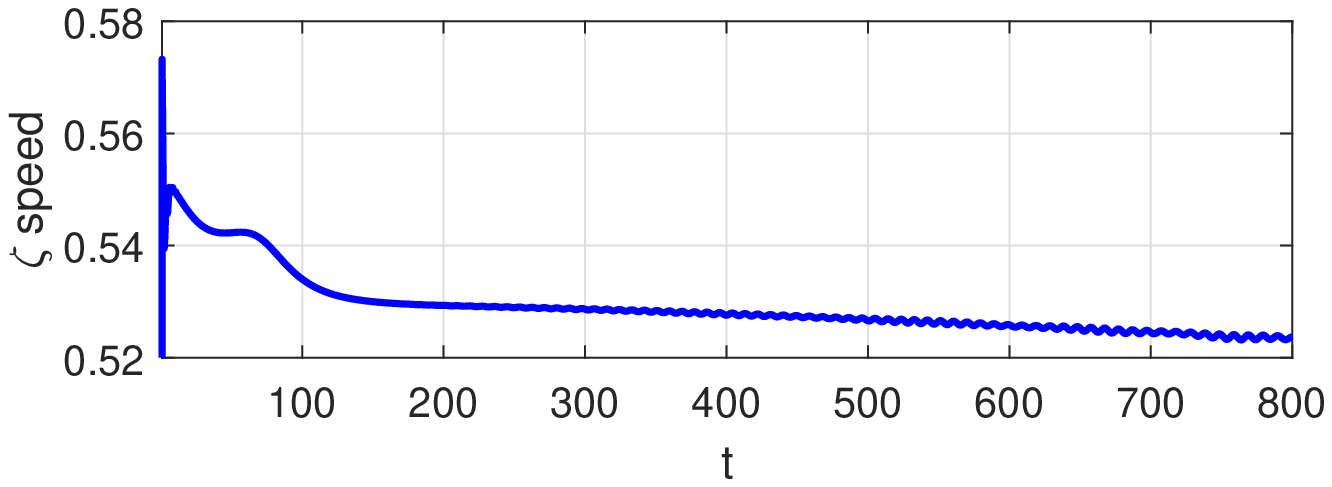}}
\caption{Large perturbation of a GSW. Case (A2) with (\ref{53a}), (\ref{53b}) with $A=4.1$. Evolution of amplitude (a) and speed (b) of the taller emerging  wave ($\zeta$ component of the numerical solution).}
\label{fdds5_17}
\end{figure}

%It may be worthwhile to mention the radiation generated in front of he main pulse; it looks like a way of adaptation of the ripples to the modified GSW. It may be interesting to run the experiment on a longer interval and during a longer time of simulation. Here the structures seem to be in an initial stage of their formation.

\subsubsection{Resolution}
\label{sec532}
In order to study the resolution property in systems with generalized solitary wave solutions, we consider again the values of the parameters given by (\ref{53a}) and use, as initial condition, the same Gaussian pulse as that considered in section \ref{sec53} for CSW's, of the form $\zeta(x,0)=Ae^{-\tau x^{2}}$, with $A=2, \tau=0.01$, and $u(x,0)=\zeta(x,0)$. The evolution of the numerical approximation is illustrated in Figure \ref{fdds5_18}.

\begin{figure}[htbp]
\centering
\subfigure[]
{\includegraphics[width=\columnwidth]{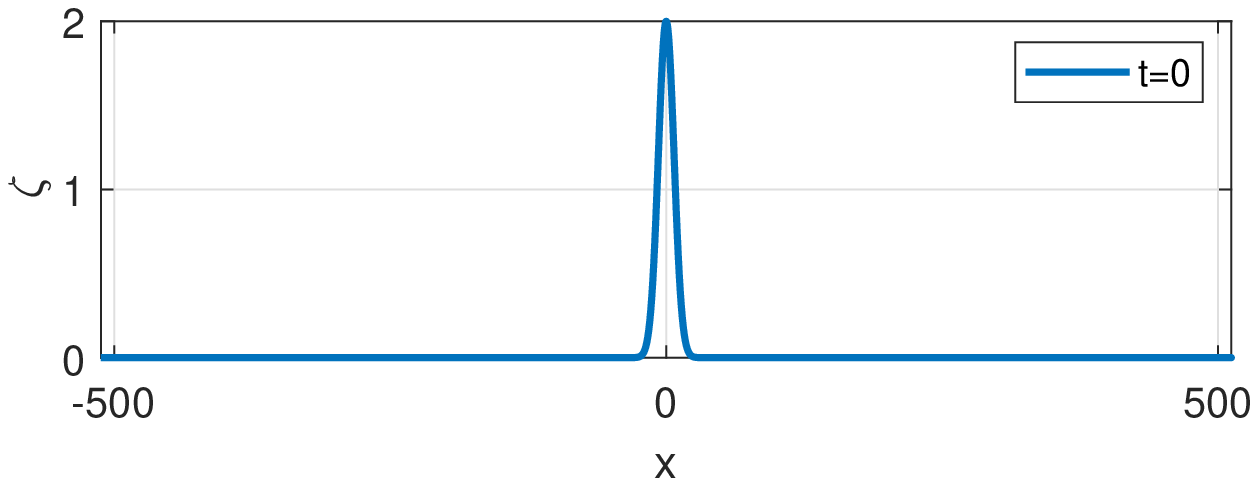}}
\subfigure[]
{\includegraphics[width=\columnwidth]{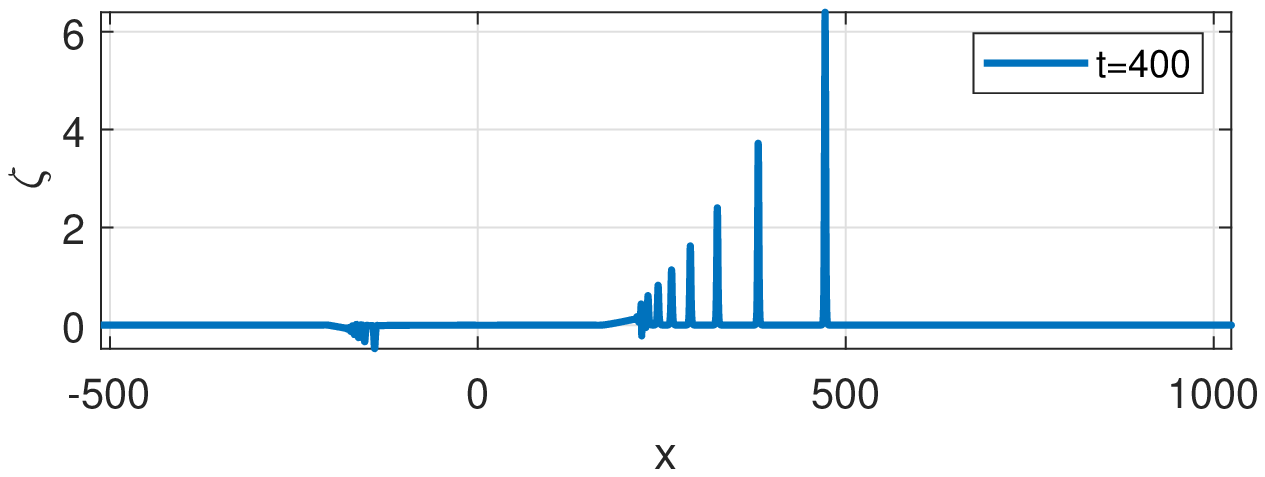}}
\subfigure[]
{\includegraphics[width=\columnwidth]{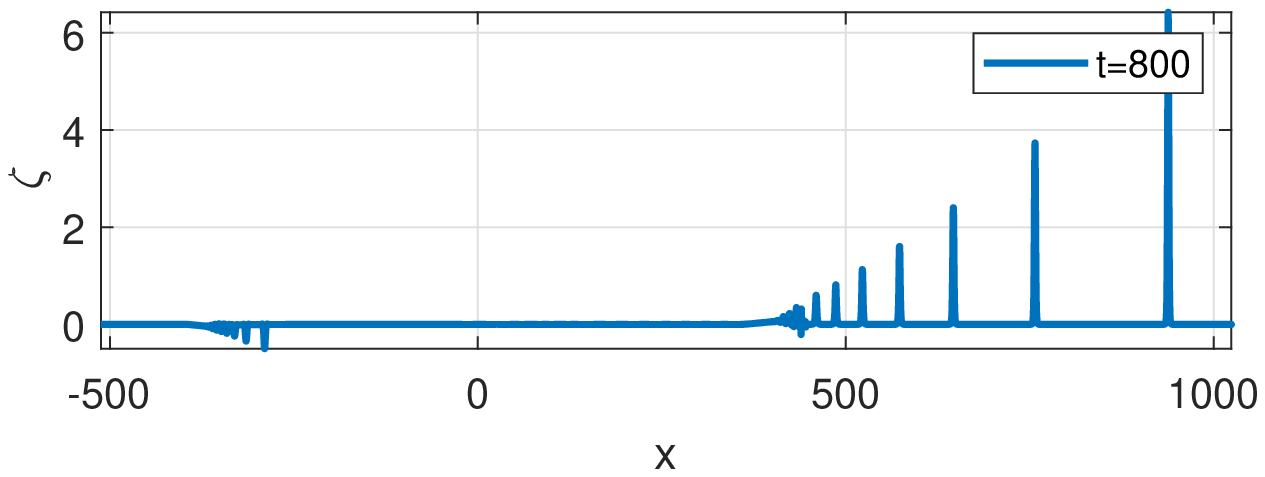}}
%\subfigure[]
%{\includegraphics[width=10cm]{gsw10_3m.eps}}
\caption{Resolution property. Case (A2) with (\ref{53a}), and a Gaussian pulse $\zeta(x,0)=u(x,0)=Ae^{-\tau x^{2}}$, with $A=2, \tau=0.01$, as initial condition.  $\zeta$ component of the numerical solution.}
\label{fdds5_18}
\end{figure}

\begin{figure}[htbp]
\centering
\subfigure[]
{\includegraphics[width=\columnwidth]{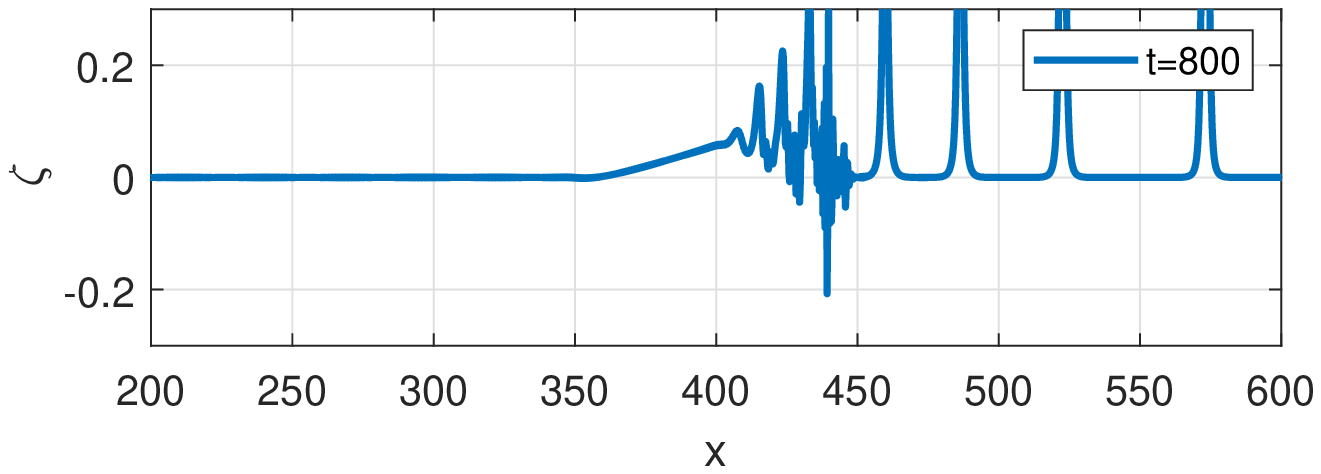}}
\subfigure[]
{\includegraphics[width=\columnwidth]{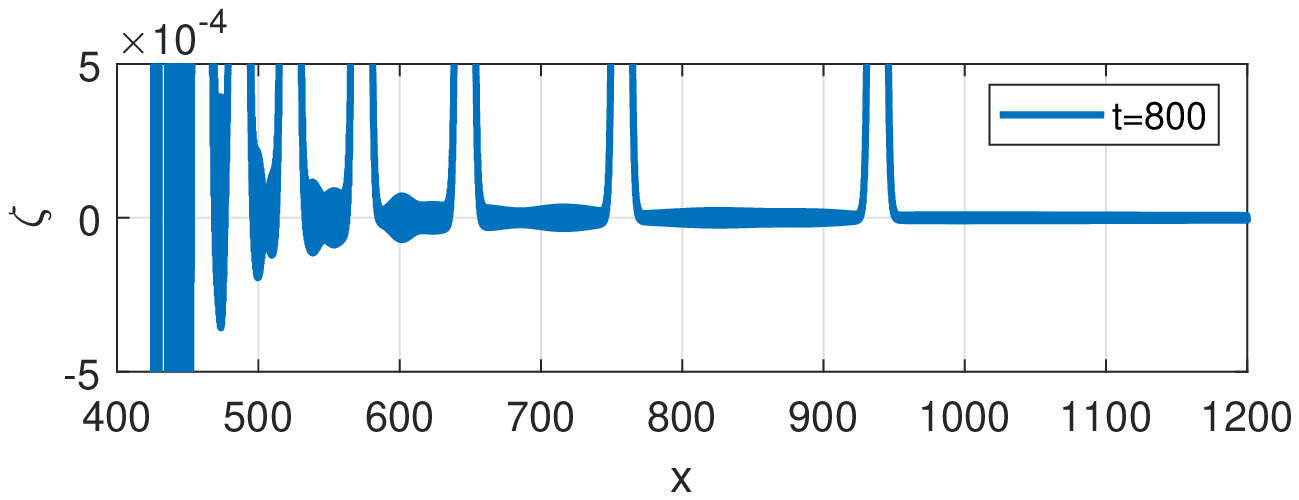}}
\subfigure[]
{\includegraphics[width=\columnwidth]{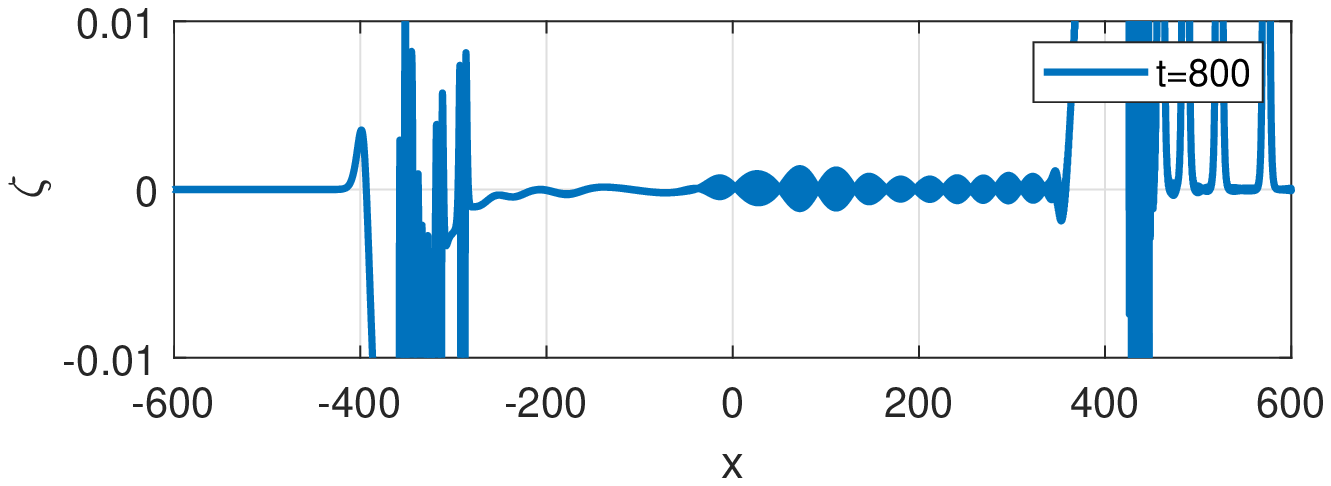}}
\subfigure[]
{\includegraphics[width=\columnwidth]{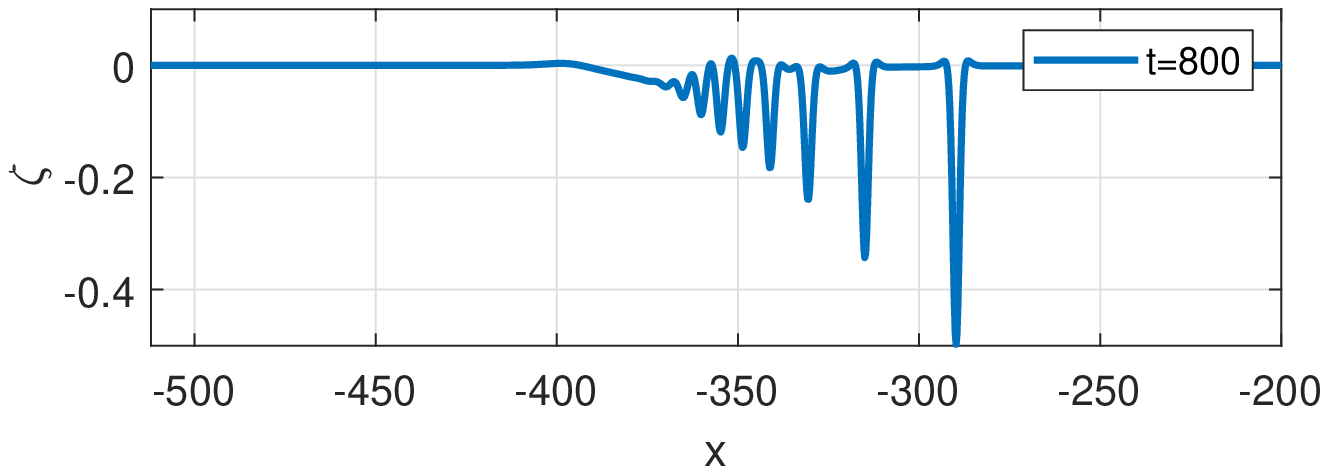}}
%\subfigure[]
%{\includegraphics[width=10cm]{gsw10_3m.eps}}
\caption{Resolution property. Case (A2) with (\ref{53a}), and a Gaussian pulse $\zeta(x,0)=u(x,0)=Ae^{-\tau x^{2}}$, with $A=2, \tau=0.01$, as initial condition.  Several structures of the $\zeta$ component of the numerical solution at $t=800$.}
\label{fdds5_18b}
\end{figure}

The behaviour of the approximation of $\zeta$ may be compared with that of the CSW case (Figures \ref{fdds5_13}-\ref{fdds5_13c}). Now, the Gaussian profile seems to evolve into a train of solitary waves of elevation, followed by some structures of different form. They are displayed in more detail in Figure \ref{fdds5_18b}. Figure \ref{fdds5_18b}(a) is a magnification of the tail formed just behind the solitary wave train
and Figure \ref{fdds5_18b}(b) is a detail of the solution between two solitary waves. Some dispersive pulses are radiated in front of each profile. These are also observed behind the wave train of solitary waves, see Figure \ref{fdds5_18b}(c). A third structure, magnified in Figure \ref{fdds5_18b}(d), seems to consist of a train of classical solitary waves with non-monotonic decay and a dispersive tail.

%develops a first structure, traveling to the left. In the magnification shown in Figure \ref{fdds5_18b}(a) we observe that the structure seems to consist of 
%
%
%(which seems to evolve to a train of CSW of depression) and then a train of CSW of elevaion. This seems to be formed before the corresponding one in the experiment of Figure \ref{fdds5_13}. The evolution of these two trains is not symmetric: the waves of elevation are more and much higher. This experiment would also suggest the existence of CSW in this example of case (A2), where the Normal Form Theory only predicts GSW.
\subsubsection{Head-on collisions}
The interactions of GSW's are illustrated with experiments of head-on collisions.
The first experiment shown in the sequel is concerned with a symmetric head-on collision of GSW's. For the system with parameters given by (\ref{53a}), a superposition of approximate GSW profiles with absolute values of the speed $c_{s}=c_{\gamma,\delta}+10^{-2}\approx 4.8824\times 10^{-1}
$, traveling to the right and to the left and centered at $x_{0}=-20$ and $x_{0}=20$ respectively, is taken as initial condition, and the simulation of the evolution is represented in Figures \ref{fdds5_19} and \ref{fdds5_19c}. The initial amplitudes are about $4.5654\times 10^{-2}$.

\begin{figure}[htbp]
\centering
\subfigure[]
{\includegraphics[width=\columnwidth]{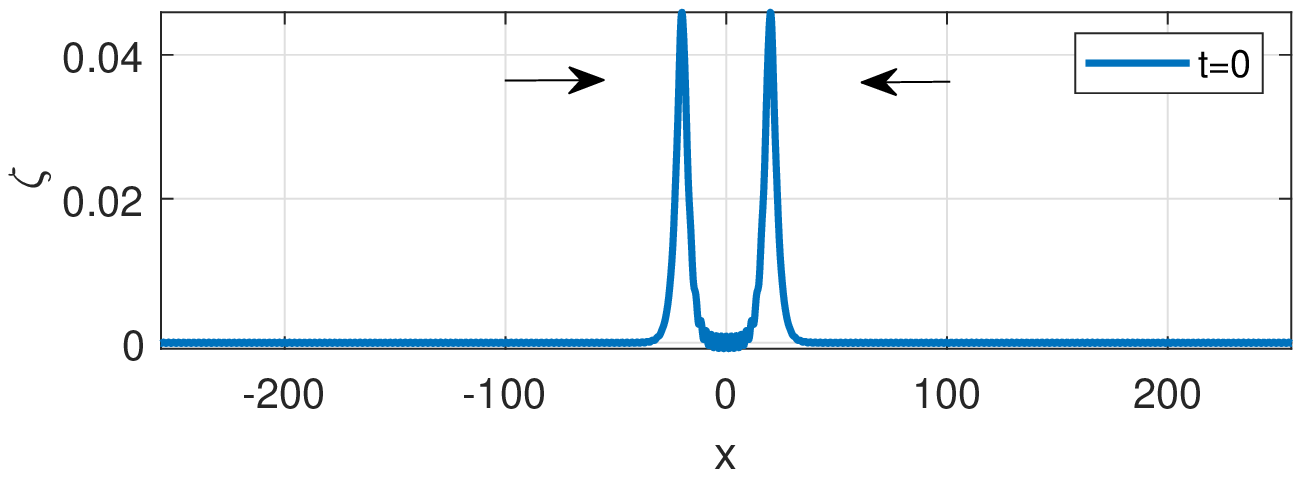}}
\subfigure[]
{\includegraphics[width=\columnwidth]{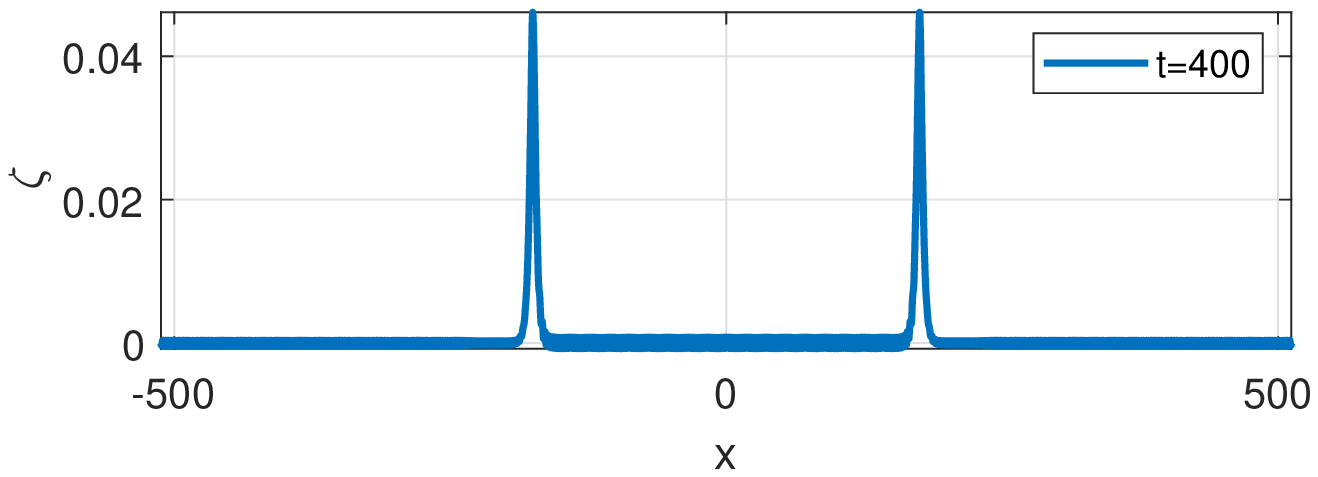}}
\subfigure[]
{\includegraphics[width=\columnwidth]{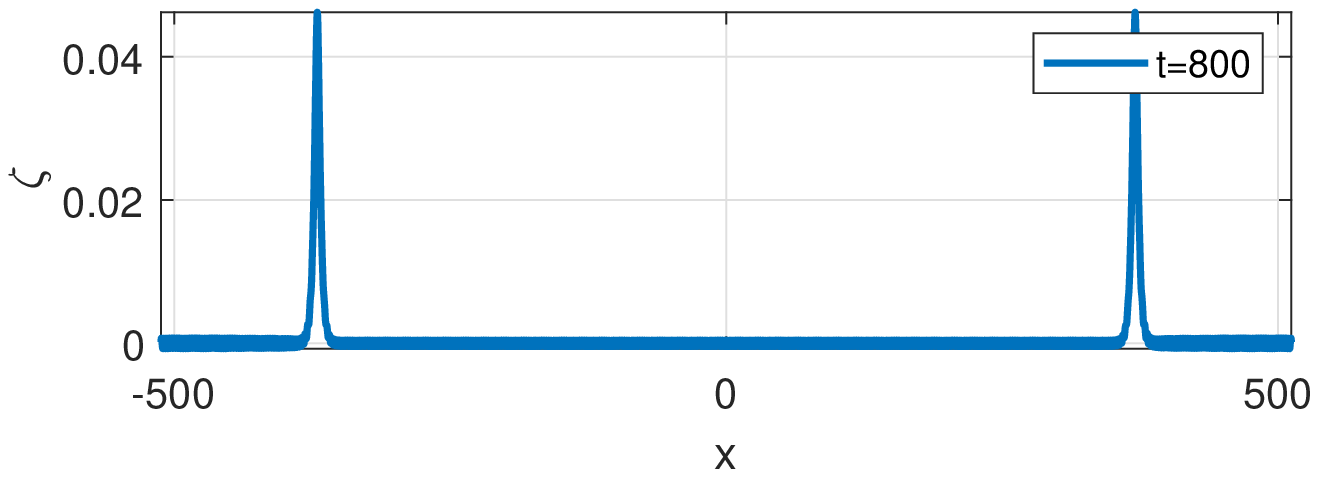}}
\caption{Symmetric head-on collision of GSW's. Case (A2) with (\ref{53a}).  $\zeta$ component of the numerical solution.}
\label{fdds5_19}
\end{figure}

The temporal interval of collision lasts approximately from $t=30$ to $t=50$; thereafter two symmetric GSW's emerge. The evolution of the amplitude of the $\zeta$ component of the numerical solution is shown in Figure \ref{fdds5_19b}(a). This seems to stabilize in a value around $4.62\times 10^{-2}$, so the emerging waves are taller (and hence faster), and the relative difference in amplitude is about $1.2\times 10^{-2}$. By comparing Figure \ref{fdds5_19c}(a) with Figures \ref{fdds5_19c}(b) and (c), we note that after the collision the amplitude of the ripples is larger. 

\begin{figure}[htbp]
\centering
\subfigure[]
{\includegraphics[width=\columnwidth]{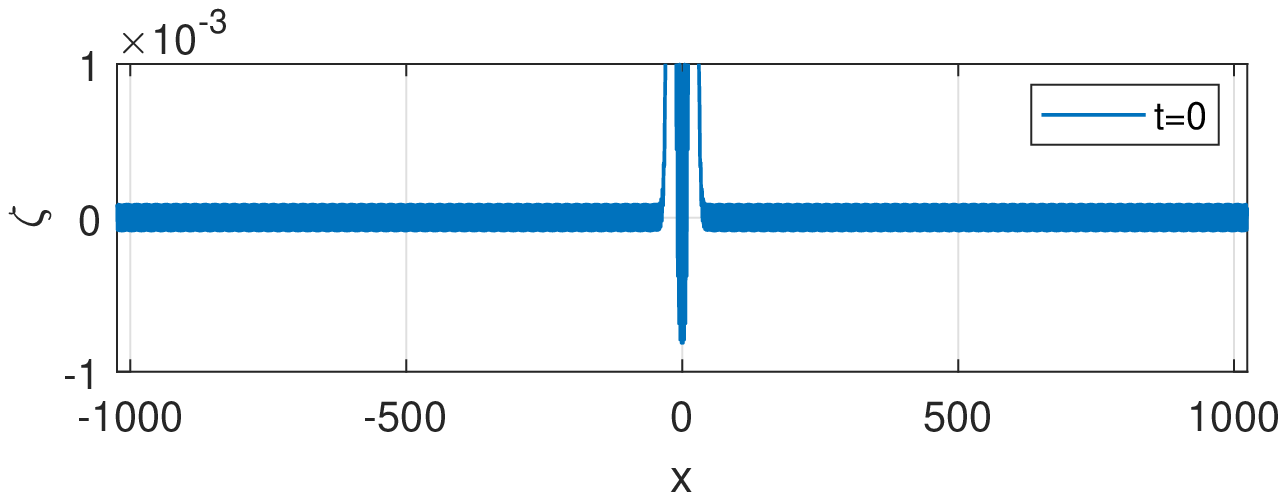}}
\subfigure[]
{\includegraphics[width=\columnwidth]{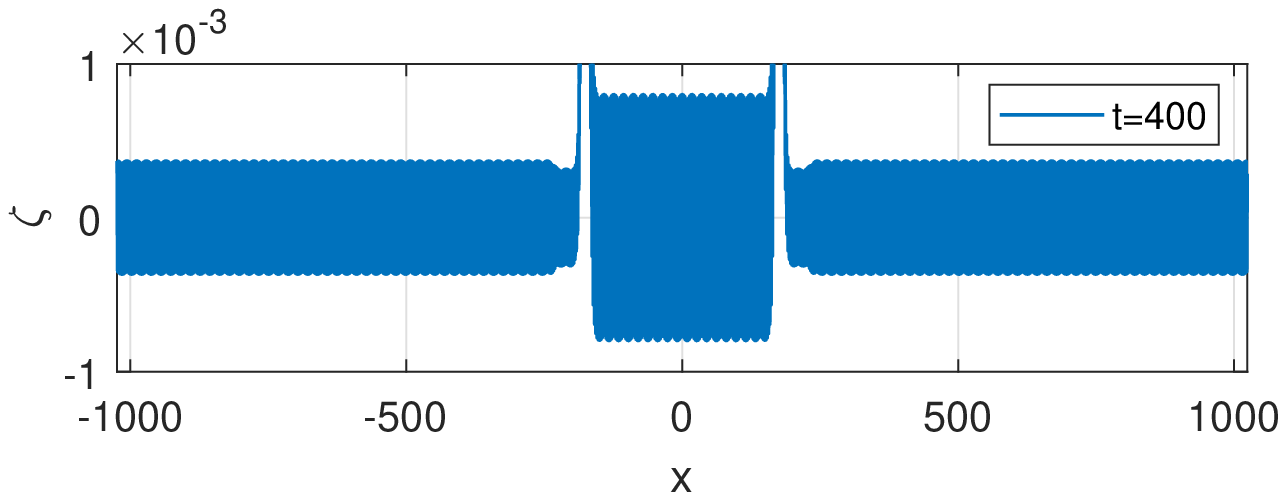}}
\subfigure[]
{\includegraphics[width=\columnwidth]{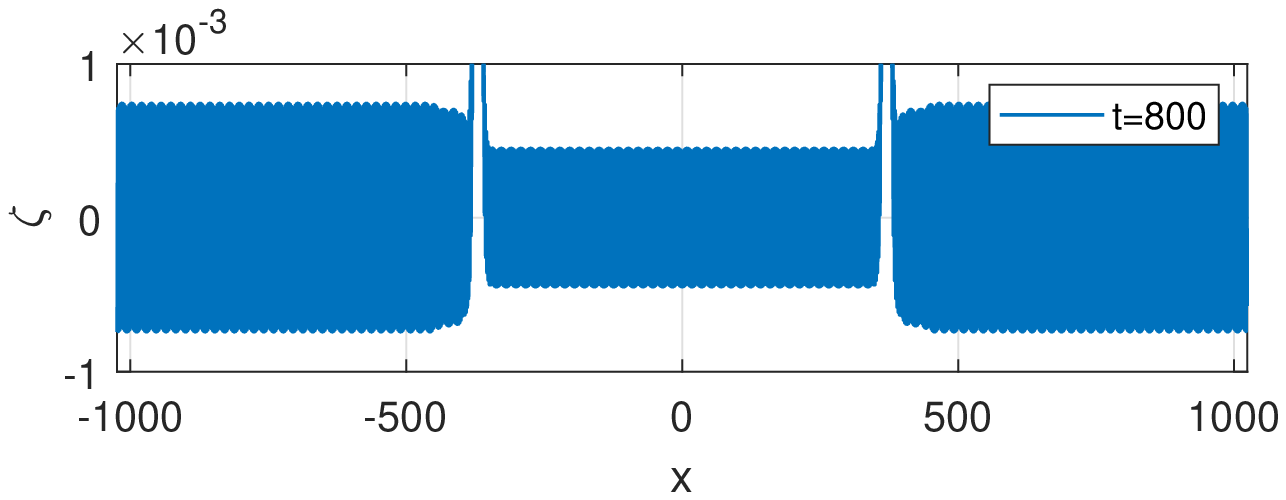}}
\caption{Symmetric head-on collision of GSW's. Case (A2) with (\ref{53a}).  $\zeta$ component of the numerical solution. Magnifications of Figure \ref{fdds5_19}.}
\label{fdds5_19c}
\end{figure}

In order to illustrate the dynamics of non-symmetric head-on collisions, we consider, for the system with parameters given by (\ref{53a}), the evolution of an initial profile consisting of a superposition of two approximate GSW's with absolute values of speeds $c_{s}^{(1)}=c_{\gamma,\delta}+0.01$, centered at $x_{0}^{(1)}=-20$ and traveling to the right, and $c_{s}^{(2)}=c_{\gamma,\delta}+0.005$, centered at $x_{0}^{(1)}=20$ and traveling to the left. The results are shown in Figures \ref{fdds5_20} and \ref{fdds5_20c}.

\begin{figure}[htbp]
\centering
\centering
\subfigure[]
{\includegraphics[width=\columnwidth]{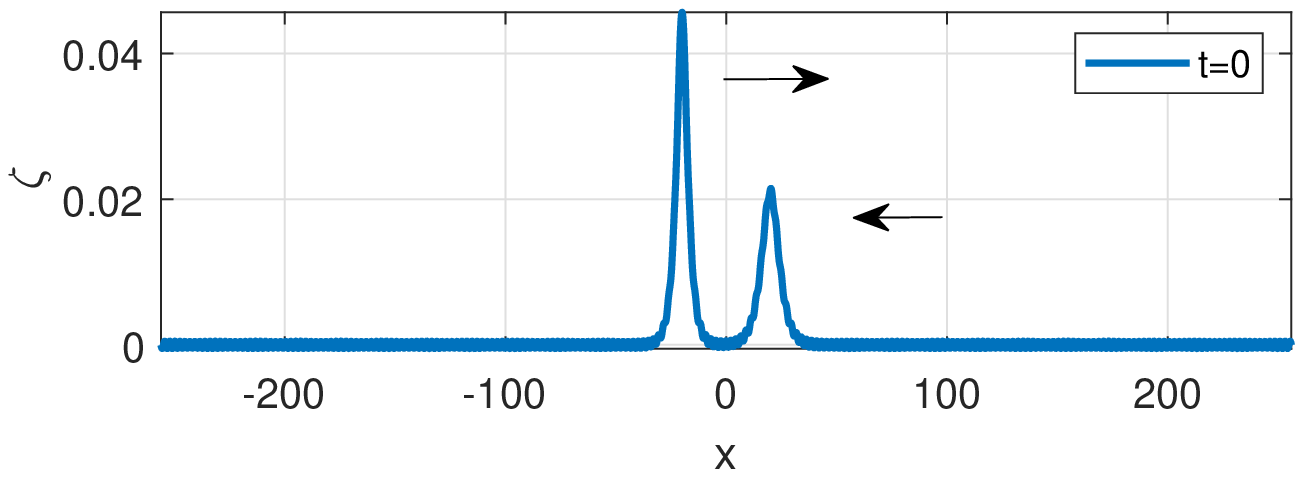}}
\subfigure[]
{\includegraphics[width=\columnwidth]{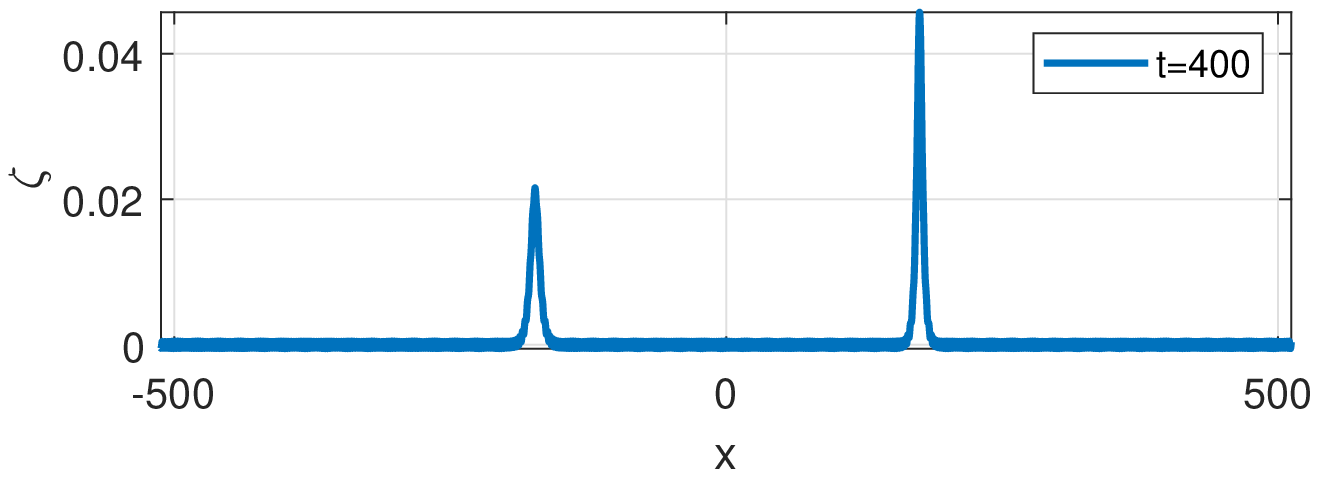}}
\subfigure[]
{\includegraphics[width=\columnwidth]{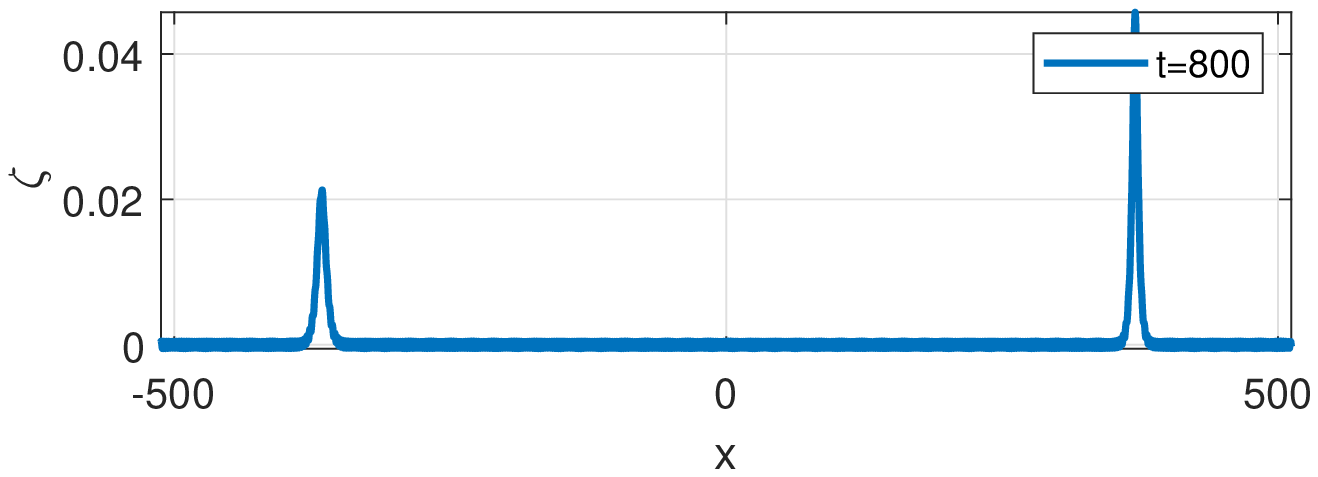}}
%\subfigure[]
%{\includegraphics[width=10cm]{gsw12_1.eps}}
%\subfigure[]
%{\includegraphics[width=10cm]{gsw12_2.eps}}
%\subfigure[]
%{\includegraphics[width=10cm]{gsw12_3.eps}}
%\subfigure[]
%{\includegraphics[width=10cm]{gsw12_4.eps}}
%\subfigure[]
%{\includegraphics[width=10cm]{gsw12_4m.eps}}
\caption{Non-symmetric head-on collision of GSW's. Case (A2) with (\ref{53a}). $\zeta$ component of the numerical solution.}
\label{fdds5_20}
\end{figure}

Similar effects to those of the symmetric collision case are observed. Note from Figure \ref{fdds5_20c} that in this case the ripples generated after the collision do not increase in  amplitude, with respect to those before the interaction, in a significant way, at least in comparison with the symmetric case, cf. Figure \ref{fdds5_19c}. 

\begin{figure}[htbp]
\centering
\subfigure[]
{\includegraphics[width=\columnwidth]{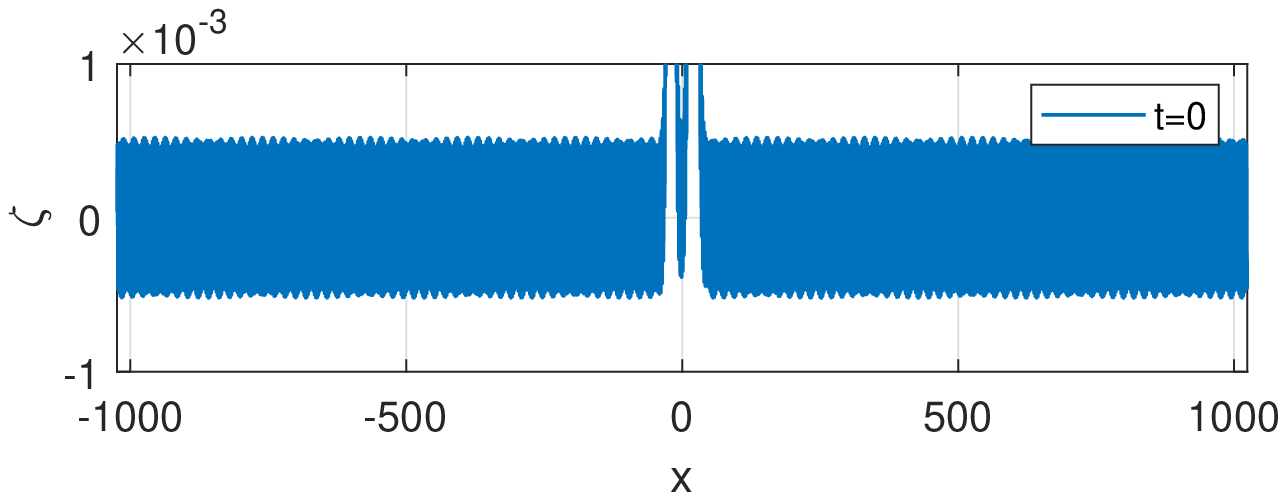}}
\subfigure[]
{\includegraphics[width=\columnwidth]{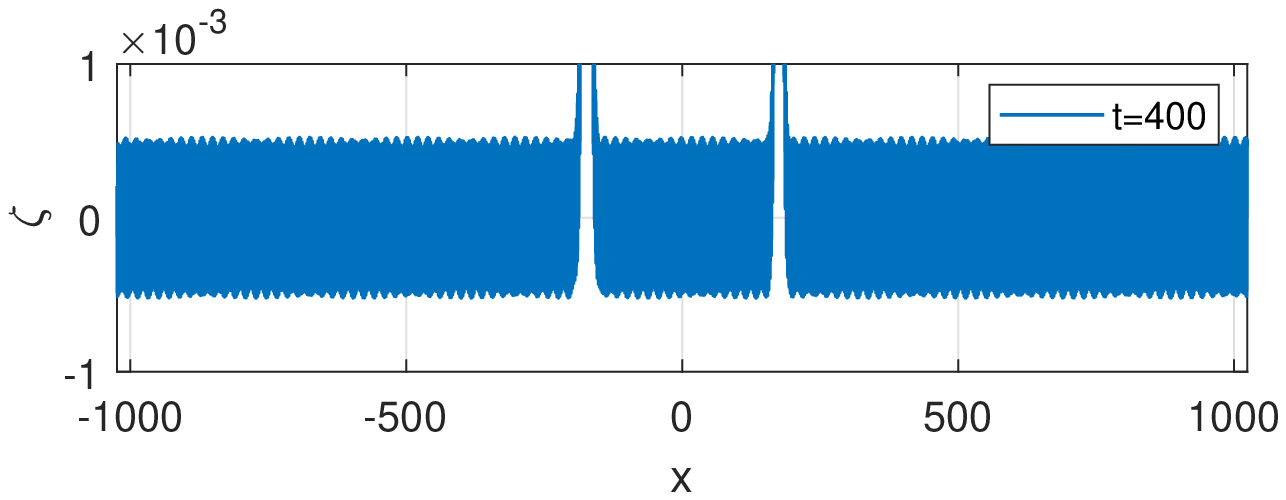}}
\subfigure[]
{\includegraphics[width=\columnwidth]{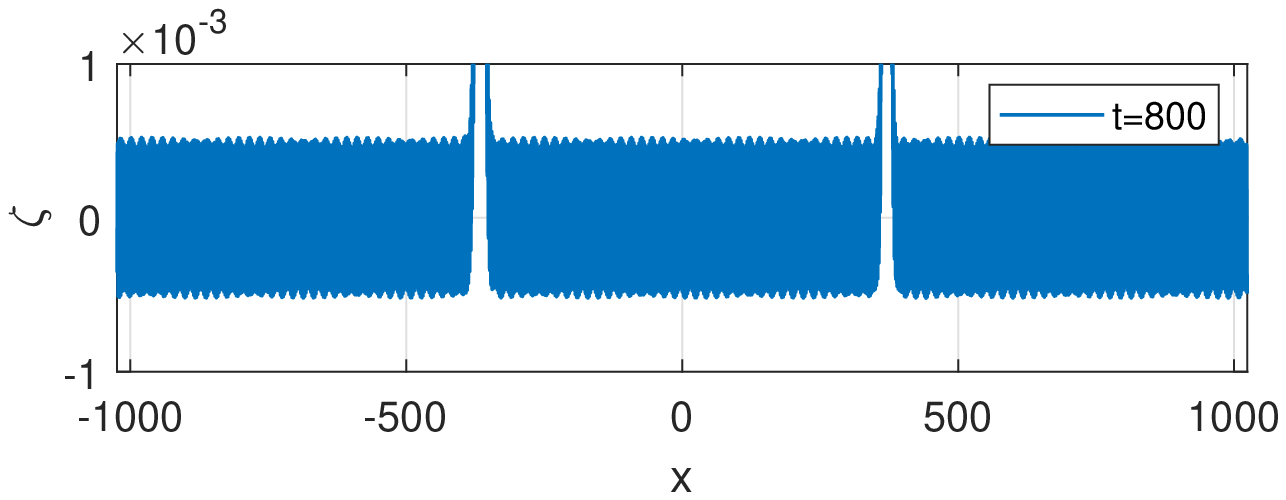}}
\caption{Non-symmetric head-on collision of GSW's. Case (A2) with (\ref{53a}).  $\zeta$ component of the numerical solution. Magnifications of Figure \ref{fdds5_20}.}
\label{fdds5_20c}
\end{figure}

The corresponding evolution of the amplitudes in both experiments of head-on collisions is shown in Figure \ref{fdds5_19b}. In the nonsymmetric case, the emerging taller wave increases in amplitude with a relative increase of $2.1\times 10^{-2}$.
\subsection{Dynamics of classical solitary wave solutions with non monotone decay}
\label{sec55}

%\begin{figure}[htbp]
%\centering
%\subfigure[]
%{\includegraphics[width=\columnwidth]{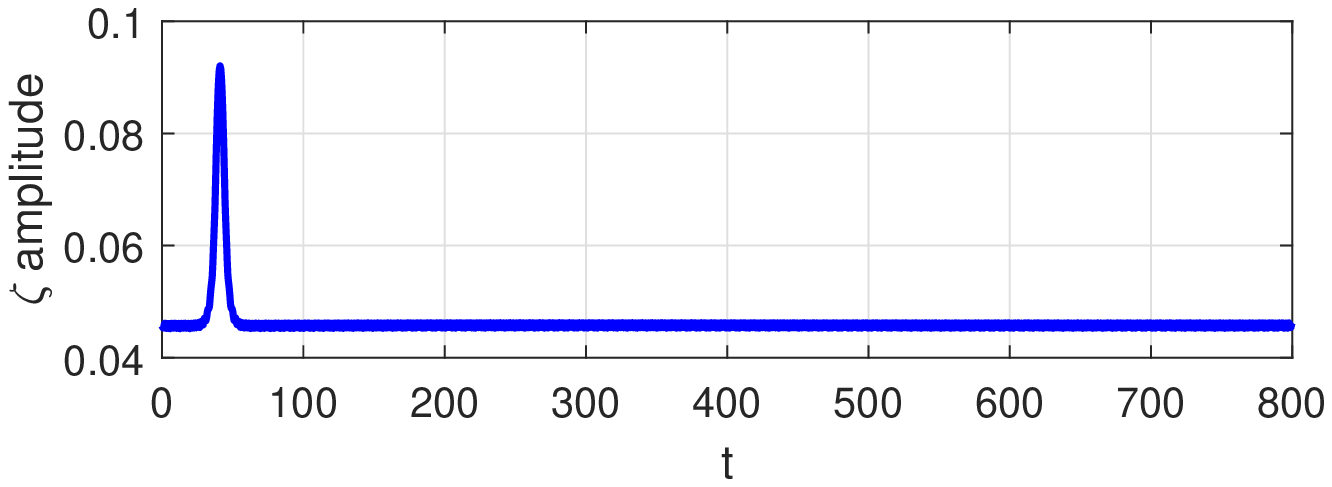}}
%\subfigure[]
%{\includegraphics[width=\columnwidth]{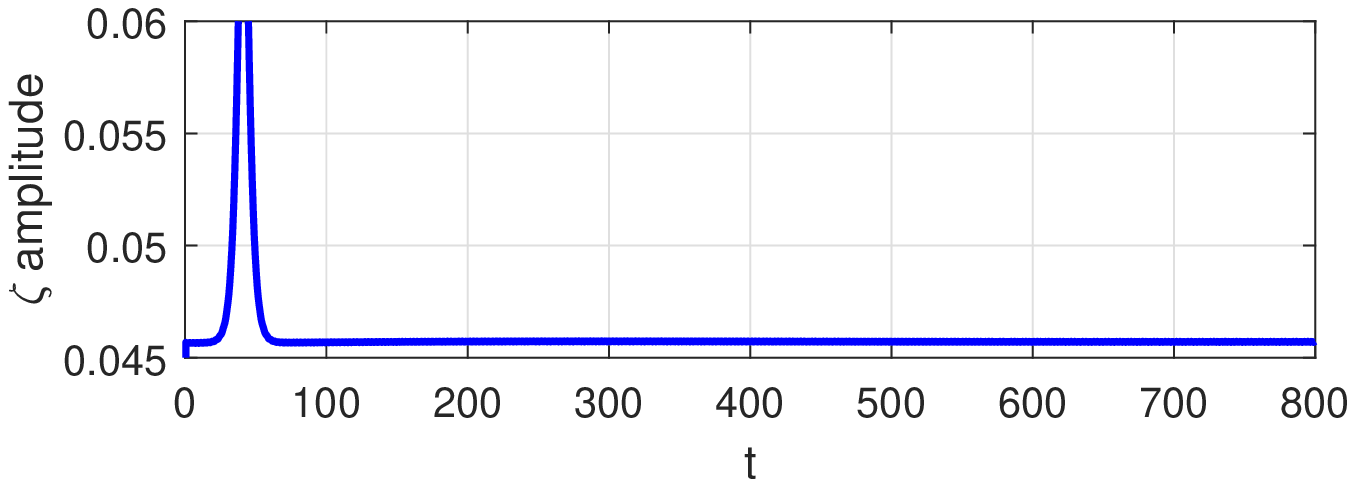}}
%\caption{Head-on collisions of GSW's. Case (A2) with (\ref{53a}). Evolution of amplitude: (a) Symmetric case; (b) Non-symmetric case (tallest emerging wave).}
%\label{fdds5_19b}
%\end{figure}

%\subsection{Dynamics of classical solitary wave solutions with nonmonotone decay}
%\label{sec55}
In this section we present some experiments concerning the dynamics of CSW solutions of (\ref{BB2}) with non monotone decay, whose existence was justified by an application of the Normal Form Theory in section \ref{sec41}, and for speeds smaller than the corresponding speed of sound. For simplicity we focus on the behaviour under small and large perturbations of the type (\ref{53b}) as those used in sections \ref{sec53} and \ref{sec54}.

\begin{figure}[htbp]
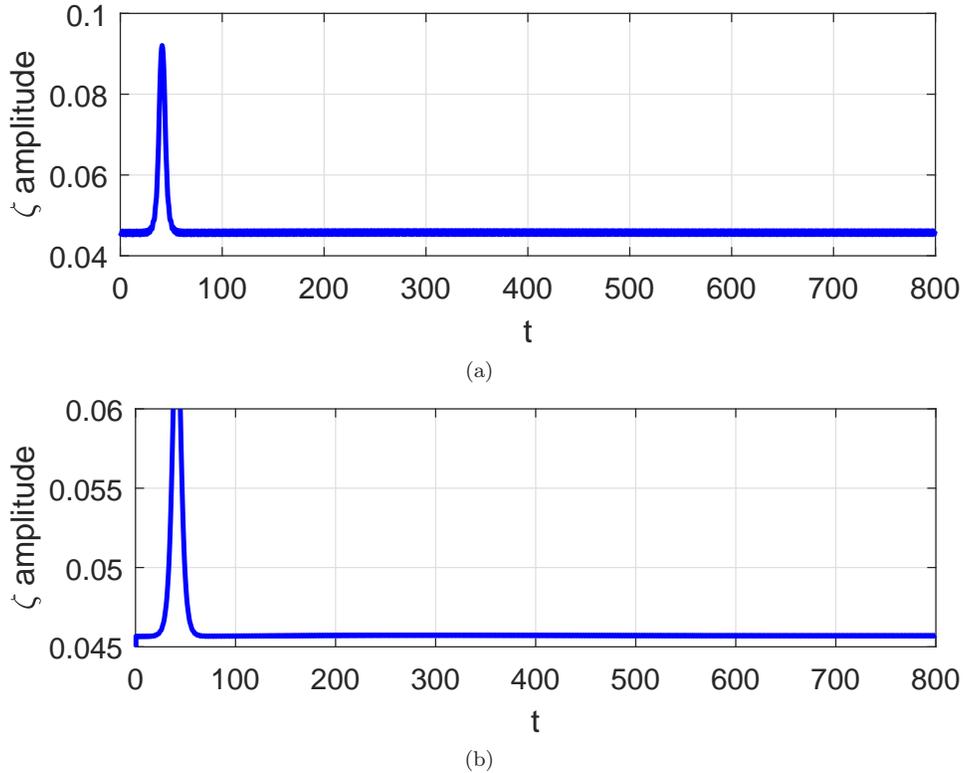

\centering
\subfigure[]
{\includegraphics[width=\columnwidth]{bb_gsw_symho_amp.eps}}
\subfigure[]
{\includegraphics[width=\columnwidth]{bb_gsw_nsymho_amp.eps}}
\caption{Head-on collisions of GSW's. Case (A2) with (\ref{53a}). Evolution of amplitude: (a) Symmetric case; (b) Non-symmetric case (tallest emerging wave).}
\label{fdds5_19b}
\end{figure}

%generated numerically at the end of section \ref{sec42} and related to the linearization of (\ref{NFT1}) at $U=0$ and the NFT, cf. section \ref{sec41}.

We consider the numerical profile obtained in section \ref{sec42} from the parameters 
\begin{eqnarray}
&&\gamma=0.5, \delta=0.9,\nonumber\\
&& a=-1/9, b=0, c=-1/6, d=S(\gamma,\delta)-\frac{a}{\kappa_{1}}-b-c\approx 0.7058.\label{53aa}
\end{eqnarray}
The $\zeta$ component has a maximum negative excursion of about $-2.8740$ and the speed is $c_{s}=c_{\gamma,\delta}-0.2\approx 0.3976$. The $\zeta_{h}$ and $v_{h}$ components are perturbed in amplitude with a perturbation factor $A=1.1$. The perturbed wave $(A\zeta_{h},Av_{h})$ is taken as initial condition of the numerical method to approximate (\ref{BB2}) with $L=1024$, and the evolution of the resulting numerical approximation is monitored up to $t=800$ and shown in Figure \ref{fdds5_21}.
\begin{figure}[htbp]
\centering
\centering
\subfigure[]
{\includegraphics[width=\columnwidth]{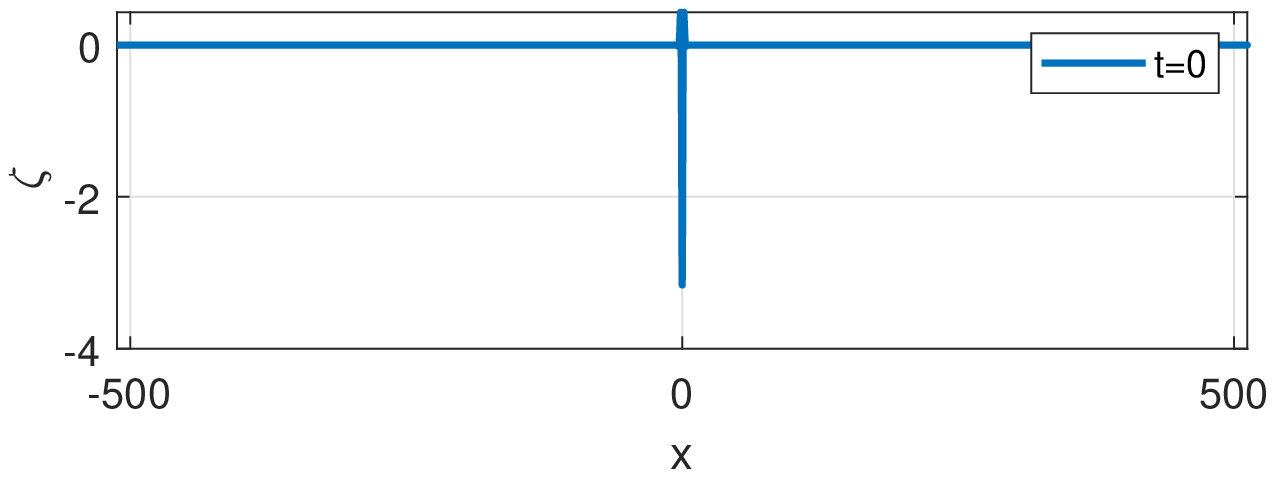}}
\subfigure[]
{\includegraphics[width=\columnwidth]{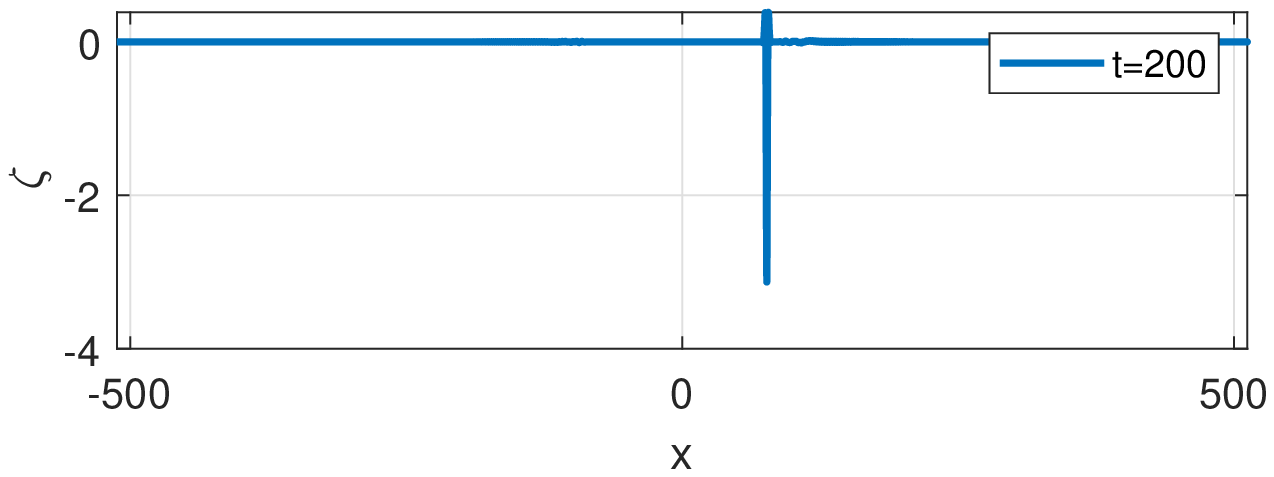}}
\subfigure[]
{\includegraphics[width=\columnwidth]{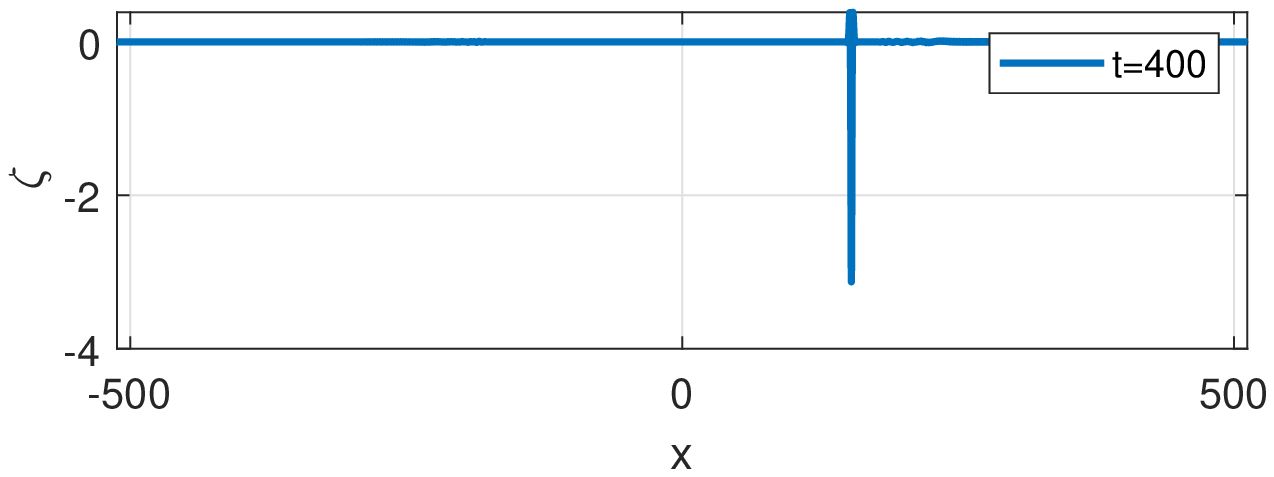}}
\subfigure[]
{\includegraphics[width=\columnwidth]{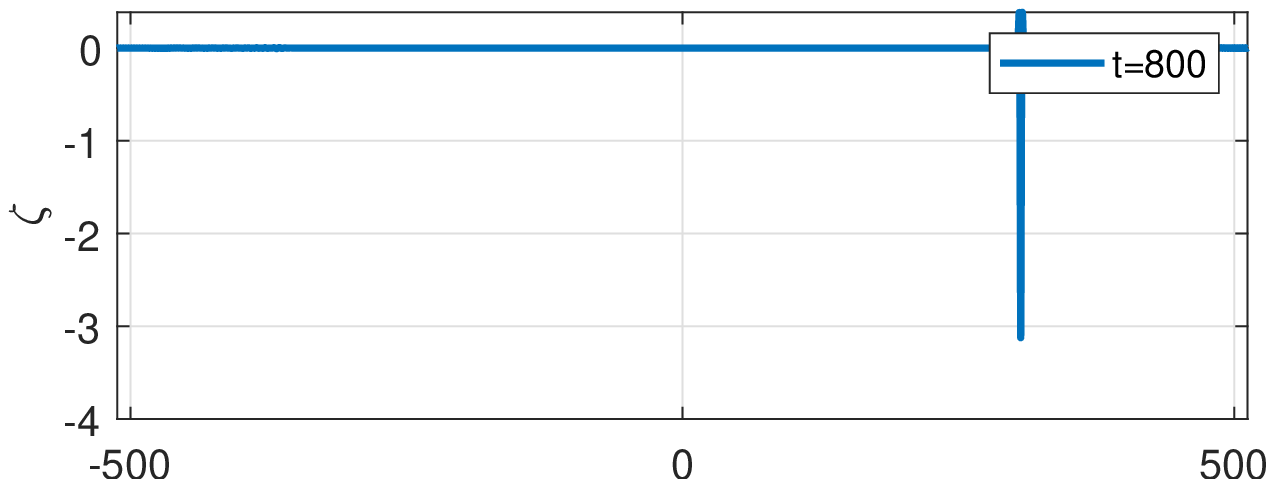}}
\caption{Small perturbation of a CSW with nonmonotone decay. $A=1.1$, $\zeta$ component of the numerical solution.}
\label{fdds5_21}
\end{figure}

During the evolution, a new solitary wave of the same type is formed. Compared to the initial perturbed wave, whose maximum negative excursion was approximately $-3.1614$, the emerging wave dips to about $-3.1376$. The wave is slower, with a speed around $0.3832$, see Figure \ref{fdds5_21c}. 

\begin{figure}[htbp]
\centering
\centering
\subfigure[]
{\includegraphics[width=\columnwidth]{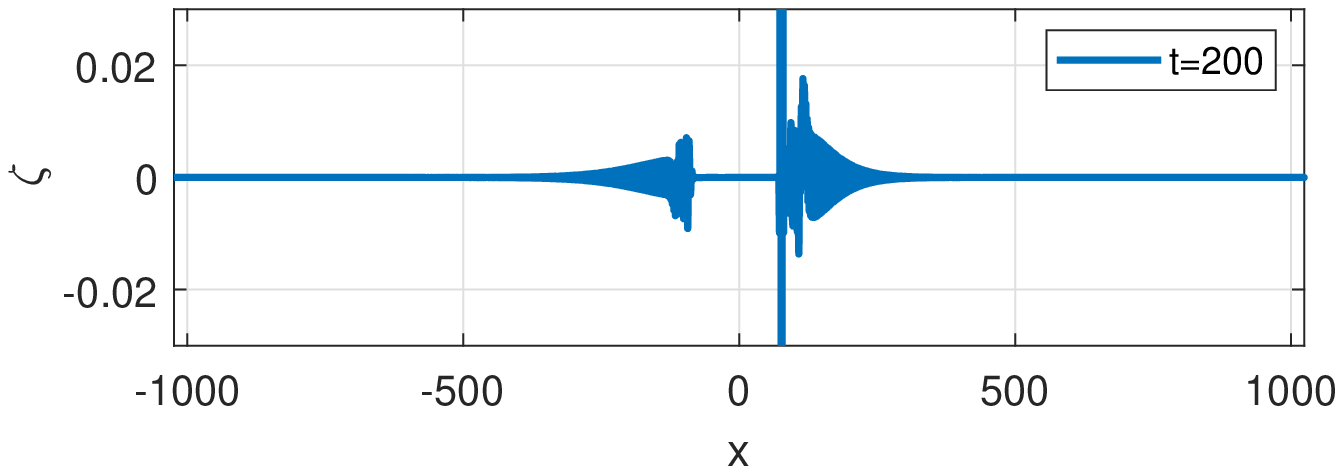}}
\subfigure[]
{\includegraphics[width=\columnwidth]{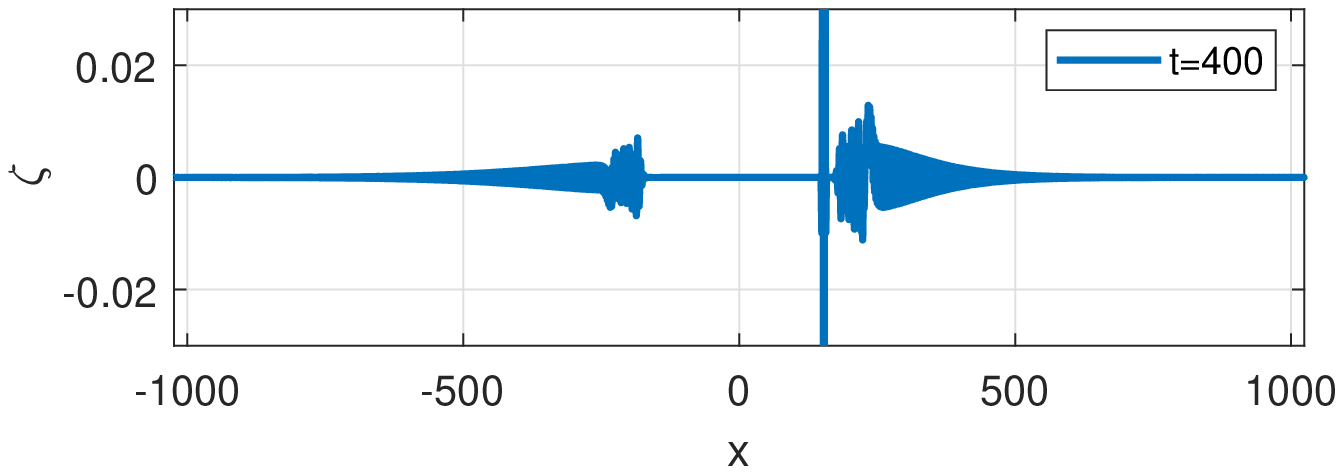}}
\subfigure[]
{\includegraphics[width=\columnwidth]{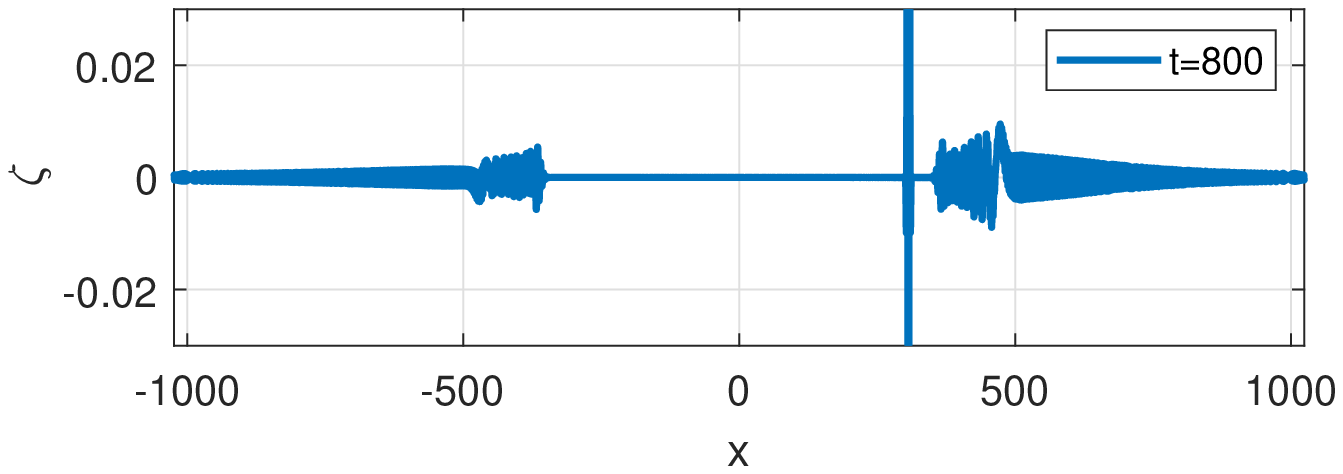}}
\caption{Small perturbation of a CSW with nonmonotone decay. $A=1.1$. $\zeta$ component of the numerical solution. Magnifications of Figure \ref{fdds5_21}.}
\label{fdds5_21b}
\end{figure}

Behind and in front of the emerging solitary wave, some other small structures form. They are observed in the magnification of Figure \ref{fdds5_21} given by Figure \ref{fdds5_21b}, and displayed in more detail in Figure \ref{fdds5_21d}.
%
%
%,  see Figure \ref{fdds5_21b}. The main component of these waves seem to be dispersive, although nonlinear forms are not discarded, see Figure \ref{fdds5_21d}.

%\begin{figure}[htbp]
%\centering
%\centering
%\subfigure[]
%{\includegraphics[width=\columnwidth]{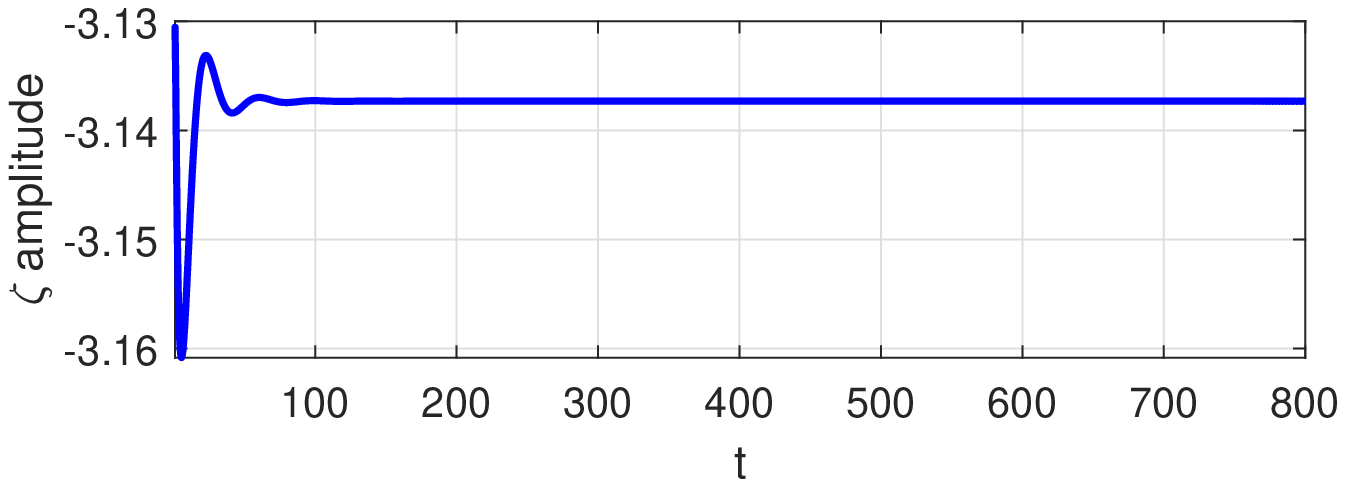}}
%\subfigure[]
%{\includegraphics[width=\columnwidth]{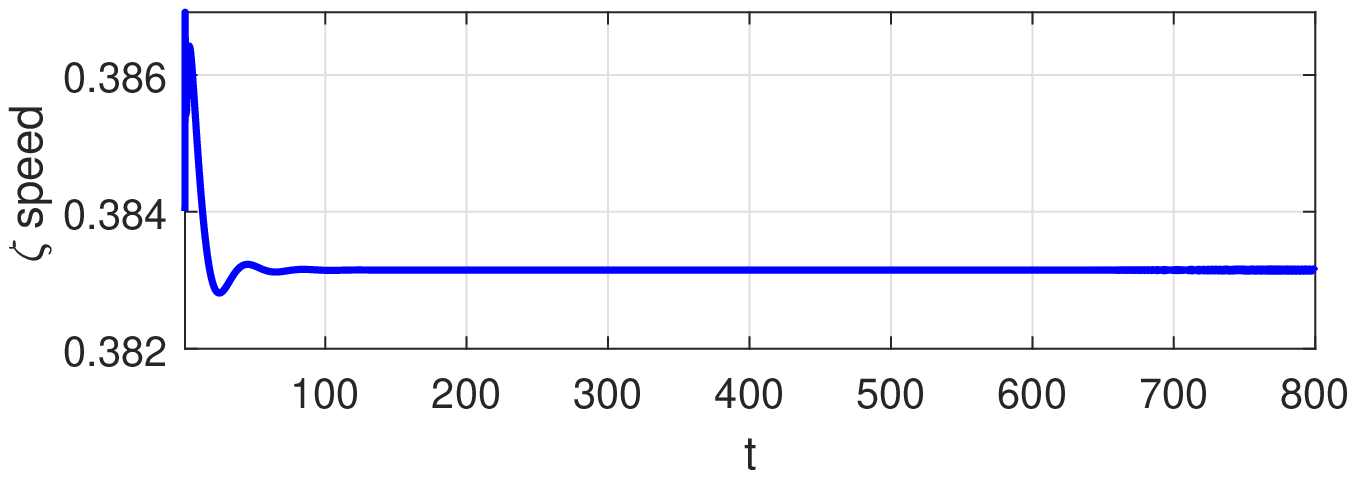}}
%\caption{Small perturbation of CSW. Evolution of  maximum negative excursion (a) and speed (b) of the emerging  solitary wave.}
%\label{fdds5_21c}
%\end{figure}
%\begin{figure}[htbp]
%\centering
%\centering
%\subfigure[]
%{\includegraphics[width=6.27cm,height=5.05cm]{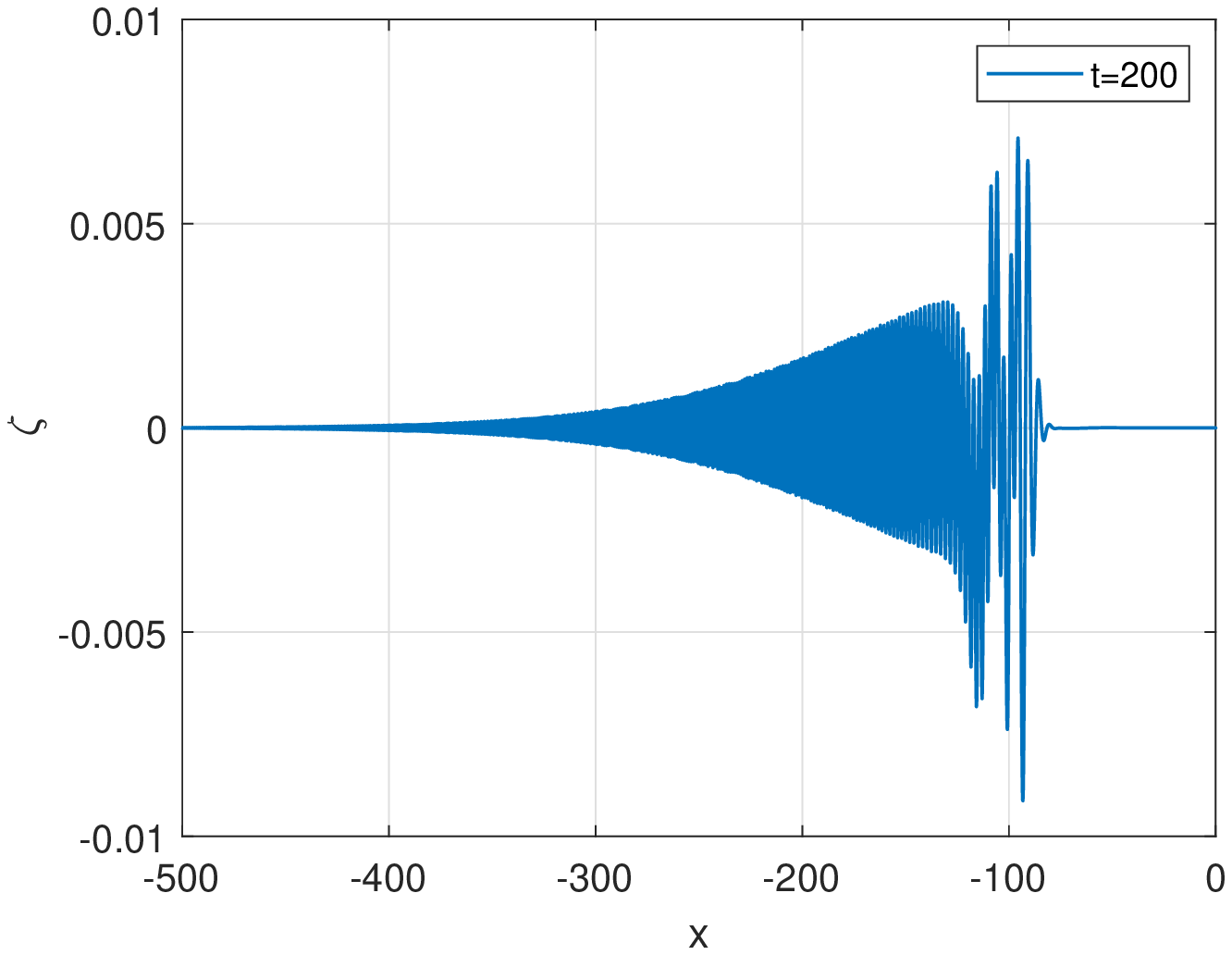}}
%\subfigure[]
%{\includegraphics[width=6.27cm,height=5.05cm]{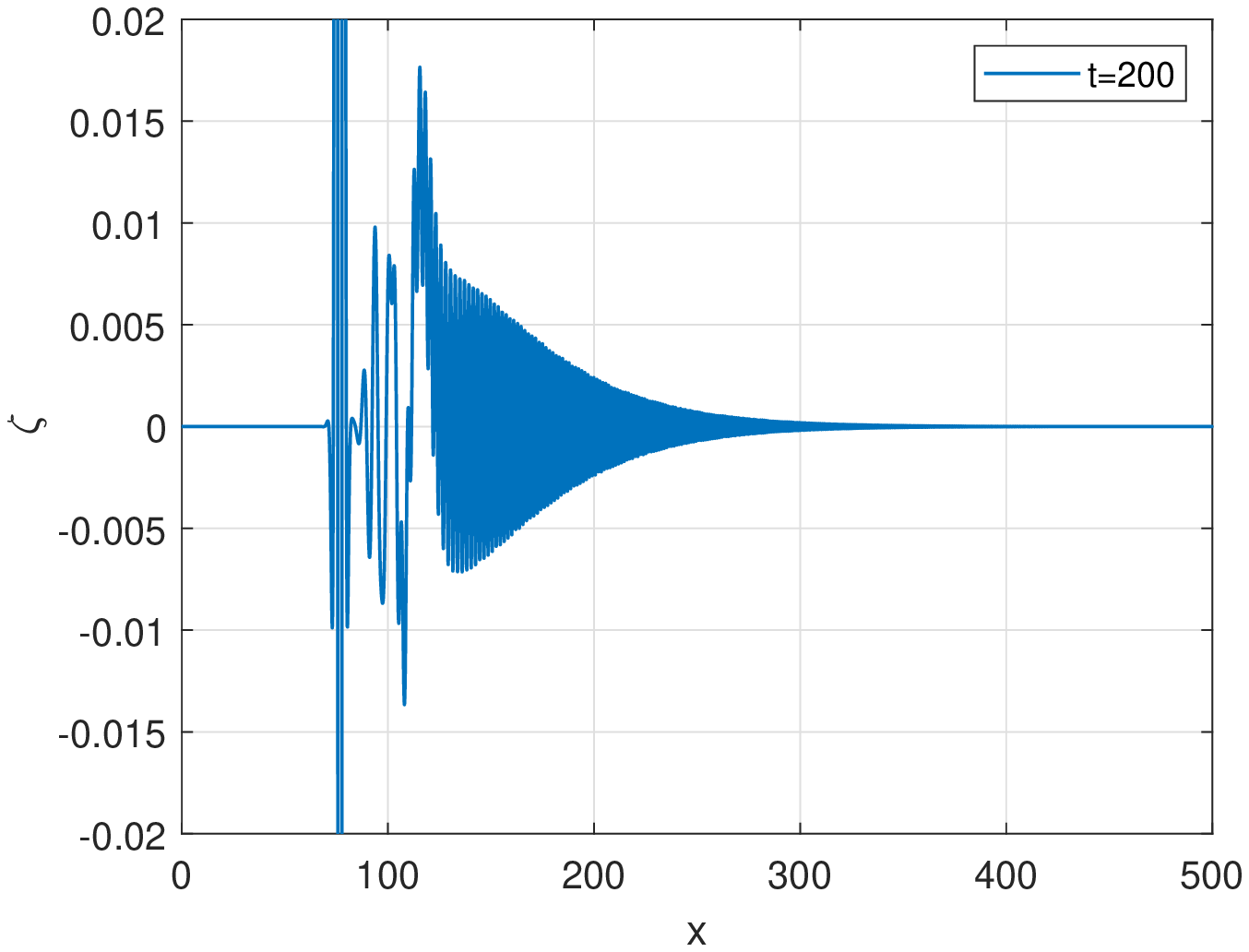}}
%\subfigure[]
%{\includegraphics[width=6.24cm,height=5.2cm]{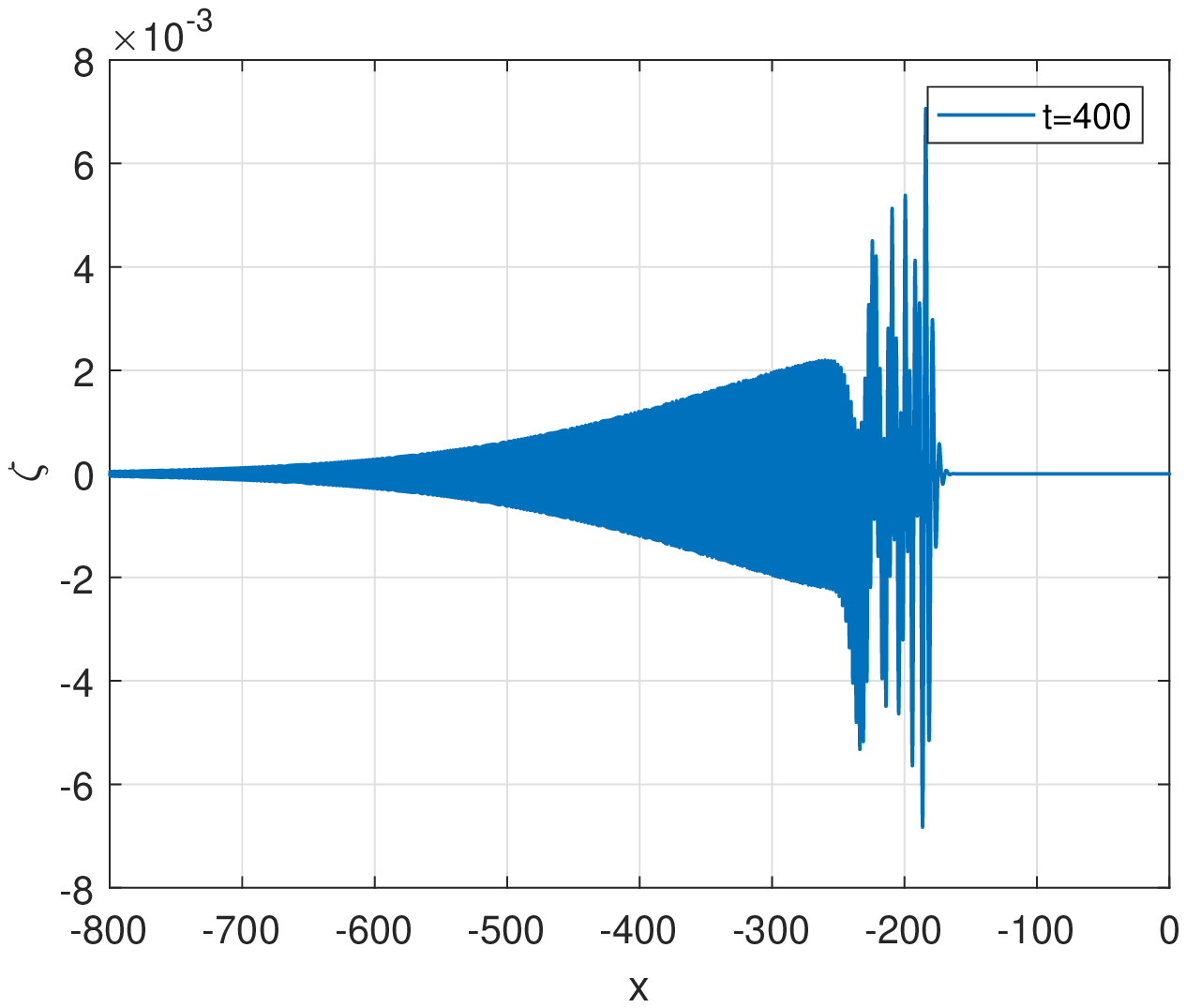}}
%\subfigure[]
%{\includegraphics[width=6.27cm,height=5cm]{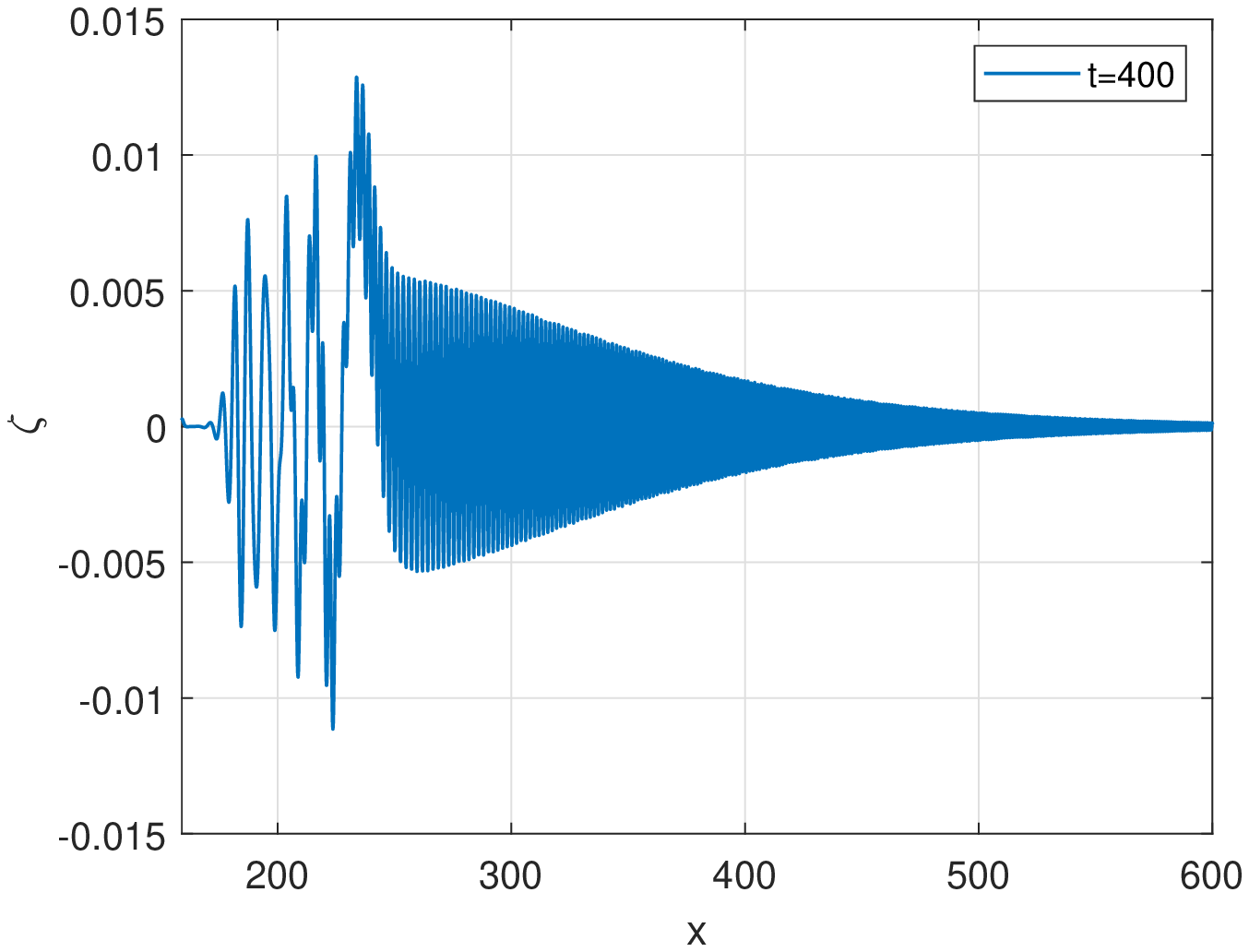}}
%\subfigure[]
%{\includegraphics[width=6.27cm]{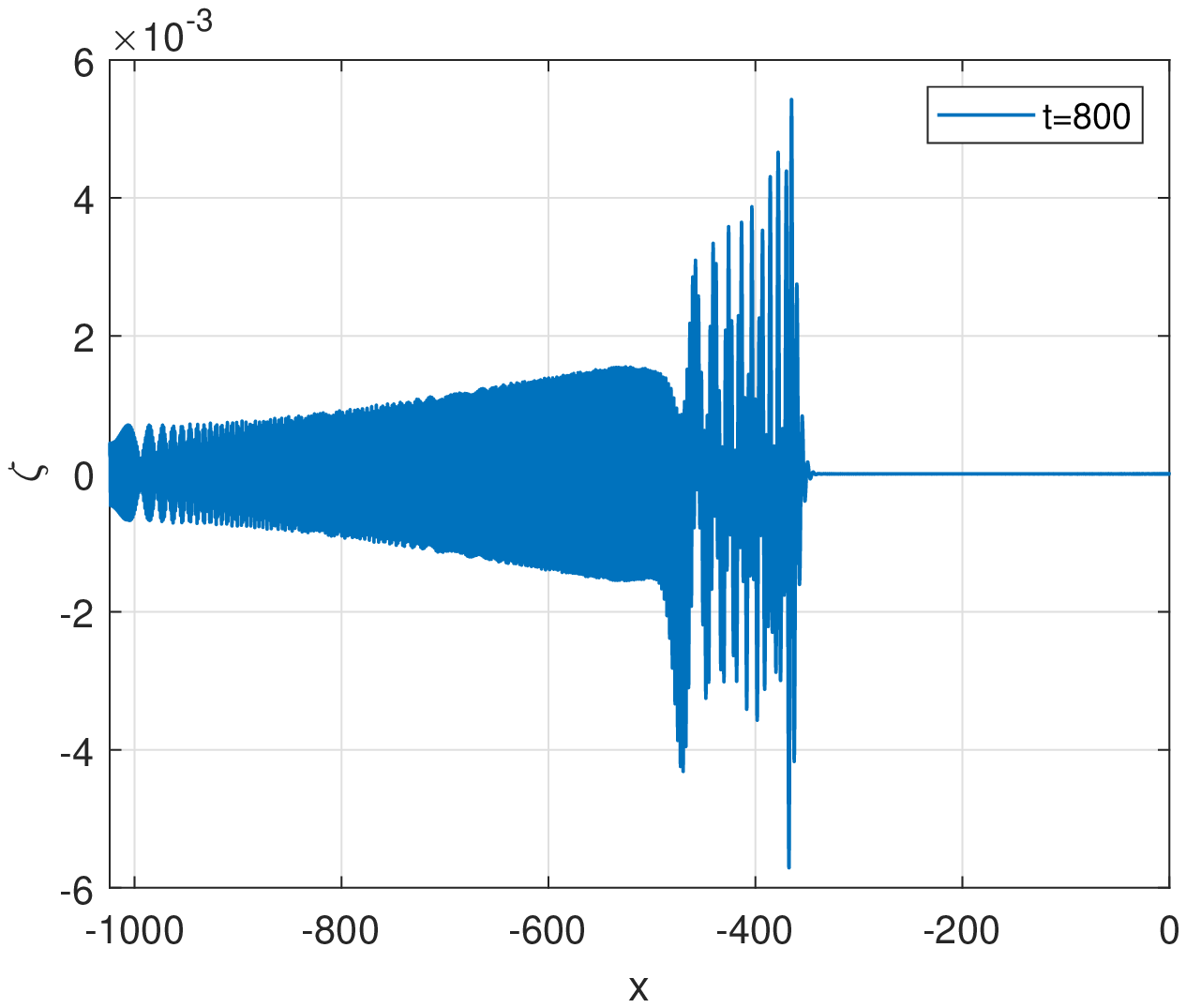}}
%\subfigure[]
%{\includegraphics[width=6.27cm,height=5.1cm]{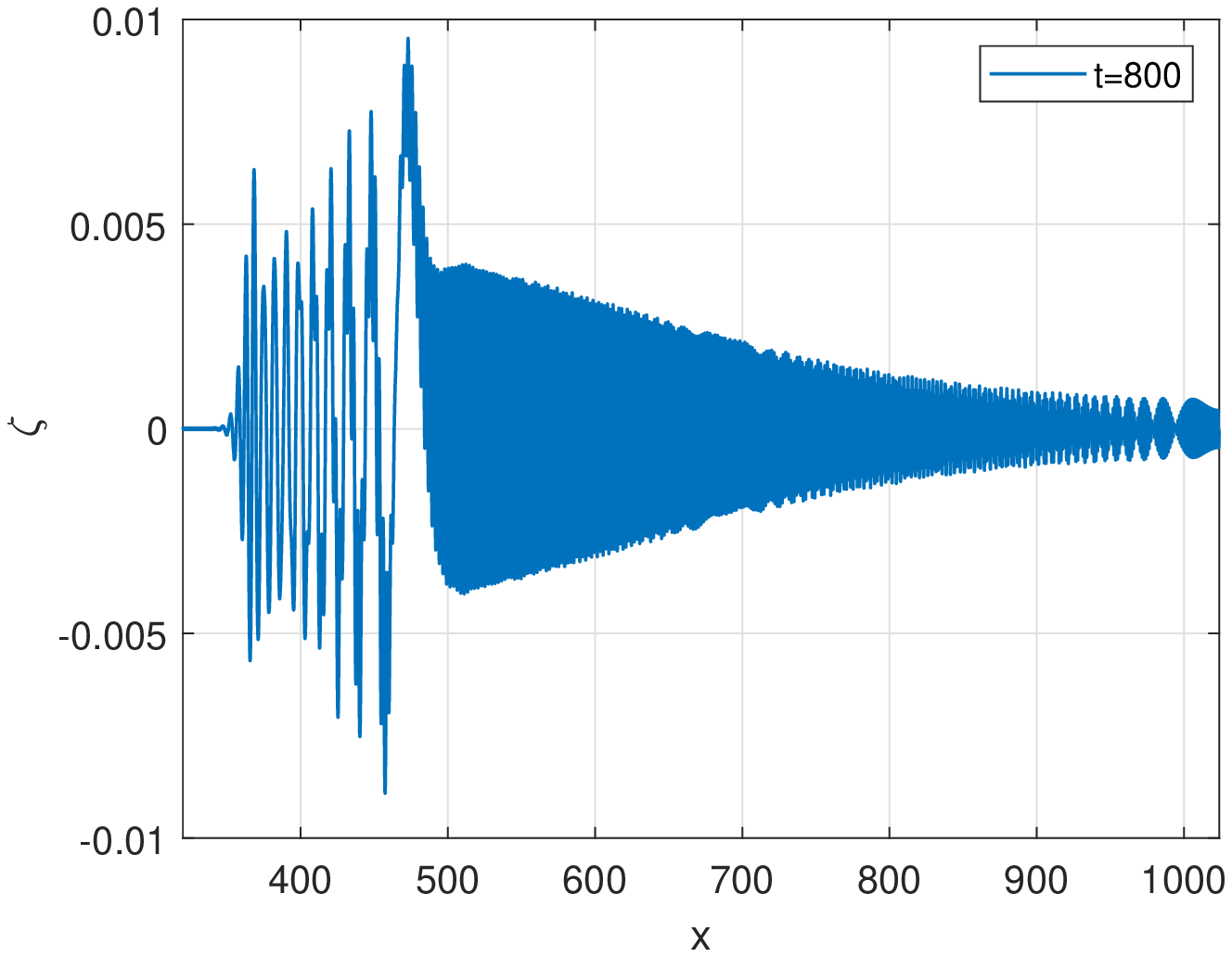}}
%\caption{Small perturbation of a CSW with nonmonotone decay. $A=1.1$. $\zeta$ component of the numerical solution. Magnifications of Figure \ref{fdds5_21b}.}
%\label{fdds5_21d}
%\end{figure}

The main character of these small-amplitude waves seems to be dispersive. The generation of dispersive oscillations in front of the emerging soltary wave can be justified from the study of small-amplitude solutions of the linearized system (\ref{53}), (\ref{54}). In this case we have a linear dispersion relation of the form
\begin{eqnarray*}
\omega(k)=\omega_{\pm}(k)=-kc_{s}\pm c_{\gamma,\delta}k\phi(k^{2}),
\end{eqnarray*}
where $\phi:[0,\infty)\rightarrow\mathbb{R}$ is the function
\begin{eqnarray*}
\phi(x)=\sqrt{\frac{(1-\widetilde{a}x)(1-cx)}{1+dx}},\; \widetilde{a}=\frac{a}{\kappa_{1}},
\end{eqnarray*}
with $a, c, d$ given by (\ref{53aa}). 
The local phase speed (relative to the speed of the CSW) is therefore
\begin{eqnarray*}
v_{\pm}(k)=-c_{s}\pm c_{\gamma,\delta}\phi(k^{2}).
\end{eqnarray*}
An analysis of $\phi$ similar to that made in section \ref{sec54} for the case (A1) shows that the function $\phi$ has the form displayed in Figure \ref{Z1}(a): It is decreasing up to some $x^{*}$ and increasing for $x\geq x^{*}$ with $\phi(x)\rightarrow +\infty$ as $x\rightarrow \infty$. As in section \ref{sec54}, this means that for large $|k|$
$$v_{+}(k)>-c_{s}+c_{\gamma,\delta},$$ and most of the components of the dispersive tail travel to the right and in front of the solitary wave. Similarly, for the group velocities
\begin{eqnarray*}
\omega^{\prime}_{\pm}(k)=-c_{s}\pm c_{\gamma,\delta}\psi(k^{2}),
\end{eqnarray*}
where  $\psi:[0,\infty)\rightarrow\mathbb{R}$, given by (\ref{514e}), has the form shown in Figure \ref{Z1}(b), and therefore, for $|k|$ large enough
 \begin{eqnarray*}
\omega^{\prime}_{+}(k)\geq -c_{s}+ c_{\gamma,\delta}>0.
\end{eqnarray*}
Hence one dispersive group travels to the right and in front of the solitary wave.
\begin{figure}[htbp]
\centering
\centering
\subfigure[]
{\includegraphics[width=6.27cm,height=5.05cm]{ocsw_2m1.eps}}
\subfigure[]
{\includegraphics[width=6.27cm,height=5.05cm]{ocsw_2m2.eps}}
\subfigure[]
{\includegraphics[width=6.24cm,height=5.2cm]{ocsw_3m1.eps}}
\subfigure[]
{\includegraphics[width=6.27cm,height=5cm]{ocsw_3m2.eps}}
\subfigure[]
{\includegraphics[width=6.27cm]{ocsw_4m1.eps}}
\subfigure[]
{\includegraphics[width=6.27cm,height=5.1cm]{ocsw_4m2.eps}}
\caption{Small perturbation of a CSW with nonmonotone decay. $A=1.1$. $\zeta$ component of the numerical solution. Magnifications of Figure \ref{fdds5_21b}.}
\label{fdds5_21d}
\end{figure}

\begin{figure}[htbp]
\centering
\centering
\subfigure[]
{\includegraphics[width=\columnwidth]{ocsw_amp.eps}}
\subfigure[]
{\includegraphics[width=\columnwidth]{ocsw_speed.eps}}
\caption{Small perturbation of a CSW with nonmonotone decay. $A=1.1$. Evolution of  maximum negative excursion (a) and speed (b) of the emerging  solitary wave.}
\label{fdds5_21c}
\end{figure}

We reproduce now the analogous experiment with $A=2.1$ in (\ref{53b}), and the results are shown in Figures \ref{fdds5_22}-\ref{fdds5_22c}. As the perturbation factor $A$ grows, the size of both tails also grows. In addition, Figures \ref{fdds5_22b} and \ref{fdds5_22d} suggest the formation of nonlinear structures, in the form of wavelets and perhaps some very small CSW's with non monotone decay. (These two structure were conjectured in Figure \ref{fdds5_21d}.)  
The emerging solitary wave is shorter (its maximum negative excursion is approximately $-5.233$, compared to $-6.035$ for the initial pertubed wave) and slower (the speed is now about $0.237$).

\begin{figure}[htbp]
\centering
\centering
\subfigure[]
{\includegraphics[width=\columnwidth]{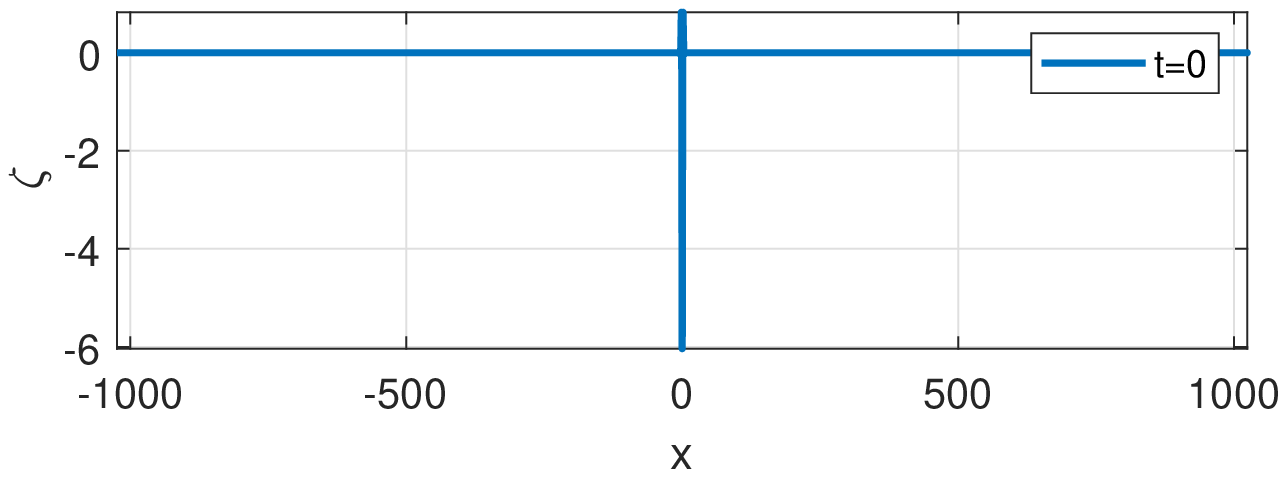}}
\subfigure[]
{\includegraphics[width=\columnwidth]{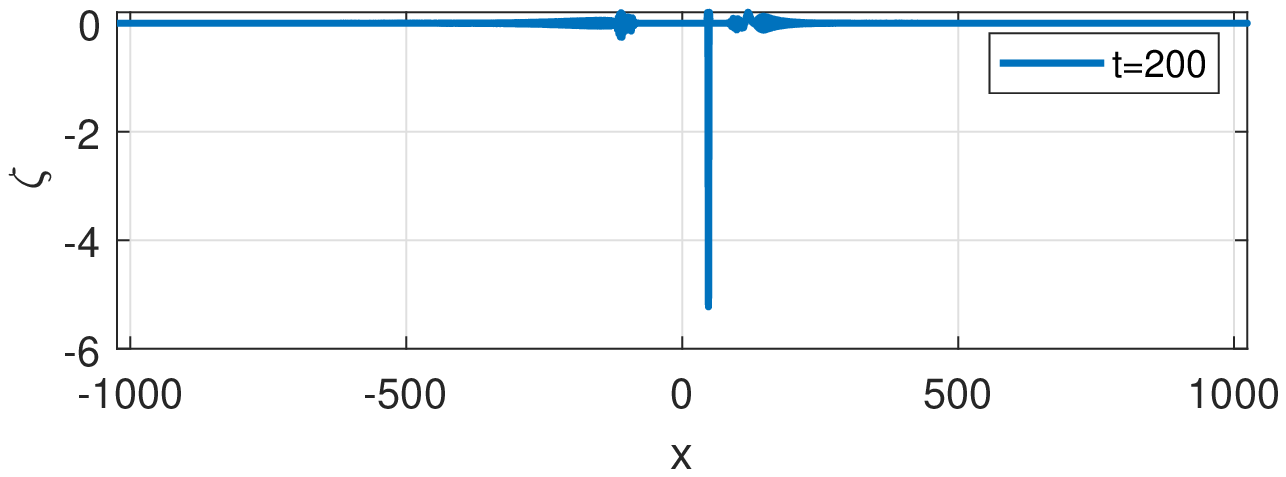}}
\subfigure[]
{\includegraphics[width=\columnwidth]{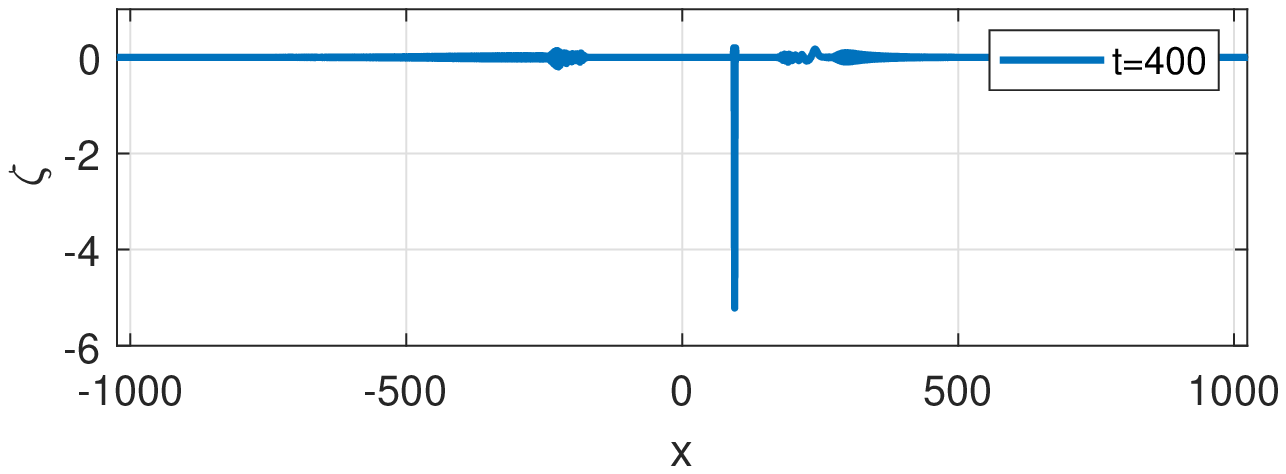}}
\subfigure[]
{\includegraphics[width=\columnwidth]{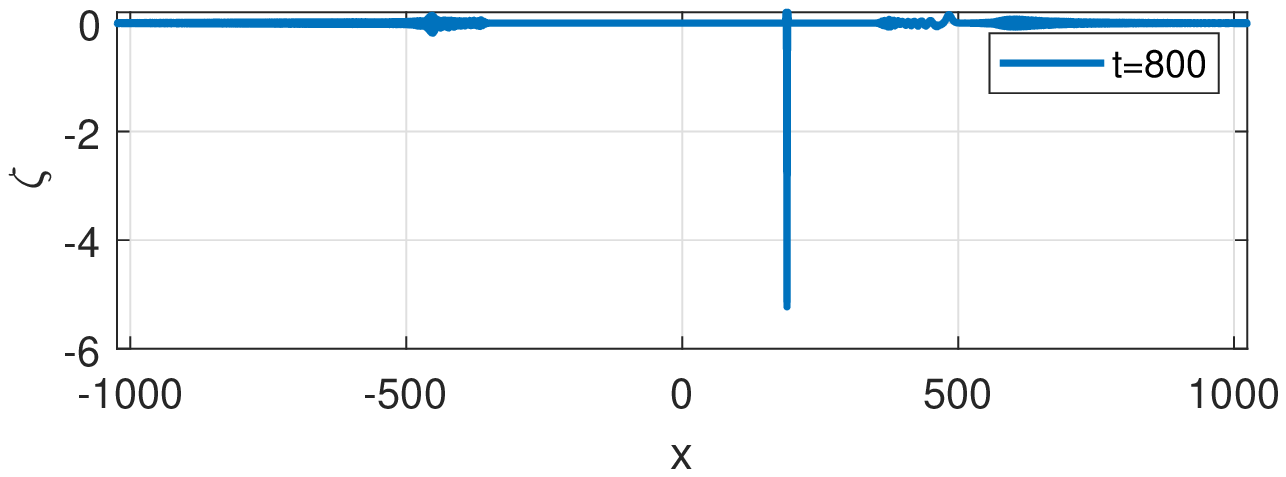}}
\caption{Perturbation of a CSW with nonmonotone decay. $A=2.1$. $\zeta$ component of the numerical solution.}
\label{fdds5_22}
\end{figure}
\begin{figure}[htbp]
\centering
\centering
\subfigure[]
{\includegraphics[width=\columnwidth]{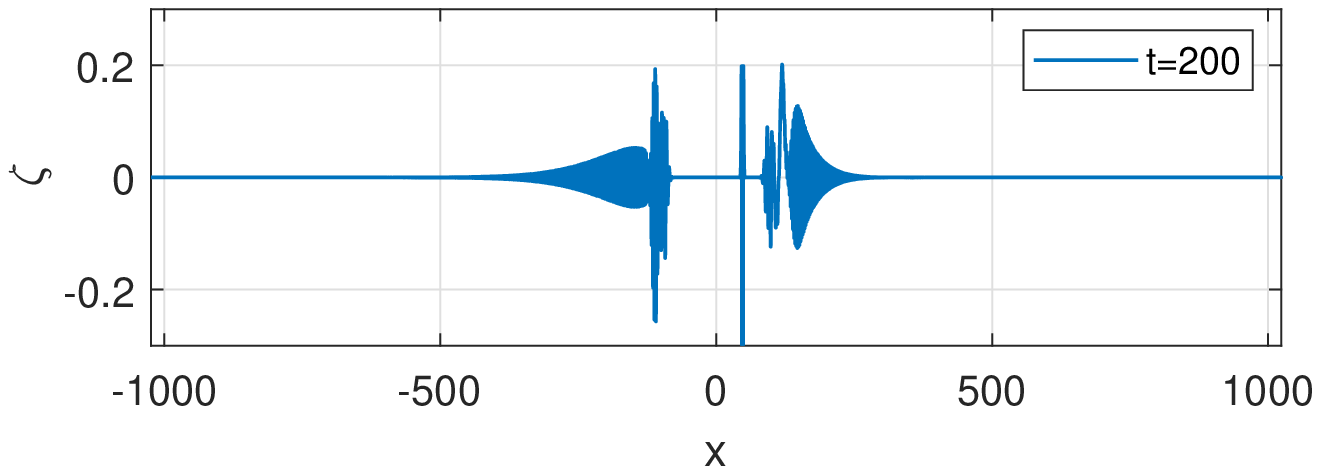}}
\subfigure[]
{\includegraphics[width=\columnwidth]{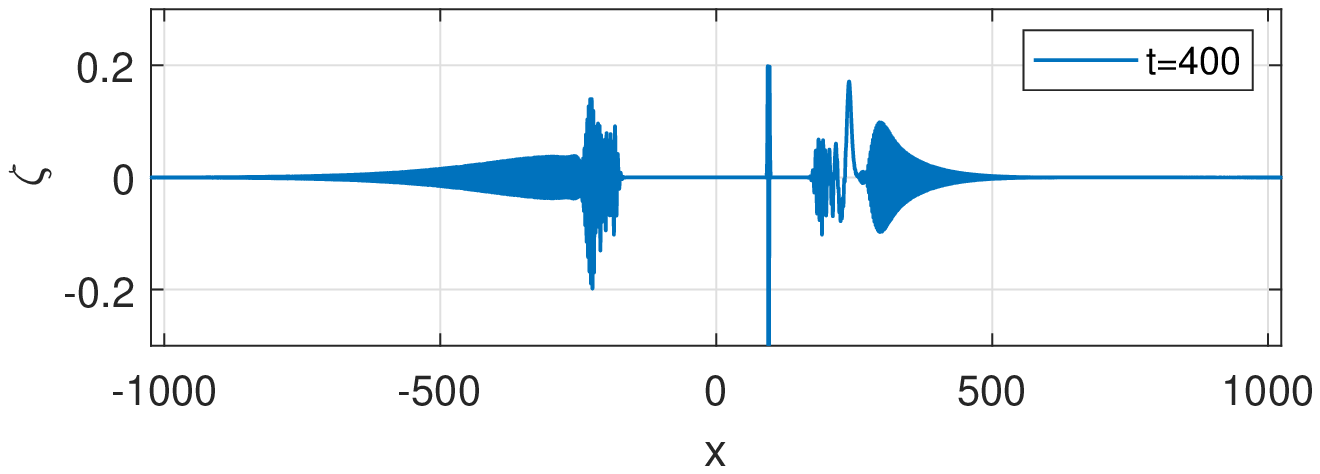}}
\subfigure[]
{\includegraphics[width=\columnwidth]{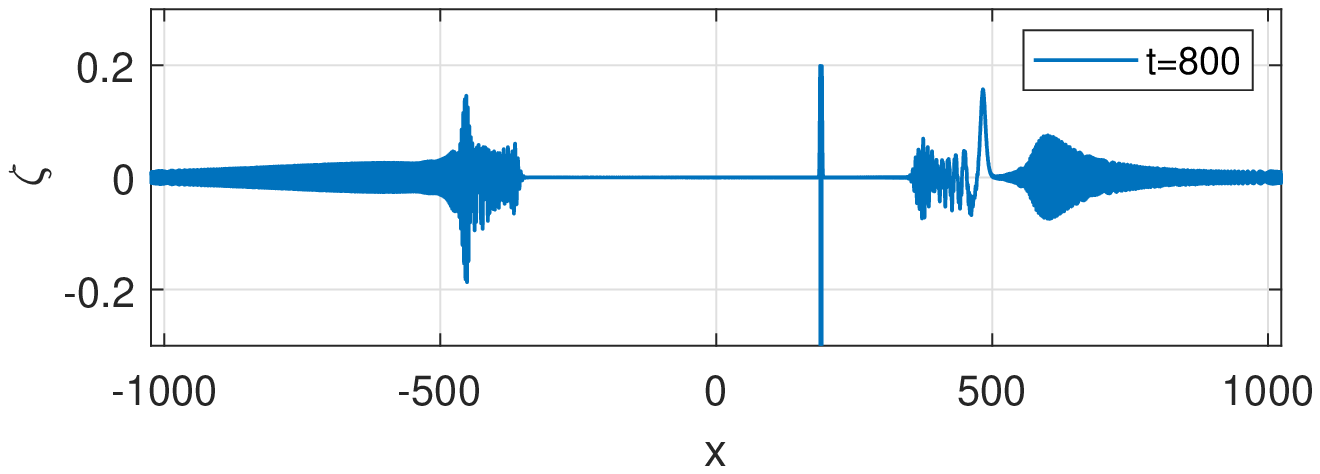}}
\caption{Perturbation of a CSW with nonmonotone decay. $A=2.1$. $\zeta$ component of the numerical solution. Magnifications of Figure \ref{fdds5_22}.}
\label{fdds5_22b}
\end{figure}

\section{Concluding remarks}
\label{sec6}
The present paper is concerned with the three-parameter family of internal-wave Boussinesq/Boussinesq (B/B) systems (\ref{BB2}). They model the bi-directional propagation of internal waves along the interface of a two-layer system of fluids under a rigid-lid assumption for the upper layer and over a rigid bottom bounding the lower layer below. The systems were derived in \cite{BonaLS2008} under the hypothesis that the flow is in the Boussinesq regime in both layers and are described by four parameters, $a, b, c, d$, three of them independent, like those corresponding to surface wave propagation, \cite{BonaChS2002,BonaChS2004}. 

\begin{figure}[htbp]
\centering
\centering
\subfigure[]
{\includegraphics[width=6.27cm,height=5.05cm]{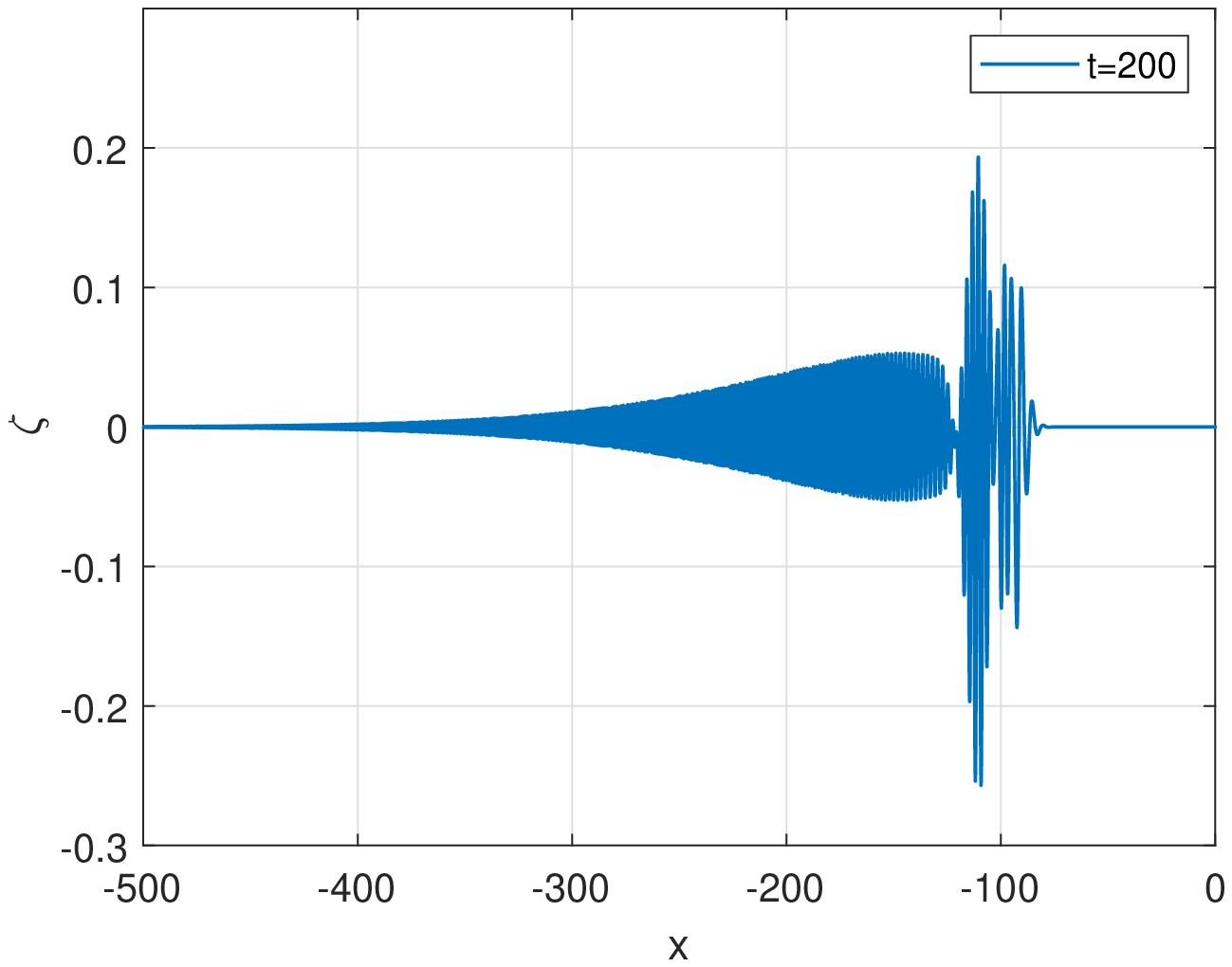}}
\subfigure[]
{\includegraphics[width=6.27cm,height=5.05cm]{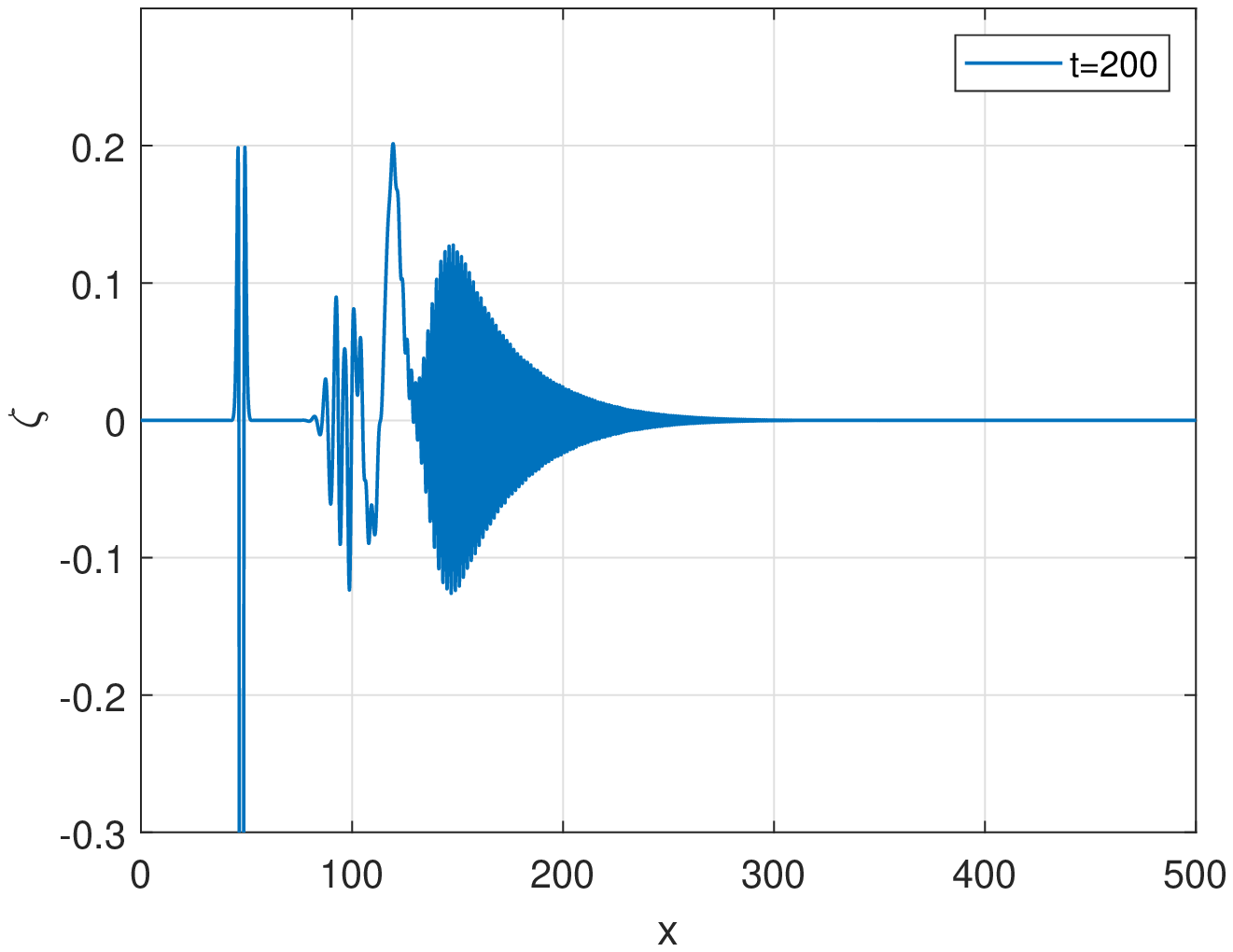}}
\subfigure[]
{\includegraphics[width=6.24cm,height=5.2cm]{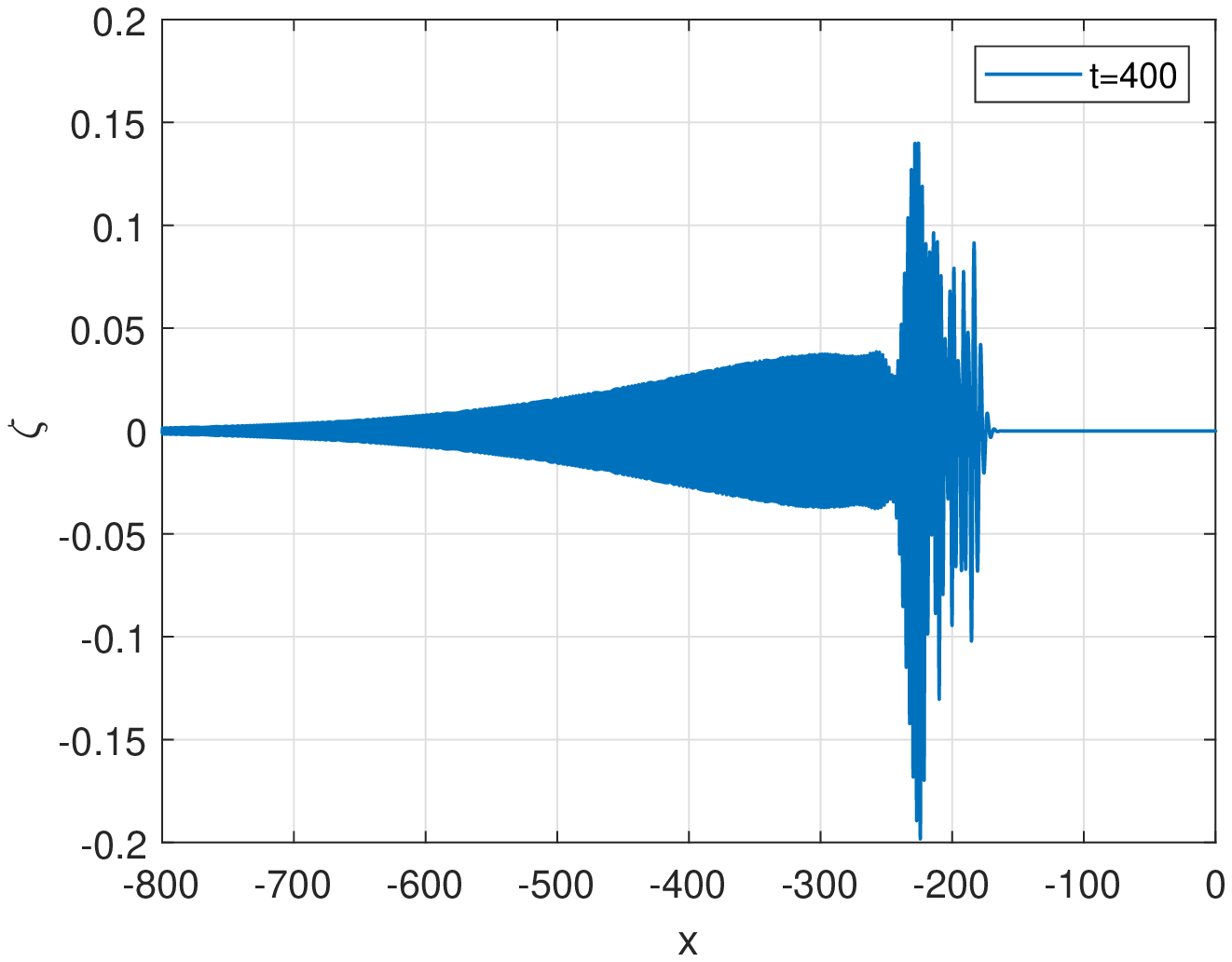}}
\subfigure[]
{\includegraphics[width=6.27cm,height=5cm]{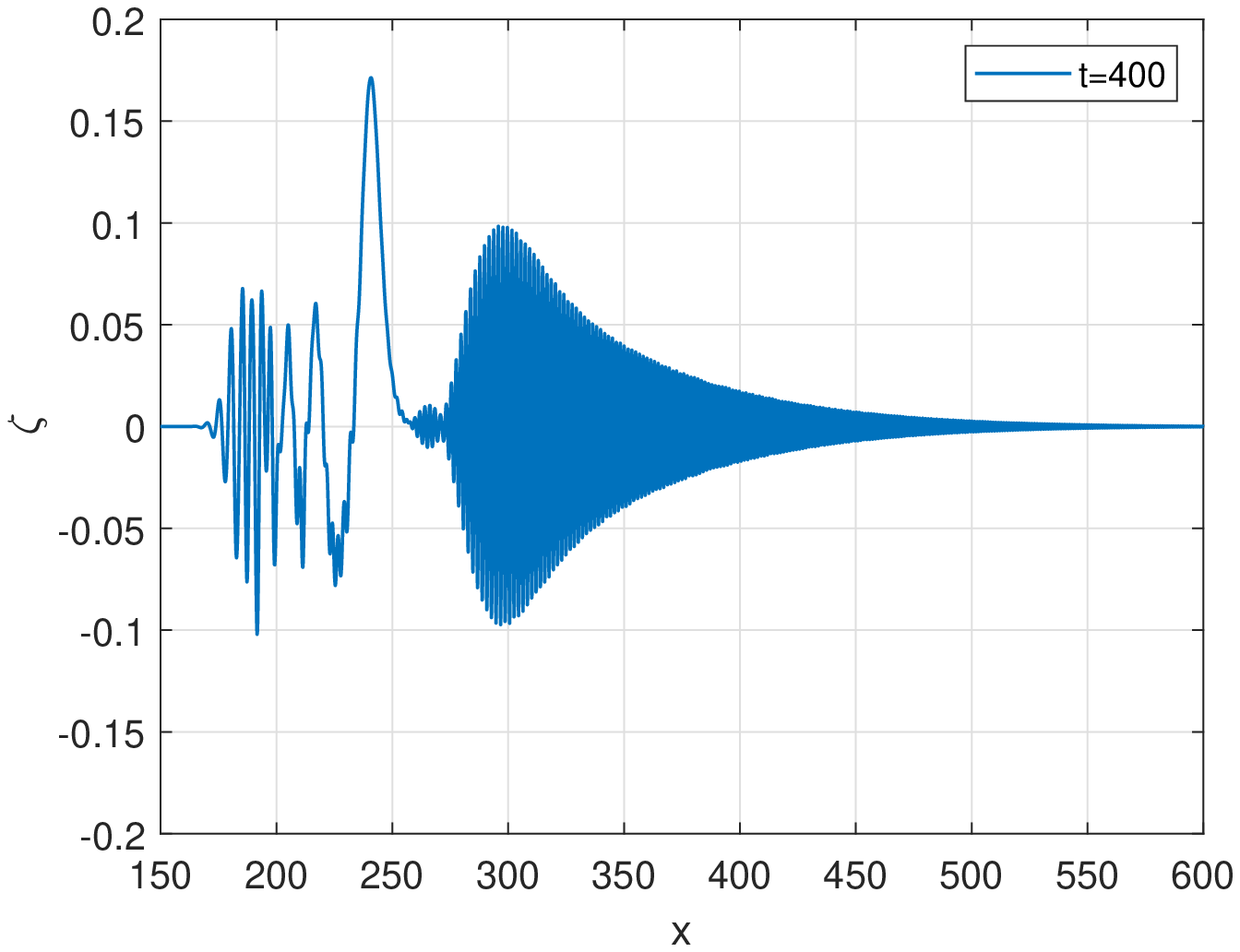}}
\subfigure[]
{\includegraphics[width=6.27cm]{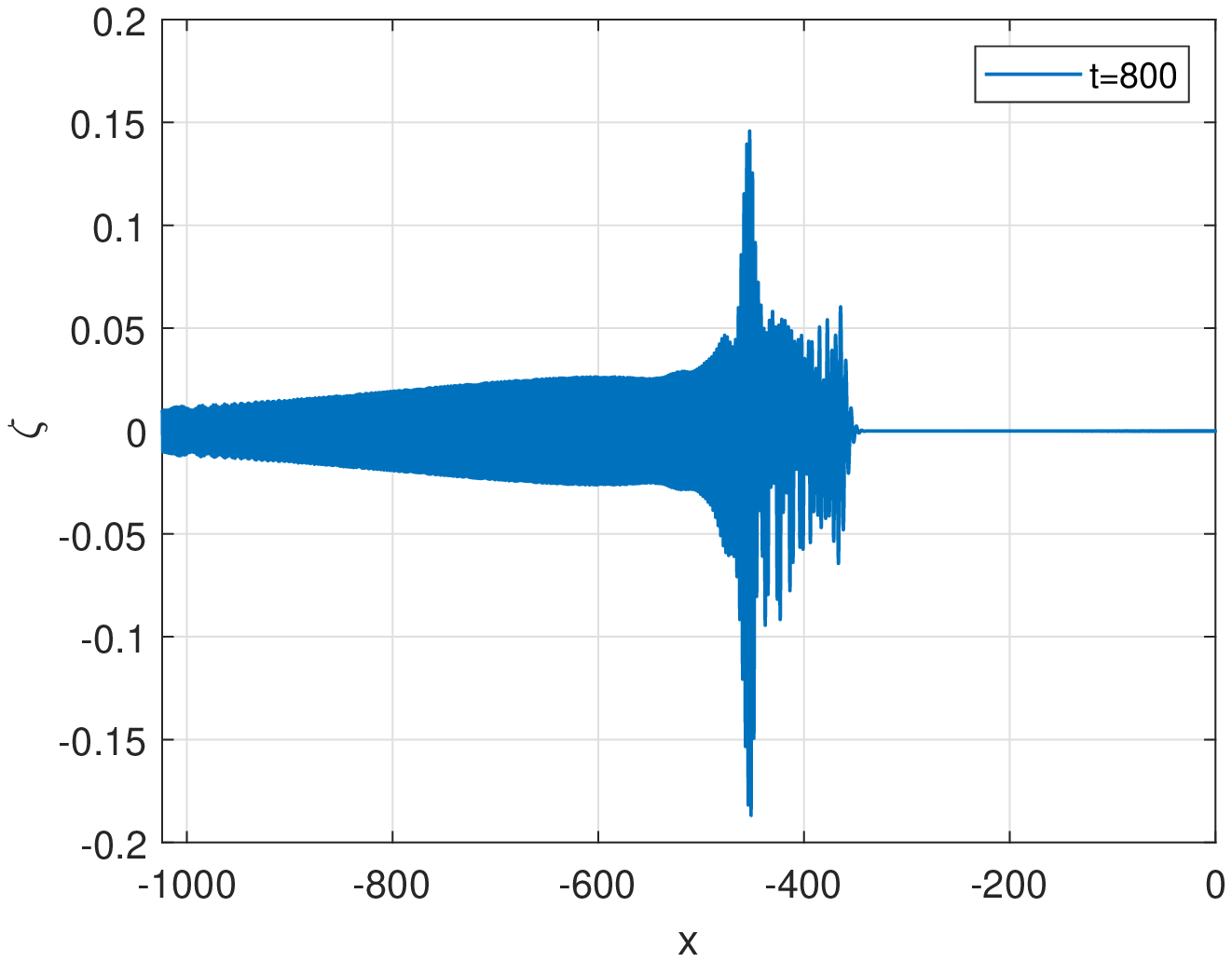}}
\subfigure[]
{\includegraphics[width=6.27cm,height=5.1cm]{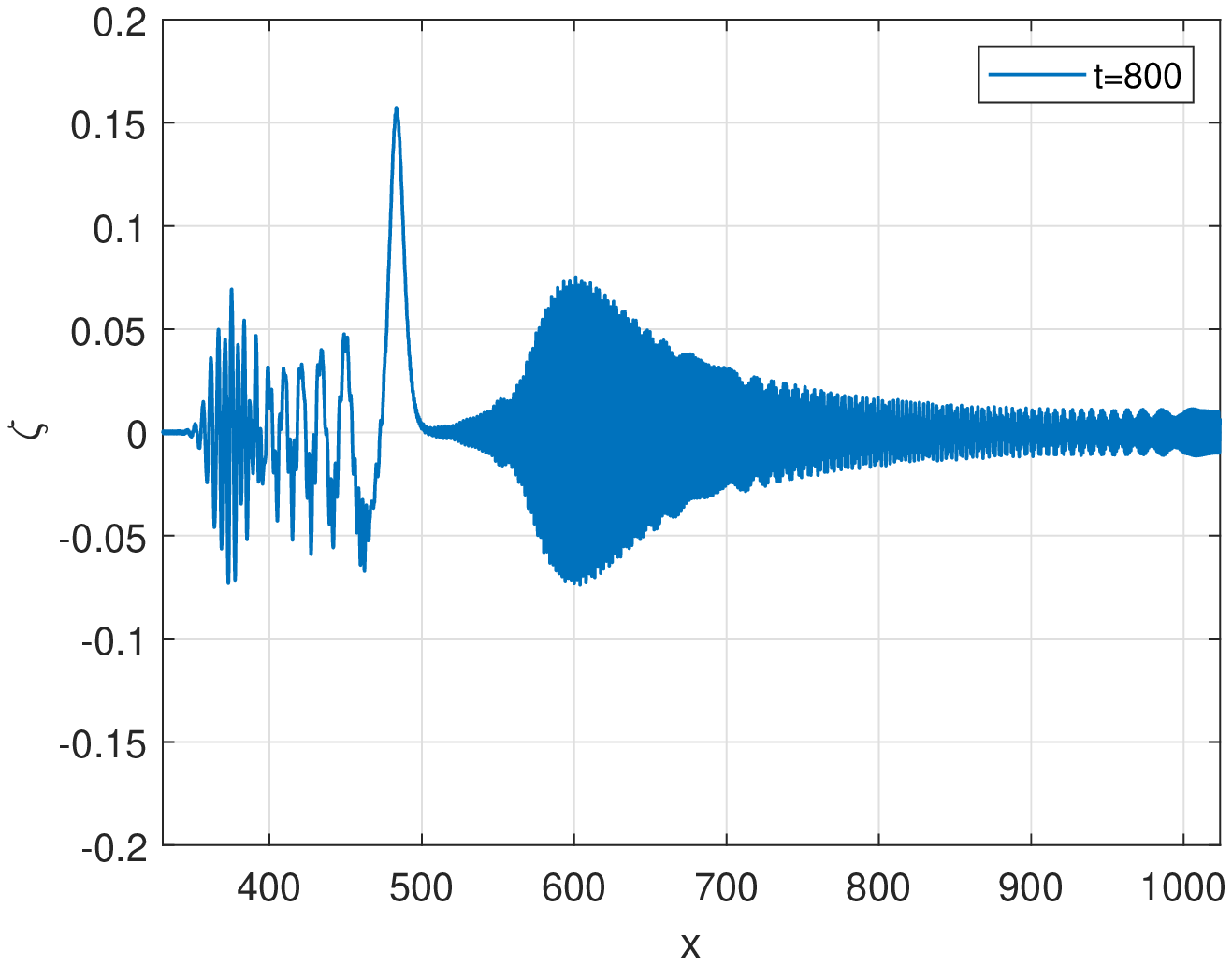}}
\caption{Perturbation of a CSW with nonmonotone decay. $A=2.1$. $\zeta$ component of the numerical solution. Magnifications of Figure \ref{fdds5_22b}.}
\label{fdds5_22d}
\end{figure}

\begin{figure}[htbp]
\centering
\centering
\subfigure[]
{\includegraphics[width=\columnwidth]{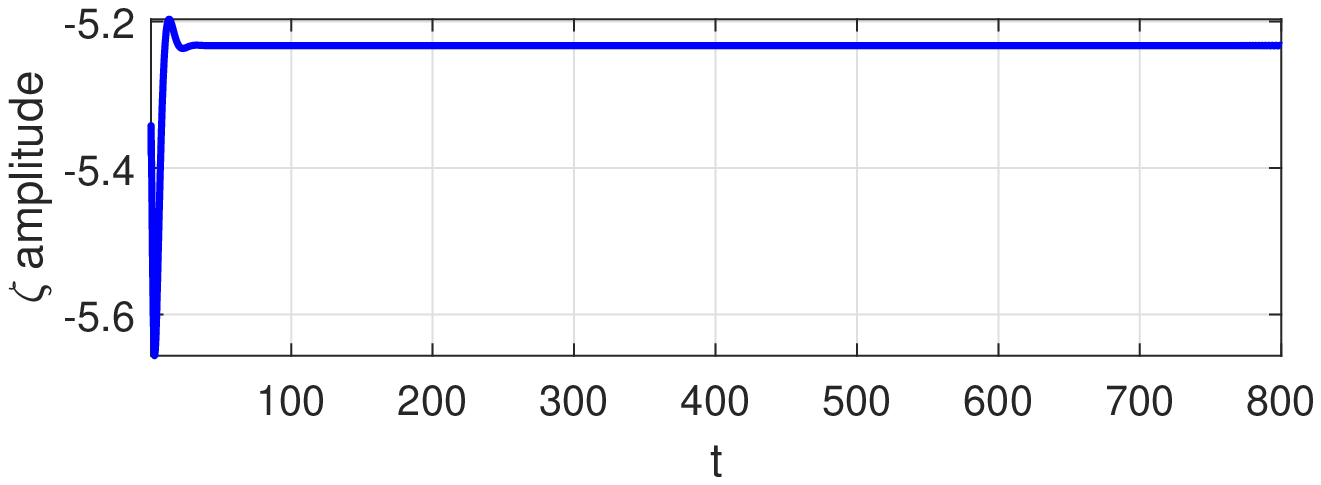}}
\subfigure[]
{\includegraphics[width=\columnwidth]{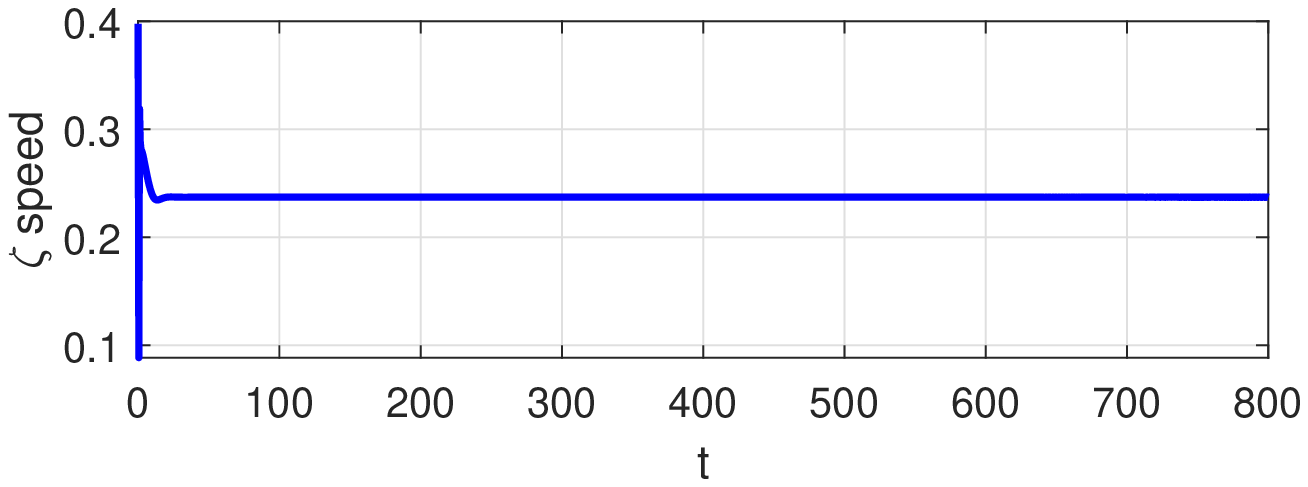}}
\caption{Perturbation of a CSW with nonmonotone decay. $A=2.1$. Evolution of  maximum negative excursion (a) and speed (b) of the emerging  solitary wave.}
\label{fdds5_22c}
\end{figure}
%The tendency seems to indicate that for larger values of $A$, the stability of the wave    will break somehow.
%\section{Concluding remarks}
%\label{sec6}
%The present paper is concerned with the three-parameter family of internal wave Boussinesq/Boussinesq (B/B) systems (\ref{BB2}). They model the bi-directional propagation of internal waves along the interface of a two-layer system of fluids under a rigid-lid assumption for the upper layer and over a rigid bottom bounding the lower layer below. The systems were derived in \cite{BonaLS2008} under the hypothesis that the flow is in the Boussinesq regime in both layers and are described by four parameters, $a, b, c, d$, three of them independent, like those corresponding to surface wave propagation, \cite{BonaChS2002,BonaChS2004}. 

In Section \ref{sec2} several theoretical aspects of this family of systems are discussed. We first present an alternative derivation, cf. \cite{BonaChS2002,Duran2019},
based on asymptotic expansions of the velocity potential associated to the fluid layers without using nonlocal operators as in \cite{BonaLS2008}. Then the theory developed in \cite{BonaChS2002} for the case of surface waves is used to review linear and nonlinear well-posedness of the internal-wave systems. Specifically, the B/B systems are linearly well-posed when $a,c\leq 0, b,d\geq 0$. As for local (and in some cases conditionally global) nonlinear well-posedness, an analysis similar to the one in \cite{BonaChS2004}
establishes seven types of systems, depending on the parameters $a, b, c, d$, in corresponding Sobolev spaces where existence, uniqueness and regularity locally in time of solutions hold. They correspond to the cases (i)-(vii) in section \ref{sec22}.
%
%are:
%\begin{itemize}
%\item Case (i): $b,d>0, a=c=0$ ($H^{s}\times H^{s}, s\geq 0$).
%\item Case (ii): $b,d>0, a,c<0$ ($H^{s}\times H^{s}, s\geq 0$).
%\item Case (iii): $b=0, d>0, a,c<0$ ($H^{s}\times H^{s+3}, s\geq 1$). 
%\item Case (iv): $b=0, d>0, a=c=0$, or
%$b>0, d=0, a=c=0$ ($H^{s}\times H^{s+1}, s\geq 1$; conditional global existence).
%\item Case (v): $b,d>0, a=0, c<0$ or $b,d>0, a<0, c=0$ ($H^{s+1}\times H^{s}, s\geq 0$; conditional global existence).
%\item Case (vi): $b=0, d>0, a<0, c=0$ ($H^{s}\times H^{s+2}, s\geq 1$).
%\item Case (vii): $b>0, d=0, a<0, c=0$ or $b=0, d>0, a=0, c<0$ ($H^{s}\times H^{s}, s\geq 2$).
%\end{itemize}
When $b=d$ these systems admit a Hamiltonian structure, and the Hamiltonian and other conserved quantities are derived in section \ref{sec23}.

Section \ref{sec3} is devoted to the error analysis of the spectral semidiscretization for approximating the periodic ivp for the B/B systems. Error estimates for the semidiscrete schemes are derived for each case (i)-(vii) of nonlinearly well posed systems obtained in section \ref{sec2}. 
%Specifically, if $N$ denotes the order of the trigonometric approximation , $T>0$, $(\zeta,u)\in C^{1}(0,T,H^{\mu}), \mu\geq 1$ is the solution of the periodic ivp and $(\zeta_{N},u_{N})$ denotes the semidiscrete solution, then, the following error estimates hold:
%\begin{itemize}
%\item Case (i): $\max_{0\leq t\leq T}\left(||\zeta_{N}-\zeta||+||u_{N}-u||\right)=O(N^{-\mu}),\; \mu\geq 1$.
%\item Case (ii): $\max_{0\leq t\leq T}\left(||\zeta_{N}-\zeta||+||u_{N}-u||\right)=O(N^{-\mu}),\; \mu\geq 1$.
%\item Case (iii): $\max_{0\leq t\leq T}\left(||\zeta_{N}-\zeta||+||u_{N}-u||_{1}\right)=O(N^{1-\mu}),\; \mu>3/2$. 
%\item Case (iv): $\max_{0\leq t\leq T}\left(||\zeta_{N}-\zeta||+||u_{N}-u||_{1}\right)=O(N^{1-\mu}),\; \mu>3/2$.
%\item Case (v): $\max_{0\leq t\leq T}\left(||\zeta_{N}-\zeta||+||u_{N}-u||\right)=O(N^{-\mu}),\; \mu\geq 1$.
%\item Case (vi): $\max_{0\leq t\leq T}\left(||\zeta_{N}-\zeta||+||u_{N}-u||_{1}\right)=O(N^{1-\mu}),\; \mu>3/2$. 
%\item Case (vii): $\max_{0\leq t\leq T}\left(||\zeta_{N}-\zeta||+||u_{N}-u||\right)=O(N^{1-\mu}),\; \mu>3/2$. 
%\end{itemize}
%In section \ref{sec4} we consider the existence and numerical approximation of solitary-wave solutions of the systems (\ref{BB2}). We apply several techniques for proving existence of such waves, namely Normal Form Theory, \cite{I,HaragusI}, valid for solitary-wave speeds close to the limiting value $c_{\gamma,\delta}=\sqrt{(1-\gamma)/(\delta+\gamma)}$, and Toland's theory, \cite{Toland1986}, Concentration-Compactness theory, \cite{Lions}, and Positive Operator theory, \cite{BenjaminBB1990}, 

In sections \ref{sec4} and \ref{sec5} we study solitary-wave solutions of the B/B systems. Section \ref{sec4} is concerned with the existence and numerical generation of this type of solutions. In the first part, section \ref{sec41}, we apply standard theories such as Normal Form Theory (NFT), \cite{I,HaragusI}, Toland's Theory, \cite{Toland1986}, Concentration-Compactness Theory (CCT), \cite{Lions},  and Positive Operator Theory (POT), \cite{BenjaminBB1990}, in order to derive existence results. If we make use of the linearization of the system for the solitary waves (\ref{BB6}), written as a first-order system (\ref{NFT1}), at the origin, NFT allows us to establish the existence of classical solitary waves (CSW's) and generalized solitary waves (GSW's), in two \lq generic\rq\ cases (cf. Table \ref{tavle0}), respectively:
\begin{itemize}
\item $a,c\leq 0, b,d\geq 0, bd-ac/\kappa_{1}>0$ (CSW),
\item $a,c\leq 0, b,d\geq 0, bd-ac/\kappa_{1}<0$ (GSW),
\end{itemize}
where $\kappa_{1}=1/(\delta+\gamma)$, with $\delta$ and $\gamma$ denoting, respectively, the depth and density ratios of the two-layer system of fluids. Existence of such solitary waves is ensured by the NFT when the magnitude of the speed $c_{s}$ is greater than but close to the limiting value $c_{\gamma,\delta}=\sqrt{(1-\gamma)/(\delta+\gamma)}$ (speed of sound). In addition, NFT also predicts periodic solutions close to the region of generation of CSW's, as well as classical solitary waves with non monotone decay for speeds $c_{s}$ satisfying $|c_{s}|<c_{\gamma,\delta}$.

The rest of the theories contribute more results for the existence of CSW's of speeds not necessarily close to $c_{\gamma,\delta}$. Thus:
\begin{itemize}
\item When $a,c\leq 0, b=d>0$ (Hamiltonian case), Toland's theory ensures the existence of classical solitary waves. A specific speed-amplitude relation of the form  (\ref{48b}) holds.
\item When $a,c< 0, b=d>0$, CCT establishes the existence of CSW's  for speeds $c_{s}$ satisfying a bound of the form (\ref{49e}) and $|c_{s}|<c_{\gamma,\delta}$. 
\item The application of POT proves the existence of CSW's when $b,d>0, a,c\leq 0$, and $bd-ac/\kappa_{1}>0$, with speeds $c_{s}$ satisfying $|c_{s}|>c_{\gamma,\delta}$.
\end{itemize}
For particular values of $c_{s}$, some exact formulas of CSW's of ${\rm sech}^{2}$ type may be obtained by using similar arguments to those of \cite{MChen1998} valid in the case of surface waves.

% \cite{Petv1976,pelinovskys}, accelerated by the Minimal Polynomial extrapolation (MPE) algorithm, \cite{sidi,sidifs,smithfs,AlvarezD2015}.

These existence results are illustrated in section \ref{sec42}, where CSW and GSW solutions are numerically generated. The numerical procedure consists of discretizing the system (\ref{BB6}) of ode's satisfied by the solitary wave profiles, on a long enough interval with periodic boundary conditions, by the Fourier collocation method. In the Fourier space, the differential equation systems for the profiles become algebraic systems, which are iteratively solved by the Petviashvili's scheme, \cite{Petv1976}, accelerated with vector extrapolation techniques, \cite{sidi}. Each case is illustrated by exhibiting the approximate solitary wave profiles ($\zeta$ and $v_{\beta}$ components), the corresponding phase portraits, and the decrease of the residual error with the number of iterations in order to check the convergence of the iteration. It is worth mentioning that, in order to illustrate the results of Toland's Theory, the iterative method is modified by projection techniques so that the speed-amplitude relation (\ref{48b}) is satisfied up to machine accuracy. In addition, numerical experiments suggest that in all cases, and for both CSW's and GSW's, the amplitude is an increasing function of the speed difference $c_{s}-c_{\gamma,\delta}$.

In section \ref{sec5} we make a computational study on some aspects of the dynamics of classical and generalized solitary-wave solutions of the B/B systems by solving numerically the periodic ivp on a long enough interval. The numerical method used for these evolution simulations is a spectral Fourier discretization in space coupled with an implicit fourth-order RK-composition method based on the implicit midpoint rule as time integrator. The choice of the spatial discretization is justified by the error estimates in section \ref{sec3}, while the time integrator has been analyzed in \cite{DD} in the case of spectral discretizations of the periodic ivp for the KdV equation and shown to be computationally efficient when used in other nonlinear dispersive equations as well, \cite{FrutosS1992,DDM}. The resulting full discretization is computationally validated in section \ref{sec52}, where several numerical experiments are performed to check its accuracy, as well as the accuracy of the numerical solitary waves as traveling waves.

%that of the scheme of generation for solitary waves implemented in section \ref{sec42}.

Section \ref{sec53} is devoted to a computational study of the dynamics of CSW solutions which are strictly positive or negative. The experiments are made for the generic case $a,c<0, b,d>0, bd-ac/\kappa_{1}>0$ and are concerned with the ensuing evolution from small and large perturbations of CSW's, from superpositions of CSW's (in order to study overtaking and head-on collisions), and from Gaussian pulses, in order to study resolution into solitary waves. Some of the main conclusions of this study are reported here:
\begin{itemize}
\item Under small initial perturbations, the solution evolves into a modified CSW with small dispersive tails following the main wave. The generation and structure of these tails are justified by an analysis of small-amplitude solutions of the associated linearized system in a reference frame moving with the speed of the solitary wave. In the case of these strictly positive or negative classical solitary waves, the results predict the formation of two types of dispersive oscillation groups, trailing the solitary wave and traveling in opposite directions.
\item Increasing the size of the perturbation of the initial solitary wave may lead to the generation of additional stable, nonlinear structures. They may consist of smaller CSW's, CSW's of non monotone decay or others of wavelet type.
\item The collisions are, as expected, inelastic. In both cases (overtaking and head-on collisions), two CSW's emerge; the effects of the inelastic interactions include the generation of tails of dispersive nature and nonlinear structures of the same type as those already mentioned.
\item The evolution ensuing from initial Gaussian pulses shows resolution into a train of CSW's, leaving a small structure behind which seems to be dispersive.
\end{itemize}
In section \ref{sec54} we perform a corresponding computational study of the dynamics of GSW's. To this end, the experiments are in the generic case $a,c<0, b,d>0, bd-ac/\kappa_{1}<0$ and are of the same type as those in the study of section \ref{sec53}. The main conclusions are:
\begin{itemize}
\item The experiments with small perturbations of GSW's suggest that the question of stability of these waves is more intricate, in the sense that the formation of emerging, stable GSW's seems to require smaller initial perturbations and takes longer time than in the case of CSW's. In some examples the dispersive tails are hard to observe, as they are hidden in the ripples of the structure being of much smaller size.
\item The experiments with larger perturbations of an initial GSW show the formation of nonlinear structures of similar type to those observed in the case of CSW's, but overimposed on the ripples. In addition, larger ripples along with dispersive tails form in front of an emerging GSW, a fact that surely affects its stability.
\item The experiments show the evolution, from initial Gaussian pulses, of a train of solitary-wave type pulses traveling to the right. Ripples are formed between each pair of consecutive pulses, mixed somehow with dispersive tails. These dispersive groups are also observed behind the train. Finally, a second train of solitary-wave pulses is formed, traveling to the left, consisting of nonmonotonically decaying solitary waves; thus each wave of this train should have speed less than the limiting value $c_{\gamma,\delta}$.
\item The main effect observed in the experiments of overtaking and head-on collisions of GSW's is the formation of two emerging GSW's with ripples of different size, larger, in general, than those of the initial GSW's. Dispersive structures seem now to be smaller and are superimposed on the emerging ripples.
\end{itemize}

The existence of nonmonotonically decaying classical solitary waves, established in section \ref{sec41} and numerically generated in section \ref{sec42}, as well as their role in the dynamics of other solitary waves, observed in the experiments of sections \ref{sec53} and \ref{sec54}, motivate the numerical experiments of section \ref{sec55}, devoted to a computational study of the behaviour of these waves under small and large perturbations. The experiments suggest persistence and stability of these waves. From small perturbations, a new solitary wave of the same type is formed. The analysis of small-amplitude plane wave solutions of the linearized system in a reference moving with the speed of the solitary wave, shows now that the main part of the dispersive oscillations travels to the right in front of the solitary wave, with the rest traveling to the left, behind it. When the perturbation factor grows, the formation of a new solitary wave with nonmonotone decay is observed, and now the dispersive tails seems to be accompanied by the generation of small nonlinear structures, in the form of wavelets or nonmonotone CSW's.
%
%
%, with tails emanating in front of and behind it. 
%
%
%These waves appear to have different components, some with dispersive nature but some others which do not disperse and whose size grows with the perturbation factor of the initial profile. Thus it is expected  than large perturbations will generate some type of instability).
%

\section*{Acknowledgements}
Vassilios Dougalis and Angel Duran would like to acknowledge travel support, that made possible this collaboration, from the Institute of Mathematics (IMUVA) of the University of Valladolid, and the Institute of Applied and Computational Mathematics of FORTH.
Leetha Saridaki was supported by the grant \lq\lq Innovative Actions in Environmental Research and Development (PErAn)\rq\rq (MIS5002358), implemented under the \lq\lq Action for the strategic development of the Research and Technological sector'' funded by the Operational Program \lq\lq Competitiveness, and Innovation'' (NSRF 2014-2020) and co- financed by Greece and the EU (European Regional Development Fund). The grant was issued to the Institute of Applied and Computational Mathematics of FORTH.

%\tableofcontents
\end{document}